\newcounter{LabelHelp} 
\setheadfoot{\baselineskip}{3\baselineskip}
\let\OLDthebibliography\thebibliography 
\renewcommand\thebibliography[1]{ 
     \OLDthebibliography{#1}
     \setlength{\parskip}{0pt}
}
\newlist{tightitemize}{itemize}{2} 
\newlist{tightdescription}{description}{2}
\newlist{tightenumerate}{enumerate}{2}
\setlist[tightitemize]{label=\textbullet, nosep, leftmargin=*, align=left, itemindent=0pt, labelindent=1.5em, labelsep*=0.3em}
\setlist[tightdescription]{nosep, leftmargin=!, align=left, itemindent=0pt, labelindent=1.5em, labelsep*=0.3em, labelwidth=4em}
\setlist[tightenumerate, 1]{label=(\alph*), nosep, leftmargin=*, align=left, itemindent=0pt, labelindent=1.5em, labelsep*=0.3em} 
\theoremstyle{plain}
\newtheorem{thm}{Theorem}[chapter] 
\newtheorem{prop}[thm]{Proposition} 
\newtheorem{cor}[thm]{Corollary}
\theoremstyle{definition}
\newtheorem{defi}[thm]{Definition}
\newtheorem{HelpRem}[thm]{Remark} 
\newtheorem{HelpExmpl}[thm]{Example}
\newenvironment{exmpl}{\renewcommand{\qedsymbol}{\( \blacklozenge \)}\pushQED{\qed}\begin{HelpExmpl}}{\popQED\end{HelpExmpl}}
\newenvironment{rem}{\renewcommand{\qedsymbol}{\( \blacklozenge \)}\pushQED{\qed}\begin{HelpRem}}{\popQED\end{HelpRem}}
\renewcommand{\qedsymbol}{$\blacksquare$} 
\renewcommand{\in}{\smallin} 
\renewcommand{\notin}{\notsmallin} 
\newcommand{\DisjCup}{\mathbin{\dot{\cup}}} 
\DeclareMathOperator*{\DisjCupDisp}{\dot{\bigcup}} 
\DeclareMathOperator{\GenLin}{GL} 
\DeclareMathOperator{\SpLin}{SL} 
\newcommand{\GL}{\GenLin(2 , \RR)}
\newcommand{\SL}{\SpLin(2 , \RR)}
\newcommand{\Id}{I\!d} 
\DeclareMathOperator{\Tr}{tr} 
\newcommand{\NN}{\mathbb{N}}
\newcommand{\ZZ}{\mathbb{Z}}
\newcommand{\QQ}{\mathbb{Q}}
\newcommand{\RR}{\mathbb{R}}
\newcommand{\CC}{\mathbb{C}}
\newcommand{\Set}[1]{\{ \, #1 \, \}} 
\newcommand{\Card}[1]{\# #1} 
\newcommand{\Cmplmt}[1]{#1^{\mathsf{c}}} 
\newcommand{\Close}[1]{\overline{#1}} 
\newcommand{\CharFkt}[1]{\mathbb{1}_{#1}} 
\newcommand{\DefAs}{:=} 
\newcommand{\AsDef}{=:} 
\newcommand{\Alphab}{\mathscr{A}} 
\newcommand{\AlphabEv}{\widetilde{\Alphab}} 
\newcommand{\AEv}{\widetilde{a}} 
\newcommand{\EventNr}{\widetilde{K}} 
\newcommand{\Length}[1]{\lvert #1 \rvert} 
\newcommand{\Restr}[3]{#1|_{[#2 , \, #3]}} 
\newcommand{\Hole}{?} 
\newcommand{\InfWord}{\omega} 
\newcommand{\RWord}{p} 
\newcommand{\PBlock}[1]{\RWord^{(#1)}} 
\newcommand{\SpW}[1]{\InfWord^{(#1)}} 
\newcommand{\Origin}[1]{#1 \, |} 
\newcommand{\Cylin}[2]{C_{#2}(#1)} 
\newcommand{\Langu}[1]{\mathscr{L}(#1)} 
\newcommand{\Rev}[1]{\overleftarrow{#1}} 
\newcommand{\Word}[1]{#1 \@ifnextchar\bgroup{\LoadNextArg}{}} 
\newcommand{\LoadNextArg}[1]{\, #1 \@ifnextchar\bgroup{\LoadNextArg}{}} 
\newcommand{\Subshift}{\Omega} 
\newcommand{\Shift}{T} 
\newcommand{\ErgodMeas}{\mu} 
\newcommand{\MaxCylin}{\eta} 
\newcommand{\Rot}[1]{R_{#1}} 
\newcommand{\LWord}[1]{\InfWord^{(#1)}} 
\newcommand{\LOrigin}[1]{v^{(#1)}} 
\newcommand{\LNr}{r} 
\newcommand{\Comp}{\mathcal{C}} 
\newcommand{\Growth}{\mathcal{G}} 
\newcommand{\Pali}{\mathcal{P}} 
\newcommand{\Repe}{\mathcal{R}} 
\newcommand{\AllL}{F} 
\newcommand{\AllLInc}[1]{m_{#1}} 
\newcommand{\Debruijn}[1]{\mathcal{G}_{#1}} 
\newcommand{\Vertices}[1]{\mathcal{V}_{#1}} 
\newcommand{\Edges}[1]{\mathcal{E}_{#1}} 
\newcommand{\Copies}[2]{\#_{#1}( #2 )} 
\newcommand{\DisCopies}[2]{\widetilde{\#}_{#1}( #2 )} 
\newcommand{\Jac}{H} 
\newcommand{\Spec}{\Sigma} 
\newcommand{\AC}{\mathrm{ac}} 
\newcommand{\SC}{\mathrm{sc}} 
\newcommand{\PP}{\mathrm{pp}} 
\newcommand{\Dig}{g} 
\newcommand{\NDig}{f} 
\newcommand{\TrMat}{M_{E}} 
\newcommand{\TrMod}{\widetilde{M}_{E}} 
\newcommand{\Cocyc}{A} 
\newcommand{\Lyapu}[1]{\Lambda_{#1}} 
\newcommand{\AsAv}[1]{\overline{#1}} 
\newcommand{\GenSeq}{\alpha} 
\newcommand{\SchreiSp}[1]{\mathcal{S}_{#1}} 
\newcommand{\Lap}[2]{L_{#1}(#2)} 
\newcommand{\LCond}[1]{(LSC#1)} 
\newcommand{\LConda}[1]{\hyperlink{LSCa}{\LCond{\(\ \alpha\)}}} 
\newcommand{\LCondb}[1]{\hyperlink{LSCb}{\LCond{\(\ \beta\)}}}
\newcommand{\LCondc}[1]{\hyperlink{LSCc}{\LCond{\(\ \gamma\)}}}
\newcommand{\LCondaa}[1]{\hyperlink{LSCaa}{\LCond{\(\ \alpha '\)}}}
\newcommand{\LCondbb}[1]{\hyperlink{LSCbb}{\LCond{\(\ \beta '\)}}}
\newcommand{\LCondcc}[1]{\hyperlink{LSCcc}{\LCond{\(\ \gamma '\)}}}
\title{Simple Toeplitz subshifts: combinatorial properties and uniformity of cocycles}
\author{Daniel Sell}
\date{30.08.2019}
\begin{document}
\allowdisplaybreaks 
\frontmatter 

\thispagestyle{empty}
\begingroup
\centering
\vspace*{-0.2cm} 

\HUGE%
\rule{\textwidth}{0.4ex}\\[0.5cm]%
\textsf{%
Simple Toeplitz subshifts:\\[1ex]%
combinatorial properties and\\[-0.5ex]%
uniformity of cocycles}\\%
\rule{\textwidth}{0.4ex}%

\vspace{3cm}

\huge
\textbf{%
Dissertation\\%
zur Erlangung des akademischen Grades%
}\\%
\textsl{doctor rerum naturalium (Dr. rer. nat.)}\\[2cm]%
\Large%
\textbf{%
vorgelegt dem Rat der Fakultät für Mathematik und Informatik\\%
der Friedrich-Schiller-Universität Jena\\[1.7cm]%
\huge%
von M. Sc. Daniel Sell\\[0.5cm]%
\Large%
geboren am 10.03.1991 in Quedlinburg%
}

\endgroup

\clearpage

\thispagestyle{empty}

\vspace*{\fill}

\begingroup
\Large%
\textbf{Gutachter:\\[\baselineskip]
1. \; Professor Dr. Daniel Lenz \textmd{(Friedrich-Schiller-Universität Jena)}\\[\baselineskip]
2. \; Professor Dr. Michael Baake \textmd{(Universität Bielefeld)}\\[\baselineskip]
3. \; Professor Dr. David Damanik \textmd{(Rice University, Houston)}\\[2\baselineskip]
Tag der öffentlichen Verteidigung: \,26.02.2020 
}

\vspace{3ex}
\endgroup

\chapter{Abstract}

In this thesis we investigate the properties of so-called simple Toeplitz subshifts. They are generated by a subclass of Toeplitz words that is distinguished by the extraordinary simple structure of their approximating partial words. We describe their construction principle in detail and discuss various examples. Our study focuses on the combinatorial properties of the subshifts themself and on the spectral properties of Jacobi operators that are defined by the subshifts.

Regarding the combinatorial properties we treat the complexity, the de Bruijn graphs, the palindrome complexity,  the repetitivity and \( \alpha \)-repetitivity as well as the Boshernitzan condition. We derive explicit formulas for the complexity, the palindrome complexity and, for sufficiently large word length, for the repetitivity. In addition we give a complete description of the de Bruijn graphs. We characterise \( \alpha \)-repetitivity and, based on a result by Liu and Qu from 2011, the Boshernitzan condition for all simple Toeplitz subshifts.

We begin the treatment of our second topic by reviewing the notions of a Jacobi operator, its spectrum, the associated transfer matrices and Gordon-type arguments for excluding eigenvalues. Our first main result in this context is that Jacobi operators \( \Jac_{\InfWord} \) have empty pure point spectrum for almost all \( \InfWord \) in a simple Toeplitz subshift. This generalises a result of Grigorchuk, Lenz and Nagnibeda which dealt only with one particular simple Toeplitz subshift. As our second main result we show that the spectrum of a Jacobi operator on a simple Toeplitz subshift is always a Cantor set of Lebesgue measure zero. In fact we prove a stronger statement: we introduce a new condition for subshifts that we call the leading sequence condition, and then we show that every locally constant \(\SL\)-cocycle is uniform whenever our condition is satisfied. As a special case we infer uniformity of the (modified) transfer matrix cocycle. We show that simple Toeplitz subshifts satisfy the leading sequence condition. Combined with their aperiodicity, this yields the aforementioned Cantor spectrum. The notion of leading sequences and their usage in the study of spectral properties stem from a collaboration with Rostislav Grigorchuk, Daniel Lenz and Tatiana Nagnibeda.

On simple Toeplitz subshifts, Cantor spectrum was previously only known for Schrödinger operators (obtained by Liu/Qu 2011 through an analysis of the trace map). Our generalisation to Jacobi operators is particularly interesting since the latter are connected to Laplacians on Schreier graphs of self-similar groups. This is briefly reviewed in the appendix.

As an aside we prove that Sturmian subshifts satisfy the leading sequence condition, too. Our approach has therefore the additional advantage that it establishes uniformity of cocycles for two disjoint classes of subshifts (namely simple Toeplitz and Sturmian) in a unified way.

\chapter{Zusammenfassung}
\begin{otherlanguage}{ngerman}
Die vorliegende Dissertation behandelt die Eigenschaften sogenannter \foreignlanguage{UKenglish}{simple Toeplitz subshifts}. Diese werden von Toeplitz-Wörtern erzeugt, die sich durch eine besonders einfach Periodenstruktur auszeichnen. Die Arbeit behandelt vor allem die kombinatorischen Eigenschaften dieser \foreignlanguage{UKenglish}{subshifts} sowie die spektralen Eigenschaften von Jacobi-Operatoren, die von den \foreignlanguage{UKenglish}{subshifts} definiert werden.

Bei den kombinatorischen Eigenschaften konzentrieren wir uns auf die Komplexität, die de-Bruijn-Graphen, die Palindromkomplexität, die Repetitivität und \( \alpha \)-Repetitivität sowie die Boshernitzan-Bedingung. Wir leiten explizite Formeln für die Komplexität, die Palindromkomplexität und, bei genügend großer Wortlänge, für die Repetitivität her. Weiterhin geben wir eine detailliere Beschreibung der de-Bruijn-Graphen an. Wir charakterisieren sowohl \( \alpha \)-Repetitivität als auch, basierend auf einem Resultat von Liu und Qu, die Boshernitzan-Bedingung für \foreignlanguage{UKenglish}{simple Toeplitz subshifts}.

Die Behandlung des zweiten Themenbereiches beginnen wir mit einer Wiederholung der Begriffe des Jacobi-Operators, seines Spektrums und der zugehörigen Transfermatrizen, sowie einer auf Gordon zurückgehenden Methode zum Ausschließen von Eigenwerten. Wir beweisen anschließend, dass Jacobi-Operatoren \( \Jac_{\InfWord} \) für fast alle Elemente \( \InfWord \) aus einem \foreignlanguage{UKenglish}{simple Toeplitz subshift} leeres Punktspektrum haben. Dies verallgemeinert ein Resultat von Grigorchuk, Lenz und Nagnibeda, welches nur einen einzelnen, speziellen \foreignlanguage{UKenglish}{simple Toeplitz subshift} behandelte. Das zweite wichtige Ergebnis dieses Themenbereiches ist der Nachweis, dass das Spektrum von Jacobi-Operatoren auf \foreignlanguage{UKenglish}{simple Toeplitz subshifts} stets eine Cantor-Menge vom Lebesgue-Maß Null ist. Tatsächlich zeigen wir eine noch stärkere Aussage: Wir führen das neue Konzept der \foreignlanguage{UKenglish}{leading-sequence}-Bedingung für \foreignlanguage{UKenglish}{subshifts} ein und weisen nach, dass jeder lokalkonstante \( \SL \)-Kozyklus gleichmäßig ist, wenn der zugrundeliegende \foreignlanguage{UKenglish}{subshift} diese Bedingung erfüllt. Insbesondere ist in diesem Fall der Kozyklus der (modifizierten) Transfermatrizen gleichmäßig. Wir zeigen, dass \foreignlanguage{UKenglish}{simple Toeplitz subshifts} die \foreignlanguage{UKenglish}{leading-sequence}-Bedingung erfüllen. Aufgrund ihrer Aperiodizität erhalten wir dann die oben erwähnte Aussage zum Cantor-Spektrum. Der Begriff der \foreignlanguage{UKenglish}{leading sequence} und seine Anwendung in der Untersuchung von spektralen Eigenschaften sind im Rahmen einer Zusammenarbeit mit Rostislav Grigorchuk, Daniel Lenz und Tatiana Nagnibeda entstanden.

Für \foreignlanguage{UKenglish}{simple Toeplitz subshifts} war Cantor-Spektrum bisher nur für Schrödinger-Operatoren nachgewiesen (was Liu und Qu 2011 durch eine Analyse der \foreignlanguage{UKenglish}{trace map} gelang). Die hier vorgestellte Verallgemeinerung auf den Fall von Jacobi-Operatoren ist vor allem dadurch von Interesse, dass diese eng mit Laplace-Operatoren auf den Schreier-Graphen selbstähnlicher Gruppen zusammenhängen. Dies wird im Anhang kurz erläutert.

Des Weiteren zeigen wir, dass auch Sturmsche \foreignlanguage{UKenglish}{subshifts} die \foreignlanguage{UKenglish}{leading-sequence}-Bedingung erfüllen. Unser Vorgehen hat daher den zusätzlichen Vorteil, dass die Gleichmäßigkeit der Kozyklen für zwei disjunkte Klassen von \foreignlanguage{UKenglish}{subshifts} (nämlich \foreignlanguage{UKenglish}{simple Toeplitz} und Sturmsche) auf eine einheitliche Weise nachgewiesen werden kann.
\end{otherlanguage}

\chapter{Thanks and acknowledgement}

First and foremost I want to thank my thesis advisor Daniel Lenz, not only for all the knowledge about aperiodic systems that he shared with me, but also for his general guidance and assistance  in matters of academia. I am grateful for the flexibility that he gives his PhD-students, which allowed me to strengthen my ability to work independently. On the other hand I could rely on his help whenever it was needed. He always encouraged me to attend workshops or conferences and to share my ideas with others. 

I also received ample support from outside Jena. I thank Michael Baake for several invitations to workshops at the University of Bielefeld and for his advice and interest in my work. Moreover I am glad that I had the chance to spend two months of my PhD-studies in the USA. I sincerely thank David Damanik, Anton Gorodetski and everyone in their respective groups in Houston and Irvine for their time, their organisational help and their hospitality. It was a highly instructive and rewarding time. In addition I thank Tatiana Nagnibeda for her invitation to the University of Geneva, where I also met Rostislav Grigorchuk. I am grateful for the opportunity to collaborate with both of them as well as with Daniel Lenz. The results of our collaboration form a part of this thesis.

There are several other people who helped me to improve my understanding of the topics of this thesis. I especially want to mention Aitor Perez (University of Geneva), for patiently answering numerous questions about self-similar groups and Schreier graphs, as well as Siegfried Beckus and Franziska Sieron, for enlightening explanations and fruitful discussions on everything from de Bruijn graphs and trace maps to leading sequences. I also had some inspiring conversations with Till Hauser. All of them are, or used to be, at the University of Jena.

Regarding the thesis itself I kindly thank Benny, Daniel, Maike, Melchior, Ruth and Sofia. They all read parts of this text and provided valuable feedback. Their suggestions certainly made the exposition clearer and more comprehensible.

In addition to Friedrich Schiller University Jena the following institutions helped me to finance various stages and projects of my PhD. Their financial support is gratefully acknowledged:
\begin{tightitemize}
\item{the German Research Foundation (DFG), through the Research Training Group 1523 ``Quantum and Gravitational Fields'', in which I was a member from October 2015 to March 2016 and an associated member from April 2016 until the project's end in March 2018,}
\item{the federal state of Thuringia, through the Thuringian state scholarship (\foreignlanguage{ngerman}{Landesgraduiertenstipendium}) from April 2016 until March 2019,}
\item{the German Academic Exchange Service (DAAD), which supported my research visit to the USA with both travelling allowance and a temporary heightening of the aforementioned scholarship from September to November 2017.}
\end{tightitemize}
In addition I highly appreciate the financial assistance by the organisers of various conferences and workshops that I attended during the my PhD-studies.

I very much enjoyed my time in Jena and the pleasant working environment had a large part in this. For this I  want to thank everyone in the Institute of Mathematics and in particular my fellow PhD-students from the analysis groups. I also want to mention the secretaries, without whom many things would not have been possible, and our PhD Seminar which contributed a lot to the cordial atmosphere. Thanks to Siggi and Therese, who started the seminar, as well as to my co-organisers Maike and Benny for their commitment.

Last but not least I own a great deal of thanks to my friends and relatives. Many things have changed during the last four years and I am glad for our continued closeness and friendship, even though we by now all live in different places. Thus I want to conclude with a special thanks to my parents, grandparents and the whole rest of the Ge-sell-schaft, as well as to Julia, Ralf and Ruth.

\cleardoublepage
\tableofcontents

\cleardoublepage
\thispagestyle{empty}

\vspace*{6\baselineskip}

\begin{otherlanguage}{ngerman}
\epigraph{%
"`... dort eine andere, im grotesken Geschmack eingerichtete Grotte, wo zierliche Muscheln und weiße gewundene Schneckenhäuser, in geordneter Unordnung gefaßt, mit Stücken glänzenden Kristalls vermischt sind"'
}{%
\textsc{Miguel de Cervantes Saavedra}: \textit{Leben und Taten des scharfsinnigen Edlen Don Quixote von la Mancha}, Fünftes Buch, Neuntes Kapitel
}
\end{otherlanguage}

\clearpage
\thispagestyle{empty}

\vspace*{6\baselineskip}

\begin{otherlanguage}{ngerman}
\epigraph{%
"`Vielleicht werden im Laufe solcher Untersuchungen heute noch für uns grundlegende Begriffe und unumstößliche Gegensätze wie etwa der Gegensatz zwischen vollständiger Zufälligkeit und determinierter Kausalität brüchig und hinfällig werden. Vielleicht werden wir auf diesem Wege zu einer Ebene von Erscheinungen vordringen, auf der jene Begriffe durch völlig andere ersetzt werden müssen, welche die genannte gegenseitige Ausschließung (von Ordnung und Zufall) nicht zulassen."'
}{%
\textsc{Stanisław Lem}: \textit{Essays}, Teil III: Über außersinnliche Wahrnehmung (1974)%
}
\end{otherlanguage}

\mainmatter 

\chapter{Introduction}

The main topic of this thesis are simple Toeplitz subshifts. These are sets of sequences of symbols, with the important property that the sequences are not periodic but still highly structured. Consequently their study belongs to the field of aperiodic order. From a physics point of view this also relates them to so-called quasicrystals. In addition simple Toeplitz subshifts, as well as certain operators on them that we will consider, appear naturally in the context of self-similar groups.

\section{Organisation of this thesis and remarks on notation}

The aim of this introduction is to provide a brief overview of the aforementioned topics. We treat them roughly in the same order in which they will appear in the subsequent chapters. We give some motivating background information and mention our main results. Necessarily the details and precise definitions have to be skipped here, so the reader might want to reread the introductory section along with the corresponding main chapter to relate the precise mathematical notions to the rough outline.

Throughout this thesis we use a black square~\qedsymbol {} to denote the end of a proof and a black diamond~\( \blacklozenge \) to denote the end of an example or a remark. For the natural numbers we write \( \NN = \Set{ 1, 2, 3, \ldots } \) and \( \NN_{0} = \NN \cup \Set{0} \), respectively. We use \( \Card{A} \) to denote the \emph{cardinality} of a set \( A \), while \( \Cmplmt{A} \) denotes its \emph{complement} and \( \Close{A} \) its \emph{closure}. To emphasise that a union of two sets \( A \) and \( B \) is disjoint, we write \( A \, \DisjCup \, B \). For the most important mathematical notions, the page that contains their definition can be found in the index at the end of this thesis (page \pageref{chap:Index}).

\section{Symbolic dynamics and aperiodic order}

In general, dynamical systems describe the time evolution of points in a space. Consider for example a compact metric space \( X \) together with a homeomorphism \( f \) from \( X \) to itself (compare \cite[Definition~6.2.1]{LindMarcus_Coding}). The aim is to study various properties of the orbit \( \Set{ f^{k}( x ) : k \in \ZZ } \) which a point \( x \in X \) traces when the homeomorphism is repeatedly applied to it. However, many interesting phenomena can already be observed when we simplify the setting and partition \( X \) into finitely many regions, each of which is assigned a unique symbol. Let \( \Alphab \DefAs \Set{ a_{1}, \ldots, a_{N} } \) denote the set of all these symbols. Instead of the precise orbit of a point we only record the symbol of the region for each iteration. In this way a two-sided infinite ``string of symbols'' is generated. The study of the properties of such strings is the contents of symbolic dynamics. Usually the symbols are referred to as \emph{letters} and strings of symbols are called \emph{words}. If a collection \( \Subshift \subseteq \Alphab^{ \ZZ } \) of infinite words satisfies some basic constrains, it is called a \emph{subshift}.

Another field were subshifts play an important role is aperiodic order. To illustrate the phenomena, we first consider the two-dimensional case (we mention the three-dimensional analogues briefly in Section~\ref{sec:IntroQC}). The only rotation symmetries that can occur in a periodic tiling of \( \RR^{2} \) are twofold, threefold, fourfold or sixfold. This is known as crystallographic restriction. Clearly it is also possible to ``tile'' the plane in a completely arbitrary, non-periodic way. In his famous article from 1974 Penrose presented a ``very ordered'' tiling of \( \RR^{2} \) which uses only four different shapes, but nevertheless exhibits a fivefold symmetry (\cite{Penr_AesthInMath}). Hence it is non-periodic, but Penrose proved that it extends to the whole plane. Since a one-dimensional tiling is just a two-sided infinite ``list'' of tiles, the representation of each tile-type by a letter connects this special case to words and subshifts.

In the years following the discovery of Penrose, aperiodic order became the object of thorough mathematical analysis (for example \cite{deBr_AlgPenr1u2}, \cite{Beenk_AlgTiling}, \cite{KramNeri_NonPerTil} and many more). Also earlier works about almost periodic functions (treated for instance in \cite{Bohr_apFunc}) were reconsidered in the setting of subshifts. This concerns for instance a construction by Toeplitz (\cite{Toepl_FastperFkt}) which was originally developed for real-valued functions, but was adapted by Jacobs and Keane to the symbolic case (\cite{JacobsKeane_01Toeplitz}). These so-called Toeplitz words are the central objects of this thesis and will be presented in detail in the next section. However, the construction by Toeplitz, Jacobs and Keane is just one particular way to generate words and subshifts which combine non-periodicity with a certain amount of order. Other ways include so-called cut-and-project schemes or purely symbolic means such as substitutions. \textbf{In the first section of Chapter~\ref{chap:STSubshifts}} various notions regarding words and subshifts are made rigorous, including their definition from one-sided infinite or two-sided infinite words.

The question of how ordered an aperiodic system is, already present in the earliest articles in the field such as the seminal work by Morse and Hedlund \cite{MorseHedl_SymbDyn}, is still a central topic today. The degree of order can for example be measured by counting the number of different words of length \( L \in \NN \) in a subshift. This concept is known as \emph{complexity of the subshift}. A low complexity indicates that the system is very ordered (or, in a way, ``predictable''). For example is the number of words in a periodic element bounded by the length of the period. Conversely, if the letter at every position is chosen at random, then we should expect that all \( ( \Card{ \Alphab } )^{L} \) possible words of length \( L\) occur, which yields a very high complexity. In between these extremes there are the subshifts that are associated with aperiodic order. By the famous Morse/Hedlund theorem (\cite[Theorem~7.4]{MorseHedl_SymbDyn}, see also Proposition~\ref{prop:MHThm}) the complexity of every non-periodic infinite word grows at least like \( L+1 \). In \cite{MorseHedl_Sturmian}, Morse and Hedlund introduced the term \emph{Sturmian word} for an infinite word whose complexity is equal to \( L+1 \). In the light of the Morse/Hedlund theorem, Sturmian words can be considered as the most ordered among the non-periodic words. That makes them a particular well-studied and relatively well-understood class of examples. In this thesis, we consider different, also highly ordered subshifts which are known as simple Toeplitz subshifts. Often, our results or methods have well-established counterparts for the Sturmian case. In various places we will highlight the similarities and differences between these two types of subshifts. \textbf{In Appendix~\ref{app:Sturm}} we therefore give a brief overview over Sturmian subshifts, focused on those results that are related to this thesis.

\section{Simple Toeplitz subshifts}
\label{sec:IntroToepl}

Jacobs and Keane introduced Toeplitz words in \cite{JacobsKeane_01Toeplitz} as a particular way to generate two-sided infinite words, which in turn define subshifts. Toeplitz words are constructed via so-called ``partial words'', that is, periodic words with some undetermined positions (``holes''). The holes are then filled with other partial words. If no undetermined part remains in the limit word \( \InfWord \in \Alphab^{\ZZ} \), then every position \( \InfWord( j ) \) is repeated periodically, but the period length depends on \( j \):
\[ \forall j \in \ZZ \quad \exists p \in \NN \quad \forall m \in \ZZ \; : \;\; \InfWord( j ) = \InfWord( j + mp) \, . \]
The concept of Toeplitz words has been studied in numerous versions, for example as words on \( \NN \), \( \ZZ \) or \( \ZZ^{d} \), with either two or arbitrarily many letters (see for instance \cite{KamZamb_MaxPattCompl}, \cite{GKBYM_MaxPatternToepl}, \cite{QRWX_PatCompDDim}). Often Toeplitz words serve as (counter-)examples, for instance as minimal systems with several ergodic measures (\cite{Ox_ErgSets}, \cite{Wil_ToepNotUniqErgod}), as systems with (almost) arbitrarily prescribed dynamical pure point spectrum (\cite{DownLacr_NonRegToepl}) or, after a translation to cut-and-project schemes, for irregular model sets with zero entropy (\cite{BJL_ToeplModelSet}). In this thesis we consider so-called simple Toeplitz words in the definition of Liu and Qu (\cite{LiuQu_Simple}): for a finite alphabet \( \Alphab \) we construct a word \( \InfWord \in \Alphab^{\ZZ} \) by the hole-filling procedure described above, but every partial words consists of the repetition of a single letter and there is exactly one hole in every partial word. \textbf{In the second section of Chapter~\ref{chap:STSubshifts}}, we make this more precise and give two (equivalent) mathematical definitions of simple Toeplitz subshifts. Moreover a number of examples and some basic properties like regularity, minimality and ergodicity are discussed.

The construction of Toeplitz words from periodic partial words suggests that they are highly ordered. \textbf{In Chapter~\ref{chap:Combinat}} we study combinatorial properties of simple Toeplitz subshifts: the already mentioned complexity (which measures the number of words of a given length), palindrome complexity (the same for palindromes of a given length), repetitivity (which measures the maximal gap length between consecutive occurrences of words of a given length) and the Boshernitzan condition (which gives a bound for the average gap length). We prove that the complexity of an aperiodic simple Toeplitz subshift grows for all sufficiently large \( L \) alternately like \( \Card{ \AlphabEv } \) and \( \Card{ \AlphabEv } - 1 \) (see Corollary~\ref{cor:CompLargeL}), where \( \AlphabEv \) denotes the so-called eventual alphabet and has cardinality of at least two. Consequently the class of simple Toeplitz subshifts is disjoint from Sturmian subshifts. However, the fact that the complexity of simple Toeplitz subshifts is bounded by a linear function indicates that they are (in terms of complexity) only marginally less ordered than Sturmian systems. Similar to the complexity, also the other aforementioned combinatorial quantities measure different aspects of how much structure or randomness there is in the subshift. Many more measures of order exist that are not treated in this thesis. Pattern complexity for example counts arbitrary patterns of \( L \) letters instead of strings of \( L \) consecutive letters. For aperiodic simple Toeplitz words with two letters it is known that they have the minimal possible pattern complexity (so-called pattern Sturmian systems), see \cite[Lemma~2]{GKBYM_MaxPatternToepl}.

\section{Quasicrystals and Schrödinger operators}
\label{sec:IntroQC}
 
The characteristic property of crystals is their periodicity at atomic level. Similar to the two-dimensional case where only certain rotation symmetries are compatible with periodicity, there are restrictions in the three-dimensional case as well (given by the so-called point groups). In 1982 Shechtman conducted diffraction experiments which showed a ``forbidden'' symmetry in an alloy of aluminium and manganese, while also showing the sharp diffractions peaks that were associated with periodicity. After additional experiments together with Blech, Gratias and Cahn, the discovery of this new type of material was published in 1984 (\cite{ShechtmanBGC_QC}). Shortly after, the name \emph{quasicrystal} (from ``quasiperiodic crystal'') was suggested (\cite{LevSteinh_NameQC}). For his discovery, Schechtman was awarded the Nobel prize in Chemistry in 2011. Very roughly speaking, the apparent violation of the crystallographically allowed rotations has the same reason that allowed the Penrose tiling to have five-fold symmetry in two dimensions: the arrangement of the atoms is non-periodic, but nevertheless ``very ordered'', which causes the sharp peaks in the diffraction pattern.

Since the structure of quasicrystals contains both order and disorder, it is natural to ask if their properties also ``are in between'' the properties of periodic materials (crystals) and completely random (also: glassy) materials. Crystals for example tend to be good electric conductors, while glassy material tend to be insulators. Mathematically these properties are encoded in the spectrum of a Schrödinger operator. Focusing again on the one-dimensional case, the idea is the following: we discretise the problem and let \( \InfWord \in \Alphab^{\ZZ} \) describe the arrangement of atoms or ions in the quasicrystal (or rather: in its one-dimensional and two-sided infinite mathematical model). To each of its sites \( \InfWord( j ) \), \( j \in \ZZ \), a potential is assigned which takes the values of \( \InfWord \) around position \( i \) into account. Mathematically, the potential acts as a multiplication operator on \( \ell^{2}( \ZZ ) \). The so-called tight binding approximation of interaction of an electron with the atoms is given by the sum of this potential and a discrete Laplacian. This sum is known as the Schrödinger operator. Its spectrum encodes the allowed energies of the electron. The precise definitions are presented \textbf{in Chapter~\ref{chap:JOSimpToep}, Section~\ref{sec:JOTransMat} and~\ref{sec:SpecJO}.} We even consider a situation that is slightly more general than described above, where we allow different weights for the off-diagonal terms of the operator. Such objects are known as \emph{Jacobi operators}.

For the special case of a periodic word \( \InfWord \in \Alphab^{\ZZ} \) (which models a one-dimensional crystal) if follows from Floquet/Bloch theory that the spectrum of a Schrödinger operator has only an absolutely continuous part. For the converse case where each \( \InfWord( j ) \) is chosen randomly (which models a one-dimensional glassy material) the spectrum has only a pure point part. Note that these opposite characteristics of the spectrum correspond to the opposite conductivity in crystals and glassy materials. Since quasicrystals show order as well as disorder, one might expect that their spectrum ``is in between'' both extreme cases. Indeed it can sometimes be shown that the randomness prevents the spectrum from being absolutely continuous, while the order prevents it from being pure point. The main techniques for this are reviewed \textbf{in Section~\ref{sec:SpecJO} and~\ref{sec:JOGordon}. In Section~\ref{sec:AsNoPP}} we apply these techniques to simple Toeplitz subshifts. We show that the spectrum is purely singular continuous for almost all \( \InfWord \in \Subshift \) with respect to the unique ergodic probability measure on the subshift.

\section{Cocycles, Cantor spectrum and leading sequences}

An important tool to study the spectrum of a Schrödinger or Jacobi operator \( \Jac_{\InfWord} \) are \emph{transfer matrices,} \textbf{see Section~\ref{sec:JOTransMat}}. They describe solutions to the eigenvalue equation \( \Jac_{\InfWord} \varphi = E \varphi \) with \( E \in \RR \) and \( \varphi \colon \ZZ \to \RR \). More precisely, for every \( \InfWord \in \Subshift \) and every \( j \in \ZZ \) they define a matrix product which determines \( \varphi( j ) \), depending on \( \varphi( 0 ) \), \( \varphi( 1 ) \) and \( E \in \RR \). \textbf{In Chapter~\ref{chap:UnifCocycCantor}} we study a generalisation of this concept to more arbitrary matrix products associated to \( \InfWord \in \Subshift \) and \( j \in \ZZ \) which need not be related to the eigenvalue equation. This generalisation is know as \emph{cocycles,} \textbf{see Section~\ref{sec:CocycCantor}}. More specifically we will consider cocycles that depend locally constant on \( \InfWord \). The central question will concern the asymptotic exponential behaviour of the cocycle's norm and whether or not this behaviour is uniform with respect to \( \InfWord \).

While the question of uniform convergence of cocycles is interesting on its own, it is also connected to the study of Schrödinger and Jacobi operators. For every minimal, uniquely ergodic subshift it was shown in \cite[Theorem~3]{Lenz_SingSpec1dQC} that the spectrum of a Schrödinger operator is given by
\[ \Spec = \Set{ E \in \RR : \Lyapu{\TrMat}( E ) = 0} \DisjCup \Set{ E \in \RR : \TrMat \text{ is not uniform} } \, , \]
where \( \TrMat \) denotes the aforementioned transfer matrix cocycle and \( \Lyapu{\TrMat}( E ) \in \RR \) is a quantity known as the Lyapunov exponent (related to the asymptotic exponential behaviour of the cocycle's norm). For aperiodic subshifts, Kotani theory (\cite{Kotani_KotTheo}) implies that the set of those \( E \) for which the Lyapunov exponent vanishes, has zero Lebesgue measure. Moreover it follows in a rather general setting that the spectrum does not contain isolated points (\cite[Theorem~4]{Pastur_LyapuAndSc}). Thus we can infer from the uniformity of the transfer matrix cocycle that the spectrum is a Cantor set of Lebesgue measures zero.

It is therefore of interest to find sufficient conditions for uniform convergence. For example it was proved that a property called ``positivity of weights'' implies uniformity of the transfer matrix cocycle (\cite[Theorem~2]{Lenz_SingSpec1dQC}) and even uniformity of every locally constant \( \SL \)-cocycle (\cite[Theorem~1]{Lenz_NonUnifCocy}). Later it was shown that positivity of weights can be replaced by the weaker Boshernitzan condition (\cite[Theorem~1]{DamLenz_Boshern}). Demonstrating that this condition is satisfied has since become a standard approach to prove Cantor spectrum (see for instance \cite[Theorem~4]{DamLenz_BoshLowCompl} for Sturmian subshifts). However, for simple Toeplitz subshifts it is known that the Boshernitzan condition does not hold in general (\cite{LiuQu_Simple}). From an analysis of the so-called trace map Liu and Qu could nevertheless deduce uniformity of the transfer matrix cocycles of Schrödinger operators.

In this thesis we introduce a different approach, which stems from a collaboration with Tatiana Nagnibeda, Rostislav Grigorchuk and Daniel Lenz. \textbf{In Section~\ref{sec:LSCSubsh}} we define the \emph{leading sequence condition \LCond{}} for subshifts. It ensures the existence of finitely many elements in the subshift which are ``exhaustive'' in terms of subwords around their origin and ``well-behaved'' with respect to cocycles. We call these elements \emph{leading sequences}. All simple Toeplitz subshifts, but also all Sturmian subshifts, satisfy the leading sequence condition, \textbf{see Section~\ref{sec:LCondExmpl}.} We prove \textbf{in Section~\ref{sec:UnifCoycLCond}} that every locally constant cocycle over \LCond{}-subshift is uniform. This generalises the result of Liu and Qu from the Schrödinger case to arbitrary cocycles. In particular we obtain Cantor spectrum for Jacobi operators. An additional merit of our approach is that it provides a unified treatment of Sturmian and simple Toeplitz subshifts.

\section{Self-similar groups}

As aperiodic systems, simple Toeplitz subshifts are of interest on their own. In addition they may serve to some extend as models of one-dimensional quasicrystals. In this context, Schrödinger and Jacobi operators appear rather naturally. Another reason to study these subshifts and operators is a recently found connection to certain self-similar groups (\cite{GLN_SpectraSchreierAndSO}).

Self-similar groups are groups of graph automorphisms which act on a regular tree in a self-similar way. They are an important source of (counter-)examples. A self-similar group (Grigorchuk's group, \cite{Grig_BurnsideRuss}) was for instance the first example of a group with intermediate growth and also the first example of a group which is amenable but not elementary amenable (see \cite{Grig_DegrOfGrowthRuss}). More generally, a whole family of Grigorchuk's groups can be defined which is related to subshifts in various ways (see for instance \cite{MBon_TopoFullGr} regarding an embedding into the topological full group). For us, the similarity between simple Toeplitz subshifts and the Schreier graphs of the groups will be crucial. Roughly speaking the Schreier graphs encode the action of the group's generators on the tree. The graphs can be constructed inductively by a hole-filling procedure which is analogous to the construction of simple Toeplitz words. As a result, Laplacians on the Schreier graphs associated to the members of Grigorchuk's family of groups are unitarily equivalent to Jacobi operators on certain simple Toeplitz subshifts (\cite[Proposition~4.1]{GLN_SpectraSchreierAndSO}, \cite[Theorem~7.1]{GLNS_LeadingSeq_Arxiv}). While this is one of our main motivations to analyse the spectrum of Jacobi operators, our study of these questions is solely based on word combinatorics and methods from ergodic theory. Thus no knowledge about self-similar groups is required to follow our exposition in this thesis. For the interested reader \textbf{Appendix~\ref{app:SelfSimGr}} nevertheless provides a short introduction to self-similar groups and Schreier graphs.

\chapter[Simple Toeplitz subshifts][Subshifts]{Simple Toeplitz subshifts}
\label{chap:STSubshifts}

This chapter presents the main objects of this thesis: simple Toeplitz subshifts. In Section~\ref{sec:PrelimSubsh} we recall standard notation and essential facts about subshifts in general. The second section serves as an introduction to simple Toeplitz subshifts. A first definition via one-sided infinite words is given in Subsection~\ref{subsec:STDefiPBlock}. Afterwards, we present various examples. They will be used throughout the whole thesis as (counter-)examples to illustrate the wide range of behaviour that simple Toeplitz subshifts exhibit. In Subsection~\ref{subsec:STDefiHoleFill} we recall another, probably better known definition of the subshifts, which uses a hole-filling procedure. This definition is a bit more involved, but has the advantage that it immediately yields a description of the subshift's elements. Finally, we state (and in some cases prove) several basic, well-know properties of simple Toeplitz subshifts in Subsection~\ref{subsec:STBasicProp}, which are essential for later chapters.

\section{Preliminaries on subshifts}
\label{sec:PrelimSubsh}

As discussed in the introduction, the elements of a subshift are ``infinite strings of symbols''. Each symbol represents for example a region in the underlying space of a dynamical system or a certain type of atoms in a quasicrystal. Moreover, a subshift reflects the fact that shifting the orbit in a dynamical system or the origin of a quasicrystal ``does not matter''. In this section, we formulate these ideas mathematically. We focus on basic notions which are important for our study of simple Toeplitz subshifts later on. For a more thorough treatment, the reader is referred to standard texts such as \cite{Queff_SubstDynSys}, \cite{LindMarcus_Coding} or \cite{Fogg_Substitutions} and the references therein.

By \( \Alphab = \Set{ a_{1} , \ldots , a_{\Card{\!\Alphab}} } \) we denote a finite set, the so-called \emph{alphabet}\index{alphabet}. Its elements are called \emph{letters}\index{letter} and they represent for example regions in space or different types of atoms. We will always assume \( \Card{ \Alphab } \geq 2 \). A concatenation of (finitely or infinitely many) letters is called a (finite or infinite) \emph{word}\index{word!finite}\index{word!infinite}. Formally, words are defined as maps from a (finite or infinite) set into \( \Alphab \). The elements in the subshifts that we consider will be two-sided infinite words, that is, maps \( \InfWord \colon \ZZ \to \Alphab \). When we need to mark the origin of the word, we denote it with a vertical bar between the non-positive and the positive positions:
\[ \Word{ \ldots }{ \InfWord(-2) }{ \InfWord(-1) }{ \Origin{\InfWord(0)} }{ \InfWord(1) }{ \InfWord(2) }{ \ldots } \; .\]
Similarly, one-sided infinite words are given by a map \( \RWord \colon \NN \to \Alphab \), while finite words are defined as \( u \colon \Set{ 1 , \ldots , L } \to \Alphab \) for some \( L \in \NN_{0} \). In this case, we call \( L \) the \emph{length}\index{length of a word}\index{word!length of} of the word \( u \) and we write \( \Length{u} = L \). The word \( \epsilon \colon \emptyset \to \Alphab \) of length zero is called the \emph{empty word}\index{word!empty}. A concatenation of two finite words \( u = \Word{ u(1) }{ \ldots }{ u( \Length{u} ) } \) and \( v = \Word{ v(1) }{ \ldots }{ v( \Length{v}  ) } \) is denoted by \( uv \DefAs \Word{ u(1) }{ \ldots }{ u( \Length{u}  ) }{ v(1) }{ \ldots }{ v( \Length{u}  ) } \). Conversely the restriction of a (finite or infinite) word \( \InfWord \) to a subset of its positions is defined as \( \Restr{\InfWord}{j}{j+L-1} \DefAs \Word{ \InfWord( j ) }{ \ldots }{ \InfWord( j+L-1 ) } \). In other words, \( \Restr{\InfWord}{j}{j+L-1} \) denotes the finite word of length \( L \) in \( \InfWord \) that starts at position \( j \). The set of all finite words that occur in \( \InfWord \) is known as the \emph{language of \( \InfWord \)}\index{language!of a word}. More precisely, we define
\[ \Langu{\InfWord}_{L} \DefAs \Set{ \Restr{\InfWord}{j}{j+L-1} : j \in \ZZ } \]
as the \emph{set of all words of length \( L \)}\index{set of words of length L!in a word} in \( \InfWord \in \Alphab^{\ZZ} \). The language of \( \InfWord \in \Alphab^{\ZZ} \) is then given by \( \Langu{\InfWord} \DefAs \cup_{L \in \NN_{0}} \Langu{\InfWord}_{L} \). Similarly we define the language of \( \InfWord \in \Alphab^{\NN} \). Note that the empty word is an element of every language.

We now have a mathematical way to represent an orbit in a dynamical system or to model a quasiperiodic arrangement of atoms. However, we might want to compare different orbits or approximate the quasicrystal by suitable periodic arrangements. Thus, we introduce a topology to obtain notions of closeness and convergence of words: we equip \( \Alphab \) with the discrete topology and \( \Alphab^{\ZZ} \) with the resulting product topology. The sets \( \Cylin{u}{j} \DefAs \Set{ \InfWord \in \Alphab^{\ZZ} : \Restr{\InfWord}{j}{j+\Length{u}-1} = u } \) of those words, in which a given finite word \( u \) appears at position \( j \), are called \emph{cylinder sets}\index{cylinder set} and form a base of open sets of the topology. The topological space \( \Alphab^{\ZZ} \) is metrisable and a metric is for example given by
\[ d( \InfWord_{1}, \InfWord_{2} ) \DefAs \sum_{j = - \infty}^{\infty} \frac{ \delta( \InfWord_{1}(j), \InfWord_{2}(j) ) }{ 2^{ \lvert j \rvert} } \, , \quad \text{with } \delta( a_{1}, a_{2} ) \DefAs \begin{cases} 0 & \text{ if } a_{1} = a_{2} \\ 1& \text{ if } a_{1} \neq a_{2} \end{cases} \; ,\]
for two-sided infinite words \( \InfWord_{1} , \, \InfWord_{2} \in \Alphab^{\ZZ} \) and letters \( a_{1}, a_{2} \in \Alphab \). In this topology, two words \( \InfWord , \, \varrho \in \Alphab^{\ZZ} \) are ``close'' if and only if they agree on a large interval around the origin. In particular we note that a sequence \( (\InfWord_{k}) \) converges to \( \InfWord \) if, for every interval, there exists a number \( k_{0} \) such that \( \InfWord_{k} \) agrees with \( \InfWord \) on this interval for all \( k \geq k_{0} \).

\begin{rem}
It is easy to see that \( \Alphab^{\ZZ} \) with this topology is a Cantor set: the finite union of all cylinder sets of a fixed word length and a fixed position \( j \in \ZZ \) covers \( \Alphab^{\ZZ} \), so every cylinder set is clopen. Let \( \InfWord \neq \varrho \in \Alphab^{\ZZ} \) be two distinct elements. Then there exists a cylinder set \( \Cylin{u}{j} \) that contains \( \InfWord \) but not \( \varrho \), and \( \Alphab^{\ZZ} = \Cylin{u}{j} \DisjCup \Cmplmt{ \Cylin{u}{j} } \) is a partition of \( \Alphab^{\ZZ} \) with open sets that separate \( \InfWord \) and \( \varrho \). Hence \( \Alphab^{\ZZ} \) is totally disconnected. Moreover, every finite word can be extended by all elements in \( \Alphab \). Therefore, no cylinder set contains just one element, that is, there are no isolated points in \( \Alphab^{\ZZ} \). Finally, note that \( \Alphab^{\ZZ} \) is compact as a product of finite (hence compact) sets.
\end{rem}

The above gives a proper mathematical meaning to the notion of ``strings of symbols''. Another important property is that the set of all words is invariant under movements of the origin. First, we formalise the notion of ``movement'':

\begin{defi}
The map \( \Shift \colon \Alphab^{\ZZ} \to \Alphab^{\ZZ} \), defined by \( (\Shift \InfWord) (j) \DefAs \InfWord( j+1 ) \), is called the \emph{shift}\index{shift map} (or more precisely: the \emph{left shift}) on \( \Alphab^{\ZZ} \).
\end{defi}

Since \( \Shift \) is bijective and maps cylinder sets to cylinder sets, it is a homeomorphism on \( \Alphab^{\ZZ} \). Thus, \( (\Alphab^{\ZZ}, \Shift) \) is a topological dynamical system. For \( \InfWord \in \Alphab^{\ZZ} \), the set \( \Set{ \Shift^{j} \InfWord : j \in \ZZ } \) is called the \emph{orbit of \( \InfWord \)}\index{orbit}\index{word!orbit of}. Clearly, every orbit is invariant under the shift. We are particularly interested in sets that combine invariance and closedness:

\begin{defi}
A closed subset \( \Subshift \subseteq \Alphab^{\ZZ} \) which is \( \Shift \)-invariant, is called a \emph{subshift}\index{subshift}.
\end{defi}

As for infinite words, we let \( \Langu{ \Subshift }_{L} \DefAs \cup_{ \InfWord \in \Subshift} \Langu{\InfWord}_{L} \) denote the \emph{set of all words of length \( L \)}\index{word!set of words of length L}\index{set of words of length L!in a subshift} in the subshift. Moreover, we define the \emph{language of a subshift}\index{language!of a subshift}\index{subshift!language of} \( \Langu{\Subshift} \DefAs \cup_{ \InfWord \in \Subshift} \Langu{ \InfWord }\) as the set of all finite words that occur in the subshift. The subshifts that we encounter will be defined in two different ways. First of all, a subshift can be defined as the orbit closure of a two-sided infinite word:

\begin{defi}
\label{defi:SubshiftOrbit}
The subshift associated to \( \InfWord \in \Alphab^{\ZZ} \) is defined as \( \Subshift( \InfWord ) \DefAs \Close{ \Set{ \Shift^{j} \InfWord : j \in \ZZ } } \).
\end{defi}

By definition, \( \Subshift( \InfWord ) \) is a closed set. To see that \( \Subshift( \InfWord ) \) is \( \Shift \)-invariant, note that \( \Shift \) is continuous and that every \( \varrho \in \Subshift( \InfWord ) \) can be expressed as \( \varrho = \lim_{k \to \infty} \Shift^{j_{k}} \InfWord \) for some sequence \( (j_{k}) \). Thus \( \Shift \varrho = \Shift( \lim_{k \to \infty} \Shift^{j_{k}} \InfWord ) = \lim_{k \to \infty} \Shift^{j_{k}+1} \InfWord \) is indeed an element of \( \Subshift( \InfWord ) \). Alternatively, a subshift can be defined from a one-sided infinite word through its language:

\begin{defi}
\label{defi:SubshiftLangu}
The subshift associated to \( \RWord \in \Alphab^{\NN} \) is defined as \( \Subshift( \RWord ) \DefAs \Set{ \InfWord \in \Alphab^{\ZZ} : \Langu{ \InfWord } \subseteq \Langu{ \RWord } } \).
\end{defi}

The shift-invariance of \( \Subshift( \RWord ) \) is clear, since shifted elements have the same language. To see that \( \Subshift( \RWord ) \) is closed, let \( ( \InfWord_{k} ) \) be a sequence in \( \Subshift( \RWord ) \) and let \( \varrho = \lim_{k \to \infty} \InfWord_{k} \). For every word \( u = \Restr{ \varrho }{ j }{ j+\Length{u}-1 } \) in \( \varrho \), we have \( \Restr{ \varrho }{ j }{ j+\Length{u}-1 } = \Restr{ \InfWord_{k} }{ j }{ j+\Length{u}-1 } \) for all sufficiently large \( k \), and thus \( u \in \Langu{ \InfWord_{k} } \subseteq \Langu{ \RWord } \).

We conclude this section with the definition of three important properties of subshifts: first of all, we say that an element \( \InfWord \in \Subshift \) is \emph{periodic}\index{periodic word}\index{word!periodic} if there exists a number \( P \in \NN \) with \( \Shift^{P} \InfWord = \InfWord \). A subshift is called \emph{aperiodic}\index{aperiodic}\index{subshift!aperiodic} if none of its elements is periodic. Secondly, a subshift is called \emph{minimal}\index{minimal}\index{subshift!minimal} if the orbit of every \( \InfWord \in \Subshift \) is dense in \( \Subshift \). Thirdly, a subshift is called \emph{uniquely ergodic}\index{uniquely ergodic subshift}\index{subshift!uniquely ergodic} if there exists a unique \( \Shift \)-invariant Borel probability measure on \( \Subshift \). Usually the subshifts we consider will be aperiodic, minimal and uniquely ergodic.

\section{Simple Toeplitz subshifts}
\label{sec:SimpleToepl}

We now turn towards the special class of subshifts that are known as simple Toeplitz subshifts. Since several later chapters are devoted to their study, we discuss their definition in great detail and present numerous examples. However, before we start we briefly recall the principle on which Toeplitz words and Toeplitz subshifts rely in general. As examples of aperiodic order, Toeplitz words combine non-randomness and non-periodicity. The particular method that is used to obtain this combination goes back to Toeplitz, Jacobs and Keane. The main idea is that the value of every position is repeated periodically, but different positions may have different periods. More precisely, a word \( \InfWord \in \Alphab^{\ZZ} \) is called a \emph{Toeplitz word}\index{Toeplitz!word}\index{word!general Toeplitz} if the following holds:
\begin{equation}
\label{eqn:DefToeplitz}
\forall j \in \ZZ \quad \exists p \in \NN \quad \forall m \in \ZZ \; : \;\; \InfWord( j ) = \InfWord( j + mp) \, .
\end{equation}
According to Definition~\ref{defi:SubshiftOrbit} we can associate a subshift \( \Subshift( \InfWord ) \) to a Toeplitz word, which is then called a \emph{Toeplitz subshift}\index{Toeplitz!subshift}\index{subshift!general Toeplitz}.

One way to construct a Toeplitz word is to start with a word that has some yet undetermined positions, the so-called holes. The word, including the holes, is chosen to be periodic. The holes are then filled with the letters of another periodic word with holes and the positions that are still undetermined after that, are filled with the letters of yet another periodic word with holes, and so on. For the special case of simple Toeplitz words, this procedure is explained in detail in Subsection~\ref{subsec:STDefiHoleFill}.

\subsection{Definition of simple Toeplitz subshifts via subwords of \texorpdfstring{\( \PBlock{\infty} \)}{p\^{}(infinity)}}
\label{subsec:STDefiPBlock}

Now we define the special case of a simple Toeplitz subshift, which is characterised by two sequences: let \( (a_{k}) \in \Alphab^{\NN_{0}} \) be a sequence of letters and \( (n_{k}) \in ( \NN \setminus \{1\} )^{\NN_{0}} \) be a sequence of period lengths that are greater or equal to two. These sequences are called the \emph{coding sequences}\index{coding sequence} of the subshift. From the coding sequences, we define recursively a sequence \( (\PBlock{k}) \) of finite words by
\begin{equation}
\label{eqn:PBlockRecur}
\PBlock{-1} \DefAs \epsilon \qquad \text{and} \qquad \PBlock{k+1} \DefAs \Word{ \PBlock{k} }{ a_{k+1} }{ \PBlock{k} }{ a_{k+1} }{ \PBlock{k} }{ a_{k+1} }{ \ldots }{ \PBlock{k} } \, ,
\end{equation}
with \( n_{k+1} \)-many \( \PBlock{k} \)-blocks and (\( n_{k+1}-1 \))-many \( a_{k+1} \)'s. We focus on aperiodic subshifts and we will see that in this case, \( (a_{k}) \) must not be eventually constant (Proposition~\ref{prop:STAperiod}). Therefore, and since consecutive occurrences of the same letter can be expressed as a single occurrence if \(n_{k}\) is increased accordingly, we always assume \( a_{k+1} \neq a_{k} \). As we will see in Equation~(\ref{eqn:DecompPBlock}), every element of the subshift can be decomposed into single letters and ``blocks of letters'', which are precisely the words \( \PBlock{k} \). Since in addition all \( \PBlock{k} \) are palindromes (see Example~\ref{exmpl:PBlockPalindr}), we refer to them as \emph{\( \PBlock{k} \)-blocks}\index{p-blocks@\( \PBlock{k} \)-blocks} or \emph{palindromic blocks}\index{palindromic blocks}. Their length is given by a simple recursion:

\begin{prop}
For all \( k \geq -1 \), we have \( \Length{\PBlock{k+1}} + 1= n_{k+1} (\Length{\PBlock{k}} + 1) \). In particular this yields \( \Length{\PBlock{k+1}} + 1 = \prod_{j=0}^{k+1} n_{j} \). 
\end{prop}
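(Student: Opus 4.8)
The plan is to read the length of $\PBlock{k+1}$ directly off the defining recursion~(\ref{eqn:PBlockRecur}) and then pass to the closed product formula by a one-line induction.

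First I would establish the one-step recursion. By~(\ref{eqn:PBlockRecur}), $\PBlock{k+1}$ is the concatenation of $n_{k+1}$ copies of $\PBlock{k}$ with $n_{k+1}-1$ single letters $a_{k+1}$ inserted between consecutive copies. Since word length is additive under concatenation, this gives $\Length{\PBlock{k+1}} = n_{k+1}\Length{\PBlock{k}} + (n_{k+1}-1)$. Adding $1$ to both sides and factoring yields
$\Length{\PBlock{k+1}} + 1 = n_{k+1}\Length{\PBlock{k}} + n_{k+1} = n_{k+1}\bigl(\Length{\PBlock{k}} + 1\bigr)$,
which is the first assertion. Note that this already covers the base value $k=-1$ without any special argument, since $\PBlock{-1}=\epsilon$ has length $0$ and the identity then just reads $\Length{\PBlock{0}} + 1 = n_0$.

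For the closed form I would induct on $k$. The base case $k=-1$ is $\Length{\PBlock{0}}+1 = n_0 = \prod_{j=0}^{0} n_j$, which is the instance of the recursion just noted (with the empty-product convention $\Length{\PBlock{-1}}+1 = 1$). For the inductive step, assuming $\Length{\PBlock{k}}+1 = \prod_{j=0}^{k} n_j$, the one-step recursion gives $\Length{\PBlock{k+1}}+1 = n_{k+1}\bigl(\Length{\PBlock{k}}+1\bigr) = n_{k+1}\prod_{j=0}^{k} n_j = \prod_{j=0}^{k+1} n_j$, completing the induction.

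There is no real obstacle here; the only point requiring care is the off-by-one bookkeeping of the $n_{k+1}-1$ connecting letters $a_{k+1}$, which is exactly the reason the statement is clean in the ``$+1$'' form rather than for $\Length{\PBlock{k}}$ itself. Everything else is just additivity of length under concatenation.
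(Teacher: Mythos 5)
Your proof is correct and is exactly the routine computation the paper leaves implicit: the paper states this proposition without proof, as it follows directly from the recursion~(\ref{eqn:PBlockRecur}) by additivity of length and a one-line induction, which is precisely your argument. The base case bookkeeping (with \( \PBlock{-1} = \epsilon \), so \( \Length{\PBlock{0}} + 1 = n_{0} \)) is also handled correctly.
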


Note that \( \PBlock{k} \) is a prefix of \( \PBlock{k+1} \) for every \( k \) (it is also a suffix of \( \PBlock{k+1} \), which we will use later). After extending the finite words \( \PBlock{k} \) arbitrarily to one-sided infinite words, the sequence will therefore converge in \( \Alphab^{\NN} \). We denote the limit by \( \PBlock{\infty} \in \Alphab^{\NN} \). 

\begin{defi}
\label{defi:SToeplPInfty}
The subshift \( \Subshift( \PBlock{\infty} ) \), defined according to Definition~\ref{defi:SubshiftLangu} by \( \PBlock{\infty} \DefAs \lim_{k \to \infty} \PBlock{k} \), is called the \emph{simple Toeplitz subshift}\index{simple Toeplitz!subshift}\index{subshift!simple Toeplitz} with coding sequences \( (a_{k}) \) and \( (n_{k}) \).
\end{defi}

\begin{rem}
It is not obvious that a simple Toeplitz subshift is actually a Toeplitz subshift in the sense of Equation~(\ref{eqn:DefToeplitz}). In Subsection~\ref{subsec:STDefiHoleFill} we will see an alternative definition of simple Toeplitz subshifts via hole-filling, which is more involved, but highlights the Toeplitz character of the subshift.
\end{rem}

The properties of a simple Toeplitz subshift depend on its coding sequences. Similar to \cite{LiuQu_Simple}, we introduce the following notions to describe \( (a_{k}) \) in more detail: the set of all letters which appear at position \( k \) or later in the coding sequence, is denoted by \( \Alphab_{k} \DefAs \Set{ a_{j} : j \geq k } \). The set of letters which appear infinitely often is denoted by \( \AlphabEv \DefAs \cap_{k \geq 0} \Alphab_{k} \) and called the \emph{eventual alphabet}\index{eventual alphabet}\index{alphabet!eventual} or the \emph{set of recurrent letters}\index{recurrent letters}\index{letter!set of recurrent letters}. Since \( \Alphab \) is finite, there exists a number \( \EventNr \) with \( a_{k} \in \AlphabEv \) and \( \Alphab_{k} = \AlphabEv \) for all \( k \geq \EventNr \).

\begin{exmpl}[leading words]
\label{exmpl:SpWBlocks}
To conclude this subsection, we describe particular elements of simple Toeplitz subshifts, which we call the \emph{leading words}\index{leading word!of a simple Toeplitz subshift}. For every \( \AEv \in \AlphabEv \), the leading word \( \SpW{ \AEv } \) is defined as the two-sided limit
\[ \SpW{ \AEv } \DefAs \lim_{k \to \infty} \Word{ \PBlock{k} }{ \Origin{ \AEv } }{ \PBlock{k} } \, \in \Alphab^{ \ZZ } \, . \]
To define this limit, we use that \( \PBlock{k} \) is a suffix as well as prefix of  \( \PBlock{k+1} \) for every \( k \). To prove \( \SpW{ \AEv } \in \Subshift( \PBlock{\infty} ) \), it remains to show that, for arbitrary \( k \), all finite words in \( \Word{ \PBlock{k} }{ \AEv }{ \PBlock{k} } \) occur in \( \PBlock{\infty} \) as well: because of \( \AEv \in \AlphabEv \), there exists a number \( l > k \) with \( a_{l} = \AEv \). And since \( \PBlock{\infty} \) contains \( \PBlock{l} = \Word{ \PBlock{l-1} }{ a_{l} }{ \PBlock{l-1} }{ \ldots }{ \PBlock{l-1} } \) and since \( \PBlock{l-1} \) begins and ends with \( \PBlock{k} \), all subwords of \( \Word{ \PBlock{k} }{ \AEv }{ \PBlock{k} } \) occur in \( \PBlock{\infty} \).
\end{exmpl}

\begin{rem}
As the name suggests, the leading words determine important properties of the subshift. Roughly speaking, this is possible because every finite word occurs close to the origin of a leading word. Moreover they have certain symmetries and ``enough structure'' to prevent exponential decay of the norm of certain matrix products as we go to plus/minus infinity in \( \SpW{ \AEv } \). Later we will generalise the notion of leading words to the much broader class of so-called \LCond{}-subshifts (Definition~\ref{defi:LCond}).
\end{rem}

\subsection{Examples of simple Toeplitz subshifts}
\label{subsec:STExmpl}

To illustrate the notion of simple Toeplitz subshifts, we now discuss a number of examples. Throughout the whole thesis we will come back to them to demonstrate the concepts we encounter. Our first example is the well-known \emph{period doubling subshift}\index{subshift!period doubling}\index{period doubling subshift}:

\begin{exmpl}[period doubling]
\label{exmpl:PD}
Consider the alphabet \( \Alphab = \Set{ a, b } \), the alternating sequence \( (a_{k}) = ( a , b , a , b, \ldots ) \) and the constant sequence \( (n_{k}) = ( 2 , 2 , 2 , \ldots ) \). This yields the blocks
\begin{alignat*}{3}
\PBlock{0} & \, = \, a_{0}^{n_{0}-1} && \, = \, a \\
\PBlock{1} & \, = \, \Word{ \PBlock{0} }{ a_{1} }{ \PBlock{0} } && \, = \, \Word{ a }{ b }{ a } \\
\PBlock{2} & \, = \, \Word{ \PBlock{1} }{ a_{2} }{ \PBlock{1} } && \, = \, \Word{ a }{ b }{ a }{ a }{ a }{ b }{ a } \\
\PBlock{3} & \, = \, \Word{ \PBlock{2} }{ a_{3} }{ \PBlock{2} } && \, = \, \Word{ a }{ b }{ a }{ a }{ a }{ b }{ a }{ b }{ a }{ b }{ a }{ a }{ a }{ b }{ a } \\[-1ex]
\vdots\hspace{0.5em}
\end{alignat*}
In a way, this defines the simplest of the aperiodic simple Toeplitz subshifts: by definition, \( n_{k} \) has to be at least two for every \( k \). For aperiodicity we need an alphabet with at least two letters, as well as a sequence \( (a_{k}) \) which is not eventually constant (see Proposition~\ref{prop:STAperiod}). For a two-letter alphabet \( \Set{ a, b } \) our assumption \( a_{k} \neq a_{k+1} \) enforces an alternating sequence of \( a \)'s and \( b \)'s.
\end{exmpl}

\begin{rem}
\label{rem:SubstPD}
Alternatively, the subshift above can be defined by the period doubling substitution \( \sigma_{\text{pd}} \colon a \mapsto \Word{ a }{ b } \, , \; b \mapsto \Word{ a }{ a } \). For details on substitution systems the reader is referred to \cite{Fogg_Substitutions} or \cite[Chapter~4]{BaakeGrimm_Aperio}. Here we only prove that the two subshifts, defined by the one-sided infinite words \( \PBlock{\infty} \) and \( \lim_{k \to \infty} \sigma_{\text{pd}}^{k}( a ) \), are actually equal. First note that \( \sigma_{\text{pd}}^{k}( a ) \) and \( \sigma_{\text{pd}}^{k}( b ) \) differ exactly in the last letter, which can easily be shown by induction. The claim now follows from a second induction, which shows that \( \Word{ \PBlock{k} }{ a_{k+1} } \) and \( \sigma_{\text{pd}}^{k+1}( a ) \) are equal: for \( k = 0 \), this is obvious. Now assume that \( \Word{ \PBlock{k} }{ a_{k+1} } = \sigma_{\text{pd}}^{k+1}( a ) \) holds for some \( k \). By changing the last letter on both sides, we obtain \( \Word{ \PBlock{k} }{ a_{k+2} } = \sigma_{\text{pd}}^{k+1}( b ) \), where we used that the sequence \( (a_{k}) \) alternates between \( a \) and \( b \). This yields
\[ \Word{ \PBlock{k+1} }{ a_{k+2} } =  \Word{ \PBlock{k} }{ a_{k+1} }{ \PBlock{k} }{ a_{k+2} } = \Word{ \sigma_{\text{pd}}^{k+1}( a ) }{\sigma_{\text{pd}}^{k+1}( b )  } = \sigma_{\text{pd}}^{k+2}( a ) \, . \qedhere \]
\end{rem}
 
As mentioned in the introduction, certain simple Toeplitz subshifts are related to self-similar groups. In particular it turns out that the subshift that is defined in the next example is connected to Grigorchuk's group (\cite{Grig_BurnsideRuss}, for the connection see \cite{GLN_SpectraSchreierAndSO}). Thus we call it the \emph{Grigorchuk subshift}\index{subshift!Grigorchuk}\index{Grigorchuk!Grigorchuk subshift}, which is, however, not standard terminology. Here we only define the subshift. Some information on its connection to Grigorchuk's group can be found in Appendix~\ref{app:SelfSimGr}.

\begin{exmpl}[Grigorchuk subshift]
\label{exmpl:GrigSubshiftDef}
Consider the alphabet \( \Alphab = \Set{ a, b, c, d } \). The subshift is defined by \( (a_{k})_{k \in \NN_{0}} = ( a, b, c, d, b, c, d, \ldots ) \), which is three-periodic from \( a_{1} \) on, and the constant sequence \( (n_{k})_{k \in \NN_{0}} = (2, 2, 2, \ldots ) \). In particular, we have \( \AlphabEv = \Set{ b , c , d } \) and \( \Alphab_{k} = \AlphabEv \) for all \( k \geq 1 \). Moreover, the length of the \( \PBlock{k} \)-blocks is given by \( \Length{ \PBlock{k} } = 2^{k+1} - 1 \). 
\end{exmpl}

\begin{rem}
\label{rem:SubstGrig}
Just as the period doubling subshift (see Remark~\ref{rem:SubstPD}), the Grigorchuk subshift can also be defined by a substitution. It is given by
\[ \sigma_{\text{Grig}} \, \colon \; a \mapsto \Word{ a }{ b }{ a } \; , \;\; b \mapsto c  \; , \;\; c \mapsto d \; , \;\; d \mapsto b \, . \]
In fact, the same substitution appears in the description of Grigorchuk's group (\cite{Lys_RelForGrigGr}). The associated subshift and its properties are discussed in \cite[Section~2]{GLN_SpectraSchreierAndSO}. In particular, it is shown that the subshift is defined by the same word \( \PBlock{\infty} \in \Alphab^{\NN} \) as in our definition above. Hence the subshift is indeed the Grigorchuk subshift. Alternatively, it can also be defined by the primitive substitution 
\[ \widetilde{\sigma}_{\text{Grig}} \, \colon \; a \mapsto \Word{ a }{ b } \; , \;\; b \mapsto \Word{ a }{ c } \; , \;\; c \mapsto \Word{ a }{ d } \; , \;\; d \mapsto \Word{ a }{ b } \, , \]
which is due to Fabien Durand. For details see \cite[Subsection~2.5]{GLN_SpectraSchreierAndSO}, where also the equality of both substitution subshifts is proved.
\end{rem}
 
In generalisation of the aforementioned group, there is a whole family of Grigorchuk's groups (\cite{Grig_DegrOfGrowthRuss}, see also Definition~\ref{defi:DefFamGrigGr} in the appendix). The subshifts associated to these groups are discussed next. We call them \emph{generalised Grigorchuk subshifts}\index{Grigorchuk!generalised Grigorchuk subshift}\index{subshift!generalised Grigorchuk}\index{generalised Grigorchuk subshift}, which is not standard terminology either.

\begin{exmpl}[generalised Grigorchuk subshift]
\label{exmpl:defGenGrigSubsh}
Consider the alphabet \( \Alphab = \Set{ a, b , c , d } \) and the constant sequence \( (n_{k}) = (2, 2, 2, \ldots ) \). The subshift is determined by a sequence \( (b_{k}) \in \Alphab^{\NN_{0}} \) of letters with \( b_{0} = a \) and \( b_{k} \in \Set{ b ,  c , d } \) for \( k \geq 1 \). Here \( b_{k} = b_{k+1} \) is allowed. For \( b_{k} \neq b_{k+1} = \ldots = b_{k+l} \neq b_{k+l+1} \), we can replace the repetitions of \( b_{k+1} \) by a single letter \( a_{m} \) with period length \( n_{m } = 2^{l} \) since
\begin{alignat*}{8}
\PBlock{k+l} & {} = {}&&& \PBlock{k+l-1} & b_{k+l} && \PBlock{k+l-1} & \\
&{} = {} && \PBlock{k+l-2} \, b_{k+l-1} \, & \PBlock{k+l-2} \, & b_{k+l} & & \, \PBlock{k+l-2} \, & b_{k+l-1} \, \PBlock{k+l-2} \\
&{} = {} && \PBlock{k} \, b_{k+1} \, \PBlock{k} \, & \ldots \;\;\, & \ldots & & \, \ldots \, & \PBlock{k} \, b_{k+1} \, \PBlock{k}
\end{alignat*}
holds with \( 2^{l} \)-many \(\PBlock{k}\)-blocks. Hence, a generalised Grigorchuk subshift is a simple Toeplitz subshift with \( \Alphab = \Set{ a, b, c, d } \), \( a_{0} = a \) and \( a_{k} \in \Set{ b , c , d } \) for \( k \geq 1\), where \( n_{k} \) has the form \( n_{k} = 2^{l_{k}} \) with \( l_{k} \in \NN \). Consequently, the block length is given by
\[ \Length{ \PBlock{k} } + 1 = ( \Length{ \PBlock{k-1} } + 1 ) \cdot 2^{l_{k}} = \ldots = 2^{ l_{0} + l_{1} + \hdots + l_{k} } \, .\]
When we restrict ourself to subshifts with \( \Card{ \AlphabEv } \geq 2 \), there are two possibilities: in the case \( \Card{ \AlphabEv } = 3 \) we have \( \AlphabEv = \Set{ b , c , d } \) and \( \Alphab_{k} = \AlphabEv \) for all \( k \geq 1 \). In the case \( \Card{ \AlphabEv } = 2 \), one of the letters \( b \), \( c \), \( d \) appears in \( (a_{k}) \) only finitely many times (possibly zero times) and consequently \( (a_{k}) \) alternates between the two remaining letters for all sufficiently large \( k \).
\end{exmpl}

\begin{rem}
Our notion of generalised Grigorchuk subshifts includes the so-called \emph{\( l \)-Grigorchuk subshifts}\index{l-Grigorchuk subshift@\( l \)-Grigorchuk subshift}\index{Grigorchuk!l-Grigorchuk@\( l \)-Grigorchuk subshift}\index{subshift!l-Grigorchuk@\(l\)-Grigorchuk} that were studied in \cite{DKMSS_Regul-Article}. They are given by \( \Alphab = \Set{ a , b , c , d } \), \( (a_{k}) = (a, b , c , d , b, c, d , \ldots) \) and \( (n_{k}) = (2, 2^{l_{1}}, 2^{l_{2}}, 2^{l_{3}} \ldots) \) with \( l_{k} \in \NN \). 
\end{rem}

Finally, we include an example from \cite{LiuQu_Simple}. It will serve as our main counterexample for notions related to repetitivity in general and the Boshernitzan condition~(\ref{eqn:BoshCond}) in particular (see Section~\ref{sec:Repe} and~\ref{sec:BoshCond}). This is also its original context in \cite{LiuQu_Simple} and the reason that we call it the non-(B) example.

\begin{exmpl}[non-(B) example, {\cite[Section~4]{LiuQu_Simple}}]
\label{exmpl:BoshSubshiftDef}
Consider \( \Alphab = \Set{ a, b, c, d } \) and the constant sequence \( (n_{k}) = (2, 2, 2, \ldots) \). The second coding sequence is given by
\[ (a_{k}) = (\hspace{-0.3em} \underbrace{a, b}_{2 \text{ letters}} \hspace{-0.5em}, c, \underbrace{a, b, a, b}_{4 \text{ letters}}, d, \ldots , c, \underbrace{a, b, \ldots, a, b}_{4l \text{ letters}}, d, \underbrace{a, b, \ldots, a, b}_{4l+2 \text{ letters}}, c, \ldots ) \, , \]
where increasing blocks of \( a \)'s and \(b \)'s are separated alternately by \( c \) and \( d \). Obviously \( \Alphab_{k} = \Alphab = \AlphabEv\) holds for all \( k \in \NN_{0} \). Moreover, we have once again \( \Length{ \PBlock{k} } = 2^{k+1} - 1 \). 
\end{exmpl}

\subsection{Definition of simple Toeplitz subshifts via hole-filling}
\label{subsec:STDefiHoleFill}

Now we discuss an alternative definition of simple Toeplitz subshifts, which was used for example in \cite{LiuQu_Simple}. It is more involved than the one given in Subsection~\ref{subsec:STDefiPBlock}, but shows more explicitly the structure of the subshift's elements. The main idea is to consider periodic words with ``holes'', fill these holes successively with other periodic words with holes, and take the limit. More precisely, let \( \Hole \notin \Alphab\) denote an additional letter (the ``hole''). The positions of the holes in a two-sided infinite word \( \InfWord \) are called the \emph{undetermined part of \( \InfWord \)}\index{undetermined part}\index{word!undetermined part of} and are denoted by \( U_{\InfWord} \DefAs \Set{ j \in \ZZ : \InfWord( j ) = \, \Hole } \). As earlier, we have the sequences \( (a_{k}) \in \Alphab^{\NN_{0}} \), with \( a_{k} \neq a_{k+1} \), and \( (n_{k}) \in ( \NN \setminus \{1\} )^{\NN_{0}} \). In addition, we now define a third coding sequence \( (r_{k})_{k \in \NN_{0}} \) of integers with \( 0 \leq r_{k} < n_{k} \), which encodes the positions of the holes. From these sequences we define the periodic words
\[ ( \Word{ a_{k}^{n_{k}-1} }{ \Hole } )^{\infty} \DefAs \Word{ {} \ldots }{ a_{k} }{ \ldots }{ a_{k} }{ \Hole } \, \underbrace{\Word{ a_{k} }{ \ldots }{ a_{k} }}_{n_{k}-1 \text{ times}} \, \Word{ \Hole }{ a_{k} }{ \ldots }{ a_{k} }{ \Hole }{ \ldots }  \;\; \in \; ( \Alphab \DisjCup \Set{ ? } )^{\ZZ} \, ,\]
with period length \( n_{k} \) and undetermined part \( n_{k} \ZZ + r_{k} \). Following \cite{LiuQu_Simple}, we define the filling of the holes in the following way:

\begin{defi}
Let \( \varrho_{1} \in (\Alphab \DisjCup \Set{ \Hole })^{\ZZ} \) be periodic with period $n$ and undetermined part \( n \ZZ + r \). Inserting the letters of \( \varrho_{2} \in (\Alphab \DisjCup \Set{ \Hole })^{\ZZ} \) into the holes of \( \varrho_{1} \) yields a word \( \varrho_{1} \triangleleft \varrho_{2} \), which is defined by
\[ (\varrho_{1} \triangleleft \varrho_{2})(j) \DefAs \begin{cases}
\varrho_{1}(j) & \text{for } j \notin n \ZZ + r\\
\varrho_{2}( \frac{j-r}{n} ) & \text{for } j \in n \ZZ + r
\end{cases} \, . \]
\end{defi}

In other words, we insert the letter \( \varrho_{2}( 0 ) \) into the first hole at a non-negative position in \( \varrho_{1} \), we insert \( \varrho_{2}( 1 ) \) into the second hole at a non-negative position in \( \varrho_{1} \), and so on, and similarly for the negative direction as well.

This procedure is now applied to \( ( \Word{ a_{k}^{n_{k}-1} }{ \Hole } )^{\infty} \): first, we fill the holes of \( ( \Word{ a_{0}^{n_{0}-1} }{ \Hole })^{\infty} \) with the letters from \( ( \Word{ a_{1}^{n_{1}-1} }{ \Hole })^{\infty} \). The result is another periodic word with holes, which are then filled with the letters from \( ( \Word{ a_{2}^{n_{2}-1} }{ \Hole })^{\infty} \). This defines a sequence \( ( \InfWord_{k} )_{k \in \NN_{0}} \) of two-sided infinite words, given by
\[ \InfWord_{k} \DefAs ( \Word{ a_{0}^{n_{0}-1} }{ \Hole })^{\infty} \triangleleft ( \Word{ a_{1}^{n_{1}-1} }{\Hole })^{\infty} \triangleleft ( \Word{ a_{2}^{n_{2}-1} }{ \Hole })^{\infty} \triangleleft \ldots  \triangleleft ( \Word{ a_{k}^{n_{k}-1} }{ \Hole })^{\infty} \, . \]

\begin{prop}
\label{prop:PeriodWord}
The word \( \InfWord_{k} \) is periodic with period length \( \Length{ \PBlock{k} } + 1 = \prod_{l = 0}^{k} n_{l} \) and has undetermined part \( U_{k} = ( n_{0} \cdot \ldots \cdot n_{k} ) \ZZ + [ r_{0} + \sum_{j=1}^{k} r_{j} \cdot n_{0} \cdot \ldots \cdot n_{j-1} ] \) . The period of \( \InfWord_{k} \) is the finite word \( \Word{ \PBlock{k} }{ \Hole } \). In particular, there is exactly one undetermined position per period.
\end{prop}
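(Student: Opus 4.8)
The plan is to prove the proposition by induction on $k \in \NN_{0}$, which is natural because $\InfWord_{k+1}$ arises from $\InfWord_{k}$ by a single application of $\triangleleft$. Throughout, write $N_{k} \DefAs \prod_{l=0}^{k} n_{l}$ (with $N_{-1} \DefAs 1$) and $s_{k} \DefAs r_{0} + \sum_{j=1}^{k} r_{j}\, n_{0} \cdots n_{j-1} = \sum_{j=0}^{k} r_{j} N_{j-1}$, so that $s_{0} = r_{0}$ and $0 \le s_{k} \le \sum_{j=0}^{k} (n_{j}-1) N_{j-1} = N_{k} - 1$; recall also $N_{k} = \Length{\PBlock{k}} + 1$ and $N_{k+1} = n_{k+1} N_{k}$. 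The statement I would carry through the induction is that $\InfWord_{k}$ has period $N_{k}$, has undetermined part $U_{k} = N_{k} \ZZ + s_{k}$, and satisfies $\Restr{\InfWord_{k}}{s_{k} - N_{k} + 1}{s_{k}} = \Word{\PBlock{k}}{\Hole}$. Since, by periodicity, the length-$N_{k}$ window ending at \emph{any} hole is this same word, the last clause is precisely what ``the period of $\InfWord_{k}$ is $\Word{\PBlock{k}}{\Hole}$'' means, and ``exactly one undetermined position per period'' follows at once from $U_{k} = N_{k}\ZZ + s_{k}$. The base case $k=0$ is just the definition of $(\Word{a_{0}^{n_{0}-1}}{\Hole})^{\infty}$: its holes are $n_{0}\ZZ + r_{0}$, the $n_{0}-1$ positions $s_{0}-n_{0}+1, \dots, s_{0}-1$ all carry $a_{0}$, position $s_{0}=r_{0}$ carries $\Hole$, and $\PBlock{0} = a_{0}^{n_{0}-1}$ by~(\ref{eqn:PBlockRecur}).

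For the inductive step I would assume the statement for $k$ and use $\InfWord_{k+1} = \InfWord_{k} \triangleleft (\Word{a_{k+1}^{n_{k+1}-1}}{\Hole})^{\infty}$. As the undetermined part of $\InfWord_{k}$ is $N_{k}\ZZ + s_{k}$, the definition of $\triangleleft$ gives $\InfWord_{k+1}(j) = \InfWord_{k}(j)$ for $j \notin N_{k}\ZZ + s_{k}$ and $\InfWord_{k+1}(s_{k} + mN_{k}) = (\Word{a_{k+1}^{n_{k+1}-1}}{\Hole})^{\infty}(m)$ for every $m \in \ZZ$. Since the latter word equals $\Hole$ exactly when $m \equiv r_{k+1} \pmod{n_{k+1}}$ and equals $a_{k+1}$ otherwise, and since $\InfWord_{k+1}$ copies $\InfWord_{k}$ off $U_{k}$ (so that it takes no value $\Hole$ there), one reads off $U_{k+1} = N_{k+1}\ZZ + s_{k+1}$ with $s_{k+1} = s_{k} + r_{k+1} N_{k} = r_{0} + \sum_{j=1}^{k+1} r_{j}\, n_{0}\cdots n_{j-1}$, as claimed. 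Periodicity of $\InfWord_{k+1}$ under translation by $N_{k+1}$ is then checked in the two cases $j \notin U_{k}$ (use $N_{k} \mid N_{k+1}$ and the period $N_{k}$ of $\InfWord_{k}$) and $j \in U_{k}$ (use the period $n_{k+1}$ of $(\Word{a_{k+1}^{n_{k+1}-1}}{\Hole})^{\infty}$ together with $\tfrac{(j + N_{k+1}) - s_{k}}{N_{k}} = \tfrac{j - s_{k}}{N_{k}} + n_{k+1}$).

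It remains to identify the period word, the only place where I expect any care to be needed. I would take the length-$N_{k+1}$ window $W \DefAs \Restr{\InfWord_{k+1}}{s_{k+1} - N_{k+1} + 1}{s_{k+1}}$ and, using $N_{k+1} = n_{k+1} N_{k}$, cut it into the $n_{k+1}$ consecutive length-$N_{k}$ blocks $B_{i} \DefAs \Restr{\InfWord_{k+1}}{s_{k+1} - (i+1)N_{k} + 1}{s_{k+1} - iN_{k}}$, for $i = n_{k+1}-1, \dots, 1, 0$ read from left to right. For each such $i$ the right endpoint $s_{k+1} - iN_{k} = s_{k} + (r_{k+1}-i)N_{k}$ is a hole of $\InfWord_{k}$; hence on the first $N_{k}-1$ positions of $B_{i}$ the words $\InfWord_{k+1}$ and $\InfWord_{k}$ agree and, by the induction hypothesis and the period $N_{k}$ of $\InfWord_{k}$, spell out $\PBlock{k}$, while the last position of $B_{i}$ carries $\Hole$ when $i=0$ and $a_{k+1}$ when $1 \le i \le n_{k+1}-1$ (because $r_{k+1}-i \equiv r_{k+1}\pmod{n_{k+1}}$ only for $i \equiv 0$). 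Thus $B_{i} = \Word{\PBlock{k}}{a_{k+1}}$ for $i \ge 1$ and $B_{0} = \Word{\PBlock{k}}{\Hole}$, so $W = \Word{\PBlock{k}}{a_{k+1}}{\PBlock{k}}{a_{k+1}}{\ldots}{a_{k+1}}{\PBlock{k}}{\Hole}$, which is $\Word{\PBlock{k+1}}{\Hole}$ by~(\ref{eqn:PBlockRecur}). This closes the induction. No individual step is deep; the main obstacle is the index bookkeeping --- above all, aligning the chosen length-$N_{k+1}$ window with a hole of $\InfWord_{k}$ so that its $n_{k+1}$ sub-blocks assemble, in the correct order, into $\Word{\PBlock{k+1}}{\Hole}$.
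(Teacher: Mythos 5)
Your proof is correct and takes essentially the same route as the paper: an induction on \( k \) that reads off the new undetermined part \( U_{k+1} = N_{k+1}\ZZ + s_{k+1} \) directly from the definition of \( \triangleleft \) and identifies the period word through the recursion~(\ref{eqn:PBlockRecur}). The only difference is presentational — where the paper bounds the period length from below by the hole spacing and from above by the insertion structure, and then merely remarks that the period of \( \InfWord_{k} \) satisfies the same recurrence as \( \Word{ \PBlock{k} }{ \Hole } \), you verify \( N_{k+1} \)-periodicity and the block decomposition of the length-\( N_{k+1} \) window explicitly, which fills in the paper's ``easy to see'' step.
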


\begin{proof}
We prove the formula for the undetermined part and show that the period length of \( \InfWord_{k} \) is \( \Length{ \PBlock{k} } + 1 \). This implies that there is exactly one hole per period, since \( U_{k} \) and \( \InfWord_{k} \) have the same period length. We proceed by induction: for \( \InfWord_{0} = ( \Word{ a_{0}^{n_{0}-1} }{ \Hole } )^{\infty} \) the statements are true by definition. Now assume that the statements holds for \( \InfWord_{k} \) and recall that \( ( \Word{ a_{k+1}^{n_{k+1}-1} }{ \Hole })^{\infty} \) has undetermined part \( n_{k+1} \ZZ + r_{k+1} \). Thus, the undetermined part of \( \InfWord_{k+1} = \InfWord_{k} \triangleleft ( \Word{ a_{k+1}^{n_{k+1}-1} }{ \Hole } )^{\infty} \) is given by
\begin{align*}
& n_{0} \cdot \hdots \cdot n_{k} \cdot \big( n_{k+1} \ZZ + r_{k+1}  \big) + \big[ r_{0} + \sum_{j=1}^{k} r_{j} \cdot n_{0} \cdot \hdots \cdot n_{j-1} \big] \\[-1ex]
={} & \big( n_{0} \cdot \hdots \cdot n_{k+1} \big) \, \ZZ + \big[ r_{0} + \sum_{j=1}^{k+1} r_{j} n_{0} \cdot \hdots \cdot n_{j-1} \big] \, .
\end{align*}
Clearly, the period length of \( \InfWord_{k+1} \) is at least as large as the distance between two undetermined positions, which is \( \prod_{l = 0}^{k+1} n_{l} \). Moreover, we obtained \( \InfWord_{k+1} \) by inserting a word with period length \( n_{k+1} \) into a word with period length \( \prod_{l = 1}^{k} n_{l} \) and exactly one hole per period. Thus, the period length of \( \InfWord_{k+1} \) is at most \( n_{k+1} \cdot \prod_{l = 1}^{k} n_{l} \), which finishes the first part of the proof. Finally, it is easy to see that the period of \( \InfWord_{k} \) and the word \( \Word{ \PBlock{k} }{ ? } \) satisfy the same recurrence relation, see Equation~(\ref{eqn:PBlockRecur}), with the same initial words for \( k = 0 \).
\end{proof}

For every position \( j \in \ZZ \), the letter \( \InfWord_{k}( j ) \) is eventually constant as \( k \) tends to infinity. Thus the limit \( \InfWord_{\infty} \DefAs \lim_{k \to \infty} \InfWord_{k} \) exists in \( ( \Alphab \cup \Set{ \Hole } )^{\ZZ} \). Since \( ( U_{k} ) \) is a decreasing sequence of sets and the period length tends to infinity, the undetermined part \( U_{\infty} \DefAs \cap_{k \geq 0} U_{k} \) of \( \InfWord_{\infty} \) is either empty or a single position. In resemblance to \cite{LiuQu_Simple}, we define the following notions:

\begin{defi}
\label{defi:SToepHoleFill}
In the case \( U_{\infty} = \emptyset \), we call \( \InfWord_{\infty} \) a \emph{normal Toeplitz word}\index{normal Toeplitz word}\index{word!normal Toeplitz}. In the case \( U_{\infty} \neq \emptyset \), we insert an arbitrary letter \( \AEv \in \AlphabEv \) into the undetermined position \( U_{\infty} \). The resulting word \( \InfWord_{\infty}^{(\AEv)} \) is called an \emph{extended Toeplitz word}\index{extended Toeplitz word}\index{word!extended Toeplitz}. A word is called a \emph{simple Toeplitz word}\index{simple Toeplitz!word}\index{word!simple Toeplitz} if it is either a normal Toeplitz word or an extended Toeplitz word. The orbit closure \( \Subshift( \InfWord ) \DefAs \Close{ \Set{ \Shift^{j} \InfWord : j \in \ZZ } } \) of a simple Toeplitz word \( \InfWord \) is called a \emph{simple Toeplitz subshift}\index{simple Toeplitz!subshift}\index{subshift!simple Toeplitz}. 
\end{defi}

We will show in Proposition~\ref{prop:SToeplDefSame} that the two definitions of a simple Toeplitz subshift (Definition~\ref{defi:SToeplPInfty} and Definition~\ref{defi:SToepHoleFill}) really agree.

\begin{rem}
In general, an extended Toeplitz word is not actually a Toeplitz word in the sense of Equation~(\ref{eqn:DefToeplitz}), since there is no period for the position \( U_{\infty} \). However, since the general definition of Toeplitz words will not play any role in this thesis, no confusion should arise from this inconsistency.
 
The reader is also warned that there are many different notions of simple Toeplitz words in the literature. For example in \cite{LiuQu_Simple}, where our definitions are essentially taken from, simple Toeplitz words must not be periodic. We have omitted this requirement to keep the notion consistent with Definition~\ref{defi:SToeplPInfty}. This will have no practical consequences since, from the next chapter on, we will only deal with aperiodic simple Toeplitz subshifts. Other authors use the term only for Toeplitz words in the sense of  Equation~(\ref{eqn:DefToeplitz}) (see for example \cite{KamZamb_MaxPattCompl}), only for words with two letters (\cite{DamLiuQu_PatternSturm}), for words over \( \NN \) (\cite{GKBYM_MaxPatternToepl}) or for words over \( \ZZ^{d} \) (\cite{QRWX_PatCompDDim}).
\end{rem}

Next we discuss some examples to illustrate the hole-filling procedure and the structure of simple Toeplitz words. For the first example, recall our notions of leading words (Example~\ref{exmpl:SpWBlocks}) and of the Grigorchuk subshift (Example~\ref{exmpl:GrigSubshiftDef}).

\begin{exmpl}[leading words in the Grigorchuk subshift]
We consider the coding sequences \( (a_{k}) = ( a , b , c , d, \ldots ) \), \( (n_{k}) = (2, 2, 2, \ldots ) \) and \( (r_{k}) = ( 0, 0, 0, \ldots ) \). They define the sequence \( ( \InfWord_{k} ) \), whose first elements are shown below. Recall that the origin is denoted by a bar between the positions 0 and 1.
\begingroup
\setlength{\arraycolsep}{1pt}
\[ \begin{array}{r *{42}{c}}
\InfWord_{0} = & \ldots & a & \Hole &  a & \Hole & a & \Hole & a & \Hole & a & \Hole & a & \Hole & a & \Hole & a & \Hole & a & \Origin{\Hole} & a & \Hole & a & \Hole & a & \Hole & a & \Hole & a & \Hole& a & \Hole & a & \Hole & a & \Hole & a & \Hole & a & \Hole & a & \Hole \\
\InfWord_{1} = & \ldots & a & \Hole & a & b & a & \Hole & a & b & a & \Hole & a & b & a & \Hole & a & b & a & \Origin{\Hole} & a & b & a & \Hole & a & b & a & \Hole & a & b & a & \Hole & a & b & a & \Hole & a & b & a & \Hole & a & b \\
\InfWord_{2} =& \ldots & a & \Hole & a & b & a & c & a & b & a & \Hole & a & b & a & c & a & b & a & \Origin{\Hole} & a & b & a & c & a & b & a & \Hole & a & b & a & c & a & b & a & \Hole & a & b & a & c & a & b \\
\InfWord_{3} = & \ldots & a & \Hole & a & b & a & c & a & b & a & d & a & b & a & c & a & b & a & \Origin{\Hole} & a & b & a & c & a & b & a & d & a & b & a & c & a & b & a & \Hole & a & b & a & c & a & b
\end{array} \]
\endgroup
By Proposition~\ref{prop:PeriodWord} all \( \InfWord_{k} \) have the form \( \InfWord_{k} = \Word{ \ldots }{ \PBlock{k} }{ \Origin{?} }{ \PBlock{k} }{ \ldots } \) since their undetermined part is given by \( U_{k} = 2^{k+1} \ZZ \). Thus, the sequence \( ( \InfWord_{k} ) \) converges to \( \InfWord_{\infty} = \Word{ \PBlock{\infty} }{ \Origin{?} }{ \PBlock{\infty} } \). When the remaining hole is filled with a letter \( \AEv \in \AlphabEv \), the resulting element \( \SpW{ \AEv }_{\infty} = \Word{ \PBlock{\infty} }{ \Origin{\AEv} }{ \PBlock{\infty} } \) is precisely one of the leading words of the Grigorchuk subshift. Note however that we have not yet shown that this subshift, defined as the orbit closure, is really the same as the Grigorchuk subshift defined from the language in Example~\ref{exmpl:GrigSubshiftDef}.
\end{exmpl}

\begin{exmpl}[alternating word in the Grigorchuk subshift]
\label{exmpl:DefAlterWord}
For the same sequences \( (a_{k}) \) and \( (n_{k}) \) as in the previous example, we now consider \( (r_{k}) = ( 0, 1, 0, 1 \ldots ) \). The holes are filled alternately left and right of the origin, as illustrated by the first \( \InfWord_{k} \) below. Thus, we call this element an \emph{alternating word}\index{alternating word}\index{word!alternating}. 
\begingroup
\setlength{\arraycolsep}{1pt}
\[ \begin{array}{r *{45}{c}}
\InfWord_{0} = & \ldots & a & \Hole & a & \Hole & a & \Hole & a & \Hole & a & \Hole & a & \Hole & a & \Hole & a & \Hole & a & \Origin{\Hole} & a & \Hole & a & \Hole & a & \Hole & a & \Hole & a & \Hole & a & \Hole & a & \Hole & a & \Hole & a & \Hole & a & \Hole & a & \Hole & \ldots \\
\InfWord_{1} = & \ldots & a & b & a & \Hole & a & b & a & \Hole & a & b & a & \Hole & a & b & a & \Hole & a & \Origin{b} & a & \Hole & a & b & a & \Hole & a & b & a & \Hole & a & b & a & \Hole & a & b & a & \Hole & a & b & a & \Hole & \ldots \\
\InfWord_{2} = & \ldots & a & b & a & c & a & b & a & \Hole & a & b & a & c & a & b & a & \Hole & a & \Origin{b} & a & c & a & b & a & \Hole & a & b & a & c & a & b & a & \Hole & a & b & a & c & a & b & a & \Hole & \ldots \\
\InfWord_{3} = & \ldots & a & b & a & c & a & b & a & \Hole & a & b & a & c & a & b & a & d & a & \Origin{b} & a & c & a & b & a & \Hole & a & b & a & c & a & b & a & d & a & b & a & c & a & b & a & \Hole & \ldots
\end{array} \]
\endgroup
For every \( k \) there is a \( \PBlock{k} \)-block that contains the origin. Asymptotically, the origin is placed alternately at one third and at two thirds of the block, as a short computation shows: by Proposition~\ref{prop:PeriodWord}, the undetermined part of \( \InfWord_{k} \), with \(  k = 2l \) or \( k = 2l-1 \), is given by
\begin{align*}
& 2^{k+1} \ZZ + \Big[ 0 + \sum_{j=1}^{k} ( j \bmod 2 ) \cdot 2^{j} \Big] \\
= {} & 2^{k+1} \ZZ + 2 [ 4^{0} + 4^{1} + \ldots + 4^{l-1} ] \\
= {} & 2^{k+1} \ZZ + \begin{cases}
\frac{ 2 }{ 3 } (2^{k+1} - 1) & \text{for } k = 2l-1 \\
\frac{ 1 }{ 3 } (2^{k+1} - 2) & \text{for } k = 2l 
\end{cases} \, . \qedhere
\end{align*} 
\end{exmpl}

\begin{rem}
Alternating words and leading words have in some sense opposite hole-filling patterns. We will see in Example~\ref{exmpl:AbsenSpWords} and Example~\ref{exmpl:NoSquaresUnSpW} that they also show opposite behaviour regarding the occurrence of squares of words.
\end{rem}

\subsection{Basic properties of simple Toeplitz subshifts}
\label{subsec:STBasicProp}

In the previous subsections we gave two definitions of simple Toeplitz subshifts. Now we show that they really define the same object. Before we do so, recall that Definition~\ref{defi:SToepHoleFill} describes a simple Toeplitz subshift as the orbit closure of a simple Toeplitz word. This raises the natural question whether taking the closure introduces an element in the subshift that is not a simple Toeplitz word itself. In \cite{LiuQu_Simple} it was shown that this is not the case:

\begin{prop}[{\cite[Proposition~2.4]{LiuQu_Simple}}]
\label{prop:AllAreST}
Let \( \InfWord \) be a simple Toeplitz word with coding sequences \( (a_{k}) \), \( (n_{k}) \) and \( (r_{k}) \) and let \( \Subshift( \InfWord ) \) be the associated simple Toeplitz subshift. Then every element \( \widetilde{\InfWord} \in \Subshift( \InfWord ) \) is a simple Toeplitz word with coding sequences \( (a_{k}) \), \( (n_{k}) \) and \( (\widetilde{r}_{k}) \).
\end{prop}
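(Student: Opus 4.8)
The plan is to show that applying the shift to a simple Toeplitz word only modifies the hole-position sequence $(r_k)$, and that this feature is inherited by every element of the orbit closure. The first ingredient is the structural fact that for each $j \in \ZZ$ the shifted word $\Shift^j\InfWord$ is again a simple Toeplitz word, with the \emph{same} sequences $(a_k)$ and $(n_k)$ but a new hole-position sequence $(\widetilde r_k^{(j)})$. Write $N_k \DefAs \prod_{l=0}^{k} n_l$ and recall from Proposition~\ref{prop:PeriodWord} that the periodic approximant $\InfWord_k$ has period length $N_k$ and undetermined part $N_k\ZZ + c_k$ with $c_k = r_0 + \sum_{l=1}^{k} r_l N_{l-1}$. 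Then $\Shift^j\InfWord_k$ is again $N_k$-periodic, with undetermined part $N_k\ZZ + (c_k - j)$; reducing $c_k - j$ modulo $N_k$ and reading off its mixed-radix digits relative to $n_0, n_1, \dots, n_k$ yields numbers $\widetilde r_l^{(j)} \in \{0, \dots, n_l - 1\}$, and the congruence $c_{k+1} \equiv c_k \pmod{N_k}$ (immediate from Proposition~\ref{prop:PeriodWord}) shows that these digits are consistent as $k$ grows, so $(\widetilde r_l^{(j)})_{l \geq 0}$ is well defined. Since the letter occupying a filled position, and whether a position gets filled at level $l$ at all, depend only on congruences modulo $N_0, \dots, N_l$, one checks that $\Shift^j\InfWord_k$ is exactly the $k$-th periodic approximant of the hole-filling word with data $(a_l)$, $(n_l)$, $(\widetilde r_l^{(j)})$. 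Passing to the limit $k \to \infty$ (using continuity of $\Shift$) identifies $\Shift^j\InfWord$ with the normal or extended Toeplitz word with these coding sequences; in the extended case the inserted letter is the same $\AEv \in \AlphabEv$ used for $\InfWord$, since $\Shift^j$ merely translates the undetermined position.

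Now let $\widetilde\InfWord \in \Subshift(\InfWord)$, say $\widetilde\InfWord = \lim_{i \to \infty}\Shift^{j_i}\InfWord$. The digit sequences $(\widetilde r_l^{(j_i)})_{l \geq 0}$ all lie in the compact product $\prod_{l \geq 0}\{0, \dots, n_l - 1\}$, so by a diagonal argument I may pass to a subsequence of $(j_i)$ — along which $\widetilde\InfWord$ is still the limit — on which, for every $l$, the digit $\widetilde r_l^{(j_i)}$ is eventually constant equal to some $\widetilde r_l \in \{0, \dots, n_l - 1\}$. Let $\InfWord'$ be the hole-filling word with coding sequences $(a_k)$, $(n_k)$, $(\widetilde r_k)$, with periodic approximants $\InfWord'_k$, undetermined parts $U'_k$ and $U'_\infty \DefAs \cap_k U'_k$. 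By the previous paragraph, $\Shift^{j_i}\InfWord_k = \InfWord'_k$ once $i$ is large (depending on $k$), and $\Shift^{j_i}(U_k) = U'_k$ there as well.

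It then suffices to check $\widetilde\InfWord(p) = \InfWord'(p)$ at every position $p$, where at the possible undetermined position $\InfWord'(p)$ is still to be chosen. If $p \notin U'_\infty$, pick $k$ with $p \notin U'_k$; then for all large $i$ the word $\Shift^{j_i}\InfWord$ agrees with $\Shift^{j_i}\InfWord_k = \InfWord'_k$ off $U'_k$, so $\widetilde\InfWord(p) = \Shift^{j_i}\InfWord(p) = \InfWord'_k(p) = \InfWord'(p)$. If $p \in U'_\infty$ — the at most one undetermined position of $\InfWord'$ — I claim $\widetilde\InfWord(p) \in \AlphabEv$, which is exactly what is needed to realise $\widetilde\InfWord$ as the extended Toeplitz word with data $(a_k)$, $(n_k)$, $(\widetilde r_k)$ and inserted letter $\widetilde\InfWord(p)$: for each fixed $k$ and all large $i$ we have $U'_k = U_k - j_i$, hence $p + j_i \in U_k$, so either $p + j_i$ equals the (unique) element of $U_\infty$ for infinitely many $i$, giving $\widetilde\InfWord(p) = \AEv \in \AlphabEv$, or, for all large $i$, $p + j_i \notin U_\infty$ and is therefore first filled in $\InfWord$ at some level $k^*(i)$ with $k^*(i) \to \infty$ (since $k^*(i) > k$ eventually, for every $k$), so $\InfWord(p+j_i) = a_{k^*(i)}$ lies in $\AlphabEv$ for $i$ large (as $a_m \in \AlphabEv$ for $m \geq \EventNr$), and the limit $\widetilde\InfWord(p) = \lim_i \InfWord(p+j_i)$ lies in $\AlphabEv$ since $\AlphabEv \subseteq \Alphab$ is finite. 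Hence $\widetilde\InfWord$ is the normal or extended Toeplitz word with coding sequences $(a_k)$, $(n_k)$, $(\widetilde r_k)$, as claimed.

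I expect the main obstacle to be this last position-by-position matching at the single undetermined position: one must keep careful track of the fact that ``$i$ large'' depends on the level $k$, argue that the fill-level $k^*(i) \to \infty$, and allow for the possibility that $\widetilde\InfWord$ turns out to be an \emph{extended} Toeplitz word even when $\InfWord$ was a normal one. The verification that shifting permutes the mixed-radix hole-offset digits in a way that is consistent across levels is the other slightly delicate point, but it is essentially already contained in Proposition~\ref{prop:PeriodWord}.
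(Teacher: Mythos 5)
Your argument is correct, but note that the paper does not actually prove this statement: it is imported verbatim from Liu and Qu (\cite[Proposition~2.4]{LiuQu_Simple}), so there is no in-paper proof to compare against. What you supply is a complete, self-contained argument in the spirit of the hole-filling definition of Subsection~\ref{subsec:STDefiHoleFill}: shifting translates the undetermined sets $U_k$, which via the mixed-radix expansion of $c_k - j$ modulo $\prod_{l\le k} n_l$ only changes the sequence $(r_k)$; compactness of $\prod_k \{0,\dots,n_k-1\}$ lets you extract consistent limit digits $(\widetilde r_k)$ along the approximating orbit sequence; and the position-by-position comparison with the word $\InfWord'$ built from $(\widetilde r_k)$ works because all later filling steps only touch positions inside $U'_k$. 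Your treatment of the one residual position $p \in U'_\infty$ is the genuinely delicate part and is handled correctly: either $p + j_i$ hits the filled hole of $\InfWord$ infinitely often (giving the letter $\AEv \in \AlphabEv$), or the first-fill level $k^*(i)$ tends to infinity because $p + j_i \in U_k$ for every fixed $k$ and large $i$, so the limiting letter lies in $\AlphabEv$, which is exactly what Definition~\ref{defi:SToepHoleFill} demands of an extended Toeplitz word. The only step worth writing out in a final version is the identification $\Shift^{j}\InfWord_k = \InfWord'_k$: it suffices to observe, using Proposition~\ref{prop:PeriodWord}, that a $(\Length{\PBlock{k}}+1)$-periodic word with exactly one hole per period and period $\Word{\PBlock{k}}{\Hole}$ is uniquely determined by its undetermined set, so matching the undetermined sets already matches the words.
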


In particular, there exists a sequence \( ( \PBlock{k} )_{k} \) of finite words such that every \( \widetilde{\InfWord} \in \Subshift( \InfWord ) \) can be written as
\begin{equation}
\label{eqn:DecompPBlock}
\widetilde{\InfWord} = \Word{ {} \ldots }{ \PBlock{k} }{ \star }{ \PBlock{k} }{ \star }{ \PBlock{k} }{ \star }{ \PBlock{k} }{ \ldots}
\end{equation}
for every \( k \in \NN_{0} \). Here, the words \( \PBlock{k} \) are defined as in Equation~(\ref{eqn:PBlockRecur}) and each \( \star \) denotes a position that is filled with one of the elements of \( \Alphab_{k+1} = \Set{ a_{j} : j \geq k+1 } \). This decomposition yields the equivalence of our two definitions of simple Toeplitz subshifts:

\begin{prop}
\label{prop:SToeplDefSame}
Let \( (a_{k}) \), \( (n_{k}) \) and \( (r_{k}) \) be coding sequences that define the palindromic blocks \( \PBlock{k} \) and the simple Toeplitz word \( \InfWord \). Let \( \Subshift( \PBlock{\infty} ) \) and \( \Subshift( \InfWord ) \) be the associated subshifts, given by Definition~\ref{defi:SToeplPInfty} and~\ref{defi:SToepHoleFill} respectively. Then \( \Subshift( \PBlock{\infty} ) = \Subshift( \InfWord ) \) holds.
\end{prop}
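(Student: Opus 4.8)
The plan is to establish the two inclusions $\Subshift(\InfWord) \subseteq \Subshift(\PBlock{\infty})$ and $\Subshift(\PBlock{\infty}) \subseteq \Subshift(\InfWord)$ separately. The decisive tool in both directions is the block decomposition~(\ref{eqn:DecompPBlock}) of elements of $\Subshift(\InfWord)$, combined with the observation already made in Example~\ref{exmpl:SpWBlocks}: for every index $k$ and every letter $a \in \Alphab_{k+1}$, the finite word $\Word{\PBlock{k}}{a}{\PBlock{k}}$ occurs in $\PBlock{\infty}$. (Indeed $a = a_m$ for some $m \geq k+1$, and $\PBlock{m} = \Word{\PBlock{m-1}}{a_{m}}{\PBlock{m-1}}{\ldots}{\PBlock{m-1}}$, cf.~(\ref{eqn:PBlockRecur}), is a prefix of $\PBlock{\infty}$, while $\PBlock{k}$ is both a prefix and a suffix of $\PBlock{m-1}$.)

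\emph{Inclusion $\Subshift(\InfWord) \subseteq \Subshift(\PBlock{\infty})$.} Let $\widetilde{\InfWord} \in \Subshift(\InfWord)$ and let $u = \Restr{\widetilde{\InfWord}}{j}{j+L-1}$ be an arbitrary finite subword. By Proposition~\ref{prop:AllAreST} and the decomposition~(\ref{eqn:DecompPBlock}), for every $l$ the word $\widetilde{\InfWord}$ is a bi-infinite concatenation of copies of $\PBlock{l}$ separated by single letters from $\Alphab_{l+1}$. Choosing $l$ with $\Length{\PBlock{l}} \geq L$, a window of length $L$ meets at most two consecutive $\PBlock{l}$-blocks and the single separator between them, so $u$ is a subword of $\Word{\PBlock{l}}{a}{\PBlock{l}}$ for some $a \in \Alphab_{l+1}$, hence occurs in $\PBlock{\infty}$. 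Thus $\Langu{\widetilde{\InfWord}} \subseteq \Langu{\PBlock{\infty}}$, which by Definition~\ref{defi:SubshiftLangu} means $\widetilde{\InfWord} \in \Subshift(\PBlock{\infty})$.

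\emph{Inclusion $\Subshift(\PBlock{\infty}) \subseteq \Subshift(\InfWord)$.} First I would note $\Langu{\PBlock{\infty}} \subseteq \Langu{\InfWord}$: since $\PBlock{k}$ is a prefix of $\PBlock{k+1}$ for every $k$, any finite subword of $\PBlock{\infty} = \lim_{k} \PBlock{k}$ is a subword of some $\PBlock{k}$, and $\PBlock{k}$ occurs in $\InfWord$ by~(\ref{eqn:DecompPBlock}). Now let $\varrho \in \Subshift(\PBlock{\infty})$, so $\Langu{\varrho} \subseteq \Langu{\PBlock{\infty}} \subseteq \Langu{\InfWord}$. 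For every $n \in \NN$ the central word $\Restr{\varrho}{-n}{n}$ lies in $\Langu{\InfWord}$, hence occurs in $\InfWord$ at some position $m_n$; then $\Shift^{m_n + n}\InfWord$ agrees with $\varrho$ on $[-n,n]$. Letting $n \to \infty$ shows that $\varrho$ is a limit of shifts of $\InfWord$, so $\varrho \in \Close{\Set{ \Shift^{j}\InfWord : j \in \ZZ }} = \Subshift(\InfWord)$ by Definition~\ref{defi:SToepHoleFill}.

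None of this is deep; the two points that deserve a line of justification are (i) that the separators in~(\ref{eqn:DecompPBlock}) are genuinely single letters from $\Alphab_{l+1}$ — this rests on the ``exactly one undetermined position per period'' statement of Proposition~\ref{prop:PeriodWord} — and (ii) the elementary bookkeeping that a window of length at most $\Length{\PBlock{l}}$ cannot straddle two separators, so that it really sits inside one occurrence of $\Word{\PBlock{l}}{a}{\PBlock{l}}$. The only genuinely ``Toeplitz'' ingredient is that such words occur in $\PBlock{\infty}$, which is exactly the prefix/suffix computation of Example~\ref{exmpl:SpWBlocks}.
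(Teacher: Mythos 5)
Your proof is correct; the two inclusions hold and the bookkeeping (one separator per window, the shift index $m_n+n$) checks out. For the inclusion \( \Subshift( \PBlock{\infty} ) \subseteq \Subshift( \InfWord ) \) you argue exactly as the paper does: the central words \( \Restr{\varrho}{-n}{n} \) lie in some \( \PBlock{k} \), hence in \( \InfWord \) by~(\ref{eqn:DecompPBlock}), so \( \varrho \) is a limit of shifts of \( \InfWord \). The other inclusion is where you take a different route. The paper only verifies \( \InfWord \in \Subshift( \PBlock{\infty} ) \), splitting into the normal and the extended case and working with the approximants \( \InfWord_{k} \) (in the extended case the persistent hole forces the separate argument with \( \Word{ \PBlock{k_{J}-1} }{ \AEv }{ \PBlock{k_{J}-1} } \), which uses \( \AEv \in \AlphabEv \)), and then relies, implicitly, on the fact that \( \Subshift( \PBlock{\infty} ) \) is closed and shift-invariant, so it contains the whole orbit closure of \( \InfWord \). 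You instead invoke Proposition~\ref{prop:AllAreST} to apply the decomposition~(\ref{eqn:DecompPBlock}) to an arbitrary \( \widetilde{\InfWord} \in \Subshift( \InfWord ) \) and show \( \Langu{ \widetilde{\InfWord} } \subseteq \Langu{ \PBlock{\infty} } \) directly; your windowing step is essentially Proposition~\ref{prop:WordsContained}, which the paper proves only later, and your occurrence claim for \( \Word{ \PBlock{l} }{ a }{ \PBlock{l} } \) with \( a \in \Alphab_{l+1} \) is the computation of Example~\ref{exmpl:SpWBlocks}, which indeed works for every letter of \( \Alphab_{l+1} \) and not only for recurrent ones, so the filled hole (a letter of \( \AlphabEv \subseteq \Alphab_{l+1} \)) is covered automatically. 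What your route buys is the absence of the normal/extended case distinction and a slightly stronger intermediate statement (language containment for every element of \( \Subshift( \InfWord ) \)); what it costs is a heavier reliance on the imported Proposition~\ref{prop:AllAreST}, which the paper's version of this direction avoids. A leaner variant of your argument would apply the decomposition only to \( \InfWord \) itself (where it follows from Proposition~\ref{prop:PeriodWord} and the filling rule) and then conclude as the paper does from closedness and shift-invariance of \( \Subshift( \PBlock{\infty} ) \).
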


\begin{proof}
First we show the inclusion \( \Subshift( \PBlock{\infty} ) \subseteq \Subshift( \InfWord ) \): let \( \varrho \in \Subshift( \PBlock{\infty} ) \) and \( J \in \NN \). By definition, the word \( \Restr{\varrho}{-J}{J} \) occurs in \( \PBlock{\infty} \) and thus in \( \PBlock{k} \) for every sufficiently large \( k \). Because of the decomposition \( \InfWord = \Word{ {} \ldots }{ \star }{ \PBlock{k} }{ \star }{ \PBlock{k} }{ \star }{ \ldots {}} \) (see Equation~(\ref{eqn:DecompPBlock})), there exists a number \( k_{J} \in \ZZ \) with \( \Shift^{k_{J}} \Restr{\InfWord}{-J}{J} = \Restr{\varrho}{-J}{J} \). Hence there is a sequence \( (k_{J}) \) with \( \varrho = \lim_{J \to \infty} \Shift^{k_{J}} \InfWord \), which implies \( \varrho \in \Subshift( \InfWord ) \).

For the converse inclusion, we first consider the case that \( \InfWord \) is a normal Toeplitz word. Recall that the periodic words \(  \InfWord_{k}\) tend to \( \InfWord \) as \( k \) tends to infinity. Since \( \InfWord \) is normal, for every \( J \in \NN \) there exists a number \( k_{J} \in \NN \) such that all positions in \( \Restr{\InfWord}{-J}{J} \) are determined in \( \InfWord_{k_{J}} \). Thus, \( \Restr{\InfWord}{-J}{J} \) is contained in \( \PBlock{k_{J}} \), which is a subword of \( \PBlock{\infty} \). This yields \( \InfWord \in \Subshift( \PBlock{\infty} ) \). Now consider the second case that \( \InfWord = \SpW{ \AEv }\) is an extended Toeplitz word. Then, for every sufficiently large \( J \) there is a number \( k_{J} \) with \( a_{k_{J}} = \AEv \) such that \( \Restr{\InfWord}{-J}{J} \) is contained in \( \Word{ \PBlock{ k_{J}-1 } }{ \AEv }{ \PBlock{ k_{J}-1 } } \). Since the latter is contained in \( \PBlock{k_{J}} \) and hence in \( \PBlock{\infty} \), we again obtain \( \InfWord \in \Subshift( \PBlock{\infty} ) \).
\end{proof}

Note that \( \PBlock{\infty} \) depends only on the coding sequences \( (a_{k}) \) and \( ( n_{k}) \), but not on \( (r_{k}) \). Consequently, a simple Toeplitz subshift does not depend on \( (r_{k}) \) either:

\begin{cor}
\label{cor:SameSubsh}
If two simple Toeplitz words \( \InfWord, \widetilde{\InfWord} \) have the coding sequences \( (a_{k}) \), \( (n_{k}) \), \( (r_{k}) \) and \( (a_{k}) \), \( (n_{k}) \), \( (\widetilde{r}_{k}) \) respectively, then \( \Subshift( \InfWord ) = \Subshift( \widetilde{\InfWord} ) \) holds.
\end{cor}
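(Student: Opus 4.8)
The plan is to deduce this immediately from Proposition~\ref{prop:SToeplDefSame} together with the observation recorded just before the corollary, namely that the one-sided infinite word \( \PBlock{\infty} \) is built from \( (a_{k}) \) and \( (n_{k}) \) alone. So the first step is to make that independence explicit: the palindromic blocks \( \PBlock{k} \) are defined by the recursion in Equation~(\ref{eqn:PBlockRecur}), which only refers to the letters \( a_{k} \) and the period lengths \( n_{k} \), never to the hole positions \( r_{k} \). Hence the two simple Toeplitz words \( \InfWord \) and \( \widetilde{\InfWord} \), sharing the coding sequences \( (a_{k}) \) and \( (n_{k}) \), produce literally the same sequence \( ( \PBlock{k} )_{k} \), and therefore the same limit \( \PBlock{\infty} \in \Alphab^{\NN} \).

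The second and final step is to invoke Proposition~\ref{prop:SToeplDefSame} twice. Applied to the coding sequences \( (a_{k}) \), \( (n_{k}) \), \( (r_{k}) \) it gives \( \Subshift( \InfWord ) = \Subshift( \PBlock{\infty} ) \); applied to \( (a_{k}) \), \( (n_{k}) \), \( (\widetilde{r}_{k}) \) it gives \( \Subshift( \widetilde{\InfWord} ) = \Subshift( \PBlock{\infty} ) \), with the \emph{same} \( \PBlock{\infty} \) by the previous step. Chaining these equalities yields \( \Subshift( \InfWord ) = \Subshift( \widetilde{\InfWord} ) \), which is the claim.

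There is essentially no obstacle here: the corollary is a direct bookkeeping consequence of the proposition, and all the real work — showing the orbit closure of a hole-filled word coincides with \( \Subshift( \PBlock{\infty} ) \) — has already been carried out in the proof of Proposition~\ref{prop:SToeplDefSame}. The only point worth stating carefully is the \( (r_{k}) \)-independence of \( \PBlock{\infty} \), which one should phrase as a one-line appeal to Equation~(\ref{eqn:PBlockRecur}) rather than leaving it implicit.
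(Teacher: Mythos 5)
Your proposal is correct and matches the paper's argument exactly: the paper likewise observes that \( \PBlock{\infty} \) depends only on \( (a_{k}) \) and \( (n_{k}) \) and then concludes via Proposition~\ref{prop:SToeplDefSame} that both subshifts equal \( \Subshift( \PBlock{\infty} ) \). Making the \( (r_{k}) \)-independence explicit through Equation~(\ref{eqn:PBlockRecur}) is a fine touch, but there is no substantive difference from the paper's proof.
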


\begin{rem}
The sequence \( (r_{k}) \) describes where the origin is mapped in each step of the hole-filling procedure. Another way to think about this result is therefore that changing \( r_{k} \) shifts the word, which still yields the same orbit closure. Moreover, note that the corollary above is essentially the converse of Proposition~\ref{prop:AllAreST}. By different arguments, it was also obtained in \cite[Proposition~2.3]{LiuQu_Simple}.
\end{rem}

Finally, we comment on the three important properties that we mentioned at the end of Section~\ref{sec:PrelimSubsh}: aperiodicity, minimality and unique ergodicity. As indicated earlier, a coding sequence \( (a_{k} ) \) which is not eventually constant is equivalent to aperiodicity:

\begin{prop}
\label{prop:STAperiod}
A simple Toeplitz subshift \( \Subshift \) is aperiodic if and only if \( \Card{ \AlphabEv } \geq 2 \) holds.
\end{prop}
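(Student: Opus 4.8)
The plan is to prove the two implications separately. First note that $\AlphabEv \neq \emptyset$: since $(a_k)$ is an infinite sequence over the finite alphabet $\Alphab$, some letter occurs infinitely often. Hence $\Card{\AlphabEv} = 1$ is equivalent to $(a_k)$ being eventually constant (if only one letter is recurrent, the finitely many others occur only finitely often, so $(a_k)$ is eventually constant; the converse is clear). So it suffices to show: $(a_k)$ eventually constant $\iff$ $\Subshift$ is not aperiodic.

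For ``$(a_k)$ eventually constant $\Rightarrow$ $\Subshift$ not aperiodic'', suppose $a_k = a$ for all $k \geq K$. I would prove by induction on $m \geq K$, using the recursion~\eqref{eqn:PBlockRecur} together with $a_{m+1} = a$, that $\Word{\PBlock{m}}{a} = (\Word{\PBlock{K-1}}{a})^{n_K \cdots n_m}$. Since $\Word{\PBlock{m}}{a_{m+1}} = \Word{\PBlock{m}}{a}$ is a prefix of $\PBlock{m+1}$, and hence of $\PBlock{\infty}$, and since these prefixes exhaust $\PBlock{\infty}$, this yields $\PBlock{\infty} = (\Word{\PBlock{K-1}}{a})^{\infty}$, a purely periodic one-sided word of period length $\Length{\PBlock{K-1}} + 1$. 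Extending $\Word{\PBlock{K-1}}{a}$ periodically to a two-sided word gives an element with the same language as $\PBlock{\infty}$, which therefore lies in $\Subshift(\PBlock{\infty}) = \Subshift$; it is periodic, so $\Subshift$ is not aperiodic.

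The substance of the proposition is the converse, which I would prove in contrapositive form: if some $\InfWord \in \Subshift$ satisfies $\Shift^{P} \InfWord = \InfWord$ with $P \in \NN$, then $(a_k)$ is eventually constant. Set $Q_k \DefAs \Length{\PBlock{k}} + 1 = \prod_{j=0}^{k} n_j$; since every $n_j \geq 2$ we have $Q_k \to \infty$, so fix $k_0$ with $Q_{k_0} > P$ and let $k \geq k_0$. Applying the decomposition~\eqref{eqn:DecompPBlock} to $\InfWord$ at level $k+1$ and then expanding each $\PBlock{k+1}$-block via~\eqref{eqn:PBlockRecur}, I obtain a decomposition of $\InfWord$ into consecutive copies of $\PBlock{k}$ separated by single ``connector'' letters, in which infinitely many connectors equal $a_{k+1}$ — namely the $n_{k+1}-1 \geq 1$ internal separators of each $\PBlock{k+1}$-block. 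These connectors sit at the positions $p_i = p_0 + i Q_k$, $i \in \ZZ$. Now invoke periodicity: since $1 \leq P \leq Q_k - 1 = \Length{\PBlock{k}}$, the position $p_i + P$ lies strictly between $p_i$ and $p_{i+1}$, where $\InfWord$ reads off the $P$-th letter of $\PBlock{k}$; hence $\InfWord(p_i) = \InfWord(p_i + P) = \PBlock{k}(P)$ for every $i$. Thus \emph{all} connectors equal the single letter $\PBlock{k}(P)$, and in particular $a_{k+1} = \PBlock{k}(P)$. Since $\PBlock{k_0}$ is a prefix of $\PBlock{k}$ and $P \leq \Length{\PBlock{k_0}}$, the letter $\PBlock{k}(P)$ equals the fixed letter $c \DefAs \PBlock{k_0}(P)$ for every $k \geq k_0$; therefore $a_{k+1} = c$ for all $k \geq k_0$, i.e.\ $(a_k)$ is eventually constant.

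I expect the main obstacle to be the bookkeeping in the contrapositive: setting up the bi-infinite $\PBlock{k}$-block decomposition with its equally spaced connectors, verifying that $a_{k+1}$ genuinely appears among the level-$k$ connectors (this is why I route through the level-$(k+1)$ decomposition rather than using~\eqref{eqn:DecompPBlock} at level $k$ directly), and confirming that the identity $a_{k+1} = \PBlock{k}(P)$ holds for \emph{all} sufficiently large $k$ and that these letters stabilise to a common value. The remaining ingredients are just the recursion~\eqref{eqn:PBlockRecur} and routine convergence arguments in $\Alphab^{\NN}$ and $\Alphab^{\ZZ}$.
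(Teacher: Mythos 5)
Your proof is correct and follows essentially the same route as the paper: the easy direction reads periodicity off the \( \PBlock{k} \)-block decomposition~(\ref{eqn:DecompPBlock}) (the paper concludes every element is periodic, you exhibit one periodic element via \( \PBlock{\infty} = (\Word{\PBlock{K-1}}{a})^{\infty} \)), and for the hard direction both arguments use that \( a_{k} \) is immediately followed by a copy of \( \PBlock{k_{0}} \) inside the periodic element, so \( P \)-periodicity with \( P \leq \Length{\PBlock{k_{0}}} \) forces \( a_{k} = \PBlock{k_{0}}(P) \) for all large \( k \). Your extra bookkeeping (routing through the level-\( (k{+}1) \) decomposition and the lattice of connector positions) is a careful way of guaranteeing that \( a_{k+1} \) really occurs as a separator, which the paper obtains more directly from \( \Word{\PBlock{k-1}}{a_{k}}{\PBlock{k-1}} \subseteq \PBlock{k} \).
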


\begin{proof}
First we assume that \( \Alphab_{k} = \Set{ \AEv } \) holds for all sufficiently large \( k \). According to Equation~(\ref{eqn:DecompPBlock}) every \( \InfWord\in \Subshift \) can be written as \( \Word{ {} \ldots }{ \PBlock{k} }{ \star }{ \PBlock{k} }{ \star }{ \PBlock{k} }{ \ldots} \), with \( \star \in \Alphab_{k+1} = \Set{ \AEv } \). Thus, every \( \InfWord\in \Subshift \) is periodic.

To prove the converse, we use a similar idea as in the proof of \cite[Proposition~6.2]{QRWX_PatCompDDim}: assume that there exists a periodic element \( \InfWord \in \Subshift \) with period length \( r\). Choose \( k_{0} \in \NN \) large enough such that \( r < \Length{ \PBlock{k_{0}} } + 1 \) holds. Note that for every \( k \), the word \( \PBlock{k} = \Word{ \PBlock{k-1} }{ a_{k} }{ \PBlock{k-1} } \) is contained in \( \InfWord \). Moreover, \( \PBlock{k} \) is a prefix of \( \PBlock{k+1} \) for every \( k \). Thus, all words \( \Word{ a_{k} }{ \PBlock{k_{0}} } \) with \( k \geq k_{0}+1 \) are contained in \( \InfWord \). Now \( r \)-periodicity of \( \InfWord \) yields \( a_{k} = \PBlock{k_{0}}(r) \) for all \( k \geq k_{0}+1 \) and thus \( \Card{ \AlphabEv } = 1 \).
\end{proof}

Before we discuss minimality and unique ergodicity, we introduce one more notion: recall that a (not necessarily simple) Toeplitz word \( \InfWord \) can be defined as the limit of periodic words \( \InfWord_{k} \) by a hole-filling procedure. Assume that every position gets eventually filled in this process, that is, for every \( j \in \ZZ \) there exists a number \( k \in \NN \) with \( \InfWord_{k}(j) \in \Alphab \). If additionally 
\[ \frac{\text{number of undetermined positions in } \InfWord_{k} \text{ per period}}{\text{length of the period of } \InfWord_{k}} \xrightarrow{k \to \infty} 0 \]
holds, then the Toeplitz word is called \emph{regular}\index{regular Toeplitz word}\index{word! regular Toeplitz}. The subshift associated to a regular Toeplitz word is always minimal and uniquely ergodic. For Toeplitz words over two letters, this was shown in \cite[corollary to Theorem~5]{JacobsKeane_01Toeplitz}. For Toeplitz words over arbitrary finite alphabets, see for example \cite[Theorem~13.1]{Downa_OdomToepl}. An explicitly constructed example of an irregular Toeplitz word can for instance be found in \cite[Example~4.2]{KasKelLem_BFreeWindArtic}. However, all Toeplitz words that we consider in this thesis will be regular.

\begin{exmpl}
By definition, every position in a normal Toeplitz word is filled eventually. Moreover, we have seen in Proposition~\ref{prop:PeriodWord} that there is exactly one hole per period and that the period length tends to infinity. Thus, every normal Toeplitz word is regular.
\end{exmpl}

\begin{cor}[{\cite[Corollary~2.1]{LiuQu_Simple}}]
\label{cor:STStrictErgod}
Every simple Toeplitz subshift is minimal and uniquely ergodic.
\end{cor}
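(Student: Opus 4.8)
The plan is to reduce the statement to the case of a \emph{regular} Toeplitz word, for which minimality and unique ergodicity are already available from the results quoted above (Jacobs/Keane for a two-letter alphabet, Downarowicz for an arbitrary finite alphabet). The reduction rests on two facts from this section: that a simple Toeplitz subshift always equals \( \Subshift( \PBlock{\infty} ) \) and therefore does not see the hole positions \( (r_{k}) \) (Proposition~\ref{prop:SToeplDefSame} and Corollary~\ref{cor:SameSubsh}), and that \emph{normal} Toeplitz words are regular (the preceding example).

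Concretely, fix a simple Toeplitz subshift \( \Subshift \) with coding sequences \( (a_{k}) \), \( (n_{k}) \). By Proposition~\ref{prop:SToeplDefSame} we have \( \Subshift = \Subshift( \PBlock{\infty} ) \), and by Corollary~\ref{cor:SameSubsh} this holds for \emph{every} choice of hole positions \( (r_{k}) \) with \( 0 \leq r_{k} < n_{k} \). I would now pick \( (r_{k}) \) so that the resulting simple Toeplitz word \( \InfWord_{\infty} \) is normal, i.e. so that its undetermined part \( U_{\infty} = \bigcap_{k} U_{k} \) is empty. Write \( N_{k} := \Length{\PBlock{k}} + 1 = \prod_{l=0}^{k} n_{l} \) for the period length of the approximant \( \InfWord_{k} \). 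By Proposition~\ref{prop:PeriodWord} the single hole of \( \InfWord_{k} \) lies, modulo \( N_{k} \), at \( c_{k} = r_{0} + \sum_{j=1}^{k} r_{j} \, n_{0} \cdots n_{j-1} \in [0, N_{k}) \), and passing from stage \( k-1 \) to stage \( k \) amounts to choosing one of the \( n_{k} \) values \( c_{k-1}, \, c_{k-1} + N_{k-1}, \, \ldots, \, c_{k-1} + (n_{k}-1) N_{k-1} \). These candidates are \( N_{k-1} \)-spaced and, because \( n_{k} \geq 2 \), their range contains the midpoint \( N_{k}/2 \); hence \( r_{k} \) can be chosen so that \( \min\{ c_{k}, \, N_{k} - c_{k} \} \geq N_{k}/4 \). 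Since \( N_{k} \geq 2^{k+1} \to \infty \), both \( c_{k} \) and \( N_{k} - c_{k} \) tend to infinity, so no fixed integer can lie in \( U_{k} \) for all \( k \); thus \( U_{\infty} = \emptyset \), and \( \InfWord_{\infty} \) is a normal Toeplitz word with \( \Subshift( \InfWord_{\infty} ) = \Subshift( \PBlock{\infty} ) = \Subshift \).

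It then remains to note that a normal Toeplitz word fills every position eventually, has exactly one hole per period, and (again by Proposition~\ref{prop:PeriodWord}) has period length tending to infinity; it is therefore regular, and the cited theorem on regular Toeplitz words gives that \( \Subshift = \Subshift( \InfWord_{\infty} ) \) is minimal and uniquely ergodic.

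The main obstacle is the middle step: showing that the hole positions can always be steered away from every fixed integer so that \( U_{\infty} = \emptyset \). This is an elementary mixed-radix estimate, but it uses \( n_{k} \geq 2 \) essentially (so that at each stage the \( n_{k} \) available positions reach past the centre of the period) and should be written out carefully. The remaining ingredients are pure bookkeeping, since \( \Subshift = \Subshift( \PBlock{\infty} ) \) and its independence of \( (r_{k}) \) are exactly Proposition~\ref{prop:SToeplDefSame} and Corollary~\ref{cor:SameSubsh}, and minimality together with unique ergodicity are then imported from the regularity theorem. If one wanted only minimality, it could instead be read off directly from the decomposition~(\ref{eqn:DecompPBlock}): every word in \( \Langu{\Subshift} \) already occurs in some \( \PBlock{k} \), hence occurs with bounded gaps in every element of \( \Subshift \), so all elements share the language \( \bigcup_{k} \Langu{\PBlock{k}} \) and every orbit is dense; unique ergodicity, however, appears to need the quantitative input of the regular-Toeplitz result.
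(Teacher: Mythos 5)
Your proof is correct and follows essentially the same route as the paper: reduce to a normal Toeplitz word by exploiting the \( (r_{k}) \)-independence of the subshift (Corollary~\ref{cor:SameSubsh}), then invoke regularity of normal Toeplitz words and the regular-Toeplitz theorem. The only difference is in the middle step, where the paper simply observes that any sequence \( (\widetilde{r}_{k}) \) that is not eventually \( 0 \) and not eventually \( n_{k}-1 \) already yields a normal word, whereas you give a quantitative construction keeping the hole near the middle of each period; both are valid.
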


\begin{proof}
Clearly, a simple Toeplitz subshift \( \Subshift( \InfWord ) \) associated to a normal Toeplitz word \(\InfWord \) is minimal and uniquely ergodic. If \( \InfWord \) is an extended Toeplitz word, then we change the coding sequence \( (r_{k}) \) into a sequence \( (\widetilde{r}_{k}) \), such that the modified simple Toeplitz word \( \widetilde{\InfWord} \) is normal. This can always be done, since every sequence \( (\widetilde{r}_{k}) \) that is not eventually equal to \( 0 \) or eventually equal to \( n_{k}-1 \), yields a normal Toeplitz word. Since \( \InfWord \) and \( \widetilde{\InfWord} \) define the same subshift, this finishes the proof.
\end{proof}

We conclude this chapter with some comments on the relation between simple Toeplitz words and self-similar groups. Roughly speaking, it is the hole-filling procedure that links the two concepts. To be a bit more precise, a self-similar group defines a so-called Schreier graph. This graph can be constructed by a hole-filling process similar to the one for simple Toeplitz words. This yields a connection between Laplacians on these graphs and Jacobi operators on simple Toeplitz words. More details on self-similar groups, Schreier graphs and their relation to subshifts can be found in Appendix~\ref{app:SelfSimGr}. The interested reader is encouraged to pause here and read or skim the appendix for motivating background information. For instance, the connection between the (generalised) Grigorchuk subshift and self-similar groups highlights why the coding sequences were chosen as they were. However, it should be stressed that the content of Appendix~\ref{app:SelfSimGr} is purely supplementary and not necessary for out treatment of simple Toeplitz subshifts. The reader can just as well proceed with Chapter~\ref{chap:Combinat},~\ref{chap:JOSimpToep} and~\ref{chap:UnifCocycCantor} and postpone the reading of Appendix~\ref{app:SelfSimGr} or skip it altogether.

\chapter[Combinatorial properties of simple Toeplitz subshifts][Combinatorial properties]{Combinatorial properties of simple Toeplitz subshifts}
\label{chap:Combinat}

There are numerous notions related to the ``amount of order'' in a subshift. Some of them, all related to the subshift's language, are studied in this chapter. Unless explicitly stated otherwise, we consider a simple Toeplitz subshift \( \Subshift \) which is defined by coding sequences \( (a_{k})_{k \geq 0} \) and \( (n_{k})_{k \geq 0} \). We always assume \( \Card{ \AlphabEv } \geq 2 \), which makes the subshift aperiodic, minimal and uniquely ergodic.

First we discuss the subword complexity, that is, the number of words of a given length. An explicit formula for the complexity is derived, see Proposition~\ref{prop:ComplexI}, Theorem~\ref{thm:ComplexII} and Theorem~\ref{thm:ComplexIII}. Secondly, in Section~\ref{sec:DeBruijn} we describe in detail the de Bruijn graphs (also known as Rauzy graphs) of simple Toeplitz subshifts. They yield an explicit formula for the palindrome complexity (Corollary~\ref{cor:PaliComp}). Afterwards we study repetitivity, that is, the maximal gap length between consecutive occurrences of a word. Again an explicit formula is proved (Theorem~\ref{thm:Repe}). After a brief discussion of \( \alpha \)-repetitivity, we conclude the chapter with a characterisation of the Boshernitzan condition. This result will also play a role in Chapter~\ref{chap:JOSimpToep}, since the Boshernitzan condition determines certain spectral properties of Jacobi operators.

Most of the results in this chapter were published by the author of this thesis in \cite{Sell_SimpToepCombETDS}. In large parts, the text below follows the structure and contents of that article.

\section{Subword complexity}
\label{sec:Compl}

There are several classes of Toeplitz words for which the complexity has already been studied, for example in \cite{CassKar_ToeplWords} and \cite{Kosk_ComplSuites}. The Toeplitz words in \cite{CassKar_ToeplWords} are obtained from a single word with holes, which is repeatedly inserted into itself. This class contains for example simple Toeplitz words with periodic coding. For them \cite[Theorem~5]{CassKar_ToeplWords} implies that the complexity is dominated by a linear function. For more general Toeplitz constructions it is shown in \cite{Kosk_ComplSuites} that, for every \( r \in \QQ \), there is a Toeplitz word whose complexity grows like \( L^{r} \). Moreover we infer from \cite[Theorem~9 ]{Kosk_ComplSuites} that the complexity of simple Toeplitz words with a bounded coding sequence \( ( n_{k} ) \) is again dominated by a linear function. The aim of this section is to provide an explicit formula for the complexity of a general simple Toeplitz subshift. It will imply that the complexity is dominated by a linear function also in the general case (see Proposition~\ref{prop:ComplQuotient}).

Our main strategy is similar to the one that was used in \cite{GLN_SpectraSchreierAndSO} for the special case of the Grigorchuk subshift: first we prove an upper bound for the complexity at certain lengths. Then we prove a lower bound for the growth rate of the complexity. Together these inequalities determine the complexity. The same technique was also employed in \cite{DKMSS_Regul-Article} for \( l \)-Grigorchuk subshifts.

Recall that \( \Langu{\Subshift} \) denotes the language of a subshift \( \Subshift \), that is, the set of all finite words which occur in elements of \( \Subshift \). The subset of words of length \( L \) is denoted by \( \Langu{\Subshift}_{L} \). The \emph{complexity function}\index{complexity} (or short: \emph{the complexity}) is defined as
\[ \Comp \colon \NN_{0} \rightarrow \NN \;\; , \;\;\; L \mapsto \Card{ \Langu{\Subshift}_{L} } =  \text{``number of words of length } L \text{''} \, .\]
Note that we always have \( \Comp( 0 ) = 1\), since every subshift has exactly one word of length zero (namely the empty word). We define the \emph{growth rate}\index{growth rate of complexity}\index{complexity!growth rate of} (or \emph{difference function}\index{difference function of complexity}) of the complexity by \( \Growth( L ) \DefAs \Comp( L+1 ) - \Comp( L ) \) for \( L \in \NN_{0} \). When counting subwords, it actually suffices to consider subwords of \( \Word{ \PBlock{k} }{ a }{ \PBlock{k} } \):

\begin{prop}
\label{prop:WordsContained}
For every word \( u \in \Langu{ \Subshift } \) of length \( \Length{ u } \leq \Length{ \PBlock{k} }+1 \), there exists a letter \( a \in \Alphab_{k+1} \) such that \( u \) is a subword of \( \Word{ \PBlock{k} }{ a }{ \PBlock{k} } \).
\end{prop}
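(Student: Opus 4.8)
The plan is to reduce the statement to the block decomposition of subshift elements provided by Equation~(\ref{eqn:DecompPBlock}). First I would choose an element \( \widetilde{\InfWord} \in \Subshift \) in which \( u \) occurs, say \( u = \Restr{\widetilde{\InfWord}}{m}{m + \Length{u} - 1} \) for some \( m \in \ZZ \); this is possible by the definition of \( \Langu{\Subshift} \). For the given \( k \), Proposition~\ref{prop:AllAreST} together with the decomposition~(\ref{eqn:DecompPBlock}) lets me write \( \widetilde{\InfWord} \) as a concatenation of consecutive segments \( S_{\nu} = \Word{\PBlock{k}}{\star_{\nu}} \), \( \nu \in \ZZ \), where each \( \star_{\nu} \) is a letter of \( \Alphab_{k+1} \) and each segment \( S_{\nu} \) has length \( \Length{\PBlock{k}} + 1 \).

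Next I would locate the interval \( [m, m + \Length{u} - 1] \) inside this segmentation. Let \( S_{\nu} \) be the unique segment containing the starting position \( m \), write \( \ell_{0} := \Length{\PBlock{k}} + 1 \) for the common segment length and \( j \) for the offset of \( m \) inside \( S_{\nu} \), so \( 0 \leq j \leq \ell_{0} - 1 \). Since \( \Length{u} \leq \ell_{0} \), the interval ends at offset \( j + \Length{u} - 1 \leq 2\ell_{0} - 2 \) from the start of \( S_{\nu} \), hence it is contained in \( S_{\nu} S_{\nu+1} \). The key bookkeeping step is to note that the interval cannot reach the final position of \( S_{\nu+1} \): otherwise \( j + \Length{u} - 1 = 2\ell_{0} - 1 \), which forces \( j = 2\ell_{0} - \Length{u} \geq \ell_{0} \), contradicting \( j \leq \ell_{0} - 1 \). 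Therefore \( u \) is a subword of \( S_{\nu} S_{\nu+1} \) with its last letter deleted, and \( S_{\nu} S_{\nu+1} \) without its last letter is precisely \( \Word{\PBlock{k}}{\star_{\nu}}{\PBlock{k}} \). Setting \( a := \star_{\nu} \in \Alphab_{k+1} \) completes the argument.

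The proof is short, and its only slightly delicate ingredient — the one I would write out explicitly — is this off-by-one length count showing that a word of length at most \( \Length{\PBlock{k}} + 1 \) overlaps at most one of the separating letters \( \star_{\nu} \) coming from the decomposition; everything else is a direct appeal to Equation~(\ref{eqn:DecompPBlock}). The boundary case \( \Length{u} = 0 \) is trivial, the empty word being a subword of every word.
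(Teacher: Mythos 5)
Your proposal is correct and follows essentially the same route as the paper: both deduce the claim from the block decomposition of Equation~(\ref{eqn:DecompPBlock}), where every element of the subshift splits into \( \PBlock{k} \)-blocks separated by single letters from \( \Alphab_{k+1} \). The only difference is that you spell out the off-by-one length count which the paper leaves as ``immediate''.
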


\begin{proof}
This follows immediately from the decomposition \( \InfWord = \Word{ \ldots }{ \PBlock{k} }{ \star }{ \PBlock{k} }{ \star }{ \PBlock{k} }{ \star }{ \PBlock{k} }{ \ldots } \) with letters \( \star \in \Alphab_{k+1} \), which exists for all \( \InfWord \in \Subshift \) and all \( k \geq 0 \) (see Equation~(\ref{eqn:DecompPBlock})).
\end{proof}

\subsection{Inequalities for the complexity and its growth}
\label{subsec:IneqCompGrow}

In this subsection we derive the necessary combinatorial results for the computation of the complexity, which we postpone to Subsection~\ref{subsec:Complexity}. First we give an upper bound for \( \Comp( \Length{ \PBlock{k} } +1 ) \). Then we prove lower bounds for \( \Growth( L ) \), which yield a lower bound for \( \Comp( \Length{ \PBlock{k} } +1 ) \) through a telescoping sum. Since it agrees with the upper bound, we get an exact value for \( \Comp( \Length{ \PBlock{k} } +1 ) \). We also get exact values for \( \Growth( L ) \), since any growth which is faster than the lower bound would contradict the value that we obtain for \( \Comp( \Length{ \PBlock{k} } +1 ) \).

\begin{prop}
\label{prop:UpBdCompBlock}
For all \( k \geq 0 \) the following inequality holds:
\[ \Comp( \Length{ \PBlock{k} } +1 ) \leq ( \Card{ \Alphab_{k} } - 1 ) \cdot (\Length{ \PBlock{k} }+1) + \CharFkt{ \Alphab_{k+1} }( a_{k} ) \cdot ( \Length{ \PBlock{k-1} }+1 ) \, . \]
\end{prop}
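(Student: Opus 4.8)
The plan is to bound the number of words of length \( L \DefAs \Length{ \PBlock{k} } + 1 \) by using Proposition~\ref{prop:WordsContained}: every such word is a subword of \( \Word{ \PBlock{k} }{ a }{ \PBlock{k} } \) for some letter \( a \in \Alphab_{k+1} \). Hence \( \Comp( L ) \) is at most the total number of length-\( L \) words that arise as subwords of the \( \Card{ \Alphab_{k+1} } \) words \( \Word{ \PBlock{k} }{ a }{ \PBlock{k} } \), \( a \in \Alphab_{k+1} \). Since \( \Word{ \PBlock{k} }{ a }{ \PBlock{k} } \) has length \( 2 \Length{ \PBlock{k} } + 1 \), it contains at most \( \Length{ \PBlock{k} } + 1 = L \) subwords of length \( L \). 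Applying this crude bound for every \( a \) would only give \( \Card{ \Alphab_{k+1} } \cdot L \), which is too weak; the gain comes from treating the separator letter \( a = a_{k} \) separately.

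For \( a = a_{k} \) I would first observe, from the recursion~(\ref{eqn:PBlockRecur}) at level \( k \), that \( \PBlock{k} = \Word{ \PBlock{k-1} }{ a_{k} }{ \PBlock{k-1} }{ a_{k} }{ \ldots }{ a_{k} }{ \PBlock{k-1} } \) (with \( n_{k} \) blocks \( \PBlock{k-1} \)), so that \( \PBlock{k} \) both begins and ends with \( \PBlock{k-1} \). Therefore \( \Word{ \PBlock{k} }{ a_{k} }{ \PBlock{k} } \) consists of \( 2 n_{k} \) copies of \( \PBlock{k-1} \) alternating with \( 2 n_{k} - 1 \) occurrences of \( a_{k} \), that is, it is a factor of the periodic word \( ( \Word{ \PBlock{k-1} }{ a_{k} } )^{\infty} \), whose period length is \( \Length{ \PBlock{k-1} } + 1 \). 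A factor of a word of period \( q \) has at most \( q \) distinct subwords of any fixed length, so \( \Word{ \PBlock{k} }{ a_{k} }{ \PBlock{k} } \) contributes at most \( \Length{ \PBlock{k-1} } + 1 \) words of length \( L \). For \( k = 0 \) this specialises to the valid (if degenerate) statement that \( \Word{ \PBlock{0} }{ a_{0} }{ \PBlock{0} } = a_{0}^{2 n_{0} - 1} \) is a factor of \( a_{0}^{\infty} \), of period \( \Length{ \PBlock{-1} } + 1 = 1 \).

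For the remaining letters \( a \in \Alphab_{k+1} \setminus \Set{ a_{k} } \) I would keep the crude bound of at most \( \Length{ \PBlock{k} } + 1 \) length-\( L \) subwords each. Since there are \( \Card{ \Alphab_{k+1} } - \CharFkt{ \Alphab_{k+1} }( a_{k} ) \) such letters, summing the two contributions gives
\[ \Comp( L ) \leq \big( \Card{ \Alphab_{k+1} } - \CharFkt{ \Alphab_{k+1} }( a_{k} ) \big) ( \Length{ \PBlock{k} } + 1 ) + \CharFkt{ \Alphab_{k+1} }( a_{k} ) ( \Length{ \PBlock{k-1} } + 1 ) \, . \]
It then remains to identify \( \Card{ \Alphab_{k+1} } - \CharFkt{ \Alphab_{k+1} }( a_{k} ) \) with \( \Card{ \Alphab_{k} } - 1 \); this follows from \( \Alphab_{k} = \Set{ a_{k} } \cup \Alphab_{k+1} \), distinguishing the cases \( a_{k} \notin \Alphab_{k+1} \) (so \( \Card{ \Alphab_{k} } = \Card{ \Alphab_{k+1} } + 1 \) and \( \CharFkt{ \Alphab_{k+1} }( a_{k} ) = 0 \)) and \( a_{k} \in \Alphab_{k+1} \) (so \( \Card{ \Alphab_{k} } = \Card{ \Alphab_{k+1} } \) and \( \CharFkt{ \Alphab_{k+1} }( a_{k} ) = 1 \)); in both cases the two expressions agree, which yields the stated inequality.

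The step that needs the most care is the treatment of \( a = a_{k} \): one has to notice that inserting the ``generic'' separator \( a_{k} \) between two \( \PBlock{k} \)-blocks erases the distinction between this separator and the internal \( a_{k} \)'s of the blocks, so that \( \Word{ \PBlock{k} }{ a_{k} }{ \PBlock{k} } \) collapses into a purely periodic word of small period. The remaining steps are elementary bookkeeping with the cardinalities of \( \Alphab_{k} \) and \( \Alphab_{k+1} \).
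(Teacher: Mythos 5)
Your proof is correct and follows essentially the same route as the paper: reduce to subwords of \( \Word{ \PBlock{k} }{ a }{ \PBlock{k} } \) via Proposition~\ref{prop:WordsContained}, bound each \( a \neq a_{k} \) crudely by \( \Length{\PBlock{k}}+1 \), exploit the fact that \( \Word{ \PBlock{k} }{ a_{k} }{ \PBlock{k} } \) sits inside the \( (\Length{\PBlock{k-1}}+1) \)-periodic word \( (\Word{ \PBlock{k-1} }{ a_{k} })^{\infty} \), and finish with the same cardinality identity \( \Card{\Alphab_{k+1}\setminus\Set{a_{k}}} = \Card{\Alphab_{k}}-1 \). The only (harmless) difference is that you treat \( k=0 \) uniformly through the periodicity argument, whereas the paper handles it as a separate case.
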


\begin{proof}
First we assume \( k \geq 1 \) (the case \( k = 0 \) is similar, but due to technical details we treat it separately). By Proposition~\ref{prop:WordsContained} it is sufficient to study subwords of length \( \Length{ \PBlock{k} }+1 \) of \( \Word{ \PBlock{k} }{ a }{ \PBlock{k} } \), with \( a \in \Alphab_{k+1} \). For \( a \neq a_{k} \) there are at most \( \Length{ \PBlock{k} } + 1 \) different subwords of length \( \Length{ \PBlock{k} } + 1 \). If \( a_{k} \in \Alphab_{k+1} \) holds, then \( a = a_{k} \) results in at most \( \Length{ \PBlock{k-1} } + 1 \) additional subwords, since the words in
\[ \Word{ \PBlock{k} }{ a_{k} }{ \PBlock{k} } = \big[ \Word{ \PBlock{k-1} }{ a_{k} }{ \PBlock{k-1} }{ \ldots }{ \PBlock{k-1} } \big] \, a_{k} \, \big[ \Word{ \PBlock{k-1} }{ a_{k} }{ \PBlock{k-1} }{ \ldots }{ \PBlock{k-1} } \big] \]
repeat when the second \( \PBlock{k-1} \) is reached. We obtain for \( k \geq 1 \):
\[ \Comp( \Length{ \PBlock{k} }+1 ) \leq \Card{ ( \Alphab_{k+1} \setminus \Set{ a_{k} } ) } \cdot ( \Length{ \PBlock{k} }+1) + \CharFkt{ \Alphab_{k+1}}( a_{k} ) \cdot (\Length{ \PBlock{k-1} } + 1) \, . \]

The case \( k = 0 \) is similar: for every \( a \in \Alphab_{1} \setminus \Set{ a_{0} } \) we have at most \( \Length{ \PBlock{0} } + 1 \) subwords of length \( \Length{ \PBlock{0} } + 1 \) in \( \Word{ \PBlock{0} }{ a }{ \PBlock{0} } \). Moreover, there is exactly one subword of length \( \Length{ \PBlock{0} } + 1 \) in \( \Word{ \PBlock{0} }{ a_{0} }{ \PBlock{0} } \), namely \( a_{0}^{n_{0}} = \PBlock{0} a_{0} \). With the convention \( \Length{ \PBlock{-1} } = 0 \) this yields
\[ \Comp( \Length{ \PBlock{0} }+1 ) \leq \Card{ ( \Alphab_{1} \setminus \Set{ a_{0} } ) } \cdot (\Length{ \PBlock{0} } + 1) + \CharFkt{ \Alphab_{1} }( a_{0} ) \cdot (\Length{ \PBlock{-1} } + 1) \, . \]

Finally we note that \( \Card{ ( \Alphab_{k+1} \setminus \Set{ a_{k} } ) } = \Card{ \Alphab_{k} } - 1 \) holds for all \( k \geq 0 \): if \( a_{k} \in \Alphab_{k+1} \) holds, then we have \( \Alphab_{k+1} = \Alphab_{k} \), which implies \( \Card{ ( \Alphab_{k+1} \setminus \Set{ a_{k} } ) } = \Card{ \Alphab_{k+1} } - 1 =  \Card{ \Alphab_{k} } - 1 \). If \( a_{k} \notin \Alphab_{k+1} \) holds, then we have \( \Alphab_{k+1} = \Alphab_{k} - 1 \), which implies \( \Card{ ( \Alphab_{k+1} \setminus \Set{ a_{k} } ) } = \Card{ \Alphab_{k+1} } =  \Card{ \Alphab_{k} } - 1 \).
\end{proof}

In the following we establish lower bounds on the growth of the complexity by counting the number of possible extensions of finite words. In Proposition~\ref{prop:LowBdCompBlock} we use them to prove a lower bound for the complexity.

\begin{prop}
\label{prop:CompGrow1}
For all \( k \geq 0 \) and \( 0 \leq L \leq \Length{ \PBlock{k} } - \Length{ \PBlock{k-1} } - 1 \) the inequality \( \Growth( L ) = \Comp( L+1 ) - \Comp( L ) \geq \Card{ \Alphab_{k} } - 1 \) holds.
\end{prop}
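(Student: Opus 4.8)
The plan is to produce, for every admissible $L$, a single word of length $L$ that has at least $\Card{ \Alphab_{k} }$ distinct right extensions in the subshift; the bound on $\Growth( L )$ then follows from the standard counting identity for complexity. Recall that for $u \in \Langu{ \Subshift }_{L}$ one writes $d^{+}( u ) \DefAs \Card{ \Set{ b \in \Alphab : ub \in \Langu{ \Subshift }_{L+1} } }$, and that $\Comp( L+1 ) = \sum_{u \in \Langu{ \Subshift }_{L}} d^{+}( u )$, since every word of length $L+1$ splits uniquely into a length-$L$ prefix and a final letter. Moreover $d^{+}( u ) \geq 1$ for every $u \in \Langu{ \Subshift }_{L}$, because any occurrence of $u$ inside a two-sided infinite element of $\Subshift$ has a letter immediately to its right. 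Hence $\Growth( L ) = \Comp( L+1 ) - \Comp( L ) = \sum_{u} ( d^{+}( u ) - 1 ) \geq d^{+}( u_{0} ) - 1$ for any fixed $u_{0} \in \Langu{ \Subshift }_{L}$, so it suffices to exhibit such a $u_{0}$ with $d^{+}( u_{0} ) \geq \Card{ \Alphab_{k} }$.

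The word I would take is $u_{0} \DefAs s_{L}$, the suffix of $\PBlock{k}$ of length $L$ (which lies in $\Langu{ \Subshift }_{L}$ since $\PBlock{k}$ is a prefix of $\PBlock{\infty}$). I claim $s_{L}$ is followed in $\Subshift$ by every letter of $\Alphab_{k} = \Set{ a_{k} } \cup \Alphab_{k+1}$. For a letter $a_{j}$ with $j \geq k+1$: the block $\PBlock{j} = \PBlock{j-1} \, a_{j} \, \PBlock{j-1} \, a_{j} \cdots \PBlock{j-1}$ contains $\PBlock{j-1} a_{j} \PBlock{j-1}$, and $\PBlock{k}$ is a suffix of $\PBlock{j-1}$ (by induction from the prefix/suffix relation between consecutive blocks), so $\PBlock{k} a_{j}$ — and hence $s_{L} a_{j}$ — occurs in $\PBlock{\infty}$. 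For the letter $a_{k}$: put $q \DefAs \PBlock{k-1} \, a_{k} \, \PBlock{k-1} \, a_{k} \cdots \PBlock{k-1}$ with $n_{k}-1$ blocks of $\PBlock{k-1}$; from the two readings $\PBlock{k} = \PBlock{k-1} \, a_{k} \, q = q \, a_{k} \, \PBlock{k-1}$ one sees that $q$ is a suffix of $\PBlock{k}$ of length exactly $\Length{ \PBlock{k} } - \Length{ \PBlock{k-1} } - 1$, so for $L$ in the stated range $s_{L}$ is a suffix of $q$, while $q a_{k}$ is a factor (indeed a prefix) of $\PBlock{k}$; therefore $s_{L} a_{k}$ occurs as well. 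Consequently the set of right extensions of $s_{L}$ contains $\Set{ a_{k} } \cup \Alphab_{k+1} = \Alphab_{k}$, so $d^{+}( s_{L} ) \geq \Card{ \Alphab_{k} }$ and the proposition follows.

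The one step that needs care is the $a_{k}$-case: the bookkeeping that $q$ is a suffix of $\PBlock{k}$ of length precisely $\Length{ \PBlock{k} } - \Length{ \PBlock{k-1} } - 1$ is exactly what makes the hypothesis $L \leq \Length{ \PBlock{k} } - \Length{ \PBlock{k-1} } - 1$ the natural one, since it is the condition ensuring that $s_{L}$ does not reach past the last internal occurrence of $a_{k}$ in $\PBlock{k}$. I would also check the degenerate instance $k = 0$, where $\PBlock{-1} = \epsilon$, $\PBlock{0} = a_{0}^{n_{0}-1}$, $q = a_{0}^{n_{0}-2}$ and the admissible range is $0 \leq L \leq n_{0} - 2$; this is the same computation with empty blocks and requires no separate treatment. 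The remaining ingredients — that $\PBlock{k}$ is a suffix of every $\PBlock{j}$ with $j \geq k$, that all words produced lie in $\Langu{ \Subshift } = \Langu{ \PBlock{\infty} }$, and that $\Card{ \Set{ a_{k} } \cup \Alphab_{k+1} }$ equals $\Card{ \Alphab_{k+1} }$ or $\Card{ \Alphab_{k+1} } + 1$ according as $a_{k} \in \Alphab_{k+1}$ or not — are routine.
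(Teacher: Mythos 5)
Your proof is correct and takes essentially the same route as the paper: the same right special word (the length-\( L \) suffix of \( \PBlock{k} \)), the same argument that it extends to the right by every letter of \( \Alphab_{k+1} \) via an occurrence of \( \Word{ \PBlock{k} }{ a } \), and an extension-by-\( a_{k} \) argument that the paper carries out by case distinction (\( a_{k} \notin \Alphab_{k+1} \), then \( k \geq 1 \) versus \( k = 0 \), \( n_{0} > 2 \) versus \( n_{0} = 2 \)) while your bookkeeping with \( q \) handles all cases uniformly. The only other difference is cosmetic: you spell out the counting identity \( \Comp( L+1 ) = \sum_{u} d^{+}( u ) \), which the paper leaves implicit.
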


\begin{proof}
We consider the suffix \( v_{1} \) of \( \PBlock{k} \) that consists of the last \( L \) letters and show that it has at least \( \Card{ \Alphab_{k} } \) different extensions to the right, which yields the claim. First we prove that \( v_{1} \) can be extended by all letters in \( \Alphab_{k+1} \): for every \( a \in \Alphab_{k+1} \) there exists an index \( l \geq k+1 \) with \( a_{l} = a \). Since \( \PBlock{l} \) contains the word \( \Word{ \PBlock{k} }{ a }{ \PBlock{k} } \), the suffix \( v_{1} \) of \( \PBlock{k} \) can be extended by \( a \).

Now there are two cases: for \( a_{k} \in \Alphab_{k+1} \) we obtain \( \Card{ \Alphab_{k+1} } = \Card{ \Alphab_{k} } \) and we are done. For \( a_{k} \notin \Alphab_{k+1} \) we show that \( v_{1} \) can nevertheless be extended by \( a_{k} \): first we assume \( k \geq 1 \). Then \( v_{1} \) is a suffix of the first \( (n_{k} - 1) \cdot ( \Length{ \PBlock{k-1} } + 1 ) - 1 \) letters in \( \PBlock{k} \), since \( \PBlock{k} = \Word{ \PBlock{k-1} }{ \ldots }{ \PBlock{k-1} }{ a_{k} }{ \PBlock{k-1} } \) and \( L \leq \Length{ \PBlock{k} } - ( \Length{ \PBlock{k-1}} + 1) \) hold. Thus \( v_{1} \) can be extended by \( a_{k} \). Now we assume \( k = 0 \) and \( n_{0} > 2 \). In this case, \( v_{1} \) is given by \( v_{1} = a_{0}^{L} \) with \( L \leq n_{0} - 2 \). As \( \PBlock{0} = a_{0}^{n_{0}-1} \) shows, \( v_{1} \) can be extended by \(a_{0} \). Finally we assume \( k = 0 \) and \( n_{0} = 2 \). Then the claim follows immediately from \( \Comp(0) = 1 \) and \( \Comp(1) = \Card{ \Alphab } = \Card{ \Alphab_{0} } \).
\end{proof}

A word which has more than one extension to the right (such as \( v_{1} \) from the proof above), is called a \emph{right special word}\index{right special word}\index{word!right special}. Under certain conditions there exists another right special word \( v_{2} \neq v_{1} \), which further increases the growth. Since we need the inequalities to be sharp to deduce the complexity, we now establish when this is the case.

\begin{prop}
\label{prop:GrowGreater}
Let \( k \geq 1 \) and \( \Length{ \PBlock{k-1} } + 1 \leq L \leq 2 \Length{ \PBlock{k-1} } - \Length{ \PBlock{k-2} } \). If \( a_{k-1} \in \Alphab_{k+1} \) holds, then \( \Growth( L ) \) is at least by one greater than stated in Proposition~\ref{prop:CompGrow1}.
\end{prop}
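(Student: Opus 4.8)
The plan is to extend the counting argument behind Proposition~\ref{prop:CompGrow1}. Recall that \( \Growth(L) = \Comp(L+1) - \Comp(L) \) equals the sum, over the right special words \( v \in \Langu{\Subshift}_{L} \) (those with at least two right extensions), of \( \big( \#\Set{ a \in \Alphab : va \in \Langu{\Subshift} } - 1 \big) \); in particular every right special word contributes at least \( 1 \). The proof of Proposition~\ref{prop:CompGrow1} already produces one such word of length \( L \), namely the suffix \( v_{1} \) of \( \PBlock{k} \) of length \( L \), and shows that it accounts for the full bound \( \Card{ \Alphab_{k} } - 1 \) (its right extensions are all of \( \Alphab_{k+1} \) together with \( a_{k} \)). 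Hence it will be enough to exhibit a \emph{second} right special word \( v_{2} \in \Langu{\Subshift}_{L} \) with \( v_{2} \neq v_{1} \); then \( \Growth(L) \geq ( \Card{ \Alphab_{k} } - 1 ) + 1 \), as claimed.

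For the candidate I would take \( v_{2} \DefAs \Word{ w }{ a_{k-1} }{ \PBlock{k-1} } \), where \( w \) is the suffix of \( \PBlock{k-1} \) of length \( m \DefAs L - \Length{ \PBlock{k-1} } - 1 \); the hypothesis on \( L \) gives \( 0 \leq m \leq \Length{ \PBlock{k-1} } - \Length{ \PBlock{k-2} } - 1 \), so \( w \) is well defined, and \( v_{2} \) is just the suffix of length \( L \) of \( \Word{ \PBlock{k-1} }{ a_{k-1} }{ \PBlock{k-1} } \). First I would note that \( v_{1} \) is the suffix of length \( L \) of \( \Word{ \PBlock{k-1} }{ a_{k} }{ \PBlock{k-1} } \) — here one uses \( L \leq 2\Length{ \PBlock{k-1} } - \Length{ \PBlock{k-2} } < 2\Length{ \PBlock{k-1} } + 1 \), so that the separating letter is still contained in these suffixes — and that, since \( a_{k-1} \neq a_{k} \), the words \( v_{1} \) and \( v_{2} \) differ in their \( (m+1) \)-th letter; in particular \( v_{2} \neq v_{1} \). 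Next I would check \( v_{2} \in \Langu{\Subshift} \): since \( a_{k-1} \in \Alphab_{k+1} \) there is some \( j \geq k+1 \) with \( a_{j} = a_{k-1} \), and in \( \PBlock{j} \supseteq \Word{ \PBlock{j-1} }{ a_{j} }{ \PBlock{j-1} } \) the block \( \PBlock{k} \) is both a prefix and a suffix of \( \PBlock{j-1} \), so \( \Word{ \PBlock{k} }{ a_{k-1} }{ \PBlock{k} } \) — and therefore \( v_{2} \) — occurs in \( \PBlock{j} \subseteq \PBlock{\infty} \).

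The core of the argument is then to show that \( v_{2} \) is right special, and the idea is to find two occurrences of \( v_{2} \) inside the factor \( \Word{ \PBlock{k} }{ a_{k-1} }{ \PBlock{k} } \) that sit differently relative to the central separator. In the ``obvious'' phase the central \( a_{k-1} \) is the \( (m+1) \)-th letter of \( v_{2} \): then \( \Restr{v_{2}}{1}{m} = w \) is a suffix of the left \( \PBlock{k} \)-block, \( \Restr{v_{2}}{m+2}{L} = \PBlock{k-1} \) is a prefix of the right \( \PBlock{k} \)-block, and the next letter of that block is \( a_{k} \) because \( \PBlock{k} = \Word{ \PBlock{k-1} }{ a_{k} }{ \PBlock{k-1} }{ \ldots } \); so \( \Word{ v_{2} }{ a_{k} } \in \Langu{\Subshift} \). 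In a shifted phase the central \( a_{k-1} \) is the \( (m + \Length{ \PBlock{k-2} } + 2) \)-th letter of \( v_{2} \) — this is legitimate because \( v_{2} \) really carries an \( a_{k-1} \) there, namely the internal separator following the first sub-block \( \PBlock{k-2} \) of the trailing \( \PBlock{k-1} \) of \( v_{2} \); and, using the recursion~(\ref{eqn:PBlockRecur}) for \( \PBlock{k-1} \), one checks that the initial segment \( \Restr{v_{2}}{1}{m + \Length{ \PBlock{k-2} } + 1} = \Word{ w }{ a_{k-1} }{ \PBlock{k-2} } \) is exactly the suffix of \( \PBlock{k} \) of that length, while the tail of \( v_{2} \) after this separator is exactly a prefix of \( \PBlock{k} \). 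In this phase \( v_{2} \) ends \emph{strictly inside} the right \( \PBlock{k} \)-block, right before the last letter of the segment \( ( \Word{ \PBlock{k-2} }{ a_{k-1} } )^{n_{k-1}-1} \) of its leading \( \PBlock{k-1} \), so the continuation is again \( a_{k-1} \). Since \( a_{k} \neq a_{k-1} \), these give two different right extensions of \( v_{2} \), hence \( v_{2} \) is right special.

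Combining the two distinct right special words \( v_{1}, v_{2} \) of length \( L \) yields \( \Growth(L) \geq ( \Card{ \Alphab_{k} } - 1 ) + 1 = \Card{ \Alphab_{k} } \), one more than the bound of Proposition~\ref{prop:CompGrow1}. The step I expect to be the main obstacle is precisely the bookkeeping in the shifted phase: matching \( v_{2} \) letter-for-letter against a suffix and a prefix of \( \PBlock{k} \) via the block recursion, and it is here that both bounds on \( L \) are really used — \( L \geq \Length{ \PBlock{k-1} } + 1 \) so that \( v_{2} \) contains a full trailing block \( \PBlock{k-1} \), and \( L \leq 2\Length{ \PBlock{k-1} } - \Length{ \PBlock{k-2} } \) so that \( m + \Length{ \PBlock{k-2} } + 1 \leq \Length{ \PBlock{k-1} } \), which confines the relevant initial segment of \( v_{2} \) to a single palindromic block. (One should also double-check, in the degenerate case \( n_{k} = 2 \), that \( v_{1} \) still accounts for the bound of Proposition~\ref{prop:CompGrow1} read at the appropriate index — this is a minor point.)
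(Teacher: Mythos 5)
Your proof is correct and takes essentially the same route as the paper: you choose the same second right special word \( v_{2} \) (the suffix of length \( L \) of \( \Word{ \PBlock{k-1} }{ a_{k-1} }{ \PBlock{k-1} } \)), distinguish it from \( v_{1} \) via the letter at position \( L - \Length{ \PBlock{k-1} } \), and obtain its two right extensions \( a_{k} \) and \( a_{k-1} \) by exhibiting two differently aligned occurrences of \( v_{2} \) inside \( \Word{ \PBlock{k} }{ a_{k-1} }{ \PBlock{k} } \), which is exactly the content of the paper's Figure~\ref{fig:GrowPlusOne}. The only differences are cosmetic: you carry out the figure's block bookkeeping in explicit indices and your argument covers \( k=1 \) (with \( \PBlock{-1} = \epsilon \)) uniformly, whereas the paper treats that case separately.
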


\begin{proof}
In addition to the right special word \( v_{1} \) from the previous proof, we show the existence of another right special word \( v_{2} \neq v_{1} \) of length \( L \). First note that \( \Word{ \PBlock{k} }{ a_{k-1} }{ \PBlock{k} } \) occurs in the subshift since \( a_{k-1} \in \Alphab_{k+1} \) holds. We define \( v_{2} \) as the suffix of length \( L \) of \( \Word{ \PBlock{k-1} }{ a_{k-1} }{ \PBlock{k-1} } \). Because of \( L \geq \Length{ \PBlock{k-1} } + 1 \), the word \( v_{2} \) ends with \( \Word{ a_{k-1} }{ \PBlock{k-1} } \). Hence it is different from \( v_{1} \), which ends with \( \Word{ a_{k} }{ \PBlock{k-1} } \). We show that \( v_{2} \) can be extended to the right with both, \( a_{k} \) and \( a_{k-1} \): for \( k = 1 \) this follows from \( v_{2} = a_{0}^{L} \), with \( n_{0} \leq L \leq 2 \cdot (n_{0} - 1) \), combined with the decomposition
\[ \Word{ \PBlock{1} }{ a_{0} }{ \PBlock{1} } = \big[ \Word{ a_{0}^{n_{0}-1} }{ a_{1} }{ \ldots }{ a_{1} }{ a_{0}^{n_{0}-1} } \, \big] \, a_{0} \, \big[ \, \Word{ a_{0}^{n_{0}-1} }{ a_{1} }{ \ldots }{ a_{1} }{ a_{0}^{n_{0}-1} } \big] \, . \]
For \( k \geq 2 \) the same result follows from a decomposition of \( \Word{ \PBlock{k-1} }{ a_{k-1} }{ \PBlock{k-1} } \) into \( \PBlock{k-1} \)-blocks, \( \PBlock{k-2} \)-blocks and the letters \( a_{k-1} \) and \( a_{k} \), see Figure~\ref{fig:GrowPlusOne}. 
\end{proof}

\begin{figure}
\centering
\footnotesize
\pgfmathsetmacro{\TikzNumberPkkk}{2} 
\pgfmathsetmacro{\TikzCodSeqNkkk}{2} 
\pgfmathsetmacro{\TikzCodSeqNkk}{4} 
\pgfmathsetmacro{\TikzDistBlocks}{3} 
\pgfmathsetmacro{\TikzBuchstLength}{9} 
\pgfmathsetmacro{\TikzLengthPkkk}{(0.95*\linewidth - (\TikzNumberPkkk - 1) * ( \TikzBuchstLength + 2 * \TikzDistBlocks) ) / \TikzNumberPkkk} 
\pgfmathsetmacro{\TikzLengthPkk}{(\TikzLengthPkkk - (\TikzCodSeqNkkk - 1) * ( \TikzBuchstLength + 2 * \TikzDistBlocks) ) / \TikzCodSeqNkkk} 
\pgfmathsetmacro{\TikzLengthPk}{(\TikzLengthPkk - (\TikzCodSeqNkk - 1) * ( \TikzBuchstLength + 2 * \TikzDistBlocks) ) / \TikzCodSeqNkk} 
\begin{tikzpicture}
[every node/.style ={rectangle, outer sep=0pt, inner sep=0pt, minimum height=0.6cm, draw},
buchst/.style={minimum width=\TikzBuchstLength pt},
pkkk/.style={minimum width=\TikzLengthPkkk pt},
pkk/.style={minimum width=\TikzLengthPkk pt},
pk/.style={minimum width=\TikzLengthPk pt}]
\node [right] at (0,1) [pkk] (A) {\( \PBlock{k-1} \)};
\ifnum 2<\TikzCodSeqNkkk 
	\foreach \x in {3,...,\TikzCodSeqNkkk}{
		\node [buchst] (A) [right=\TikzDistBlocks pt of A] {\( c \)};
		\node [pkk] (A) [right=\TikzDistBlocks pt of A] {\( \PBlock{k-1} \)};}
\fi
\node [buchst] (A) [right=\TikzDistBlocks pt of A] {\( c \)};
\foreach \x in {1,...,\TikzCodSeqNkk}{
	\node[pk] (A) [right=\TikzDistBlocks pt of A] {\( \)};
	\node [buchst] (A) [right=\TikzDistBlocks pt of A] {\( b \)};}
\node [pkk] (A) [right=\TikzDistBlocks pt of A] {\( \PBlock{k-1} \)};
\node [buchst] (Fix1) [right=\TikzDistBlocks pt of A] {\( c \)};
\node [pkk] (A) [right=\TikzDistBlocks pt of Fix1] {\( \PBlock{k-1} \)};
\ifnum 2<\TikzCodSeqNkkk
	\foreach \x in {3,...,\TikzCodSeqNkkk}{
		\node [buchst] (A) [right=\TikzDistBlocks pt of A] {\( c \)};
		\node [pkk] (A) [right=\TikzDistBlocks pt of A] {\( \PBlock{k-1} \)};}
\fi
\node [right] at (0,0) [pkkk] (A) {\( \PBlock{k} \)};
\node [buchst] (A) [right=\TikzDistBlocks pt of A] {\( b \)};
\node [pkkk] (A) [right=\TikzDistBlocks pt of A] {\( \PBlock{k} \)};
\node [right] at (0,-1) [pkk] (A) {\( \PBlock{k-1} \)};
\ifnum 2<\TikzCodSeqNkkk
	\foreach \x in {3,...,\TikzCodSeqNkkk}{
		\node [buchst] (A) [right=\TikzDistBlocks pt of A] {\( c \)};
		\node [pkk] (A) [right=\TikzDistBlocks pt of A] {\( \PBlock{k-1} \)};}
\fi
\node [buchst] (A) [right=\TikzDistBlocks pt of A] {\( c \)};
\foreach \x in {1,...,\TikzCodSeqNkk}{
	\node[pk] (A) [right=\TikzDistBlocks pt of A] {\( \)};
	\node [buchst] (A) [right=\TikzDistBlocks pt of A] {\( b \)};}
\node [pk] (A) [right=\TikzDistBlocks pt of A] {\( \)};
\foreach \x in {2,...,\TikzCodSeqNkk}{
	\node [buchst] (A) [right=\TikzDistBlocks pt of A] {\( b \)};
	\node[pk] (A) [right=\TikzDistBlocks pt of A] {\( \)};}
\foreach \x in {2,...,\TikzCodSeqNkkk}{
	\node [buchst] (A) [right=\TikzDistBlocks pt of A] {\( c \)};
	\node[pkk] (A) [right=\TikzDistBlocks pt of A] {\( \PBlock{k-1} \)};}
\node [right] at (0,-2) [pkk] (A) {\( \PBlock{k-1} \)};
\ifnum 2<\TikzCodSeqNkkk
	\foreach \x in {3,...,\TikzCodSeqNkkk}{
		\node [buchst] (A) [right=\TikzDistBlocks pt of A] {\( c \)};
		\node [pkk] (A) [right=\TikzDistBlocks pt of A] {\( \PBlock{k-1} \)};}
\fi
\node [buchst] (A) [right=\TikzDistBlocks pt of A] {\( c \)};
\foreach \x in {2,...,\TikzCodSeqNkk}{
	\node[pk] (A) [right=\TikzDistBlocks pt of A] {\( \)};
	\node [buchst] (A) [right=\TikzDistBlocks pt of A] {\( b \)};}
\node[pkk] (A)  [right=\TikzDistBlocks pt of A] {\( \PBlock{k-1} \)};
\node [buchst] (Fix2) [right=\TikzDistBlocks pt of A] {\( b \)};
\node[pk] (A) [right=\TikzDistBlocks pt of Fix2] {\( \)};
\foreach \x in {2,...,\TikzCodSeqNkkk}{
	\node [buchst] (A) [right=\TikzDistBlocks pt of A] {\( c \)};
	\node [pkk] (A) [right=\TikzDistBlocks pt of A] {\( \PBlock{k-1} \)};}
\node [buchst] (A) [above=0.4 of Fix1] {\( c \)};
\node [left=\TikzDistBlocks pt of A, minimum width=\TikzBuchstLength + \TikzLengthPk + \TikzLengthPkk + 2*\TikzDistBlocks pt] {\( v_{2} \)};
\node [buchst] (A) [below=0.4 of Fix2] {\( b \)};
\node [left=\TikzDistBlocks pt of A, minimum width=\TikzBuchstLength + \TikzLengthPk + \TikzLengthPkk + 2*\TikzDistBlocks pt] {\( v_{2} \)};
\end{tikzpicture}
\normalsize
\caption{The right special word \( v_{2} \) with two possible extensions. To shorten notation we use \( b \DefAs a_{k-1} \) and \( c \DefAs a_{k} \). Unlabelled rectangles denote the block \( \PBlock{k-2} \). \label{fig:GrowPlusOne}}
\end{figure}
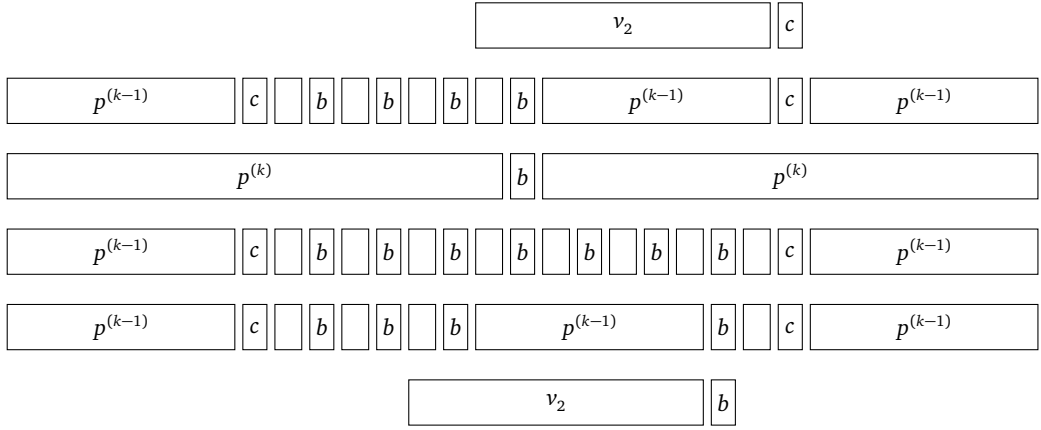

\begin{rem}
\label{rem:An+1An}
Note that \( a_{k-1} \in \Alphab_{k+1} \) holds if and only if \( a_{k-1} \in \Alphab_{k} \) holds, since we always assume \( a_{k} \neq a_{k+1} \).
\end{rem}

Since the initial value \( \Comp( 0 ) = 1 \) is known, the lower bound for the growth yields a lower bound for the complexity in the obvious way. It turns out that it agrees with the upper bound from Proposition~\ref{prop:UpBdCompBlock}:

\begin{prop}
\label{prop:LowBdCompBlock}
For all \( k \geq 0 \) the following inequality holds:
\[ \Comp( \Length{ \PBlock{k} } +1 ) \geq ( \Card{ \Alphab_{k} } - 1 ) \cdot (\Length{ \PBlock{k} }+1) + \CharFkt{ \Alphab_{k+1} } (a_{k}) \cdot ( \Length{ \PBlock{k-1} }+1 ) \, . \]
\end{prop}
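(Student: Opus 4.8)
The plan is to prove this lower bound by induction on $k$, mirroring the telescoping strategy already implicit in the upper‑bound proof. Since $\Comp(0)=1$ and $\Growth(L)=\Comp(L+1)-\Comp(L)$, we have
\[ \Comp(\Length{\PBlock{k}}+1)=\Comp(\Length{\PBlock{k-1}}+1)+\sum_{L=\Length{\PBlock{k-1}}+1}^{\Length{\PBlock{k}}}\Growth(L)\, , \]
so it suffices to bound the $\Length{\PBlock{k}}-\Length{\PBlock{k-1}}=(n_{k}-1)(\Length{\PBlock{k-1}}+1)$ new summands from below (using Propositions~\ref{prop:CompGrow1} and~\ref{prop:GrowGreater}) and feed in the inductive hypothesis for $\Comp(\Length{\PBlock{k-1}}+1)$. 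The base case $k=0$ is instead treated via $\Comp(\Length{\PBlock{0}}+1)=1+\sum_{L=0}^{\Length{\PBlock{0}}}\Growth(L)$.

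For the inductive step I would split the range $\Length{\PBlock{k-1}}+1\leq L\leq\Length{\PBlock{k}}$ into the \emph{inner} part $\Length{\PBlock{k-1}}+1\leq L\leq\Length{\PBlock{k}}-\Length{\PBlock{k-1}}-1$ (containing $(n_{k}-2)(\Length{\PBlock{k-1}}+1)$ values, hence none when $n_{k}=2$) and the \emph{outer} part $\Length{\PBlock{k}}-\Length{\PBlock{k-1}}\leq L\leq\Length{\PBlock{k}}$ (containing $\Length{\PBlock{k-1}}+1$ values). On the inner part Proposition~\ref{prop:CompGrow1} with index $k$ gives $\Growth(L)\geq\Card{\Alphab_{k}}-1$; on the outer part the same proposition with index $k+1$ (its hypothesis $L\leq\Length{\PBlock{k+1}}-\Length{\PBlock{k}}-1$ holds because $\Length{\PBlock{k+1}}-\Length{\PBlock{k}}\geq\Length{\PBlock{k}}+1$) gives $\Growth(L)\geq\Card{\Alphab_{k+1}}-1$. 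In addition, Proposition~\ref{prop:GrowGreater} with index $k$ contributes an extra $1$ to $\Growth(L)$ for each of the $\Length{\PBlock{k-1}}-\Length{\PBlock{k-2}}$ values $\Length{\PBlock{k-1}}+1\leq L\leq 2\Length{\PBlock{k-1}}-\Length{\PBlock{k-2}}$ (a range that lies inside $[\Length{\PBlock{k-1}}+1,\Length{\PBlock{k}}]$), provided $a_{k-1}\in\Alphab_{k+1}$; by Remark~\ref{rem:An+1An} this condition is equivalent to $\CharFkt{\Alphab_{k}}(a_{k-1})=1$, which is exactly the weight appearing in the inductive hypothesis $\Comp(\Length{\PBlock{k-1}}+1)\geq(\Card{\Alphab_{k-1}}-1)(\Length{\PBlock{k-1}}+1)+\CharFkt{\Alphab_{k}}(a_{k-1})(\Length{\PBlock{k-2}}+1)$.

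It then remains to check that the resulting lower bound is at least the claimed value, and here two algebraic facts do the work. First, the two terms carrying the weight $\CharFkt{\Alphab_{k}}(a_{k-1})$ merge via $(\Length{\PBlock{k-2}}+1)+(\Length{\PBlock{k-1}}-\Length{\PBlock{k-2}})=\Length{\PBlock{k-1}}+1$. Second, the cardinalities telescope through $\Card{\Alphab_{j-1}}-\Card{\Alphab_{j}}=1-\CharFkt{\Alphab_{j}}(a_{j-1})$ (immediate from $\Alphab_{j-1}=\Set{a_{j-1}}\cup\Alphab_{j}$ together with $a_{j-1}\neq a_{j}$), so that $\Card{\Alphab_{k-1}}-2\Card{\Alphab_{k}}+\Card{\Alphab_{k+1}}=\CharFkt{\Alphab_{k+1}}(a_{k})-\CharFkt{\Alphab_{k}}(a_{k-1})$. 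After rewriting the block lengths as multiples of $\Length{\PBlock{k-1}}+1$ using $\Length{\PBlock{k}}+1=n_{k}(\Length{\PBlock{k-1}}+1)$, all terms cancel and the step closes exactly, consistent with the fact that this lower bound coincides with the upper bound of Proposition~\ref{prop:UpBdCompBlock}. The base case is analogous: $\Growth(L)\geq\Card{\Alphab_{0}}-1$ for $0\leq L\leq n_{0}-2$ and $\Growth(n_{0}-1)\geq\Card{\Alphab_{1}}-1$ (Proposition~\ref{prop:CompGrow1} with index $1$), and the identity $\Card{\Alphab_{0}}-\Card{\Alphab_{1}}=1-\CharFkt{\Alphab_{1}}(a_{0})$ finishes it.

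The main obstacle is not any single inequality but the bookkeeping: one must verify that each application of Propositions~\ref{prop:CompGrow1} and~\ref{prop:GrowGreater} stays within the stated length ranges — in particular the inner range collapses precisely when $n_{k}=2$, and the index $k-2$ must be read as $-1$ with $\Length{\PBlock{-1}}=0$ when $k=1$ — and that the three indicator functions in play ($\CharFkt{\Alphab_{k}}(a_{k-1})$ from the hypothesis, the condition $a_{k-1}\in\Alphab_{k+1}$ from Proposition~\ref{prop:GrowGreater}, and $\CharFkt{\Alphab_{k+1}}(a_{k})$ from the statement) are correctly matched, which is exactly where Remark~\ref{rem:An+1An} and the cardinality identity above are used. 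Everything else is the routine telescoping computation sketched above.
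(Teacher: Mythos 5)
Your argument is correct and follows essentially the same route as the paper's proof: the same telescoping of \( \Growth \) over the split into the ranges \( [\Length{\PBlock{k-1}}+1, \Length{\PBlock{k}}-\Length{\PBlock{k-1}}-1] \) and \( [\Length{\PBlock{k}}-\Length{\PBlock{k-1}}, \Length{\PBlock{k}}] \), with Proposition~\ref{prop:CompGrow1} applied at indices \( k \) and \( k+1 \), the extra contribution from Proposition~\ref{prop:GrowGreater} (matched via Remark~\ref{rem:An+1An}), the induction hypothesis, and the cardinality identity \( \Card{\Alphab_{j-1}}-\Card{\Alphab_{j}}=1-\CharFkt{\Alphab_{j}}(a_{j-1}) \). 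The bookkeeping you describe, including the special cases \( n_{k}=2 \) and \( k=1 \) with \( \Length{\PBlock{-1}}=0 \), closes the step exactly as in the paper.
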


\begin{proof}
We proceed by induction and express the complexity as a telescoping sum of the growth. For \( k = 0 \) the growth is bounded from below by Proposition~\ref{prop:CompGrow1}. We obtain
\begin{align*}
\Comp( \Length{ \PBlock{0} }+1 ) &= \Growth( \Length{ \PBlock{0} } ) + \sum \nolimits_{L=0}^{\Length{ \PBlock{0} }-1} \Growth( L ) + \Comp( 0 ) \\
& \geq \Card{ \Alphab_{1} } - 1 + ( \Card{ \Alphab_{0} } - 1 ) \cdot \Length{ \PBlock{0} } + 1 \\
&= (\Card{ \Alphab_{0} } - 1) \cdot ( \Length{ \PBlock{0} } + 1 ) + \CharFkt{ \Alphab_{1} }( a_{0} ) \cdot( \Length{ \PBlock{-1} } + 1 ) \, ,
\end{align*}
where we used \( 1 - \Card{ \Alphab_{0} } + \Card{ \Alphab_{1} } = \CharFkt{ \Alphab_{1} }( a_{0} ) \) and \( \Length{ \PBlock{-1} } = 0 \) in the last line. Now assume that the claim is true for \( k-1 \geq 0 \). The bounds from Proposition~\ref{prop:CompGrow1} and~\ref{prop:GrowGreater} yield
\begin{align*}
\Comp( \Length{ \PBlock{k} }+1 ) &= \sum \nolimits_{ L = \Length{ \PBlock{k-1} }+1 }^{ \Length{ \PBlock{k} } - \Length{ \PBlock{k-1} } -1 }{ \Growth( L ) } + \sum \nolimits_{ L = \Length{ \PBlock{k} } - \Length{ \PBlock{k-1} } }^{ \Length{ \PBlock{k} } }{ \Growth( L ) } + \Comp( \Length{ \PBlock{k-1} }+1) \\
& \geq  (\Card{ \Alphab_{k} } - 1) \cdot ( \Length{ \PBlock{k} } - 2 \Length{ \PBlock{k-1} } -1 ) + (\Card{ \Alphab_{k+1} } - 1) \cdot ( \Length{ \PBlock{k-1}} + 1 ) \\
& \qquad + \CharFkt{ \Alphab_{k+1} }( a_{k-1} ) \cdot (\Length{ \PBlock{k-1} } - \Length{ \PBlock{k-2} }) + \Comp( \Length{ \PBlock{k-1} }+1) \\
& = (\Card{ \Alphab_{k} } - 1) \cdot ( \Length{ \PBlock{k} } + 1 - 2 \Length{ \PBlock{k-1} } -2 ) + (\Card{ \Alphab_{k} } - 1 + \CharFkt{ \Alphab_{k+1} }( a_{k} ) - 1) \cdot \\
& \qquad ( \Length{ \PBlock{k-1} } + 1 ) + \CharFkt{ \Alphab_{k+1} }( a_{k-1} ) \cdot (\Length{ \PBlock{k-1} } - \Length{ \PBlock{k-2} }) + \Comp( \Length{ \PBlock{k-1} }+1) \\
& = (\Card{ \Alphab_{k} } - 1) \cdot ( \Length{ \PBlock{k} } +1 ) + \CharFkt{ \Alphab_{k+1} }( a_{k} ) \cdot ( \Length{ \PBlock{k-1} } + 1 ) \\
& \qquad - \big[ ( \Card{ \Alphab_{k-1} } - 1 ) \cdot ( \Length{ \PBlock{k-1} } + 1 ) + \CharFkt{ \Alphab_{k} }( a_{k-1} ) \cdot ( \Length{ \PBlock{k-2} } + 1) \big] \\
& \qquad + \Comp( \Length{ \PBlock{k-1} }+1 ) \\
& \geq ( \Card{ \Alphab_{k} } - 1 ) \cdot ( \Length{ \PBlock{k} } +1 ) + \CharFkt{ \Alphab_{k+1} }( a_{k} ) \cdot ( \Length{ \PBlock{k-1} } + 1 ) \, ,
\end{align*}
where we used \( \Card{ \Alphab_{k} } + 1 - \CharFkt{ \Alphab_{k} }( a_{k-1} ) =  \Card{ \Alphab_{k-1} } \) in the next to last line and the induction hypothesis in the last line.
\end{proof}

Thus we actually have equality in Proposition~\ref{prop:UpBdCompBlock} and~\ref{prop:LowBdCompBlock}. In particular the subwords of \( \Word{ \PBlock{k} }{ a }{ \PBlock{k} } \), with \( a \in \Alphab_{k+1} \), that were counted in the proof of Proposition~\ref{prop:UpBdCompBlock} are pairwise different. Moreover, the complexity grows exactly by the values given in Proposition~\ref{prop:CompGrow1} and~\ref{prop:GrowGreater}. If the growth was faster, then the value at \( \Length{ \PBlock{k} } + 1 \) would be exceeded. For convenience we collect our results in the following corollary. We use that \( \Card{ \Alphab_{k} } - \Card{ \Alphab_{k+1} } = \CharFkt{ \Cmplmt{ \Alphab_{k+1} } }(a_{k}) \) holds for all \( k \).

\begin{cor}
\label{cor:CompGrowth}
For all \( k \geq 0 \) the complexity at length \( \Length{ \PBlock{k} } + 1 \) is given by
\[ \Comp( \Length{ \PBlock{k} } +1 ) = ( \Card{ \Alphab_{k} } - 1 ) \cdot (\Length{ \PBlock{k} }+1) + \CharFkt{ \Alphab_{k+1} } (a_{k}) \cdot ( \Length{ \PBlock{k-1} }+1 ) \, .\]
The growth rate of the complexity function has the following values:
\begin{align*}
\Growth( L ) = {} & \Card{ \Alphab_{0} } - 1 \quad \text{for } 0 \leq L \leq \Length{ \PBlock{0} } - 1 \, , \\
\Growth( L ) = {} & \Card{ \Alphab_{1} } - 1 \quad \text{for } L = \Length{ \PBlock{0} }  \\
\text{and} \quad \Growth( L ) = {} & \Card{ \Alphab_{k} } - 1\\
& - \begin{cases}
0 & \text{if } \Length{ \PBlock{k-1} }+1 \leq L \leq \Length{ \PBlock{k} } - \Length{ \PBlock{k-1} }-1 \\
\Card{ \Alphab_{k} } - \Card{ \Alphab_{k+1} } & \text{if } \Length{ \PBlock{k} } - \Length{ \PBlock{k-1} } \leq L \leq \Length{ \PBlock{k} } 
\end{cases} \\ & + \begin{cases}
\CharFkt{ \Alphab_{k} }( a_{k-1} ) &  \text{if } \Length{ \PBlock{k-1} } + 1 \leq L \leq 2 \Length{ \PBlock{k-1} } - \Length{ \PBlock{k-2} } \\
0 & \text{if } 2 \Length{ \PBlock{k-1} } - \Length{ \PBlock{k-2} } + 1 \leq L \leq \Length{ \PBlock{k} }
\end{cases}
\end{align*}
for \( k \geq 1 \) and \( \Length{ \PBlock{k-1} } + 1 \leq L \leq \Length{ \PBlock{k} } \).
\end{cor}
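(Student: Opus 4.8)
The plan is to assemble the corollary from the preceding propositions rather than to prove anything genuinely new. First I would obtain the formula for \( \Comp( \Length{ \PBlock{k} } + 1 ) \) directly by combining the upper bound of Proposition~\ref{prop:UpBdCompBlock} with the matching lower bound of Proposition~\ref{prop:LowBdCompBlock}; since the two bounds coincide, equality follows for every \( k \geq 0 \).

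For the growth rate \( \Growth \), I would argue that the lower bounds established in Propositions~\ref{prop:CompGrow1} and~\ref{prop:GrowGreater} are in fact attained. The key observation is that the proof of Proposition~\ref{prop:LowBdCompBlock} expresses \( \Comp( \Length{ \PBlock{k} } + 1 ) \) as the telescoping sum \( \Comp( 0 ) + \sum_{L = 0}^{ \Length{ \PBlock{k} } } \Growth( L ) \), bounds each summand from below by the corresponding value from those two propositions, and arrives at precisely the value of \( \Comp( \Length{ \PBlock{k} } + 1 ) \) that we have just confirmed. Consequently the chain of inequalities in that proof must consist entirely of equalities, which forces \( \Growth( L ) \) to equal its stated lower bound for every \( L \) with \( 0 \leq L \leq \Length{ \PBlock{k} } \); letting \( k \to \infty \) then covers all \( L \in \NN_{0} \). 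The initial interval \( 0 \leq L \leq \Length{ \PBlock{0} } - 1 \) and the single length \( L = \Length{ \PBlock{0} } \) are handled in the same manner, using \( \Comp( 0 ) = 1 \) together with the already established value of \( \Comp( \Length{ \PBlock{0} } + 1 ) \).

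It then remains to rewrite the resulting values in the form stated in the corollary. For lengths with \( L \geq \Length{ \PBlock{k} } - \Length{ \PBlock{k-1} } \) one applies Proposition~\ref{prop:CompGrow1} with index \( k+1 \) in place of \( k \), which replaces the bound \( \Card{ \Alphab_{k} } - 1 \) by \( \Card{ \Alphab_{k+1} } - 1 \); via the identity \( \Card{ \Alphab_{k} } - \Card{ \Alphab_{k+1} } = \CharFkt{ \Cmplmt{ \Alphab_{k+1} } }( a_{k} ) \) this produces the ``\( - \)'' term. The ``\( + \)'' term is the extra \( +1 \) contributed by Proposition~\ref{prop:GrowGreater}, which is available precisely for \( \Length{ \PBlock{k-1} } + 1 \leq L \leq 2 \Length{ \PBlock{k-1} } - \Length{ \PBlock{k-2} } \), and there \( \CharFkt{ \Alphab_{k+1} }( a_{k-1} ) = \CharFkt{ \Alphab_{k} }( a_{k-1} ) \) by Remark~\ref{rem:An+1An}.

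The main obstacle --- indeed the only delicate part --- is the bookkeeping of the interval endpoints and the interplay of the indicator functions \( \CharFkt{ \Alphab_{k+1} } \), \( \CharFkt{ \Alphab_{k} } \) and \( \CharFkt{ \Cmplmt{ \Alphab_{k+1} } } \). One has to check, for example, that the first subinterval attached to the ``\( - \)'' term is empty exactly when \( n_{k} = 2 \), and that the two different case distinctions partition \( [ \Length{ \PBlock{k-1} } + 1 , \Length{ \PBlock{k} } ] \) compatibly. None of this is conceptually difficult, but it must be carried out carefully to confirm that the stated growth values, summed over \( [ \Length{ \PBlock{k-1} } + 1 , \Length{ \PBlock{k} } ] \), really telescope to \( \Comp( \Length{ \PBlock{k} } + 1 ) - \Comp( \Length{ \PBlock{k-1} } + 1 ) \) --- which is exactly the computation already performed within the proof of Proposition~\ref{prop:LowBdCompBlock}.
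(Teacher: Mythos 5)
Your proposal is correct and takes essentially the same approach as the paper: equality of the bounds in Propositions~\ref{prop:UpBdCompBlock} and~\ref{prop:LowBdCompBlock} fixes \( \Comp( \Length{ \PBlock{k} } + 1 ) \), and the telescoping sum from the proof of Proposition~\ref{prop:LowBdCompBlock} forces each \( \Growth( L ) \) to equal its lower bound from Propositions~\ref{prop:CompGrow1} and~\ref{prop:GrowGreater} (the paper phrases this as ``if the growth was faster, the value at \( \Length{ \PBlock{k} } + 1 \) would be exceeded''). Your final rewriting via \( \Card{ \Alphab_{k} } - \Card{ \Alphab_{k+1} } = \CharFkt{ \Cmplmt{ \Alphab_{k+1} } }( a_{k} ) \) and Remark~\ref{rem:An+1An} is exactly what the paper does as well.
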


\subsection{The complexity of simple Toeplitz subshifts}
\label{subsec:Complexity}

In the following we compute the complexity from the growth rate in Corollary~\ref{cor:CompGrowth}. Since \( \Growth(L ) \) is defined piecewise, we have to distinguish three cases. Accordingly we split the result into three statements. First of all we treat the case \( L \leq \Length{ \PBlock{0} } + 1 \), where the additional increase of Proposition~\ref{prop:GrowGreater} does not occur.

\begin{prop}[complexity function I]
\label{prop:ComplexI}
The first values of the complexity function are
\begin{align*}
\Comp( L ) &= ( \Card{ \Alphab_{0} } - 1 ) L + 1 & & \hspace{-4em} \text{for } 0 \leq L \leq \Length{ \PBlock{0} } \\
\text{and} \quad \Comp( L ) &= ( \Card{ \Alphab_{0} } - 1 ) L + \CharFkt{ \Alphab_{1} }( a_{0} ) & & \hspace{-4em} \text{for } L = \Length{ \PBlock{0} } + 1 \, .
\end{align*}
\end{prop}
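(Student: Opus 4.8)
The plan is to read both formulas directly off the growth rate computed in Corollary~\ref{cor:CompGrowth}, via a telescoping sum starting from the known value \( \Comp(0) = 1 \). All the combinatorial work has already been done in Subsection~\ref{subsec:IneqCompGrow}; this proposition is just the evaluation of the sum over the first block.

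First I would treat the range \( 0 \leq L \leq \Length{\PBlock{0}} \). Corollary~\ref{cor:CompGrowth} gives \( \Growth(j) = \Card{\Alphab_{0}} - 1 \) for every \( 0 \leq j \leq \Length{\PBlock{0}} - 1 \), so expressing the complexity as a telescoping sum yields
\[ \Comp(L) = \Comp(0) + \sum_{j=0}^{L-1} \Growth(j) = 1 + L \cdot ( \Card{\Alphab_{0}} - 1 ) \, , \]
which is the first claim (the case \( L = 0 \) being the trivial \( \Comp(0) = 1 \)).

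For \( L = \Length{\PBlock{0}} + 1 \) I would add the one further term \( \Growth(\Length{\PBlock{0}}) = \Card{\Alphab_{1}} - 1 \), again supplied by Corollary~\ref{cor:CompGrowth}, obtaining
\[ \Comp(\Length{\PBlock{0}}+1) = \Comp(\Length{\PBlock{0}}) + \Growth(\Length{\PBlock{0}}) = 1 + \Length{\PBlock{0}} ( \Card{\Alphab_{0}} - 1 ) + \Card{\Alphab_{1}} - 1 \, . \]
It then remains to match this against \( ( \Card{\Alphab_{0}} - 1 )( \Length{\PBlock{0}} + 1 ) + \CharFkt{\Alphab_{1}}(a_{0}) \), i.e.\ to verify \( \Card{\Alphab_{1}} = \Card{\Alphab_{0}} - 1 + \CharFkt{\Alphab_{1}}(a_{0}) \). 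This is precisely the \( k = 0 \) instance of the identity \( \Card{\Alphab_{k}} - \Card{\Alphab_{k+1}} = \CharFkt{\Cmplmt{\Alphab_{k+1}}}(a_{k}) = 1 - \CharFkt{\Alphab_{k+1}}(a_{k}) \) noted just before Corollary~\ref{cor:CompGrowth}. If necessary I would check it by hand: since \( \Alphab_{0} = \Alphab_{1} \DisjCup \Set{a_{0}} \) when \( a_{0} \notin \Alphab_{1} \) and \( \Alphab_{0} = \Alphab_{1} \) otherwise, both sides equal \( -1 \) in the first case and \( 0 \) in the second.

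There is no genuine obstacle here; the only point requiring a little care is the bookkeeping with the indicator \( \CharFkt{\Alphab_{1}}(a_{0}) \) at the single length \( L = \Length{\PBlock{0}} + 1 \), which is exactly why that value is singled out in the statement.
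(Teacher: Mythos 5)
Your proof is correct and follows essentially the same route as the paper: both read the values off Corollary~\ref{cor:CompGrowth} via a telescoping sum starting from \( \Comp(0)=1 \). The only cosmetic difference is at \( L = \Length{\PBlock{0}}+1 \), where the paper simply cites the exact value \( \Comp(\Length{\PBlock{k}}+1) \) already recorded in that corollary (with \( \Length{\PBlock{-1}}=0 \)), whereas you re-derive it by adding \( \Growth(\Length{\PBlock{0}}) = \Card{\Alphab_{1}}-1 \) and checking the identity \( \Card{\Alphab_{1}} = \Card{\Alphab_{0}} - 1 + \CharFkt{\Alphab_{1}}(a_{0}) \), which is a valid (and correctly verified) alternative bookkeeping of the same fact.
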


\begin{proof}
By Corollary~\ref{cor:CompGrowth} we have \( \Growth( L ) = \Alphab_{0} - 1 \) for \( 0 \leq L \leq \Length{ \PBlock{0} } - 1 \). Now \( \Comp( 0)  = 1 \) yields
\[ \Comp( L ) = \sum \nolimits_{l = 0}^{L-1} \Growth( l ) + \Comp( 0 ) = L ( \Alphab_{0} - 1 ) + 1\]
for \( 0 \leq L \leq \Length{ \PBlock{0} } \). For \( L = \Length{ \PBlock{0} }+1 \) the claim is already contained in Corollary~\ref{cor:CompGrowth}.
\end{proof}

Now we treat the case \( L \geq \Length{ \PBlock{0} }+2 \). Note that the second and the third summand in the growth function are defined piecewise. If \( L \) is close to \( \Length{ \PBlock{k-1} } + 1 \), then the first case applies in both summands. If \( L \) is close to \( \Length{ \PBlock{k} } \), then the second case applies in both summands. For intermediate \( L \) it depends on \( n_{k} \) in which summand the case changes first. This causes the growth function to be different for \( n_{k} = 2 \) and \( n_{k} > 2 \). First, we consider \( n_{k} = 2 \):

\begin{thm}[complexity function II]
\label{thm:ComplexII}
For \( k \geq 1 \) with \( n_{k}  = 2 \), the complexity function in the range \( \Length{ \PBlock{k-1} } + 2 \leq L \leq \Length{ \PBlock{k} } + 1 \) is given by
\begin{align*}
\Comp( L ) &= ( \Card{ \Alphab_{k+1} } - 1 ) L + ( \Card{ \Alphab_{k-1} } - \Card{ \Alphab_{k+1} } ) ( \Length{ \PBlock{k-1} } + 1) \\
& \quad + \CharFkt{ \Alphab_{k} }( a_{k-1} ) \cdot \begin{cases}
- \Length{ \PBlock{k-1} } + \Length{ \PBlock{k-2} } + L & \text{if } \Length{ \PBlock{k-1} } + 2 \leq L \leq \Length{ \PBlock{k} } - \Length{ \PBlock{k-2}}  \\
\Length{ \PBlock{k-1} } + 1 & \text{if } \Length{ \PBlock{k} } - \Length{ \PBlock{k-2} } + 1 \leq L \leq \Length{ \PBlock{k} } + 1
\end{cases} \, .
\end{align*}
\end{thm}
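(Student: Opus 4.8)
The plan is to read off $\Comp(L)$ from the growth rate $\Growth$ computed in Corollary~\ref{cor:CompGrowth} via a telescoping sum, starting from the already known value $\Comp(\Length{\PBlock{k-1}}+1)$. First I would specialise to the hypothesis $n_{k}=2$: using $\Length{\PBlock{k}}+1 = n_{k}\,(\Length{\PBlock{k-1}}+1) = 2\,(\Length{\PBlock{k-1}}+1)$ we get $\Length{\PBlock{k}} - \Length{\PBlock{k-1}} - 1 = \Length{\PBlock{k-1}}$ and $\Length{\PBlock{k}} - \Length{\PBlock{k-1}} = \Length{\PBlock{k-1}}+1$. Plugging this into the formula of Corollary~\ref{cor:CompGrowth} for $\Growth(L)$ on the range $\Length{\PBlock{k-1}}+1 \leq L \leq \Length{\PBlock{k}}$, the first case of the middle (``subtract'') branch shrinks to the empty range $\Length{\PBlock{k-1}}+1 \leq L \leq \Length{\PBlock{k-1}}$, so one always subtracts $\Card{\Alphab_{k}} - \Card{\Alphab_{k+1}}$, and with $\Card{\Alphab_{k}} - 1 - (\Card{\Alphab_{k}} - \Card{\Alphab_{k+1}}) = \Card{\Alphab_{k+1}} - 1$ one is left with
\[ \Growth(L) = \Card{\Alphab_{k+1}} - 1 + \begin{cases} \CharFkt{\Alphab_{k}}(a_{k-1}) & \text{if } \Length{\PBlock{k-1}}+1 \leq L \leq 2\Length{\PBlock{k-1}} - \Length{\PBlock{k-2}} \\ 0 & \text{if } 2\Length{\PBlock{k-1}} - \Length{\PBlock{k-2}} + 1 \leq L \leq \Length{\PBlock{k}} \end{cases} \, , \]
where the threshold $2\Length{\PBlock{k-1}} - \Length{\PBlock{k-2}}$ lies strictly between $\Length{\PBlock{k-1}}$ and $\Length{\PBlock{k}} = 2\Length{\PBlock{k-1}}+1$, since $\PBlock{k-2}$ is a proper prefix of $\PBlock{k-1}$.

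Next I would use the telescoping identity $\Comp(L) = \Comp(\Length{\PBlock{k-1}}+1) + \sum_{l = \Length{\PBlock{k-1}}+1}^{L-1} \Growth(l)$ for $\Length{\PBlock{k-1}}+2 \leq L \leq \Length{\PBlock{k}}+1$, with $\Comp(\Length{\PBlock{k-1}}+1) = (\Card{\Alphab_{k-1}}-1)(\Length{\PBlock{k-1}}+1) + \CharFkt{\Alphab_{k}}(a_{k-1})(\Length{\PBlock{k-2}}+1)$ supplied by Corollary~\ref{cor:CompGrowth}. The summation range splits according to whether every index $l$ stays below the threshold or not, and since $\Length{\PBlock{k}} - \Length{\PBlock{k-2}} = 2\Length{\PBlock{k-1}} - \Length{\PBlock{k-2}} + 1$ these are exactly the cases $L \leq \Length{\PBlock{k}} - \Length{\PBlock{k-2}}$ and $L \geq \Length{\PBlock{k}} - \Length{\PBlock{k-2}}+1$ of the statement. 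In the first case all $L - 1 - \Length{\PBlock{k-1}}$ summands carry the extra $\CharFkt{\Alphab_{k}}(a_{k-1})$, so $\sum \Growth(l) = (L - 1 - \Length{\PBlock{k-1}})\bigl(\Card{\Alphab_{k+1}} - 1 + \CharFkt{\Alphab_{k}}(a_{k-1})\bigr)$; in the second case exactly $\Length{\PBlock{k-1}} - \Length{\PBlock{k-2}}$ of them do, so $\sum \Growth(l) = (L - 1 - \Length{\PBlock{k-1}})(\Card{\Alphab_{k+1}} - 1) + (\Length{\PBlock{k-1}} - \Length{\PBlock{k-2}})\CharFkt{\Alphab_{k}}(a_{k-1})$.

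Finally I would add $\Comp(\Length{\PBlock{k-1}}+1)$ and simplify, abbreviating $P \DefAs \Length{\PBlock{k-1}}+1$. The part not involving $\CharFkt{\Alphab_{k}}(a_{k-1})$ is $(\Card{\Alphab_{k-1}}-1)P + (L - P)(\Card{\Alphab_{k+1}} - 1) = (\Card{\Alphab_{k+1}} - 1)L + (\Card{\Alphab_{k-1}} - \Card{\Alphab_{k+1}})P$, which is the first line of the asserted formula; collecting the $\CharFkt{\Alphab_{k}}(a_{k-1})$-terms gives coefficient $(\Length{\PBlock{k-2}}+1) + (L - 1 - \Length{\PBlock{k-1}}) = -\Length{\PBlock{k-1}} + \Length{\PBlock{k-2}} + L$ in the first case and $(\Length{\PBlock{k-2}}+1) + (\Length{\PBlock{k-1}} - \Length{\PBlock{k-2}}) = \Length{\PBlock{k-1}}+1$ in the second, matching the two branches of the stated $\CharFkt{\Alphab_{k}}(a_{k-1})$-coefficient. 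The whole argument is bookkeeping on top of Corollary~\ref{cor:CompGrowth}; the only point requiring genuine care is matching the breakpoints of the piecewise $\Growth$ with the two cases of the statement — in particular verifying that the first case of the ``subtract'' branch of $\Growth$ becomes vacuous precisely because $n_{k}=2$ — and there is no conceptually hard step.
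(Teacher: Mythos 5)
Your proposal is correct and follows essentially the same route as the paper: specialise the growth formula of Corollary~\ref{cor:CompGrowth} using $\Length{\PBlock{k}} = 2\Length{\PBlock{k-1}}+1$, telescope from the known value $\Comp(\Length{\PBlock{k-1}}+1)$, split at the threshold $\Length{\PBlock{k}}-\Length{\PBlock{k-2}}$, and simplify. Your explicit check that the ``subtract'' branch collapses because its first case becomes vacuous under $n_{k}=2$ is just a more detailed spelling-out of the paper's one-line simplification, so there is nothing to add.
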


\begin{proof}
Because of \( \Length{ \PBlock{k} } = 2 \Length{ \PBlock{k-1} } + 1 \), the growth from Corollary~\ref{cor:CompGrowth} simplifies to
\[ \Growth( L ) = \Card{ \Alphab_{k+1} } - 1 + \begin{cases}
\CharFkt{ \Alphab_{k} }( a_{k-1} ) & \text{if } \Length{ \PBlock{k-1} } + 1 \leq L \leq \Length{ \PBlock{k} } - \Length{ \PBlock{k-2} } - 1 \\
0 & \text{if } \Length{ \PBlock{k} } - \Length{ \PBlock{k-2} } \leq L \leq \Length{ \PBlock{k} }
\end{cases} \, .\]
Together, the value of \( \Comp( \Length{ \PBlock{k-1} } + 1 ) \) and the growth yield the complexity. First we consider the range \( \Length{ \PBlock{k-1} } + 2 \leq L \leq \Length{ \PBlock{k} } - \Length{ \PBlock{k-2} } \):
\begin{align*}
\Comp( L ) &= \Comp( \Length{ \PBlock{k-1} } + 1 ) + \sum \nolimits_{l = \Length{ \PBlock{k-1} }+1}^{L-1} \Growth( l ) \\
&= ( \Card{ \Alphab_{k-1} } - 1 ) \cdot (\Length{ \PBlock{k-1} }+1) + \CharFkt{ \Alphab_{k} }( a_{k-1} ) \cdot ( \Length{ \PBlock{k-2} }+1 ) \\
& \qquad + (L - \Length{ \PBlock{k-1} } - 1) \cdot ( \Card{ \Alphab_{k+1} } - 1 + \CharFkt{ \Alphab_{k} }( a_{k-1}) ) \\
&= ( \Card{ \Alphab_{k-1} } - \Card{ \Alphab_{k+1} } ) ( \Length{ \PBlock{k-1} } + 1 ) - \CharFkt{ \Alphab_{k} }( a_{k-1} ) \cdot ( \Length{ \PBlock{k-1} } - \Length{ \PBlock{k-2} } ) \\
& \qquad + ( \Card{ \Alphab_{k+1} } - 1 + \CharFkt{ \Alphab_{k} }( a_{k-1} ) ) L \, .
\end{align*}
Similarly we obtain for \( \Length{ \PBlock{k} } - \Length{ \PBlock{k-2} } + 1 \leq L \leq \Length{ \PBlock{k} } + 1 \):
\begin{align*}
\Comp( L ) &= \Comp( \Length{ \PBlock{k-1} } + 1 ) + \sum \nolimits_{ j = \Length{ \PBlock{k-1} }+1 }^{L-1} \Growth( j ) \\
&= ( \Card{ \Alphab_{k-1} } - 1 ) \cdot ( \Length{ \PBlock{k-1} } + 1 ) + \CharFkt{ \Alphab_{k} } ( a_{k-1} ) \cdot ( \Length{ \PBlock{k-2} } + 1 ) \\
& \qquad + ( \Card{ \Alphab_{k+1} } - 1 ) ( L - 1 - \Length{ \PBlock{k-1} } ) + \CharFkt{ \Alphab_{k} }( a_{k-1} ) ( \Length{ \PBlock{k-1} } - \Length{ \PBlock{k-2} } ) \\
&= ( \Card{ \Alphab_{k-1} } - \Card{ \Alphab_{k+1} } ) ( \Length{ \PBlock{k-1} } + 1) + \CharFkt{ \Alphab_{k} }( a_{k-1} ) \cdot ( \Length{ \PBlock{k-1} } + 1 ) \\
& \qquad + ( \Card{ \Alphab_{k+1} } - 1 ) L \, . \qedhere
\end{align*} 
\end{proof}

In the next theorem we treat the remaining case \( \Length{ \PBlock{k-1} } + 2 \leq L \leq \Length{ \PBlock{k} } + 1 \) with \( n_{k} > 2 \). Except for different values of the growth function, we proceed exactly as for \( n_{k} = 2 \). There are three different intervals to be considered. To shorten notation we denote them as follows:
\begin{align*}
I_{1} &\DefAs \ZZ \cap \big[ \Length{ \PBlock{k-1} } + 2 \, , \, 2 \Length{ \PBlock{k-1} } - \Length{ \PBlock{k-2} } + 1 \big] \, ,\\
I_{2} &\DefAs\ZZ \cap \big[ 2 \Length{ \PBlock{k-1} } - \Length{ \PBlock{k-2} } + 2 \, , \, \Length{ \PBlock{k} } - \Length{ \PBlock{k-1} } \big] \, ,\\
I_{3} &\DefAs \ZZ \cap \big[ \Length{ \PBlock{k} } - \Length{ \PBlock{k-1} } + 1 \, , \, \Length{ \PBlock{k} } +1 \big] \, .
\end{align*}

\begin{thm}[complexity function III]
\label{thm:ComplexIII}
For \( k \geq 1 \) with \( n_{k} > 2 \), the complexity function in the range \( \Length{ \PBlock{k-1} } + 2 \leq L \leq \Length{ \PBlock{k} } + 1 \) is given by
\begin{align*}
\Comp( L ) = {} & ( \Length{ \PBlock{k-1} } + 1 )  + ( \Card{ \Alphab_{k} } - 1 ) L \\
& {} + \begin{cases}
\CharFkt{ \Alphab_{k} }( a_{k-1} )( L - 2 \Length{ \PBlock{k-1} } + \Length{ \PBlock{k-2} } - 1 ) & \text{if } L \in I_{1} \\
0 & \text{if } L \in I_{2} \\
- \CharFkt{ \Cmplmt{ \Alphab_{k+1} } }( a_{k} ) ( L - \Length{ \PBlock{k} } + \Length{ \PBlock{k-1} } ) & \text{if } L \in I_{3}
\end{cases} \, .
\end{align*}
\end{thm}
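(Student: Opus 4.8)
The plan is to mirror the proof of Theorem~\ref{thm:ComplexII}: recover $\Comp(L)$ from the growth rate of Corollary~\ref{cor:CompGrowth} via the telescoping identity $\Comp(L) = \Comp(\Length{\PBlock{k-1}}+1) + \sum_{l=\Length{\PBlock{k-1}}+1}^{L-1}\Growth(l)$, using for the starting value the formula $\Comp(\Length{\PBlock{k-1}}+1) = (\Card{\Alphab_{k-1}}-1)(\Length{\PBlock{k-1}}+1) + \CharFkt{\Alphab_{k}}(a_{k-1})(\Length{\PBlock{k-2}}+1)$ from the same corollary. The only difference to the case $n_{k}=2$ lies in which pieces of the piecewise growth function the summation index $l$ meets.

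First I would record the arithmetic consequence of $n_{k}>2$. Since $\Length{\PBlock{k}}+1 = n_{k}(\Length{\PBlock{k-1}}+1) \geq 3(\Length{\PBlock{k-1}}+1)$ and $\Length{\PBlock{k-2}} \geq 0$, one gets $2\Length{\PBlock{k-1}} - \Length{\PBlock{k-2}} \leq \Length{\PBlock{k}} - \Length{\PBlock{k-1}} - 1$. Hence the sub-interval on which Proposition~\ref{prop:GrowGreater} adds an extra $\CharFkt{\Alphab_{k}}(a_{k-1})$ to the growth ends strictly before the sub-interval on which the repetition inside $\Word{\PBlock{k}}{a_{k}}{\PBlock{k}}$ subtracts $\Card{\Alphab_{k}} - \Card{\Alphab_{k+1}}$ begins; this is exactly the phenomenon that forced a separate treatment for $n_{k}=2$, where those two sub-intervals overlap. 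Consequently, on $\Length{\PBlock{k-1}}+1 \leq l \leq \Length{\PBlock{k}}$ the growth from Corollary~\ref{cor:CompGrowth} is the constant $\Card{\Alphab_{k}} - 1 + \CharFkt{\Alphab_{k}}(a_{k-1})$ for $l \leq 2\Length{\PBlock{k-1}} - \Length{\PBlock{k-2}}$, the constant $\Card{\Alphab_{k}} - 1$ for $2\Length{\PBlock{k-1}} - \Length{\PBlock{k-2}} + 1 \leq l \leq \Length{\PBlock{k}} - \Length{\PBlock{k-1}} - 1$, and the constant $\Card{\Alphab_{k+1}} - 1$ for $\Length{\PBlock{k}} - \Length{\PBlock{k-1}} \leq l \leq \Length{\PBlock{k}}$ (the last value using $\Card{\Alphab_{k}} - \Card{\Alphab_{k+1}} = \CharFkt{\Cmplmt{\Alphab_{k+1}}}(a_{k})$, and Remark~\ref{rem:An+1An} so that the $\CharFkt{\Alphab_{k}}(a_{k-1})$-term is really absent there). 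The three summation ranges this produces are precisely $I_{1}$, $I_{2}$, $I_{3}$.

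Next I would evaluate the telescoping sum in each case, bootstrapping each from the previous one so that only a constant growth value is summed at a time. For $L \in I_{1}$ every summand equals $\Card{\Alphab_{k}} - 1 + \CharFkt{\Alphab_{k}}(a_{k-1})$; plugging in $\Comp(\Length{\PBlock{k-1}}+1)$ and using the identity $\Card{\Alphab_{k-1}} - 1 = \Card{\Alphab_{k}} - \CharFkt{\Alphab_{k}}(a_{k-1})$ (already used in the proof of Proposition~\ref{prop:LowBdCompBlock}), the $\Length{\PBlock{k-1}}$- and $\Length{\PBlock{k-2}}$-terms collect into the claimed affine function $(\Length{\PBlock{k-1}}+1) + (\Card{\Alphab_{k}}-1)L + \CharFkt{\Alphab_{k}}(a_{k-1})(L - 2\Length{\PBlock{k-1}} + \Length{\PBlock{k-2}} - 1)$. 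For $L \in I_{2}$ I would start from the value already obtained at $L = 2\Length{\PBlock{k-1}} - \Length{\PBlock{k-2}} + 1$ (where the $\CharFkt{\Alphab_{k}}(a_{k-1})$-term evaluates to zero) and add $\Card{\Alphab_{k}} - 1$ per further step, getting $(\Length{\PBlock{k-1}}+1) + (\Card{\Alphab_{k}}-1)L$. For $L \in I_{3}$ I would start from the value at $L = \Length{\PBlock{k}} - \Length{\PBlock{k-1}}$ and add $\Card{\Alphab_{k+1}} - 1$ per further step, then rewrite the linear-in-$L$ remainder via $\Card{\Alphab_{k+1}} - \Card{\Alphab_{k}} = -\CharFkt{\Cmplmt{\Alphab_{k+1}}}(a_{k})$ to obtain $(\Length{\PBlock{k-1}}+1) + (\Card{\Alphab_{k}}-1)L - \CharFkt{\Cmplmt{\Alphab_{k+1}}}(a_{k})(L - \Length{\PBlock{k}} + \Length{\PBlock{k-1}})$.

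The only genuine work is the bookkeeping: keeping track, for an arbitrary $L$ in each of $I_{1},I_{2},I_{3}$, of exactly how many summands carry the $\CharFkt{\Alphab_{k}}(a_{k-1})$- or the $\Card{\Alphab_{k}} - \Card{\Alphab_{k+1}}$-correction, and never letting the summation index cross a case boundary unnoticed. The preliminary inequality between $2\Length{\PBlock{k-1}} - \Length{\PBlock{k-2}}$ and $\Length{\PBlock{k}} - \Length{\PBlock{k-1}}$ is what makes this clean, and after it each of the three cases collapses to the asserted formula by a one-line simplification, exactly as for $n_{k}=2$ in Theorem~\ref{thm:ComplexII}; no idea beyond Corollary~\ref{cor:CompGrowth} is needed.
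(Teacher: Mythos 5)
Your proposal is correct and takes essentially the same route as the paper: the inequality \( \Length{ \PBlock{k} } - \Length{ \PBlock{k-1} } > 2 \Length{ \PBlock{k-1} } - \Length{ \PBlock{k-2} } \) forced by \( n_{k} > 2 \) reduces the growth function of Corollary~\ref{cor:CompGrowth} to three constant values on \( I_{1}, I_{2}, I_{3} \), and the telescoping sum starting from \( \Comp( \Length{ \PBlock{k-1} } + 1 ) \) then collapses to the stated formulas exactly as in the paper's proof. The only differences are cosmetic — you bootstrap each interval from the endpoint of the previous one instead of summing from \( \Length{ \PBlock{k-1} } + 1 \) in every case, and your appeal to Remark~\ref{rem:An+1An} for the vanishing of the \( \CharFkt{ \Alphab_{k} }( a_{k-1} ) \)-term on \( I_{3} \) is superfluous, since the interval inequality you established already gives it.
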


\begin{proof}
The assumption \( n_{k} > 2 \) yields \( \Length{ \PBlock{ k } } - \Length{ \PBlock{k-1} } = ( n_{k} - 1 ) ( \Length{ \PBlock{k-1} } + 1 ) > 2 \Length{ \PBlock{k-1} } - \Length{ \PBlock{ k-2 } } \). Thus the growth from Corollary~\ref{cor:CompGrowth} simplifies to
\[ \Growth( L ) = \Card{ \Alphab_{k} } - 1 + \begin{cases}
\CharFkt{ \Alphab_{k} }( a_{k-1} ) & \text{if } L+1 \in I_{1} \\
0 & \text{if } L+1 \in I_{2} \\
- ( \Card{ \Alphab_{k} } - \Card{ \Alphab_{k+1} }  ) & \text{if } L+1 \in I_{3}
\end{cases} \, .\]
We compute the complexity from the growth and the value of \( \Comp( \Length{ \PBlock{k-1} } + 1 ) \):
\begin{align*}
& \Comp( L ) = \Comp( \Length{ \PBlock{k-1} } + 1 ) + \sum \nolimits_{l = \Length{ \PBlock{k-1} } + 1 }^{L-1} \Growth( l ) \\
&= ( \Card{ \Alphab_{k-1} } - 1 ) ( \Length{ \PBlock{k-1} } + 1 ) + \CharFkt{ \Alphab_{k} }( a_{k-1} ) ( \Length{ \PBlock{k-2} } + 1 ) + ( \Card{ \Alphab_{k} } - 1 ) ( L - \Length{ \PBlock{k-1} } - 1 ) \\
& \quad + \begin{cases}
\CharFkt{ \Alphab_{k} }( a_{k-1} ) ( L - \Length{ \PBlock{k-1} } - 1 ) & \text{if } L \in I_{1} \\
\CharFkt{ \Alphab_{k} }( a_{k-1} ) ( \Length{ \PBlock{k-1} } - \Length{ \PBlock{k-2} } ) & \text{if } L \in I_{2} \\
\CharFkt{ \Alphab_{k} }( a_{k-1} ) ( \Length{ \PBlock{k-1} } - \Length{ \PBlock{k-2} } ) - ( \Card{ \Alphab_{k} } - \Card{ \Alphab_{k+1} } ) ( L - \Length{ \PBlock{k} } + \Length{ \PBlock{k-1} } ) & \text{if } L \in I_{3}
\end{cases}\\
&= ( \Card{ \Alphab_{k-1} } - \Card{ \Alphab_{k} } ) ( \Length{ \PBlock{k-1} } + 1 ) + \CharFkt{ \Alphab_{k} }( a_{k-1} ) ( \Length{ \PBlock{k-1} } + 1 ) + ( \Card{ \Alphab_{k} } - 1) L \\
& \quad + \begin{cases}
\CharFkt{ \Alphab_{k} }( a_{k-1} )( L - 2 \Length{ \PBlock{k-1} } + \Length{ \PBlock{k-2} } - 1 ) & \text{if } L \in I_{1} \\
0 & \text{if } L \in I_{2} \\
- ( \Card{ \Alphab_{k} } - \Card{ \Alphab_{k+1} } ) ( L - \Length{ \PBlock{k} } + \Length{ \PBlock{k-1} } ) & \text{if } L \in I_{3}
\end{cases} \\
&= ( \Length{ \PBlock{k-1} } + 1 )  + ( \Card{ \Alphab_{k} } - 1) L + \begin{cases}
\CharFkt{ \Alphab_{k} }( a_{k-1} )( L - 2 \Length{ \PBlock{k-1} } + \Length{ \PBlock{k-2} } - 1 ) & \text{if } L \in I_{1} \\
0 & \text{if } L \in I_{2} \\
- ( \Card{ \Alphab_{k} } - \Card{ \Alphab_{k+1} } ) ( L - \Length{ \PBlock{k} } + \Length{ \PBlock{k-1} } ) & \text{if } L \in I_{3}
\end{cases} \, .
\end{align*}
Now the relation \( \Card{ \Alphab_{k} } - \Card{ \Alphab_{k+1} } = \CharFkt{ \Cmplmt{ \Alphab_{k+1} } }( a_{k} ) \) yields the claim.
\end{proof}

The previous results completely describe the complexity function. As we can see, its value depends on whether or not \( a_{k-1} \) is in \( \Alphab_{k} \). Since the alphabet is finite, we have \( \Alphab_{k} = \AlphabEv \) for all sufficiently large \( k \). In this case, the complexity function simplifies and even the difference between \( n_{k} = 2 \) and \( n_{k} > 2 \) vanishes:

\begin{cor}
\label{cor:CompLargeL}
Let \( k \) be large enough such that \( \Alphab_{k-1} = \AlphabEv \) holds. Then the complexity function in the range \( \Length{ \PBlock{k-1} } + 2 \leq L \leq \Length{ \PBlock{k} } + 1 \) is given by
\begin{align*}
\Comp( L ) &= \begin{cases}
\Card{ \AlphabEv } \cdot L - \Length{ \PBlock{k-1} } + \Length{ \PBlock{k-2} } & \text{if } \Length{ \PBlock{k-1} } + 2 \leq L \leq 2 \Length{ \PBlock{k-1} } - \Length{ \PBlock{k-2} } + 1 \\
( \Card{ \AlphabEv } - 1 ) \cdot L + \Length{ \PBlock{k-1} } + 1 & \text{if } 2 \Length{ \PBlock{k-1} } - \Length{ \PBlock{k-2} } + 2 \leq L \leq \Length{ \PBlock{k} } + 1
\end{cases} \, .
\end{align*}
\end{cor}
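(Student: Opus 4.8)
The plan is to derive the corollary directly from Theorem~\ref{thm:ComplexII} and Theorem~\ref{thm:ComplexIII} by specialising all alphabet-dependent quantities to the situation $\Alphab_{k-1} = \AlphabEv$, and then checking that the two cases $n_{k} = 2$ and $n_{k} > 2$ collapse to the same answer. First I would note that, since $(\Alphab_{j})_{j}$ is a decreasing sequence of sets with intersection $\AlphabEv$, the hypothesis $\Alphab_{k-1} = \AlphabEv$ forces $\Alphab_{j} = \AlphabEv$ for every $j \geq k-1$; in particular $\Card{ \Alphab_{k-1} } = \Card{ \Alphab_{k} } = \Card{ \Alphab_{k+1} } = \Card{ \AlphabEv }$. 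Moreover $a_{k-1} \in \Alphab_{k-1} = \Alphab_{k}$ and $a_{k} \in \Alphab_{k} = \Alphab_{k+1}$, so that $\CharFkt{ \Alphab_{k} }( a_{k-1} ) = 1$, $\CharFkt{ \Alphab_{k+1} }( a_{k} ) = 1$, and consequently $\CharFkt{ \Cmplmt{ \Alphab_{k+1} } }( a_{k} ) = 0$. (Such $k$ exists by the discussion following the definition of $\EventNr$, and is automatically $\geq 2$.)

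Next I would substitute these values into the formula of Theorem~\ref{thm:ComplexII}, i.e.\ the case $n_{k} = 2$. The coefficient $\Card{ \Alphab_{k-1} } - \Card{ \Alphab_{k+1} }$ vanishes and the indicator $\CharFkt{ \Alphab_{k} }( a_{k-1} )$ becomes $1$; using $\Length{ \PBlock{k} } = 2 \Length{ \PBlock{k-1} } + 1$ one checks that the two subintervals appearing there, $[ \Length{ \PBlock{k-1} } + 2 , \Length{ \PBlock{k} } - \Length{ \PBlock{k-2} } ]$ and $[ \Length{ \PBlock{k} } - \Length{ \PBlock{k-2} } + 1 , \Length{ \PBlock{k} } + 1 ]$, coincide with $[ \Length{ \PBlock{k-1} } + 2 , 2 \Length{ \PBlock{k-1} } - \Length{ \PBlock{k-2} } + 1 ]$ and $[ 2 \Length{ \PBlock{k-1} } - \Length{ \PBlock{k-2} } + 2 , \Length{ \PBlock{k} } + 1 ]$. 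Collecting terms turns the first branch into $\Card{ \AlphabEv } \cdot L - \Length{ \PBlock{k-1} } + \Length{ \PBlock{k-2} }$ and the second into $( \Card{ \AlphabEv } - 1 ) \cdot L + \Length{ \PBlock{k-1} } + 1$, which is exactly the claimed formula.

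Then I would perform the analogous substitution in Theorem~\ref{thm:ComplexIII}, the case $n_{k} > 2$. On the interval $I_{1}$ the contribution $\CharFkt{ \Alphab_{k} }( a_{k-1} )( L - 2 \Length{ \PBlock{k-1} } + \Length{ \PBlock{k-2} } - 1 )$ combines with $( \Length{ \PBlock{k-1} } + 1 ) + ( \Card{ \AlphabEv } - 1 ) L$ to give again $\Card{ \AlphabEv } \cdot L - \Length{ \PBlock{k-1} } + \Length{ \PBlock{k-2} }$; on $I_{2}$ and on $I_{3}$ the extra terms vanish (on $I_{3}$ because $\CharFkt{ \Cmplmt{ \Alphab_{k+1} } }( a_{k} ) = 0$), leaving $( \Length{ \PBlock{k-1} } + 1 ) + ( \Card{ \AlphabEv } - 1 ) L$. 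Since $I_{1} = \ZZ \cap [ \Length{ \PBlock{k-1} } + 2 , 2 \Length{ \PBlock{k-1} } - \Length{ \PBlock{k-2} } + 1 ]$ and $I_{2} \cup I_{3} = \ZZ \cap [ 2 \Length{ \PBlock{k-1} } - \Length{ \PBlock{k-2} } + 2 , \Length{ \PBlock{k} } + 1 ]$, both cases $n_{k} = 2$ and $n_{k} > 2$ produce precisely the two-interval description in the statement, completing the proof.

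I do not expect any genuine obstacle here: the argument is a routine specialisation. The only point requiring a little care — and hence the ``hard part'' — is the bookkeeping, namely verifying that the differently-structured interval partitions of Theorems~\ref{thm:ComplexII} and~\ref{thm:ComplexIII} really merge into the same pair of intervals once the distinction between $n_{k} = 2$ and $n_{k} > 2$ disappears, and that the boundary value $L = \Length{ \PBlock{k} } - \Length{ \PBlock{k-2} }$ from Theorem~\ref{thm:ComplexII} is correctly identified with $L = 2 \Length{ \PBlock{k-1} } - \Length{ \PBlock{k-2} } + 1$ via $\Length{ \PBlock{k} } = 2 \Length{ \PBlock{k-1} } + 1$.
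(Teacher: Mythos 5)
Your proposal is correct and takes essentially the same route as the paper, which states the corollary without separate proof precisely because it follows by specialising Theorem~\ref{thm:ComplexII} and Theorem~\ref{thm:ComplexIII} under \( \Alphab_{k-1} = \AlphabEv \) (so \( \CharFkt{ \Alphab_{k} }( a_{k-1} ) = \CharFkt{ \Alphab_{k+1} }( a_{k} ) = 1 \)) and observing, as you do, that the interval partitions of the two theorems merge into the same two ranges. The only tiny inaccuracy is the parenthetical claim that such \( k \) is automatically \( \geq 2 \): the hypothesis can already hold for \( k = 1 \) (e.g.\ in the non-(B) example, where \( \Alphab_{0} = \AlphabEv \)), but with the convention \( \Length{ \PBlock{-1} } = 0 \) this does not affect your argument.
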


\begin{rem}
A different extensively studied class of subshifts are \emph{Sturmian subshifts}\index{Sturmian!subshift}\index{subshift!Sturmian}. Their complexity is given by \( \Comp( L ) = L+1 \) for all \( L \geq 0 \), which indicates the high amount of order in Sturmian subshifts. In fact, by the famous Morse/Hedlund theorem (\cite[Theorem~7.4]{MorseHedl_SymbDyn}) this is the lowest possible complexity for an aperiodic subshift. As the previous corollary shows, the complexity of simple Toeplitz subshifts grows linearly with a slope that alternates between \( \Card{ \AlphabEv } \) and  \( \Card{ \AlphabEv } - 1 \). In particular this proves that simple Toeplitz subshifts and Sturmian subshifts are disjoint classes. In Chapter~\ref{chap:UnifCocycCantor} we will see that both, simple Toeplitz subshifts and Sturmian subshifts, are examples of so-called \LCond{}-subshifts. For the interested reader, Appendix~\ref{app:Sturm} provides further details and references about Sturmian subshifts.
\end{rem}

We illustrate the complexity of simple Toeplitz subshifts through two example cases, namely the Grigorchuk subshift and the non-(B) example. For their definitions, see Example~\ref{exmpl:GrigSubshiftDef} and~\ref{exmpl:BoshSubshiftDef} respectively.

\begin{exmpl}[Grigorchuk subshift]
We have \( \Length{ \PBlock{k} } = 2^{k+1} - 1 \), \( \Card{ \Alphab_{0} } = 4 \), \( \Card{ \AlphabEv } = 3 \) and \( \Alphab_{k} = \AlphabEv \) for all \( k \geq 1 \). In particular \( \CharFkt{ \Alphab_{1} }( a_{0} ) = 0 \) holds. From Proposition~\ref{prop:ComplexI}, Theorem~\ref{thm:ComplexII} and Corollary~\ref{cor:CompLargeL} we recover precisely the complexity function that was given in \cite[Theorem~1]{GLN_Combinat}:
\begin{align*}
\Comp( L ) &= 3L + 1 \quad \text{for } 0 \leq L \leq 1 \, ,\\
\Comp( L ) &= 3 L \hphantom{{} + 1} \quad \text{for } L = 2 \, , \\
\Comp( L ) &= 2 L + 2 \quad \text{for } 3 \leq L \leq 4 \\
\text{and} \quad \Comp( L ) &= \begin{cases}
3 \cdot L - 2^{k} + 2^{k-1} & \text{if } 2^{k} + 1 \leq L \leq 2^{k+1} -  2^{k-1} \\
2 \cdot L + 2^{k} & \text{if } 2^{k+1} - 2^{k-1} + 1 \leq L \leq 2^{k+1}
\end{cases} \quad \text{for } k \geq 2 \, .
\end{align*}
 A plot of the complexity function, with its typical pattern of alternating slope, is shown in Figure~\ref{fig:GrigCompl}.
\end{exmpl}

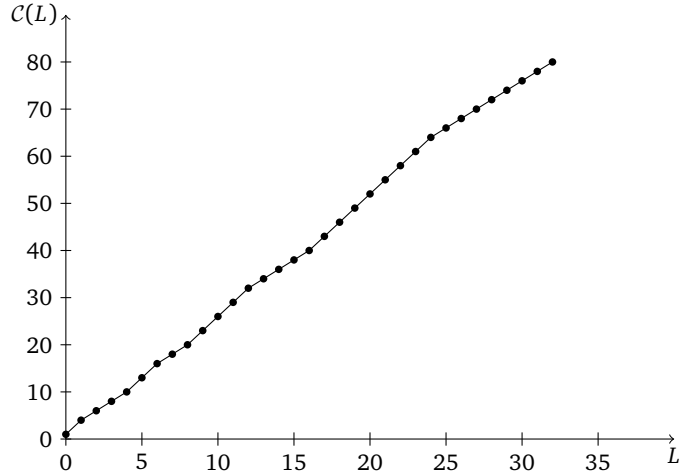
\begin{figure}
\centering
\footnotesize
\begin{tikzpicture}
[xscale=1/5, yscale=1/16, data/.style={circle, outer sep=0pt, inner sep=1 pt, fill=black}]
\node at ( 0 , 1 )[data]{};
\node at ( 1 , 4 )[data]{};
\node at ( 2 , 6 )[data]{};
\node at ( 3 , 8 )[data]{};
\node at ( 4 , 10 )[data]{};
\draw ( 0 , 1 ) -- ( 1 , 4 );
\draw ( 1, 4 ) -- ( 4, 10 );
\foreach \k in {2,...,4}{
	\pgfmathsetmacro{\xOneStart}{2^\k + 1}
	\pgfmathsetmacro{\xOneEnd}{2^(\k+1) - 2^(\k-1)}
	\foreach \L in {\xOneStart,...,\xOneEnd}{
		\pgfmathsetmacro{\Comp}{3*\L - 2^\k + 2^(\k-1)}
		\node at ( \L , \Comp )[data]{};
		\draw ( \L-1, \Comp-3) --(\L , \Comp);
	}
	\pgfmathsetmacro{\xTwoStart}{2^(\k+1)-2^(\k-1)+1}
	\pgfmathsetmacro{\xTwoEnd}{2^(\k+1)}
	\foreach \L in {\xTwoStart,...,\xTwoEnd}{
		\pgfmathsetmacro{\Comp}{2*\L + 2^\k}
		\node at ( \L , \Comp )[data]{};
		\draw ( \L-1, \Comp-2) --(\L , \Comp);
	}
}
\draw[->] ( 0 , 0 ) -- ( 40 , 0 ) node[below] {\( L \)};
\draw[->] ( 0 , 0 ) -- ( 0 , 90 )node[left] {\( \Comp( L ) \)};
\foreach \x in {0,5,...,35}
\draw ( \x , 40pt ) -- ( \x , -40pt ) node[below] {$\x$};
\foreach \y in {0,10,...,80}
\draw ( 10pt , \y ) -- ( -10pt , \y ) node[left] {$\y$};
\end{tikzpicture}
\normalsize
\caption{Plot of the complexity function of the Grigorchuk subshift.\label{fig:GrigCompl}}
\end{figure}

\vspace{-2ex} 
\begin{exmpl}[non-(B) example]
We have \( \Length{ \PBlock{k} } = 2^{k+1} - 1 \), \( \Alphab_{k} = \AlphabEv \) for all \( k \) and \( \Card{ \Alphab_{0} } = \Card{ \AlphabEv } = 4 \). Thus Proposition~\ref{prop:ComplexI} and Corollary~\ref{cor:CompLargeL} yield
\begin{align*}
\Comp( L ) &= 3 L + 1 \quad \text{for } 0 \leq L \leq 2 \\
\text{and} \quad \Comp( L ) &= \begin{cases}
4 \cdot L - 2^{k} + 2^{k-1} & \text{if } 2^{k} + 1 \leq L \leq 2^{k+1} - 2^{k-1} \\
3 \cdot L + 2^{k} & \text{if } 2^{k+1} - 2^{k-1} +1 \leq L \leq 2^{k+1}
\end{cases} \quad \text{for } k \geq 1 \, . \qedhere
\end{align*}
\end{exmpl}

As we saw above, the complexity function grows linearly with alternating slope for all sufficiently large \( L \). More precisely, it is bounded from below and above by the linear functions \(  ( \Card{ \AlphabEv } - 1 ) L \) and \( ( \Card{ \AlphabEv } - \frac{ 1 }{ 3 } ) L \,\):

\begin{prop}
\label{prop:ComplQuotient}
Let \( k \) be large enough that \( \Alphab_{k-1} = \AlphabEv \) holds. For \( \Length{ \PBlock{k-1} } + 2 \leq L \leq \Length{ \PBlock{k} }+1 \) the quotient \( \frac{ \Comp( L ) }{ L } \) lies between
\[ \Card{ \AlphabEv } - 1 < \min \Big\{ \Card{ \AlphabEv - \frac{ n_{k} - 1 }{ n_{k} } \, , \, \AlphabEv - \frac{ n_{k-1} - 1 }{ n_{k-1} } } \Big\} \leq \min_{ \Length{ \PBlock{k-1} } + 2 \leq L \leq \Length{ \PBlock{k} } + 1 } \frac{ \Comp( L ) }{ L } \]
and
\[ \max_{ \Length{ \PBlock{k-1} } + 2 \leq L \leq \Length{ \PBlock{k} } + 1 } \frac{ \Comp( L ) }{ L } = \Card{ \AlphabEv } - \frac{ n_{k-1} - 1 }{ 2 n_{k-1} - 1 } \leq \Card{ \AlphabEv } - \frac{ 1 }{ 3 } \, . \]
The maximum is attained at \( L = 2 \Length{ \PBlock{k-1} } - \Length{ \PBlock{k-2} } + 1 \).
\end{prop}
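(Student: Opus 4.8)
The plan is to read off the closed form for $\Comp(L)$ from Corollary~\ref{cor:CompLargeL} --- whose hypothesis $\Alphab_{k-1}=\AlphabEv$ is exactly what we assume here --- and then to optimise the elementary function $L\mapsto \Comp(L)/L$ over the two subintervals on which it is given by a single formula. To streamline the bookkeeping I would set $p\DefAs\Length{\PBlock{k-1}}+1$ and $q\DefAs\Length{\PBlock{k-2}}+1$, so that $p=n_{k-1}q$ by the block-length recursion and $\Length{\PBlock{k}}+1=n_kp$. In these terms the first range of Corollary~\ref{cor:CompLargeL} reads $p+1\leq L\leq 2p-q$ with $\Comp(L)=\Card{\AlphabEv}\,L-(p-q)$, and the second reads $2p-q+1\leq L\leq n_kp$ with $\Comp(L)=(\Card{\AlphabEv}-1)\,L+p$. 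Since $p-q=(n_{k-1}-1)q>0$ and $n_k\geq 2$, both intervals are non-empty and the junction point $2p-q$ lies strictly inside the full range.

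Dividing by $L$ gives $\Comp(L)/L=\Card{\AlphabEv}-(p-q)/L$ on the first piece, which is strictly increasing in $L$, and $\Comp(L)/L=\Card{\AlphabEv}-1+p/L$ on the second piece, which is strictly decreasing in $L$. Hence the maximum of the quotient over $[p+1,n_kp]$ is attained either at $L=2p-q$ (right end of the first piece) or at $L=2p-q+1$ (left end of the second piece). Writing the first value as $\Card{\AlphabEv}-1+p/(2p-q)$ and the second as $\Card{\AlphabEv}-1+p/(2p-q+1)$, a one-line comparison shows the former is the larger. So the maximum sits at $L^\ast=2p-q=2\Length{\PBlock{k-1}}-\Length{\PBlock{k-2}}+1$ and equals $\Card{\AlphabEv}-(p-q)/(2p-q)$; substituting $p=n_{k-1}q$ turns $2p-q=(2n_{k-1}-1)q$ and $p-q=(n_{k-1}-1)q$, so this is $\Card{\AlphabEv}-\tfrac{n_{k-1}-1}{2n_{k-1}-1}$, as claimed. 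Finally, $n\mapsto\tfrac{n-1}{2n-1}$ is increasing and equals $\tfrac13$ at $n=2$, while $n_{k-1}\geq 2$, whence the maximum is at most $\Card{\AlphabEv}-\tfrac13$.

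For the lower bound I would locate the minimum of the quotient the same way. On the increasing first piece it occurs at $L=p+1$, giving $\Card{\AlphabEv}-(p-q)/(p+1)$; on the decreasing second piece it occurs at $L=n_kp$, giving $\Card{\AlphabEv}-1+1/n_k=\Card{\AlphabEv}-\tfrac{n_k-1}{n_k}$. The minimum over the full range is the smaller of these two. It then suffices to check $(p-q)/(p+1)<\tfrac{n_{k-1}-1}{n_{k-1}}$, which after clearing denominators and using $p=n_{k-1}q$ reduces to $n_{k-1}q<n_{k-1}q+1$. Therefore $\Card{\AlphabEv}-(p-q)/(p+1)>\Card{\AlphabEv}-\tfrac{n_{k-1}-1}{n_{k-1}}$, and the minimum of the quotient is bounded below by $\min\bigl\{\Card{\AlphabEv}-\tfrac{n_k-1}{n_k},\,\Card{\AlphabEv}-\tfrac{n_{k-1}-1}{n_{k-1}}\bigr\}$. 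Since $\tfrac{n-1}{n}<1$ for every $n\in\NN$, we also get $\Card{\AlphabEv}-1<\Card{\AlphabEv}-\tfrac{n-1}{n}$, which yields the leftmost (strict) estimate.

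I do not expect a genuine obstacle: everything reduces to the monotonicity of two linear-over-linear functions and two fraction comparisons. The only points that need care are (i) treating the junction between the two pieces correctly --- verifying that the quotient does not keep increasing past $L=2p-q$ --- and (ii) keeping the re-parametrisation $p=n_{k-1}q$, $\Length{\PBlock{k}}+1=n_kp$ straight so that the final expressions come out in terms of $n_{k-1}$ and $n_k$ rather than block lengths.
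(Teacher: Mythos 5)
Your proposal is correct and follows essentially the same route as the paper's proof: both read off the two-piece formula from Corollary~\ref{cor:CompLargeL}, exploit that the quotient is increasing on the first piece and decreasing on the second, and then compare the candidate extremal points \( L = 2\Length{\PBlock{k-1}} - \Length{\PBlock{k-2}} + 1 \) and \( 2\Length{\PBlock{k-1}} - \Length{\PBlock{k-2}} + 2 \) for the maximum, respectively \( \Length{\PBlock{k-1}} + 2 \) and \( \Length{\PBlock{k}} + 1 \) for the minimum, with the same fraction reductions via \( \Length{\PBlock{k-1}}+1 = n_{k-1}(\Length{\PBlock{k-2}}+1) \). Your substitution \( p, q \) and the explicit monotonicity remarks are only notational streamlining of what the paper does implicitly.
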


\begin{proof}
For \( \Length{ \PBlock{k-1} } + 2 \leq L \leq \Length{ \PBlock{k} }+1 \), Corollary~\ref{cor:CompLargeL} yields
\begin{align*}
\frac{ \Comp( L ) }{ L } &= \begin{cases}
\Card{ \AlphabEv } - \frac{ \Length{ \PBlock{k-1} } - \Length{ \PBlock{k-2} } }{ L } & \text{if } \Length{ \PBlock{k-1} } + 2 \leq L \leq 2 \Length{ \PBlock{k-1} } - \Length{ \PBlock{k-2} } + 1 \\
\Card{ \AlphabEv } - 1 + \frac{ \Length{ \PBlock{k-1} } + 1 }{ L } & \text{if } 2 \Length{ \PBlock{k-1} } - \Length{ \PBlock{k-2} } + 2 \leq L \leq \Length{ \PBlock{k} } + 1
\end{cases} \, .
\end{align*}
Thus the maximum is attained either at \( L_{1} \DefAs 2 \Length{ \PBlock{k-1} } - \Length{ \PBlock{k-2} } + 1 \) or at \( L_{2} \DefAs 2 \Length{ \PBlock{k-1} } - \Length{ \PBlock{k-2} } + 2 \). A short computation shows that the value at \( L_{1} \) is greater:
\[ \frac{ \Comp( L_{1} ) }{ L_{1} } = \Card{ \AlphabEv } - \frac{ \Length{ \PBlock{k-1} } - \Length{ \PBlock{k-2} } }{ 2 \Length{ \PBlock{k-1} } - \Length{ \PBlock{k-2} } + 1 } = \Card{ \AlphabEv } - 1 + \frac{ \Length{ \PBlock{k-1} } + 1 }{ 2 \Length{ \PBlock{k-1} } - \Length{ \PBlock{k-2} } + 1 } >  \frac{ \Comp( L_{2} ) }{ L_{2} } \, . \]
Now reducing the fraction on the left hand side by \( \Length{ \PBlock{k-2} } + 1 \) yields the claim. Similarly the minimum is attained either at \( L_{3} \DefAs \Length{ \PBlock{k-1} } + 2 \) or at \( L_{4} \DefAs \Length{ \PBlock{k} } + 1 \). The claim follows from another direct computation:
\begin{align*}
\frac{ \Comp( L_{3} ) }{ L_{3} } &= \Card{ \AlphabEv } - \frac{ \Length{ \PBlock{k-1} } - \Length{ \PBlock{k-2} } }{ \Length{ \PBlock{k-1} } + 2 } > \Card{ \AlphabEv } - \frac{ ( \Length{ \PBlock{k-1} } +1 )( 1 - \frac{ 1 }{ n_{k-1} } ) }{ \Length{ \PBlock{k-1} } + 1 } = \Card{ \AlphabEv } - \frac{ n_{k-1} - 1 }{ n_{k-1} } \, , \\
\frac{ \Comp( L_{4} ) }{ L_{4} } &= \Card{ \AlphabEv } - 1 + \frac{ \Length{ \PBlock{k-1} } + 1 }{ \Length{ \PBlock{k} } + 1 } = \Card{ \AlphabEv } - 1 + \frac{ 1 }{ n_{k} } = \Card{ \AlphabEv } - \frac{ n_{k} - 1 }{ n_{k} } \, . \qedhere
\end{align*}
\end{proof}

\begin{rem}
\label{rem:STCompUniErg}
We have already seen that simple Toeplitz subshifts are uniquely ergodic due to their regularity (Corollary~\ref{cor:STStrictErgod}). If \( \Card{ \AlphabEv } \leq 3 \) holds, we can now give an alternative proof: clearly the previous proposition yields
\[ \Card{ \AlphabEv } - 1 \leq \liminf_{ L \to \infty } \frac{ \Comp( L ) }{ L } \leq \limsup_{ L \to \infty } \frac{ \Comp( L ) }{ L } <  \Card{ \AlphabEv } \, . \]
 By \cite[Theorem~1.5]{Boshernitzan_UniErgodic} every minimal subshift with \( \limsup_{ L \to \infty } \frac{ \Comp( L ) }{ L } < 3 \) is uniquely ergodic.
\end{rem}

\section{De Bruijn graphs and palindrome complexity}
\label{sec:DeBruijn}

In this section we investigate a sequence of graphs which encodes the language of the subshift. They are called \emph{de Bruijn graphs}\index{de Bruijn graph}\index{graph!de Bruijn graph} and were introduced by de~Bruijn (\cite{deBruijn_Graphs}) and Good (\cite{Good_deBrGraphs}) to illustrate how certain finite words can be concatenated. Since they were used by Rauzy to analyse combinatorial properties of infinite words (\cite{Rauzy_deBrGraphs}), they are also known as \emph{Rauzy graphs}\index{Rauzy graph}\index{graph!Rauzy graph}. Each graph in the sequence represents all words of a certain length, together with their extensions to the left and to the right. More precisely, the \( L \)-th de Bruijn graph \( \Debruijn{L} = ( \Vertices{L}, \Edges{L} ) \) of a subshift \( \Subshift \) is a directed graph with vertices \( \Vertices{L} \DefAs \Langu{ \Subshift }_{L} \). There is an edge \( (u, v) \in \Edges{L} \subseteq \Vertices{L} \times \Vertices{L} \) from \( u = \Word{ u(1) }{ \ldots }{ u(L) }\) to \( v = \Word{ v(1) }{ \ldots }{ v(L) } \) if and only if the words \( u \) and \( v \) satisfy
\[ \Word{ u(2) }{ \ldots }{ u(L) } = \Word{ v(1) }{ \ldots }{ v(L-1) } \quad  \text{and} \quad \Word{ u }{ v(L) } = \Word{ u(1) }{ v } \in \Vertices{L+1} \, .\]
 
In the previous section we analysed the right special words of simple Toeplitz subshifts. In the de Bruijn graph they correspond to branching points. Thus the results from Section~\ref{sec:Compl} allow us to deduce a detailed description of the graphs, see Subsection~\ref{subsec:deBruijn}. It turns out that a reflection of the de Bruijn graphs corresponds to a reflection of finite words. Therefore the graphs yield an explicit formula for the palindrome complexity. This is discussed in the second subsection.

\subsection{De Bruijn graphs of simple Toeplitz subshifts}
\label{subsec:deBruijn}

As in Proposition~\ref{prop:CompGrow1}, let \( v_{1} \) denote the suffix of length \( L \)\( \PBlock{k} \). For \( L \) with \( L \leq \Length{ \PBlock{k} } - \Length{ \PBlock{k-1} } - 1 \) we know that \( v_{1} \) can be extended by all letters in \( \Alphab_{k} \).

First we treat the case \( 1 \leq L \leq \Length{ \PBlock{ 0 } } = n_{0} - 1 \). The extension of \( v_{1} \) by a letter \( b \in \Alphab_{0} \setminus \Set{ a_{0} } \) yields an edge from \( v_{1} = a_{0}^{L} \) to \( \Word{ a_{0}^{L-1} }{ b } \). Since \( v_{1} \) is a prefix of \( \PBlock{0} \) as well, the word \( \Word{ v_{1} }{ b }{ v_{1} } \) exists and we obtain a loop with \( L+1 \) edges from \( v_{1} \) through \( \Word{ a_{0}^{L-1} }{ b } \) back to \( v_{1} \). Clearly the words in this loop are pairwise different and different from the words in all similar loops for letters \( \widetilde{b} \neq b \). For \( L \leq \Length{ \PBlock{0} } - \Length{ \PBlock{-1} } - 1 = n_{0} - 2  \) the extension of \( v_{1} \) by \( a_{0} \) exists and yields an edge from \( v_{1} \) to \( v_{1} \). For \( L = n_{0} - 1 \), the extension by \( a_{0} \) exists if and only if \( a_{0} \in \Alphab_{1} \) holds. Combined, these considerations yield the graph in Figure~\ref{fig:deBruijn1ToL0}.

\begin{figure}
\centering
\footnotesize
\begin{tikzpicture}
[ every path/.style = {shorten <=1pt, shorten >=1pt, >=stealth},
  vertex/.style = {circle, minimum size=17pt, inner sep=2pt, draw}]
\node (dotstop) {\( \ldots \)};
\node [vertex] (vtopleft)  [left = of dotstop] {};
\node [vertex] (vtopright) [right = of dotstop] {};
\node (dotsmiddle) [below = of dotstop] {\( \vdots \)};
\node (dotsbottom) [below = of dotsmiddle] {\( \ldots \)};
\node [vertex] (vbottomleft) [left = of dotsbottom] {};
\node [vertex] (vbottomright) [right = of dotsbottom] {};
\node [vertex] (w0w0) [below = of dotsbottom] {\( v_{1} \)};
\graph{
(w0w0) -> [controls ={ +(10em, -5ex) and + (8em, 0)}] (vtopright.east);
(vtopright.west)  -> (dotstop) -> (vtopleft.east);
(w0w0) <-  [controls ={ +(-10em, -5ex) and + (-8em, 0ex)}] (vtopleft.west);
(w0w0) -> [controls ={ +(10em, 0) and + (10em, 0)}, edge node ={ node [at end, above, xshift = -1em, yshift = 1.5ex] {\( L + 1 \) edges on each arc} }] (dotsmiddle.north east);
(w0w0) <- [controls ={ +(-10em, 0) and + (-10em, 0)}]  (dotsmiddle.north west);
(w0w0) -> [controls ={ +(8.5em, 5ex) and + (8.5em, 0)}]  (dotsmiddle.south east);
(w0w0) <- [controls ={ +(-8.5em, 5ex) and + (-8.5em, 0)}] (dotsmiddle.south west);
(w0w0) -> [bend right = 15, edge node= {node [right, yshift=-1ex, fill = white, inner sep = 0] {{\( \underbrace{\hspace{1.8cm}}_{\Card{ \Alphab_{0} } - 1 } \)} } } ] (vbottomright) -> (dotsbottom) -> (vbottomleft)  -> [bend right = 15] (w0w0);
};
\draw [->] (w0w0.south east) .. controls +(1.6cm, -1.8cm) and +(-1.6cm, -1.8cm) .. node [right, near start, xshift=0.5em, align = left]{for \( L = \Length{\PBlock{ 0 }} \) this edge exists\\ if and only if \( a_{0} \in \Alphab_{1} \) holds} (w0w0.south west);
\end{tikzpicture}
\normalsize
\caption{The de Bruijn graph \( \Debruijn{L} \) for \( 1 \leq L \leq \Length{ \PBlock{ 0 }} \).\label{fig:deBruijn1ToL0}}
\end{figure}

Now we discuss \( \Debruijn{L} \) for \( \Length{ \PBlock{ k-1 } } + 1 \leq L \leq \Length{ \PBlock{ k } } \) and \( k \geq 1 \). Let \( u_{1} \) denote the prefix of length \( L \) of \( \PBlock{k} \) and define \( r \DefAs L \bmod ( \Length{ \PBlock{ k-1 } } + 1 ) \). The decomposition \( \PBlock{k} = \Word{ \PBlock{k-1} }{ a_{k} }{ \ldots }{ \PBlock{k-1} } \) implies that we obtain \( v_{1} \) if we shift \( u_{1} \) by \( \Length{ \PBlock{ k-1 } } - r \) positions to the right, see Figure~\ref{fig:ShiftU1V1}.

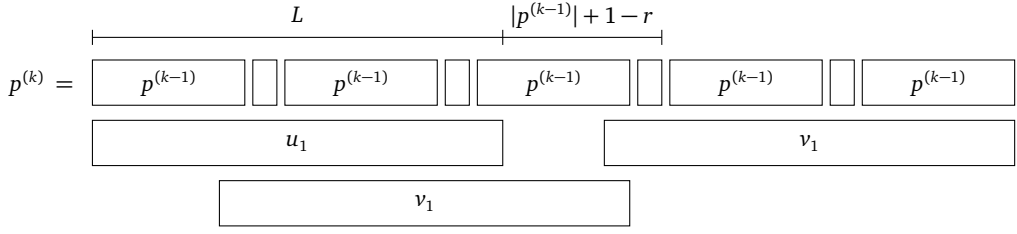
\begin{figure}
\centering
\footnotesize
\pgfmathsetmacro{\TikzNumberPkkk}{5} 
\pgfmathsetmacro{\TikzDistBlocks}{3} 
\pgfmathsetmacro{\TikzBuchstLength}{9} 
\pgfmathsetmacro{\TikzNumberPkkkL}{2} 
\pgfmathsetmacro{\TikzLengthPkkk}{(0.85*\linewidth - (\TikzNumberPkkk - 1) * ( \TikzBuchstLength + 2 * \TikzDistBlocks) ) / \TikzNumberPkkk} 
\pgfmathsetmacro{\TikzLengthTotal}{\TikzNumberPkkk*\TikzLengthPkkk + (\TikzNumberPkkk - 1)*(\TikzDistBlocks + \TikzDistBlocks + \TikzBuchstLength) } 
\pgfmathsetmacro{\TikzLengthR}{\TikzLengthPkkk/6} 
\pgfmathsetmacro{\TikzLengthL}{(\TikzNumberPkkkL*(\TikzLengthPkkk + \TikzBuchstLength + 2*\TikzDistBlocks) +\TikzLengthR} 
\begin{tikzpicture}
[every node/.style ={rectangle, outer sep=0pt, inner sep=0pt, minimum height=0.6cm},
buchst/.style={draw, minimum width=\TikzBuchstLength pt},
pkkk/.style={draw, minimum width=\TikzLengthPkkk pt}]
\draw (0, 0.5) -- (0, 0.7);
\draw (\TikzLengthL pt, 0.5) -- (\TikzLengthL pt, 0.7);
\draw (\TikzLengthL - \TikzLengthR + \TikzLengthPkkk + \TikzDistBlocks + \TikzBuchstLength pt, 0.5) -- (\TikzLengthL - \TikzLengthR + \TikzLengthPkkk + \TikzDistBlocks + \TikzBuchstLength pt, 0.7);
\draw (0, 0.6) -- node [midway, above] {\( L \)} (\TikzLengthL pt, 0.6) -- node [midway, above] {\( \Length{ \PBlock{ k-1 } } + 1 - r \)} (\TikzLengthL - \TikzLengthR + \TikzLengthPkkk + \TikzDistBlocks + \TikzBuchstLength pt, 0.6);
\node [right] at (0,0) [pkkk] (A) {\( \PBlock{k-1} \)};
\node [left=\TikzDistBlocks pt of A] {\( \PBlock{k} \; = \;\; \)};
\foreach \x in {2,...,\TikzNumberPkkk}{
	\node [buchst] (A) [right=\TikzDistBlocks pt of A] {};
	\node [pkkk] (A) [right=\TikzDistBlocks pt of A] {\( \PBlock{k-1} \)};}
\draw (0, -1.1) rectangle (\TikzLengthL pt, -0.5) node[midway]{\( u_{1} \)};
\draw (\TikzLengthTotal - \TikzLengthL pt, -1.1) rectangle (\TikzLengthTotal pt, -0.5) node[midway]{\( v_{1} \)};
\draw (\TikzLengthPkkk - \TikzLengthR pt, -1.9) rectangle (\TikzLengthPkkk - \TikzLengthR + \TikzLengthL pt, -1.3) node[midway]{\( v_{1} \)};
\end{tikzpicture}
\normalsize
\caption{After \( \Length{ \PBlock{ k-1 } } - r \) shifts the prefix \( u_{1} \) becomes the suffix \( v_{1} \).\label{fig:ShiftU1V1}}
\end{figure}

Recall that \( v_{1} \) can be extended to the right by all letters in \( \Alphab_{k} \setminus \Set{a_{k}} \). An extension by \( a_{k} \) is possible if and only if either \( a_{k} \in \Alphab_{k+1} \) or \( L \leq \Length{ \PBlock{ k } } - \Length{ \PBlock{ k-1 } } - 1 \) holds. Starting from \( v_{1} \), we obtain \( u_{1} \) by \( L+1 \) shifts along an extension with a letter in \( \Alphab_{k} \setminus \Set{ a_{k} } \) or by \( r+1 \) shifts along the extension with \( a_{k} \). This yields the graph in Figure~\ref{fig:deBruijnAllg1}. It is the complete de Bruijn graph of the subshift if there is no other right special word, that is, if Proposition~\ref{prop:GrowGreater} does not apply.

\begin{figure}
\centering
\footnotesize
\begin{tikzpicture}
[ every path/.style = {shorten <=1pt, shorten >=1pt, >=stealth, absolute, draw},
  vertex/.style = {circle, minimum size=14pt, inner sep=1pt, draw}]
\matrix[row sep = 3ex, column sep = 3em]{
& \node [vertex] (zu) {}; & \node (dotstopz) {\( \cdots \)}; & \node [vertex] (vz) {}; & \\
& & \node (dotstopy2){\( \cdots \)}; & &\\
& & \node (dotstopy1) {\( \cdots \)}; & &\\
& \node [vertex] (xu) {}; & \node (dotstopx) {\( \cdots \)}; & \node [vertex] (vx) {}; & \\
\node [vertex] (u) {\( u_{1} \)}; & \node [vertex] (ux) {}; & \node (dotsmiddle) {\( \cdots \)}; & \node [vertex] (xv) {}; & \node [vertex] (v) {\( v_{1} \)}; \\
& \node [vertex] (tu) {}; &  \node (dotsbottom) {\( \cdots \)}; & \node [vertex] (vt) {}; & \\
};
\graph [use existing nodes]{
u -> ux ->[edge node= {node [above, xshift = 2.5em] {\( \Length{ \PBlock{ k-1 } } - r \) edges} }]  dotsmiddle -> xv -> v -> {
	vt [> out = 240, > in = 0,  target edge node = { node [right, xshift = 1em, yshift = -1.5ex, align = left] {for \( L \geq \Length{ \PBlock{ k } } - \Length{ \PBlock{ k-1 } } \) this arc exists\\if and only if \( a_{k} \in \Alphab_{k+1} \) holds }}] -> dotsbottom [target edge node= {node [below, xshift = 2em, yshift=-1.5ex] {\( \Word{ \Restr{ v_{1} }{ 2 }{ L } }{ a_{k} } \)} }, source edge node= {node [below, xshift = 1.5em] {$ r+1 $ edges} }] -> tu [< out = 180, < in = 300],
	vx [> out = 120, > in = 0] -> dotstopx [target edge node= {node [above, xshift = 1.5em, yshift=1.5ex] {\( \Word{ \Restr{ v_{1} }{ 2 }{ L } }{ a } \) } }] -> xu [< out = 180, < in = 60],	
	dotstopy1 [> out = 90, > in = 0, target edge node = {node [above, near end] {\( \vdots \)} } , < out = 180, < in = 90, source edge node = {node [above, near start] {\( \vdots \)} }],	
	dotstopy2 [> out = 60, > in = 0, < out = 180, < in = 120],	
	vz [> out =30, > in = 340, target edge node = {node [xshift = -1em, yshift = -8ex, fill = white ,inner sep = 1pt] { \( \overbrace{ \hspace{1.4cm} }^{ \Card{ \Alphab_{k} } -1 } \) } }] -> dotstopz [target edge node= {node [above, xshift = 6.5em, yshift=1.5ex] {\( \Word{ \Restr{ v_{1} }{ 2 }{ L } }{ a } \) \quad with \( a \in \Alphab_{k} \setminus \Set{ a_{k} } \) } }]  -> [edge node= {node [below, xshift = 2.5em, yshift=-2ex] {\( L + 1 \) edges on each arc} }] zu [< out = 200, < in = 150]
} -> u;
};
\end{tikzpicture}
\normalsize
\caption{The de Bruijn graph \( \Debruijn{L} \) for \( k \geq 1 \) and \( \Length{ \PBlock{ k-1 } } + 1 \leq L \leq \Length{ \PBlock{ k } } \), when either \( a_{k-1} \notin \Alphab_{k} \) or \( L > 2 \Length{ \PBlock{k-1} } - \Length{ \PBlock{k-2} } \) holds. The number of edges refers to the distance between \( u_{1} \) and \( v_{1} \).\label{fig:deBruijnAllg1}}
\end{figure}

If Proposition~\ref{prop:GrowGreater} applies, then the de Bruijn graph changes. In this case, the suffix \( v_{2} \) of length \( L \) of \( \Word{ \PBlock{k-1} }{ a_{k-1} }{ \PBlock{k-1} } \) is another right special word and can be extended with both, \( a_{k-1} \) and \( a_{k} \). We define \( \widetilde{r} \DefAs L \bmod ( \Length{ \PBlock{ k-2 } } + 1 ) \). Clearly \( v_{2} \) is contained in \( \Word{ \PBlock{k} }{ a_{k-1} }{ \PBlock{k} } \). Therefore \( v_{2} \) lies on the arc that represents the extension of \( v_{1} \) by \( a_{k-1} \). It is reached after \( r + 1 + \Length{ \PBlock{ k-2 } } - \widetilde{r} \) shifts from \( v_{1} \), which can be seen as follows: the end of \( v_{2} \) has to align with the end of a \( \PBlock{k-2} \)-block. When we reach \( v_{2} \) for the first time after \( v_{1} \), the beginning of \( v_{2} \) has to be in the leftmost \( \Word{ \PBlock{k-2} }{ a_{k-1} } \) of the rightmost \( \PBlock{k-1} \)-block of the left \( \PBlock{k} \)-block, see Figure~\ref{fig:LocV2}.

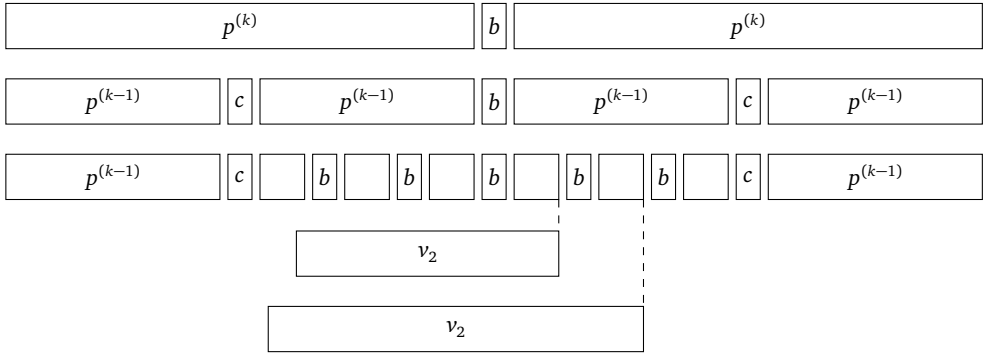
\begin{figure}
\centering
\footnotesize
\pgfmathsetmacro{\TikzDistBlocks}{3} 
\pgfmathsetmacro{\TikzBuchstLength}{9} 
\pgfmathsetmacro{\TikzLengthPkkk}{(0.9*\linewidth - ( \TikzBuchstLength + 2 * \TikzDistBlocks) ) / 2} 
\pgfmathsetmacro{\TikzLengthPkk}{(\TikzLengthPkkk - ( \TikzBuchstLength + 2 * \TikzDistBlocks) ) / 2} 
\pgfmathsetmacro{\TikzLengthPk}{(\TikzLengthPkk - 2 * ( \TikzBuchstLength + 2 * \TikzDistBlocks) ) / 3} 
\begin{tikzpicture}
[every node/.style ={rectangle, outer sep=0pt, inner sep=0pt},
buchst/.style={minimum width=\TikzBuchstLength pt, minimum height=0.6cm, draw},
pkkk/.style={minimum width=\TikzLengthPkkk pt, minimum height=0.6cm, draw},
pkk/.style={minimum width=\TikzLengthPkk pt, minimum height=0.6cm, draw},
pk/.style={minimum width=\TikzLengthPk pt, minimum height=0.6cm, draw}]
\node [right] at (0,0) [pkkk] (A) {\( \PBlock{k} \)};
\node [buchst] (A) [right=\TikzDistBlocks pt of A] {\( b \)};
\node [pkkk] (A) [right=\TikzDistBlocks pt of A] {\( \PBlock{k} \)};
\node [right] at (0,-1) [pkk] (A) {\( \PBlock{k-1} \)};
\node [buchst] (A) [right=\TikzDistBlocks pt of A] {\( c \)};
\node [pkk] (A) [right=\TikzDistBlocks pt of A] {\( \PBlock{k-1} \)};
\node [buchst] (A) [right=\TikzDistBlocks pt of A] {\( b \)};
\node [pkk] (A) [right=\TikzDistBlocks pt of A] {\( \PBlock{k-1} \)};
\node [buchst] (A) [right=\TikzDistBlocks pt of A] {\( c \)};
\node [pkk] (A) [right=\TikzDistBlocks pt of A] {\( \PBlock{k-1} \)};
\node [right] at (0,-2) [pkk] (A) {\( \PBlock{k-1} \)};
\node [buchst] (A) [right=\TikzDistBlocks pt of A] {\( c \)};
\node [pk] (Fix1) [right=\TikzDistBlocks pt of A] {};
\node [buchst] (A) [right=\TikzDistBlocks pt of Fix1] {\( b \)};
\node [pk] (A) [right=\TikzDistBlocks pt of A] {};
\node [buchst] (A) [right=\TikzDistBlocks pt of A] {\( b \)};
\node [pk] (A) [right=\TikzDistBlocks pt of A] {};
\node [buchst] (A) [right=\TikzDistBlocks pt of A] {\( b \)};
\node [pk] (Fix2) [right=\TikzDistBlocks pt of A] {};
\node [buchst] (A) [right=\TikzDistBlocks pt of Fix2] {\( b \)};
\node [pk] (Fix3) [right=\TikzDistBlocks pt of A] {};
\node [buchst] (A) [right=\TikzDistBlocks pt of Fix3] {\( b \)};
\node [pk] (A) [right=\TikzDistBlocks pt of A] {};
\node [buchst] (A) [right=\TikzDistBlocks pt of A] {\( c \)};
\node [pkk] (A) [right=\TikzDistBlocks pt of A] {\( \PBlock{k-1} \)};
\node (C1) [below=0.4 of Fix2.south east] {};
\node (C2) [below=1.4 of Fix3.south east]{};
\node (B1) [below=1 of Fix1.south] {};
\node (B2) [below=2 of Fix1.south] {};
\node (D1) [right=0.15 of B1]{};
\node (D2) [left=0.15 of B2]{};
\draw (D1) rectangle (C1) node[midway]{\( v_{2} \)};
\draw (D2) rectangle (C2) node[midway]{\( v_{2} \)};
\draw[dashed] (Fix2.south east) -- (C1);
\draw[dashed] (Fix3.south east) -- (C2);
\end{tikzpicture}
\normalsize
\caption{Two examples of the location of \( v_{2} \) in \( \PBlock{k} \, a_{k-1} \, \PBlock{k} \), for different word length \( L \). To shorten notation we write \( b \DefAs a_{k-1} \) and \( c \DefAs a_{k} \).\label{fig:LocV2}}
\end{figure}

After \( r + 1 \) shifts from \( v_{1} \) along the path to \( v_{2} \), we encounter the prefix of length \( L \) of \( \Word{ \PBlock{ k-1 } }{ a_{k-1} }{ \PBlock{ k-1 } } \). We denote it by \( u_{2} \). When we shift further, we reach \( v_{2} \), where the path splits: one path is the extension of \( v_{2} \) with \( a_{k-1} \) and leads back to \( u_{2} \) after \( \widetilde{r} + 1  \) shifts, the other path is the extension of \( v_{2} \) with \( a_{k} \) and leads to \( u_{1} \) after \( L - \Length{  \PBlock{ k-1 } } = r + 1 \) shifts. This yields the de Bruijn graph that is shown in Figure~\ref{fig:deBruijnAllg2}.

\begin{figure}
\centering
\footnotesize
\begin{tikzpicture}
[ every path/.style = {shorten <=1pt, shorten >=1pt, >=stealth, absolute, draw},
  vertex/.style = {circle, minimum size=14pt, inner sep=1pt, draw}]
\matrix[row sep = 3ex, column sep = 1.5em]{
& \node [vertex] (zu) {}; & & \node (dotstopz) {\( \cdots \)}; & & \node [vertex] (vz) {}; & \\
& & & \node (dotstopy2){\( \cdots \)}; & & &\\
& & & \node (dotstopy1) {\( \cdots \)}; & & &\\
& \node [vertex] (xu) {}; & & \node (dotstopx) {\( \cdots \)}; & & \node [vertex] (vx) {}; & \\
\node [vertex] (u) {\( u_{1} \)}; & \node [vertex] (ux) {}; & & \node (dotsmiddle) {\( \cdots \)}; & & \node [vertex] (xv) {}; & \node [vertex] (v) {\( v_{1} \)}; \\
& \node (dotsbottoms1) {\( \cdots \)}; & \node [vertex] (vtilde) {\( v_{2} \)}; & \node (dotsbottoms2) {\( \cdots \)}; & \node [vertex] (utilde) {\( u_{2} \)}; & \node (dotsbottoms3) {\( \cdots \)}; & \\
& & & \node (dotsbottomr){\( \cdots \)}; & & &\\
& \node [vertex] (tu) {}; & & \node (dotsbottomt) {\( \cdots \)}; & & \node [vertex] (vt) {}; & \\
};
\graph [use existing nodes]{
u -> ux ->[edge node= {node [above, xshift = 3em] {\( \Length{ \PBlock{ k-1 } } - r \) edges} }]  dotsmiddle -> xv -> v -> {
	vt [> out = 270, > in = 0, target edge node = {node [xshift = 8.5em, yshift = -2.5ex, align = left, fill=white] { for \( L \geq \Length{ \PBlock{ k } } - \Length{ \PBlock{ k-1 } } \) this arc exists\\ if and only if \( a_{k} \in \Alphab_{k+1} \) holds} }] -> dotsbottomt [target edge node= {node [xshift = 3.5em, yshift=-3.5ex] {\( \Word{ \Restr{ v_{1} }{ 2 }{ L } }{ a_{k} } \)} }, source edge node= {node [below, xshift = 3em] {\( r + 1 \) edges}}] -> tu [< out = 180, < in = 270],	
	dotsbottoms3 [> out = 240, > in = 0, target edge node = {node [xshift = 3em, yshift=-4ex, inner sep = 0.5ex, fill = white] {\( r+ 1 \) edges between \( v_{1} \) and \( u_{2} \)}}] -> utilde -> dotsbottoms2 -> vtilde [target edge node = {node [xshift = 1.7em, yshift = 3ex] {\( \Length{ \PBlock{ k-2 } } - \widetilde{r} \) edges between \( u_{2} \) and \( v_{2} \)} }] -> dotsbottoms1 [< out = 180, < in = 300, source edge node = { node [xshift = -3em, yshift = -4ex, inner sep=0.5ex, fill = white] {\( r + 1 \) edges between \( v_{2} \) and \( u_{1} \)} }],
	vx [> out = 120, > in = 0] -> dotstopx -> xu [< out = 180, < in = 60],	
	dotstopy1 [> out = 90, > in = 0, target edge node = {node [above, near end] {\( \vdots \)} } , < out = 180, < in = 90, source edge node = {node [above, near start] {\( \vdots \)} }],	
	dotstopy2 [> out = 60, > in = 0, < out = 180, < in = 120],	
	vz [> out =30, > in = 340, target edge node = {node [xshift = -1.3em, yshift = -8.2ex, fill = white, inner sep = 0.5pt] { \( \overbrace{ \hspace{1.2cm} }^{ \Card{ \Alphab_{k} } - 2 }  \)} }] -> dotstopz [target edge node= {node [xshift = 8.5em, yshift=3.5ex] {\( \Word{ \Restr{ v_{1} }{ 2 }{ L } }{ a } \) \quad with \( a \in  \Alphab_{k} \setminus \Set{ a_{k-1}, a_{k} } \)} }] -> [edge node= {node [xshift = 3.5em, yshift=-3.5ex] {\( L + 1 \) edges on each arc} }] zu [< out = 200, < in = 150]
} -> u;
vtilde ->[out=270, in= 180, edge node= {node [at end, below, xshift = 1em]{\( \widetilde{r} + 1 \) edges between \( v_{2} \) and \( u_{2} \)} }] dotsbottomr ->[out = 0, in = 270] utilde;
};
\end{tikzpicture}
\normalsize
\caption{The de Bruijn graph \( \Debruijn{L} \) for \( k \geq 1 \) and \( \Length{ \PBlock{ k-1 } } + 1 \leq L \leq 2 \Length{ \PBlock{k-1} } - \Length{ \PBlock{k-2} } \) when \( a_{k-1} \in \Alphab_{k} \) holds. Unless stated otherwise, the number of edges refers to the distance between \( u_{1} \) and \( v_{1} \).}
\label{fig:deBruijnAllg2}
\end{figure}
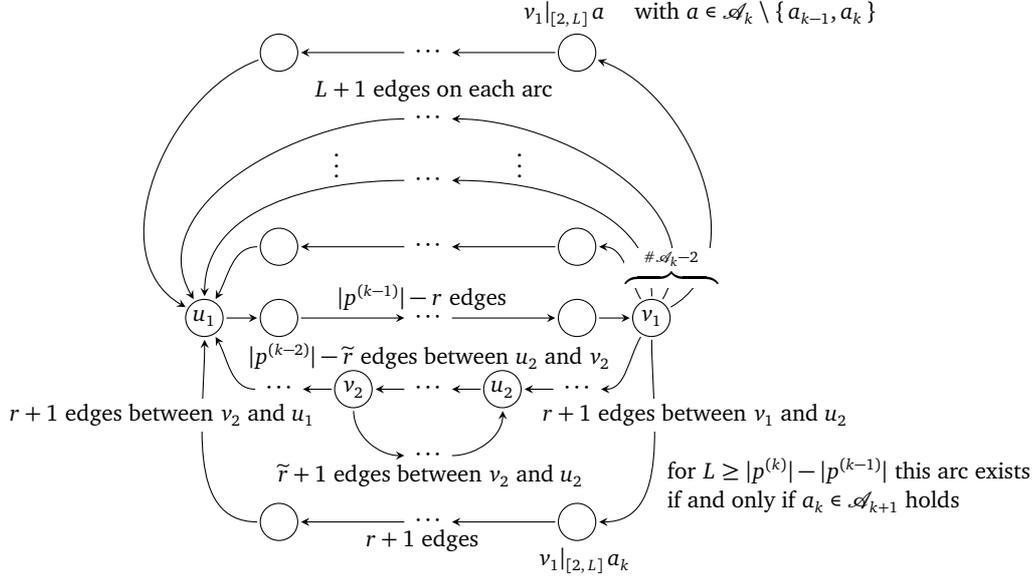

Our results from Section~\ref{sec:Compl} imply that this is the whole de Bruijn graph: by Corollary~\ref{cor:CompGrowth} the only branching points are \( v_{1} \) and \( v_{2} \) and these have only the extensions that were discussed in Proposition~\ref{prop:CompGrow1} and~\ref{prop:GrowGreater}. Moreover, every word of length \( L \leq \Length{ \PBlock{ k } } \) is contained in \( \Word{ \PBlock{k} }{ a }{ \PBlock{k} } \) for some \( a \in \Alphab_{k+1} \) by Proposition~\ref{prop:WordsContained}. Since all such extensions of \( v_{1} \) are included in the graph, together with all the words that emerge when we shift further, neither edges nor vertices are missing.

\begin{exmpl}[Grigorchuk subshift]
From Figure~\ref{fig:deBruijn1ToL0} we obtain the graph for \( \Length{ \PBlock{ 0 } } \), see Figure~\ref{subfig:deBruGrigo1}. Because of \( a_{0} \notin \Alphab_{1} \), it is Figure~\ref{fig:deBruijnAllg1} which describes the graphs for \( k = 1 \). The result is shown in Figure~\ref{subfig:deBruGrigo2} and~\ref{subfig:deBruGrigo3}. For \( k \geq 2 \) the relation \( a_{k-1} \in \Alphab_{k} \) implies that \( \Debruijn{L} \) is given by Figure~\ref{fig:deBruijnAllg2} for \( \Length{ \PBlock{ k-1 } } + 1 \leq L \leq \Length{ \PBlock{k} } - \Length{ \PBlock{k-2} } - 1 \) and by Figure~\ref{fig:deBruijnAllg1} for \( \Length{ \PBlock{k} } - \Length{ \PBlock{k-2} } \leq L \leq \Length{ \PBlock{k} } \). With \( \Alphab_{k} = \Set{ a_{k-1}, a_{k}, a_{k+1} } \) and \( \Length{ \PBlock{ k } } = 2^{k+1} - 1 \) we obtain the graphs in Figure~\ref{fig:deBruGrigoAllg1} and~\ref{fig:deBruGrigoAllg2}.
\end{exmpl}

\begin{rem}
For \( L = \Length{\PBlock{ k }} \) the de Bruijn graphs have a particularly interesting form: for  \( k\geq 1 \) we note that \( r = \Length{\PBlock{ k-1 }} \) holds in Figure~\ref{fig:deBruijnAllg1}. Hence the vertices \( u_{1} \) and \( v_{1} \) coincide. In other words, the de Bruijn graphs \( \Debruijn{ \Length{\PBlock{ k }} } \), \( k \in \NN \), has exactly one branching point and a number of loops that start and terminate at this vertex. As Figure~\ref{fig:deBruijn1ToL0} shows this is true for \( k = 0 \) as well.
\end{rem}

\begin{figure}[p] 
\centering
\footnotesize
%
\begin{minipage}[b]{0.2\textwidth}
\centering
\begin{tikzpicture}
[ every path/.style = {shorten <=1pt, shorten >=1pt, >=stealth},
  vertex/.style = {circle, minimum size=17pt, inner sep=2pt, draw}]
\node [vertex] (b) {\( b \)};
\node [vertex] (c)  [below = 2ex of b] {\( c \)};
\node [vertex] (d) [below = 2ex of c] {\( d \)};
\node [vertex] (a) [below = 2ex of d] {\( a \)};
\graph{
(a) -> [bend right = 90] (b) -> [bend right = 90] (a);
(a) -> [bend right = 60] (c) -> [bend right = 60] (a);
(a) -> [bend right = 30] (d) -> [bend right = 30] (a);
};
\node[draw=none, below = 2ex of a]{}; 
\end{tikzpicture}
\subcaption{The graph \( \Debruijn{1} \). We have \( u_{1} = v_{1} = a \).\label{subfig:deBruGrigo1}}
\end{minipage}
\hfill
%
\begin{minipage}[b]{0.35\textwidth}
\centering
\begin{tikzpicture}
[ every path/.style = {shorten <=1pt, shorten >=1pt, >=stealth, absolute, draw},
  vertex/.style = {circle, minimum size=17pt, inner sep=1pt, draw}]
\matrix[row sep = 2ex, column sep = 2em]{
& \node [vertex] (ca) {\( c a \)}; & \node [vertex] (ac) {\( a c \)}; & \\
& \node [vertex] (da) {\( d a \)}; & \node [vertex] (ad) {\( a d \)}; & \\
\node [vertex] (u) {\( a b \)}; & & & \node [vertex] (v) {\( b a \)}; \\
};
\graph [use existing nodes]{
u  -> v -> {
	ad [> out = 120, > in = 0] -> da [< out = 180, < in = 60],	
	ac [> out =90, > in = 0] -> ca [< out = 180, < in = 90]
} -> u;
v [< out = 270, < in = 270] -> u;
};
\end{tikzpicture}
\subcaption{The graph \( \Debruijn{2} \). We have \( u_{1}= a \, b  \), \( v_{1} = b \, a  \) and \( r = 0 \).\label{subfig:deBruGrigo2}}
\end{minipage}
\hfill
%
\begin{minipage}[b]{0.35\textwidth}
\centering
\begin{tikzpicture}
[ every path/.style = {shorten <=1pt, shorten >=1pt, >=stealth, absolute, draw},
  vertex/.style = {circle, minimum size=17pt, inner sep=1pt, draw}]
\matrix[row sep = 2ex, column sep = 2em]{
\node [vertex] (cab) {\( c a b \)}; & \node [vertex](aca) {\( a c a \)}; & \node [vertex] (bac) {\( b a c \)}; \\
\node [vertex] (dab) {\( d a b \)}; & \node [vertex] (ada) {\( a d a \)}; & \node [vertex] (bad) {\( b a d \)}; \\
& \node [vertex] (aba) {\( a b a \)}; & \\
&  \node [vertex] (bab) {\( b a b \)}; & \\
};
\graph [use existing nodes]{
aba -> {
	bab [> bend left=30, < bend left = 30],	
	bad [> bend right = 30] -> ada -> dab [< bend right = 30],
	bac [> bend right = 90] -> aca -> cab [< bend right = 90]
} -> aba;
};
\end{tikzpicture}
\subcaption{The graph \( \Debruijn{3} \). We have \( u_{1} = v_{1} =  a \, b \, a  \) and \( r = 1 \).\label{subfig:deBruGrigo3}}
\end{minipage}
\caption{The first de Bruijn graphs for the Grigorchuk subshift.\label{fig:deBruGrigo123}}
\vspace{8ex}
\begin{tikzpicture}
[ every path/.style = {shorten <=1pt, shorten >=1pt, >=stealth, absolute, draw},
  vertex/.style = {circle, minimum size=14pt, inner sep=1pt, draw}]
\matrix[row sep = 3ex, column sep = 1.5em]{
& \node [vertex] (zu) {}; & & \node (dotstopz) {\( \cdots \)}; & & \node [vertex] (vz) {}; & \\
\node [vertex] (u) {\( u_{1} \)}; & \node [vertex] (ux) {}; & & \node (dotsmiddle) {\( \cdots \)}; & & \node [vertex] (xv) {}; & \node [vertex] (v) {\( v_{1} \)}; \\
& \node (dotsbottoms1) {\( \cdots \)}; & \node [vertex] (vtilde) {\( v_{2} \)}; & \node (dotsbottoms2) {\( \cdots \)}; & \node [vertex] (utilde) {\( u_{2} \)}; & \node (dotsbottoms3) {\( \cdots \)}; & \\
& & & \node (dotsbottomr){\( \cdots \)}; & & &\\
& \node [vertex] (tu) {}; & & \node (dotsbottomt) {\( \cdots \)}; & & \node [vertex] (vt) {}; & \\
};
\graph [use existing nodes]{
u -> ux ->[edge node= {node [above, xshift = 3em] {\( 2^{k+1} -  L - 1 \) edges} }]  dotsmiddle -> xv -> v -> {
	vt [> out = 270, > in = 0] -> dotsbottomt [target edge node= {node [xshift = 3.5em, yshift=-3.5ex] {\( \Word{ \Restr{ v_{1} }{ 2 }{ L } a_{k} } \)} }, source edge node= {node [xshift = 3em, yshift=-2.3ex] {\( L - 2^{k} + 1 \) edges}}] -> tu [< out = 180, < in = 270],	
	dotsbottoms3 [> out = 240, > in = 0, target edge node = {node [xshift = 4em, yshift=-4ex, fill = white, inner sep = 0.5ex] {\( L - 2^{k} + 1 \) edges between \( v_{1} \) and \( u_{2} \)}}] -> utilde -> dotsbottoms2 -> vtilde [target edge node = {node [above, xshift = 1.7em, yshift = 1ex] {\( 2^{k-1} - 1 - \widetilde{r} \) edges between \( u_{2} \) and \( v_{2} \)} }] -> dotsbottoms1 [< out = 180, < in = 300, source edge node = { node [xshift = -4em, yshift = -4ex, fill = white, inner sep = 0.5ex] {\( L - 2^{k} + 1 \) edges between \( v_{2} \) and \( u_{1} \)} }],
	vz [> out =90, > in = 0] -> dotstopz [target edge node= {node [xshift = 3.5em, yshift=3.5ex] {\( \Word{ \Restr{ v_{1} }{ 2 }{ L} }{ a_{k+1} } \)} }] -> [edge node= {node [xshift = 3.5em, yshift=2ex] {\( L + 1 \) edges} }] zu [< out = 180, < in = 90]
} -> u;
vtilde ->[out=270, in= 180, edge node= {node [at end, below, xshift = 1em]{\( \widetilde{r} + 1 \) edges between \( v_{2} \) and \( u_{2} \)} }] dotsbottomr ->[out = 0, in = 270] utilde;
};
\end{tikzpicture}
\caption{The de Bruijn graph for the Grigorchuk subshift for \( k \geq 2 \) and \( 2^{k} \leq L \leq 2^{k+1} - 2^{k-1} - 1 \). We have \( r = L - 2^{k} \). Unless stated otherwise, the number of edges refers to the distance between \( u_{1} \) and \( v_{1} \).\label{fig:deBruGrigoAllg1}}
\vspace{8ex}
\begin{tikzpicture}
[ every path/.style = {shorten <=1pt, shorten >=1pt, >=stealth, absolute, draw},
  vertex/.style = {circle, minimum size=14pt, inner sep=1pt, draw}]
\matrix[row sep = 3ex, column sep = 3em]{
& \node [vertex] (zu) {}; & \node (dotstopz) {\( \cdots \)}; & \node [vertex] (vz) {}; & \\
& \node [vertex] (xu) {}; & \node (dotstopx) {\( \cdots \)}; & \node [vertex] (vx) {}; & \\
\node [vertex] (u) {\( u_{1} \)}; & \node [vertex] (ux) {}; & \node (dotsmiddle) {\( \cdots \)}; & \node [vertex] (xv) {}; & \node [vertex] (v) {\( v_{1} \)}; \\
& \node [vertex] (tu) {}; &  \node (dotsbottom) {\( \cdots \)}; & \node [vertex] (vt) {}; & \\
};
\graph [use existing nodes]{
u -> ux ->[edge node= {node [above, xshift = 2.5em] {\( 2^{k+1} - L - 1 \) edges} }]  dotsmiddle -> xv -> v -> {
	vt [> out = 240, > in = 0] -> dotsbottom [target edge node= {node [below, xshift = 2em, yshift=-1.5ex] {\( \Word{ \Restr{ v_{1} }{ 2 }{ L } }{ a_{k} } \)} }, source edge node= {node [below, xshift = 2em] {\(  L - 2^{k} + 1 \) edges} }] -> tu [< out = 180, < in = 300],
	vx [> out = 120, > in = 0] -> dotstopx [target edge node= {node [below, xshift = 1.5em, yshift=-0.5ex] {\( \Word{ \Restr{ v_{1} }{ 2 }{ L } }{ a_{k+1} } \)} }] -> xu [< out = 180, < in = 60],	
	vz [> out =90, > in = 0] -> dotstopz [target edge node= {node [above, xshift = 2em, yshift=1.5ex] {\( \Word{ \Restr{ v_{1} }{ 2 }{ L } }{ a_{k-1} } \)} }]  -> [edge node= {node [below, xshift = 2.5em, yshift=-2ex] {\( L + 1 \) edges on each arc} }] zu [< out = 180, < in = 90]
} -> u;
};
\end{tikzpicture}
\caption{The de Bruijn graph for the Grigorchuk subshift for \( k \geq 2 \) and \( 2^{k+1} - 2^{k-1} \leq L \leq 2^{k+1} - 1 \). We have \( r = L -2^{k} \). The number of edges refers to the distance between \( u_{1} \) and \( v_{1}  \).\label{fig:deBruGrigoAllg2}}
\normalsize
\end{figure}

\subsection{Palindrome complexity of simple Toeplitz subshifts}
\label{subsec:PalComp}

A palindrome is a word that remains the same when read backwards. More precisely, a finite word \( u \in \Langu{ \Subshift } \) is called a \emph{palindrome}\index{palindrome} if
\[ u = \Word{ u(1) }{ \ldots }{ u(L) } = \Word{ u(L) }{ \ldots }{ u(1) } \AsDef \Rev{ u } \]
holds, where \( \Rev{ u } \) denotes the \emph{reflection of the word \( u \) at its midpoint}\index{reflection! of a finite word}\index{word!reflection of}. We also consider the empty word as a palindrome. Note that \( \Rev{ \boldsymbol{\cdot} } \colon \Langu{ \Subshift } \to \Langu{ \Subshift } \) defines an involution.

\begin{exmpl}
\label{exmpl:PBlockPalindr}
For every \( k \in \NN_{0} \) the word \( \PBlock{k} \) is a palindrome: for \( \PBlock{0} = a_{0}^{n_{0}-1} \) this is obvious. For \( k \geq 1 \) the claim follows by induction from
\[ \Rev{ \PBlock{k+1} } = \Word{ \Rev{ \PBlock{k} } }{ a_{k+1} }{ \Rev{ \PBlock{k} } }{ \ldots }{ \Rev{ \PBlock{k}} } = \Word{ \PBlock{k} }{ a_{k+1} }{ \PBlock{k} }{ \ldots }{ \PBlock{k} } = \PBlock{k+1} \, .\]
In particular we have \( \PBlock{k}( j ) = \PBlock{k}( \Length{ \PBlock{ k } } - j + 1 ) \).
\end{exmpl}

Similar to the subword complexity, we define the \emph{palindrome complexity}\index{palindrome complexity}\index{complexity!palindrome complexity} \( \Pali \colon \NN_{0} \rightarrow \NN \) by
\[ \Pali( L ) \DefAs \Card{ \Set{ u \in \Langu{ \Subshift } : \Length{ u } = L \, , \; \Rev{ u } = u } } = \text{ ``number of palindromes of length } L \text{''} \, . \]
It turns out that reflecting the words and reflecting the de Bruijn graph is essentially the same:

\begin{prop}
Reflection of a word at its midpoint corresponds in Figure~\ref{fig:deBruijn1ToL0}, \ref{fig:deBruijnAllg1} and~\ref{fig:deBruijnAllg2} to reflection of the graph at a vertical axis through its middle.
\end{prop}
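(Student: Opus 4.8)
The plan is to show that the involution $\Rev{\cdot}$ on the vertex set $\Vertices{L} = \Langu{\Subshift}_L$ is an isomorphism from $\Debruijn{L}$ onto the graph obtained by reversing all edge directions, and then to identify this map, on the vertices that occur in the figures, with the exchange of mirror-symmetric positions.

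First I would record that $\Rev{\cdot}$ maps $\Langu{\Subshift}_L$ bijectively onto itself. By Proposition~\ref{prop:WordsContained}, for $L \leq \Length{\PBlock{k}} + 1$ every word $u \in \Langu{\Subshift}_L$ is a subword of $\Word{\PBlock{k}}{a}{\PBlock{k}}$ for some $a \in \Alphab_{k+1}$; since $\PBlock{k}$ is a palindrome (Example~\ref{exmpl:PBlockPalindr}), so is $\Word{\PBlock{k}}{a}{\PBlock{k}}$, hence $\Rev{u}$ is again one of its subwords and lies in $\Langu{\Subshift}_L$. As $\Rev{\cdot}$ is an involution, it is a bijection of $\Vertices{L}$.

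Next I would verify the edge relation $(u,v) \in \Edges{L} \Leftrightarrow (\Rev{v}, \Rev{u}) \in \Edges{L}$. By definition the edge $(u,v)$ is encoded by the word $w \DefAs \Word{u(1)}{\ldots}{u(L)}{v(L)} \in \Langu{\Subshift}_{L+1}$, whose prefix of length $L$ is $u$ and whose suffix of length $L$ is $v$. Using the overlap relation $\Word{u(2)}{\ldots}{u(L)} = \Word{v(1)}{\ldots}{v(L-1)}$ one finds $\Rev{w} = \Word{v(L)}{u(L)}{\ldots}{u(1)}$, which by the first step lies in $\Langu{\Subshift}_{L+1}$ and has prefix $\Rev{v}$ and suffix $\Rev{u}$; this is exactly the datum of the edge $(\Rev{v}, \Rev{u})$, and the converse follows by reflecting again. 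Hence $\Rev{\cdot}$ is an isomorphism of $\Debruijn{L}$ onto its arrow-reversal, and it is involutive.

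Finally I would match this with the drawings. The branch vertices transform as required: in Figure~\ref{fig:deBruijn1ToL0} the central vertex $a_0^L$ satisfies $\Rev{a_0^L} = a_0^L$; in Figures~\ref{fig:deBruijnAllg1} and~\ref{fig:deBruijnAllg2} the words $u_1$ and $v_1$ are the prefix and the suffix of length $L$ of the palindrome $\PBlock{k}$, so $\Rev{u_1} = v_1$, and $u_2$, $v_2$ are the prefix and suffix of length $L$ of the palindrome $\Word{\PBlock{k-1}}{a_{k-1}}{\PBlock{k-1}}$, so $\Rev{u_2} = v_2$; these are precisely the pairs of vertices at mirror-symmetric positions in the figures. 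Moreover every loop in Figure~\ref{fig:deBruijn1ToL0} is generated by a palindrome $a_0^L b a_0^L$ and every arc in Figures~\ref{fig:deBruijnAllg1} and~\ref{fig:deBruijnAllg2} by one of the palindromes $\Word{\PBlock{k}}{a}{\PBlock{k}}$ or $\Word{\PBlock{k-1}}{a_{k-1}}{\PBlock{k-1}}$; since reflecting a word reverses the order in which its windows of length $L$ occur, $\Rev{\cdot}$ carries each loop, each arc, and the middle path between $u_1$ and $v_1$ onto itself, flipped end to end. Together with the edge-reversal of the previous step, this is exactly the assertion that relabelling vertices by $w \mapsto \Rev{w}$ turns each figure into its reflection across the vertical axis through its centre. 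I expect the only genuine work to be bookkeeping: running through the three figures and checking, with the help of $\Length{\PBlock{k}} + 1 = n_k(\Length{\PBlock{k-1}}+1)$, that the window $\Rev{\cdot}$ attaches to each drawn vertex is the one at the mirrored position — routine, but somewhat tedious.
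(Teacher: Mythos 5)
Your proposal is correct and follows essentially the same route as the paper: the substance in both cases is that the branch vertices are prefix/suffix pairs of the palindromes \( \PBlock{k} \) and \( \Word{ \PBlock{k-1} }{ a_{k-1} }{ \PBlock{k-1} } \), and that on each arc the reflection of the \( j \)-th window is the window at the mirrored position, so every arc is carried onto itself flipped end to end. Your preliminary observation that \( \Rev{\cdot} \) is an anti-automorphism of \( \Debruijn{L} \) is a nice way to organise this, but the "bookkeeping" you defer is exactly what the paper's proof consists of, in particular the explicit \( \PBlock{k-2} \)-block computation showing \( \Rev{x_{j}} = x_{\widetilde{r}+1-j} \) on the arc from \( v_{2} \) to \( u_{2} \), where the identification of repeated windows makes the mirror correspondence slightly less immediate than your blanket statement suggests.
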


\begin{proof}
Recall that \( u_{1} \) and \( v_{1} \) denote respectively the prefix and the suffix of length \( L \) of \( \PBlock{k} \). First we consider the arcs from \( v_{1} \) to \( u_{1} \). For \( L \leq \Length{ \PBlock{0} } \) there is no such arc since \( u_{1} \) and \( v_{1} \) agree, see Figure~\ref{fig:deBruijn1ToL0}. For \( L \geq \Length{ \PBlock{0} } + 1 \) the vertices on such an arc are the subwords of \( \Word{ \PBlock{k} }{ a }{ \PBlock{k} } \), with \( a \in \Alphab_{k} \), which occur between the suffix of the first \( \PBlock{k} \)-block and the prefix of the second \( \PBlock{k} \)-block. The \( j \)-th vertex on such an arc, counted from \( v_{1} \), is the word \( \Word{ \Restr{ \PBlock{k} }{ \Length{ \PBlock{ k } } - L + 1 + j }{ \Length{ \PBlock{ k } } } }{ a }{ \Restr{ \PBlock{k} }{ 1 }{ j - 1 } } \). An empty interval means that no letter of this \( \PBlock{k} \) occurs. The reversed word is
\[ \Word{ \Rev{ \Restr{ \PBlock{k} }{ 1 }{ j - 1 } }  }{ a }{ \Rev{ \Restr{ \PBlock{k} }{ \Length{ \PBlock{ k } } - L + 1 + j }{ \Length{ \PBlock{ k } } } } } = \Word{ \Restr{ \PBlock{k} }{ \Length{ \PBlock{ k } } - j + 2 }{ \Length{ \PBlock{ k } } } }{ a }{ \Restr{ \PBlock{k} }{ 1 }{ L - j } } \, . \]
As claimed, this is precisely the \( j \)-th vertex on the same arc, but counted from \( u_{1} \).

Now we consider the path from \( u_{1} \) to \( v_{1} \). The \(j\)-th vertex, counted from \( u_{1} \), is \( u \DefAs \Restr{ \PBlock{k} }{ j }{ j + L - 1 } \), which is the subword of \( \PBlock{k} \) that starts at the \( j \)-th letter from the left. Its reflection is \( \Rev{ u } = \Restr{ \PBlock{k} }{ \Length{ \PBlock{k} } - j - L + 2 }{ \Length{ \PBlock{k} } - j + 1 } \), which is the subword of \( \PBlock{k} \) that ends at the \( j \)-th letter from the right. It corresponds to the \(j\)-th vertex, counted from \( v_{1} \), which proves the claim also for this path. 

Finally we consider the arc from \( v_{2} \) to \( u_{2} \) in Figure~ \ref{fig:deBruijnAllg2}. We define \( \widetilde{r} \DefAs \! L \bmod{( \Length{ \PBlock{ k-2 } } + 1 )} \). By the assumptions of Figure~\ref{fig:deBruijnAllg2} we have \( L \leq 2 \Length{ \PBlock{k-1} } - \Length{ \PBlock{k-2} } \). Thus there is a copy of \( u_{2} \) that begins at the start of the first \( \PBlock{k-2} \)-block in \( \Word{ \PBlock{k-1} }{ a_{k-1} }{ \PBlock{k-1} } \). Moreover, there is a copy of \( v_{2} \) that begins in the first \( \PBlock{k-2} \)-block in \( \Word{ \PBlock{k-1} }{ a_{k-1} }{ \PBlock{k-1} } \). The path from \( v_{2} \) to \( u_{2} \) corresponds to the words between them, see Figure~\ref{figV2ToU2}.

\begin{figure}
\centering
\footnotesize
\pgfmathsetmacro{\TikzDistBlocks}{3} 
\pgfmathsetmacro{\TikzBuchstLength}{9} 
\pgfmathsetmacro{\TikzLengthPkkk}{(0.95*\linewidth - ( \TikzBuchstLength + 2 * \TikzDistBlocks) ) / 2} 
\pgfmathsetmacro{\TikzLengthPkk}{(\TikzLengthPkkk - 3*( \TikzBuchstLength + 2 * \TikzDistBlocks) ) / 4} 
\pgfmathsetmacro{\TikzUVLength}{4*(\TikzLengthPkk + 2 * \TikzDistBlocks + \TikzBuchstLength) + \TikzLengthPkk / 3} 
\begin{tikzpicture}
[every node/.style ={rectangle, outer sep=0pt, inner sep=0pt},
buchst/.style={minimum width=\TikzBuchstLength pt, minimum height=0.6cm, draw},
pkkk/.style={minimum width=\TikzLengthPkkk pt, minimum height=0.6cm, draw},
pkk/.style={minimum width=\TikzLengthPkk pt, minimum height=0.6cm, draw}]
\node [right] at (0,0) [pkkk] (A) {\( \PBlock{k-1} \)};
\node [buchst] (A) [right=\TikzDistBlocks pt of A] {\( b \)};
\node [pkkk] (A) [right=\TikzDistBlocks pt of A] {\( \PBlock{k-1} \)};
\node [right] at (0,-1) [pkk] (A) {\( \PBlock{k-2} \)};
\node [buchst] (A) [right=\TikzDistBlocks pt of A] {\( b \)};
\node [pkk] (Fix1) [right=\TikzDistBlocks pt of A] {\( \PBlock{k-2} \)};
\node [buchst] (A) [right=\TikzDistBlocks pt of Fix1] {\( b \)};
\foreach \x in {1,...,2}{
	\node [pkk] (A) [right=\TikzDistBlocks pt of A] {\( \PBlock{k-2} \)};
	\node [buchst] (A) [right=\TikzDistBlocks pt of A] {\( b \)};}
\node [pkk] (Fix2) [right=\TikzDistBlocks pt of A] {\( \PBlock{k-2} \)};
\node [buchst] (A) [right=\TikzDistBlocks pt of Fix2] {\( b \)};
\foreach \x in {1,...,2}{
	\node [pkk] (A) [right=\TikzDistBlocks pt of A] {\( \PBlock{k-2} \)};
	\node [buchst] (A) [right=\TikzDistBlocks pt of A] {\( b \)};}
\node [pkk] (A) [right=\TikzDistBlocks pt of A] {\( \PBlock{k-2} \)};
\node (C1) [below=0.4 of Fix2.south east] {};
\path (C1) +(-\TikzUVLength pt, -0.6) node(B1) {};
\node (B2) [below=1.4 of Fix1.south west] {};
\path (B2) +(\TikzUVLength pt, -0.6) node(C2) {};
\draw (B1) rectangle (C1) node[midway]{\( v_{2} \)};
\draw (B2) rectangle (C2) node[midway]{\( u_{2} \)};
\end{tikzpicture}
\normalsize
\caption{By shifting the suffix \( v_{2} \)  we obtain the prefix \( u_{2} \). For brevity we use \( b \DefAs a_{k-1} \).\label{figV2ToU2}}
\end{figure}
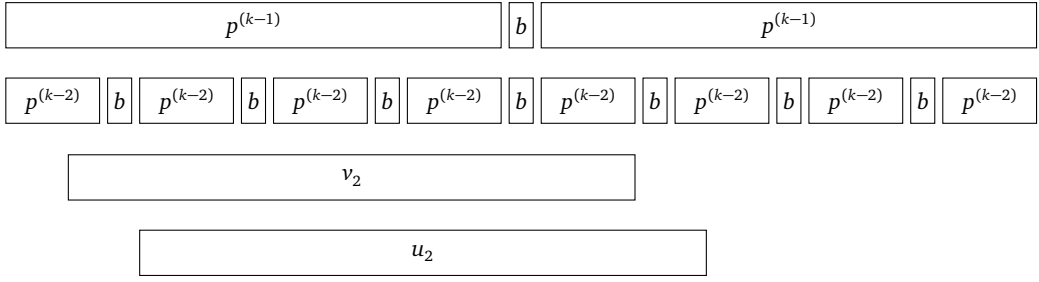

If \(x_{j} \) denotes the \( j \)-th vertex on this path, counted from \( v_{2} \), then we have
\begin{align*}
x_{j} &= \Word{ \Restr{ \PBlock{k-1} }{ \Length{ \PBlock{k-2} } + 1 - \widetilde{r} + j }{ \Length{ \PBlock{k-1} } } }{ a_{k-1} }{ \Restr{ \PBlock{k-1} }{ 1 }{ \Length{ \PBlock{ k-2 } } + j } } \\
& = \Big[ \Word{ \Restr{ \PBlock{k-2} }{ \Length{ \PBlock{k-2} } + 1 - \widetilde{r} + j }{ \Length{ \PBlock{k-2} } } }{ a_{k-1} }{ \PBlock{k-2} }{ \ldots }{ \PBlock{k-2} } \Big] \, \Word{ a_{k-1} }{ \PBlock{k-2} }{ a_{k-1} }{ \Restr{ \PBlock{k-2} }{ 1 }{ j-1 } } \, .
\end{align*}
Since the \( \PBlock{k-2} \)-blocks are palindromes, the reflection of \( x_{j} \) is given by
\begin{align*}
\Rev{ x_{j} } &= \Word{ \Rev{ \Restr{ \PBlock{k-2} }{ 1 }{ j - 1 } } }{ a_{k-1} }{ \PBlock{k-2} }{ \ldots }{ \PBlock{k-2} }{ a_{k-1} }{ \Rev{ \Restr{ \PBlock{k-2} }{ \Length{ \PBlock{k-2} } + 1 - \widetilde{r} + j }{ \Length{ \PBlock{k-2} } } } } \\
&= \Word{ \Restr{ \PBlock{k-2} }{ \Length{ \PBlock{k-2} } - j + 2 }{ \Length{ \PBlock{k-2} } } }{ a_{k-1} }{ \PBlock{k-2} }{ \ldots }{ \PBlock{k-2} }{ a_{k-1} }{ \Restr{ \PBlock{k-2} }{ 1 }{ \widetilde{r} - j } } \\
&= x_{\widetilde{r} +1 - j} \, .
\end{align*}
This is precisely the \( j \)-th vertex on the path, counted from \( u_{2} \). Thus the correspondence between reflected words and the reflected graph holds along all paths.
\end{proof}

\begin{cor}
The number of palindromes of length \( L \) is precisely the number of arcs in \( \Debruijn{L} \) that have an even number of edges.
\end{cor}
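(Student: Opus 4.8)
The plan is to exploit the correspondence just established: the involution \( \Rev{\ \cdot\ } \) on \( \Langu{\Subshift}_{L} = \Vertices{L} \) agrees with the reflection of \( \Debruijn{L} \) about the vertical axis drawn in Figures~\ref{fig:deBruijn1ToL0}, \ref{fig:deBruijnAllg1} and~\ref{fig:deBruijnAllg2}. A word \( u \) of length \( L \) is a palindrome exactly when \( \Rev{u} = u \), so the palindromes of length \( L \) are in natural bijection with the vertices of \( \Debruijn{L} \) fixed by this reflection, that is, the vertices lying on the axis. Thus the statement reduces to counting those fixed vertices, which I would do arc by arc.

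The key observation is that each arc of \( \Debruijn{L} \) is mapped onto itself by \( \Rev{\ \cdot\ } \). For the backbone \( u_{1} \to \dots \to v_{1} \) and for the return arcs \( v_{1} \to \dots \to u_{1} \) this is exactly what the proof of the preceding proposition shows (the vertex at distance \( i \) from one end goes to the vertex at distance \( i \) from the other); for the loop \( v_{2} \to \dots \to u_{2} \) in Figure~\ref{fig:deBruijnAllg2} it is shown there as well; and for the composite arc \( v_{1} \to \dots \to u_{2} \to \dots \to v_{2} \to \dots \to u_{1} \) of Figure~\ref{fig:deBruijnAllg2} it follows because \( \Rev{\ \cdot\ } \) reverses the direction of every edge (as \( \Langu{\Subshift} \) is closed under reflection, cf.\ Example~\ref{exmpl:PBlockPalindr}) and interchanges \( v_{1} \leftrightarrow u_{1} \) and \( v_{2} \leftrightarrow u_{2} \), so it carries this path to a path with the same endpoints traversed in the opposite direction, which by uniqueness is the path itself. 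Consequently, if an arc has \( e \) edges, hence \( e+1 \) vertices \( x_{0}, \dots, x_{e} \) with \( \Rev{x_{i}} = x_{e-i} \), then the axis fixes exactly one vertex of the arc, namely the midpoint \( x_{e/2} \), when \( e \) is even, and it fixes no vertex of the arc (it passes through the centre of the middle edge) when \( e \) is odd.

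It then remains to see that this counting is exact: that every fixed vertex arises as the midpoint of exactly one arc. A vertex in the interior of an arc lies on no other arc, so it is counted once; for the branching and merging vertices \( u_{1}, v_{1} \) — and \( u_{2}, v_{2} \) in Figure~\ref{fig:deBruijnAllg2} — I would check that \( \Rev{\ \cdot\ } \) fixes such a vertex only in the degenerate situations where two of them coincide (the \( 0 \)-edge backbone \( u_{1} = v_{1} \), occurring for \( L = \Length{\PBlock{k}} \) and throughout Figure~\ref{fig:deBruijn1ToL0}, and the case \( u_{2} = v_{2} \), i.e.\ \( \widetilde{r} = \Length{\PBlock{k-2}} \)), and that in precisely those situations the coincident vertex is the midpoint of the corresponding \( 0 \)-edge arc, which has an even number of edges. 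Combining the two directions yields the asserted bijection between palindromes of length \( L \) and arcs of \( \Debruijn{L} \) with an even number of edges.

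I expect the genuine work to be the case-by-case bookkeeping in the last two steps rather than any conceptual difficulty: one must run through the three shapes of de Bruijn graph and their degenerate sub-cases and confirm, in each, that every arc is symmetric about the axis — the composite arc through \( u_{2} \) and \( v_{2} \) being the one that needs the extra argument above — and that no palindrome is missed or double-counted at a junction. Once the symmetry of each arc is in place, the parity statement ``midpoint is a vertex if and only if the number of edges is even'' is immediate.
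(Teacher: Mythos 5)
Your argument is exactly the one the paper intends: the corollary is stated as an immediate consequence of the preceding proposition, with palindromes being the vertices fixed by the reflection, i.e.\ the midpoints of the (self-symmetric) arcs, which exist precisely when an arc has an even number of edges; your treatment of the \( a_{k-1} \)-extension through \( u_{2} \) and \( v_{2} \) as a single arc, together with the loop \( v_{2} \to u_{2} \), is the reading under which the statement holds and which matches Corollary~\ref{cor:PaliComp}. So the proposal is correct and follows essentially the same route, only spelling out the midpoint/junction bookkeeping that the paper leaves implicit.
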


Since the number of edges of each arc was given in Figure~\ref{fig:deBruijn1ToL0},~\ref{fig:deBruijnAllg1} and~\ref{fig:deBruijnAllg2}, we can deduce a formula for the palindrome complexity. As before we write \( r \DefAs L \bmod ( \Length{ \PBlock{ k-1 } } + 1 ) \) and \( \widetilde{r} \DefAs L \bmod ( \Length{ \PBlock{ k-2 } } + 1 ) \).

\begin{cor}
\label{cor:PaliComp}
For \( 0 \leq L \leq \Length{ \PBlock{0} } \) the palindrome complexity is given by
\[ \Pali( L ) = ( \Card{ \Alphab_{0} } - 1) \cdot (L \bmod 2) + 1 \, .\]
For \( k \geq 1 \) and \( \Length{ \PBlock{ k-1 } } + 1 \leq L \leq \Length{ \PBlock{ k } } \) the palindrome complexity is given by
\begin{align*}
\Pali( L ) = {} & ( \Card{ \Alphab_{k} } - 1 ) \cdot ( L \bmod 2 ) + (( \Length{ \PBlock{ k-1 } } + 1 - r) \bmod 2) \\
& {} + ( r \bmod 2 ) \cdot \begin{cases}
1 & \text{if } \Length{ \PBlock{ k-1 } } + 1 \leq L \leq \Length{ \PBlock{ k } } - \Length{ \PBlock{ k-1 } } - 1 \\
\CharFkt{ \Alphab_{k+1} }( a_{k} ) & \text{if } \Length{ \PBlock{ k } } - \Length{ \PBlock{ k-1 } } \leq L \leq \Length{ \PBlock{ k } }
\end{cases} \\
& {} + \begin{cases}
( \widetilde{r} \bmod 2 ) + ( (\Length{ \PBlock{k-2} } + 1 - \widetilde{r}) \bmod 2 ) - ( L \bmod 2 ) \\
0
\end{cases} \\
& \quad \begin{cases}
\text{if } a_{k-1} \in \Alphab_{k} \text{ and }  \Length{ \PBlock{ k-1 } } + 1 \leq L \leq 2 \Length{ \PBlock{k-1} } - \Length{ \PBlock{k-2} } \\
\text{if } a_{k-1} \notin \Alphab_{k} \text{ or } 2 \Length{ \PBlock{k-1} } - \Length{ \PBlock{k-2} } < L \leq \Length{ \PBlock{ k } }
\end{cases} \hspace{-0.7em} .
\end{align*}
\end{cor}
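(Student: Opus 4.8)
The plan is to read the result off the preceding corollary, which asserts that $\Pali(L)$ equals the number of arcs of $\Debruijn{L}$ carrying an even number of edges, combined with the explicit descriptions of the de Bruijn graphs obtained in Subsection~\ref{subsec:deBruijn}. In each of Figures~\ref{fig:deBruijn1ToL0}, \ref{fig:deBruijnAllg1} and~\ref{fig:deBruijnAllg2} every arc is labelled with its number of edges, expressed through $L$, through $r = L \bmod(\Length{\PBlock{k-1}}+1)$, through $\widetilde{r} = L \bmod(\Length{\PBlock{k-2}}+1)$ and through the block lengths. Thus the proof amounts to traversing these figures, deciding for each arc whether its edge count is even or odd, and adding up the contributions; the case distinction in the statement is precisely the distinction ``which of the three figures applies'', refined by ``which of the conditionally present arcs is actually there''.

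First I would dispose of $L=0$ (only the empty word, which is a palindrome) and of the range $1 \le L \le \Length{\PBlock{0}}$: here $\Debruijn{L}$ is the graph of Figure~\ref{fig:deBruijn1ToL0}, consisting of $\Card{\Alphab_{0}}-1$ loops of length $L+1$ at the single branching vertex $v_{1} = a_{0}^{L}$, the optional self-loop of length $1$ at $v_{1}$, and $v_{1}=u_{1}$ itself (the degenerate $0$-edge arc). A loop of length $L+1$ is even exactly when $L$ is odd, the self-loop never is, the $0$-edge arc always is, and this gives $(\Card{\Alphab_{0}}-1)(L\bmod 2)+1$. Next, for $k\ge 1$ and $\Length{\PBlock{k-1}}+1 \le L \le \Length{\PBlock{k}}$, one splits according to whether Proposition~\ref{prop:GrowGreater} applies. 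If it does not — that is, $a_{k-1}\notin\Alphab_{k}$ or $L > 2\Length{\PBlock{k-1}}-\Length{\PBlock{k-2}}$ — the graph is that of Figure~\ref{fig:deBruijnAllg1}: the arc $u_{1}\to v_{1}$ of length $\Length{\PBlock{k-1}}-r$ (even iff $\Length{\PBlock{k-1}}+1-r$ is odd, contributing $((\Length{\PBlock{k-1}}+1-r)\bmod 2)$), the $\Card{\Alphab_{k}}-1$ arcs $v_{1}\to u_{1}$ of length $L+1$ (contributing $(\Card{\Alphab_{k}}-1)(L\bmod 2)$), and the $a_{k}$-arc $v_{1}\to u_{1}$ of length $r+1$, present exactly when $a_{k}\in\Alphab_{k+1}$ or $L\le\Length{\PBlock{k}}-\Length{\PBlock{k-1}}-1$, contributing $(r\bmod 2)$ times the corresponding indicator. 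Using Remark~\ref{rem:An+1An} to match ``$a_{k}\in\Alphab_{k+1}$'' with ``$a_{k}\in\Alphab_{k}$'' where needed, the first three lines of the formula appear with the last bracket equal to $0$. If Proposition~\ref{prop:GrowGreater} does apply, $\Debruijn{L}$ is that of Figure~\ref{fig:deBruijnAllg2}, which carries in addition the second branching vertex $v_{2}$ (together with $u_{2}$) and the arcs of lengths $r+1$, $\Length{\PBlock{k-2}}-\widetilde{r}$ and $\widetilde{r}+1$ forming the ``$v_{2}$-gadget'', while the number of plain $v_{1}\to u_{1}$ arcs drops by one; collecting the extra even-length arcs then produces the last bracket $(\widetilde{r}\bmod 2) + ((\Length{\PBlock{k-2}}+1-\widetilde{r})\bmod 2) - (L\bmod 2)$.

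The part I expect to be the main obstacle is the final case: one must account correctly for the interaction between the reflection symmetry of the graph (the proposition preceding the corollary we are using) and the arc count, so that arcs interchanged by the reflection are weighted with the right multiplicity; one must carry the conditional existence of the $a_{k}$-arc — with the same ``$a_{k}\in\Alphab_{k+1}$ versus $L\le\Length{\PBlock{k}}-\Length{\PBlock{k-1}}-1$'' dichotomy as in Figure~\ref{fig:deBruijnAllg1} — through the whole computation; and one must check that the degenerate situation $u_{1}=v_{1}$ — which occurs for instance at $L=\Length{\PBlock{k}}$, where $r=\Length{\PBlock{k-1}}$ and the arc $u_{1}\to v_{1}$ shrinks to a point — is absorbed into the term $((\Length{\PBlock{k-1}}+1-r)\bmod 2)$ rather than counted twice. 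Apart from these points, everything is an elementary parity computation using only the edge labels in the figures and the relations between the alphabets $\Alphab_{k}$ and $\Alphab_{k+1}$ recorded in Section~\ref{sec:Compl}.
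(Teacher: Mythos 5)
Your proposal is correct and follows exactly the route the paper takes: the paper derives Corollary~\ref{cor:PaliComp} directly from the preceding corollary (palindromes correspond to arcs of \( \Debruijn{L} \) with an even number of edges) together with the edge counts recorded in Figures~\ref{fig:deBruijn1ToL0}, \ref{fig:deBruijnAllg1} and~\ref{fig:deBruijnAllg2}, which is precisely your parity bookkeeping. The subtleties you flag (the reflection-interchanged arcs of the \( v_{2} \)-gadget, the conditional \( a_{k} \)-arc, and the degenerate \( u_{1}=v_{1} \) case being absorbed into \( ((\Length{\PBlock{k-1}}+1-r)\bmod 2) \)) are handled consistently with the stated formula, so nothing is missing.
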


\begin{exmpl}[period doubling]
We have \( \Alphab = \AlphabEv = \Set{ a, b } \) and \( \Length{ \PBlock{k} } + 1 = 2^{k+1} \). The previous corollary reproduces precisely the results from \cite{Dam_LocSymPD}: the first values of the palindrome complexity are
\[ \Pali( 0 ) = 1 \, , \; \Pali( 1 ) = 2 \, , \; \Pali( 2 ) = 1 \;\; \text{and} \quad \Pali( 3 ) = 3 \, . \]
For \( k \geq 2 \) the numbers \( \Length{\PBlock{k-2}} + 1 = 2^{k-1} \) and \( \Length{\PBlock{k-1}} + 1 = 2^{k} \) are even and do therefore not affect our computations modulo two. Hence the palindrome complexity for \( k \geq 2 \) and \( 2^{ k } \leq L \leq 2^{ k+1 } - 1 \) is given by
\[ \Pali( L ) = \begin{cases}
4\cdot ( L \bmod 2 )  & \text{if } 2^{ k } \leq L \leq 2^{k+1} - 2^{k-1} - 1 \\
3 \cdot ( L \bmod 2 )  & \text{if } 2^{k+1}  - 2^{k-1} -1 < L \leq 2^{ k+1 } - 1
\end{cases} \, . \qedhere\]
\end{exmpl}

\begin{exmpl}[generalised Grigorchuk subshift]
For \( 1 \leq L \leq \Length{ \PBlock{0} } \) the palindrome complexity is given by 
\[ \Pali( L ) = ( \Card{ \Alphab_{0} } - 1 ) \cdot ( L \bmod 2 ) + 1 \, . \]
For \( \Length{ \PBlock{ 0 } } + 1 \leq L \leq \Length{ \PBlock{ 1 } } \) the general form simplifies to
\begin{align*}
\Pali( L ) = {} & ( L \bmod 2 ) \cdot \Bigg( \Card{ \Alphab_{1} } + \begin{cases}
1 & \text{if } \Length{ \PBlock{ 0 } } + 1 \leq L \leq \Length{ \PBlock{ 1 } } - \Length{ \PBlock{ 0 } } - 1 \\
\CharFkt{ \Alphab_{2} }( a_{1} ) & \text{if } \Length{ \PBlock{ 1 } } - \Length{ \PBlock{ 0 } } \leq L \leq \Length{ \PBlock{ 1 } }
\end{cases} \Bigg) \, .
\end{align*}
Since \( \Length{ \PBlock{k} } + 1 \) is even for all \( k \geq 0 \), we obtain \( ( r \bmod 2 ) = ( L \bmod 2 ) = ( \widetilde{r} \bmod 2 ) \) for all \( k \geq 2 \). Thus the palindrome complexity for \( \Length{ \PBlock{ k-1 } } + 1 \leq L \leq \Length{ \PBlock{ k } } \) with \( k \geq 2 \) is given by
\begin{align*}
\Pali( L ) =  ( L \bmod 2 ) \cdot \Bigg( & \Card{ \Alphab_{k} } + \begin{cases}
1 & \text{if } \Length{ \PBlock{ k-1 } } + 1 \leq L \leq \Length{ \PBlock{ k } } - \Length{ \PBlock{ k-1 } } - 1 \\
\CharFkt{ \Alphab_{k+1} }( a_{k} ) & \text{if } \Length{ \PBlock{ k } } - \Length{ \PBlock{ k-1 } } \leq L \leq \Length{ \PBlock{ k } }
\end{cases} \\
& {} + \begin{cases}
1 & \text{if } a_{k-1} \in \Alphab_{k} \text{ and }  L \leq 2 \Length{ \PBlock{k-1} } - \Length{ \PBlock{k-2} } \\
0 & \text{if } a_{k-1} \notin \Alphab_{k} \text{ or } L > 2 \Length{ \PBlock{k-1} } - \Length{ \PBlock{k-2} }
\end{cases} \Bigg) \, . \qedhere
\end{align*}
\end{exmpl}

\begin{exmpl}[Grigorchuk subshift]
We have \( \Card{ \Alphab_{0} } = 4 \) and \( a_{0} \notin \Alphab_{1} \), as well as \( \Card{ \Alphab_{k} } = 3 \) and \( a_{k} \in \Alphab_{k+1} \) for all \( k \geq 1 \). Together with \( \Length{ \PBlock{k} } + 1 = 2^{k+1} \) this yields
\[ \Pali( L ) = \begin{cases}
4 \cdot ( L \bmod{ 2 } ) & \text{if } 1 \leq L \leq 3 \\
5 \cdot ( L \bmod{ 2 } ) & \text{if } 2^{k} \leq L \leq  2^{k+1} - 2^{k-1} - 1 \text{ for } k \geq 2\\
4 \cdot ( L \bmod{ 2 } ) & \text{if } 2^{k+1} - 2^{k-1} \leq L \leq 2^{k+1}  -1 \text{ for } k \geq 2 
\end{cases} \, . \qedhere \]
\end{exmpl}

\section{Repetitivity}
\label{sec:Repe}

Repetitivity describes how long a block of letters has to be to ensure that all words of a given length appear in this block. More precisely, we define the \emph{repetitivity function}\index{repetitivity function} as 
\begin{align*}
\Repe \colon \NN \to \NN \, ,\; L \mapsto \min \Set{ \tilde{L} : {} & {} \text{every } u \in \Langu{ \Subshift }_{L} \text{ is contained in every } v \in \Langu{ \Subshift }_{\tilde{L}} } \, .
\end{align*}
It is easy to see that the repetitivity function is strictly increasing. For all sufficiently large \( L \) we describe it by an explicit formula in Theorem~\ref{thm:Repe}. Afterwards we characterise \( \alpha\)-repetitivity of simple Toeplitz subshifts as well as the special case of linear repetitivity (\( \alpha = 1 \)). The necessary tools are introduced in the first subsection.

\subsection{Detailed description of the coding sequence}

In the following we introduce two notions which describe combinatorial properties of the coding sequence \( (a_{k}) \). The first one is the function
\[ \AllL \colon \NN_{0} \to \NN \, , \; k \mapsto \min \Set{ j > k : \Set{ a_{k+1}, \ldots , a_{j} } = \Alphab_{k+1} } \, . \]
It counts how many positions after \( a_{k} \) it happens for the first time that every letter that occurs after \( a_{k} \) has occurred at least once. In particular \( \AllL( k ) \geq k + \Card{ \Alphab_{k+1} } \geq k + \Card{ \AlphabEv } \) holds for all \( k \in \NN_{0} \). 

\begin{prop}
The function \( \AllL \) is monotonically increasing.
\end{prop}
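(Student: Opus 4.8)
The plan is to exploit the simple recursive structure of the sets $\Alphab_{k}$. Recall $\Alphab_{k+1} = \Set{ a_{j} : j \geq k+1 }$, so by splitting off the first element we have the identity $\Alphab_{k+1} = \Set{ a_{k+1} } \cup \Alphab_{k+2}$. This is really the only fact needed.

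First I would fix $k \in \NN_{0}$ and set $m \DefAs \AllL( k+1 )$. By definition of $\AllL$ this means $m > k+1$ (hence in particular $m > k$) and $\Set{ a_{k+2} , \ldots , a_{m} } = \Alphab_{k+2}$. Then I would compute, using the recursive identity above,
\[ \Set{ a_{k+1} , \ldots , a_{m} } = \Set{ a_{k+1} } \cup \Set{ a_{k+2} , \ldots , a_{m} } = \Set{ a_{k+1} } \cup \Alphab_{k+2} = \Alphab_{k+1} \, . \]
Thus $j = m$ is an index with $j > k$ satisfying $\Set{ a_{k+1} , \ldots , a_{j} } = \Alphab_{k+1}$, so by the minimality in the definition of $\AllL( k )$ we get $\AllL( k ) \leq m = \AllL( k+1 )$. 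Since $k$ was arbitrary, $\AllL$ is monotonically increasing.

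There is no real obstacle here; the one thing to be careful about is the boundary index (checking $m > k$, which is automatic from $m = \AllL(k+1) > k+1$) and making sure the edge case $m = k+2$ is covered by reading $\Set{ a_{k+2}, \ldots, a_{m} }$ as a single-element set, which it is. I would keep the proof to these few lines.
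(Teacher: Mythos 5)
Your proof is correct and follows essentially the same approach as the paper: it compares the two minimisation problems defining \( \AllL(k) \) and \( \AllL(k+1) \) using the relation between \( \Alphab_{k+1} \) and \( \Alphab_{k+2} \). The only difference is cosmetic — the paper splits into the cases \( \Alphab_{k+2} = \Alphab_{k+1} \) and \( \Alphab_{k+2} = \Alphab_{k+1} \setminus \Set{ a_{k+1} } \), whereas your identity \( \Alphab_{k+1} = \Set{ a_{k+1} } \cup \Alphab_{k+2} \) handles both cases at once.
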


\begin{proof}
For \( \Alphab_{k+2} = \Alphab_{k+1} \) the definition of \( \AllL \) yields
\[ \AllL( k+1 ) = \min \Set{ j : \Set{ a_{k+2}, \ldots a_{j} } = \Alphab_{k+1} } \geq \min \Set{ j : \Set{ a_{k+1}, \ldots a_{j} } = \Alphab_{k+1} } = \AllL( k ) \, . \]
For \( \Alphab_{k+2} = \Alphab_{k+1} \setminus \Set{ a_{k+1} } \) we obtain
\begin{align*}
\AllL( k+1 ) &= \min \Set{ j : \Set{ a_{k+2}, \ldots , a_{j} } = \Alphab_{k+1} \setminus \Set{ a_{k+1} } } \\
&= \min \Set{ j : \Set{ a_{k+1}, \ldots , a_{j} } = \Alphab_{k+1} } \\
&= \AllL( k ) \, . \qedhere
\end{align*}
\end{proof}

Note that \( \AllL \) is in general not strictly monotonically increasing. Therefore, the second notion we introduce is the sequence \( ( \AllLInc{ i } )_{i \in \NN_{0}} \), defined by \( \AllLInc{ 0 } = 0 \) and \( \AllL( \AllLInc{ i } ) = \AllL( \AllLInc{ i } + 1 ) = \ldots = \AllL( \AllLInc{ i+1 } - 1 ) < \AllL( \AllLInc{ i+1 } ) \). It denotes those positions where \( \AllL \) actually increases. Below, we prove two useful characterisations of \( ( \AllLInc{ i } ) \). Both are based on the idea that \( \AllL(k) = \AllL(k+1) \) holds if \( a_{k+1} \) appears in \( \Set{ a_{k+2} , \ldots , a_{\AllL( k )} } \).

\begin{prop}
\label{prop:AllLIncRecur}
The sequence \( (\AllLInc{ i }) \) is given by \( \AllLInc{ 0 } = 0 \) and the recurrence relation \( \AllLInc{ i+1 } \DefAs  \max \Set{ j \leq \AllL( \AllLInc{ i } ) : \Set{ a_{j} , a_{j+1} , \ldots , a_{\AllL( \AllLInc{ i } )} } = \Alphab_{\AllLInc{ i }+1} } \).
\end{prop}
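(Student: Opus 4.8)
The plan is to fix $i \in \NN_{0}$, abbreviate $K \DefAs \AllLInc{i}$, $N \DefAs \AllL( K )$, $A \DefAs \Alphab_{K+1}$, and write $j^{\star}$ for the quantity $\max \Set{ j \leq N : \Set{ a_{j} , \ldots , a_{N} } = A }$ appearing on the right of the asserted recurrence. Since $\AllL( K ) = N$ means precisely that $\Set{ a_{K+1} , \ldots , a_{N} } = A$, the index $K+1$ is admissible in this maximum, so $j^{\star}$ is well defined with $K < j^{\star} \leq N$; in fact $j^{\star} < N$, since $\Card{A} \geq \Card{\AlphabEv} \geq 2$ forces at least two indices in $\Set{ j^{\star} , \ldots , N }$. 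By the defining property $\AllL( \AllLInc{i} ) = \ldots = \AllL( \AllLInc{i+1} - 1 ) < \AllL( \AllLInc{i+1} )$ of the sequence $( \AllLInc{i} )$, the proposition reduces, for each $i$ separately, to the two claims: (i) $\AllL( k ) = N$ for all $k$ with $K \leq k \leq j^{\star} - 1$, and (ii) $\AllL( j^{\star} ) > N$. Together with $j^{\star} > K$ these identify $j^{\star}$ as $\AllLInc{i+1}$, and the proposition follows since both sides agree at $\AllLInc{0} = 0$ and the recurrence is determined by $\AllLInc{i}$.

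For (i), I would first observe that for such $k$ one has $\Alphab_{k+1} = A$: the inclusion $\Alphab_{k+1} \subseteq \Alphab_{K+1} = A$ is immediate, while $k + 1 \leq j^{\star}$ gives $\Alphab_{k+1} \supseteq \Set{ a_{j^{\star}} , \ldots , a_{N} } = A$. Moreover $\Set{ a_{k+1} , \ldots , a_{N} }$ lies between $\Set{ a_{j^{\star}} , \ldots , a_{N} } = A$ (using $k+1 \leq j^{\star}$) and $\Set{ a_{K+1} , \ldots , a_{N} } = A$ (using $k \geq K$), hence equals $A = \Alphab_{k+1}$, so $\AllL( k ) \leq N$. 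Combined with $\AllL( k ) \geq \AllL( K ) = N$ (monotonicity of $\AllL$, just proved, together with $k \geq K$) this yields $\AllL( k ) = N$.

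Step (ii) is where I expect the real work to be. Monotonicity already gives $\AllL( j^{\star} ) \geq \AllL( K ) = N$, so the task is to exclude $\AllL( j^{\star} ) = N$. By maximality of $j^{\star}$, the letter $a_{j^{\star}}$ is the unique element of $A$ missing from $\Set{ a_{j^{\star}+1} , \ldots , a_{N} }$, so $\Set{ a_{j^{\star}+1} , \ldots , a_{N} } = A \setminus \Set{ a_{j^{\star}} }$ and $\Alphab_{j^{\star}+1} = ( A \setminus \Set{ a_{j^{\star}} } ) \cup \Alphab_{N+1}$. The natural tool is the observation recorded before the statement: applied at the index $j^{\star}-1$ (for which part~(i) gives $\AllL( j^{\star}-1 ) = N$ and $\Alphab_{j^{\star}} = A$), it shows that the value $N$ propagates to $\AllL( j^{\star} )$ only if $a_{j^{\star}}$ appears in $\Set{ a_{j^{\star}+1} , \ldots , a_{N} }$, which it does not. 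Hence, if $a_{j^{\star}}$ reoccurs anywhere, its first reoccurrence lies beyond $N$, the window $\Set{ a_{j^{\star}+1} , \ldots , a_{j} }$ cannot exhaust $A$ until $j$ passes that position, and $\AllL( j^{\star} ) > N$ as desired. The main obstacle is precisely the opposite configuration, in which $a_{j^{\star}}$ never reappears after position $j^{\star}$: then $\Alphab_{j^{\star}+1} = A \setminus \Set{ a_{j^{\star}} }$ is already exhausted by $a_{j^{\star}+1} , \ldots , a_{N}$, a bare index count no longer separates $\AllL( j^{\star} )$ from $N$, and one must argue more carefully — keeping track of which letters of $A$ have their last occurrence inside the interval $( K , N ]$, and ultimately reducing to the eventual alphabet $\AlphabEv$ and the threshold $\EventNr$ beyond which every letter of the coding sequence recurs. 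Carrying out that bookkeeping is the heart of the proof; the well-definedness of $j^{\star}$ and step~(i) are routine by comparison.
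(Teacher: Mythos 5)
Your reduction to claims (i) and (ii), the proof of (i), and the recurring subcase of (ii) are all correct, but the proposal stops at the decisive point, and the case you defer to ``bookkeeping'' cannot be closed: it is exactly the case in which the asserted recurrence fails. If \( a_{j^{\star}} \) never reappears after position \( j^{\star} \), then, as you yourself compute, \( \Set{ a_{j^{\star}+1} , \ldots , a_{N} } = A \setminus \Set{ a_{j^{\star}} } \), and since \( \Alphab_{N+1} \) also avoids \( a_{j^{\star}} \) one gets \( \Alphab_{j^{\star}+1} = A \setminus \Set{ a_{j^{\star}} } \); this set is already exhausted by \( a_{j^{\star}+1} , \ldots , a_{N} \), so \( \AllL( j^{\star} ) = N \) and hence \( j^{\star} < \AllLInc{ i+1 } \) --- the opposite of claim (ii). A concrete instance: take \( ( a_{k} ) = ( y , x , y , z , y , z , y , z , \ldots ) \), alternating \( y , z \) from \( k = 2 \) on (the standing assumptions hold: consecutive letters differ and \( \AlphabEv = \Set{ y , z } \)). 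Then \( \AllL( 0 ) = \AllL( 1 ) = 3 < \AllL( 2 ) = 4 \), so \( \AllLInc{ 1 } = 2 \), whereas \( \max \Set{ j \leq 3 : \Set{ a_{j} , \ldots , a_{3} } = \Alphab_{1} } = 1 \), because \( \Alphab_{1} = \Set{ x , y , z } \) and \( x \) occurs only at position \( 1 \). So no amount of bookkeeping rescues step (ii) in general; what makes it true is precisely recurrence of \( a_{j^{\star}} \), for instance whenever \( \Alphab_{ \AllLInc{ i } + 1 } = \AlphabEv \) (in particular for all sufficiently large \( i \), which is the regime in which the recurrence is invoked later), and there your own first argument --- the first reoccurrence of \( a_{j^{\star}} \) lies beyond \( N \) --- finishes (ii) in one line. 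Note also that the observation recorded before the statement is an ``if'', not an ``only if''; the ``only if'' reading is exactly what breaks in the dying-letter case.

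For comparison, the paper argues by induction, comparing the two sequences through the inequalities \( \widetilde{m}_{i+1} \leq \AllLInc{ i+1 } \) and \( \widetilde{m}_{i+1} \geq \AllLInc{ i+1 } \), where \( \widetilde{m}_{i+1} \) is your \( j^{\star} \). The first inequality corresponds to your step (i); the second is obtained by declaring \( j = \AllLInc{ i+1 } \) admissible in the maximum, i.e. \( \Set{ a_{ \AllLInc{ i+1 } } , \ldots , a_{ \AllL( \AllLInc{ i } ) } } = \Alphab_{ \AllLInc{ i } + 1 } \), although the argument there only exhibits this set as \( \Alphab_{ \AllLInc{ i+1 } } \subseteq \Alphab_{ \AllLInc{ i } + 1 } \); the missing equality \( \Alphab_{ \AllLInc{ i+1 } } = \Alphab_{ \AllLInc{ i } + 1 } \) is the same no-dying-letter condition, and it fails in the example above. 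So you have not overlooked a trick from the paper; you have put your finger on the one spot where letters occurring only finitely often matter. To make your write-up (and the statement) watertight, either restrict to indices \( i \) for which every letter occurring after position \( \AllLInc{ i } \) recurs infinitely often (e.g. \( \Alphab_{ \AllLInc{ i } + 1 } = \AlphabEv \)), or replace the condition in the maximum by \( \Set{ a_{j} , \ldots , a_{ \AllL( \AllLInc{ i } ) } } = \Alphab_{j} \): with that modification, admissibility of \( j = \AllLInc{ i+1 } \) is exactly the identity \( \AllL( \AllLInc{ i+1 } - 1 ) = \AllL( \AllLInc{ i } ) \), maximality follows from the monotonicity of \( \AllL \), and the repaired recurrence holds unconditionally.
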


\begin{proof}
Let \( (\AllLInc{ i }) \) denote the previously defined sequence and let \( (\widetilde{m}_{i}) \) denote the sequence defined in this proposition. By definition \( \AllLInc{ 0 } = 0 = \widetilde{m}_{0} \) holds. We proceed by induction, so assume that \( \AllLInc{ i } = \widetilde{m}_{i} \) holds. Because of \( \Set{ a_{\widetilde{m}_{i}+1} , \ldots , a_{\AllL( \widetilde{m}_{i} )} } = \Alphab_{ \widetilde{m}_{i}+1 } \), we have \( \widetilde{m}_{i+1} \geq \widetilde{m}_{i} +1 \). Combined with \( \Set{ a_{\widetilde{m}_{i+1}}, \ldots , a_{\AllL( \widetilde{m}_{i} )} } = \Alphab_{\widetilde{m}_{i} + 1} \supseteq \Alphab_{\widetilde{m}_{i+1} } \), we obtain the inequality
\[ \AllL( \widetilde{m}_{i+1}-1 ) =  \min \Set{ j > \widetilde{m}_{i+1} - 1 : \Set{ a_{ \widetilde{m}_{i+1} }, \ldots , a_{j} } = \Alphab_{ \widetilde{m}_{i+1} } } \leq \AllL( \widetilde{m}_{i} ) = \AllL( \AllLInc{ i } ) \, . \]
This yields \( \widetilde{m}_{i+1} - 1 \leq \AllLInc{ i } \leq \AllLInc{ i+1 } - 1 \). Conversely, \( \AllL( \AllLInc{ i+1 } - 1 ) = \AllL( \AllLInc{ i } ) \) holds by definition, which implies \( \Set{ a_{ \AllLInc{ i+1 } } , \ldots , a_{ \AllL( \AllLInc{ i } ) } } = \Alphab_{ \AllLInc{ i+1 } } \subseteq \Alphab_{ \AllLInc{ i } + 1 } \). This yields \( \widetilde{m}_{i+1} \geq \AllLInc{ i+1 } \).
\end{proof}

\begin{prop}
\label{prop:AkAAllLK}
For \( k \geq 1 \) the equality \( a_{k} = a_{ \AllL( k ) } \) holds if and only if \( k = \AllLInc{ i } \) holds for some \( i \geq 1 \).
\end{prop}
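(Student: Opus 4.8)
The plan is to reduce the statement to the equivalence
\[ \AllL(k-1) < \AllL(k) \iff a_k = a_{\AllL(k)} \qquad (k \geq 1) , \]
which is enough: directly from the definition of $(\AllLInc{i})$, the positions $k \geq 1$ at which $\AllL$ strictly increases are precisely the $\AllLInc{i}$ with $i \geq 1$ (the sequence $(\AllLInc{i})$ is the increasing enumeration of the jump points of $\AllL$, with $0$ prepended). Throughout I would use two elementary facts: the obvious inclusion $\Alphab_k \setminus \Alphab_{k+1} \subseteq \Set{ a_k }$, and the observation --- forced by the minimality in the definition of $\AllL$ --- that $a_{\AllL(k)}$ does not occur in $\Word{ a_{k+1} }{ \ldots }{ a_{\AllL(k)-1} }$, so that $\Set{ a_{k+1}, \ldots, a_{\AllL(k)-1} } = \Alphab_{k+1} \setminus \Set{ a_{\AllL(k)} }$.

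For the implication ``$a_k = a_{\AllL(k)} \Rightarrow \AllL(k-1) < \AllL(k)$'' I would argue directly. Since $\AllL(k) > k$, the assumption gives $a_k \in \Alphab_{k+1}$, hence $\Alphab_k = \Alphab_{k+1}$; combining this with the observation above yields $\Set{ a_k, \ldots, a_{\AllL(k)-1} } = \Set{ a_k } \cup ( \Alphab_{k+1} \setminus \Set{ a_k } ) = \Alphab_{k+1} = \Alphab_k$, and by definition of $\AllL(k-1)$ this forces $\AllL(k-1) \leq \AllL(k) - 1 < \AllL(k)$.

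For the converse I would put $M := \AllL(k-1)$, so that $\Set{ a_k, \ldots, a_M } = \Alphab_k$ while $a_M$ occurs in $\Word{ a_k }{ \ldots }{ a_M }$ only at the last position. The heart of the argument is to extract from $\AllL(k) > M$ the precise statement that $a_k$ reappears after position $k$, but only strictly after $M$. First, if $a_k \notin \Alphab_{k+1}$ then $a_k$ occurs in $\Word{ a_k }{ \ldots }{ a_M }$ only at position $k$, whence $\Set{ a_{k+1}, \ldots, a_M } = \Alphab_k \setminus \Set{ a_k } = \Alphab_{k+1}$ and so $\AllL(k) \leq M$, a contradiction; therefore $a_k \in \Alphab_{k+1}$ and $\Alphab_{k+1} = \Alphab_k$. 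Next, $\Set{ a_{k+1}, \ldots, a_M }$ contains $\Alphab_k \setminus \Set{ a_k }$, is contained in $\Alphab_{k+1} = \Alphab_k$, and is different from $\Alphab_{k+1}$ (because $\AllL(k) > M$); hence it equals $\Alphab_k \setminus \Set{ a_k }$, i.e.\ $a_k$ does not occur in $\Word{ a_{k+1} }{ \ldots }{ a_M }$. Let $p > M$ be the first position after $M$ with $a_p = a_k$ --- it exists since $a_k \in \Alphab_{k+1}$. For $k < j < p$ the set $\Set{ a_{k+1}, \ldots, a_j }$ misses $a_k$ when $j \geq M$, and misses a letter of $\Alphab_k \setminus \Set{ a_k }$ when $j < M$ (for then $\Set{ a_k, \ldots, a_j } \neq \Alphab_k$ and $a_k$ is already present at position $k$), so it never equals $\Alphab_{k+1}$; while $\Set{ a_{k+1}, \ldots, a_p } = ( \Alphab_k \setminus \Set{ a_k } ) \cup \Set{ a_k } = \Alphab_{k+1}$. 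Consequently $\AllL(k) = p$, so $a_{\AllL(k)} = a_p = a_k$, which in particular reconfirms $\AllL(k) > \AllL(k-1)$.

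Apart from this one step everything is routine set bookkeeping. For the direction $k = \AllLInc{i}$ ($i \geq 1$) $\Rightarrow a_k = a_{\AllL(k)}$ one could alternatively invoke the recurrence of Proposition~\ref{prop:AllLIncRecur}, which already provides $\Set{ a_{\AllLInc{i+1}+1}, \ldots, a_{\AllL(\AllLInc{i})} } = \Alphab_{\AllLInc{i}+1} \setminus \Set{ a_{\AllLInc{i+1}} }$; together with $\AllL(\AllLInc{i+1}) > \AllL(\AllLInc{i})$ (part of the definition of $\AllLInc{i+1}$) this quickly yields $\Alphab_{\AllLInc{i+1}+1} = \Alphab_{\AllLInc{i}+1}$ and then the claim. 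I expect the self-contained version above to be the shorter one.
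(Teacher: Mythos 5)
Your proof is correct, and its overall skeleton (reduce the statement to the jump points of \( \AllL \), then prove the equivalence \( \AllL(k-1) < \AllL(k) \Leftrightarrow a_{k} = a_{\AllL(k)} \)) is the same as the paper's. The direction ``\( a_{k} = a_{\AllL(k)} \Rightarrow \AllL(k-1) < \AllL(k) \)'' is essentially identical to the paper's: there, too, one notes \( \Set{ a_{k}, \ldots, a_{\AllL(k)-1} } = \Set{ a_{k+1}, \ldots, a_{\AllL(k)} } = \Alphab_{k+1} = \Alphab_{k} \) and concludes \( \AllL(k-1) \leq \AllL(k)-1 \). For the other direction you argue more constructively than the paper: you first exclude \( a_{k} \notin \Alphab_{k+1} \) by contradiction, show that \( a_{k} \) does not recur up to \( M = \AllL(k-1) \), locate its first reoccurrence \( p > M \), and then verify \( \AllL(k) = p \) by checking all intermediate positions. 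The paper reaches the same conclusion by a shorter sandwich argument: the jump gives \( \Alphab_{k} = \Set{ a_{k}, \ldots, a_{\AllL(k-1)} } \subseteq \Set{ a_{k}, \ldots, a_{\AllL(k)-1} } \subseteq \Alphab_{k} \), while minimality of \( \AllL(k) \) gives \( \Set{ a_{k+1}, \ldots, a_{\AllL(k)-1} } = \Alphab_{k+1} \setminus \Set{ a_{\AllL(k)} } \), so the letter \( a_{\AllL(k)} \in \Alphab_{k} \) can only be supplied by the initial letter \( a_{k} \). Both routes rest on the same two minimality observations; yours is longer, but it yields the slightly sharper (and occasionally useful) fact that \( \AllL(k) \) is exactly the first position after \( \AllL(k-1) \) at which \( a_{k} \) reappears, which the paper's argument leaves implicit.
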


\begin{proof}
First we assume \( k = \AllLInc{ i } \). The definition of \( \AllL( \AllLInc{ i } ) \) yields
\[ \Set{ a_{ \AllLInc{ i } + 1 }, \ldots, a_{ \AllL( \AllLInc{ i } ) - 1 } } = \Alphab_{ \AllLInc{ i } + 1 } \setminus \Set{ a_{ \AllL( \AllLInc{ i } ) } } \subseteq \Alphab_{ \AllLInc{ i } } \setminus  \Set{ a_{ \AllL( \AllLInc{ i } ) } } \, . \]
Since \( \AllL \) is increasing at \( \AllLInc{ i } \), we also have
\[ \Alphab_{\AllLInc{ i }} = \Set{ a_{ \AllLInc{ i } } , \ldots , a_{ \AllL( \AllLInc{ i } - 1 ) } } \subseteq \Set{ a_{\AllLInc{ i }} , \ldots , a_{ \AllL( \AllLInc{ i } ) - 1 } } \subseteq \Alphab_{\AllLInc{ i }} \, . \]
Thus \( a_{\AllLInc{ i }} = a_{\AllL( \AllLInc{ i } )} \) follows. Conversely we now assume \( a_{k} = a_{\AllL( k )} \) for some \( k \geq 1 \). Consequently we have
\[ \Set{ a_{k} , \ldots , a_{ \AllL( k ) - 1 } } = \Set{ a_{k+1} , \ldots , a_{ \AllL( k ) } } = \Alphab_{k+1} = \Alphab_{k} \, ,\]
and thus \( \AllL( k-1 ) \leq \AllL( k ) - 1 \).
\end{proof}

To illustrate the notions introduced above, we discuss some examples. The results will also play a role when we study \( \alpha \)-repetitivity and the Boshernitzan condition in Subsection~\ref{subsec:AlphaRepe} and Section~\ref{sec:BoshCond} respectively. 

\begin{exmpl}
\label{exmpl:AllLAlphEv2}
For every simple Toeplitz subshift with \( \Card{ \AlphabEv } = 2 \), the sequence \( (a_{k}) \) eventually alternates between the two letters in \( \AlphabEv \). This implies \( \AllL( k ) = k+2 \) and hence \( \AllLInc{ i+1 } = \AllLInc{ i } + 1 \) for all sufficiently large \( k \).
\end{exmpl}

\begin{exmpl}[generalised Grigorchuk subshift]
\label{exmpl:GrigAllLInc}
The case \( \Card{ \AlphabEv } = 2 \) was discussed in the previous example. For \( \Card{ \AlphabEv } = 3 \) we obtain \( \Alphab_{k} = \AlphabEv = \Set{ b , c, d } \) for all \( k \geq 1 \). The definition of  \( \AllL( \AllLInc{ i } ) \) implies \( a_{ \AllL( \AllLInc{ i } )} \notin \Set{ a_{ \AllLInc{ i } + 1 } , \ldots , a_{ \AllL( \AllLInc{ i } ) - 1} } \). Thus the letters \( a_{ \AllL( \AllLInc{ i }) - 2 } \), \( a_{ \AllL( \AllLInc{ i } ) - 1 } \) and \( a_{ \AllL( \AllLInc{ i } ) } \) are pairwise different and \( \AlphabEv = \Set{ a_{ \AllL( \AllLInc{ i } ) - 2 } , a_{ \AllL( \AllLInc{ i } ) - 1 } , a_{ \AllL( \AllLInc{ i } ) } } \) holds. Now Proposition~\ref{prop:AllLIncRecur} yields \( \AllLInc{ i+1 } = \AllL( \AllLInc{ i } ) - 2 \) for all \( i \geq 0 \). For the special case of the standard Grigorchuk subshift, we have \( \AllL( k ) = k+3 \) for all \( k \geq 0 \), and thus \( \AllLInc{ i+1 } = \AllLInc{ i } + 1 \).
\end{exmpl}

\begin{exmpl}[non-(B) example]
\label{exmpl:BoshExAllL}
Recall from Example~\ref{exmpl:BoshSubshiftDef} that \( (a_{k}) \) is given by
\[ (a_{k})_{k \in \NN_{0}} = (\hspace{-0.3em} \underbrace{a, b}_{2 \text{ letters}} \hspace{-0.5em}, c, \underbrace{a, b, a, b}_{4 \text{ letters}}, d, \ldots , c, \underbrace{a, b, \ldots, a, b}_{4l \text{ letters}}, d, \underbrace{a, b, \ldots, a, b}_{4l+2 \text{ letters}}, c, \ldots ) \, . \]
In this sequence, the \( l \)-th neither-\( a \)-nor-\( b \) letter has the index \( ( l+1 )^{2} - 2  \). When we have reached the next \( c \) and the next \( d \), we have reached all letters, since there are \( a \)'s and \( b \)'s in between them. This yields
\[ \AllL( k ) = (l+2)^{2} - 2 \quad \text{for } k \in \NN_{0} \text{ with } l^{2} - 2 \leq k < ( l+1 )^{2} - 2 \, . \]
In particular, \( ( \AllLInc{ i } ) \) is given by \( \AllLInc{ 0 } = 0 \) and \( \AllLInc{ i } = (i+1)^{2} - 2 \) for \( i \geq 1 \). Moreover \( \AllL( \AllLInc{ i } ) = \AllLInc{ i+2 } \) holds. It will turn out that this behaviour of \( \AllLInc{ i } \) and \(  \AllL( \AllLInc{ i } ) \) is the reason that the subshift is a counterexample to both, \( \alpha \)-repetitivity (see Example~\ref{exmpl:AlphaRepeBosh}) and the Boshernitzan condition (Example~\ref{exmpl:BoshExBoshCond}). In fact, this example was specifically designed in \cite{LiuQu_Simple} to violate the Boshernitzan condition.
\end{exmpl}

\subsection{The repetitivity of simple Toeplitz subshifts}

In the following we prove lower and upper bounds for the repetitivity function. It turns out that they can only be satisfied simultaneously if \( \Repe( L+1 ) - \Repe( L ) = 1 \) holds for all sufficiently large \( L \). This yields an explicit formula for the repetitivity function.

Before we begin, we repeat briefly which words occur in a simple Toeplitz subshift: according to Proposition~\ref{prop:WordsContained} every word of length \( \Length{ \PBlock{k} } + 1 \) or less is a subword of \( \Word{ \PBlock{k} }{ a }{ \PBlock{k} } \) for some \( a \in \Alphab_{k+1} \). By Proposition~\ref{prop:UpBdCompBlock} and Corollary~\ref{cor:CompGrowth}, all subwords of length \( \Length{ \PBlock{k} } + 1 \) that start in \( \Word{ \PBlock{k} }{ a } \) for \( a \in \Alphab_{k+1} \setminus \Set{ a_{k} } \) and in \( \Word{ \PBlock{k-1} }{ a_{k} } \) (if \( a_{k} \in \Alphab_{k+1} \) holds) are pairwise different. To bound the repetitivity from below, we show that certain words do not occur in a large blocks.

\begin{prop}
\label{prop:RepeLB1}
For every \( i \geq 1 \) we have the lower bound
\[ \Repe( \Length{ \PBlock{ \AllLInc{ i } } }  - \Length{ \PBlock{ \AllLInc{ i } - 1 } } + 1 ) \geq 2 \cdot ( \Length{ \PBlock{ \AllL( \AllLInc{ i } ) -1 } } + 1 ) \, . \]
\end{prop}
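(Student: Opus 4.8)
The plan is to exhibit one word of length $L \DefAs \Length{\PBlock{\AllLInc{i}}} - \Length{\PBlock{\AllLInc{i}-1}} + 1$ which fails to occur in a suitable word of length $2\bigl(\Length{\PBlock{\AllL(\AllLInc{i})-1}}+1\bigr) - 1$; by the definition of $\Repe$ this is exactly the asserted inequality. Abbreviate $m \DefAs \AllLInc{i}$, $j \DefAs \AllL(m)$ and $d \DefAs \Length{\PBlock{m-1}}+1$, so that $L = (n_{m}-1)d + 1$. Since $i \geq 1$ we have $m \geq 1$, and $j \geq m + \Card{\AlphabEv} \geq m+2$. Three facts will be used: first, $a_{m} = a_{j}$ by Proposition~\ref{prop:AkAAllLK} (so in particular $a_{m} \in \Alphab_{m+1}$); second, minimality of $j = \AllL(m)$ forces $\Set{a_{m+1},\ldots,a_{j-1}} = \Alphab_{m+1} \setminus \Set{a_{j}} = \Alphab_{m+1}\setminus\Set{a_{m}}$; third, $\Card{\Alphab_{j}} \geq \Card{\AlphabEv} \geq 2$ together with $a_{m} = a_{j} \in \Alphab_{j}$ provides a letter $c \in \Alphab_{j}\setminus\Set{a_{m}}$.

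The two words will be $v \DefAs \Word{\PBlock{j-1}}{c}{\PBlock{j-1}}$ (of length $2\Length{\PBlock{j-1}}+1$) and $u$, the prefix of length $L$ of $\Word{a_{m}}{\PBlock{m}}$; explicitly $u = (\Word{a_{m}}{\PBlock{m-1}})^{n_{m}-1}\,a_{m}$. Both lie in $\Langu{\Subshift}$: choosing $l \geq m+1$ with $a_{l}=a_{m}$, the block $\PBlock{l}$ contains $\Word{\PBlock{m}}{a_{m}}{\PBlock{m}}$ (since $\PBlock{m}$ is a prefix and a suffix of $\PBlock{l-1}$), hence $\Word{a_{m}}{\PBlock{m}}$ and thus $u$; choosing $l \geq j$ with $a_{l} = c$, the block $\PBlock{l}$ contains $\Word{\PBlock{j-1}}{c}{\PBlock{j-1}} = v$. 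Iterating the recursion~(\ref{eqn:PBlockRecur}) down to level $m$ exhibits $\PBlock{j-1}$ as a concatenation of $\PBlock{m}$-blocks joined by the letters $a_{m+1},\ldots,a_{j-1}$, which by the second fact are exactly $\Alphab_{m+1}\setminus\Set{a_{m}}$; together with the central connector $c \neq a_{m}$ of $v$ this shows that \emph{no} two consecutive level-$m$ blocks of $v$ are joined by the letter $a_{m}$.

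It remains to prove $u \notin \Langu{v}$. Since $\Length{u} = L \leq \Length{\PBlock{m}}+1$ and consecutive level-$m$ blocks of $v$ lie at distance $\Length{\PBlock{m}}+1$, any occurrence of $u$ in $v$ either sits inside a single $\PBlock{m}$-block or inside $\Word{\PBlock{m}}{c'}{\PBlock{m}}$ straddling a unique level-$m$ connector $c'$, which by the previous paragraph satisfies $c' \neq a_{m}$. Both possibilities I would rule out by a phase argument: by construction $u$ is a factor, of length $L \geq d$, of the periodic word $q \DefAs (\Word{\PBlock{m-1}}{a_{m}})^{\infty}$, namely the length-$L$ window of $q$ starting at an occurrence of $a_{m}$. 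Because $\Word{\PBlock{m-1}}{a_{m}}$ is primitive (a standard fact for simple Toeplitz blocks, provable by induction on $m$ using aperiodicity, Proposition~\ref{prop:STAperiod}), a factor of $q$ of length $\geq d$ determines its phase modulo $d$. Moreover $\PBlock{m} = (\Word{\PBlock{m-1}}{a_{m}})^{n_{m}-1}\PBlock{m-1}$ agrees with $q$ on its first $\Length{\PBlock{m}} = n_{m}d-1$ letters. Hence an occurrence of $u$ inside $\PBlock{m}$ would display $u$ as a window of $q$ whose starting phase is not that of an $a_{m}$ (there is no room to the right for the correct window), contradicting the phase computation; and an occurrence straddling $c'$ would force the letter of $u$ lying over $c'$ to agree with $q$ on long stretches to both sides, hence to carry the phase of an $a_{m}$ and so to equal $a_{m}$, contradicting $c' \neq a_{m}$. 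This disposes of all cases except $n_{m} = 2$ with $c'$ landing well inside $u$; in that remaining case the claim reduces to showing that $\Word{a_{m}}{\PBlock{m-1}}{a_{m}}$ is not a factor of $\Word{\PBlock{m-1}}{c'}{\PBlock{m-1}}$, which I would settle using the palindromicity of $\PBlock{m-1}$ (Example~\ref{exmpl:PBlockPalindr}), the relation $a_{m}\neq a_{m-1}$, and a short analysis of the periods of $\PBlock{m-1}$.

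The main obstacle is precisely this control over \emph{all} occurrences of the short word $u$: since $\PBlock{m-1}$ may itself contain the letter $a_{m}$, there are potential ``spurious'' occurrences to exclude, and doing this rigorously relies on primitivity (equivalently, recognizability) of the blocks $\PBlock{k}$ and of $\Word{\PBlock{k}}{a_{k+1}}$. The structurally clean part is the phase/periodicity computation sketched above; it could alternatively be repackaged via the pairwise distinctness of the length-$(\Length{\PBlock{m}}+1)$ words that is established in the proofs of Proposition~\ref{prop:UpBdCompBlock} and Corollary~\ref{cor:CompGrowth}.
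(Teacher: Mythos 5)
Your construction coincides with the paper's: you test the word \( ( \Word{ a_{m} }{ \PBlock{m-1} } )^{n_{m}-1} a_{m} \) of length \( \Length{ \PBlock{m} } - \Length{ \PBlock{m-1} } + 1 \) (with \( m = \AllLInc{i} \), \( j = \AllL( \AllLInc{i} ) \)) against \( \Word{ \PBlock{j-1} }{ c }{ \PBlock{j-1} } \) with \( c \neq a_{m} \), and the structural input — all level-\( m \) connectors of \( \PBlock{j-1} \) lie in \( \Set{ a_{m+1} , \ldots , a_{j-1} } \), a set not containing \( a_{m} = a_{j} \) — is exactly the paper's. The difference, and the gap, lies in how occurrences of the short word are excluded. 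Two steps are asserted rather than proved. First, primitivity of \( \Word{ \PBlock{m-1} }{ a_{m} } \) is claimed as ``standard, provable by induction using aperiodicity''; it is true, but your sketch is not a proof, and the clean source for it is precisely the pairwise distinctness of the length-\( ( \Length{ \PBlock{m-1} } + 1 ) \) subwords of \( \Word{ \PBlock{m-1} }{ a }{ \PBlock{m-1} } \), \( a \in \Alphab_{m} \), obtained from the complexity computation (your closing remark cites this fact at level \( m \), i.e.\ for length \( \Length{ \PBlock{m} } + 1 \); the level actually needed here is \( m-1 \)).

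Second, and more seriously, your phase argument only settles the straddling case when one of the two pieces of the occurrence on either side of the connector \( c' \) has length at least \( d = \Length{ \PBlock{m-1} } + 1 \). You correctly isolate the leftover case \( n_{m} = 2 \) with the connector strictly interior, but there you only state that you ``would settle it'' via palindromicity, \( a_{m} \neq a_{m-1} \) and a period analysis. This case is not marginal — it covers every subshift with \( n_{k} \equiv 2 \), in particular all the Grigorchuk-type examples — and primitivity of the single word \( \Word{ \PBlock{m-1} }{ a_{m} } \) genuinely does not suffice there, because both halves of the overlap can be shorter than the period \( d \), so no exact factor of \( ( \Word{ \PBlock{m-1} }{ a_{m} } )^{\infty} \) of length \( \geq d \) is available to fix the phase. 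What closes this case is the \emph{cross-connector} distinctness mentioned above: a subword of length \( d \) of \( \Word{ \PBlock{m-1} }{ a_{m} }{ \PBlock{m-1} } \) cannot coincide with a subword of length \( d \) of \( \Word{ \PBlock{m-1} }{ c' }{ \PBlock{m-1} } \) for \( c' \neq a_{m} \) except in the trivially aligned way. This is exactly how the paper's proof disposes of all non-aligned occurrences in one line, the aligned ones being excluded by the same count of at most \( n_{m} - 1 \) consecutive \( a_{m} \)-connectors that you use. So the plan is sound and parallels the paper, but as written it leaves the hardest subcase open; to finish it you should invoke (or reprove) the level-\( (m-1) \) distinctness result rather than rely on palindromicity and an unspecified period analysis.
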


\begin{proof}
Let \( v \) denote the suffix of length \( \Length{ \PBlock{ \AllLInc{ i }} } + 1 - \Length{ \PBlock{ \AllLInc{ i } - 1 } } \) of \( \Word{ \PBlock{\AllLInc{ i }} }{ a_{ \AllL( \AllLInc{ i } ) } } \). Because of \( a_{ \AllL( \AllLInc{ i } ) } = a_{\AllLInc{ i }} \) (see Proposition~\ref{prop:AkAAllLK}) we obtain the decomposition
\[ \Word{ \PBlock{\AllLInc{ i }} }{ a_{ \AllL( \AllLInc{ i } ) } } = \PBlock{ \AllLInc{ i }-1 } \, \underbrace{ \Word{ a_{ \AllL( \AllLInc{ i } ) } }{ \PBlock{ \AllLInc{ i }-1 } }{ \ldots }{ \PBlock{ \AllLInc{ i }-1 } }{ a_{ \AllL( \AllLInc{ i } ) } }{ \PBlock{ \AllLInc{ i }-1 } }{ a_{ \AllL( \AllLInc{ i } ) } }}_{\text{the suffix } v } \; ,
\]
with \( n_{\AllLInc{ i }} \) consecutive single letters \( a_{ \AllL( \AllLInc{ i } ) } \). Let \( c \in \Alphab_{ \AllL( \AllLInc{ i } ) + 1 } \setminus \Set{ a_{ \AllL( \AllLInc{ i } ) } } \) and consider the word \( u \DefAs \Word{ \PBlock{ \AllL( \AllLInc{ i } ) - 1 } }{ c }{ \PBlock{ \AllL( \AllLInc{ i } ) - 1 } } \). We decompose \( \PBlock{ \AllL( \AllLInc{ i } ) - 1 } \) into \( \PBlock{ \AllLInc{ i } } \)-blocks and single letters from \( \Set{ a_{ \AllLInc{ i }+1 }, \ldots , a_{ \AllL( \AllLInc{ i } ) - 1 } } \). Since the definition of \( \AllL \) implies \( a_{ \AllL( \AllLInc{ i } ) } \notin \Set{ a_{ \AllLInc{ i }+1 }, \ldots , a_{ \AllL( \AllLInc{ i } ) - 1 } } \), there are at most \( n_{\AllLInc{ i }} - 1 \) consecutive occurrences of \( a_{ \AllL( \AllLInc{ i } ) } \) in the \( \PBlock{ \AllLInc{ i } - 1 } \)-block decomposition of \( u \). Thus, \( v \) does not occur in \( u \) in such a way that the \( \PBlock{ \AllLInc{ i } - 1 } \)-blocks align. It does not occur at a different position either, since the subwords of length \( \Length{ \PBlock{ \AllLInc{ i } - 1 } } + 1 \) of \( \Word{ \PBlock{ \AllLInc{ i } - 1 } }{ a }{ \PBlock{ \AllLInc{ i } - 1 } } \) are pairwise different. This yields
\[ \Repe( \Length{ \PBlock{ \AllLInc{ i } } } + 1 - \Length{ \PBlock{ \AllLInc{ i } - 1 } } ) = \Repe( \Length{ v } ) > \Length{ u } = 2 \cdot \Length{ \PBlock{ \AllL( \AllLInc{ i } ) - 1 } } + 1 \, . \qedhere \]
\end{proof}

\begin{prop}
\label{prop:RepeLB2}
For every \( i \geq 1 \) we have the lower bound
\[ \Repe( \Length{ \PBlock{ \AllLInc{ i } } } + 2 ) \geq 2 \cdot ( \Length{ \PBlock{ \AllL( \AllLInc{ i } ) - 1 } } + 1 ) + \Length{ \PBlock{ \AllLInc{ i } } } + 1 \, . \]
\end{prop}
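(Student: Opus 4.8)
The plan is to mimic the proof of Proposition~\ref{prop:RepeLB1}: exhibit a short word $w\in\Langu{\Subshift}$ of length $\Length{\PBlock{\AllLInc{i}}}+2$ and a long word $u\in\Langu{\Subshift}$ of length $2(\Length{\PBlock{\AllL(\AllLInc{i})-1}}+1)+\Length{\PBlock{\AllLInc{i}}}$ so that $w$ does not occur in $u$. Since $\Repe$ is strictly increasing, this gives $\Repe(\Length{\PBlock{\AllLInc{i}}}+2)>\Length{u}$, which is the asserted inequality.

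Set $a\DefAs a_{\AllLInc{i}}=a_{\AllL(\AllLInc{i})}$ (the two are equal because $\AllLInc{i}$ is a place where $\AllL$ increases, Proposition~\ref{prop:AkAAllLK}) and $L\DefAs\AllL(\AllLInc{i})-1$; note $L\geq\AllLInc{i}+1$ since $\AllL(\AllLInc{i})\geq\AllLInc{i}+\Card{\Alphab_{\AllLInc{i}+1}}\geq\AllLInc{i}+2$. I will use three structural facts, all immediate from the recursion~(\ref{eqn:PBlockRecur}) and the definition of $\AllL$: (i) in the $\PBlock{\AllLInc{i}}$-block decomposition of $\PBlock{L}$ every separating letter is one of $a_{\AllLInc{i}+1},\ldots,a_{\AllL(\AllLInc{i})-1}$, hence different from $a=a_{\AllL(\AllLInc{i})}$; (ii) since $\PBlock{\AllLInc{i}+1}=\PBlock{\AllLInc{i}}\,a_{\AllLInc{i}+1}\,\PBlock{\AllLInc{i}}\cdots$ is a prefix of $\PBlock{L}$ and $\PBlock{L}$ is a palindrome, the first and the last $\PBlock{\AllLInc{i}}$-separator inside $\PBlock{L}$ both equal $a_{\AllLInc{i}+1}$; (iii) $\PBlock{\AllL(\AllLInc{i})}=\PBlock{L}\,a\,\PBlock{L}\cdots\PBlock{L}$.

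Now define
\[ u\DefAs\PBlock{L}\,a_{\AllL(\AllLInc{i})+1}\,\PBlock{L}\,a\,\PBlock{\AllLInc{i}}\qquad\text{and}\qquad w\DefAs a\,\PBlock{\AllLInc{i}}\,a_{\AllLInc{i}+1}, \]
so that $\Length{u}=2(\Length{\PBlock{L}}+1)+\Length{\PBlock{\AllLInc{i}}}$ and $\Length{w}=\Length{\PBlock{\AllLInc{i}}}+2$. Both lie in $\Langu{\Subshift}$: the word $u$ is a factor of $\PBlock{\AllL(\AllLInc{i})+1}=\PBlock{\AllL(\AllLInc{i})}\,a_{\AllL(\AllLInc{i})+1}\,\PBlock{\AllL(\AllLInc{i})}\cdots$, obtained from the suffix $\PBlock{L}$ of the first $\PBlock{\AllL(\AllLInc{i})}$, the separator $a_{\AllL(\AllLInc{i})+1}$, and the prefix $\PBlock{L}\,a\,\PBlock{\AllLInc{i}}$ of the second $\PBlock{\AllL(\AllLInc{i})}$ (using (iii)); and $w$ occurs inside $\PBlock{\AllL(\AllLInc{i})}=\PBlock{L}\,a\,\PBlock{L}\cdots$, since by (ii) the second $\PBlock{L}$-block there is preceded by the separator $a$ and begins with $\PBlock{\AllLInc{i}}\,a_{\AllLInc{i}+1}$.

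The main step is that $w$ does not occur in $u$. In the $\PBlock{\AllLInc{i}}$-block decomposition of $u$, the only $\PBlock{\AllLInc{i}}$-separator equal to $a$ is the single $a$ immediately preceding the last block: the separators inside the two copies of $\PBlock{L}$ are $\neq a$ by (i), and the middle separator is $a_{\AllL(\AllLInc{i})+1}\neq a$. Consequently the only $\PBlock{\AllLInc{i}}$-block of $u$ that is immediately preceded by the letter $a$ is the final one (every other $\PBlock{\AllLInc{i}}$-block is either at the very start of $u$, or preceded by a $\neq a$ separator of a $\PBlock{L}$, or preceded by $a_{\AllL(\AllLInc{i})+1}$). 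Hence an occurrence of $w=a\,\PBlock{\AllLInc{i}}\,a_{\AllLInc{i}+1}$ respecting the decomposition would force the $\PBlock{\AllLInc{i}}$-part onto the last block of $u$, which is impossible because nothing — in particular not $a_{\AllLInc{i}+1}$ — follows it. A misaligned occurrence is excluded exactly as in the proof of Proposition~\ref{prop:RepeLB1}: the factor $\PBlock{\AllLInc{i}}\,a_{\AllLInc{i}+1}$ of $w$ has length $\Length{\PBlock{\AllLInc{i}}}+1$, and by Corollary~\ref{cor:CompGrowth} the subwords of $\PBlock{\AllLInc{i}}\,b\,\PBlock{\AllLInc{i}}$ of that length ($b\in\Alphab_{\AllLInc{i}+1}$) are pairwise distinct, so such a word occurs in $\Subshift$ only at positions aligned with the $\PBlock{\AllLInc{i}}$-block decomposition. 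Therefore $w$ does not occur in $u$, and the inequality follows.

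I expect the only delicate point to be this final alignment argument — one must know that a word of the form ``one full $\PBlock{\AllLInc{i}}$-block followed by a separator'' is pinned to offset $0$ in the decomposition. This is handled precisely as in Proposition~\ref{prop:RepeLB1} and uses nothing beyond Corollary~\ref{cor:CompGrowth}; everything else is bookkeeping with the recursion~(\ref{eqn:PBlockRecur}).
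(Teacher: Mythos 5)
Your proposal is correct and takes essentially the same route as the paper: your $w$ is exactly the paper's word $v = \Word{ a_{ \AllL( \AllLInc{ i } ) } }{ \PBlock{\AllLInc{ i }} }{ a_{ \AllLInc{ i }+1 } }$, your $u$ is the paper's $u$ with the admissible choice $c = a_{ \AllL( \AllLInc{ i } ) + 1 }$, and the core observation --- that $a_{ \AllL( \AllLInc{ i } ) } = a_{ \AllLInc{ i } }$ never appears as a $\PBlock{ \AllLInc{ i } }$-separator inside $\PBlock{ \AllL( \AllLInc{ i } ) - 1 }$, so the only block of $u$ preceded by this letter is the final one, after which nothing follows --- is identical. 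You merely make explicit the language-membership check and the exclusion of misaligned occurrences, which the paper leaves implicit by reference to the proof of Proposition~\ref{prop:RepeLB1}.
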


\begin{proof}
The proof is similar to the previous one: we show that \( v \DefAs \Word{ a_{ \AllL( \AllLInc{ i } ) } }{ \PBlock{\AllLInc{ i }} }{ a_{ \AllLInc{ i }+1 } } \) does not occur in \( u \DefAs \Word{ \PBlock{ \AllL( \AllLInc{ i } ) -1 } }{ c }{ \PBlock{ \AllL( \AllLInc{ i } ) -1 } }{ a_{ \AllL( \AllLInc{ i } ) } }{ \PBlock{ \AllLInc{ i } } } \), with \( c \in \Alphab_{ \AllL( \AllLInc{ i } ) + 1 } \setminus \Set{ a_{ \AllL( \AllLInc{ i } ) } } \). Recall from the previous proof that \( a_{ \AllL( \AllLInc{ i } ) } \) does not appear as a single letter in the \( \PBlock{ \AllLInc{ i } } \)-block decomposition of \( \PBlock{ \AllL( \AllLInc{ i } ) - 1 } \). Thus any two \( \PBlock{ \AllLInc{ i } } \)-blocks in \( u \!=\! \PBlock{ \AllL( \AllLInc{ i } ) - 1 } c \, \PBlock{ \AllL( \AllLInc{ i } ) - 1 } a_{ \AllL( \AllLInc{ i } ) } \, \PBlock{ \AllLInc{ i } } \) are separated by a non-\( a_{ \AllL( \AllLInc{ i } ) } \) letter, except for the last two blocks. Consequently \( v = \Word{ a_{ \AllL( \AllLInc{ i } ) } }{ \PBlock{\AllLInc{ i }} }{ a_{ \AllLInc{ i }+1 } } \) does not occur in \( u \).
\end{proof}

\begin{rem}
\label{rem:CompLowBd}
For \( \AllLInc{ i+1 } = \AllLInc{ i } + 1 \) and \( n_{\AllLInc{ i }+1} = 2 \) Proposition~\ref{prop:RepeLB1} and~\ref{prop:RepeLB2} bound the repetitivity function at the same length, since
\[ \Length{ \PBlock{ \AllLInc{ i+1 } } } - \Length{ \PBlock{ \AllLInc{ i+1 } - 1 } } + 1 = \Length{ \PBlock{ \AllLInc{ i+1 } - 1 } } + 2  = \Length{ \PBlock{ \AllLInc{ i } } } + 2 \, .\] 
holds. The lower bound in Proposition~\ref{prop:RepeLB1} is stronger, as a short computation shows:
\begin{align*}
2 \cdot ( \Length{ \PBlock{ \AllL( \AllLInc{ i+1 } ) - 1 } } + 1 ) 
& \geq 2 \cdot n_{ \AllL( \AllLInc{ i+1 } ) - 1 } \cdot ( \Length{ \PBlock{ \AllL( \AllLInc{ i } ) - 1 } } + 1 ) \\
& = 2 \cdot ( \Length{ \PBlock{ \AllL( \AllLInc{ i } ) - 1 } } + 1 ) + 2 \cdot ( n_{\AllL( \AllLInc{ i+1 } ) - 1} - 1) ( \Length{ \PBlock{ \AllL( \AllLInc{ i } ) - 1 } } + 1 ) \\
& > 2 \cdot ( \Length{ \PBlock{ \AllL( \AllLInc{ i } ) - 1 } } + 1 ) + \Length{ \PBlock{ \AllLInc{ i } } } + 1 \, .
\end{align*}
Essentially, the case \( n_{\AllLInc{ i }+1} = 2 \) is special since \( \Word{ a_{\AllLInc{ i }+1} }{ \PBlock{\AllLInc{ i }} }{ a_{\AllLInc{ i }+1} } \) is not contained in \( \PBlock{ \AllLInc{ i }+1 } \). Moreover, \( \AllLInc{ i+1 } = \AllLInc{ i } + 1 \) implies that the letter \( a_{\AllLInc{ i }+1} = a_{\AllLInc{ i+1 }} = a_{\AllL( \AllLInc{ i+1 } ) } \) is not in \( \Set{ a_{ \AllLInc{ i } + 2 }\), \( \ldots, a_{ \AllL( \AllLInc{ i+1 } ) - 1} } \). Thus we need to look at a much longer word to see \( \Word{ a_{ \AllLInc{ i } + 1 } }{ \PBlock{ \AllLInc{ i } } }{ a_{\AllLInc{ i }+1} } \) than for \( n_{\AllLInc{ i }+1} > 2 \) or for \( \AllLInc{ i+1 } > \AllLInc{ i } + 1 \). 
\end{rem}

Now we prove upper bounds for the repetitivity. The idea is to show that all sufficiently long words contain all \( \Word{ \PBlock{k} a \PBlock{k} }\) for \( a \in \Alphab_{k+1} \), which contain all words of length \( \Length{ \PBlock{k} } + 1 \). As for the lower bound, we prove the inequality at two values.

\begin{prop}
\label{prop:RepeUB1}
For every  \( i \geq 1\) we have the upper bound
\[ \Repe( \Length{ \PBlock{ \AllLInc{ i } } } - \Length{ \PBlock{ \AllLInc{ i } - 1 } } ) \leq 2 \cdot ( \Length{ \PBlock{ \AllL( \AllLInc{ i } - 1 ) - 1 } } + 1 ) + \Length{ \PBlock{ \AllLInc{ i } } } - \Length{ \PBlock{ \AllLInc{ i } - 1 } } - 1 \, . \]
\end{prop}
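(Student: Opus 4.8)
The plan is to prove the equivalent assertion that every $v\in\Langu{\Subshift}$ of length $\tilde{L}:=2(\Length{\PBlock{\AllL(\AllLInc{i}-1)-1}}+1)+L-1$ contains every $u\in\Langu{\Subshift}$ of length $L:=\Length{\PBlock{\AllLInc{i}}}-\Length{\PBlock{\AllLInc{i}-1}}$. Throughout I abbreviate $M:=\AllL(\AllLInc{i}-1)$ and $N:=M-1$, so that $\tilde L=2\Length{\PBlock{N}}+L+1$. First I would extract the combinatorics of the coding sequence that is needed. Since $\AllLInc{i}$ is an increase point of $\AllL$, the plateau of $\AllL$ containing $\AllLInc{i}-1$ begins at $\AllLInc{i-1}$, so $M=\AllL(\AllLInc{i-1})=\AllL(\AllLInc{i}-1)$; applying the defining property of $\AllL$ at $\AllLInc{i}-1$ (whose successor is $\AllLInc{i}$) gives $\Set{a_{\AllLInc{i}},a_{\AllLInc{i}+1},\ldots,a_M}=\Alphab_{\AllLInc{i}}\supseteq\Alphab_{N+1}$, and since $\AllL(k)\geq k+\Card{\AlphabEv}\geq k+2$ we obtain $N=M-1\geq\AllLInc{i}$ and hence $\Length{\PBlock{N}}\geq\Length{\PBlock{\AllLInc{i}}}>L$. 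The upshot is that \emph{every admissible ``central letter'' $a\in\Alphab_{N+1}$ is of the form $a_j$ for some $j$ with $\AllLInc{i}\leq j\leq M$}.

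Next I would classify the words of length $L$. As $L\leq\Length{\PBlock{N}}$, Proposition~\ref{prop:WordsContained} shows that each $u\in\Langu{\Subshift}_L$ is a factor of $\Word{\PBlock{N}}{a}{\PBlock{N}}$ for some $a\in\Alphab_{N+1}$; since $\Length{u}<\Length{\PBlock{N}}$, either (i) $u$ is a factor of a single $\PBlock{N}$, or (ii) $u$ is \emph{straddling}, i.e.\ $u=\Word{s}{a}{p}$ with $s$ a proper suffix and $p$ a proper prefix of $\PBlock{N}$ and $\Length{s}+\Length{p}=L-1$. Case (i) is short: by the decomposition~(\ref{eqn:DecompPBlock}) the $\PBlock{N}$-blocks recur in every element of $\Subshift$ with period $\Length{\PBlock{N}}+1$, so every window of length $2\Length{\PBlock{N}}+1\leq\tilde L$ contains a full $\PBlock{N}$-block and therefore contains $u$.

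Case (ii) is the core of the proof. Writing $a=a_j$ with $\AllLInc{i}\leq j\leq M$ as above, the idea is that $\Word{s}{a_j}{p}$ occurs at (essentially) every ``internal'' $a_j$ inside a suitable block: inside $\PBlock{j}=\Word{\PBlock{j-1}}{a_j}{\PBlock{j-1}}{\ldots}{\PBlock{j-1}}$ when $j<M$ (exploiting that $\PBlock{j}$ is simultaneously a prefix and a suffix of $\PBlock{N}$, so that the suffix $s$ and the prefix $p$ of $\PBlock{N}$ also fit around such an $a_j$ provided enough $\PBlock{j-1}$-blocks lie on each side), and inside $\PBlock{M}=\Word{\PBlock{N}}{a_M}{\PBlock{N}}{\ldots}{\PBlock{N}}$ when $j=M$ (where the flanking blocks are full $\PBlock{N}$'s and no room condition is needed, since $\Length{s},\Length{p}<\Length{\PBlock{N}}$). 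A direct count of positions shows that consecutive occurrences of $\Word{s}{a_j}{p}$ produced this way lie at distance at most $2\Length{\PBlock{N}}+2$ — the worst case being two neighbouring occurrences separated by a block boundary, which is precisely where the term $2(\Length{\PBlock{N}}+1)$ originates — and since a window of length $\tilde L$ leaves exactly $\tilde L-(\Length{u}-1)=2\Length{\PBlock{N}}+2$ admissible positions for the central letter of an occurrence of $u$, every such window contains $u$.

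Finally I would check that the resulting bound is consistent with the lower bounds of Propositions~\ref{prop:RepeLB1} and~\ref{prop:RepeLB2} at the neighbouring length $L+1$ (this consistency is what forces $\Repe$ to increase by exactly one, and it is used in the proof of Theorem~\ref{thm:Repe}). The main obstacle I anticipate is exactly case (ii): verifying \emph{uniformly over all $a\in\Alphab_{N+1}$} that $\Word{s}{a}{p}$ recurs with gap at most $2\Length{\PBlock{N}}+2$. This forces one to use both the precise value $M=\AllL(\AllLInc{i}-1)$ (so that every central letter reappears in the coding sequence at index at most $M$; compare Propositions~\ref{prop:AkAAllLK} and~\ref{prop:AllLIncRecur}) and the prefix/suffix self-similarity of the palindromic blocks, and then to carry out the off-by-one bookkeeping that makes the gap come out exactly right.
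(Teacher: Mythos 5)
Your proposal is correct, and for the critical central letter it takes a genuinely different route from the paper's proof. The paper also isolates the exceptional letters \( a_{\AllLInc{i}} \) and \( a_{\AllL(\AllLInc{i}-1)} \) (after reducing, via Proposition~\ref{prop:WordsContained}, to subwords of \( \PBlock{\AllLInc{i}} \, a \, \PBlock{\AllLInc{i}} \) with \( a \in \Alphab_{\AllLInc{i}+1} \), rather than, as you do, to subwords of \( \PBlock{N} \, a \, \PBlock{N} \) with \( a \in \Alphab_{N+1} \), where your \( N = \AllL(\AllLInc{i}-1)-1 \) is the paper's \( \widetilde{m} \)), but it then argues entirely inside the given window: it locates a full \( \PBlock{\widetilde{m}} \)-block together with both neighbouring letters, notes that at least one neighbour is \( a_{\AllL(\AllLInc{i}-1)} \), assumes without loss of generality that it is the right one, and in the short-right-tail sub-case shows that the left end and the right end of the window jointly contain all straddling subwords, split according to their starting position (Figure~\ref{fig:LRRemainders}). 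You instead prove, for each fixed straddling word \( u = s \, a_{M} \, p \), a bounded-gap recurrence statement in the ambient two-sided word: \( u \) occurs centred at every internal \( a_{M} \) of every \( \PBlock{M} \)-block (legitimate since \( \Length{s}, \Length{p} < \Length{\PBlock{N}} \)), these centres are \( \Length{\PBlock{N}}+1 \) apart inside a block and exactly \( 2(\Length{\PBlock{N}}+1) \) apart across a block boundary (using that consecutive \( \PBlock{M} \)-blocks are separated by a single letter, Equation~(\ref{eqn:DecompPBlock})), and the window slack \( \tilde{L} - (L-1) = 2(\Length{\PBlock{N}}+1) \) then forces a complete occurrence. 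This avoids the w.l.o.g.\ and sub-case analysis, makes transparent where the constant \( 2(\Length{\PBlock{N}}+1) \) comes from, and unifies all remaining central letters \( a_{j} \) with \( \AllLInc{i} \leq j \leq N \) into the single claim that \( u \) already occurs inside \( \PBlock{N} \) (which the paper handles by two separate observations, one for \( a_{\AllLInc{i}+1}, \ldots, a_{\widetilde{m}} \) and one for \( a_{\AllLInc{i}} \)). The only real work your sketch defers is the flagged room condition, and it is tight precisely at \( j = \AllLInc{i} \): there \( \Length{s}+\Length{p} = L-1 = (n_{\AllLInc{i}}-1)(\Length{\PBlock{\AllLInc{i}-1}}+1)-1 \), so the internal \( a_{\AllLInc{i}} \) of \( \PBlock{\AllLInc{i}} \) around which \( s \, a_{\AllLInc{i}} \, p \) fits is the unique one with \( m = \lfloor \Length{s} / (\Length{\PBlock{\AllLInc{i}-1}}+1) \rfloor + 1 \), and the two inequalities close exactly; for \( j > \AllLInc{i} \) the condition is automatic since \( \Length{\PBlock{j-1}} > L-1 \). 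With that verification written out, your argument yields the stated bound.
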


\begin{proof}
To shorten notation we write \( \widetilde{m} \DefAs \AllL( \AllLInc{ i-1 } ) - 1 = \AllL( \AllLInc{ i } - 1 ) - 1 \). It suffices to show that every word of length \( 2 \cdot ( \Length{ \PBlock{ \widetilde{m} } } + 1 ) + \Length{ \PBlock{ \AllLInc{ i } } } - \Length{ \PBlock{ \AllLInc{ i } - 1 } } - 1 \) contains all subwords of length \( \Length{ \PBlock{ \AllLInc{ i } } } - \Length{ \PBlock{ \AllLInc{ i } - 1 } } \) of \( \Word{ \PBlock{ \AllLInc{ i } } }{ a }{ \PBlock{ \AllLInc{ i } } } \) for \( a \in \Alphab_{ \AllLInc{ i }+1 } \).

Every word of length \( 2 \cdot ( \Length{ \PBlock{ \widetilde{m} } } + 1 ) + \Length{ \PBlock{ \AllLInc{ i } } } - \Length{ \PBlock{ \AllLInc{ i } - 1 } } - 1 \) contains \( \PBlock{ \widetilde{m} } \). The decomposition of \( \PBlock{ \widetilde{m} } \) into \( \PBlock{\AllLInc{ i }} \)-blocks shows that \( \PBlock{ \widetilde{m} } \) contains \( \Word{ \PBlock{ \AllLInc{ i } } }{ a }{ \PBlock{ \AllLInc{ i } } } \) for all \( a \in \Set{  a_{ \AllLInc{ i }+1 } , \ldots, a_{\widetilde{m}} } \). Because of \( \Alphab_{ \AllLInc{ i }+1 } \subseteq \Alphab_{\AllLInc{ i }} = \Set{ a_{\AllLInc{ i }}, \ldots, a_{ \AllL( \AllLInc{ i }-1 ) } } \), this leaves only \( a \in \Set{ a_{\AllLInc{ i }} , a_{ \AllL( \AllLInc{ i }-1 ) } } \) to deal with. For \( a = a_{\AllLInc{ i }} \) it suffices to study words that start in \( \Word{ \PBlock{\AllLInc{ i }-1} }{ a_{\AllLInc{ i }} } \). Since their length is \( \Length{ \PBlock{ \AllLInc{ i } } } - \Length{ \PBlock{ \AllLInc{ i } - 1 } } \), they are contained in \( \PBlock{\AllLInc{ i }} \), which is contained in \( \PBlock{\widetilde{m}} \) and thus in \( u \). For \( a = a_{ \AllL( \AllLInc{ i }-1 ) } \) we need a refined version of the arguments above:

Every word \( u \) of length \( 2 \cdot ( \Length{ \PBlock{ \widetilde{m} }} + 1 ) + \Length{ \PBlock{ \AllLInc{ i } } } - \Length{ \PBlock{ \AllLInc{ i } - 1 } } - 1 \) contains necessarily a \( \PBlock{ \widetilde{m} } \)-block together with both neighbouring letters. At least one of the letters is \( a_{ \AllL( \AllLInc{ i }-1 ) } \) and without loss of generality we assume that it is the right one. To the right of it, the next \( \PBlock{ \AllLInc{ i } } \)-block begins. We distinguish two cases: if there are at least \( \Length{ \PBlock{\AllLInc{ i }} } - \Length{ \PBlock{ \AllLInc{ i }-1 } } - 1 \) letters of \( u \) to the right of \( a_{ \AllL( \AllLInc{ i }-1 ) } \), then \( u \) contains all subwords of \( \Word{ \PBlock{\AllLInc{ i }} }{ a_{ \AllL( \AllLInc{ i }-1 ) } }{ \PBlock{\AllLInc{ i }} } \) which start in \( \Word{ \PBlock{\AllLInc{ i }} }{ a_{ \AllL( \AllLInc{ i }-1 ) } } \) and we are done. If there are \( 0 \leq j < \Length{ \PBlock{ \AllLInc{ i } } } - \Length{ \PBlock{ \AllLInc{ i }-1 } } - 1 \) letters of \( u \) to the right of \( a_{ \AllL( \AllLInc{ i }-1 ) } \), then there are \( 2 (\Length{ \PBlock{ \widetilde{m} } } + 1 ) + \Length{ \PBlock{ \AllLInc{ i } } } - \Length{ \PBlock{ \AllLInc{ i }-1 } } - j - 2 \) letters of \( u \) to the left of \( a_{ \AllL( \AllLInc{ i }-1 ) } \). They form a \( \PBlock{ \widetilde{m} } \)-block, a single letter, another \( \PBlock{ \widetilde{m} } \)-block, another single letter and the rightmost \( \Length{ \PBlock{\AllLInc{ i }} } - \Length{ \PBlock{ \AllLInc{ i }-1 } } - 2 - j \) letters of \( \PBlock{\AllLInc{ i }} \), see Figure~\ref{fig:LRRemainders}.

\begin{figure}
\centering
\footnotesize
\pgfmathsetmacro{\TikzDistBlocks}{3} 
\pgfmathsetmacro{\TikzBuchstLength}{9} 
\pgfmathsetmacro{\TikzLengthPkkk}{(0.95*\linewidth - 3*( \TikzBuchstLength + 2 * \TikzDistBlocks) ) / 3} 
\begin{tikzpicture}
[every node/.style ={rectangle, outer sep=0pt, inner sep=0pt},
buchst/.style={minimum width=\TikzBuchstLength pt, minimum height=0.6cm, draw},
pkkk/.style={minimum width=\TikzLengthPkkk pt, minimum height=0.6cm, draw}]
\node [right] at (0,0) [buchst] (Fix1) {\( \star \)};
\node [pkkk] (A) [right=\TikzDistBlocks pt of Fix1] {\( \PBlock{ \widetilde{m} } \)};
\node [buchst] (A) [right=\TikzDistBlocks pt of A] {\( \star \)};
\node [pkkk] (A) [right=\TikzDistBlocks pt of A] {\( \PBlock{ \widetilde{m} } \)};
\node [buchst] (Fix2) [right=\TikzDistBlocks pt of A] {\( a \)};
\draw (Fix1.north west)++(-\TikzDistBlocks pt, 0) -- ++(-\TikzLengthPkkk / 2 pt, 0);
\draw (Fix1.north west)++(-\TikzDistBlocks pt, 0) -- node [midway, left=4*\TikzDistBlocks pt]{\( \PBlock{\AllLInc{ i }} \)} ++(0, -0.6) -- ++(-\TikzLengthPkkk / 2 pt, 0);
\draw (Fix2.north east)++(\TikzDistBlocks pt, 0) -- ++(\TikzLengthPkkk / 2 pt, 0);
\draw (Fix2.north east)++(\TikzDistBlocks pt, 0) -- node [midway, right=4*\TikzDistBlocks pt]{\( \Restr{ \PBlock{\AllLInc{ i }} }{ 1 }{ j } \)} ++(0, -0.6) -- ++(\TikzLengthPkkk / 2 pt, 0);
\end{tikzpicture}
\normalsize
\caption{Decomposition of a word of length \( 2 \cdot ( \Length{ \PBlock{ \widetilde{m} }} + 1 ) + \Length{ \PBlock{ \AllLInc{ i } } } - \Length{ \PBlock{ \AllLInc{ i } - 1 } } - 1 \) into \( \PBlock{ \AllLInc{i} } \)-blocks and single letters. To shorten notation we use \( a \DefAs a_{ \AllL(  \AllLInc{ i }-1 ) }\).\label{fig:LRRemainders}}
\end{figure}

At least one of the single letters is \( a_{ \AllL( \AllLInc{ i }-1 ) } \). If it is the right letter, then \( u \) contains \( \Word{ \PBlock{\AllLInc{ i }} }{ a_{ \AllL( \AllLInc{ i }-1 ) } }{ \PBlock{\AllLInc{ i }} } \) and we are done. If it is the left letter, then note that the right end of \( u \) contains all subwords of length \( \Length{ \PBlock{\AllLInc{ i }} } - \Length{ \PBlock{ \AllLInc{ i }-1 } } \) of \( \Word{ \PBlock{\AllLInc{ i }} }{ a_{ \AllL( \AllLInc{ i }-1 ) } }{ \PBlock{\AllLInc{ i }} } \) that start in \( \Restr{ \PBlock{\AllLInc{ i }} }{ 1 }{ \Length{ \PBlock{ \AllLInc{ i }-1 } } + j + 2 } \). Since the left end of \( u \) is \( \Word{ \Restr{ \PBlock{\AllLInc{ i }} }{ \Length{ \PBlock{ \AllLInc{ i }-1 }} + j + 3 }{ \Length{ \PBlock{\AllLInc{ i }} } } }{ a_{ \AllL( \AllLInc{ i }-1 ) } }{ \PBlock{\AllLInc{ i }} } \), it contains the remaining subwords, which finishes the proof.
\end{proof}

\begin{prop}
\label{prop:RepeUB2}
For every  \( i \geq 1\) we have the upper bound
\[ \Repe( \Length{ \PBlock{\AllLInc{ i }} } + 1 ) \leq 2 \cdot ( \Length{ \PBlock{ \AllL( \AllLInc{ i } ) - 1 } } + 1 ) + \Length{ \PBlock{ \AllLInc{ i }-1 } } \, . \]
\end{prop}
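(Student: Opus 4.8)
The plan is to reduce, via Proposition~\ref{prop:WordsContained}, to a statement about how many of a few explicit words every sufficiently long factor of the subshift must contain, and then to carry out a tight position count. Write $m \DefAs \AllLInc{i}$ and $M \DefAs \AllL(\AllLInc{i})$. Since $i\geq 1$, Proposition~\ref{prop:AkAAllLK} yields $a_M = a_m$, and by the definition of $M$ we have $\Alphab_{m+1} = \Set{a_{m+1},\ldots,a_M} = \Set{a_m}\cup\Set{a_{m+1},\ldots,a_{M-1}}$. By Proposition~\ref{prop:WordsContained} every word of length $\Length{\PBlock{m}}+1$ is a subword of $\Word{\PBlock{m}}{a}{\PBlock{m}}$ for some $a\in\Alphab_{m+1}$, so it suffices to show that every $v\in\Langu{\Subshift}$ with $\Length{v}=L_0\DefAs 2(\Length{\PBlock{M-1}}+1)+\Length{\PBlock{m-1}}$ contains, for each $a\in\Alphab_{m+1}$, all subwords of length $\Length{\PBlock{m}}+1$ of $\Word{\PBlock{m}}{a}{\PBlock{m}}$. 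Throughout I would use the decomposition~(\ref{eqn:DecompPBlock}): in any element of $\Subshift$ the blocks $\PBlock{M-1}$ occur with period $\Length{\PBlock{M-1}}+1$, separated by single letters which equal $a_M$ inside a $\PBlock{M}$-block and lie in $\Alphab_{M+1}$ between consecutive $\PBlock{M}$-blocks. As $L_0\geq 2\Length{\PBlock{M-1}}$, the word $v$ contains a full $\PBlock{M-1}$-block $B$, and since $n_M\geq 2$ at least one neighbour of $B$ equals $a_M=a_m$.

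For $a\in\Set{a_{m+1},\ldots,a_{M-1}}$ the word $\Word{\PBlock{m}}{a}{\PBlock{m}}$ is itself a subword of $\PBlock{M-1}$: decomposing $\PBlock{M-1}$ via~(\ref{eqn:PBlockRecur}) into $\PBlock{m}$-blocks separated by the single letters $a_{m+1},\ldots,a_{M-1}$, each of those letters occurs flanked by the suffix (resp.\ prefix) $\PBlock{m}$ of the adjacent $\PBlock{m}$-blocks; hence all subwords needed for these $a$ already lie inside $B$. The remaining, essential case is $a=a_m$. Here the crucial observation is that $\Word{\PBlock{m}}{a_m}{\PBlock{m}} = (\Word{\PBlock{m-1}}{a_m})^{2n_m-1}\PBlock{m-1}$ by~(\ref{eqn:PBlockRecur}), hence is periodic of period $\Length{\PBlock{m-1}}+1$; consequently it has only $\Length{\PBlock{m-1}}+1$ distinct subwords of length $\Length{\PBlock{m}}+1$, namely the "phases'' $w_1,\ldots,w_{\Length{\PBlock{m-1}}+1}$ of $(\Word{\PBlock{m-1}}{a_m})^{\infty}$, where $w_j$ consists of a suffix of $\PBlock{m}$ of length $\Length{\PBlock{m}}-j+1$, then $a_m$, then a prefix of $\PBlock{m}$ of length $j-1$. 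Any occurrence of $a_m$ in $v$ separating two $\PBlock{M-1}$-blocks is preceded by a suffix and followed by a prefix of $\PBlock{M-1}$ (hence of $\PBlock{m}$), so it realises $w_j$ for precisely those $j$ whose suffix-part fits to its left and prefix-part to its right; a full $\PBlock{M-1}$-block on one side leaves the corresponding part unconstrained.

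I would then distinguish cases according to the neighbours of $B$. If both neighbours of $B$ equal $a_m$, the left one realises an "upper'' block of consecutive phases and the right one a "lower'' block, their sizes summing to at least $L_0-\Length{\PBlock{M-1}}\geq\Length{\PBlock{m-1}}+1$, so together all phases are realised. If exactly one neighbour of $B$ equals $a_m$ — without loss of generality the left one, using that $\Langu{\Subshift}$ is closed under reversal since the $\PBlock{k}$ are palindromes (Example~\ref{exmpl:PBlockPalindr}) — while the right neighbour $\sigma\in\Alphab_{M+1}$ differs, then either the left $a_m$ already sees a suffix of $\PBlock{m}$ of length at least $\Length{\PBlock{m-1}}$ (and hence realises all phases), or the room of $v$ on the $\sigma$-side of $B$ exceeds $\Length{\PBlock{M-1}}+1$, so $v$ reaches past the next $\PBlock{M-1}$-block $B'$ to the following separator $s''$, which equals $a_M=a_m$ because $\sigma$ is a between-$\PBlock{M}$-block separator and $n_M\geq 2$. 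A direct count using $\Length{v}=2(\Length{\PBlock{M-1}}+1)+\Length{\PBlock{m-1}}$ then shows that the phases realised at the left neighbour of $B$, together with those realised at $s''$, exhaust all $\Length{\PBlock{m-1}}+1$ phases.

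The main obstacle is exactly this last count: the "upper'' range of phases obtained from one $a_m$-separator and the "lower'' range obtained from the other must overlap, and the chosen value $L_0$ is precisely the smallest for which they do. The bookkeeping — tracking how the $\Length{\PBlock{m-1}}$ "extra'' letters of $v$ are distributed relative to the nearest full $\PBlock{M-1}$-blocks and $a_m$-separators — is entirely analogous to the decomposition-into-blocks-and-single-letters argument (cf.\ Figure~\ref{fig:LRRemainders}) in the proof of Proposition~\ref{prop:RepeUB1}.
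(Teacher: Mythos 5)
Your proposal is correct and is essentially the paper's own proof: the same reduction to words of length \( 2(\Length{\PBlock{M-1}}+1)+\Length{\PBlock{m-1}} \), the same treatment of the letters \( a\neq a_m \) via a full \( \PBlock{M-1} \)-block, and the same combination of the subwords seen near two \( a_m \)-separators, only organised through the period-\((\Length{\PBlock{m-1}}+1)\) phases (and a reversal symmetry for the WLOG) instead of the paper's explicit letter count at the left and right ends of the word. The only blemish is the intermediate claim that the two phase ranges have sizes summing to at least \( L_0-\Length{\PBlock{M-1}} \), which is literally too strong since each range contains at most \( \Length{\PBlock{m-1}}+1 \) phases; the bound you actually need, namely that the available room on the two outer sides sums to at least \( \Length{\PBlock{m-1}} \), does follow from your count, so this does not affect the argument.
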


\begin{proof}
The proof is similar to the previous one. We write \( \widetilde{m} = \AllL( \AllLInc{ i } ) - 1 \) and show that every word \( u \) of length \( 2 \cdot ( \Length{ \PBlock{ \widetilde{m} } } + 1 ) + \Length{ \PBlock{ \AllLInc{ i }-1 } } \) contains all subwords of length \( \Length{ \PBlock{\AllLInc{ i }} } + 1 \) of \( \Word{ \PBlock{\AllLInc{ i }} }{ a }{ \PBlock{\AllLInc{ i }} } \) with \( a \in \Alphab_{ \AllLInc{ i }+1 } \).

It follows from \( \Length{u} = 2 \cdot ( \Length{ \PBlock{ \widetilde{m} } } + 1 ) + \Length{ \PBlock{ \AllLInc{ i }-1 } } \) that \( u \) contains \( \PBlock{ \widetilde{m} } \). The decomposition of \( \PBlock{ \widetilde{m} } \) into \( \PBlock{\AllLInc{ i }} \)-blocks shows that \( u \) contains \( \Word{ \PBlock{\AllLInc{ i }} }{ a }{ \PBlock{\AllLInc{ i }} } \) for all  \( a \in \Set{ a_{ \AllLInc{ i }+1 }, \ldots , a_{ \widetilde{m} } } = \Alphab_{ \AllLInc{ i }+1 } \setminus \Set{ a_{ \AllL( \AllLInc{ i } ) } } \). To treat \( a = a_{ \AllL( \AllLInc{ i } ) } = a_{\AllLInc{ i }} \), we note that \( u \) necessarily contains \( \PBlock{ \widetilde{m} } \) together with both neighbouring letters. At least one of them is \( a_{ \widetilde{m}+1 } = a_{\AllLInc{ i }} \) and without loss of generality we assume that it is the right one. If there are at least \( \Length{ \PBlock{\AllLInc{ i }} } \) letters of \( u \) to the right of \( a_{\AllLInc{ i }} \), then we are done. If there are \( 0 \leq j < \Length{ \PBlock{\AllLInc{ i }} } \) letters of \( u \) to the right of \( a_{\AllLInc{ i }} \), then there are \( \Length{ \PBlock{ \AllLInc{ i }-1 } } - 1 - j +2( 1+ \Length{ \PBlock{ \widetilde{m} } } ) \) letters to the left of \( a_{\AllLInc{ i }} \). They form a \( \PBlock{ \widetilde{m} } \)-block, a single letter, another \( \PBlock{ \widetilde{m} } \)-block, another single letter and the rightmost \( \Length{ \PBlock{ \AllLInc{ i }-1 } } - 1 - j \) letters of \( \PBlock{ \AllLInc{ i }-1 } \). If the single letter to the right is \( a_{\AllLInc{ i }} \), then we are done. If the single letter to the left is \( a_{\AllLInc{ i }} \), then the right end of \( u \) contains all subwords of length \( \Length{ \PBlock{\AllLInc{ i } } } + 1 \) of \( \Word{ \PBlock{\AllLInc{ i }} }{ a_{\AllLInc{ i }} }{ \PBlock{\AllLInc{ i }} } \) that start in \( \Restr{ \PBlock{ \AllLInc{ i }-1 } }{ 1 }{ 1+j } \). The left end of \( u\) is \( \Word{ \Restr{ \PBlock{ \AllLInc{ i }-1 } }{ 2+j }{ \Length{ \PBlock{\AllLInc{ i }} } } }{ a_{\AllLInc{ i }} }{ \PBlock{\AllLInc{ i }} } \), which contains the remaining subwords.
\end{proof}

\begin{rem}
Similar to Remark~\ref{rem:CompLowBd} we note that
\[ \Length{ \PBlock{\AllLInc{ i+1 }} } - \Length{ \PBlock{ \AllLInc{ i+1 }-1 } } = \Length{ \PBlock{ \AllLInc{ i+1 }-1 } } + 1  = \Length{ \PBlock{ \AllLInc{ i } } } + 1 \] 
holds for \( \AllLInc{ i+1 } = \AllLInc{ i }+1 \) and \( n_{\AllLInc{ i+1 }} = 2 \). Thus Proposition~\ref{prop:RepeUB1} and~\ref{prop:RepeUB2} bound the repetitivity at the same length. A short computation shows that the bound in Proposition~\ref{prop:RepeUB2} is stronger:
\begin{align*}
2 \cdot ( \Length{ \PBlock{ \AllL( \AllLInc{ i } ) - 1 } } + 1 ) + \Length{ \PBlock{ \AllLInc{ i }-1 } } 
&< 2 \cdot ( \Length{ \PBlock{ \AllL( \AllLInc{ i } ) - 1 } } + 1 ) + \Length{ \PBlock{\AllLInc{ i }} } \\
& = 2 \cdot ( \Length{ \PBlock{ \AllL( \AllLInc{ i } ) - 1 } } + 1 ) + \Length{ \PBlock{\AllLInc{ i+1 }} } - \Length{ \PBlock{ \AllLInc{ i+1 } - 1 } } - 1 \, . \qedhere
\end{align*}
\end{rem}

We now deduce an explicit formula for the repetitivity. We show that the inequalities above can only be satisfied simultaneously if they are actually equalities and \( \Repe( L+1 ) - \Repe( L ) = 1 \) holds for all \( L \) except \( \Length{ \PBlock{\AllLInc{ i } } } + 1 \) and \( \Length{ \PBlock{\AllLInc{ i }} } - \Length{ \PBlock{ \AllLInc{ i }-1 } } \).

\begin{thm}
\label{thm:Repe}
For \( i \geq 1 \) and \( \Length{ \PBlock{\AllLInc{ i }} } - \Length{ \PBlock{ \AllLInc{ i }-1 } } + 1 \leq L \leq \Length{ \PBlock{\AllLInc{ i+1 }} } - \Length{ \PBlock{ \AllLInc{i+1}-1 } } \) the repetitivity function is given by
\[ \Repe( L ) \! = \! \begin{cases}
\! 2 \! \cdot \! \Length{ \PBlock{ \AllL( \AllLInc{ i } ) - 1 } } \! + \! 1\! - \! \Length{ \PBlock{\AllLInc{ i }} } \! + \! \Length{ \PBlock{ \AllLInc{ i }-1 } } \! + \! L \! & \text{for } \Length{ \PBlock{\AllLInc{ i }} } \! - \! \Length{ \PBlock{ \AllLInc{ i }-1 } } \! + \! 1 \! \leq \! L \! \leq \! \Length{ \PBlock{\AllLInc{ i } } } \! + \! 1 \\
 \! 2 \! \cdot \! \Length{ \PBlock{ \AllL( \AllLInc{ i } ) - 1 } } \! + \! 1\!  + \! L \! & \text{for } \Length{ \PBlock{\AllLInc{ i }} } \! + \! 2 \! \leq \! L \! \leq \! \Length{ \PBlock{\AllLInc{ i+1 }} } \! - \! \Length{ \PBlock{ \AllLInc{ i+1 }-1 } } \end{cases} \, .\]
\end{thm}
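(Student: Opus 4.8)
The plan is to combine the four estimates of Propositions~\ref{prop:RepeLB1}--\ref{prop:RepeUB2} with the strict monotonicity of \( \Repe \) noted at the start of Section~\ref{sec:Repe} (so that \( \Repe( L' ) \geq \Repe( L ) + ( L' - L ) \) for \( L' \geq L \)). Write \( A_{i} \DefAs \Length{ \PBlock{ \AllLInc{ i } } } - \Length{ \PBlock{ \AllLInc{ i } - 1 } } \) and \( B_{i} \DefAs \Length{ \PBlock{ \AllLInc{ i } } } + 1 \), and let \( f \) denote the piecewise-linear function on the right-hand side of the asserted formula. First I would record the elementary combinatorial facts: the intervals \( [ A_{i} + 1 , A_{i+1} ] \), \( i \geq 1 \), are consecutive with no gaps and exhaust all sufficiently large integers; one always has \( A_{i} + 1 \leq B_{i} \); and \( B_{i} + 1 \leq A_{i+1} \) unless \( \AllLInc{ i+1 } = \AllLInc{ i } + 1 \) and \( n_{ \AllLInc{ i } + 1 } = 2 \), in which case \( A_{i+1} = B_{i} \) and the second branch of the formula on \( [ A_{i} + 1 , A_{i+1} ] \) is empty. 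All of this follows from the recursion \( \Length{ \PBlock{k+1} } + 1 = n_{k+1} ( \Length{ \PBlock{k} } + 1 ) \) together with \( \AllLInc{ i+1 } > \AllLInc{ i } \) and the monotonicity of \( \AllL \).

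Next I would translate the four propositions into statements about \( f \) at the endpoints of the two branches. Using that \( \AllL \) is constant on \( \{ \AllLInc{ i } , \ldots , \AllLInc{ i+1 } - 1 \} \), so that \( \AllL( \AllLInc{ i+1 } - 1 ) = \AllL( \AllLInc{ i } ) \), a short rewriting shows: Proposition~\ref{prop:RepeLB1} (applied at index \( i \)) gives \( \Repe( A_{i} + 1 ) \geq f( A_{i} + 1 ) \); Proposition~\ref{prop:RepeUB2} (index \( i \)) gives \( \Repe( B_{i} ) \leq f( B_{i} ) \); Proposition~\ref{prop:RepeLB2} (index \( i \)) gives \( \Repe( B_{i} + 1 ) \geq f( B_{i} + 1 ) \); and Proposition~\ref{prop:RepeUB1} (applied at index \( i+1 \)) gives \( \Repe( A_{i+1} ) \leq f( A_{i+1} ) \). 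The key observation is that \( f \) has slope exactly one on each of its two branches, that is \( f( L+1 ) = f( L ) + 1 \) there; hence on each branch we have a lower bound \( \Repe \geq f \) at the left endpoint and an upper bound \( \Repe \leq f \) at the right endpoint, and the span of \( f \) across the branch equals the minimal span forced by strict monotonicity.

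Then I would run the sandwich argument on each branch. On \( [ A_{i} + 1 , B_{i} ] \): for \( L \) in this range, monotonicity gives \( \Repe( L ) \geq \Repe( A_{i} + 1 ) + ( L - A_{i} - 1 ) \geq f( A_{i} + 1 ) + ( L - A_{i} - 1 ) = f( L ) \), and likewise \( \Repe( L ) \leq \Repe( B_{i} ) - ( B_{i} - L ) \leq f( B_{i} ) - ( B_{i} - L ) = f( L ) \), so \( \Repe( L ) = f( L ) \). The identical computation on \( [ B_{i} + 1 , A_{i+1} ] \), using the lower bound at \( B_{i} + 1 \) and the upper bound at \( A_{i+1} \), finishes the second branch; in the degenerate case \( A_{i+1} = B_{i} \) that branch is empty and the first branch already covers \( [ A_{i} + 1 , A_{i+1} ] \), so nothing further is needed (the observations in Remark~\ref{rem:CompLowBd} and the remark after Proposition~\ref{prop:RepeUB2}, that in this situation the bounds at \( A_{i+1} \) and \( A_{i+1}+1 \) coming from the two admissible indices coincide with the sharper ones being the ones we use, serve as a consistency check). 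I expect the only real work to be the bookkeeping in the second paragraph — rewriting each of the four bounds in the normalisation of \( f \) — plus the interval combinatorics and the degenerate-case check in the first paragraph; there is no new combinatorial input, and the single conceptual ingredient is that \( f \) increases by exactly one per step on each branch, leaving no slack between the matching endpoint bounds.
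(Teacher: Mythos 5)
Your proposal is correct and follows essentially the same route as the paper's proof: the paper likewise combines Propositions~\ref{prop:RepeLB1}--\ref{prop:RepeUB2} (with \( \AllL( \AllLInc{ i+1 } - 1 ) = \AllL( \AllLInc{ i } ) \) for the bound at the right end of the second branch) with \( \Repe( L+1 ) - \Repe( L ) \geq 1 \), the only cosmetic difference being that the paper collapses a telescoping sum between the two endpoint bounds rather than sandwiching each \( \Repe( L ) \) individually, and it treats the degenerate case \( \AllLInc{ i+1 } = \AllLInc{ i } + 1 \), \( n_{ \AllLInc{ i+1 } } = 2 \) exactly as you do.
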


\begin{proof}
Recall that \( \Repe( L+1 ) - \Repe( L ) \geq 1 \) holds for all \( L \). First we consider the repetitivity between \( \Length{ \PBlock{\AllLInc{ i }} } - \Length{ \PBlock{ \AllLInc{ i }-1 } } + 1 \) and \( \Length{ \PBlock{\AllLInc{ i } } } + 1 \):
\begin{align*}
& 2 \cdot ( \Length{ \PBlock{ \AllL( \AllLInc{ i } ) - 1 } } + 1 ) + \Length{ \PBlock{ \AllLInc{ i }-1 } } \\
\geq {} & \Repe( \Length{ \PBlock{\AllLInc{ i }} } + 1 ) \quad \text{by Proposition~\ref{prop:RepeUB2}} \\
= {} & \Repe( \Length{ \PBlock{\AllLInc{ i }} } - \Length{ \PBlock{ \AllLInc{ i }-1 } } + 1) + \sum \nolimits_{L = \Length{ \PBlock{\AllLInc{ i }} } - \Length{ \PBlock{ \AllLInc{ i }-1 } } + 1 }^{ \Length{ \PBlock{\AllLInc{ i }} } } \big[ \Repe( L+1 ) - \Repe( L ) \big] \\
\geq {} & 2 \cdot ( \Length{ \PBlock{ \AllL( \AllLInc{ i } ) - 1 } } + 1 ) + \Length{ \PBlock{ \AllLInc{ i }-1 } } \quad \text{by Proposition~\ref{prop:RepeLB1}} \, .
\end{align*}
This proves the claim for \( \Length{ \PBlock{\AllLInc{ i }} } - \Length{ \PBlock{ \AllLInc{ i }-1 } } + 1 \leq L \leq \Length{ \PBlock{\AllLInc{ i }} } + 1 \). If \( \AllLInc{ i+1 } = \AllLInc{ i } + 1 \) and \( n_{\AllLInc{ i+1 }} = 2 \) hold, then \( \Length{ \PBlock{\AllLInc{ i }} } + 1 = \Length{ \PBlock{\AllLInc{i+1}} } - \Length{ \PBlock{ \AllLInc{ i+1} -1 } } \) follows and we are done. Otherwise we have yet to consider the repetitivity between \( \Length{ \PBlock{\AllLInc{ i }} } + 2 \) and \( \Length{ \PBlock{\AllLInc{ i+1 }} } - \Length{ \PBlock{ \AllLInc{ i+1 }-1 } } \):
\begin{align*}
& 2 \cdot ( \Length{ \PBlock{ \AllL( \AllLInc{ i } ) - 1 } } + 1 ) + \Length{ \PBlock{\AllLInc{ i+1}} } - \Length{ \PBlock{ \AllLInc{ i+1 } - 1 } } - 1 \\
\geq {} & \Repe( \Length{ \PBlock{\AllLInc{ i+1 }} } - \Length{ \PBlock{ \AllLInc{ i+1 }-1 } } ) \quad \text{by Proposition~\ref{prop:RepeUB1}} \\
= {} & \Repe( \Length{ \PBlock{\AllLInc{ i }} } + 2 ) + \sum \nolimits_{L = \Length{ \PBlock{\AllLInc{ i }} } + 2 }^{ \Length{ \PBlock{\AllLInc{ i+1 }} } - \Length{ \PBlock{ \AllLInc{ i+1 }-1 } } - 1 } \big[ \Repe( L+1 ) - \Repe( L ) \big] \\
\geq {} & 2 \cdot ( \Length{ \PBlock{ \AllL( \AllLInc{ i } ) - 1 } } + 1 ) + \Length{ \PBlock{\AllLInc{ i+1 }} } - \Length{ \PBlock{\AllLInc{ i+1 } - 1 } } - 1 \quad \text{by Proposition~\ref{prop:RepeLB2}} \, . \qedhere
\end{align*}
\end{proof}

\begin{cor}
For \( \AllLInc{ i+1 } = \AllLInc{ i }+1 \) and \( n_{\AllLInc{ i+1 }} = 2 \) the repetitivity simplifies to
\[ \Repe( L ) = 2 \cdot \Length{ \PBlock{ \AllL( \AllLInc{ i } ) - 1 } } + 1 - \Length{ \PBlock{\AllLInc{ i }} } + \Length{ \PBlock{ \AllLInc{ i }-1 } } +  L \]
for \( \Length{ \PBlock{\AllLInc{ i }} } - \Length{ \PBlock{ \AllLInc{ i }-1 } } + 1 \leq L \leq \Length{ \PBlock{ \AllLInc{ i }+1 } } - \Length{ \PBlock{\AllLInc{ i }} } \) .
\end{cor}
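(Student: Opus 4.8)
The plan is to obtain this as an immediate specialisation of Theorem~\ref{thm:Repe}. That theorem describes \( \Repe \) on each interval \( \Length{ \PBlock{\AllLInc{ i }} } - \Length{ \PBlock{ \AllLInc{ i }-1 } } + 1 \leq L \leq \Length{ \PBlock{\AllLInc{ i+1 }} } - \Length{ \PBlock{ \AllLInc{ i+1 }-1 } } \) by a two-case formula, and the corollary simply asserts that under the two extra hypotheses \( \AllLInc{ i+1 } = \AllLInc{ i } + 1 \) and \( n_{\AllLInc{ i+1 }} = 2 \) the second case never occurs.

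First I would compute the relevant block lengths. Using the recursion \( \Length{ \PBlock{k+1} } + 1 = n_{k+1}( \Length{ \PBlock{k} } + 1 ) \) with \( k+1 = \AllLInc{ i+1 } = \AllLInc{ i } + 1 \) and \( n_{\AllLInc{ i+1 }} = 2 \) gives \( \Length{ \PBlock{\AllLInc{ i+1 }} } + 1 = 2( \Length{ \PBlock{\AllLInc{ i }} } + 1 ) \), hence
\[ \Length{ \PBlock{\AllLInc{ i+1 }} } - \Length{ \PBlock{ \AllLInc{ i+1 }-1 } } = \Length{ \PBlock{\AllLInc{ i }+1} } - \Length{ \PBlock{\AllLInc{ i }} } = \Length{ \PBlock{\AllLInc{ i }} } + 1 \, ; \]
this is exactly the identity already recorded in the remark preceding Theorem~\ref{thm:Repe}. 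From it, the range \( \Length{ \PBlock{\AllLInc{ i }} } + 2 \leq L \leq \Length{ \PBlock{\AllLInc{ i+1 }} } - \Length{ \PBlock{ \AllLInc{ i+1 }-1 } } \) governing the second case of Theorem~\ref{thm:Repe} has right endpoint \( \Length{ \PBlock{\AllLInc{ i }} } + 1 \), which is strictly below its left endpoint, so this interval is empty.

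Consequently only the first case of Theorem~\ref{thm:Repe} survives, and its domain \( \Length{ \PBlock{\AllLInc{ i }} } - \Length{ \PBlock{ \AllLInc{ i }-1 } } + 1 \leq L \leq \Length{ \PBlock{\AllLInc{ i }} } + 1 \) coincides — again by the displayed identity — with the interval \( \Length{ \PBlock{\AllLInc{ i }} } - \Length{ \PBlock{ \AllLInc{ i }-1 } } + 1 \leq L \leq \Length{ \PBlock{\AllLInc{ i }+1} } - \Length{ \PBlock{\AllLInc{ i }} } \) appearing in the corollary. Reading off the first-case expression (and dropping the superfluous parentheses around the \( +1 \)) then yields \( \Repe( L ) = 2 \cdot \Length{ \PBlock{ \AllL( \AllLInc{ i } ) - 1 } } + 1 - \Length{ \PBlock{\AllLInc{ i }} } + \Length{ \PBlock{ \AllLInc{ i }-1 } } + L \). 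There is essentially no obstacle here: the only point requiring care is to check that the two endpoint expressions match, which is precisely the short block-length computation above.
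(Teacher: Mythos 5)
Your proposal is correct and matches the paper's (implicit) argument: under the two hypotheses one has \( \Length{ \PBlock{\AllLInc{ i }} } + 1 = \Length{ \PBlock{\AllLInc{ i+1 }} } - \Length{ \PBlock{ \AllLInc{ i+1 }-1 } } \), so the second case of Theorem~\ref{thm:Repe} has empty range and only the first case survives, exactly as noted in the remark before the theorem and inside its proof. Nothing further is needed.
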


\begin{rem}
Roughly speaking, the jump between \( \Repe( \Length{ \PBlock{\AllLInc{ i }} } + 1 ) \) and \( \Repe( \Length{ \PBlock{\AllLInc{ i }} } + 2 ) \) is caused by the fact that all words of length \( \Length{ \PBlock{\AllLInc{ i }} } + 1 \) are subwords of \( \Word{ \PBlock{\AllLInc{ i }} }{ a }{ \PBlock{\AllLInc{ i }} } \) with \( a \in \Alphab_{ \AllLInc{ i }+1 } \). In contrast, this is not the case for words of length \( \Length{ \PBlock{\AllLInc{ i }} } + 2 \). The jump between \( \Repe( \Length{ \PBlock{\AllLInc{ i }} } - \Length{ \PBlock{ \AllLInc{ i }-1 } } ) \) and \( \Repe( \Length{ \PBlock{\AllLInc{ i }} } - \Length{ \PBlock{ \AllLInc{ i }-1 } } + 1 ) \) is caused by a similar reason:  while all subwords of length \( \Length{ \PBlock{\AllLInc{ i }} } - \Length{ \PBlock{ \AllLInc{ i }-1 } } \) of \( \Word{ \PBlock{\AllLInc{ i }} }{ a_{\AllLInc{ i }} }{ \PBlock{\AllLInc{ i }} } \) are contained in \( \PBlock{\AllLInc{ i }} \), this is not true for subwords of length \( \Length{ \PBlock{\AllLInc{ i }} } - \Length{ \PBlock{ \AllLInc{ i }-1 } } + 1 \). For \( n_{\AllLInc{ i+1 }} = 2 \) and \( \AllLInc{ i+1 }= \AllLInc{ i } + 1 \) the positions of the two jumps coincide. Therefore Proposition~\ref{prop:RepeLB1} and~\ref{prop:RepeUB2} provide stronger bounds than Proposition~\ref{prop:RepeLB2} and~\ref{prop:RepeUB1}, since the latter take only one of the two jumps into account.
\end{rem}

\subsection{Alpha-repetitivity}
\label{subsec:AlphaRepe}

In this subsection we study linear repetitivity and, more general, \( \alpha \)-repetitivity of simple Toeplitz subshifts. A subshift is said to be \emph{linearly repetitive}\index{linearly repetitive}\index{repetitivity!linear repetitivity} if there exists a constant \( C \) with \( \frac{ \Repe( L ) }{ L } \leq C \) for all \( L \geq 1 \). For a lower bound, note that \( 1 < \frac{ \Repe( L ) }{ L } \) holds for all \( L \), since the repetitivity function is strictly increasing. This notion was generalised in \cite[Definition 2.9]{GKMSS_AperioJarnik}, where a subshift is called \emph{\( \alpha \)-repetitive}\index{alpha-repetitive@\(\alpha\)-repetitive}\index{repetitivity!alpha-repetitivity@\(\alpha\)-repetitivity} for \( \alpha \geq 1 \) if \( 0 < \limsup_{ L \to \infty } \frac{ \Repe( L ) }{ L^{\alpha} } < \infty \) holds. In \cite[Theorem~4.10]{DKMSS_Regul-Article}, \( \alpha \)-repetitivity was characterised for \( l \)-Grigorchuk subshifts. Below we give a characterisation for general simple Toeplitz subshifts. For \( l \)-Grigorchuk subshifts we recover precisely the result from \cite{DKMSS_Regul-Article}.

\begin{prop}
\label{prop:AlphRepe}
Let \( \alpha \geq 1 \). A simple Toeplitz subshift is \( \alpha \)-repetitive if and only if the following inequalities hold:
\[ 0 < \limsup_{ i \to \infty } \frac{ \Length{ \PBlock{ \AllL( \AllLInc{ i } ) - 1 } } + 1 }{ ( \Length{ \PBlock{ \AllLInc{ i } } } + 1 )^{ \alpha } } = \limsup_{ i \to \infty } \frac{ n_{0} \cdot \ldots \cdot n_{ \AllL( \AllLInc{ i } ) - 1} }{ n_{0}^{\alpha} \cdot \ldots \cdot n_{ \AllLInc{ i } }^{\alpha} }  < \infty \, . \]
\end{prop}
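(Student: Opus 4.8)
The plan is as follows. The equality of the two $\limsup$-expressions in the statement is immediate from the block-length formula $\Length{\PBlock{k}} + 1 = \prod_{j=0}^{k} n_{j}$, so it remains to show that $\alpha$-repetitivity is equivalent to $0 < \limsup_{i \to \infty} A_{i}/B_{i}^{\alpha} < \infty$, where I abbreviate $A_{i} \DefAs \Length{\PBlock{\AllL(\AllLInc{i})-1}} + 1$, $B_{i} \DefAs \Length{\PBlock{\AllLInc{i}}} + 1$ and $D_{i} \DefAs \Length{\PBlock{\AllLInc{i}-1}} + 1$. Two elementary inequalities drive the estimate: first $B_{i} = n_{\AllLInc{i}} D_{i} \geq 2 D_{i}$; second, since $\AllL(\AllLInc{i}) \geq \AllLInc{i} + \Card{\AlphabEv} \geq \AllLInc{i} + 2$ by the standing assumption $\Card{\AlphabEv} \geq 2$ (see Proposition~\ref{prop:STAperiod}), one has $\AllL(\AllLInc{i}) - 1 \geq \AllLInc{i} + 1$ and therefore $A_{i} \geq \Length{\PBlock{\AllLInc{i}+1}} + 1 = n_{\AllLInc{i}+1} B_{i} \geq 2 B_{i}$.

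First I would observe that for $i \geq 1$ the intervals $I_{i} \DefAs \ZZ \cap [\Length{\PBlock{\AllLInc{i}}} - \Length{\PBlock{\AllLInc{i}-1}} + 1,\ \Length{\PBlock{\AllLInc{i+1}}} - \Length{\PBlock{\AllLInc{i+1}-1}}]$ cover a tail of $\NN$, since consecutive ones abut and their left endpoints tend to infinity. Hence $\limsup_{L \to \infty} \Repe(L)/L^{\alpha} = \limsup_{i \to \infty} M_{i}$ with $M_{i} \DefAs \max_{L \in I_{i}} \Repe(L)/L^{\alpha}$, and it suffices to control $M_{i}$. By Theorem~\ref{thm:Repe} the restriction of $\Repe$ to $I_{i}$ is piecewise of the form $\Repe(L) = C + L$ with a positive constant $C$ that changes once, at $L = \Length{\PBlock{\AllLInc{i}}} + 1$. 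Since $(C+L)/L^{\alpha} = C L^{-\alpha} + L^{1-\alpha}$ is strictly decreasing on $(0,\infty)$ for every $\alpha \geq 1$ and every $C > 0$, the maximum of $\Repe(L)/L^{\alpha}$ over each of the two pieces is attained at that piece's left endpoint; thus $M_{i}$ equals the larger of the two values of $\Repe(L)/L^{\alpha}$ at $L = \Length{\PBlock{\AllLInc{i}}} - \Length{\PBlock{\AllLInc{i}-1}} + 1$ and at $L = \Length{\PBlock{\AllLInc{i}}} + 2$.

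Evaluating the formula of Theorem~\ref{thm:Repe} at these two lengths gives $\Repe(\Length{\PBlock{\AllLInc{i}}} - \Length{\PBlock{\AllLInc{i}-1}} + 1) = 2 A_{i}$ and $\Repe(\Length{\PBlock{\AllLInc{i}}} + 2) = 2 A_{i} + B_{i}$. Together with $B_{i}/2 < B_{i} - D_{i} + 1 \leq B_{i}$, with $B_{i} < B_{i} + 1 \leq 2 B_{i}$, and with $2 B_{i} \leq 2 A_{i} \leq 2 A_{i} + B_{i} \leq \tfrac{5}{2} A_{i}$ (the last step using $B_{i} \leq A_{i}/2$), this yields $2\, A_{i}/B_{i}^{\alpha} \leq M_{i} \leq 2^{\alpha+1} A_{i}/B_{i}^{\alpha}$, so $M_{i} \asymp A_{i}/B_{i}^{\alpha}$ with multiplicative constants depending only on $\alpha$. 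Consequently $\limsup_{L \to \infty}\Repe(L)/L^{\alpha}$ is positive and finite precisely when $\limsup_{i \to \infty} A_{i}/B_{i}^{\alpha}$ is, which is the asserted characterisation; specialising to the $l$-Grigorchuk coding sequence then reproduces \cite[Theorem~4.10]{DKMSS_Regul-Article} by a direct substitution.

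The main obstacle is not conceptual but bookkeeping: one must verify carefully that the intervals $I_{i}$ genuinely tile a tail of $\NN$ (so that $\limsup_{L}$ may be replaced by $\limsup_{i} M_{i}$) and then push through the two-sided comparisons between $\Repe(L)$, $L$, $A_{i}$, $B_{i}$ and $D_{i}$. The strict monotonicity of $(C+L)/L^{\alpha}$ for $\alpha \geq 1$ and the chain of inequalities $A_{i} \geq 2 B_{i} \geq 4 D_{i}$ are exactly what make those comparisons go through with constants depending only on $\alpha$.
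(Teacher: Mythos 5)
Your proof is correct and follows essentially the same route as the paper: both use the explicit formula of Theorem~\ref{thm:Repe}, observe that \( \Repe(L)/L^{\alpha} \) on each piece is maximised at the left endpoint (so only \( L_{1}(i)=\Length{\PBlock{\AllLInc{i}}}-\Length{\PBlock{\AllLInc{i}-1}}+1 \) and \( L_{2}(i)=\Length{\PBlock{\AllLInc{i}}}+2 \) matter), and sandwich the values there between constant multiples of \( (\Length{\PBlock{\AllL(\AllLInc{i})-1}}+1)/(\Length{\PBlock{\AllLInc{i}}}+1)^{\alpha} \) with constants depending only on \( \alpha \). Your extra bookkeeping (the intervals \( I_{i} \) tiling a tail of \( \NN \), the bound \( A_{i}\geq 2B_{i} \)) just makes explicit what the paper's proof leaves implicit.
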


\begin{proof}
By Theorem~\ref{thm:Repe} we have \( \frac{ \Repe( L ) }{ L^{\alpha} } = \frac{ \text{const} }{ L^{\alpha} } + L^{ 1-\alpha } \), where the constant depends on \( i \) and on whether \( \Length{ \PBlock{\AllLInc{ i }} } - \Length{ \PBlock{ \AllLInc{ i }-1 } } + 1 \leq L \leq \Length{ \PBlock{\AllLInc{ i }} } + 1 \) or \( \Length{ \PBlock{ \AllLInc{ i }} } + 2 \leq L \leq \Length{ \PBlock{ \AllLInc{ i+1 }} } - \Length{ \PBlock{ \AllLInc{ i+1 }-1 } } \) holds. For every \( i \), this quotient is maximal either at \( L_{1}( i ) \DefAs \Length{ \PBlock{\AllLInc{ i }} } - \Length{ \PBlock{ \AllLInc{ i }-1 } } + 1 \) or at \( L_{2}( i ) \DefAs \Length{ \PBlock{\AllLInc{ i }} } + 2 \). For \( L_{1} \) we obtain the bounds
\[ 2 \cdot \frac{ \Length{ \PBlock{ \AllL( \AllLInc{ i } ) -1 } } + 1 }{ ( \Length{ \PBlock{ \AllLInc{ i } } } + 1 )^{\alpha} } < \frac{ \Repe( L_{1}(i) )}{ L_{1}(i)^{\alpha} } <  \frac{2}{(1 - \frac{ 1 }{  n_{\AllLInc{ i }}  }) ^{\alpha}} \cdot \frac{ \Length{ \PBlock{ \AllL( \AllLInc{ i } ) - 1 } } + 1 }{ ( \Length{ \PBlock{ \AllLInc{ i } }} + 1)^{\alpha} } \leq 2^{1+\alpha} \cdot \frac{ \Length{ \PBlock{ \AllL( \AllLInc{ i } ) - 1 } } + 1 }{ ( \Length{ \PBlock{ \AllLInc{ i } } } + 1)^{\alpha}  } \, .\]
Similarly we obtain for \( L_{2} \) 
\[ 2^{1-\alpha} \cdot \frac{ \Length{ \PBlock{ \AllL( \AllLInc{ i } ) - 1 } } + 1 }{ ( \Length{ \PBlock{ \AllLInc{ i } } } + 1 )^{\alpha} } = \frac{ 2 \cdot ( \Length{ \PBlock{ \AllL( \AllLInc{ i } ) - 1 } } + 1 ) }{ ( 2 \cdot ( \Length{ \PBlock{ \AllLInc{ i } } } + 1) )^{\alpha} } < \frac{ \Repe( L_{2}(i) ) }{L_{2}(i)^{\alpha}} < 3 \cdot \frac{ \Length{ \PBlock{ \AllL( \AllLInc{ i } ) - 1 } } + 1 }{ ( \Length{ \PBlock{ \AllLInc{ i } } } + 1 )^{\alpha} } \, .\]
First we assume \( 0 < \limsup_{ i \to \infty } \frac{ \Length{ \PBlock{ \AllL( \AllLInc{ i } ) - 1 } } + 1 }{( \Length{ \PBlock{ \AllLInc{ i } } } + 1 )^{\alpha}} < \infty \). Then the subshift is \( \alpha \)-repetitive by the bounds given above:
\[ 0 < \limsup_{ i \to \infty } 2 \cdot \frac{ \Length{ \PBlock{ \AllL( \AllLInc{ i } ) - 1 } } + 1 }{ ( \Length{ \PBlock{ \AllLInc{ i } } } + 1 )^{\alpha}} \leq \limsup_{ L \to \infty } \frac{ \Repe( L ) }{ L^{\alpha} } \leq \limsup_{ i \to \infty } 2^{1+\alpha} \cdot  \frac{ \Length{ \PBlock{ \AllL( \AllLInc{ i } ) - 1 } } + 1 }{ ( \Length{ \PBlock{ \AllLInc{ i } } } + 1)^{\alpha}  } < \infty \, .\]
Conversely we now assume that the subshift is \( \alpha \)-repetitive. Then the above bounds yield
\[  0 < 2^{-1-\alpha} \cdot \limsup_{ L \to \infty } \frac{ \Repe( L ) }{ L^{\alpha} } < \limsup_{ i \to \infty } \frac{ \Length{ \PBlock{ \AllL( \AllLInc{ i } ) - 1 } } + 1 }{ ( \Length{ \PBlock{ \AllLInc{ i } } } + 1)^{\alpha} } < 2^{-1+\alpha} \cdot \limsup_{ L \to \infty } \frac{ \Repe( L ) }{ L^{\alpha} } < \infty \, . \qedhere \]
\end{proof}

\begin{cor}
\label{cor:LinRepe}
A simple Toeplitz subshift is linearly repetitive if and only if the sequence \( \left( \prod_{ j = \AllLInc{ i }+1 }^{ \AllL( \AllLInc{ i } ) - 1} n_{j} \right)_{i \geq 1} \) is bounded from above.
\end{cor}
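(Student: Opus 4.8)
The plan is to obtain this as the immediate specialisation of Proposition~\ref{prop:AlphRepe} to the case \( \alpha = 1 \). First I would record that linear repetitivity, i.e. the existence of a constant \( C \) with \( \Repe( L )/L \leq C \) for all \( L \geq 1 \), is the same as being \( 1 \)-repetitive in the sense \( 0 < \limsup_{L \to \infty} \Repe( L )/L < \infty \). Indeed, since \( \Repe \) is strictly increasing we have \( \Repe( L ) \geq L \), so \( \limsup_{L \to \infty} \Repe( L )/L \geq 1 > 0 \) automatically; and since \( \Repe( L )/L \) takes finite values on the finite initial segment, its supremum over all \( L \) is finite precisely when its limit superior is finite. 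Thus the subshift is linearly repetitive if and only if it is \( 1 \)-repetitive.

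Next I would rewrite the \( \alpha = 1 \) version of the condition in Proposition~\ref{prop:AlphRepe}. Using the block-length formula \( \Length{ \PBlock{k} } + 1 = \prod_{j=0}^{k} n_{j} \) from the proposition following Equation~(\ref{eqn:PBlockRecur}), we get
\[ \frac{ \Length{ \PBlock{ \AllL( \AllLInc{ i } ) - 1 } } + 1 }{ \Length{ \PBlock{ \AllLInc{ i } } } + 1 } = \frac{ n_{0} \cdot \ldots \cdot n_{ \AllL( \AllLInc{ i } ) - 1 } }{ n_{0} \cdot \ldots \cdot n_{ \AllLInc{ i } } } = \prod_{ j = \AllLInc{ i } + 1 }^{ \AllL( \AllLInc{ i } ) - 1 } n_{j} \, , \]
so Proposition~\ref{prop:AlphRepe} says that the subshift is \( 1 \)-repetitive if and only if \( 0 < \limsup_{ i \to \infty } \prod_{ j = \AllLInc{ i } + 1 }^{ \AllL( \AllLInc{ i } ) - 1 } n_{j} < \infty \).

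Then I would observe that the strict lower bound \( 0 < \cdots \) is vacuous here: because \( \AllL( \AllLInc{ i } ) \geq \AllLInc{ i } + \Card{ \AlphabEv } \geq \AllLInc{ i } + 2 \), the index set \( \{ \AllLInc{ i } + 1 , \ldots , \AllL( \AllLInc{ i } ) - 1 \} \) is non-empty, and every \( n_{j} \geq 2 \), so \( \prod_{ j = \AllLInc{ i } + 1 }^{ \AllL( \AllLInc{ i } ) - 1 } n_{j} \geq 2 \) for each \( i \). Hence the condition reduces to \( \limsup_{ i \to \infty } \prod_{ j = \AllLInc{ i } + 1 }^{ \AllL( \AllLInc{ i } ) - 1 } n_{j} < \infty \), and a sequence of positive integers has finite limit superior if and only if it is bounded from above, which gives the claim. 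I do not expect any genuine obstacle here; the only points that require a moment's care are the (elementary) equivalence between the two formulations of linear repetitivity and the verification that the lower-bound part of the \( \alpha \)-repetitivity criterion is automatically satisfied when \( \alpha = 1 \).
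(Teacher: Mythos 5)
Your proposal is correct and follows essentially the same route as the paper: apply Proposition~\ref{prop:AlphRepe} with \( \alpha = 1 \) and note that the lower bound in the limsup condition is automatic because \( n_{j} \geq 2 \) and \( \AllL( k ) \geq k+2 \) force each product to be at least \( 2 \). The only difference is that you spell out explicitly the (elementary) equivalence between linear repetitivity and \( 1 \)-repetitivity, which the paper leaves implicit.
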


\begin{proof}
By Proposition~\ref{prop:AlphRepe} with \( \alpha = 1 \), linear repetitivity is equivalent to
\[ 0 < \limsup_{ i \to \infty } \prod_{ j = \AllLInc{ i }+1 }^{ \AllL( \AllLInc{ i } ) - 1 } n_{j} < \infty \, .\]
Now the claim follows from the fact that the product is always bounded away from zero because of \( n_{j} \geq 2 \) and \( \AllL( k ) \geq k + 2 \).
\end{proof}

\begin{cor}
On the one hand, if the sequence \( (n_{k})_{k \geq 0} \) is bounded, then the subshift is linearly repetitive if and only if the sequence \( ( \AllL( \AllLInc{ i } ) - \AllLInc{ i } )_{i \geq 0} \) is bounded. On the other hand, if the sequence \( ( \AllL( \AllLInc{ i } ) - \AllLInc{ i } )_{i \geq 0} \) is bounded, then the subshift is linearly repetitive if and only if the sequence \( (n_{k})_{k \geq 0} \) is bounded. 
\end{cor}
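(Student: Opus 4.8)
The plan is to reduce the whole statement to Corollary~\ref{cor:LinRepe}, which asserts that the subshift is linearly repetitive precisely when the sequence $(P_i)_{i\geq1}$, with $P_i\DefAs\prod_{j=\AllLInc{i}+1}^{\AllL(\AllLInc{i})-1}n_j$, is bounded. Set $d_i\DefAs\AllL(\AllLInc{i})-\AllLInc{i}$; then $P_i$ is a product of exactly $d_i-1$ factors, each satisfying $2\leq n_j$ (and $n_j\leq\sup_k n_k$ when that supremum is finite), so whenever $(n_k)$ is bounded by some $N\geq2$ one has
\[ 2^{d_i-1}\leq P_i\leq N^{d_i-1}. \]
Since boundedness of a sequence is unaffected by deleting finitely many terms, I will pass freely between the index ranges $i\geq0$ and $i\geq1$ and between $(\AllL(\AllLInc{i})-\AllLInc{i})_{i\geq0}$ and $(d_i)_{i\geq1}$.

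For the first assertion, suppose $(n_k)$ is bounded by $N$. If $(d_i)$ is bounded by $D$, then $P_i\leq N^{D-1}$ for all $i$, so $(P_i)$ is bounded and the subshift is linearly repetitive. If $(d_i)$ is unbounded, choose $i_m$ with $d_{i_m}\to\infty$; then $P_{i_m}\geq 2^{d_{i_m}-1}\to\infty$, so $(P_i)$ is unbounded and the subshift is not linearly repetitive. Thus, under the hypothesis that $(n_k)$ is bounded, linear repetitivity is equivalent to boundedness of $(\AllL(\AllLInc{i})-\AllLInc{i})_{i\geq0}$.

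For the second assertion, suppose $(d_i)$ is bounded by $D$. If $(n_k)$ is bounded by $N$, then $P_i\leq N^{d_i-1}\leq N^{D-1}$, so $(P_i)$ is bounded and the subshift is linearly repetitive. The remaining direction is the one requiring work: if $(n_k)$ is unbounded, I must exhibit an unbounded subsequence of $(P_i)$. The key combinatorial fact is that the intervals $[\AllLInc{i}+1,\AllL(\AllLInc{i})-1]$, $i\geq0$, cover $\NN$. Indeed, since $(\AllLInc{i})$ is strictly increasing with $\AllLInc{0}=0$ and tends to infinity, every $k\geq1$ lies in $[\AllLInc{i}+1,\AllLInc{i+1}]$ for a unique $i$; and $\AllLInc{i+1}\leq\AllL(\AllLInc{i})-1$, because by the recursion in Proposition~\ref{prop:AllLIncRecur} the equality $\AllLInc{i+1}=\AllL(\AllLInc{i})$ would force $\Set{a_{\AllL(\AllLInc{i})}}=\Alphab_{\AllLInc{i}+1}$, which is impossible as $\Card{\Alphab_{\AllLInc{i}+1}}\geq\Card{\AlphabEv}\geq2$ under the standing assumption of this chapter. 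Granting the covering, pick $k_m\to\infty$ with $n_{k_m}\to\infty$ and, for each $m$, an index $i_m$ with $k_m\in[\AllLInc{i_m}+1,\AllL(\AllLInc{i_m})-1]$; then $P_{i_m}\geq n_{k_m}\to\infty$, so $(P_i)$ is unbounded and the subshift is not linearly repetitive. Note that this last implication actually needs no hypothesis on $(d_i)$; the hypothesis is used only for the easy direction.

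The main obstacle is the covering claim $\bigcup_{i\geq0}[\AllLInc{i}+1,\AllL(\AllLInc{i})-1]=\NN$, equivalently the inequality $\AllLInc{i+1}\leq\AllL(\AllLInc{i})-1$; everything else is a routine comparison of a product of $d_i-1$ bounded factors with its extreme values. I would verify that inequality via Proposition~\ref{prop:AllLIncRecur} as sketched, or alternatively straight from the defining relation $\AllL(\AllLInc{i})=\AllL(\AllLInc{i+1}-1)<\AllL(\AllLInc{i+1})$ of the sequence $(\AllLInc{i})$ together with the elementary bound $\AllL(k)\geq k+\Card{\AlphabEv}>k$.
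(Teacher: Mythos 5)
Your proof is correct and follows exactly the route the paper intends: the corollary is stated without proof as an immediate consequence of Corollary~\ref{cor:LinRepe}, and your comparison \( 2^{d_i-1} \leq \prod_{j=\AllLInc{i}+1}^{\AllL(\AllLInc{i})-1} n_j \leq N^{d_i-1} \) is precisely that derivation. The one point the paper leaves tacit, namely the covering inequality \( \AllLInc{i+1} \leq \AllL(\AllLInc{i}) - 1 \) needed to conclude that an unbounded \( (n_k) \) forces the products to be unbounded, you verify correctly via Proposition~\ref{prop:AllLIncRecur} (or the bound \( \AllL(k) \geq k + \Card{\AlphabEv} \geq k+2 \)).
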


\begin{exmpl}[Grigorchuk subshift]
Clearly \( (n_{k}) = (2, 2, 2, \ldots ) \) is bounded and we saw in Example~\ref{exmpl:GrigAllLInc} that \( ( \AllL( \AllLInc{ i } ) - \AllLInc{ i } )_{i \geq 0} \) is bounded as well. Thus the Grigorchuk subshift is linearly repetitive. Alternatively, linear repetitivity can be derived from the fact that the subshift is generated by a primitive substitution (see Remark~\ref{rem:SubstGrig}) and every such subshift is linearly repetitive (\cite[Lemma~2.3]{Solom_ComposTiling}, \cite[Proposition~25]{DurHostSkau_SubstDS}).
\end{exmpl}

\begin{cor}
If \( (n_{k}) \) is a constant sequence, then the subshift is \( \alpha \)-repetitive if and only if \( -\infty < \limsup_{i \to \infty} \big[ \AllL( \AllLInc{ i } ) - \alpha \cdot \AllLInc{ i } \big] < \infty \) holds.
\end{cor}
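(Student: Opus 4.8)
The plan is to deduce this from Proposition~\ref{prop:AlphRepe} by observing that a constant coding sequence $(n_k)$ turns all the products in that criterion into powers of a single integer. Write $n$ for the common value of the $n_k$; recall $n \geq 2$. First I would substitute into the equivalent condition of Proposition~\ref{prop:AlphRepe}. Using $\Length{\PBlock{k}}+1 = \prod_{j=0}^{k} n_j = n^{k+1}$, the numerator $n_0 \cdots n_{\AllL(\AllLInc{i})-1}$ (which has $\AllL(\AllLInc{i})$ factors) equals $n^{\AllL(\AllLInc{i})}$, and the denominator $n_0^\alpha \cdots n_{\AllLInc{i}}^\alpha$ (which has $\AllLInc{i}+1$ factors) equals $n^{\alpha(\AllLInc{i}+1)}$. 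Hence the quotient in Proposition~\ref{prop:AlphRepe} is $n^{\AllL(\AllLInc{i}) - \alpha \AllLInc{i} - \alpha}$, and $\alpha$-repetitivity is equivalent to $0 < \limsup_{i \to \infty} n^{\AllL(\AllLInc{i}) - \alpha \AllLInc{i} - \alpha} < \infty$.

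The second step is to convert this condition on $\limsup_i n^{x_i - \alpha}$, where $x_i \DefAs \AllL(\AllLInc{i}) - \alpha \AllLInc{i}$, into a condition on $\limsup_i x_i$. Since $n > 1$, the map $t \mapsto n^t$ is a continuous, strictly increasing bijection of $\RR$ onto $(0,\infty)$, so I expect the $\limsup$ to be positive and finite exactly when $\limsup_i x_i$ is a finite real number. Concretely I would argue by cases: if $\limsup_i x_i = +\infty$, a subsequence of $n^{x_i - \alpha}$ tends to $+\infty$, so the $\limsup$ fails to be finite; if $\limsup_i x_i = -\infty$, then $x_i \to -\infty$ and $n^{x_i - \alpha} \to 0$, so the $\limsup$ fails to be positive; and if $\limsup_i x_i = L \in \RR$, then from $x_i < L + \varepsilon$ eventually one gets $\limsup_i n^{x_i - \alpha} \leq n^{L-\alpha}$, while a subsequence $x_{i_j} \to L$ gives $\limsup_i n^{x_i-\alpha} \geq n^{L-\alpha}$, so $\limsup_i n^{x_i-\alpha} = n^{L-\alpha} \in (0,\infty)$. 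Subtracting the constant $\alpha$ does not affect finiteness, so the condition is equivalent to $-\infty < \limsup_{i\to\infty}[\AllL(\AllLInc{i}) - \alpha \AllLInc{i}] < \infty$, which is the claim.

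I do not foresee a real obstacle; the only point that needs a little care is the last step, namely recognising that the single quantity $\limsup_i n^{x_i - \alpha}$ simultaneously encodes both bounds of the $\alpha$-repetitivity criterion — the upper bound excluding $\limsup_i x_i = +\infty$ and the lower bound excluding $\limsup_i x_i = -\infty$ — together with the elementary fact that one may interchange $\limsup$ with the continuous monotone map $t \mapsto n^t$ when the $\limsup$ is finite. Since these are standard properties of $\limsup$, the resulting proof is short.
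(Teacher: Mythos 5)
Your proposal is correct and follows essentially the same route as the paper: substitute the constant value $n$ into the criterion of Proposition~\ref{prop:AlphRepe}, obtain $0 < \limsup_i n^{\AllL(\AllLInc{i}) - \alpha(\AllLInc{i}+1)} < \infty$, and convert this to finiteness of $\limsup_i [\AllL(\AllLInc{i}) - \alpha \AllLInc{i}]$, the constant shift by $\alpha$ being irrelevant. The paper merely asserts the last equivalence, whereas you spell out the (standard) case analysis for exchanging $\limsup$ with the monotone map $t \mapsto n^{t}$; no substantive difference.
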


\begin{proof}
Let \( n \) denote the constant value of \( (n_{k}) \). By Proposition~\ref{prop:AlphRepe} the subshift is \( \alpha \)-repetitive if and only if the following holds:
\begin{alignat*}{2} 
&& 0 & < \limsup_{ i \to \infty } \frac{ n_{0} \cdot \ldots \cdot n_{ \AllL( \AllLInc{ i } ) - 1 } }{ n_{0}^{\alpha} \cdot \ldots \cdot n_{ \AllLInc{ i } }^{\alpha}  } < \infty \\
& \Longleftrightarrow & 0 & < \limsup_{ i \to \infty } \, n^{ \AllL( \AllLInc{ i } ) - \alpha \cdot ( \AllLInc{ i } + 1 ) } < \infty \\
& \Longleftrightarrow \quad & - \infty & < \limsup_{ i \to \infty } \big[ \AllL( \AllLInc{ i } ) - \alpha \cdot ( \AllLInc{ i } + 1) \big] < \infty \, .\tag*{\qedhere} 
\end{alignat*}
\end{proof}

\begin{exmpl}[non-(B) example]
\label{exmpl:AlphaRepeBosh}
By Example~\ref{exmpl:BoshExAllL} we have \( \AllLInc{ 0 } = 0 \), \( \AllLInc{ i } = (i+1)^{2} - 2 \) for \( i \geq 1 \), and \( \AllL( \AllLInc{ i } ) = \AllLInc{ i+2} \). Hence we obtain
\[ \AllL( \AllLInc{ i } ) - \alpha \cdot \AllLInc{ i } = ( 1-\alpha ) \cdot (( i+1 )^{2} - 2) + 4i + 8 \, . \]
Since the left hand side tends to \( + \infty \) for \( \alpha = 1 \) and to \( - \infty \) for \( \alpha > 1 \), there is no \( \alpha \geq 1 \) such that the subshift is \( \alpha \)-repetitive.
\end{exmpl}

The special case where \( \AllL( \AllLInc{i} ) - \AllLInc{i} \) is constant is discussed next. Note that the corollary covers in particular the case where \( n_{j+1} = n_{j}^{\sqrt[c-1]{\alpha}} \) holds for all \( j \geq 0 \).

\begin{cor}
Let \( ( \AllL( \AllLInc{ i } ) - \AllLInc{ i } )_{i \geq 0} \) be a constant sequence with value \( c \). If \( n_{j + c - 1}  = n_{j}^{\alpha} \) holds for all \( j \geq 0 \), then the subshift is \( \alpha \)-repetitive.
\end{cor}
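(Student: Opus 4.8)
The plan is to apply the characterisation of $\alpha$-repetitivity from Proposition~\ref{prop:AlphRepe}: it suffices to show that
\[ 0 < \limsup_{i \to \infty} \frac{ n_{0} \cdot \ldots \cdot n_{ \AllL( \AllLInc{ i } ) - 1} }{ n_{0}^{\alpha} \cdot \ldots \cdot n_{ \AllLInc{ i } }^{\alpha} } < \infty \, . \]
First I would use the hypothesis $\AllL( \AllLInc{i} ) - \AllLInc{i} = c$ for all $i$ to substitute $\AllL( \AllLInc{i} ) = \AllLInc{i} + c$, so that the quotient in question becomes
\[ \frac{ \prod_{l=0}^{ \AllLInc{i} + c - 1} n_{l} }{ \prod_{l=0}^{ \AllLInc{i} } n_{l}^{\alpha} } \, . \]

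The key step is then to reindex the denominator using the assumption $n_{j+c-1} = n_{j}^{\alpha}$. Since this identity holds for all $j \geq 0$, we obtain $\prod_{l=0}^{ \AllLInc{i} } n_{l}^{\alpha} = \prod_{l=0}^{ \AllLInc{i} } n_{l+c-1} = \prod_{m=c-1}^{ \AllLInc{i} + c - 1} n_{m}$. Dividing the numerator by this, the factors $n_{c-1}, \ldots, n_{ \AllLInc{i} + c - 1}$ all cancel, and the quotient collapses to $\prod_{l=0}^{c-2} n_{l}$, which does not depend on $i$.

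Finally, I would observe that $c = \AllL( \AllLInc{i} ) - \AllLInc{i} \geq \Card{ \AlphabEv } \geq 2$, using the elementary lower bound on $\AllL$ recalled before Proposition~\ref{prop:AllLIncRecur} together with our standing assumption $\Card{ \AlphabEv } \geq 2$. Hence $c - 2 \geq 0$, so $\prod_{l=0}^{c-2} n_{l}$ is a nonempty finite product of integers each at least $2$, i.e.\ a constant lying in $[2, \infty)$. Therefore the limsup above equals this constant, which is strictly between $0$ and $\infty$, and Proposition~\ref{prop:AlphRepe} yields that the subshift is $\alpha$-repetitive. There is essentially no obstacle in this argument; the only points needing a little care are the index bookkeeping in the cancellation and the verification that $c \geq 2$, which guarantees that the surviving product $\prod_{l=0}^{c-2} n_{l}$ is genuinely nonempty and thus strictly positive and finite.
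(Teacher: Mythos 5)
Your proof is correct and follows essentially the same route as the paper: apply Proposition~\ref{prop:AlphRepe}, use \( n_{j+c-1} = n_{j}^{\alpha} \) to rewrite the denominator as \( \prod_{m=c-1}^{\AllLInc{i}+c-1} n_{m} \), cancel against the numerator, and observe that the remaining product \( \prod_{l=0}^{c-2} n_{l} \) is a positive finite constant independent of \( i \). The extra remark that \( c \geq 2 \) is harmless but not needed, since an empty product would equal \( 1 \) and still lie strictly between \( 0 \) and \( \infty \).
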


\begin{proof}
This follows from Proposition~\ref{prop:AlphRepe}, since the product
\[ \frac{ n_{0} \cdot \ldots \cdot n_{ \AllL( \AllLInc{ i } ) - 1 } }{ n_{0}^{\alpha} \cdot \ldots \cdot n_{\AllLInc{ i }}^{\alpha} } = \frac{ \prod_{j = 0}^{ c - 2} n_{j} \cdot \prod_{j = c-1}^{ \AllLInc{ i }+c-1 } n_{j} }{ \prod_{j = 0}^{\AllLInc{ i }} n_{j}^{\alpha} } = \prod_{j = 0}^{ c - 2} n_{j} \]
is positive, finite and independent of \( i \).
\end{proof}

\section{The Boshernitzan condition}
\label{sec:BoshCond}

The Boshernitzan condition is a weaker analogue of linear repetitivity. Based on a result from \cite{LiuQu_Simple} we characterise it for simple Toeplitz subshifts in terms of \( \AllL \) and \( (\AllLInc{i}) \). Our result takes a particular simple form if either \( n_{k}=2^{j_{k}} \) or \( \Card{ \AlphabEv } = 3 \) holds. This includes generalised Grigorchuk subshifts, where our characterisation of the Boshernitzan condition for the subshift has exactly the same form as a characterisation of the Boshernitzan condition for actions of the underlying self-similar group. Moreover our results will play in role in Chapter~\ref{chap:UnifCocycCantor}, since by \cite{BeckPogo_SpectrJacobi} the Boshernitzan condition implies Cantor spectrum for Jacobi operators (see Corollary~\ref{cor:BoshSTCantor}).

Since simple Toeplitz subshifts are uniquely ergodic (Corollary~\ref{cor:STStrictErgod}), there exists a unique \( \Shift \)-invariant ergodic probability measure \( \ErgodMeas \). We define
\[ \MaxCylin( L ) \DefAs \min_{u \in \Langu{ \Subshift }_{L}} \ErgodMeas( \Cylin{u}{1} ) = \min_{u \in \Langu{ \Subshift }_{L}} \ErgodMeas( \Set{ \InfWord \in \Subshift : \Restr{ \InfWord }{ 1 }{ L } = u } ) \, , \]
where \( \Cylin{u}{1} \) denotes the cylinder set, see Section~\ref{sec:PrelimSubsh}. A subshift is said to satisfy the \emph{Boshernitzan condition (\ref{eqn:BoshCond})}\index{Boshernitzan condition}\index{B@(B)} if
\begin{equation*}
\label{eqn:BoshCond}
\limsup_{L \to \infty} L \cdot \MaxCylin( L )  > 0 \tag{B}
\end{equation*}
holds. As we will discuss later (see Section~\ref{sec:AsNoPP}), the measure \( \ErgodMeas( \Cylin{u}{1} ) \) is precisely the frequency of \( u \) in \( \InfWord \) and describes how often \( u \) occurs ``on average''. Thus the Boshernitzan condition requires the same kind of bound as linear repetitivity, but only for the average gap length between occurrences of a word, instead of every gap length. Some results from \cite{LiuQu_Simple} about the Boshernitzan condition for simple Toeplitz subshifts are stated below:

\begin{prop}[{\cite[Proposition~4.1]{LiuQu_Simple}}]
\label{prop:BoshCondA2}
Every simple Toeplitz subshift with \( \Card{ \AlphabEv } = 2 \) satisfies (\ref{eqn:BoshCond}).
\end{prop}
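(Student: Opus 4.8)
The plan is to estimate, from below, the measure $\MaxCylin(L)$ of the least frequent word of each length, and to show that for the special sequence of lengths $L_k \DefAs \Length{\PBlock{k}} + 1$ the product $L_k \cdot \MaxCylin(L_k)$ stays bounded away from zero. The key structural input is that when $\Card{\AlphabEv} = 2$, the sequence $(a_k)$ eventually alternates between the two recurrent letters, so for all sufficiently large $k$ we have $\Alphab_k = \AlphabEv$ and (by Example~\ref{exmpl:AllLAlphEv2}) $\AllL(k) = k+2$. This makes the combinatorial bookkeeping behind the word frequencies as simple as it can get.

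First I would recall the description of the subshift's elements from Equation~(\ref{eqn:DecompPBlock}): every $\InfWord \in \Subshift$ decomposes, for each $k$, as $\Word{\ldots}{\PBlock{k}}{\star}{\PBlock{k}}{\star}{\PBlock{k}}{\ldots}$ with separating letters $\star \in \Alphab_{k+1}$. Consequently the blocks $\PBlock{k}$ tile $\InfWord$ with gaps of a single letter, and the frequency of any particular word $u$ of length $\le \Length{\PBlock{k}}+1$ can be read off from how it sits inside the finitely many words $\Word{\PBlock{k}}{a}{\PBlock{k}}$, $a \in \Alphab_{k+1}$, together with the frequencies of the $\PBlock{k}$-blocks and the separating letters. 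Since $\PBlock{k}$ occurs with frequency $\tfrac{1}{\Length{\PBlock{k}}+1}$ (there is exactly one $\PBlock{k}$-block per period of length $\Length{\PBlock{k}}+1$ in the hole-filling picture, cf.\ Proposition~\ref{prop:PeriodWord}), each word of length $\le \Length{\PBlock{k}}+1$ that appears at a fixed offset inside a fixed $\Word{\PBlock{k}}{a}{\PBlock{k}}$ has frequency at least $c_a \cdot \tfrac{1}{\Length{\PBlock{k}}+1}$, where $c_a$ is the frequency of the separating letter $a$. When $\Card{\AlphabEv}=2$, eventually only two separating letters occur and each of them has frequency bounded below by a fixed positive constant (indeed comparable to $\tfrac{1}{\Length{\PBlock{k}}+1}$ summed over scales, but a cruder uniform lower bound suffices). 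Hence there is a constant $c>0$ with $\MaxCylin(\Length{\PBlock{k}}+1) \ge \tfrac{c}{\Length{\PBlock{k}}+1}$ for all large $k$, which gives $(\Length{\PBlock{k}}+1)\cdot \MaxCylin(\Length{\PBlock{k}}+1) \ge c > 0$ and therefore $\limsup_{L\to\infty} L\cdot\MaxCylin(L) \ge c > 0$, i.e.\ (\ref{eqn:BoshCond}) holds.

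An alternative and perhaps cleaner route, which I would mention, is to invoke the repetitivity formula of Theorem~\ref{thm:Repe}: for $\Card{\AlphabEv}=2$ one has $\AllL(k)=k+2$ and (Example~\ref{exmpl:AllLAlphEv2}) $\AllLInc{i+1}=\AllLInc{i}+1$ for large $i$, so the length $\AllL(\AllLInc{i})-1 = \AllLInc{i}+1$ differs from $\AllLInc{i}$ by a bounded amount; combined with $n_k$ possibly being unbounded this does \emph{not} give linear repetitivity in general, so one cannot simply quote ``linearly repetitive $\Rightarrow$ (B).'' This is exactly why the direct frequency estimate at the good scales $\Length{\PBlock{k}}+1$ is the right tool: the Boshernitzan condition only asks for a $\limsup$, so it is enough to have one well-behaved subsequence of lengths, even though the full repetitivity function may grow faster than linearly along other subsequences.

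The main obstacle is making the lower bound on $\MaxCylin(\Length{\PBlock{k}}+1)$ genuinely uniform in $k$, i.e.\ controlling the worst word of that length. The subtle point is that a word of length $\Length{\PBlock{k}}+1$ need not be a subword of $\PBlock{k}$ itself — it may straddle a separating letter — so one must check that even the ``straddling'' words of maximal length have the claimed frequency; this is where Proposition~\ref{prop:WordsContained} (every such word lives inside some $\Word{\PBlock{k}}{a}{\PBlock{k}}$) together with the one-block-per-period count is used, and where the assumption $\Card{\AlphabEv}=2$ — guaranteeing a uniform positive lower bound on the frequencies of the (eventually only two) separating letters and forcing $\Alphab_k = \AlphabEv$ for large $k$ — is essential. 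Once that uniform bound is in hand, the conclusion is immediate. Since the cited Proposition~\ref{prop:BoshCondA2} attributes the statement to \cite[Proposition~4.1]{LiuQu_Simple}, I would in the write-up either reproduce the short frequency argument above or simply refer to that source.
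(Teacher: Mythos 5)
You cannot be checked against a proof in the thesis here, because the thesis gives none: the proposition is quoted verbatim from \cite[Proposition~4.1]{LiuQu_Simple}. Judged on its own, your sketch has a genuine gap, namely the assertion that each of the two eventual separating letters ``has frequency bounded below by a fixed positive constant''. The constant that actually enters your estimate is not the density of the letter in \( \InfWord \), but the proportion of \emph{level-\(k\) separator positions} (the single letters between consecutive \( \PBlock{k} \)-blocks in Equation~(\ref{eqn:DecompPBlock})) carrying that letter. These separators form the derived simple Toeplitz word with coding \( (a_j)_{j \geq k+1} \), so for large \(k\) the letter \( a_{k+1} \) fills a proportion at least \( 1 - 1/n_{k+1} \) of them, while the other letter \( a_{k+2} = a_k \) fills a proportion at most \( 1/n_{k+1} \) — not uniformly positive when \( (n_k) \) is unbounded. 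Concretely, the word \( \PBlock{k}\,a_k = (\PBlock{k-1}\,a_k)^{n_k} \) of length \( \Length{\PBlock{k}}+1 \) is, by the pairwise distinctness of the length-\((\Length{\PBlock{k}}+1)\) subwords of \( \PBlock{k}\,a\,\PBlock{k} \) from Subsection~\ref{subsec:IneqCompGrow}, pinned to level-\(k\) separators carrying \( a_k \), with at most \( n_k \) (period-aligned) occurrences per such separator; hence its frequency is at most \( n_k / \bigl( n_{k+1} ( \Length{\PBlock{k}}+1 ) \bigr) \) and \( ( \Length{\PBlock{k}}+1 ) \cdot \MaxCylin( \Length{\PBlock{k}}+1 ) \leq n_k / n_{k+1} \). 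Taking for instance \( n_k = 2^{2^k} \), this tends to zero, so your chosen scales \( L_k = \Length{\PBlock{k}}+1 \) do not witness (\ref{eqn:BoshCond}) in general; the statement is true, but not by this route. (Nor can you fall back on Proposition~\ref{prop:BoshCond}: its two-letter case is proved by invoking exactly the proposition under review.)

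The strategy of a direct frequency bound at good scales does work, but one level up: take \( L_k \DefAs \Length{\PBlock{k+1}} - \Length{\PBlock{k}} = ( n_{k+1} - 1 )( \Length{\PBlock{k}}+1 ) \). A word of length \( L_k \) covers exactly \( n_{k+1}-1 \) consecutive level-\(k\) separators, of which at most one is a boundary between two \( \PBlock{k+1} \)-blocks, while all the others are internal and carry \( a_{k+1} \). If the word shows no letter different from \( a_{k+1} \) at these positions, it occurs inside every \( \PBlock{k+1} \)-block (at every offset), so its frequency is at least \( 1/( \Length{\PBlock{k+1}}+1 ) \); if it does show one, then (precisely because \( \Card{\AlphabEv} = 2 \)) that letter is the symbol \( a_{k+2} \), and boundary separators carrying \( a_{k+2} \) have density at least \( \frac{ n_{k+2}-1 }{ n_{k+2} } \cdot \frac{ 1 }{ n_{k+1} ( \Length{\PBlock{k}}+1 ) } \geq \frac{ 1 }{ 2 n_{k+1} ( \Length{\PBlock{k}}+1 ) } \), each producing an occurrence at the prescribed offset. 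In both cases \( L_k \cdot \MaxCylin( L_k ) \geq \frac{ n_{k+1}-1 }{ 2 n_{k+1} } \geq \frac{1}{4} \) for all sufficiently large \(k\), which gives (\ref{eqn:BoshCond}). This also shows where the hypothesis \( \Card{\AlphabEv} = 2 \) really enters: seeing one exceptional separator costs only a factor of order \( 1/n_{k+1} \), exactly compensated by the length of order \( n_{k+1}( \Length{\PBlock{k}}+1 ) \), whereas with three or more recurrent letters the visible exceptional symbol can itself be arbitrarily rare. Alternatively, since the thesis only cites \cite{LiuQu_Simple} here, simply referring to that source is legitimate — but the frequency argument as you wrote it is not a proof.
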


\begin{exmpl}[period doubling]
\label{exmpl:PDExBosh}
By the previous proposition, the period doubling subshift satisfies the Boshernitzan condition. In fact it is even linearly repetitive. This follows either from Corollary~\ref{cor:LinRepe} or, more abstractly, from the fact that the period doubling substitution (see Remark~\ref{rem:SubstPD}) is primitive. It was proved in \cite[Proposition~25]{DurHostSkau_SubstDS} that primitive substitutions always generate linearly repetitive subshifts.
\end{exmpl}

The following, very useful description of \( \MaxCylin( L ) \) for \( \Card{ \AlphabEv } \geq 3 \) is from \cite{LiuQu_Simple}. For convenience we state it in our notation. Let \( \EventNr \) be such that \( a_{k} \in \AlphabEv \) holds for all \( k \geq \EventNr \). We define \( s_{j} \DefAs n_{0} \cdot \ldots \cdot n_{j-1} \). Recall that we write \( \AllL( k ) \DefAs \min \Set{ j > k : \Set{ a_{k+1}, \ldots , a_{j} } = \Alphab_{k+1} } \).

\begin{prop}[{\cite[Proposition~4.2]{LiuQu_Simple}}]
\label{prop:LiuQuEta}
For a simple Toeplitz subshift with \( \Card{ \AlphabEv } \geq 3 \) there exist constants \( 0 < c_{1} \leq c_{2} \) such that for every \( L >  s_{\EventNr} \), and \( j \) defined by the property \( s_{j-1} < L \leq s_{j} \), the following holds:
\begin{tightenumerate}
\item{If \( s_{j-1} < L < 2s_{j-1} \), then \( c_{1} \cdot \MaxCylin( L ) \leq \min \Big\{ \, \frac{ \Big\lceil \frac{ 2s_{ j-1 } - L }{ s_{ j-2 } } \Big\rceil }{ s_{ \min \Set{ i > j-1 : \, a_{i} = a_{j-2} } } } \; , \; \frac{1}{ s_{ \AllL( j-3 ) } } \, \Big\} \leq c_{2} \cdot \MaxCylin( L ) \) .}
\item{If \( 2s_{j-1} \leq L \leq s_{j} \), then \( c_{1} \cdot \MaxCylin( L ) \leq \frac{1}{ s_{\AllL( j-2 )} }  \leq c_{2} \cdot \MaxCylin( L ) \) .}
\end{tightenumerate}
\end{prop}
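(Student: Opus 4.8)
The statement is \cite[Proposition~4.2]{LiuQu_Simple}, so the plan is to reconstruct their argument, recast in the notation fixed above. The first step is to pass from measure to combinatorics: since a simple Toeplitz subshift is uniquely ergodic (Corollary~\ref{cor:STStrictErgod}), the number $\ErgodMeas(\Cylin{u}{1})$ equals the frequency of the finite word $u$, i.e.\ the limit of $\frac{1}{2N+1}\,\Card{\Set{ \lvert i \rvert \leq N : \Restr{\InfWord}{i}{i+\Length{u}-1} = u }}$ as $N \to \infty$, and by unique ergodicity this limit exists and does not depend on $\InfWord \in \Subshift$. Hence the whole problem reduces to estimating, up to multiplicative constants, the smallest frequency among the words of a fixed length $L$ with $s_{j-1} < L \leq s_{j}$.

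Next I would exploit the block decomposition~(\ref{eqn:DecompPBlock}): every $\InfWord \in \Subshift$ reads $\ldots \PBlock{k}\,\star\,\PBlock{k}\,\star\,\PBlock{k}\ldots$, and the sequence of letters $\star$ separating consecutive $\PBlock{k}$-blocks is itself a simple Toeplitz configuration governed by the coding sequences $(a_{k+1},a_{k+2},\ldots)$ and $(n_{k+1},n_{k+2},\ldots)$. Because $\Length{\PBlock{j-2}} = s_{j-1}-1 < L$, any occurrence of a word $u$ of length $L$ contains at least one level-$(j-2)$ separator, and because $L \leq s_{j}$ it meets at most one level-$(j-1)$ separator. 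I would therefore classify the words of length $L$ by (a) their offset inside the pattern $\PBlock{j-1}\,a\,\PBlock{j-1}$ ($a \in \Alphab_{j}$) supplied by Proposition~\ref{prop:WordsContained}, and (b) the value(s) of the separator letter(s) they straddle. In case~(i) the bound $L < 2s_{j-1}$ forces $u$ to contain exactly one or two level-$(j-2)$ separators, which is the source of the finite numerator $\lceil (2s_{j-1}-L)/s_{j-2}\rceil \in \Set{1,\ldots,n_{j-2}}$; in case~(ii) the bound $L \geq 2s_{j-1}$ instead guarantees a full $\PBlock{j-2}\,a_{j-1}\,\PBlock{j-2}$ subword, which removes that finer dependence and leaves the cleaner answer $1/s_{\AllL(j-2)}$.

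The core of the argument is to evaluate, up to constants, the frequency of each class. A word sitting at a fixed offset inside a $\PBlock{k}$-block without straddling a level-$k$-or-higher separator has frequency comparable to the density $1/s_{k+1}$ of $\PBlock{k}$-block starting positions; a word straddling a separator letter equal to $a_{i}$ at level $i$ forces the pattern $\PBlock{i-1}\,a_{i}\,\PBlock{i-1}$ about a genuine level-$i$ separator, hence has frequency comparable to $\tfrac{n_{i}-1}{s_{i+1}}$, which is comparable to $1/s_{i}$ up to the harmless factor $\tfrac{n_{i}-1}{n_{i}} \in [\tfrac12,1)$; and when a letter occurs as a separator at several levels one keeps the smallest admissible index, which is precisely where $\min\Set{ i > j-1 : a_{i} = a_{j-2} }$, $\AllL(j-3)$, and $\AllL(j-2)$ enter (recall $a_{j-1}\neq a_{j-2}$, so $a_{j-2}$ can only reappear at level $\geq j$). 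To minimise the frequency one wants to straddle the highest possible separator level, but the length budget $L \leq s_{j}$, together with the requirement that the relevant configuration be already ``exhaustive'' with respect to $\Alphab_{j-1}$ (resp.\ $\Alphab_{j-2}$) — which is exactly the content of $\AllL$ — caps that level and pins the minimum to the stated value. The main obstacle is to show that the extremal word built this way genuinely realises the minimum: one must verify both the $c_{1}$- and the $c_{2}$-bound and control the rounding in the ceiling term of case~(i), and this is carried out by a lengthy but elementary case analysis on the offset modulo $s_{j-2}$, $s_{j-1}$ and $s_{j}$, checking that no competing class is rarer.
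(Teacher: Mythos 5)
The paper itself offers no proof of this proposition: it is quoted (in adapted notation) from Liu and Qu, so your reconstruction has to stand on its own, and as written it does not. Your opening reductions are fine and match the natural route: unique ergodicity identifies \( \MaxCylin(L) \) with the minimal frequency among words of length \( L \), and the block decomposition~(\ref{eqn:DecompPBlock}) together with Proposition~\ref{prop:WordsContained} is the right combinatorial frame. But everything that actually constitutes the proposition is deferred to ``a lengthy but elementary case analysis \ldots{} checking that no competing class is rarer'', which you never carry out. Concretely, two things are missing: first, the construction of a word of length \( L \) whose frequency is, up to a uniform constant, at most \( \min\bigl\{ \lceil (2s_{j-1}-L)/s_{j-2} \rceil / s_{\min\{ i>j-1 :\, a_{i}=a_{j-2} \}} \, , \, 1/s_{\AllL(j-3)} \bigr\} \) in case (i), respectively at most \( 1/s_{\AllL(j-2)} \) in case (ii) (this yields the \( c_{1} \)-inequality); second, the matching lower bound that \emph{every} word of that length has frequency at least a uniform constant times the same quantity (the \( c_{2} \)-inequality). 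Neither the role of the first-return index \( \min\{ i>j-1 : a_{i}=a_{j-2} \} \) as opposed to \( \AllL(j-3) \) and \( \AllL(j-2) \), nor the origin of the ceiling factor, is derived; you only assert where they ``enter''. Since the uniformity of \( c_{1}, c_{2} \) in \( L \) and \( j \) is exactly what is used later (Proposition~\ref{prop:BoshCond}), these estimates are the entire content and cannot be left as a black box.

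In addition, two sketch-level claims would need repair in a full write-up. In case (ii), a word of length \( 2s_{j-1} \) need not contain the full palindrome \( \Word{ \PBlock{j-2} }{ a_{j-1} }{ \PBlock{j-2} } \): it contains one complete \( \PBlock{j-2} \)-block with both neighbouring separator letters, at least one of which is \( a_{j-1} \), but the block on the far side of that \( a_{j-1} \) may be truncated. Likewise, the heuristic ``a word straddling a separator filled at level \( i \) has frequency comparable to \( 1/s_{i} \)'' is only a starting point: to pin down the minimum one must compare, for the specific rare letters that a length-\( L \) word can be forced to contain, the number of admissible offsets (this is where \( \lceil (2s_{j-1}-L)/s_{j-2} \rceil \) should arise) against the spacing of the corresponding rare configuration, and then verify that no other class of length-\( L \) words is rarer. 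That comparison is the substance of Liu and Qu's argument and is absent from your proposal, so what you have is a plausible plan rather than a proof.
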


In \cite[Corollary~4.1]{LiuQu_Simple} a characterisation of the Boshernitzan condition for simple Toeplitz subshifts is derived. Below we give a different characterisation in terms of \( \AllL \) and \( (n_{k}) \).

\begin{prop}
\label{prop:BoshCond}
A simple Toeplitz subshift satisfies the Boshernitzan condition if and only if there exists a sequence \( (k_{r}) \) of natural numbers with \( \lim_{r \to \infty} k_{r} = \infty \) such that \( \big( \prod_{ j = k_{r}+1 }^{ \AllL( k_{r}-1 ) - 1 } n_{j} \big)_{r} \) is bounded.
\end{prop}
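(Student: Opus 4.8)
The plan is to split according to \( \Card{ \AlphabEv } \), dispose of the (already known) two‑letter case, and in the main case \( \Card{ \AlphabEv } \geq 3 \) translate the asymptotic description of \( \MaxCylin( L ) \) in Proposition~\ref{prop:LiuQuEta} into the product condition. Throughout recall that \( s_{j} = \Length{ \PBlock{j-1} } + 1 = n_{0} \cdots n_{j-1} \), so that \( s_{b}/s_{a} = \prod_{l=a}^{b-1} n_{l} \) for \( a < b \), and that \( \AllL( m ) \geq m + \Card{ \AlphabEv } \geq m + 3 \); in particular \( \AllL( m-1 ) > m \) always. With \( k = j-1 \) this gives the identity
\[ \frac{ s_{j} }{ s_{ \AllL( j-2 ) } } = \Big( \prod_{l = j}^{ \AllL( j-2 ) - 1 } n_{l} \Big)^{-1} = \Big( \prod_{l = k+1}^{ \AllL( k-1 ) - 1 } n_{l} \Big)^{-1} \, , \]
and with \( k = j-2 \) the analogous identity \( s_{j-1}/s_{\AllL( j-3 )} = \big( \prod_{l = k+1}^{ \AllL( k-1 ) - 1 } n_{l} \big)^{-1} \); the right‑hand sides are exactly the reciprocals of the product appearing in the statement.

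\emph{The case \( \Card{ \AlphabEv } = 2 \).} Here the Boshernitzan condition holds by Proposition~\ref{prop:BoshCondA2}, so only the product condition needs checking. By Example~\ref{exmpl:AllLAlphEv2} the sequence \( (a_{k}) \) eventually alternates between the two recurrent letters, hence \( \AllL( k ) = k + 2 \) for all large \( k \). For such \( k \) one has \( \AllL( k-1 ) - 1 = k \), so \( \prod_{j=k+1}^{ \AllL( k-1 ) - 1 } n_{j} \) is the empty product \( 1 \); the sequence \( k_{r} = r \) then works.

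\emph{The implication ``product condition \( \Rightarrow \) Boshernitzan'' (the easy direction).} Given \( k_{r} \to \infty \) with \( \prod_{j=k_{r}+1}^{ \AllL( k_{r}-1 ) - 1 } n_{j} \leq M \), I would put \( L \DefAs s_{k_{r}+1} \) for \( r \) large. Since \( n_{k_{r}} \geq 2 \), this \( L \) satisfies \( 2 s_{k_{r}} \leq L \leq s_{k_{r}+1} \), i.e.\ it lies in the case \( 2 s_{j-1} \leq L \leq s_{j} \) of Proposition~\ref{prop:LiuQuEta} with \( j = k_{r}+1 \). Hence \( \MaxCylin( L ) \geq ( c_{2}\, s_{ \AllL( k_{r}-1 ) } )^{-1} \), so by the identity above
\[ L \cdot \MaxCylin( L ) \geq \frac{1}{c_{2}} \cdot \frac{ s_{k_{r}+1} }{ s_{ \AllL( k_{r}-1 ) } } = \frac{1}{c_{2}} \Big( \prod_{j=k_{r}+1}^{ \AllL( k_{r}-1 ) - 1 } n_{j} \Big)^{-1} \geq \frac{1}{ c_{2} M } > 0 \, , \]
and letting \( r \to \infty \) gives \( \limsup_{L} L \cdot \MaxCylin( L ) > 0 \).

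\emph{The implication ``Boshernitzan \( \Rightarrow \) product condition'' (the harder direction), and the main obstacle.} Assuming \( \limsup_{L} L \cdot \MaxCylin( L ) > 0 \), I would fix \( \delta > 0 \) and \( L_{r} \to \infty \) with \( L_{r} \cdot \MaxCylin( L_{r} ) > \delta \), and let \( j_{r} \) be defined by \( s_{ j_{r}-1 } < L_{r} \leq s_{ j_{r} } \); since \( s_{m} \to \infty \) we have \( j_{r} \to \infty \). By Proposition~\ref{prop:LiuQuEta}, for \( r \) large each \( L_{r} \) lies in one of the two cases, and after passing to a subsequence I may assume it is always the same one. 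If \( 2 s_{ j_{r}-1 } \leq L_{r} \leq s_{ j_{r} } \), then \( \MaxCylin( L_{r} ) \leq ( c_{1}\, s_{ \AllL( j_{r}-2 ) } )^{-1} \) together with \( L_{r} \leq s_{ j_{r} } \) gives \( \delta < c_{1}^{-1} \big( \prod_{l=j_{r}}^{ \AllL( j_{r}-2 )-1 } n_{l} \big)^{-1} \), so \( k_{r} \DefAs j_{r}-1 \to \infty \) satisfies \( \prod_{j=k_{r}+1}^{ \AllL( k_{r}-1 )-1 } n_{j} < 1/( c_{1}\delta ) \). If \( s_{ j_{r}-1 } < L_{r} < 2 s_{ j_{r}-1 } \), then using only that the minimum in that case is \( \leq 1/s_{ \AllL( j_{r}-3 ) } \) one gets \( \MaxCylin( L_{r} ) \leq ( c_{1}\, s_{ \AllL( j_{r}-3 ) } )^{-1} \), hence with \( L_{r} < 2 s_{ j_{r}-1 } \) that \( \delta < 2 c_{1}^{-1} \big( \prod_{l=j_{r}-1}^{ \AllL( j_{r}-3 )-1 } n_{l} \big)^{-1} \), so \( k_{r} \DefAs j_{r}-2 \to \infty \) satisfies \( \prod_{j=k_{r}+1}^{ \AllL( k_{r}-1 )-1 } n_{j} < 2/( c_{1}\delta ) \). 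In either case the required sequence is produced. The main obstacle is purely the bookkeeping: matching the three \( 1/s_{ \AllL(\cdot) } \)‑factors in Proposition~\ref{prop:LiuQuEta} to the single product \( \prod_{j=k+1}^{ \AllL( k-1 )-1 } n_{j} \) through the two index shifts \( k = j-1 \) and \( k = j-2 \), together with the observation that in the case \( s_{j-1} < L < 2 s_{j-1} \) the crude bound ``minimum \( \leq \) second term'' already suffices, so the more delicate first term never has to be analysed.
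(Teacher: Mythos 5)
Your proposal is correct and follows essentially the same route as the paper: the two-letter case is disposed of via Proposition~\ref{prop:BoshCondA2} and the empty product, the forward implication uses Proposition~\ref{prop:LiuQuEta} at \( L_{r} = s_{k_{r}+1} \) exactly as in the paper, and your backward implication is just the paper's contrapositive argument (the same two cases of Proposition~\ref{prop:LiuQuEta}, the same bound by the second term of the minimum, and the same index shifts \( k = j-1 \), \( k = j-2 \)) written in the direct form with an extracted sequence \( (L_{r}) \). The only cosmetic difference is this direct-versus-contrapositive phrasing, which changes nothing of substance.
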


\begin{proof}
For \( \Card{ \AlphabEv }= 2 \) the claim is trivially true: by Example~\ref{exmpl:AllLAlphEv2} we have \( \AllL( k_{r}-1 ) = k_{r}+1 \) for all sufficiently large \( k_{r} \). Clearly the empty product \( \prod_{ j = k_{r}+1 }^{ \AllL( k_{r}-1 )-1 } n_{j} \) is always bounded and by Proposition~\ref{prop:BoshCondA2} the Boshernitzan condition is always satisfied for \( \Card{ \AlphabEv }= 2 \).

Let now \( \Card{ \AlphabEv } \geq 3 \). First we assume that \( \big( \prod_{ j = k_{r}+1 }^{ \AllL( k_{r}-1 )-1 } n_{j} \big)_{r} \) is bounded. Let the sequence \( (L_{r})\) be given by \( L_{r} \DefAs s_{ k_{r}+1 } \). By Proposition~\ref{prop:LiuQuEta} we obtain \( \MaxCylin( L_{r} ) \geq ( c_{2} \cdot s_{ \AllL( k_{r}-1 ) } )^{-1} \), which yields
\[ \limsup_{ L \to \infty } L \cdot \MaxCylin( L ) \geq \limsup_{ r \to \infty } L_{r} \cdot \MaxCylin( L_{r} ) \geq \frac{ 1 }{ c_{2} } \limsup_{ r \to \infty } \frac{ 1 }{ \prod_{ j=k_{r}+1 }^{ \AllL( k_{r}-1 ) - 1 } n_{j} } > 0 \, . \]
To prove the converse we assume that there is no sequence \( (k_{r}) \) for which the product is bounded. For every \( L \in \NN \) we define \( j \) by \( s_{j-1} < L \leq s_{j} \). If \( s_{j-1} < L < 2s_{j-1} \) holds, then Proposition~\ref{prop:LiuQuEta} yields
\[ L \cdot \MaxCylin( L ) < \frac{ 2 s_{j-1} }{ c_{1} } \cdot \frac{ 1 }{ s_{ \AllL( j-3 ) } }  = \frac{ 2 }{ c_{1} } \cdot \frac{ 1 }{ \prod_{ i=j-1 }^{ \AllL( j-3 )-1 } n_{i} } \xrightarrow{j \to \infty} 0 \, . \]
If \( 2s_{j-1} \leq L \leq s_{j} \) holds, then Proposition~\ref{prop:LiuQuEta} yields
\[ L \cdot \MaxCylin( L ) \leq \frac{ s_{j} }{ c_{1} } \cdot \frac{ 1 }{ s_{\AllL( j-2 )} } = \frac{ 1 }{ c_{1} } \cdot \frac{ 1 }{ \prod_{i=j}^{ \AllL( j-2 )-1 } n_{i} } \xrightarrow{ j \to \infty } 0 \, . \qedhere \]
\end{proof}

The following proposition shows that the sequence \( ( k_{r} ) \) can always be chosen as a subsequences of \( ( \AllLInc{ i } ) \). The reason is that among all \( k \in \Set{ \AllLInc{ i }+1, \ldots , \AllLInc{ i+1 } } \) the value of \( \prod_{ j = k+1 }^{ \AllL( k-1 )-1 } n_{j} \) is minimal for \( k  = \AllLInc{ i+1 } \).

\begin{prop}
\label{prop:BdKBdAllLInc}
There exists a sequence \( (k_{r}) \) with \( \lim_{ r \to \infty } k_{r} = \infty \) such that the products \( \big( \prod_{ j = k_{r}+1 }^{ \AllL( k_{r}-1 )-1 } n_{j} \big)_{r} \) are bounded if and only if there exists a subsequence \( (\AllLInc{ i_{r} })_{r} \) of \( (\AllLInc{ i }) \) with \( \lim_{r \to \infty} i_{r} = \infty \) such that the products \( \big( \prod_{ j = \AllLInc{ i_{r} }+1 }^{ \AllL( \AllLInc{ i_{r} }-1 )-1 } n_{j} \big)_{r} \) are bounded.
\end{prop}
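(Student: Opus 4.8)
The statement is an equivalence between the existence of a boundedness-witnessing sequence $(k_r)$ drawn from all of $\NN_0$ and one drawn specifically from the subsequence $(\AllLInc{i})$. The "if" direction is immediate, since every $\AllLInc{i_r}$ is itself a natural number tending to infinity, so $(k_r) \DefAs (\AllLInc{i_r})$ is an admissible choice. The entire content is therefore in the "only if" direction: given an arbitrary $(k_r)$ with $\lim_{r\to\infty} k_r = \infty$ along which $\big( \prod_{ j = k_{r}+1 }^{ \AllL( k_{r}-1 )-1 } n_{j} \big)_{r}$ is bounded, produce a subsequence of $(\AllLInc{i})$ with the analogous property.

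The key observation, as the remark preceding the proposition already indicates, is a monotonicity/comparison statement: for each $k$ there is a unique $i$ with $\AllLInc{i} < k \leq \AllLInc{i+1}$ (using that $(\AllLInc{i})$ is strictly increasing and exhausts $\NN$ from some point on, together with $\AllLInc{0}=0$), and for this $i$ one has
\[ \prod_{ j = \AllLInc{i+1}+1 }^{ \AllL( \AllLInc{i+1}-1 )-1 } n_{j} \;\leq\; \prod_{ j = k+1 }^{ \AllL( k-1 )-1 } n_{j} \, . \]
To see this I would argue as follows. On the one hand, since $k-1 \geq \AllLInc{i}$ and $\AllL$ is monotonically increasing (proved earlier), we have $\AllL(k-1) \geq \AllL(\AllLInc{i})$; but also $k-1 \leq \AllLInc{i+1}-1$, and by the definition of the sequence $(\AllLInc{i})$ the function $\AllL$ is constant on $\{\AllLInc{i}, \ldots, \AllLInc{i+1}-1\}$, so in fact $\AllL(k-1) = \AllL(\AllLInc{i}) = \AllL(\AllLInc{i+1}-1)$. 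Thus the upper endpoints of the two products agree (they equal $\AllL(\AllLInc{i+1}-1)-1$ versus $\AllL(k-1)-1$), while the lower endpoint of the left product is $\AllLInc{i+1}+1 \geq k+1$ since $\AllLInc{i+1} \geq k$. Hence the left product ranges over a subset of the index range of the right product, and since every $n_j \geq 2 \geq 1$, the inequality of products holds. (One should check the degenerate cases where a product range is empty, i.e. when $\AllL(k-1)-1 < k+1$; then the left product is empty too and the inequality reads $1 \leq (\text{something} \geq 1)$, which is fine.)

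With this comparison in hand, the argument concludes quickly. Let $M$ be an upper bound for $\big( \prod_{ j = k_{r}+1 }^{ \AllL( k_{r}-1 )-1 } n_{j} \big)_{r}$. For each $r$ choose $i_r$ with $\AllLInc{i_r} < k_r \leq \AllLInc{i_r+1}$; then by the comparison inequality $\prod_{ j = \AllLInc{i_r+1}+1 }^{ \AllL( \AllLInc{i_r+1}-1 )-1 } n_{j} \leq M$ for all $r$. Since $k_r \to \infty$ and $\AllLInc{i_r+1} \geq k_r$, we have $\AllLInc{i_r+1}\to\infty$, hence $i_r + 1 \to \infty$. Relabelling $i_r + 1$ as the desired indices (and passing to a subsequence to make them strictly increasing if one wants an honest subsequence of $(\AllLInc{i})$), we obtain a subsequence $(\AllLInc{i_r'})_r$ of $(\AllLInc{i})$ with $\lim_{r\to\infty} i_r' = \infty$ along which $\big( \prod_{ j = \AllLInc{i_r'}+1 }^{ \AllL( \AllLInc{i_r'}-1 )-1 } n_{j} \big)_{r}$ is bounded.

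**Main obstacle.** The proof is essentially bookkeeping; the one place to be careful is the claim that $\AllL$ is constant on each block $\{\AllLInc{i}, \ldots, \AllLInc{i+1}-1\}$ and that this forces the two product ranges to have the same right endpoint — this is exactly where the definition of $(\AllLInc{i})$ via "the positions where $\AllL$ actually increases" is used, and I would state it cleanly as a preliminary lemma before doing the comparison. The degenerate/empty-range cases and the passage to a genuine strictly increasing subsequence are the only other spots needing a sentence of care.
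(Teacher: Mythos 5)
Your proposal is correct and follows essentially the same route as the paper: the "if" direction by taking \( k_{r} \DefAs \AllLInc{i_{r}} \), and the "only if" direction by locating \( k_{r}-1 \) in the block on which \( \AllL \) is constant and comparing the two products termwise using \( n_{j}\geq 1 \), exactly the inequality \( \prod_{ j = \AllLInc{ i }+1 }^{ \AllL( \AllLInc{ i }-1 )-1 } n_{j} \leq \prod_{ j = k+1 }^{ \AllL( k-1 )-1 } n_{j} \) the paper uses. Your extra care about empty index ranges and about extracting a strictly increasing subsequence is fine but not a substantive difference.
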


\begin{proof}
On the one hand, if \( (\AllLInc{ i_{r} }) \) is such a subsequence, then clearly \( k_{r} \DefAs \AllLInc{ i_{r} } \) satisfies the claim. On the other hand, assume that \( ( k_{r} ) \) is a sequence as in the statement. For every \( r \), let \( i_{r} \) be such that \( \AllLInc{ i_{r} }-1 \leq k_{r}-1 < \AllLInc{ i_{r} } \) holds. Because of \( \AllL( \AllLInc{ i_{r} } - 1 ) = \AllL( k_{r}-1 ) \) we obtain
\( \prod_{ j = \AllLInc{ i_{r} }+1 }^{ \AllL( \AllLInc{ i_{r} }-1 )-1 } n_{j} \leq \prod_{ j = k_{r}+1 }^{ \AllL( k_{r}-1 )-1 } n_{j} \).
\end{proof}

\begin{exmpl}[non-(B) example]
\label{exmpl:BoshExBoshCond}
As we saw in Example~\ref{exmpl:BoshExAllL}, we have \( \AllL( \AllLInc{ i } ) = \AllLInc{ i+2} \) and \( \AllLInc{ i } = (i+1)^{2} - 2 \) for all \( i \geq 1 \). For every \( i \geq 1 \) this yields
\[ \prod_{ j = \AllLInc{ i }+1 }^{ \AllL( \AllLInc{ i }-1 )-1 } n_{j} = \prod_{ j = (i+1)^{2} - 1 }^{ (i+2)^{2} - 3 } n_{j} \geq 2^{2i + 2} \, .\]
By Proposition~\ref{prop:BoshCond} and~\ref{prop:BdKBdAllLInc}, the non-(B) example does therefore not satisfy (\ref{eqn:BoshCond}), which is of course the reason for its name. In fact, no simple Toeplitz subshift with the same coding sequence \( (a_{k}) \) satisfies (\ref{eqn:BoshCond}), independent of the chosen sequence \( (n_{k}) \). To demonstrate this behaviour is precisely why the non-(B) example was introduced in \cite{LiuQu_Simple}.
\end{exmpl}

\begin{rem}
By Corollary~\ref{cor:LinRepe}, a simple Toeplitz subshift is linearly repetitive if and only if \( \big( \prod_{ j = \AllLInc{ i }+1 }^{ \AllL( \AllLInc{ i } )-1 } n_{j} \big)_{i \geq 1} \) is bounded. By Proposition~\ref{prop:BoshCond} and~\ref{prop:BdKBdAllLInc} a simple Toeplitz subshift satisfies the Boshernitzan condition if and only if there exists a subsequence \( (\AllLInc{ i_{r} })_{r} \) such that \( \big( \prod_{ j = \AllLInc{ i_{r} }+1 }^{ \AllL( \AllLInc{ i_{r} }-1 )-1 } n_{j} \big)_{r} \) is bounded. This demonstrates that the Boshernitzan condition is indeed a weaker analogue of linear repetitivity. 
\end{rem}

\begin{cor}
\label{cor:BoshCondForA3}
For a simple Toeplitz subshift with \( \Card{ \AlphabEv } = 3 \) the Boshernitzan condition is satisfied if and only if \( \liminf_{ i \to \infty } n_{ \AllLInc{ i }+1 } < \infty \) holds.
\end{cor}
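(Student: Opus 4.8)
The plan is to combine the characterisation of the Boshernitzan condition from Proposition~\ref{prop:BoshCond} and Proposition~\ref{prop:BdKBdAllLInc} with the special structure of the sequences $(\AllLInc{i})$ and $\AllL$ that arises when $\Card{\AlphabEv} = 3$. Together those two propositions say that $\Subshift$ satisfies (B) if and only if there is a subsequence $(\AllLInc{i_r})_r$ with $\lim_{r \to \infty} i_r = \infty$ along which the products $\big( \prod_{j = \AllLInc{i_r}+1}^{\AllL(\AllLInc{i_r}-1)-1} n_j \big)_r$ are bounded. So it suffices to show that, for all sufficiently large $i$, this product collapses to the single factor $n_{\AllLInc{i}+1}$.

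First I would establish the recurrence $\AllLInc{i+1} = \AllL(\AllLInc{i}) - 2$ for all large $i$. This is exactly the computation already carried out for generalised Grigorchuk subshifts with $\Card{\AlphabEv}=3$ in Example~\ref{exmpl:GrigAllLInc}, and its only input is that $\Alphab_{\AllLInc{i}+1} = \AlphabEv$ is a three-element set; since $\Alphab_k = \AlphabEv$ for all $k \geq \EventNr$ and $\AllLInc{i} \to \infty$, this holds once $\AllLInc{i} \geq \EventNr - 1$. Concretely: by definition of $\AllL(\AllLInc{i})$ the letter $a_{\AllL(\AllLInc{i})}$ differs from $a_{\AllLInc{i}+1}, \ldots, a_{\AllL(\AllLInc{i})-1}$, so (using also $a_{k+1} \neq a_k$ and $\AllL(\AllLInc{i}) \geq \AllLInc{i}+3$) the letters $a_{\AllL(\AllLInc{i})-2}, a_{\AllL(\AllLInc{i})-1}, a_{\AllL(\AllLInc{i})}$ are pairwise distinct and hence exhaust $\AlphabEv$; Proposition~\ref{prop:AllLIncRecur} then forces $\AllLInc{i+1} = \AllL(\AllLInc{i}) - 2$.

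Next I would use that, by the definition of $(\AllLInc{i})$, the function $\AllL$ is constant on each block $\Set{ \AllLInc{i-1}, \ldots, \AllLInc{i}-1 }$, so in particular $\AllL(\AllLInc{i}-1) = \AllL(\AllLInc{i-1})$. Applying the recurrence from the previous step with $i$ replaced by $i-1$ (valid for large $i$) gives $\AllL(\AllLInc{i}-1) = \AllL(\AllLInc{i-1}) = \AllLInc{i} + 2$, hence $\AllL(\AllLInc{i}-1) - 1 = \AllLInc{i}+1$ and therefore $\prod_{j = \AllLInc{i}+1}^{\AllL(\AllLInc{i}-1)-1} n_j = n_{\AllLInc{i}+1}$ for all sufficiently large $i$. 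Substituting this into the combined characterisation shows that $\Subshift$ satisfies (B) if and only if there is a subsequence $(\AllLInc{i_r})_r$ with $i_r \to \infty$ along which $(n_{\AllLInc{i_r}+1})_r$ is bounded, which is precisely the statement $\liminf_{i \to \infty} n_{\AllLInc{i}+1} < \infty$.

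The argument is mostly bookkeeping, and I expect no serious obstacle. The one point that needs care is aligning the two index shifts correctly — one must first use the block-constancy of $\AllL$ to pass from $\AllL(\AllLInc{i}-1)$ to $\AllL(\AllLInc{i-1})$, and only then apply the recurrence — and one must keep track of the fact that all these identities hold only for large $i$. The latter is harmless, since both the Boshernitzan characterisation and the $\liminf$ condition are asymptotic.
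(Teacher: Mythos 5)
Your proof is correct and takes essentially the same route as the paper: the paper also reduces the claim to Propositions~\ref{prop:BoshCond} and~\ref{prop:BdKBdAllLInc} after establishing \( \AllL( \AllLInc{ i }-1 ) = \AllLInc{ i }+2 \) for all large \( i \), which it obtains from \( \Set{ a_{ \AllLInc{ i } }, a_{ \AllLInc{ i }+1 }, a_{ \AllLInc{ i }+2 } } = \AlphabEv \) via the same three-distinct-letters argument of Example~\ref{exmpl:GrigAllLInc} that you route through the recurrence \( \AllLInc{ i+1 } = \AllL( \AllLInc{ i } ) - 2 \) and the block-constancy of \( \AllL \). The difference is only in bookkeeping, and your handling of the index shifts and of the ``sufficiently large \( i \)'' caveat is sound.
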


\begin{proof}
For all sufficiently large \( i \) we have \( \Set{ a_{ \AllLInc{ i } }, a_{ \AllLInc{ i }+1 }, a_{ \AllLInc{ i }+2 } } = \AlphabEv \) by the definitions of \( \AllL \) and \( (\AllLInc{ i }) \) and by a similar argument as in Example~\ref{exmpl:GrigAllLInc}. This yields \( \AllL( \AllLInc{ i }-1 ) = \AllLInc{ i }+2 \). Now the claim follows from Proposition~\ref{prop:BoshCond} and~\ref{prop:BdKBdAllLInc}.
\end{proof}

\begin{rem}
\label{rem:BdDiffForA3}
It was shown in \cite[Corollary~4.3]{LiuQu_Simple} that simple Toeplitz subshifts with \( \Card{ \AlphabEv } \geq 3 \), a bounded sequence \( ( n_{k} ) \) and \( \lim_{ k \to \infty } ( \AllL( k-2 ) - k ) = \infty \) do not satisfy the Boshernitzan condition. However, as the previous proof showed, the latter assumption cannot be satisfied for \( \Card{ \AlphabEv } = 3 \). In particular, every simple Toeplitz subshift with \( \Card{ \AlphabEv } = 3 \) and a bounded sequence \( ( n_{k} ) \) satisfies the Boshernitzan condition. Note that this is not true for \( \Card{ \AlphabEv } \geq 4 \), see Example~\ref{exmpl:BoshExBoshCond}.
\end{rem}

\begin{exmpl}[generalised Grigorchuk subshift]
We summarise the results from Proposition~\ref{prop:BoshCondA2} and Corollary~\ref{cor:BoshCondForA3}: a generalised Grigorchuk subshift satisfies (\ref{eqn:BoshCond}) if and only if either \( \Card{ \AlphabEv } = 2 \) holds, or \( \Card{ \AlphabEv } = 3 \) and \( \liminf_{ i \to \infty } n_{ \AllLInc{ i }+1 } < \infty \) hold.
\end{exmpl}

To conclude this section, we reformulate our results for generalised Grigorchuk subshifts in a way that relates them to self-similar groups. Recall from Example~\ref{exmpl:defGenGrigSubsh} that these subshifts are obtained from \( (n_{k}) = (2, 2, 2, \ldots ) \) and a sequence \( (b_{k}) \in \Alphab^{\NN_{0}} \), where \( b_{k} = b_{k+1} \) is allowed. When a letter \( b_{k} \) is repeated \( j \) times, we combine these occurrences to \( b_{k}^{2^{j}-1} \). Conversely, when we have \( n_{k} = 2^{j_{k}} \), we can interpret \( (a_{k}) \) as derived from a sequence \( (b_{k}) \) with repetitions and \( (n_{k})_{k \in \NN_{0}} = (2, 2, 2, \ldots ) \). Proposition~\ref{prop:BoshCond} can then be formulated in terms of \( (b_{k}) \):

\begin{cor}
\label{cor:BoshCondGenGrig}
Assume that all \( n_{k} \) have the form \( n_{k} = 2^{j_{k}} \) with \( j_{k} \in \NN \). Then the Boshernitzan condition is satisfied if and only if there exists a constant \( C \) and a sequence \( (t_{r}) \)  with \( \lim_{ r \to \infty } t_{r} = \infty \) such that \( \Set{ b_{ t_{r} } , \ldots , b_{ t_{r}+C } } = \Set{ b_{s} : s \geq t_{r} } \) holds.
\end{cor}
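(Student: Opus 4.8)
The plan is to translate the condition in Proposition~\ref{prop:BoshCond} — phrased in terms of the coding sequence $(a_k)$ and the function $\AllL$ — into the language of the sequence $(b_k)$ with repetitions. The dictionary is the one already set up in Example~\ref{exmpl:defGenGrigSubsh}: a maximal run $b_{k+1}=\dots=b_{k+l}$ (with $b_k\neq b_{k+1}\neq b_{k+l+1}$) of length $l$ in $(b_k)$ corresponds to a single letter $a_m$ of the compressed coding sequence with period length $n_m = 2^{l}$. Conversely, writing each $n_k=2^{j_k}$, the compressed sequence $(a_k)$ together with $(n_k)=(2,2,\dots)$ "expands" to a sequence $(b_s)$ in which $a_k$ occupies a block of $j_k$ consecutive positions. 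The key bookkeeping fact is that if $a_k$ occupies positions $s$ through $s+j_k-1$ in the $(b_s)$-indexing, then $\prod_{j=k+1}^{\AllL(k-1)-1} n_j = 2^{N}$, where $N$ is the number of $(b_s)$-positions strictly between the block of $a_k$ and the first $(b_s)$-position that completes the set $\{b_t : t\ge (\text{start of }a_k\text{'s block})\}$; more precisely, by definition of $\AllL$, after index $\AllL(k-1)-1$ every letter of $\Alphab_{k}$ has appeared, so $\AllL(k-1)$ is exactly the index in $(a_k)$ at which the "exhaustion point" of $\{b_t : t \ge s\}$ is reached.

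The concrete steps I would carry out are: (i) Fix notation for the expansion map $k \mapsto (s_k^-, s_k^+)$ sending the index $k$ to the first and last $(b_s)$-positions of $a_k$'s block, and record that $s_{k+1}^- = s_k^+ + 1$ and $s_k^+ - s_k^- + 1 = j_k$ where $2^{j_k}=n_k$. (ii) Show that for any $k$, letting $t := s_k^-$, one has
\[
\Set{ b_{t}, \ldots, b_{s_{\AllL(k-1)-1}^+} } = \Set{ b_s : s \ge t } \quad\text{and}\quad \AllL(k-1)-1 \text{ is minimal with this property},
\]
which is just the definition of $\AllL$ transported through the expansion. (iii) Conclude that $\prod_{j=k+1}^{\AllL(k-1)-1} n_j = 2^{\,(s_{\AllL(k-1)-1}^+ - s_k^+)}$, so this product is bounded along a sequence $k_r\to\infty$ if and only if the quantities $s_{\AllL(k_r-1)-1}^+ - s_{k_r}^+$ are bounded, i.e.\ if and only if there is a constant $C$ and indices $t_r := s_{k_r}^- \to \infty$ (note $t_r\to\infty$ since $s_k^- \ge k$) with $\Set{b_{t_r},\dots,b_{t_r+C}} = \Set{b_s : s\ge t_r}$. (iv) Invoke Proposition~\ref{prop:BoshCond} to finish: the existence of such $(k_r)$ is equivalent to the Boshernitzan condition, hence so is the existence of such $(t_r)$ and $C$.

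The main obstacle, and the step needing the most care, is step (ii) combined with the direction in (iii) where one starts from an arbitrary exhausting window $\Set{b_{t_r},\dots,b_{t_r+C}}$ in $(b_s)$ and must produce a compressed index $k_r$ with a bounded product. Here one has to be careful that $t_r$ need not be the start of an $a$-block and $t_r+C$ need not be its end, so one should pass to the index $k_r$ with $s_{k_r}^- \le t_r \le s_{k_r}^+$ (equivalently enlarge the window to block boundaries) and check that enlarging the window to at most $C + j_{k_r} + j_{k_r'}$ positions — where the added margins are the partial blocks at the two ends — still gives a bounded count, using that a bounded-length window in $(b_s)$ can meet only a bounded number of distinct letters, hence a bounded number of complete $a$-blocks; but one must also rule out the partial end-blocks being arbitrarily long, which follows because the window has bounded length so each partial block inside it has length at most $C$. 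Once this boundary-alignment argument is pinned down, the equivalence is immediate and the rest is routine translation.
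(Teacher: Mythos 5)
Your overall plan -- translating Proposition~\ref{prop:BoshCond} through the block expansion $a_k \leftrightarrow b_{s_k^-}=\dots=b_{s_k^+}$ with $s_k^+-s_k^-+1=j_k$ -- is the same route the paper takes, and your identity $\prod_{i=k+1}^{\AllL(k-1)-1} n_i = 2^{\,s^+_{\AllL(k-1)-1}-s^+_k}$ is correct. The gap is in the forward direction. First, step (ii) is off by one block: by minimality of $\AllL(k-1)$ the letter $a_{\AllL(k-1)}$ does not occur among $a_k,\dots,a_{\AllL(k-1)-1}$, so $\Set{ b_t,\dots,b_{s^+_{\AllL(k-1)-1}} }$ is a \emph{proper} subset of $\Set{ b_s : s\ge t }$; exhaustion is first reached only at the position $s^-_{\AllL(k-1)}$. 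Second, and more seriously, the anchoring $t_r:=s^-_{k_r}$ in step (iii) does not produce a bounded $C$: the minimal exhausting window starting at $s^-_{k_r}$ has length $j_{k_r}+\sum_{i=k_r+1}^{\AllL(k_r-1)-1} j_i$, and $j_{k_r}$ is not controlled by the bounded product. Concretely, take $(a_k)$ three-periodic over three letters and $n_k=2$ except $n_{3r}=2^{3r}$; then the products along $k_r=3r$ are bounded, so the Boshernitzan condition holds, but any window starting at the first position of the block of $a_{k_r}$ must cross that whole block of length $3r$ before it can meet the missing letter, so no bounded $C$ exists for your choice of $t_r$. Hence the ``i.e.'' in step (iii) is false as stated.

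The repair is exactly the paper's choice: start the window at the \emph{last} position of the block, $t_r:=s^+_{k_r}$. One still has $\Set{ b_s : s\ge s^+_{k_r} }=\Alphab_{k_r}$, and the window from $s^+_{k_r}$ to $s^-_{\AllL(k_r-1)}$, of length $1+\sum_{i=k_r+1}^{\AllL(k_r-1)-1} j_i$, exhausts it, which is bounded along your sequence. Your converse direction is essentially right and matches the paper, though it can be stated more cleanly than via enlarging windows: with $s^-_{k_r}\le t_r\le s^+_{k_r}$, the exhaustion forces the window to reach $s^-_{\AllL(k_r-1)}\le t_r+C$, so the blocks of $a_{k_r+1},\dots,a_{\AllL(k_r-1)-1}$ lie \emph{entirely} inside $(t_r,\,t_r+C)$ and the sum is at most $C$; the possibly long partial blocks at the two ends are harmless simply because the indices $k_r$ and $\AllL(k_r-1)$ do not occur in the sum, so no bound on their full lengths is needed. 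With these corrections your argument coincides with the paper's proof.
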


\begin{proof}
By Proposition~\ref{prop:BoshCond} with \( n_{k} = 2^{j_{k}} \), (\ref{eqn:BoshCond}) is satisfied if and only if there exists a sequence \( (k_{r})\) with \( \lim_{ r \to \infty } k_{r} = \infty \) such that \( \big( \sum \nolimits_{ i = k_{r}+1 }^{ \AllL( k_{r}-1 )-1 } j_{i} \big)_{r} \) is bounded. Assume that the letter \( a_{k} \) corresponds to the letters \( b_{t} = \ldots = b_{ t+j_{k}-1} \), the letter \( a_{k+1} \) corresponds to the letters \( b_{ t+j_{k} } = \ldots = b_{ t+j_{k}+j_{k+1}-1 } \) and so on, such that \( a_{\AllL( k-1 )} \) corresponds to \( b_{ t+j_{k}+\ldots+j_{ \AllL( k-1 )-1} } = \ldots = b_{ j_{k}+\ldots+j_{ \AllL( k-1 ) }-1 } \). Note that
\[ \Alphab_{k} = \Set{ a_{k}, \ldots , a_{\AllL( k-1 )} } = \Set{ b_{ t+j_{k}-1 }, b_{ t+j_{k} } , \ldots , b_{ t+j_{k}+\ldots+j_{ \AllL( k-1 )-1 } } } \]
holds. The set on the right contains \( 2 + \sum_{ i = k+1 }^{ \AllL( k-1 )-1 } j_{i} \) elements. If there exists a sequence \( ( k_{r} ) \) such that this sum is bounded, then
\[ \Set{ b_{ t_{r}+j_{ k_{r} }-1} , \ldots , b_{ t_{r}+j_{ k_{r} } + \ldots + j_{ \AllL( k_{r}-1 )-1} } } \]
has the claimed property. Conversely, assume that there exists a sequence \( ( t_{r} ) \) with \( \Set{ b_{ t_{r} } , \ldots , b_{ t_{r}+C } } = \Set{ b_{s} : s \geq t_{r} } \). Let \( a_{k_{r}} \) denote the letter that corresponds to \( b_{t_{r}} \). Then \( a_{ \AllL( k_{r}-1 )-1 } \) corresponds to a letter \( b_{t} \) with \( t \leq t_{r}+C \). Since \( j_{k} \) denotes the multiplicity of the letter \( b_{t} \) that corresponds to \( a_{k} \), we obtain
\[ \sum \nolimits_{ i = k_{r}+1 }^{ \AllL( k_{r}-1 )-1 } j_{i} \leq ( t_{r}+C ) - ( t_{r}+1 ) + 1 = C \, . \qedhere \]
\end{proof}

\begin{rem}
As the name suggests, generalised Grigorchuk subshifts are associated to elements in the family of Grigorchuk's groups. These groups are defined by sequences \( (\GenSeq_{t}) \) of automorphism-valued maps. Since the groups act on a set of rooted graphs, there exists a notion of a Boshernitzan condition, see Definition~\ref{defi:LRuBGrAct} in the appendix. Nagnibeda and Perez show in \cite{NagnPerez_SchreierGr} that the action of such a group satisfies the Boshernitzan condition if and only if there exists a constant \( C \) and a sequence \( (t_{r}) \) with \( \lim_{ r \to \infty } t_{r} = \infty \) such that for every \( r \) the equality \( \Set{ \GenSeq_{t_{r}} , \ldots , \GenSeq_{t_{r} + C} } = \Set{ \GenSeq_{s} : s \geq t_{r} } \) holds, see Proposition~\ref{prop:BoshCondGrAction}. This is completely analogous to our previous corollary for subshifts.
\end{rem}

\chapter[Jacobi operators: almost sure absence of eigenvalues][Jacobi operators]{Jacobi operators: almost sure absence of eigenvalues}
\label{chap:JOSimpToep}

In this chapter we begin our study of Jacobi and Schrödinger operators. We recall their definitions and basic facts about their spectrum on aperiodic subshifts. A central concept is that of transfer matrices, which are matrix products that describe solutions of the eigenvalue equation. We remind the reader how repetitions of words exclude eigenvalues via so-called Gordon-type arguments. This allows us to prove that the pure point spectrum is empty for almost all elements of a simple Toeplitz subshift. In addition this chapter serves as an introduction for Chapter~\ref{chap:UnifCocycCantor}, where many of the aforementioned concepts will reappear in a more general form. Accordingly we sometimes only sketch the main idea and refer to the detailed proof of a more general result in the next chapter.

\section{Jacobi operators and transfer matrices}
\label{sec:JOTransMat}

The prototype of the operators that we are interested in is the Schrödinger operator \( \Jac \DefAs \Delta + V \) on \( \ell^{2}(\ZZ) \). Here \( \Delta \) is the discrete Laplacian and \( V \) is a multiplication operator. It is called \emph{potential}\index{potential} and reflects the underlying aperiodic structure. In other words, we model a quasicrystal by a two-sided infinite, aperiodic arrangement of equally spaced atoms or ions. The Schrödinger operator describes their interaction with an electron. The spectrum of the operator encodes the possible energy values of the electron.

In the following we mostly consider Jacobi operators, which can have an off-diagonal term different from one (the Laplacian \( \Delta \) is, in other words, replaced by a weighted Laplacian). In \cite[Proposition~4.1]{GLN_SpectraSchreierAndSO} a connection between Jacobi operators and self-similar groups was proved: in certain cases, a Jacobi operator on a simple Toeplitz subshift is unitarily equivalent to the Laplacian on a Schreier graph, which is associated to the group. Therefore the study of Jacobi operators also sheds new light on self-similar groups. More details about this connection can be found in Section~\ref{sec:AppSchrGraphLapl} of the appendix.

\begin{defi}
Let \( \NDig \colon \Subshift \to \RR \setminus \Set{ 0 } \) and \( \Dig \colon \Subshift \to \RR \) be continuous functions on a subshift \( \Subshift \). For an element \( \InfWord \in \Subshift \), the operator \( \Jac_{\InfWord} \colon \ell^{2}( \ZZ ) \to \ell^{2}( \ZZ ) \) that is defined by
\[ ( \Jac_{\InfWord} \psi )( k ) \DefAs \NDig( \Shift^{k} \InfWord ) \psi( k-1 ) + \Dig( \Shift^{k} \InfWord ) \psi( k ) + \NDig( \Shift^{k+1} \InfWord ) \psi( k+1 ) \]
is called the \emph{Jacobi operator}\index{Jacobi operator} associated to \( \InfWord \). In the special case \( \NDig \equiv 1 \), the operator is called a \emph{Schrödinger operator}\index{Schrödinger operator}.
\end{defi}

For the operators that we consider in this thesis we make two additional assumptions. The first one ensures that \( \Jac_{\InfWord} \) actually ``sees'' the aperiodic structure of \( \InfWord \). More precisely, we define the dynamical system \( (\widetilde{\Subshift} , \widetilde{ \Shift }) \) by
\begin{equation}
\label{eqn:TildsShiftFG}
\widetilde{\Subshift} \DefAs \bigg\{ \, \bigg(\begin{matrix} \NDig( \InfWord ) \\ \Dig( \InfWord ) \end{matrix}\bigg) \, : \, \InfWord  \in \Subshift \, \bigg\} \qquad \text{and} \qquad \widetilde{ \Shift } \colon \widetilde{\Subshift} \to \widetilde{\Subshift} \, ,\; \bigg(\begin{matrix} \NDig( \InfWord ) \\ \Dig( \InfWord ) \end{matrix}\bigg) \mapsto \bigg(\begin{matrix} \NDig( \Shift \InfWord ) \\ \Dig( \Shift \InfWord ) \end{matrix}\bigg) \, ,
\end{equation}
with the product topology on \( \widetilde{\Subshift} \). We say that \( ( \NDig , \Dig ) \) is \emph{aperiodic}\index{aperiodic!pair of functions} if \( \widetilde{\Subshift} \) contains no \( \widetilde{ \Shift } \)-periodic element. For example, \( \NDig \) and \( \Dig \) must not both be constant. Our second assumption limits the range that \( \NDig \) and \( \Dig \) ``see'' of \( \InfWord \) by requiring both functions to be locally constant. With the usual topology on \( \Subshift \) (see Section~\ref{sec:PrelimSubsh}) this notion can be defined as follows:

\begin{defi}
\label{defi:LocConst}
A map \( f \), defined on a subshift \( \Subshift \), is called \emph{locally constant}\index{locally constant!map} if there exists a number \( J \in \NN_{0} \) such that \( f( \InfWord ) = f( \varrho ) \) holds for all \( \InfWord , \varrho \in \Subshift \) with \( \Restr{\InfWord}{-J}{J} = \Restr{\varrho}{-J}{J} \). In symbolic dynamics, locally constant maps are also known as \emph{(sliding) block codes}.
\end{defi}

\begin{rem}
\label{rem:LocConstFinVal}
Let \( f \) be locally constant. Then \( f \) takes only finitely many values, since the compactness of \( \Subshift \) implies that there exists a finite covering \( \Subshift = \cup_{ i = 1 , \ldots , k } U_{\InfWord_{i}} \) such that \( f \) is constant on each \( U_{\InfWord_{i}} \).
\end{rem}

\begin{rem}
While the Jacobi operators that we consider involve real-valued locally constant functions, Definition~\ref{defi:LocConst} is not restricted to this setting. In fact we will study matrix-valued locally constant functions in Chapter~\ref{chap:UnifCocycCantor}.
\end{rem}

The starting point for our investigation of the spectrum is the eigenvalue equation \( \Jac_{\InfWord} \varphi = E \varphi \) with \( E \in \RR \) and \( \varphi \in \RR^{\ZZ} \). Note that we do not assume \( \varphi \in \ell^{2}( \ZZ ) \), so \( \varphi \) is not necessarily an eigenfunction even if it solves the eigenvalue equation. Since such a solution is determined by its value at two consecutive positions, we collect them in a vector \( \Phi( j ) \DefAs \big( \begin{smallmatrix} \varphi( j+1 ) \\ \varphi( j ) \end{smallmatrix} \big) \). The definition of \( \Jac_{\InfWord} \) yields for \( j > 0 \) the equality
\[ \bigg(\begin{matrix} \varphi( j+1 ) \\ \varphi( j ) \end{matrix}\bigg) = \bigg(\begin{matrix} \frac{ E - \Dig( \Shift^{j} \InfWord ) }{ \NDig( \Shift^{j+1} \InfWord )} & - \frac{ \NDig(\Shift^{j} \InfWord) }{ \NDig( \Shift^{j+1} \InfWord )} \\ 1 & 0 \end{matrix}\bigg) \bigg(\begin{matrix} \varphi( j ) \\ \varphi( j-1 ) \end{matrix}\bigg) \]
and for \( j < 0 \) the equality
\[ \bigg(\begin{matrix} \varphi( j ) \\ \varphi( j-1 ) \end{matrix}\bigg) = \bigg(\begin{matrix} 0 & 1 \\ - \frac{ \NDig( \Shift^{j+1} \InfWord ) }{ \NDig( \Shift^{j} \InfWord ) } & \frac{ E - \Dig( \Shift^{j} \InfWord ) }{ \NDig( \Shift^{j} \InfWord ) } \end{matrix}\bigg) \bigg(\begin{matrix} \varphi( j+1 ) \\ \varphi( j ) \end{matrix}\bigg) \, . \]
The matrices above are called \emph{elementary transfer matrices}\index{elementary tansfer matrix}\index{transfer matrix!elementary}. They encode how the value of \( \varphi \) at two positions determines the value to the right (for \( j > 0 \)) or to the left (for \( j < 0 \)). This process is now iterated. To shorten notation, we introduce the \emph{transfer matrix map}\index{transfer matrix!map}, defined by
\begin{equation}
\label{eqn:TrMatMap}
\TrMat \colon \Subshift \to \GL \;\, , \;\; \InfWord \mapsto \bigg(\begin{matrix} \frac{ E - \Dig( \Shift \InfWord ) }{ \NDig( \Shift^{2} \InfWord ) } & - \frac{ \NDig( \Shift \InfWord ) }{ \NDig( \Shift^{2} \InfWord ) } \\ 1 & 0 \end{matrix}\bigg) \, .
\end{equation}
We obtain for \( j > 0 \) the equality
\[ \Phi( j ) = \TrMat( \Shift^{j-1} \InfWord ) \cdot \ldots \cdot \TrMat( \InfWord ) \, \Phi( 0 ) \]
and for \( j < 0 \) the equality
\[ \Phi( j ) = \TrMat( \Shift^{j} \InfWord )^{-1} \cdot \ldots \cdot \TrMat( \Shift^{-1} \InfWord )^{-1} \, \Phi( 0 ) \, .\]
The matrix products above are called \emph{transfer matrices}\index{transfer matrix}. Since they describe the behaviour of solutions of the eigenvalue equation, they are an important tool in the spectral theory of Jacobi operators. Both transfer matrix equations can be compressed into a single map. By a slight abuse of notation it is also called \( \TrMat \) and defined by
\begin{equation}
\label{eqn:TrMatCocyc}
\TrMat \colon \ZZ \times \Subshift \to \GL \, , \; ( j , \InfWord ) \mapsto \begin{cases}
\TrMat( \Shift^{j-1} \InfWord ) \cdot \ldots \cdot \TrMat( \Shift^{0} \InfWord ) & \text{for } j > 0 \\
\Id & \text{for } j = 0 \\
\TrMat( \Shift^{j} \InfWord )^{-1} \cdot \ldots \cdot \TrMat( \Shift^{-1} \InfWord )^{-1} & \text{for } j < 0
\end{cases} \, .
\end{equation}
This yields \( \Phi( j ) = \TrMat( j , \InfWord) \, \Phi( 0 )\) for all \( j \in \ZZ \). The same type of matrix products will also appear in Chapter~\ref{chap:UnifCocycCantor} when we study cocycles. In fact, transfer matrices are cocycles and they will be the main application of our more general approach in the next chapter.

\section{The spectrum of Jacobi operators}
\label{sec:SpecJO}

For \( \InfWord \in \Subshift \) the spectrum of the associated Jacobi operator \( \Jac_{\InfWord} \) on \( \ell^{2}( \ZZ ) \) is given by
\[ \sigma( \Jac_{\InfWord} ) \DefAs \Set{ E \in \CC : E \cdot \Id - \Jac_{\InfWord} \text{ is not a bijection with bounded inverse} } \, . \]
In the following we review basic facts about the spectrum for the case that the subshift is aperiodic. For a more thorough discussion of spectral theory, see standard works such as \cite{ReedSimon1} or the surveys \cite{Suto_SOSurvey}, \cite{Dam_GordonInBook}, \cite{Dam_SurveyStricErgodSO}, \cite{Dam_SOSurvey} and the references therein. Many results on Jacobi operators and their spectrum can also be found in \cite{Teschl_JO}.

First note that \( \Jac_{\InfWord} \) is self-adjoint, which implies \( \sigma( \Jac_{\InfWord} ) \subseteq \RR \). Moreover since \( \Jac_{\InfWord} \) is bounded there is a spectral measure \( \mu_{\varphi} \) associated to every \( \varphi \in \ell^{2}( \ZZ ) \). We write:
\begin{tightitemize}
\tightlist
\item{\( \ell^{2}( \ZZ )_{\AC} \DefAs \{ \, \varphi \in \ell^{2}( \ZZ ) : \mu_{\varphi} \) is absolutely continuous with respect to the Lebesgue measure\( \, \}\).}
\item{\( \ell^{2}( \ZZ )_{\SC} \DefAs \{ \, \varphi \in \ell^{2}( \ZZ ) : \mu_{\varphi} \) is singular with respect to the Lebesgue measure and continuous\( \, \} \).}
\item{\( \ell^{2}( \ZZ )_{\PP} \DefAs \{ \, \varphi \in \ell^{2}( \ZZ ) : \mu_{\varphi} \) is a pure point measure\( \, \} \).}
\end{tightitemize}

\begin{defi}
The spectrum of the restriction of \( \Jac_{\InfWord} \) to each of the sets above is called: 
\begin{tightitemize}
\tightlist
\item{\( \sigma_{\AC}( \Jac_{\InfWord} ) \DefAs \sigma( \Jac_{\InfWord}|_{\ell^{2}( \ZZ )_{\AC}} ) \) the absolutely continuous spectrum of \( \Jac_{\InfWord} \).}
\item{\( \sigma_{\SC}( \Jac_{\InfWord} ) \DefAs \sigma( \Jac_{\InfWord}|_{\ell^{2}( \ZZ )_{\SC}} ) \) the singular continuous spectrum of \( \Jac_{\InfWord} \).}
\item{\( \sigma_{\PP}( \Jac_{\InfWord} ) \DefAs \Set{  E \in \RR : E \text{ is an eigenvalue of } \Jac_{ \InfWord } } \) the pure point spectrum of \( \Jac_{\InfWord} \).}
\end{tightitemize}
\end{defi}

Note that \( \Close{  \sigma_{\PP}( \Jac_{\InfWord} ) } = \sigma( \Jac_{\InfWord}|_{\ell^{2}( \ZZ )_{\PP}} ) \) holds. We obtain the following decomposition of the spectrum, see for example \cite[Section~VII.2]{ReedSimon1}:
\[ \sigma( \Jac_{\InfWord} ) = \sigma_{\AC}( \Jac_{\InfWord} ) \cup \sigma_{\SC}( \Jac_{\InfWord} ) \cup \Close{\sigma_{\PP}( \Jac_{\InfWord} )} \, . \]
Since every \( \InfWord \in \Subshift \) defines a different operator \( \Jac_{\InfWord} \), the spectra will in general also be different. However, in the situation that we typically consider in this thesis, the spectrum is independent of \( \InfWord \). In an almost sure sense, this is even true for the spectrum's components:

\begin{prop}
\label{prop:SpecIndepOmega}
Let \( \Subshift \) be a minimal and uniquely ergodic subshift. Then there exist sets \( \Spec , \Spec_{\AC} , \Spec_{\SC} , \Spec_{\PP} \subseteq \RR \) such that 
\[ \sigma( \Jac_{\InfWord} ) = \Spec \qquad \text{and} \qquad \sigma_{\AC}( \Jac_{\InfWord} ) = \Spec_{\AC} \;\; , \;\;\; \sigma_{\SC}( \Jac_{\InfWord} ) = \Spec_{\SC} \;\; , \;\;\; \Close{\sigma_{\PP}( \Jac_{\InfWord} )} = \Spec_{\PP} \]
hold for almost all \( \InfWord \in \Subshift \) with respect to the ergodic measure. The equality \( \sigma( \Jac_{\InfWord} ) = \Spec \) even holds for all \( \InfWord \in \Subshift \). This set is therefore referred to as the \emph{spectrum of the Jacobi operator}\index{spectrum of a Jacobi operator}\index{Jacobi operator!spectrum of} on the subshift. 
\end{prop}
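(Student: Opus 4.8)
The statement asserts three things: (i) the full spectrum $\sigma(\Jac_{\InfWord})$ is constant over \emph{all} $\InfWord \in \Subshift$; (ii) the three spectral components $\sigma_{\AC}$, $\sigma_{\SC}$, $\Close{\sigma_{\PP}}$ are constant for $\ErgodMeas$-almost every $\InfWord$; and (iii) the object is well-defined, so may be called ``the spectrum''. These are classical facts for ergodic families of operators, and the plan is to invoke them in the right order, checking that the hypotheses (minimality, unique ergodicity, and the fact that $\InfWord \mapsto \Jac_{\InfWord}$ is a measurable, shift-covariant family of bounded self-adjoint operators) are met.

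\textbf{Plan for (i).} First I would record the covariance relation: if $U$ denotes the unitary shift on $\ell^2(\ZZ)$, $(U\psi)(k) = \psi(k+1)$, then a direct computation from the definition of $\Jac_{\InfWord}$ gives $U \Jac_{\InfWord} U^{-1} = \Jac_{\Shift \InfWord}$. Hence $\sigma(\Jac_{\InfWord}) = \sigma(\Jac_{\Shift^j \InfWord})$ for all $j \in \ZZ$, so the map $\InfWord \mapsto \sigma(\Jac_{\InfWord})$ is constant on orbits. Next I would use that $\NDig$ and $\Dig$ are continuous (indeed locally constant, by Definition~\ref{defi:LocConst} and Remark~\ref{rem:LocConstFinVal}) to conclude that $\InfWord \mapsto \Jac_{\InfWord}$ is continuous from $\Subshift$ into the bounded operators on $\ell^2(\ZZ)$ in the strong operator topology, and that the operator norms are uniformly bounded; standard perturbation theory then shows $\InfWord \mapsto \sigma(\Jac_{\InfWord})$ is continuous with respect to the Hausdorff metric on compact subsets of a fixed bounded interval of $\RR$. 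Since by Corollary~\ref{cor:STStrictErgod} the subshift is minimal, every orbit is dense, so a function that is both orbit-constant and continuous must be globally constant. This gives a set $\Spec$ with $\sigma(\Jac_{\InfWord}) = \Spec$ for every $\InfWord \in \Subshift$.

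\textbf{Plan for (ii).} For the components I would appeal to the general ergodic-theory result (see e.g.\ \cite{ReedSimon1} or the surveys cited in the text, or \cite{Teschl_JO} in the Jacobi setting): for a measurable shift-covariant family of self-adjoint operators over an ergodic base $(\Subshift, \Shift, \ErgodMeas)$, each of the sets $\sigma_\bullet(\Jac_{\InfWord})$, $\bullet \in \{\AC, \SC, \PP\text{-closure}\}$, is $\ErgodMeas$-almost surely equal to a fixed set. The argument underlying this is that, by covariance, the indicator of the event $\{E \in \sigma_\bullet(\Jac_{\InfWord})\}$ (for fixed $E$, or more precisely the corresponding spectral projections' properties) is a shift-invariant measurable quantity, hence $\ErgodMeas$-a.s.\ constant by ergodicity; a Fubini/separability argument over a countable dense set of energies then upgrades this to a.s.\ constancy of the whole set. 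I would cite this rather than reprove it, since it is standard and the excerpt explicitly points the reader to standard references. This yields sets $\Spec_{\AC}, \Spec_{\SC}, \Spec_{\PP}$ as claimed.

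\textbf{Main obstacle and remark on rigor.} The genuinely substantive point is (i) --- promoting orbit-invariance to global constancy of the full spectrum via minimality plus continuity --- and here the only thing requiring a little care is the continuity of $\InfWord \mapsto \sigma(\Jac_{\InfWord})$ in the Hausdorff metric; this follows from strong-resolvent continuity together with the uniform norm bound (which confines all spectra to a common compact interval), a standard fact I would state with a one-line justification rather than belabor. The a.s.\ statements in (ii) are ``soft'' consequences of ergodicity and I would not prove them from scratch. The cleanest exposition is: (a) covariance identity; (b) strong continuity and uniform boundedness from local constancy; (c) minimality $\Rightarrow$ $\sigma(\Jac_{\InfWord})$ globally constant; (d) ergodicity $\Rightarrow$ the three components a.s.\ constant, citing the standard reference. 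I would not expect any serious difficulty, only the need to phrase the appeal to general ergodic-operator theory precisely enough.
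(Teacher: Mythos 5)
Your overall route is the same as the paper's: the paper gives no self-contained proof but cites exactly the two ingredients you identify, namely (a) ergodicity plus shift-covariance \( U \Jac_{\InfWord} U^{-1} = \Jac_{\Shift \InfWord} \) for the almost sure constancy of \( \sigma_{\AC} \), \( \sigma_{\SC} \), \( \Close{\sigma_{\PP}} \) (Pastur, CFKS, Teschl), and (b) strong approximation by translates together with minimality for the everywhere-constancy of the full spectrum (S\"ut\H{o}, Damanik). Your covariance identity and your treatment of part (ii) are fine.

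There is, however, a genuine flaw in your justification of part (i). You claim that strong (resolvent) continuity of \( \InfWord \mapsto \Jac_{\InfWord} \) together with a uniform norm bound makes \( \InfWord \mapsto \sigma( \Jac_{\InfWord} ) \) continuous in the Hausdorff metric; this is false as a general perturbation statement. Strong resolvent convergence only prevents the spectrum from \emph{suddenly contracting}: if \( A_{n} \to A \) strongly (uniformly bounded, self-adjoint), then every \( E \in \sigma( A ) \) is a limit of points \( E_{n} \in \sigma( A_{n} ) \), but points of \( \sigma( A_{n} ) \) need not accumulate on \( \sigma( A ) \) — take \( A_{n} \) the orthogonal projection onto the \( n \)-th basis vector, so \( A_{n} \to 0 \) strongly while \( \sigma( A_{n} ) = \{ 0 , 1 \} \not\to \{ 0 \} \). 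Upper semicontinuity (and hence Hausdorff continuity) would require norm resolvent convergence, which you do not have here since the operators differ far from the origin. The repair is standard and is precisely what the cited references do: if \( \InfWord' \) lies in the orbit closure of \( \InfWord \), choose shifts with \( \Shift^{j_{k}} \InfWord \to \InfWord' \); then \( \Jac_{\Shift^{j_{k}} \InfWord} \to \Jac_{\InfWord'} \) strongly, all approximants have spectrum \( \sigma( \Jac_{\InfWord} ) \) by covariance, and the no-sudden-contraction principle gives \( \sigma( \Jac_{\InfWord'} ) \subseteq \sigma( \Jac_{\InfWord} ) \) because the latter set is closed. Minimality makes the situation symmetric (each element lies in the orbit closure of the other), so both inclusions hold and \( \sigma( \Jac_{\InfWord} ) \) is constant on \( \Subshift \). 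With that one-sided semicontinuity argument in place of the Hausdorff-continuity claim, your plan goes through.
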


The almost sure constancy can be found in \cite[Corollary~1]{Pastur_LyapuAndSc} and \cite[Theorem~9.4]{CFKS_SOApplicQC} for Schrödinger operators and in \cite[Theorem~5.3 and~5.4]{Teschl_JO} for Jacobi operators. For the complete \( \InfWord \)-independence of the spectrum, which is based on the strong convergence of an approximation by minimality, see \cite[Section~6.2]{Suto_SOSurvey} or \cite[Theorem~3.2]{Dam_SurveyStricErgodSO}. At least for Schrödinger operators, the equality \( \sigma_{\AC}( \Jac_{\InfWord} ) = \Spec_{\AC} \) also holds everywhere (\cite[Theorem~1.5]{LastSim_Spec1dSO}), but \( \sigma_{\SC}( \Jac_{\InfWord} ) \) and \( \sigma_{\PP}( \Jac_{\InfWord} ) \) will in general depend on \( \InfWord \), see \cite[Example~1]{JitoSim_SCSpecContCase}.

\begin{rem}
\label{rem:PerioAndRand}
As a side remark we comment on two extremal cases that are beyond the scope of this thesis: crystals and random (also: ``glassy'' or ``amorphous'') materials. In both cases, the spectrum of the associated Schrödinger operator has been thoroughly studied.

On the one hand, assume that \( \Jac_{ \InfWord } \) is \( P \)-periodic (``a crystal''). This yields a periodic difference equation which can be treated in the framework of Floquet/Bloch theory (\cite{Floqu_DiffEqCoefPerio}, \cite{Bloch_QMKristall}). An important role is played by the transfer matrix over a whole period. Since it has determinant one (see Equation~(\ref{eqn:TrMatMap})), its eigenvalues \( m_{\pm} \) satisfy \( m_{-} = \frac{1}{m_{+}} \). A short computation shows that \( \lvert m_{\pm} \rvert = 1 \) holds if and only if \(  \lvert \Tr( \TrMat( P , \InfWord ) ) \rvert \leq 2 \) holds. Moreover, the eigenvalues of \( \TrMat( P , \InfWord ) \) are connected to the asymptotic behaviour of solutions of the eigenvalue equation \( \Jac_{\InfWord} \varphi = E \varphi \). Combined, this yields the important relation:
\[ \sigma( \Jac_{ \InfWord } ) = \Set{ E \in \RR : \lvert \Tr( \TrMat( P , \InfWord ) ) \rvert \leq 2 } \, . \]
It can be shown that the spectrum of a periodic Schrödinger operator is purely absolutely continuous. A detailed treatment of this topic can be found in \cite[Subsection~VII.2.1]{CarmoLacr_SpecRandSO}.

On the other hand, assume that the value of each \( \InfWord( j ) \) is chosen randomly, independent from each other and with identical distributions (``a glassy material''). This is known as the Anderson model. The associated Schrödinger operator almost surely has pure point spectrum and exhibits so-called Anderson localisation, that is, exponentially decaying eigenfunctions, see \cite{And_AndersLoc}, \cite{CarmKleiMart_AndersBernou}, but also \cite[Chapter~9]{CFKS_SOApplicQC}, \cite[Chapter~VIII]{CarmoLacr_SpecRandSO} and \cite[Section~4]{Dam_SOSurvey}).
\end{rem}

In aperiodic subshifts there is a certain degree of both, disorder (which roughly corresponds to singular spectrum) and order (which roughly corresponds to continuous spectrum). Here and in the next subsection we discuss how aperiodicity indeed excludes absolutely continuous spectrum and how symmetries can be used to exclude pure point spectrum. For aperiodic subshifts where both approaches can be applied, this yields purely singular continuous spectrum.

The line of arguments which deduces absence of \( \AC \)-spectrum from aperiodicity is known as Kotani theory (\cite{Kotani_KotTheo}, \cite{Sim_KotaniLattice}, \cite{Kotani_AperioNoAC}) and is summarised in the following theorem. For Schrödinger operators it goes back to Kotani and was later generalised to Jacobi operators. The \( \InfWord \)-independence of \( \sigma_{\AC}( \Jac_{\InfWord} ) \), which was discussed in the paragraph after Proposition~\ref{prop:SpecIndepOmega}, is also used.

\begin{thm}
\label{thm:KotaniNoAC}
Let \( \Subshift \) be minimal and uniquely ergodic and assume that \( ( \NDig , \Dig ) \) is aperiodic. Then \( \Spec_{\AC} = \emptyset \) holds, that is, the spectrum of the Jacobi operator on \( \Subshift \) has no absolutely continuous part.
\end{thm}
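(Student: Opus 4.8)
The proof will be a fairly direct application of Kotani theory, transported to the Jacobi setting; the plan is to reduce the hypotheses to a statement about a finitely‑valued ergodic process and then quote two results from the literature. First I would repackage the setup. Since $\NDig$ and $\Dig$ are locally constant, Remark~\ref{rem:LocConstFinVal} shows they take only finitely many values, and $\NDig$ is in addition bounded away from zero (its finite range avoids $0$). Hence the factor system $(\widetilde{\Subshift}, \widetilde{\Shift})$ from Equation~(\ref{eqn:TildsShiftFG}) lives on a finite alphabet $\Set{ ( \NDig( \InfWord ), \Dig( \InfWord ) ) : \InfWord \in \Subshift } \subseteq ( \RR \setminus \Set{0} ) \times \RR$; as a factor of the minimal and uniquely ergodic $\Subshift$ it is again minimal and uniquely ergodic, with invariant measure $\widetilde{\ErgodMeas}$ the pushforward of $\ErgodMeas$. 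By the assumed aperiodicity of $( \NDig, \Dig )$ the system $\widetilde{\Subshift}$ has no $\widetilde{\Shift}$‑periodic point, so it is aperiodic, and the coefficient sequences $( \NDig( \Shift^{k} \InfWord ) )_{k}$, $( \Dig( \Shift^{k} \InfWord ) )_{k}$ of $\Jac_{\InfWord}$ are precisely the trajectories of this process.

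Next I would invoke the identification of the absolutely continuous spectrum with the zero set of the Lyapunov exponent. For each $E \in \RR$ the transfer‑matrix cocycle $\TrMat( \cdot, \cdot )$ of Equation~(\ref{eqn:TrMatCocyc}) is an $\SL$‑valued cocycle over $( \widetilde{\Subshift}, \widetilde{\Shift}, \widetilde{\ErgodMeas} )$ (up to a deterministic, $\InfWord$‑independent conjugation absorbing the off‑diagonal ratios $\NDig( \Shift^{j} \InfWord ) / \NDig( \Shift^{j+1} \InfWord )$), so its Lyapunov exponent $\Lyapu{\TrMat}( E )$ is well defined. Kotani theory for ergodic Jacobi operators (the discrete Schr\"odinger case being \cite{Kotani_KotTheo}, \cite{Sim_KotaniLattice}; the Jacobi version e.g.\ \cite{Teschl_JO}) then yields that the almost sure absolutely continuous spectrum is the essential closure of $Z \DefAs \Set{ E \in \RR : \Lyapu{\TrMat}( E ) = 0 }$. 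It therefore suffices to show that $Z$ has Lebesgue measure zero.

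Here the decisive input is Kotani's theorem on finitely‑valued potentials (\cite{Kotani_AperioNoAC}): an ergodic, finitely‑valued process that is not almost surely periodic is non‑deterministic, and for a non‑deterministic process one has $\Lyapu{\TrMat}( E ) > 0$ for Lebesgue‑almost every $E$. Applied to the process from the first step — whose aperiodicity rules out almost sure periodicity — this gives $\Lyapu{\TrMat}( E ) > 0$ for Lebesgue‑a.e.\ $E$, hence $Z$ is Lebesgue‑null and its essential closure is empty. Consequently $\sigma_{\AC}( \Jac_{\InfWord} ) = \emptyset$ for $\widetilde{\ErgodMeas}$‑a.e.\ trajectory, i.e.\ for $\ErgodMeas$‑a.e.\ $\InfWord \in \Subshift$, and since $\sigma_{\AC}( \Jac_{\InfWord} )$ is almost surely constant equal to $\Spec_{\AC}$ by Proposition~\ref{prop:SpecIndepOmega} (and in fact constant everywhere for Schr\"odinger operators by \cite{LastSim_Spec1dSO}), we conclude $\Spec_{\AC} = \emptyset$.

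The routine parts are the factor‑system bookkeeping of the first step and the quoting of the two cited theorems. The genuinely delicate point — and the main obstacle — is that both Kotani's description of $\Spec_{\AC}$ and the positivity result are classically stated for discrete Schr\"odinger operators with constant off‑diagonal term, so the real work is to check that they transfer to Jacobi operators with a locally constant, nowhere‑vanishing weight $\NDig$: in particular that the correct hypothesis is non‑determinism of the \emph{full} coefficient process $( \NDig, \Dig )$ and that this follows from aperiodicity of the pair $( \NDig, \Dig )$ rather than of the subshift. This is exactly the place where the aperiodicity assumption on $( \NDig, \Dig )$ enters, and it is the only non‑formal ingredient of the argument.
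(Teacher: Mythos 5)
Your proposal follows essentially the same route as the paper, which itself only sketches this theorem: identify \( \Spec_{\AC} \) with the essential closure of the zero set of the Lyapunov exponent (Ishii/Pastur/Kotani for Schr\"odinger, \cite{Teschl_JO} for Jacobi), then use Kotani's finitely-valued determinism result \cite{Kotani_AperioNoAC} together with aperiodicity of \( ( \NDig , \Dig ) \) to conclude that this zero set is Lebesgue-null, and finish with the almost-sure constancy of \( \sigma_{\AC}( \Jac_{\InfWord} ) \). The one point you flag as remaining work — transferring these results to Jacobi operators with a locally constant, nowhere-vanishing weight — is handled in the paper in the same spirit, namely by citing \cite{Reml_ACSpecJO} and \cite{BeckPogo_SpectrJacobi} rather than by a self-contained argument, so your outline matches the paper's level of detail (your claim of an \( \InfWord \)-independent conjugation to \( \SL \) is a minor imprecision; the conjugation is \( \InfWord \)-dependent but bounded, which suffices).
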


We do not give a proof here, but outline its main idea. The proof relies on the relation between the spectrum and the asymptotic exponential growth rate of the norm of transfer matrices. More precisely, \( \Spec_{\AC} \) is the essential closure of the set of those energies, for which the Lyapunov exponent is zero (see \cite{Ishii_LyapuAndSC}, \cite{Pastur_LyapuAndSc}, \cite{Kotani_KotTheo} for Schrödinger operators and \cite[Theorem~5.17]{Teschl_JO} for Jacobi operators). If that set had positive Lebesgue measure, then almost all elements would be determined by their restriction to \( \NN \) respectively \( -\NN \), see \cite[Lemma~2.2]{Kotani_AperioNoAC}, \cite[Theorem~5.20]{Teschl_JO}. If \( \NDig \) and \( \Dig \) are locally constant, then this would imply that \( ( f , g ) \) are eventually constant (see also \cite[Theorem~1.1]{Reml_ACSpecJO} for Jacobi operators on the half-line and \cite[proof of Theorem~3]{BeckPogo_SpectrJacobi} for a discussion of the extension to \( \ZZ ) \)). Thus the aperiodicity of \( ( f , g ) \) implies Lebesgue measure zero for the spectrum.

\section{Gordon-type arguments}
\label{sec:JOGordon}

As we have just seen, aperiodic subshifts are ``random enough'' to have no \( \AC \)-spectrum. In this section we discuss the converse, namely how symmetries (that is, ``order'') can be used to exclude \( \PP \)-spectrum. At the end of the section we comment briefly on reflection symmetry. However, our main focus lies on methods that use repetitions, also known as \emph{powers of a word}\index{power of a word}\index{word!power of} and denoted for example by \( u^{3} \DefAs u u u \). This approach goes back to Gordon (\cite{Gordon_3Block}) and is known as the \emph{three-block Gordon argument}\index{three-block Gordon argument}\index{Gordon argument!three-block}. Its basic form is sketched below. Additional background information can for instance be found in \cite{Dam_GordonInBook}. 

\begin{prop}[three-block Gordon argument, \cite{Gordon_3Block}]
\label{prop:ThreeGordon}
If there exists a sequence \( ( l_{i} ) \) of natural numbers with \( \lim_{i \to \infty} l_{i} = \infty \) such that \( \Restr{\InfWord}{-2l_{i}+1}{-l_{i}} = \Restr{\InfWord}{-l_{i}+1}{0} = \Restr{\InfWord}{1}{ l_{i}} \) holds for all \( i \in \NN \), then \( \Jac_{\InfWord} \) has no eigenvalues.
\end{prop}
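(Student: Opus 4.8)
The plan is to carry out the classical three-block Gordon argument. Assume, for contradiction, that some $E \in \RR$ is an eigenvalue of $\Jac_{\InfWord}$, and fix a corresponding eigenfunction $\varphi \in \ell^{2}( \ZZ ) \setminus \Set{ 0 }$. As in Section~\ref{sec:JOTransMat} I would pass to the vectors $\Phi( j ) \DefAs \big( \begin{smallmatrix} \varphi( j+1 ) \\ \varphi( j ) \end{smallmatrix} \big)$. Since $\NDig$ is nowhere zero, the transfer matrix recursion shows that $\Phi( 0 ) = 0$ would force $\varphi \equiv 0$; hence $\Phi( 0 ) \neq 0$, and after rescaling we may assume $\Length{ \Phi( 0 ) } = 1$. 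By construction $\Phi( j ) = \TrMat( j , \InfWord ) \Phi( 0 )$ for all $j \in \ZZ$.

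The decisive observation is that the hypothesis $\Restr{\InfWord}{-2l_{i}+1}{-l_{i}} = \Restr{\InfWord}{-l_{i}+1}{0} = \Restr{\InfWord}{1}{ l_{i}}$ makes the transfer matrix over each of the three length-$l_{i}$ blocks one and the same matrix, which I will call $A_{i}$. Consequently
\[ \Phi( l_{i} ) = A_{i} \, \Phi( 0 ) \, , \qquad \Phi( -l_{i} ) = A_{i}^{-1} \, \Phi( 0 ) \, , \qquad \Phi( -2 l_{i} ) = A_{i}^{-2} \, \Phi( 0 ) \, . \]
Next I would invoke Cayley--Hamilton for $A_{i} \in \SL$: from $A_{i}^{2} - \Tr( A_{i} ) A_{i} + \Id = 0$ one gets $A_{i} + A_{i}^{-1} = \Tr( A_{i} ) \Id$ and $A_{i}^{-2} = \Tr( A_{i} ) A_{i}^{-1} - \Id$. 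Writing $t \DefAs \Tr( A_{i} )$ and substituting the three identities above, this gives $\Phi( l_{i} ) + \Phi( -l_{i} ) = t \, \Phi( 0 )$ and $\Phi( 0 ) = t \, \Phi( -l_{i} ) - \Phi( -2 l_{i} )$. A short estimate then yields, for every $i$, the uniform lower bound
\[ \max \Set{ \Length{ \Phi( l_{i} ) } , \Length{ \Phi( -l_{i} ) } , \Length{ \Phi( -2 l_{i} ) } } \geq \tfrac{ 1 }{ 2 } \Length{ \Phi( 0 ) } = \tfrac{ 1 }{ 2 } \, . \]
Indeed, if all three norms were strictly below $\tfrac{1}{2}$, then the first identity would force $\lvert t \rvert < 1$, and then the second would give $1 = \Length{ \Phi( 0 ) } < \tfrac{ 1 }{ 2 } + \lvert t \rvert \cdot \tfrac{ 1 }{ 2 } < 1$, a contradiction. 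But $l_{i} \to \infty$ together with $\varphi \in \ell^{2}( \ZZ )$ implies $\Phi( \pm l_{i} ) \to 0$ and $\Phi( -2 l_{i} ) \to 0$, so the left-hand side tends to $0$ along $i$ --- contradicting the bound $\geq \tfrac{1}{2}$. Hence $\Jac_{\InfWord}$ has no eigenvalues.

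The step that requires genuine care --- and that I expect to be the main obstacle compared with the Schrödinger case --- is the claim that ``the transfer matrix over each of the three blocks is the same element $A_{i}$ of $\SL$''. Two points enter here. First, the Jacobi transfer matrix of Equation~(\ref{eqn:TrMatMap}) has determinant $\NDig( \Shift \InfWord ) / \NDig( \Shift^{j+1} \InfWord )$ rather than $1$; I would therefore run the argument with the modified, genuinely $\SL$-valued transfer matrix cocycle $\TrMod$ and the correspondingly rescaled vectors. The rescaling is by diagonal factors built from values of $\NDig$, hence --- because $\NDig$ is locally constant and so takes finitely many nonzero values --- bounded above and away from zero; it therefore preserves both membership in $\ell^{2}( \ZZ )$ and non-vanishing at the origin, and leaves the $\SL$-algebra of the previous paragraph intact with $A_{i} \DefAs \TrMod( l_{i} , \cdot )$. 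Second, since $\NDig$ and $\Dig$ are locally constant with some radius $J$, a product of elementary transfer matrices over a block of positions depends on $\InfWord$ not only on that block but on its enlargement by $J$ on each side; strictly speaking, equality of the three block matrices therefore needs the repetition to persist on these slightly larger windows. This is harmless: for large $l_{i}$ one simply passes from $l_{i}$ to $l_{i} - 2J$ (which still tends to infinity), or reads the hypothesis with the $J$-neighbourhoods built in. I would settle both points once, and then the estimate above goes through verbatim.
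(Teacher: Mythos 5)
Your core computation is exactly the paper's own sketch of this proposition: the three identical blocks give one matrix $A_{i}$, Cayley--Hamilton turns $A_{i}^{2}$ into $\Tr( A_{i} ) A_{i} - \Id$, and the two resulting identities force $\max \{ \lVert \Phi( l_{i} ) \rVert , \lVert \Phi( -l_{i} ) \rVert , \lVert \Phi( -2l_{i} ) \rVert \} \geq \tfrac{1}{2} \lVert \Phi( 0 ) \rVert$, contradicting the decay of an $\ell^{2}$-eigenfunction. Your treatment of the determinant issue is also fine: working with $\TrMod$ and the rescaled vectors $\big( \begin{smallmatrix} \varphi( j+1 ) \\ \NDig( \Shift^{j+1} \InfWord ) \varphi( j ) \end{smallmatrix} \big)$ of Example~\ref{exmpl:TrMod} is exactly what the paper does, and the boundedness of the rescaling (finitely many nonzero values of $\NDig$) preserves decay and non-vanishing at the origin.

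The gap is in your second ``point of care'', which is precisely the general locally constant case that the proposition asserts and that the paper proves separately in Proposition~\ref{prop:GordonCocyc}; the sketch in the paper, like your clean computation, only covers $\NDig \equiv 1$ with $\Dig$ depending on $\InfWord( 0 )$. Neither of your two shortcuts repairs it. Replacing $l_{i}$ by $l_{i} - 2J$ does not help: the hypothesis $\Restr{\InfWord}{-2l_{i}+1}{-l_{i}} = \Restr{\InfWord}{-l_{i}+1}{0} = \Restr{\InfWord}{1}{l_{i}}$ is an $l_{i}$-periodicity statement on $[ -2l_{i}+1 , \, l_{i} ]$ and does not imply the corresponding three-block identity at block length $l_{i} - 2J$, so you cannot simply rerun the argument at the shorter length; and ``reading the hypothesis with the $J$-neighbourhoods built in'' changes the statement you are supposed to prove. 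What is actually true is only that the block cocycles agree after discarding boundary factors: with $\varrho \DefAs \Shift^{J} \InfWord$ one gets $\TrMod( l_{i} , \Shift^{-2l_{i}+1} \varrho ) = \TrMod( l_{i} , \Shift^{-l_{i}+1} \varrho )$ exactly, but the middle and right blocks agree only up to a correction matrix $B$ built from at most $4J$ elementary factors, so that $1 \leq \lVert B \rVert \leq S^{4J}$ with $S \DefAs \sup_{\InfWord \in \Subshift} \lVert \TrMod( \InfWord ) \rVert < \infty$. Carrying $B$ through the Cayley--Hamilton identities weakens the Gordon bound to $\max \{ \lVert \Phi( -2l_{i} ) \rVert , \lVert \Phi( -l_{i} ) \rVert , \lVert \Phi( l_{i} ) \rVert \} \geq \lVert \Phi( 0 ) \rVert / ( 2 \lVert B \rVert )$, which still contradicts square-summability because $\lVert B \rVert$ is bounded independently of $i$ --- but this is not the ``verbatim'' estimate you claim, and your write-up contains neither the bookkeeping with $B$ nor the uniform bound on it. To close the proof you need exactly this three-minus-epsilon version of the argument, as carried out in Proposition~\ref{prop:GordonCocyc}.
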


A more general version of this statement will be proved in detail in Proposition~\ref{prop:GordonCocyc}. Here we present the underlying mechanism only in the simplest case, where \( f \equiv 1 \) holds and \( g \) only depends on \( \InfWord( 0 ) \):

\begin{proof}[Sketch of proof]
Let \( \varphi \) be a solution of the eigenvalue equation \( \Jac_{\InfWord} \varphi = E \varphi \). Then \( \Phi( j ) \DefAs \big( \begin{smallmatrix} \varphi( j+1 ) \\ \varphi( j ) \end{smallmatrix} \big) \) is given by \( \Phi( j ) = \TrMat( j , \InfWord) \, \Phi( 0 )\) for all \( j \in \ZZ \). Clearly \( \Restr{\InfWord}{-2l_{i}+1}{-l_{i}} = \Restr{\InfWord}{-l_{i}+1}{0} = \Restr{\InfWord}{1}{l_{i}} \) implies \( \TrMat( l_{i} , \Shift^{-2l_{i}} \InfWord ) = \TrMat( l_{i} , \Shift^{-l_{i}} \InfWord ) = \TrMat( l_{i} , \InfWord ) \). Applying the transfer matrix over \( \Restr{\InfWord}{-l_{i}+1}{0} \) twice therefore yields the transfer matrix over \(\Restr{ \InfWord }{ -2l_{i}+1 }{ 0 } \). By the Cayley/Hamilton theorem for \( \SL \)-matrices we obtain
\[ \TrMat( 2 l_{i} , \Shift^{-2l_{i}} \InfWord ) = \TrMat( l_{i} , \Shift^{-l_{i}} \InfWord )^{2} = \Tr( \TrMat( l_{i} , \Shift^{-l_{i}} \InfWord ) ) \TrMat( l_{i} , \Shift^{-l_{i}} \InfWord ) - \Id \, .\]
Now multiplication with  \( \Phi( -2l_{i} ) \) and \( \Phi( -l_{i} ) \) respectively yields the equations:
\begin{alignat}{3}
\Phi( 0 ) &=  \TrMat( 2 l_{i} , \Shift^{-2l_{i}} \InfWord ) \, \Phi( -2l_{i} )  &&= \Tr( \TrMat( l_{i} , \Shift^{-l_{i}} \InfWord ) ) \Phi( -l_{i} ) - \Phi( -2l_{i} ) \, , \label{eqn:ThreeGordon}\\
\Phi( l_{i} ) &=  \TrMat( 2 l_{i} , \Shift^{-l_{i} }\InfWord ) \, \Phi( - l_{i} )  &&= \Tr( \TrMat( l_{i} , \Shift^{-l_{i}} \InfWord ) ) \Phi( 0 ) - \Phi( -l_{i} ) \, . \notag
\end{alignat}
If we assume \( \Phi ( 0 ) \neq \Big( \begin{smallmatrix} 0 \\ 0 \end{smallmatrix} \Big) \), then clearly \( \Phi( -2l_{i} ) \), \( \Phi( -l_{i} ) \) and \( \Phi( l_{i} ) \) cannot all tend to zero for \( i \to \infty \). Hence no non-zero solution of the eigenvalue equation is square summable.
\end{proof}

\begin{rem}
By symmetry the same result holds if \( \Restr{\InfWord}{-l_{i}+1}{0} = \Restr{\InfWord}{1}{l_{i}} = \Restr{\InfWord}{l_{i}+1}{ 2l_{i}} \) is satisfied. Moreover, the repetitions could be aligned at any fixed position instead of the origin.
\end{rem}

\begin{exmpl}[no eigenvalues for leading words]
\label{exmpl:AbsenSpWords}
Recall from Example~\ref{exmpl:SpWBlocks} that the leading words of a simple Toeplitz subshift are the elements \( \SpW{ \AEv } \DefAs \lim_{k \to \infty} \Word{ \PBlock{k} }{ \Origin{ \AEv } }{ \PBlock{k} } \) with \( \AEv \in \AlphabEv \). We apply the three-block Gordon argument: for fixed \( \AEv \in \AlphabEv \) we choose a subsequence \( ( a_{k_{i}} )_{i} \) of the coding sequence such that \( a_{k_{i}} = \AEv \) holds for all \( i \). Since every \( \PBlock{k} \) is a prefix and a suffix of \( \PBlock{k+1} \), the leading word \( \SpW{ \AEv } \) looks around the origin like
\begin{alignat*}{3}
\SpW{ \AEv } = {} && \Word{ \ldots }{ \PBlock{k_{i}} } & \Word{ \Origin{\AEv} }{ \PBlock{k_{i}} }{ \ldots } \\
= {} && \Word{ \ldots }{ \PBlock{k_{i}-1} }{ a_{k_{i}} }{ \PBlock{k_{i}-1} } & \Word{ \Origin{\AEv} }{ \PBlock{k_{i}-1} }{ a_{k_{i}} }{ \PBlock{k_{i}-1} }{ \ldots } \, .
\end{alignat*}
Now Proposition~\ref{prop:ThreeGordon} with \( l_{i} \DefAs \Length{ \PBlock{k_{i}-1} } + 1 \) yields \( \sigma_{\PP}( \Jac_{\SpW{ \AEv }} ) = \emptyset \).
\end{exmpl}

\begin{rem}
\label{rem:GenAbsPP}
Absence of eigenvalues for a single \( \InfWord \in \Subshift \) already implies \emph{generic absence}\index{generic}, that is, absence for a dense \( G_{\delta} \) set of elements, see \cite[Proposition~6.1]{Dam_GordonInBook} for the Schrödinger case. This follows from the fact that \( \Set{ \InfWord \in \Subshift : \sigma_{\PP}( \Jac_{\InfWord} ) = \emptyset } \) is a \( G_{\delta} \) set (\cite[Theorem~1.1]{Simon_SCSpec1}) and if it is non-empty, then it is dense by minimality. When the three-block Gordon argument applies, it therefore proves generic absence of pure point spectrum. However, it can be shown that every minimal, aperiodic subshift contains at least one element which has no square centred at the origin (\cite[Theorem~2]{Dam_SCSpec2}, based on a result from \cite{DuvMigRest_LocGlobPeriod}). In particular the three-block Gordon argument alone is not sufficient to prove uniform absence of eigenvalues.
\end{rem}

By Equation~(\ref{eqn:ThreeGordon}) there is another way to exclude eigenvalues: if the trace is bounded, then a twofold repetition on one side of the origin suffices. This criterion is known as the \emph{two-block Gordon argument}\index{two-block Gordon argument}\index{Gordon argument!two-block}. It was proved in \cite[Lemma~1 and Proposition~2]{Suto_QuasiPerSO}.

\begin{prop}[two-block Gordon argument]
\label{prop:TwoGordon}
Let \( E \in \RR \). If there exists a sequence \( ( l_{i} ) \) of natural numbers with \( \lim_{i \to \infty} l_{i} = \infty \) and a constant \( C \in \RR \) such that \( \Restr{\InfWord}{-2l_{i}+1}{-l_{i}} = \Restr{\InfWord}{-l_{i}+1}{0} \) and \( \lvert \Tr( \TrMat( l_{i} , \Shift^{-l_{i}} \InfWord ) ) \rvert \leq C \) hold for all \( i \in \NN \), then \( E \) is not an eigenvalue of \( \Jac_{\InfWord} \).
\end{prop}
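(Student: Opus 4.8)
The plan is to reproduce the mechanism of the three-block sketch above, but to read off everything from the \emph{first} line of Equation~(\ref{eqn:ThreeGordon}) alone: a single repetition produces one transfer-matrix identity, the trace bound keeps the surviving scalar coefficient under control, and the $\ell^{2}$-decay of a putative eigenfunction replaces the second repetition. So I would fix an eigenvalue $E$ together with an eigenfunction $\varphi \in \ell^{2}( \ZZ )$, $\varphi \not\equiv 0$, and pass to the vectors $\Phi( j ) \DefAs \big( \begin{smallmatrix} \varphi( j+1 ) \\ \varphi( j ) \end{smallmatrix} \big)$, which satisfy $\Phi( j ) = \TrMat( j , \InfWord )\, \Phi( 0 )$ and, more generally, the cocycle relation $\Phi( m+n ) = \TrMat( n , \Shift^{m} \InfWord )\, \Phi( m )$ coming from the cocycle property of $\TrMat$ (cf. Equation~(\ref{eqn:TrMatCocyc})). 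Recall from Section~\ref{sec:JOTransMat} that a solution of the eigenvalue equation is determined by its values at two consecutive sites.

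Set $M_{i} \DefAs \TrMat( l_{i} , \Shift^{-l_{i}} \InfWord )$. The key input is that the word identity $\Restr{\InfWord}{-2l_{i}+1}{-l_{i}} = \Restr{\InfWord}{-l_{i}+1}{0}$ forces the transfer matrix over the preceding block to coincide with $M_{i}$, that is, $\TrMat( l_{i} , \Shift^{-2l_{i}} \InfWord ) = M_{i}$. Because $\NDig$ and $\Dig$ are locally constant with some block-code window $J$, an elementary transfer matrix depends on the letters of $\InfWord$ in a $J$-neighbourhood of its position, so this matrix identity is obtained only after the routine shrinking of the two blocks by $J$ at their outer ends — equivalently, after replacing $l_{i}$ by $l_{i} - O( J )$, which is harmless since $l_{i} \to \infty$. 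Carrying out this bookkeeping cleanly is the only genuinely technical point, and it is exactly the argument that will be spelled out for the three-block version in Proposition~\ref{prop:GordonCocyc}; I would simply invoke the same reasoning here.

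Granting $\TrMat( l_{i} , \Shift^{-2l_{i}} \InfWord ) = M_{i}$, the cocycle relation gives $\Phi( -l_{i} ) = M_{i}\, \Phi( -2l_{i} )$ and hence $\Phi( 0 ) = M_{i}^{2}\, \Phi( -2l_{i} )$. The Cayley/Hamilton identity $M_{i}^{2} = \Tr( M_{i} )\, M_{i} - \det( M_{i} )\, \Id$ then yields
\[
\Phi( 0 ) = \Tr( M_{i} )\, \Phi( -l_{i} ) - \det( M_{i} )\, \Phi( -2l_{i} ) \, .
\]
Now $\det \TrMat( \,\cdot\, )$ telescopes along a product (see the form of $\TrMat$ in Equation~(\ref{eqn:TrMatMap})), and $\NDig$ takes only finitely many nonzero values (Remark~\ref{rem:LocConstFinVal}), so $\lvert \det( M_{i} ) \rvert$ is bounded by a constant independent of $i$. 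Since $\varphi \in \ell^{2}( \ZZ )$ we have $\lVert \Phi( j ) \rVert \to 0$ as $j \to -\infty$, so $\Phi( -l_{i} ) \to 0$ and $\Phi( -2l_{i} ) \to 0$; taking norms above and using $\lvert \Tr( M_{i} ) \rvert \leq C$ gives $\lVert \Phi( 0 ) \rVert \leq C\, \lVert \Phi( -l_{i} ) \rVert + \lvert \det( M_{i} ) \rvert\, \lVert \Phi( -2l_{i} ) \rVert \to 0$, hence $\Phi( 0 ) = 0$, i.e. $\varphi( 0 ) = \varphi( 1 ) = 0$. As the off-diagonal coefficients of $\Jac_{\InfWord}$ are non-vanishing, this forces $\varphi \equiv 0$, contradicting the choice of $\varphi$. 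Therefore $E \notin \sigma_{\PP}( \Jac_{\InfWord} )$.

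Alternatively one could first pass to the modified $\SL$-valued transfer matrices $\TrMod$, which are diagonal conjugates of the $\TrMat$ normalised to determinant one; as $\Tr$ is conjugation-invariant up to the bounded determinant factor, the hypothesised trace bound transfers to them, and Cayley/Hamilton then reads $\TrMod^{2} = \Tr( \TrMod )\, \TrMod - \Id$, slightly streamlining the display above. The determinant-bookkeeping version presented here avoids introducing $\TrMod$, so either route is fine.
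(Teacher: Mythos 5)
Neither the paper nor its cited source is reproduced here in detail: the paper derives this proposition informally from the first line of Equation~(\ref{eqn:ThreeGordon}) (which was established under the simplifying assumptions \( \NDig \equiv 1 \) and \( \Dig \) depending only on \( \InfWord(0) \)) and otherwise refers to S\"ut\H{o}. Your argument follows exactly that route, and in that setting it is complete and correct: the cocycle relation gives \( \Phi(0) = M_i^2 \Phi(-2l_i) \), the determinant of \( M_i \) telescopes to \( \NDig( \Shift^{-l_i+1} \InfWord ) / \NDig( \Shift \InfWord ) \) and is therefore bounded above and below uniformly in \( i \), Cayley/Hamilton plus the trace bound plus \( \varphi \in \ell^{2}( \ZZ ) \) force \( \Phi(0) = 0 \), and two consecutive zeros annihilate a solution because \( \NDig \) never vanishes.

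The one step that does not go through as claimed is the boundary bookkeeping in the genuinely locally constant case with window \( J > 0 \). You assert that trimming the blocks by \( O(J) \) is "harmless" and that the same reasoning as in Proposition~\ref{prop:GordonCocyc} applies. It does not: in Proposition~\ref{prop:GordonCocyc} the trace that enters is the trace of the actual cocycle over the exact block of the shifted element \( \varrho \), no a priori bound on it is needed (the proof splits into the cases \( \lvert \Tr \rvert \leq 1 \) and \( \geq 1 \)), and the mismatch is absorbed into a single bounded matrix \( B \) multiplying the product from the outside. Here, by contrast, the hypothesis bounds \( \Tr( \TrMat( l_{i} , \Shift^{-l_{i}} \InfWord ) ) \) for the \emph{untrimmed} block, and trace bounds are not stable under removing or correcting \( O(J) \) elementary factors: \( \lvert \Tr( B M ) - \Tr( M ) \rvert \) can be of order \( \lVert M \rVert \), which is not controlled. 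Concretely, the word repetition only yields \( \TrMat( l_{i} , \Shift^{-2l_{i}} \InfWord ) = D_{i} M_{i} E_{i} \) with uniformly bounded (and boundedly invertible) \( D_{i}, E_{i} \), so \( \Phi(0) = M_{i} D_{i} M_{i} E_{i} \Phi( -2l_{i} ) \), and the interposed \( D_{i} \) blocks the Cayley/Hamilton step; it cannot be pulled out without controlling \( \lVert M_{i} \rVert \). So you should either restrict the statement to the case where the elementary transfer matrices over a block depend only on letters inside that block (the setting of the paper's sketch, of S\"ut\H{o}, and of every application in the paper, e.g.\ Proposition~\ref{prop:LiuQuN4NoEV}), or strengthen the hypothesis (repetition with a margin of \( J \), or the trace bound for the correspondingly adjusted matrix). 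As written, the "harmless bookkeeping" claim is the one unproved step; everything else is sound.
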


Provided that there are arbitrarily long squares, we can therefore exclude point spectrum if, for all \( E \in \sigma( \Jac_{\InfWord} ) \), the trace of the corresponding transfer matrix is bounded. Fortunately, periodic approximation shows that being in the spectrum is closely related to bounded traces. While we do not use this relation in the following, it is crucial for the earlier results on Schrödinger operators on simple Toeplitz subshifts that we cite next. Since it also highlights the importance of transfer matrices and recurrence relations for words, we briefly sketch the main idea (compare \cite[Section~4]{Dam_GordonInBook}): assume that there is a sequence \( ( \InfWord_{k} ) \) of \( P_{k} \)-periodic words, such that the associated Schrödinger operators \( \Jac_{ \InfWord_{k} } \) converge strongly to \( \Jac_{\InfWord} \). As mentioned in Remark~\ref{rem:PerioAndRand}, the periodic operators have the spectrum \( \sigma( \Jac_{ \InfWord_{k} } ) = \Set{ E \in \RR : \lvert \Tr( \TrMat( P_{k} , \InfWord_{k} ) ) \rvert \leq 2 } \). It can be shown that the strong convergence implies
\[ \sigma( \Jac_{ \InfWord } ) \subseteq \bigcap_{j \in \NN} \Close{ \bigcup_{k \geq j} \Set{ E \in \RR : \lvert \Tr( \TrMat( P_{k} , \InfWord_{k} ) ) \rvert \leq 2 } } \; . \]
The idea is to choose the approximating sequence \( ( \InfWord_{k} ) \) such that it yields a ``nice'' recurrence relation for \( \Tr( \TrMat( P_{k} , \InfWord_{k} ) ) \), known as the \emph{trace map}\index{trace map}. Then this recurrence has to be used to show that \( \bigcup_{k \geq j} \Set{ E \in \RR : \lvert \Tr( \TrMat( P_{k} , \InfWord_{k} ) ) \rvert \leq 2 } \) is already a closed set. This yields
\begin{align*}
\sigma( \Jac_{ \InfWord } ) &\subseteq \bigcap_{j \in \NN} \bigcup_{k \geq j} \Set{ E \in \RR : \lvert \Tr( \TrMat( P_{k} , \InfWord_{k} ) ) \rvert \leq 2 } \\
&= \Set{ E \in \RR : \lvert \Tr( \TrMat( P_{k} , \InfWord_{k} ) ) \rvert \leq 2 \text{ for infinitely many } k } \, , 
\end{align*}
which is precisely what is needed for the two-block Gordon argument. The existence of such trace maps for arbitrary substitution systems was for instance proved in \cite{KolNori_TrMapSubst} and \cite{AvishBeren_TrMapSubust}.

\begin{exmpl}[period doubling]
\label{exmpl:TrMapPD}
Recall from Example~\ref{exmpl:PD} that the period doubling subshift is defined by the limit of the words \( \PBlock{k+1} \DefAs \Word{ \PBlock{k} }{ a_{k+1} }{ \PBlock{k} } \) with \( (a_{k})_{k \in \NN_{0}} = ( a , b , a , b , \ldots ) \). For the approximation we use the periodic words \( \InfWord_{k} \DefAs ( \Word{ \PBlock{k} }{ a_{k+1} } )^{\infty} \) with period length \( P_{k} = 2^{k+1} \). Note that the words \( \Word{ \PBlock{k} }{ a_{k+1} } \) are generated by the powers of the period doubling substitution, see Remark~\ref{rem:SubstPD}. We consider the trace \( \tau_{E , k} \DefAs \Tr( \TrMat( P_{k} , \InfWord_{k} ) ) \) for \( \NDig \equiv 1 \) and \( \Dig( \InfWord ) \) only depending on \( \InfWord(0) \). In \cite[Section~1]{BellBovGhez_PD} it is shown that the recurrence relation for \( \PBlock{k} \) implies the trace map \( \tau_{E , k+1} = \tau_{E , k} ( \tau_{E , k-1}^{2} - 2 ) - 2 \). Consequently, if \( \lvert \tau_{E , j} \rvert > 2 \) and \( \lvert \tau_{E , j+1} \rvert > 2 \) hold, then \( \lvert \tau_{E , k} \rvert > 2 \) holds for all \( k \geq j \). This yields
\begin{equation}
\label{eqn:NonEscTr}
\bigcup_{k \geq j} \Set{ E \in \RR : \lvert \tau_{E , k} \rvert \leq 2 } = \Set{ E \in \RR : \lvert \tau_{E , j} \rvert \leq 2 } \cup \Set{ E \in \RR : \lvert \tau_{E , j+1} \rvert \leq 2 } \, .
\end{equation}
Since \( \TrMat( P_{k} , \InfWord_{k} ) \) is a product of \( P_{k} \) elementary transfer matrices, \( \tau_{E , k} \) is a polynomial in \( E \) and hence continuous. Therefore the sets on the right hand side in Equation~(\ref{eqn:NonEscTr}) are indeed closed.
\end{exmpl}

Similar arguments were also used for Schrödinger operators on the Fibonacci subshift (\cite{Suto_QuasiPerSO}) and on Sturmian subshifts in general (\cite{BellIochScopTest_SpecProp}), see Appendix~\ref{app:Sturm} for definitions. In addition, Liu and Qu generalised the previous example to arbitrary simple Toeplitz subshifts and obtained a result analogous to Equation~(\ref{eqn:NonEscTr}), see \cite[Lemma~3.2]{LiuQu_Simple}. This implies that, for all energies in the spectrum of the Schrödinger operator, the trace over \( \Word{ \PBlock{k} }{ a_{k+1} } \) is bounded for infinitely many \( k \), which yields the following result:

\begin{prop}[{\cite[Theorem~1.3]{LiuQu_Simple}}]
\label{prop:LiuQuN4NoEV}
Let \( \Subshift \) be a simple Toeplitz subshift with coding sequences \( (a_{k}) \) and \( (n_{k}) \). Let \( f \equiv 1 \) and let \( \Dig \) only depend on \( \InfWord(0) \). If \( n_{k} \geq 4 \) holds for all \( k \geq 0 \), then \( \Jac_{\InfWord} \) has purely singular continuous spectrum for every \( \InfWord \in \Subshift \).
\end{prop}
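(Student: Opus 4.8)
\section*{Proof proposal}

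The plan is to prove the two halves of the statement separately — absence of absolutely continuous spectrum and absence of eigenvalues, both for \emph{every} \( \InfWord\in\Subshift \) — and then combine them via the decomposition \( \sigma(\Jac_\InfWord)=\sigma_{\AC}(\Jac_\InfWord)\cup\sigma_{\SC}(\Jac_\InfWord)\cup\Close{\sigma_{\PP}(\Jac_\InfWord)} \). Since \( \NDig\equiv 1 \), we are in the Schrödinger case throughout, so all of the tools cited in Sections~\ref{sec:SpecJO} and~\ref{sec:JOGordon} apply directly.

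For the first half I would invoke Kotani theory. The subshift is minimal and uniquely ergodic (Corollary~\ref{cor:STStrictErgod}) and aperiodic (Proposition~\ref{prop:STAperiod}, using \( \Card{\AlphabEv}\ge 2 \)), and \( (\NDig,\Dig) \) is assumed aperiodic, so Theorem~\ref{thm:KotaniNoAC} gives \( \Spec_{\AC}=\emptyset \). Because in the Schrödinger case the equality \( \sigma_{\AC}(\Jac_\InfWord)=\Spec_{\AC} \) holds for \emph{all} \( \InfWord \) (see the discussion following Proposition~\ref{prop:SpecIndepOmega}), we conclude \( \sigma_{\AC}(\Jac_\InfWord)=\emptyset \) for every \( \InfWord\in\Subshift \).

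The second half is the substantial part. Fix \( \InfWord\in\Subshift \); by Proposition~\ref{prop:SpecIndepOmega} we have \( \sigma_{\PP}(\Jac_\InfWord)\subseteq\sigma(\Jac_\InfWord)=\Spec \), so it suffices to show that no \( E\in\Spec \) is an eigenvalue of \( \Jac_\InfWord \). I would run a Gordon-type argument with the palindromic blocks as the repeated word, using two ingredients. \textbf{(i) Bounded traces.} Since \( \NDig\equiv 1 \) and \( \Dig \) depends only on \( \InfWord(0) \), the transfer matrix over an occurrence of the block \( \PBlock{k}a_{k+1} \) is determined by the block of letters it runs over, so its trace \( \tau_{E,k} \) is well defined (and unchanged under passing to a cyclic conjugate of the block). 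The recursion \( \PBlock{k+1}=(\PBlock{k}a_{k+1})^{n_{k+1}-1}\PBlock{k} \) induces a trace map for \( (\tau_{E,k})_k \), and the hypothesis \( n_k\ge 4 \) forces this trace map to be ``non-escaping'' in the sense of Equation~(\ref{eqn:NonEscTr}); this is exactly the content of \cite[Lemma~3.2]{LiuQu_Simple}, and it yields that for every \( E\in\Spec \) one has \( \lvert\tau_{E,k}\rvert\le 2 \) for infinitely many \( k \). \textbf{(ii) Long repetitions at a bounded position.} Because \( n_{k+1}\ge 4 \), the word \( \PBlock{k}a_{k+1}\PBlock{k}a_{k+1}\PBlock{k} \) occurs inside \( \PBlock{k+1} \), hence inside every element of \( \Subshift \); more precisely every window of length \( 2(\Length{\PBlock{k}}+1) \) of \( (\PBlock{k}a_{k+1})^{\infty} \) is a square \( w_kw_k \) with \( w_k \) a cyclic conjugate of \( \PBlock{k}a_{k+1} \), and the transfer matrix over \( w_k \) has trace \( \tau_{E,k} \). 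Analysing the nested hole-filling structure of simple Toeplitz words (Proposition~\ref{prop:AllAreST}) one extracts, for every \( \InfWord \), a radius \( R=R(\InfWord) \) and infinitely many \( k \) such that such a square occurs in \( \Restr{\InfWord}{-R}{R+2(\Length{\PBlock{k}}+1)} \). For the leading words of Example~\ref{exmpl:SpWBlocks} and their shifts this is immediate, since there \( \Restr{\InfWord}{1}{\infty}=\PBlock{\infty} \) begins with \( (\PBlock{k}a_{k+1})^{2} \) for every \( k \). Feeding (i) and (ii) into the two-block Gordon argument (Proposition~\ref{prop:TwoGordon}, in the routine variant anchored at a position in \( [-R,R] \) rather than at the origin: if a square of length \( 2l_k \) abuts a fixed position \( m \) and \( \lvert\tau_{E,k}\rvert\le 2 \), the Cayley/Hamilton relation gives \( \Phi(m)=\tau_{E,k}\Phi'-\Phi'' \) with \( \Phi',\Phi'' \) values of \( \Phi \) at points tending to infinity, so \( \varphi\in\ell^{2} \) forces \( \Phi(m)=0 \) and hence \( \varphi\equiv 0 \)) rules out \( E \) as an eigenvalue. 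Thus \( \sigma_{\PP}(\Jac_\InfWord)=\emptyset \), and combined with the first half the spectrum is purely singular continuous.

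The main obstacle is ingredient (ii): controlling \emph{where} the repetitions sit, uniformly over \emph{all} \( \InfWord\in\Subshift \). A single square near the origin of one element yields only generic absence of eigenvalues (Remark~\ref{rem:GenAbsPP}), and there are elements with no square at the origin at all; showing that nonetheless every element carries squares \( w_k^{2} \) within a \emph{bounded} neighbourhood of the origin for infinitely many \( k \) requires a case distinction according to the position of the origin inside the growing \( \PBlock{k} \)-blocks — roughly, either the origin is deep inside a \( \PBlock{k} \)-block for infinitely many \( k \), or \( \InfWord \) is eventually one-sided in the sense that \( \Restr{\InfWord}{p_0+1}{\infty}=\PBlock{\infty} \) for a fixed \( p_0 \), placing a clean copy of \( \PBlock{\infty} \) at a fixed offset. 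Here the hypothesis \( n_k\ge 4 \) does double duty: it supplies enough copies of \( \PBlock{k} \) inside \( \PBlock{k+1} \) to produce these repetitions, and it is also what makes the trace map of (i) non-escaping — whereas, for instance, the period doubling subshift has \( n_k=2 \) and the same conclusion holds there only because of the special form of its coding sequence (Example~\ref{exmpl:TrMapPD}).
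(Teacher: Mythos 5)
Your overall route coincides with the paper's own treatment of this proposition: the paper only sketches it (the details are deferred to Liu--Qu), and its sketch is exactly your two ingredients --- Kotani theory (Theorem~\ref{thm:KotaniNoAC}, together with the everywhere-constancy of the absolutely continuous spectrum discussed after Proposition~\ref{prop:SpecIndepOmega}) to remove \( \Spec_{\AC} \), and bounded traces of the transfer matrices over \( \PBlock{k} a_{k+1} \) from \cite[Lemma~3.2]{LiuQu_Simple} combined with squares and the two-block Gordon argument to remove eigenvalues for every \( \InfWord \). One attribution quibble: according to the paper, the bounded-trace statement holds for \emph{arbitrary} simple Toeplitz subshifts; the hypothesis \( n_{k} \geq 4 \) is needed only for the combinatorial half, not to make the trace map non-escaping.

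The soft spot is your step (ii), and it is worth naming precisely. First, you produce squares near the origin only for \emph{infinitely many} \( k \), while (i) gives \( \lvert \tau_{E,k} \rvert \leq 2 \) only for \emph{infinitely many} \( k \); two infinite subsets of \( \NN \) can be disjoint, so as written the two ingredients need not meet along a common subsequence. Second, the dichotomy you sketch is not clearly exhaustive at the right scale: ``deep inside a \( \PBlock{k} \)-block'' must mean deep compared with \( \Length{\PBlock{k-1}}+1 \), and the distance from the origin to the block boundary can tend to infinity while staying far below that scale; also the mirrored one-sided case (a reflected copy of \( \PBlock{\infty} \) ending at a fixed position) is missing. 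All of this disappears once you prove the sharper statement the paper actually asserts: for \emph{every} \( k \), a square of a cyclic conjugate of \( \PBlock{k} a_{k+1} \) abuts the origin on at least one side. Indeed, in the level-\( (k+1) \) decomposition each \( \PBlock{k+1} \)-block consists of \( n_{k+1} \geq 4 \) copies of \( \PBlock{k} \) separated by \( a_{k+1} \); if the origin falls in the \( j \)-th copy of its enclosing block, then \( j \leq n_{k+1}-2 \) yields a square of length \( 2(\Length{\PBlock{k}}+1) \) starting at position \( 1 \), while \( j \geq 3 \) yields one ending at position \( 0 \), and \( n_{k+1} \geq 4 \) guarantees at least one of the two (the positions on an \( a_{k+1} \) separator or on a letter between \( \PBlock{k+1} \)-blocks are checked the same way). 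With the anchor thus fixed at the origin for all \( k \), intersect with the bounded-trace subsequence, pigeonhole one side, and apply Proposition~\ref{prop:TwoGordon} (or its right-sided mirror), using as you do that \( f \equiv 1 \) and \( \Dig(\InfWord) = \Dig(\InfWord(0)) \) make the transfer matrix word-determined and the trace invariant under cyclic conjugation; this gives \( \sigma_{\PP}( \Jac_{\InfWord} ) = \emptyset \) for every \( \InfWord \). So: right approach, same as the paper's, but replace the \( R(\InfWord) \)-plus-dichotomy step by this uniform-in-\( k \) statement.
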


For a detailed proof the reader is referred to \cite{LiuQu_Simple}. The main idea is to use Kotani theory (see Theorem~\ref{thm:KotaniNoAC}) to exclude \( \AC \)-spectrum. To prove absence of \( \PP \)-spectrum, consider a subsequence \( ( k_{i} ) \) such that the trace over \( \Word{ \PBlock{k_{i}} }{ a_{k_{i}+1} } \) is bounded. Because of \( n_{k_{i}} \geq 4 \), the decomposition of \( \InfWord \) into \( \PBlock{k_{i}} \)-blocks has a square at least on one side of the origin. Thus the two-block Gordon argument applies. As the next example shows, \( n_{k} \geq 4 \) is not a necessary condition:

\begin{exmpl}[no eigenvalues for period doubling, \cite{Damanik_ScForPD}]
For the period doubling subshift it was shown in \cite[Theorem~1]{Damanik_ScForPD} that the Schrödinger operator \( \Jac_{\InfWord} \) has empty point spectrum for every \( \InfWord \in \Subshift \). The proof analyses the possible arrangements of the words \( \Word{ \PBlock{k} }{ a } \) and \( \Word{ \PBlock{k} }{ b } \) around the origin. It turns out that there is always either a threefold repetition or a twofold repetition whose trace can be controlled by the recurrence relation from \cite{BellBovGhez_PD} (see Example~\ref{exmpl:TrMapPD} above). 
\end{exmpl}
  
As the period doubling example shows, there are simple Toeplitz subshifts where every element contains growing cubes or growing squares with bounded trace. However, there are also simple Toeplitz words without squares at their origin (let alone cubes). In this case one cannot prove absence of eigenvalues by Gordon-type arguments. Such words are for example contained in the Grigorchuk subshift:
 
\begin{exmpl}[no squares in alternating words]
\label{exmpl:NoSquaresUnSpW}
In the alternating word \( \widetilde{ \InfWord } \) of the Grigorchuk subshift (see Example~\ref{exmpl:DefAlterWord}), there are no squares on either side of the origin: for arbitrary \( L \in \NN \), let \( k \) be such that \( \Length{\PBlock{ k }}+1 \leq L < \Length{\PBlock{ k+1 }} + 1 \) holds. We consider the decomposition of \( \widetilde{ \InfWord } \) in \( \PBlock{k} \)-blocks, here for even \( k \):
\[  \widetilde{ \InfWord } = \Word{ \ldots }{ \PBlock{k} }{ a_{k+1} }{ \PBlock{k} }{a_{k+4}}{ \PBlock{k} }{ a_{k+1} }{ \PBlock{k} }{ a_{k+2} }{ \underline{\PBlock{k}} }{ a_{k+1} }{ \PBlock{k} }{ a_{k+3} }{ \PBlock{k} }{ a_{k+1} }{ \PBlock{k} }{ \ldots } \, , \]
where the underlined \( \PBlock{k} \) contains the origin. For odd \( k \), the arrangement of the letters \( a_{j} \) is simply reflected at the origin. We compare the prefixes of length \( \Length{\PBlock{ k }} + 1 \) of \( \Restr{ \widetilde{ \InfWord } }{ 1 }{ L } \) and  \( \Restr{ \widetilde{ \InfWord } }{ L+1 }{ 2L } \): while the prefix of the first word is a subword of \( \Word{ \PBlock{k} }{ a_{k+1} }{ \PBlock{ k } } \), the prefix of the second word is either in \( \Word{ \PBlock{k} }{ a_{k+3} }{ \PBlock{ k } } \) or in \( \Word{ \PBlock{k} }{ a_{k+1} }{ \PBlock{ k } } \). In the first case, it differs from \( \Restr{  \widetilde{ \InfWord }  }{ 1 }{ \Length{\PBlock{ k }} +1 } \) because of \( a_{k+1} \neq a_{k+3} \). In the second case, it starts at a different position inside \( \PBlock{k} \) than \( \Restr{  \widetilde{ \InfWord }  }{ 1 }{ \Length{\PBlock{ k }} +1 } \) does, since \(  L < \Length{\PBlock{ k+1 }} + 1 = 2 \Length{\PBlock{ k }} + 2 \) holds. Since the subwords of length \( \Length{\PBlock{ k }} + 1 \) of \( \Word{ \PBlock{k} }{ a }{ \PBlock{k} } \) are pairwise different (Subsection~\ref{subsec:IneqCompGrow}), we obtain \( \Restr{  \widetilde{ \InfWord }  }{ 1 }{ L } \neq \Restr{ \widetilde{ \InfWord }  }{ L+1 }{ 2L } \). Similar considerations for \( a_{k+1} \), \( a_{k+2} \) and \( a_{k+4} \) show that there are no squares to the left either.
\end{exmpl}

It is also possible to exclude eigenvalues with the help of palindromes (reflection symmetry) instead of powers of words (translation symmetry). Examples can be found in \cite[Theorem~5.1]{HofKnillSim_Pali} (based on a similar result in \cite{JitoSim_SCSpecContCase} for the continuous case) and in \cite[Theorem~1]{DamGhezRaym_Pali}. Similar to the difference between the three-block Gordon argument and the two-block Gordon argument, the result in \cite{HofKnillSim_Pali} requires stronger combinatorial properties, while the result in \cite{DamGhezRaym_Pali} requires additional control over the trace. Here we only recall a corollary from \cite{HofKnillSim_Pali} that has an immediate consequence for Schrödinger operators on simple Toeplitz subshifts:

\begin{prop}[{\cite[Corollary~7.3]{HofKnillSim_Pali}}]
Let \(f \equiv 1 \) and let \( g( \InfWord ) \) only depend on \( \InfWord( 0 ) \). If \( \Subshift \) is minimal, uniquely ergodic and contains arbitrarily long palindromes, then \( \Jac_{\InfWord} \) has purely singular continuous spectrum for a generic subset of \( \Subshift \).
\end{prop}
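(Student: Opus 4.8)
The plan is to split the conclusion into its two defining ingredients and treat them separately: $\sigma_{\AC}( \Jac_{\InfWord} ) = \emptyset$ for \emph{every} $\InfWord$, and $\sigma_{\PP}( \Jac_{\InfWord} ) = \emptyset$ for $\InfWord$ in a dense $G_{\delta}$ set. The first is handed to us by Kotani theory: under the implicit assumption (which, for the conclusion to be non-vacuous, is in fact necessary — otherwise $\widetilde{\Subshift}$ is finite and every $\Jac_{\InfWord}$ is periodic, hence purely absolutely continuous) that the pair $( f , g ) = ( 1 , g )$ is aperiodic, the subshift being minimal and uniquely ergodic lets us invoke Theorem~\ref{thm:KotaniNoAC} to get $\Spec_{\AC} = \emptyset$, so $\sigma_{\AC}( \Jac_{\InfWord} ) = \emptyset$ for all $\InfWord \in \Subshift$. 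Thus the real content is the generic absence of eigenvalues.

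For that I would follow exactly the mechanism recalled in Remark~\ref{rem:GenAbsPP}. The set $\mathcal{N} \DefAs \Set{ \InfWord \in \Subshift : \sigma_{\PP}( \Jac_{\InfWord} ) = \emptyset }$ is a $G_{\delta}$ subset of $\Subshift$ by \cite[Theorem~1.1]{Simon_SCSpec1}; since $\Subshift$ is minimal, $\mathcal{N}$ is dense as soon as it is non-empty, so it suffices to produce a single $\InfWord_{0} \in \mathcal{N}$. To build $\InfWord_{0}$, I would use that $\Subshift$ contains palindromes $p_{i} \in \Langu{\Subshift}$ with $\Length{p_{i}} \to \infty$: pass to a subsequence of constant parity, shift an occurrence of each $p_{i}$ so that it becomes symmetric about a fixed centre $c \in \Set{ 0 , \tfrac12 }$, and let $\InfWord_{0} \in \Subshift$ be an accumulation point of the resulting shifted elements (using compactness of $\Subshift$). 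Then $\InfWord_{0}$ has palindromes centred at $c$ of unbounded length, i.e. there is a sequence $l_{i} \to \infty$ with $\Restr{\InfWord_{0}}{c - l_{i}}{c + l_{i}}$ a palindrome.

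It remains to show that such an $\InfWord_{0}$ has no eigenvalues, and this is where the palindrome analogue of the Gordon argument enters. Fix $E \in \RR$ and a solution $\varphi$ of $\Jac_{\InfWord_{0}} \varphi = E \varphi$, with $\Phi( j ) = \big( \begin{smallmatrix} \varphi( j+1 ) \\ \varphi( j ) \end{smallmatrix} \big) = \TrMat( j , \InfWord_{0} ) \Phi( 0 )$ as in Section~\ref{sec:JOTransMat}. Since $f \equiv 1$ and $g$ depends only on $\InfWord( 0 )$, each elementary transfer matrix has the form $A( v ) = \big( \begin{smallmatrix} E - v & -1 \\ 1 & 0 \end{smallmatrix} \big)$ and satisfies the pointwise identity $A( v )^{T} = P\, A( v )\, P$ with the fixed involution $P = \operatorname{diag}( 1 , -1 )$; hence the transfer matrix over any finite word equals $P$ times the transpose of the transfer matrix over the \emph{reversed} word, times $P$. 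Feeding in the palindrome symmetry $\InfWord_{0}( c - j ) = \InfWord_{0}( c + j )$, the left transfer matrix over $\Restr{\InfWord_{0}}{c - l_{i}}{c}$ becomes expressible, up to a bounded factor, through the inverse transpose of the right transfer matrix $C_{i}$ over $\Restr{\InfWord_{0}}{c}{c + l_{i}}$. If $\varphi \in \ell^{2}( \ZZ )$ were a non-zero eigenfunction, then $\Phi( 0 ) \neq 0$ while $\Phi( \pm l_{i} ) \to 0$; unwinding the two relations and using the general $\SL$-identity $M^{T} = J M^{-1} J^{-1}$ shows that the single $\SL$-matrix $C_{i}^{-1}$ maps both $\Phi( 0 )$ and its image under the fixed invertible reflection $J^{-1}P$ to vectors tending to $0$. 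When these two vectors are linearly independent this forces $\| C_{i}^{-1} \| \to 0$, contradicting $\det C_{i}^{-1} = 1$. The remaining degenerate configuration, in which $\Phi( 0 )$ happens to lie on the exceptional line of that reflection, is disposed of by a variant of the same computation (using palindromes of the other parity when they occur, or relating the values of $\varphi$ at a shifted pair of sites through the reflection symmetry). This gives $E \notin \sigma_{\PP}( \Jac_{\InfWord_{0}} )$, so $\InfWord_{0} \in \mathcal{N}$ and we are done.

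I expect the main obstacle to be precisely this last step: setting up the reflection identity $A( v )^{T} = P A( v ) P$ correctly and tracking it through the two transfer-matrix products down to the boundary vectors $\Phi( \pm l_{i} )$, together with cleanly handling the degenerate boundary vector. By contrast the other ingredients — Kotani theory, the $G_{\delta}$-plus-minimality reduction, and the compactness construction of $\InfWord_{0}$ — are soft once the right statements are quoted.
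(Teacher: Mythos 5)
There is a genuine gap, and it sits exactly where you predicted the "main obstacle" would be — but it is not a technicality, it is fatal to the route you chose. (For context: the paper does not prove this proposition at all; it is quoted from Hof--Knill--Simon, so the only question is whether your sketch reproduces a correct argument.) Your soft steps are fine: Kotani theory for \( \Spec_{\AC} = \emptyset \) under the aperiodicity of \( (1,g) \) that you correctly flag, the \( G_{\delta} \)-plus-minimality reduction as in Remark~\ref{rem:GenAbsPP}, and the compactness construction of an accumulation point. The problem is what that construction produces and what you then claim about it. By recentring all palindromes at one fixed centre \( c \) you obtain an element \( \InfWord_{0} \) that is globally reflection-symmetric about \( c \); but reflection symmetry about a \emph{fixed} centre does not exclude eigenvalues. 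Such an operator commutes with the reflection about \( c \), so eigenfunctions can be chosen symmetric or antisymmetric, and these satisfy exactly \( \varphi(1) = \pm \varphi(0) \) (for \( c = \tfrac12 \)), i.e.\ \( \Phi(0) \) lies on an eigenline of your reflection matrix. Hence the "degenerate configuration" you propose to dispose of by a variant computation is the \emph{only} configuration a genuine eigenfunction can realise, and it genuinely occurs: an even potential well, or the symmetrised Anderson model \( V(-n) = V(n) \), are reflection-symmetric about a fixed centre and have point spectrum. Since your eigenvalue-exclusion step uses nothing about \( \Subshift \) beyond the symmetry of \( \InfWord_{0} \), it cannot be repaired as stated.

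What Hof--Knill--Simon actually use is \emph{strong} palindromicity: palindromes centred at points \( m_{i} \to \infty \) whose lengths \( l_{i} \) grow faster than \( e^{B m_{i}} \) for every \( B > 0 \). For such \( \InfWord_{0} \), an \( \ell^{2} \)-eigenfunction is small near \( m_{i} \), while the reflection about \( m_{i} \) transports its (not small) values near the origin to the region near \( 2m_{i} \) up to transfer-matrix factors of size at most \( e^{C m_{i}} \); because the palindrome reaches super-exponentially far beyond \( 2m_{i} \) and the centres move off to infinity, this contradicts square-summability — also in the symmetric/antisymmetric case, which is precisely why the moving centres and the quantitative length condition cannot be dropped. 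So the single element you must exhibit in \( \Set{ \InfWord : \sigma_{\PP}( \Jac_{\InfWord} ) = \emptyset } \) is a strongly palindromic one (HKS construct these in any minimal subshift whose language contains arbitrarily long palindromes, by a more careful choice of centres and lengths than a fixed-centre limit), not a fixed-centre symmetric one. The paper's remark immediately after the proposition, citing \cite{DamZare_PaliCompl}, refers exactly to this strongly palindromic set, which is why the criterion only yields generic, not almost-sure, absence of eigenvalues. With that replacement your outer skeleton (Kotani, \( G_{\delta} \), minimality) is the standard and correct route.
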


Recall that every simple Toeplitz subshift is minimal, uniquely ergodic (see Corollary~\ref{cor:STStrictErgod}) and contains arbitrarily long palindromes (namely the \( \PBlock{k} \)-blocks, see Example~\ref{exmpl:PBlockPalindr}). This yields the following result, which we will improve further in the next section:

\begin{cor}
\label{cor:GenNoPPSimToep}
In every simple Toeplitz subshift there is a generic subset of elements for which the associated Schrödinger operator has purely singular continuous spectrum.
\end{cor}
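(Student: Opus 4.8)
The plan is to verify the hypotheses of the palindrome criterion \cite[Corollary~7.3]{HofKnillSim_Pali} quoted above and then invoke it directly; essentially all of the work has already been done in the preceding sections. First I would recall that every simple Toeplitz subshift is minimal and uniquely ergodic by Corollary~\ref{cor:STStrictErgod}. Second, I would check that $\Subshift$ contains arbitrarily long palindromes: by Example~\ref{exmpl:PBlockPalindr} every block $\PBlock{k}$ is a palindrome, and by the block-length formula $\Length{\PBlock{k}} + 1 = \prod_{j=0}^{k} n_{j}$ these lengths tend to infinity, since $n_{j} \geq 2$ for all $j$; as each $\PBlock{k}$ occurs in the subshift, this produces palindromes in $\Langu{\Subshift}$ of unbounded length. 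With $\NDig \equiv 1$ and $\Dig(\InfWord)$ depending only on $\InfWord(0)$, exactly as assumed in the statement, \cite[Corollary~7.3]{HofKnillSim_Pali} then yields a generic (that is, dense $G_{\delta}$) subset of $\Subshift$ on which $\Jac_{\InfWord}$ has purely singular continuous spectrum.

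It is worth making explicit which ingredient is genuinely only generic rather than universal. The absence of an absolutely continuous part holds for every $\InfWord \in \Subshift$: since we always assume $\Card{\AlphabEv} \geq 2$, the subshift is aperiodic by Proposition~\ref{prop:STAperiod}, the pair $(\NDig,\Dig)$ is aperiodic, and Theorem~\ref{thm:KotaniNoAC} applies. Hence, in the decomposition $\sigma(\Jac_{\InfWord}) = \sigma_{\AC}(\Jac_{\InfWord}) \cup \sigma_{\SC}(\Jac_{\InfWord}) \cup \Close{\sigma_{\PP}(\Jac_{\InfWord})}$, only the closed set $\Close{\sigma_{\PP}(\Jac_{\InfWord})}$ remains to be shown empty, and this is precisely what the palindrome argument delivers on a dense $G_{\delta}$ set. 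Combined with $\sigma(\Jac_{\InfWord}) \neq \emptyset$, the spectrum is then purely singular continuous there.

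There is no serious obstacle here: the combinatorial input (arbitrarily long palindromes) is immediate from the structure of the $\PBlock{k}$-blocks, and the spectral-theoretic input is imported wholesale from the literature. The only point requiring attention is that the hypotheses of \cite[Corollary~7.3]{HofKnillSim_Pali} — minimality, unique ergodicity, arbitrarily long palindromes, and the Schr\"odinger form $\NDig \equiv 1$ with $\Dig$ depending only on $\InfWord(0)$ — are met verbatim. The reason the conclusion cannot (yet) be strengthened to hold for all $\InfWord$ is that some elements, such as the alternating word of Example~\ref{exmpl:NoSquaresUnSpW}, possess neither long squares nor the reflection symmetry about a fixed position that Gordon- and palindrome-type arguments require; upgrading to \emph{uniform} absence of eigenvalues is what the cocycle approach of Chapter~\ref{chap:UnifCocycCantor} is designed to achieve.
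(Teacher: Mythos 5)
Your proposal is correct and follows exactly the paper's own argument: cite minimality and unique ergodicity from Corollary~\ref{cor:STStrictErgod}, note that the palindromic blocks \( \PBlock{k} \) supply arbitrarily long palindromes (Example~\ref{exmpl:PBlockPalindr}), and invoke \cite[Corollary~7.3]{HofKnillSim_Pali}. The additional paragraph on Kotani theory is harmless but superfluous, since the quoted palindrome criterion already delivers purely singular continuous spectrum on the generic set by itself.
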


\begin{rem}
For a subshift that is generated by a primitive substitution (such as the Grigorchuk subshift, see Remark~\ref{rem:SubstGrig}), the elements to which the palindrome criterion from \cite{HofKnillSim_Pali} applies form a set of measure zero. This was proved in \cite[Theorem~1.3]{DamZare_PaliCompl} by studying the palindrome complexity. The criterion can therefore not be used to show almost sure or uniform absence of pure point spectrum for the Grigorchuk subshift.
\end{rem}

\section{Almost sure absence of eigenvalues for simple Toeplitz subshifts}
\label{sec:AsNoPP}

By Corollary~\ref{cor:GenNoPPSimToep} the pure point spectrum of the Schrödinger operator \( \Jac_{\InfWord} \) is empty for a generic subset of every simple Toeplitz subshift. In the following we improve this result and show that, even for Jacobi operators, \( \sigma_{\PP}( \Jac_{\InfWord} ) = \emptyset \) holds for almost every \( \InfWord \in \Subshift \) with respect to the unique ergodic probability measure \( \ErgodMeas \). A similar result was shown for the Grigorchuk subshift in \cite[Theorem~4.6]{GLN_SpectraSchreierAndSO}. Our proof is also inspired by \cite{GLN_SpectraSchreierAndSO}: since \( \Jac_{\InfWord} \) and \( \Jac_{ \Shift \InfWord } \) are unitarily equivalent, the set \( \Set{ \InfWord \in \Subshift : \sigma_{\PP}( \Jac_{\InfWord} ) = \emptyset } \) is \( \Shift \)-invariant. By ergodicity, absence of pure point spectrum on a set of positive \( \ErgodMeas \)-measure implies therefore \( \ErgodMeas \)-almost sure absence. Since a threefold repetition excludes pure point spectrum by the Gordon argument, the following sufficient condition can be derived (see \cite[Proposition~6.4]{Dam_GordonInBook} for the Schrödinger case):

\begin{prop}
\label{prop:PosMesAsNoPP}
Let \( \Subshift \) be a uniquely ergodic subshift and \( (l_{i}) \) a sequence of natural numbers with \( \lim_{i \to \infty} l_{i} = \infty \). If
\[ \limsup_{i \to \infty} \ErgodMeas \big( \Set{ \InfWord \in \Subshift : \Restr{\InfWord}{-2l_{i}+1}{-l_{i}} = \Restr{\InfWord}{-l_{i}+1}{0} = \Restr{\InfWord}{1}{l_{i}} } \big) > 0 \]
holds, then \( \Jac_{\InfWord} \) has no eigenvalues for almost all \( \InfWord \in \Subshift \) with respect to the ergodic measure. 
\end{prop}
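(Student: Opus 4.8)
The plan is to combine the three-block Gordon argument (Proposition~\ref{prop:ThreeGordon}, in its general form from Proposition~\ref{prop:GordonCocyc}) with an ergodicity argument, as already sketched in the paragraph preceding the statement. Set
\[ A_{i} \DefAs \Set{ \InfWord \in \Subshift : \Restr{\InfWord}{-2l_{i}+1}{-l_{i}} = \Restr{\InfWord}{-l_{i}+1}{0} = \Restr{\InfWord}{1}{l_{i}} } \, . \]
Each \( A_{i} \) is a finite union of cylinder sets, hence clopen and in particular Borel measurable. Consider \( \limsup_{i} A_{i} = \cap_{j \geq 1} \cup_{i \geq j} A_{i} \), the set of those \( \InfWord \) that lie in \( A_{i} \) for infinitely many \( i \). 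For such an \( \InfWord \) one can extract a subsequence \( ( l_{i_{k}} ) \) with \( l_{i_{k}} \to \infty \) along which the three consecutive blocks of length \( l_{i_{k}} \) at positions \( -2l_{i_{k}}+1, \ldots, l_{i_{k}} \) agree, so Proposition~\ref{prop:ThreeGordon} yields \( \sigma_{\PP}( \Jac_{\InfWord} ) = \emptyset \). Writing \( G \DefAs \Set{ \InfWord \in \Subshift : \sigma_{\PP}( \Jac_{\InfWord} ) = \emptyset } \), we thus obtain the inclusion \( \limsup_{i} A_{i} \subseteq G \).

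\textbf{Key steps.} First I would bound \( \ErgodMeas( \limsup_{i} A_{i} ) \) from below. Since \( \ErgodMeas \) is a probability measure, the sets \( B_{j} \DefAs \cup_{i \geq j} A_{i} \) decrease to \( \limsup_{i} A_{i} \), and continuity from above gives
\[ \ErgodMeas( \limsup\nolimits_{i} A_{i} ) = \lim_{j \to \infty} \ErgodMeas( B_{j} ) \geq \lim_{j \to \infty} \sup_{i \geq j} \ErgodMeas( A_{i} ) = \limsup_{i \to \infty} \ErgodMeas( A_{i} ) > 0 \, , \]
the last inequality being the hypothesis. For this to be meaningful I also need \( G \) itself to be measurable; this holds because \( \Set{ \InfWord : \sigma_{\PP}( \Jac_{\InfWord} ) = \emptyset } \) is a \( G_{\delta} \) set (cf.\ Remark~\ref{rem:GenAbsPP} and \cite{Simon_SCSpec1}), so \( \ErgodMeas( G ) \geq \ErgodMeas( \limsup_{i} A_{i} ) > 0 \). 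Second, I would invoke shift-invariance: the shift operator \( U \) on \( \ell^{2}( \ZZ ) \) conjugates \( \Jac_{\InfWord} \) into \( \Jac_{\Shift \InfWord} \) (the covariance \( \Jac_{\Shift \InfWord} = U \Jac_{\InfWord} U^{-1} \) is immediate from the definition of \( \Jac_{\InfWord} \) via the functions \( \NDig, \Dig \) evaluated along the orbit), so \( \Jac_{\InfWord} \) and \( \Jac_{\Shift \InfWord} \) are unitarily equivalent and have the same eigenvalues. Hence \( \Shift^{-1} G = G \), i.e.\ \( G \) is a \( \Shift \)-invariant Borel set of positive \( \ErgodMeas \)-measure. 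Since the unique \( \Shift \)-invariant probability measure on \( \Subshift \) is ergodic, such a set has full measure, so \( \ErgodMeas( G ) = 1 \), which is the claim.

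\textbf{Main obstacle.} There is no deep obstruction; the proof is a routine assembly of the Gordon criterion, the (reverse) continuity-from-above estimate, and ergodicity. The two points that deserve a little care are (i) confirming that \( G \) is genuinely Borel — this rests entirely on the cited fact that it is a \( G_{\delta} \) set — and (ii) recording the covariance \( \Jac_{\Shift \InfWord} = U \Jac_{\InfWord} U^{-1} \), which is what upgrades ``positive measure'' to ``full measure'' via ergodicity. One should also make sure the Gordon argument is applied in the Jacobi generality (not just the special sketched case with \( \NDig \equiv 1 \) and \( \Dig \) depending on \( \InfWord(0) \) only), but this is exactly what the general Proposition~\ref{prop:GordonCocyc} referenced in the text provides.
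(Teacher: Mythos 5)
Your proposal is correct and follows essentially the same route the paper takes (and leaves largely to the preceding sketch and the cited reference): the three-block Gordon argument along a subsequence for every \( \InfWord \) in \( \limsup_{i} A_{i} \), the reverse-Fatou estimate \( \ErgodMeas( \limsup_{i} A_{i} ) \geq \limsup_{i} \ErgodMeas( A_{i} ) > 0 \), measurability of \( G \) via the \( G_{\delta} \) fact, and the covariance \( \Jac_{\Shift \InfWord} = U \Jac_{\InfWord} U^{-1} \) plus ergodicity of the unique invariant measure to upgrade positive measure to full measure. No gaps; your explicit attention to the measurability of \( G \) and to using the general Jacobi version of the Gordon argument only makes precise what the paper's sketch leaves implicit.
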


To prove that there is a set of positive measure with threefold repetitions in its elements, we first recall standard notions on frequencies; for more details see for example \cite[Section~IV.2]{Queff_SubstDynSys}, \cite[Example~1.4]{Dam_GordonInBook} or \cite[Subsection~2.2]{Dam_SurveyStricErgodSO}.

\begin{defi}
\label{defi:FreqAndCo}
For \( u, v \in \Langu{ \Subshift } \), let \( \Copies{u}{v} \) denote the \emph{number of copies}\index{copies!number of}\index{overlapping copies} of \( u \) that appear in \( v \), where occurrences of \( u \) may overlap. Let \( \DisCopies{u}{v} \) denote the \emph{maximal number of non-overlapping copies}\index{non-overlapping copies}\index{copies!number of non-overlapping} of \( u \) in \( v \). For \( \RWord \in \Alphab^{\NN} \) we say that \( u \in \Langu{ \Subshift } \) \emph{has frequency \( c_{u} \) in \( \RWord \) }\index{frequency}\index{word!frequency of} if, for every \( j \in \NN \), we have \( c_{u} = \lim_{L \to \infty} \frac{1}{L} \Copies{u}{ \Restr{ \RWord }{ j }{ j+L-1 } } \) and the convergence is uniform in \( j \) (thus  \( c_{u} \) is sometimes called \emph{uniform frequency} as well). Similarly frequencies in \( \RWord \in \Alphab^{- \NN_{0}} \) are defined.
\end{defi}

The reason that we introduce frequencies here is that they are connected to the measure of cylinder sets: let \( \Subshift \DefAs \Subshift( \RWord ) \) be a subshift, generated by a one-sided infinite word \( \RWord \in \Alphab^{\NN} \), see Definition~\ref{defi:SubshiftLangu}. Then \( \Subshift \) is uniquely ergodic if and only if, for every \( u \in \Langu{ \Subshift } \), the frequency \( c_{u} \) in \( \RWord \) exists. In that case \( \ErgodMeas( \Cylin{u}{j} ) = c_{u} \) holds for every \( u \in \Langu{ \Subshift } \) and every \( j \in \ZZ \), compare \cite[Example~1.4]{Dam_GordonInBook}. Here \( \Cylin{u}{j} \) denotes the cylinder set of elements with word \( u \) at position \( j \), see Section~\ref{sec:PrelimSubsh}. We use the high frequency of \( \PBlock{k} \)-blocks in every \( \InfWord \in \Subshift \) to prove the main result of this chapter:

\begin{thm}
\label{thm:AsNoPP}
For every simple Toeplitz subshift \( \Subshift \) there exists a sequence \( ( l_{i} ) \) of natural numbers with \( \lim_{i \to \infty} l_{i} = \infty \) and
\[ \ErgodMeas \big( \Set{ \InfWord \in \Subshift : \Restr{\InfWord}{-2l_{i}+1}{-l_{i}} = \Restr{\InfWord}{-l_{i}+1}{0} = \Restr{\InfWord}{1}{ l_{i}} } \big) > \frac{ 1 }{ 3^{ 1 + \Card{ \AlphabEv } } } \, .\]
In particular \( \sigma_{\PP}( \Jac_{\InfWord} ) = \emptyset \) holds for \( \ErgodMeas \)-almost every \( \InfWord \in \Subshift \) by Proposition~\ref{prop:PosMesAsNoPP}.
\end{thm}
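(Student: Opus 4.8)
\emph{Proof proposal.} The final (``in particular'') assertion is immediate once the displayed inequality is established: each of the measures is bounded below by the fixed positive number $3^{-(1+\Card{\AlphabEv})}$, so the $\limsup$ occurring in Proposition~\ref{prop:PosMesAsNoPP} is positive; since also $l_{i} \to \infty$, that proposition yields $\sigma_{\PP}(\Jac_{\InfWord}) = \emptyset$ for $\ErgodMeas$-almost every $\InfWord$. Thus the content is the measure estimate. The plan is to take $l_{i} \DefAs \Length{\PBlock{k_{i}}} + 1$ for a subsequence $(k_{i})$ with $k_{i} \to \infty$ chosen below, so that the event in question is ``a threefold repetition of a level-$k_{i}$ block straddles the origin''. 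Writing $L_{k} \DefAs \Length{\PBlock{k}} + 1$ and $E_{i} \DefAs \Set{ \InfWord \in \Subshift : \Restr{\InfWord}{-2l_{i}+1}{-l_{i}} = \Restr{\InfWord}{-l_{i}+1}{0} = \Restr{\InfWord}{1}{l_{i}} }$, one has $\InfWord \in E_{i}$ precisely when $\Restr{\InfWord}{-2l_{i}+1}{l_{i}}$ is a factor of period $l_{i}$ (a ``cube''). Since simple Toeplitz subshifts are uniquely ergodic (Corollary~\ref{cor:STStrictErgod}), the $\ErgodMeas$-measure of any clopen set is the uniform frequency of the corresponding pattern; applying this to the clopen set $E_{i}$ and Birkhoff's theorem along the orbit of a fixed $\InfWord_{0} \in \Subshift$, $\ErgodMeas(E_{i})$ equals the density, in $\InfWord_{0}$, of positions $p$ such that $\Restr{\InfWord_{0}}{p}{p+3l_{i}-1}$ has period $l_{i}$.

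Next I would turn this density into a purely combinatorial quantity in the coding sequences. By Equation~(\ref{eqn:DecompPBlock}) (Proposition~\ref{prop:AllAreST}) every element decomposes as $\ldots \PBlock{k} \star \PBlock{k} \star \PBlock{k} \ldots$ with spacers $\star \in \Alphab_{k+1}$ and period $L_{k}$; let $S^{(k)}$ denote the bi-infinite word over $\Alphab_{k+1}$ read off from the level-$k$ spacers of $\InfWord_{0}$. A window of length $3L_{k_{i}}$ meets exactly three of these spacers together with three intervening $\PBlock{k_{i}}$-blocks (a suffix, two full blocks, a prefix), and it has period $L_{k_{i}}$ exactly when those three spacers are equal -- this is precisely the local picture around the origin of a leading word in Examples~\ref{exmpl:SpWBlocks}/\ref{exmpl:AbsenSpWords}. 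As the window slides through one period the triple of spacers it sees is constant, so the cube-starting positions come in full runs of length $L_{k_{i}}$, one per spacer-triple that is constant. Hence
\[ \ErgodMeas(E_{i}) = \text{density of three consecutive equal letters in } S^{(k_{i})} \, . \]
Moreover the hole-filling construction (Subsection~\ref{subsec:STDefiHoleFill}) gives the self-similar description $S^{(k)} = a_{k+1}^{n_{k+1}-1}\,S^{(k+1)}(1)\,a_{k+1}^{n_{k+1}-1}\,S^{(k+1)}(2)\,\ldots$, and all the letter- and pattern-frequencies used below exist by unique ergodicity.

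It remains to choose $(k_{i})$ so that this density exceeds $3^{-(1+\Card{\AlphabEv})}$. \textbf{Case A: $n_{k+1} \geq 4$ for infinitely many $k$.} For such $k$, every period of $S^{(k)}$ contains the run $a_{k+1}^{n_{k+1}-1}$ of length $\geq 3$, so the density of three consecutive equal letters in $S^{(k)}$ is at least $(n_{k+1}-3)/n_{k+1} \geq \tfrac14 > 3^{-3} \geq 3^{-(1+\Card{\AlphabEv})}$, using $\Card{\AlphabEv} \geq 2$. \textbf{Case B: $n_{m} \leq 3$ for all large $m$.} The coding sequence is eventually a word over $\AlphabEv$ (take $k \geq \EventNr$), so pigeonhole on any window of $\Card{\AlphabEv}+1$ consecutive entries yields infinitely many $k$ for which $a_{k+1}$ reappears in the coding sequence within the next $\Card{\AlphabEv}$ steps; fix such a large $k$, say $a_{k+1} = a_{k+1+t_{0}}$ with $2 \leq t_{0} \leq \Card{\AlphabEv}$ minimal (here $t_{0} \geq 2$ since $a_{k+2} \neq a_{k+1}$). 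Tracing this first occurrence of the letter $a_{k+1}$ through the self-similar description of $S^{(k+1)}$ and using $n_{m} \leq 3$, the density of $a_{k+1}$ in $S^{(k+1)}$ is at least $\tfrac{1}{n_{k+2}\cdots n_{k+t_{0}}}\cdot\tfrac{n_{k+1+t_{0}}-1}{n_{k+1+t_{0}}} \geq \tfrac12\,3^{-(\Card{\AlphabEv}-1)}$. Now in $S^{(k)}$ each entry $S^{(k+1)}(j)$ lies between two runs $a_{k+1}^{n_{k+1}-1}$, so whenever $S^{(k+1)}(j) = a_{k+1}$ it completes a block of $2(n_{k+1}-1)+1 \geq 3$ consecutive $a_{k+1}$'s; since these entries have density $1/n_{k+1} \geq \tfrac13$ in $S^{(k)}$, the density of three consecutive equal letters in $S^{(k)}$ is at least $\tfrac13 \cdot \tfrac12\,3^{-(\Card{\AlphabEv}-1)} = \tfrac16\,3^{-(\Card{\AlphabEv}-1)} > \tfrac19\,3^{-(\Card{\AlphabEv}-1)} = 3^{-(1+\Card{\AlphabEv})}$. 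In either case we obtain $k_{i} \to \infty$ with $\ErgodMeas(E_{i}) > 3^{-(1+\Card{\AlphabEv})}$ and $l_{i} = \Length{\PBlock{k_{i}}} + 1 \to \infty$, as required; Proposition~\ref{prop:PosMesAsNoPP} then finishes the proof.

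The step I expect to demand the most care is the identity $\ErgodMeas(E_{i}) = (\text{density of three consecutive equal letters in } S^{(k_{i})})$: one must make precise the passage from ``density among positions of $\InfWord_{0}$'' to ``density among spacer-indices'' (the positions starting a cube of a fixed period group into full runs of length $L_{k_{i}}$, one per suitable spacer-triple), and note that counting only occurrences aligned with the block decomposition is harmless for a lower bound. Everything after that is bookkeeping: the pigeonhole producing the recurrence within $\Card{\AlphabEv}$ steps, and the constant chase under $n_{m} \leq 3$ in Case~B.
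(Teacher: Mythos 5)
Your proposal is correct and follows essentially the same route as the paper: the same dichotomy (\( n_{k} \geq 4 \) along a subsequence versus \( n_{k} \leq 3 \) eventually), the same pigeonhole argument producing a recurrence of a coding letter within \( \Card{ \AlphabEv } \) steps (which forces three equal spacers and hence a cube of \( \PBlock{k} \)-period straddling the origin), and the same use of unique ergodicity to turn the frequency of these cubes into a measure bound. The only real difference is bookkeeping: you count densities of equal triples in the derived spacer sequence \( S^{(k)} \), whereas the paper bounds \( \ErgodMeas \) of a disjoint union of shifted cylinder sets such as \( \Shift^{j}( \Cylin{ \PBlock{h_{i}} }{ -r_{i}+1 } ) \) directly, and the two computations produce comparable constants exceeding \( 3^{-(1+\Card{ \AlphabEv })} \).
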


\begin{proof}
To shorten notation, we denote the set of elements with a threefold repetition around the origin by
\( R_{3} \DefAs \Set{ \InfWord \in \Subshift : \Restr{\InfWord}{-2l_{i}+1}{-l_{i}} = \Restr{\InfWord}{-l_{i}+1}{0} = \Restr{\InfWord}{1}{ l_{i}} } \). Now recall from Equation~(\ref{eqn:PBlockRecur}) the recurrence relation
\( \PBlock{-1} \DefAs \epsilon \) and \( \PBlock{k+1} \DefAs \Word{ \PBlock{k} }{ a_{k+1} }{ \PBlock{k} }{ \ldots }{ \PBlock{k} } \) for simple Toeplitz subshifts. For the one-sided infinite limit \( \PBlock{ \infty } \DefAs \lim_{k \to \infty} \PBlock{k} \) this yields \( \Copies{ \PBlock{k} }{ \Restr{ \PBlock{ \infty } }{ 1 }{ L } } \geq \Big\lfloor \frac{ L }{ \Length{\PBlock{ k }}+1 } \Big\rfloor \) for every \( L \in \NN \). By unique ergodicity (Corollary~\ref{cor:STStrictErgod}) we obtain for every \( j \in \ZZ \) the relation
\[ \ErgodMeas( \Cylin{ \PBlock{k} }{j} ) = \lim_{ L \to \infty} \Copies{ \PBlock{k} }{ \Restr{ \PBlock{ \infty } }{ 1 }{ L } } \cdot \frac{ 1 }{ L } \geq \frac{ 1 }{ \Length{\PBlock{ k }}+1 } \, . \]

It remains to relate cylinder sets to repetitions in \( \InfWord \). First we treat the case where the coding sequence \(( n_{k}) \) is eventually bounded by three (any greater number would do as well). The idea is that the reoccurrence of a letter in \( (a_{k}) \) causes a repetition. Since all \( n_{k} \) are small, the \( \PBlock{k} \)-block that contains the repetition is short and hence frequent. Here are the details:

For all sufficiently large \( k \) both \( \Alphab_{k} = \AlphabEv \) and \( n_{k} \leq 3 \) hold. Since at least one letter appears twice in \( a_{k} , a_{k+1} \ldots , a_{k+\Card{ \AlphabEv }} \) we can choose a sequence \( k_{i} \) with \( \lim_{i \to \infty} k_{i} = \infty \), such that for all \( i \in \NN \) we have \( n_{k_{i}} \leq 3 \) and \( a_{k_{i}} = a_{h_{i}} \) for some \( h_{i} \in \Set{ k_{i}+1 , k_{i}+2, \ldots , k_{i}+\Card{ \AlphabEv } } \). We define \( l_{i} \DefAs \Length{ \PBlock{k_{i}-1} } + 1 \) and \( r_{i} \DefAs \Length{ \PBlock{h_{i}-1} } + 1 \). By the recurrence relation for \( \PBlock{k} \)-blocks, the start of \( \PBlock{h_{i}} \) looks like:
\begingroup
\setlength{\arraycolsep}{1pt}
\[ \begin{array}{r *{13}{c} l}
\PBlock{h_{i}} = & \multicolumn{6}{c}{ \PBlock{h_{i}-1} } & a_{h_{i}} & \multicolumn{6}{c}{ \PBlock{h_{i}-1} } & \ldots \\
= & & \ldots & \PBlock{k_{i}-1} & a_{k_{i}} & \PBlock{k_{i}-1} & & a_{k_{i}} & & \PBlock{k_{i}-1} & a_{k_{i}} & \PBlock{k_{i}-1} & \ldots & & \ldots \, .
\end{array} \]
\endgroup 
For each \( \InfWord \in \Cylin{ \PBlock{h_{i}} }{ -r_{i} + 1 } \) we conclude that every \( \varrho \in \Set{ \InfWord , \ldots , \Shift^{ l_{i}-1 } \InfWord } \) lies in \( R_{3} \), see Figure~\ref{fig:CylinThreeRepe1}. Note that no element occurs twice in \( \cup_{j=0}^{l_{i}-1} \, \Shift^{j} ( \Cylin{ \PBlock{h_{i}} }{ -r_{i} + 1 } ) \): for every \( \InfWord \in \Cylin{ \PBlock{h_{i}} }{ -r_{i}+1 } \), the elements \( \InfWord , \ldots , \Shift^{ l_{i}-1 } \InfWord \) are pairwise different by aperiodicity. For two distinct elements \( \InfWord_{1} , \InfWord_{2} \in \Cylin{ \PBlock{h_{i}} }{ -r_{i}+1 } \) we obviously have \( \Shift^{j_{1}} \InfWord_{1} \neq \Shift^{j_{2}} \InfWord_{2} \) for \( j_{1} = j_{2} \in \Set{ 0 , \ldots , l_{i}-1 }\). For \( j_{1} \neq j_{2} \), the shifted elements are different since \( \PBlock{h_{i}} \) occurs in them at different positions. This yields the inclusion
\[ R_{3} \supseteq \DisjCupDisp_{j =0}^{ l_{i}-1 } \Shift^{j} ( \Cylin{ \PBlock{h_{i}} }{ -r_{i}+1 } ) \, . \]
Since the sets are disjoint and the measure \( \ErgodMeas \) is \( \Shift \)-invariant, we obtain the estimate
\[ \ErgodMeas( R_{3} ) \geq \ErgodMeas \Bigg( \DisjCupDisp_{j =0}^{ l_{i}-1 } \Shift^{j}( \Cylin{ \PBlock{h_{i}} }{ -r_{i}+1 } ) \Bigg) =  l_{i} \cdot \ErgodMeas( \Cylin{ \PBlock{h_{i}} }{ -r_{i}+1 } ) \geq \frac{ 1 }{ n_{h_{i}} \cdot \ldots \cdot n_{k_{i}} } \geq \frac{ 1 }{ 3^{ 1+\Card{ \AlphabEv} } } \; . \]

This finishes the first case. The remaining case concerns subshifts with a subsequence \( (k_{i}) \) such that \( n_{k_{i}} \geq 4 \) holds for every \( i \in \NN \). We use that every \( \PBlock{k_{i}} \) consists of ``many'' \( \PBlock{k_{i}-1} \)-blocks, which ensures the existence of ``many'' repetitions. To make this precise, define again \( l_{i} \DefAs \Length{\PBlock{ k_{i}-1 }} +1 \). For each \( \InfWord \in \Cylin{ \Word{a_{ k_{i}+1 } }{ \PBlock{k_{i}} } }{ -2l_{i} } \), every \( \varrho \) in \( \Set{ \InfWord , \ldots , \Shift^{ ( n_{k_{i}} - 3 )l_{i} - 1 } \InfWord } \) lies in \( R_{3} \), see Figure~\ref{fig:CylinThreeRepe2}. By the same argument as in the first case we obtain the inclusion
\[ R_{3} \supseteq \DisjCupDisp_{ j = 0 }^{ ( n_{k_{i}} - 3 )l_{i} - 1 } \Shift^{j} ( \Cylin{ \Word{ a_{ k_{i}+1 } }{ \PBlock{k_{i}} } }{ -2l_{i} } ) \, . \]
The disjointness of the sets and the \( \Shift \)-invariance of \( \ErgodMeas \) yield the desired estimate:
\[ \ErgodMeas( R_{3} ) \geq ( n_{k_{i}} - 3 ) l_{i} \cdot \frac{ n_{k_{i}+1}-1 }{ \Length{ \PBlock{ k_{i}+1 } } + 1 } = \frac{ n_{k_{i}} - 3  }{ n_{ k_{i} } } \cdot \frac{  n_{ k_{i}+1 } - 1 }{  n_{ k_{i}+1 } } \geq \frac{ 1 }{ 4 } \cdot \frac{ 1 }{ 2 } = \frac{ 1 }{ 8 } \, , \]
where we used that \( \Word{ a_{ k_{i}+1 } }{ \PBlock{k_{i}} } \) occurs \( ( n_{ k_{i}+1 } - 1) \)-times in \( \PBlock{ k_{i}+1 } \).
\end{proof}

\begin{figure}
\centering
\footnotesize
\pgfmathsetmacro{\TikzCodSeqNkkk}{3} 
\pgfmathsetmacro{\TikzCodSeqNkk}{3} 
\pgfmathsetmacro{\TikzDistBlocks}{3} 
\pgfmathsetmacro{\TikzBuchstLength}{9} 
\pgfmathsetmacro{\TikzLengthPkkk}{0.8*\linewidth} 
\pgfmathsetmacro{\TikzLengthPkk}{(\TikzLengthPkkk - (\TikzCodSeqNkkk - 1) * ( \TikzBuchstLength + 2 * \TikzDistBlocks) ) / \TikzCodSeqNkkk} 
\pgfmathsetmacro{\TikzLengthPk}{(\TikzLengthPkk - (\TikzCodSeqNkk - 1) * ( \TikzBuchstLength + 2 * \TikzDistBlocks) ) / \TikzCodSeqNkk} 
\begin{tikzpicture}
[every node/.style ={rectangle},
buchst/.style={minimum width=\TikzBuchstLength pt, draw, outer sep=0pt, inner sep=0pt, minimum height=0.6cm},
pkkk/.style={minimum width=\TikzLengthPkkk pt, draw, outer sep=0pt, inner sep=0pt, minimum height=0.6cm},
pkk/.style={minimum width=\TikzLengthPkk pt, draw, outer sep=0pt, inner sep=0pt, minimum height=0.6cm},
pk/.style={minimum width=\TikzLengthPk pt, draw, outer sep=0pt, inner sep=0pt, minimum height=0.6cm}]
\node [right] at (0,0) [pkkk] (Fix2) {\( \PBlock{h_{i}} \)};
\node [right] at (0,-1) [pkk] (A) {\( \PBlock{h_{i}-1} \)};
\node [buchst] (A) [right=\TikzDistBlocks pt of A] {\( b \)};
\node [pkk] (A) [right=\TikzDistBlocks pt of A] {\( \PBlock{h_{i}-1} \)};
\node [pkk, draw=none] (A) [right=\TikzDistBlocks pt of A] {\( \ldots \)};
\node [right] at (0,-2) [pk, draw=none] (A) {\( \ldots \)};
\node [buchst, draw=none] (A) [right=\TikzDistBlocks pt of A] {};
\node [pk] (A) [right=\TikzDistBlocks pt of A] {\( u \)};
\node [buchst] (A) [right=\TikzDistBlocks pt of A] {\( c \)};
\node [pk] (A) [right=\TikzDistBlocks pt of A] {\( u \)};
\node [buchst] (Fix1) [right=\TikzDistBlocks pt of A] {\( b \)};
\node [pk] (A) [right=\TikzDistBlocks pt of Fix1] {\( u \)};
\node [buchst] (A) [right=\TikzDistBlocks pt of A] {\( c \)};
\node [pk] (A) [right=\TikzDistBlocks pt of A] {\( u \)};
\node [buchst, draw=none] (A) [right=\TikzDistBlocks pt of A] {};
\node [pk, draw=none] (A) [right=\TikzDistBlocks pt of A] {\( \ldots \)};
\node [pkk, draw=none] (A) [right=\TikzDistBlocks pt of A] {\( \ldots \)};
\draw[dashed] (Fix1.north east) ++ ( \TikzDistBlocks/2 pt , 0 ) -- + (0 , 2.4 ) node[at end, left, yshift=-3]{0\vphantom{1}} node[at end, right, yshift=-3]{1\vphantom{0}};
\draw (Fix2.north east) -- +( 0.7 , 0 );
\draw (Fix2.south east) -- +( 0.7 , 0 );
\node [right=1 of Fix2.east]{\( \InfWord \)};
\draw (Fix2.north west) -- +( -0.7 , 0 );
\draw (Fix2.south west) -- +( -0.7 , 0 );
\end{tikzpicture}
\normalsize
\caption{An element \( \InfWord \in \Cylin{ \PBlock{h_{i}} }{ -r_{i}+1 } \). To shorten notation we use \( u \DefAs \PBlock{k_{i}-1} \), \( b \DefAs a_{h_{i}} \) and \( c \DefAs a_{k_{i}} \). Because of \( b = c \), each of the words \( \InfWord , \ldots , \Shift^{ l_{i}-1 } \InfWord \) has three repetitions around the origin.\label{fig:CylinThreeRepe1}}
\end{figure}

\begin{figure}
\centering
\footnotesize
\pgfmathsetmacro{\TikzCodSeqNkkk}{5} 
\pgfmathsetmacro{\TikzDistBlocks}{3} 
\pgfmathsetmacro{\TikzBuchstLength}{9} 
\pgfmathsetmacro{\TikzLengthPkkk}{0.8*\linewidth} 
\pgfmathsetmacro{\TikzLengthPkk}{(\TikzLengthPkkk - (\TikzCodSeqNkkk - 1) * ( \TikzBuchstLength + 2 * \TikzDistBlocks) ) / \TikzCodSeqNkkk} 
\begin{tikzpicture}
[every node/.style ={rectangle},
buchst/.style={minimum width=\TikzBuchstLength pt, draw, outer sep=0pt, inner sep=0pt, minimum height=0.6cm},
pkkk/.style={minimum width=\TikzLengthPkkk pt, draw, outer sep=0pt, inner sep=0pt, minimum height=0.6cm},
pkk/.style={minimum width=\TikzLengthPkk pt, draw, outer sep=0pt, inner sep=0pt, minimum height=0.6cm},
pk/.style={minimum width=\TikzLengthPk pt, draw, outer sep=0pt, inner sep=0pt, minimum height=0.6cm}]
\node [right] at (0,0) [pkk] (A) {\( \PBlock{k_{i}-1} \)};
\node [buchst] (A) [right=\TikzDistBlocks pt of A] {\( b \)};
\node [pkk] (A) [right=\TikzDistBlocks pt of A] {\( \PBlock{k_{i}-1} \)};
\node [buchst] (Fix1) [right=\TikzDistBlocks pt of A] {\( b \)};
\node [pkk] (A) [right=\TikzDistBlocks pt of Fix1] {\( \PBlock{k_{i}-1} \)};
\foreach \x in {4,...,\TikzCodSeqNkkk}{
	\node [buchst] (A) [right=\TikzDistBlocks pt of A] {\( b \)};
	\node [pkk] (A) [right=\TikzDistBlocks pt of A] {\( \PBlock{k_{i}-1} \)};}
\node [right] at (0,1) [pkkk] (Fix2) {\( \PBlock{k_{i}} \)};
\draw[dashed] (Fix1.south east) ++ ( \TikzDistBlocks/2 pt , 0 ) -- + (0 , 2 ) node[at end, left, yshift=-3]{0\vphantom{1}} node[at end, right, yshift=-3]{1\vphantom{0}};
\draw (Fix2.north east) -- +( 0.7 , 0 );
\draw (Fix2.south east) -- +( 0.7 , 0 );
\node [right=1 of Fix2.east]{\( \InfWord \)};
\node [left=0 of Fix2.west]{\( a_{ k_{i}+1 }\)};
\draw (Fix2.north west) -- +( -1.2 , 0 );
\draw (Fix2.south west) -- +( -1.2 , 0 );
\end{tikzpicture}
\normalsize
\caption{An element \( \InfWord \in \Cylin{ a_{ k_{i}+1 } \, \PBlock{k_{i}} }{ -2l_{i} } \) with \( b \DefAs a_{k_{i}} \). There are three repetitions around the origin of each of the words \( \InfWord , \ldots , \Shift^{ ( n_{k_{i}} - 3 )l_{i} - 1 } \InfWord \).\label{fig:CylinThreeRepe2}}
\end{figure}

Combining Theorem~\ref{thm:AsNoPP} and Theorem~\ref{thm:KotaniNoAC} we obtain the following corollary:

\begin{cor}
Let  \( \Subshift \) be a simple Toeplitz subshift with \( \Card{ \AlphabEv } \geq 2 \). Let \( \NDig \colon \Subshift \to \RR \setminus \Set{ 0 } \) and \( \Dig \colon \Subshift \to \RR \) be locally constant functions such that \( ( \NDig , \Dig ) \) is aperiodic. Then the Jacobi operator \( \Jac_{\InfWord} \) has purely singular continuous spectrum for almost every \( \InfWord \in \Subshift \) with respect to the ergodic measure.
\end{cor}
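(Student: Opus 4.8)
The plan is simply to combine the two results established immediately above. First I would check that the hypotheses of both theorems are met: by Corollary~\ref{cor:STStrictErgod} every simple Toeplitz subshift is minimal and uniquely ergodic, and the assumption $\Card{\AlphabEv}\geq 2$ makes $\Subshift$ aperiodic by Proposition~\ref{prop:STAperiod}. Together with the hypothesis that $(\NDig,\Dig)$ is aperiodic, this places us exactly in the setting of Theorem~\ref{thm:KotaniNoAC} (Kotani theory), which yields $\Spec_{\AC}=\emptyset$; by Proposition~\ref{prop:SpecIndepOmega} this in fact gives $\sigma_{\AC}(\Jac_{\InfWord})=\emptyset$ for \emph{every} $\InfWord\in\Subshift$.

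Next I would invoke Theorem~\ref{thm:AsNoPP}: for a simple Toeplitz subshift there is a sequence $(l_i)$ of natural numbers tending to infinity for which the set of elements with a threefold repetition of length $l_i$ around the origin has $\ErgodMeas$-measure bounded below uniformly in $i$, and hence, via Proposition~\ref{prop:PosMesAsNoPP}, $\sigma_{\PP}(\Jac_{\InfWord})=\emptyset$ on a set $\Subshift_0\subseteq\Subshift$ of full $\ErgodMeas$-measure. Since the $\AC$-part vanishes on all of $\Subshift$, no further intersection of exceptional sets is needed beyond restricting to $\Subshift_0$: for $\InfWord\in\Subshift_0$ we have both $\sigma_{\AC}(\Jac_{\InfWord})=\emptyset$ and $\Close{\sigma_{\PP}(\Jac_{\InfWord})}=\emptyset$.

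Finally I would conclude using the spectral decomposition $\sigma(\Jac_{\InfWord})=\sigma_{\AC}(\Jac_{\InfWord})\cup\sigma_{\SC}(\Jac_{\InfWord})\cup\Close{\sigma_{\PP}(\Jac_{\InfWord})}$ recalled in Section~\ref{sec:SpecJO}: for $\InfWord\in\Subshift_0$ the first and third summands are empty, so $\sigma(\Jac_{\InfWord})=\sigma_{\SC}(\Jac_{\InfWord})$. As $\Jac_{\InfWord}$ is bounded and self-adjoint its spectrum is a nonempty subset of $\RR$, so $\sigma_{\SC}(\Jac_{\InfWord})\neq\emptyset$ and the spectrum of $\Jac_{\InfWord}$ is purely singular continuous. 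There is no genuine obstacle here: all the substantive work lives in Theorems~\ref{thm:KotaniNoAC} and~\ref{thm:AsNoPP}, and the only point that requires a moment's care is verifying that the subshift-level hypotheses (minimality, unique ergodicity, aperiodicity of $\Subshift$ and of the pair $(\NDig,\Dig)$) are all in force, together with the observation that the Kotani input holds everywhere so that one need only discard the single $\ErgodMeas$-null set coming from Theorem~\ref{thm:AsNoPP}.
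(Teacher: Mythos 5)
Your proposal is correct and is essentially the paper's own argument: the corollary is stated there as the direct combination of Theorem~\ref{thm:AsNoPP} (almost sure absence of eigenvalues via Proposition~\ref{prop:PosMesAsNoPP}) with Theorem~\ref{thm:KotaniNoAC} (no absolutely continuous spectrum), using the spectral decomposition and the minimality, unique ergodicity and aperiodicity checks you list. One minor remark: for Jacobi operators the paper only guarantees \( \sigma_{\AC}( \Jac_{\InfWord} ) = \emptyset \) almost surely (the everywhere-constancy of the \(\AC\)-part is cited only for Schr\"odinger operators), so strictly one intersects two full-measure sets rather than one — which of course changes nothing in the almost-sure conclusion.
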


\chapter[Uniformity of cocycles and Cantor spectrum][Uniformity of cocycles]{Uniformity of cocycles and Cantor spectrum}
\label{chap:UnifCocycCantor}

In the previous chapter we defined a Jacobi operator \( \Jac_{\InfWord} \) for every \( \InfWord \in \Subshift \). We discussed that the singular continuous part and the pure point part of its spectrum may depend on \( \InfWord  \in \Subshift \). This makes it difficult to obtain uniform statements about the spectral type of \( \Jac_{\InfWord} \). However, the spectrum as a set is independent of \( \InfWord\), see Proposition~\ref{prop:SpecIndepOmega}. In this chapter we study conditions under which \( \sigma( \Jac_{\InfWord} ) \) becomes a Cantor set. For this purpose, the asymptotic exponential behaviour of the transfer matrices plays a crucial role. It is known that uniformity of the transfer matrices implies Cantor spectrum of Lebesgue measure zero, see\cite{DamLenz_Boshern} and \cite{BeckPogo_SpectrJacobi}. As main result of this chapter, we prove this uniformity (and thus Cantor spectrum for Jacobi operators) for every simple Toeplitz subshift. In fact, we prove that every locally constant cocycle is uniform if the underlying subshift satisfies some conditions that we call the ``leading sequence conditions \LCond{}'', see Theorem~\ref{thm:LCondUniform}.

First we recall the notion of cocycles, with transfer matrices as main examples, and how their uniformity is related to Cantor spectrum. In Section~\ref{sec:LSCSubsh} we define \LCond{}-subshifts, discuss basic properties and give sufficient conditions for a subshift to satisfy \LCond{}. We explicitly check these conditions for simple Toeplitz subshifts (Subsection~\ref{subsec:LCondST}) and Sturmian subshifts (Subsection~\ref{subsec:LCondSturm}). In Section~\ref{sec:UnifCoycLCond} we prove our main result, namely that every locally constant cocycle over an \LCond{}-subshift is uniform. For Sturmian subshifts this is a well-known property (see \cite[Theorem~4]{Lenz_ErgodTheo1dimSO}, which even treats the quasi-Sturmian case). However, the \LCond{}-property of Sturmian subshifts demonstrates how \LCond{}-subshifts can serve as a unifying framework for different classes of subshifts. Most of the results from Section~\ref{sec:LSCSubsh} to \ref{sec:UnifCoycLCond} are based on joint work with Rostislav Grigorchuk, Daniel Lenz and Tatiana Nagnibeda and can also be found in \cite{GLNS_LeadingSeq_Arxiv}.

\section{Cocycles and their relation to Cantor spectrum}
\label{sec:CocycCantor}

In Section~\ref{sec:JOTransMat} we associated an elementary transfer matrix \( \TrMat( \InfWord ) \) to every \( \InfWord \in \Subshift \). This was done via locally constant functions \( \NDig \) and \( \Dig \) that relate the product \( \TrMat( j, \InfWord ) = \TrMat( \Shift^{j-1} \InfWord ) \cdot \ldots \cdot \TrMat( \Shift^{0} \InfWord ) \) to solutions of the eigenvalue equation of \( \Jac_{ \InfWord } \). Now we consider the more general notion of cocycles: to every \( \InfWord \in \Subshift \), a matrix is associated in a locally constant way. In analogy to the transfer matrix case (see Equation~(\ref{eqn:TrMatCocyc})), we consider the product of these matrices, the so-called cocycle. While cocycles can be defined for matrices in \( \GL \), we consider only \( \SL \)-matrices to simplify computations.

\begin{defi}
\label{defi:Cocyc}
Let \( \Cocyc \colon \Subshift \to \SL \) be a continuous map. The \emph{cocycle associated to \( \Cocyc \)}\index{cocycle}, which we also denote by \( \Cocyc \), is defined as
\[ \Cocyc \colon \ZZ \times \Subshift \to \SL \; , \;\; ( j , \InfWord ) \mapsto \begin{cases} 
\Cocyc( \Shift^{j-1} \InfWord ) \cdot \ldots \cdot \Cocyc( \Shift^{0} \InfWord ) & \text{for } j > 0 \\
\Id & \text{for } j = 0 \\
\Cocyc( \Shift^{j} \InfWord )^{-1} \cdot \ldots \cdot \Cocyc( \Shift^{-1} \InfWord )^{-1} & \text{for } j < 0
\end{cases} \, . \]
\end{defi}

\begin{defi}
A cocycle \( \Cocyc \colon \ZZ \times \Subshift \to \SL \) is called \emph{locally constant}\index{locally constant!cocycle} if the underlying map \( \Cocyc \colon \Subshift \to \SL \) is locally constant, see Definition~\ref{defi:LocConst}.
\end{defi}

\begin{exmpl}
\label{exmpl:TrMod}
Our main example are transfer matrices, but with the definition from Section~\ref{sec:JOTransMat}, they are in general not in \( \SL \). Thus, we follow \cite{BeckPogo_SpectrJacobi} and define for \( E \in \RR \) the \emph{modified transfer matrix map}\index{modified transfer matrix map}\index{transfer matrix!modified map}
\[ \TrMod \colon \Subshift \to \SL \; , \;\; \InfWord \mapsto \bigg(\begin{matrix} \frac{ E - \Dig( \Shift \InfWord ) }{ \NDig( \Shift^{2} \InfWord ) } & - \frac{ 1 }{ \NDig( \Shift^{2} \InfWord ) } \\ \NDig( \Shift^{2} \InfWord ) & 0 \end{matrix}\bigg) \, . \]
Similar to Equation~(\ref{eqn:TrMatCocyc}), we define the \emph{modified transfer matrices}\index{modified transfer matrix}\index{transfer matrix!modified} as the matrix product that is given by the map
\[ \TrMod \colon \ZZ \times \Subshift \to \SL \; , \;\; ( j , \InfWord ) \mapsto \begin{cases}
\TrMod( \Shift^{j-1} \InfWord ) \cdot \ldots \cdot \TrMod( \Shift^{0} \InfWord ) & \text{for } j > 0 \\
\Id & \text{for } j = 0 \\
\TrMod( \Shift^{j} \InfWord )^{-1} \cdot \ldots \cdot \TrMod( \Shift^{-1} \InfWord )^{-1} & \text{for } j < 0
\end{cases} \, . \]
Clearly \( \TrMod \) is a cocycle. If \( \NDig \) and \( \Dig \) are locally constant, then \( \TrMod \) is locally constant as well. Just as the unmodified transfer matrix, \( \TrMod \) describes solutions of the eigenvalue equation \( \Jac_{\InfWord} \varphi = E \varphi \). If \( \varphi \) is such a solution, then we have
\[ \bigg(\begin{matrix} \varphi(j+1) \\ \NDig( \Shift^{j+1} \InfWord ) \varphi(j) \end{matrix}\bigg) = \TrMod( j, \InfWord ) \, \bigg(\begin{matrix} \varphi(1) \\ \NDig( \Shift \InfWord ) \varphi(0) \end{matrix}\bigg) \, . \qedhere\]
\end{exmpl}

Because of its relation to solution of \( \Jac_{\InfWord} \varphi = E \varphi \), we want to study the asymptotic exponential behaviour of \( \Vert \TrMod( j , \InfWord ) \rVert \). It turns out that this behaviour is the same for almost all \( \InfWord \in\Subshift \). In fact, the same is true for every locally constant cocycle. This follows from the multiplicative ergodic theorem by Furstenberg and Kesten (\cite{FurstenKest_ProdRandMatr}), which is based on Kingman's subadditive ergodic theorem (\cite{Kingm_ErgodTheoSubadd}, \cite[Theorem~1]{King_SubaddErgod}). Below, the theorem is stated in the form that was given in \cite[page 798]{Furman_MultiErgodThm}.

\begin{thm}[multiplicative ergodic theorem]
\label{thm:MET}
Let \( (X, \ErgodMeas, T) \) be an ergodic system. Let \( \Cocyc \colon X \to \GL \) be a measurable function, with both \( \ln ( \lVert \Cocyc(x) \rVert ) \) and  \( \ln( \lVert \Cocyc(x)^{-1} \rVert ) \) in \( L^{1}( \ErgodMeas ) \). Then there exists a constant \( \Lyapu{ \Cocyc } \) such that the convergence
\[ \lim_{j \to \infty} \frac{1}{j} \ln( \lVert \Cocyc( \Shift^{j-1}x ) \cdot \hdots \cdot \Cocyc( x ) \rVert ) = \Lyapu{ \Cocyc } \]
holds for \( \ErgodMeas \)-almost every \( x \in X \) and in \( L^{1}( \ErgodMeas ) \).
\end{thm}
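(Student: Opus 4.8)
The plan is to deduce this classical statement of Furstenberg and Kesten from Kingman's subadditive ergodic theorem. For $j \geq 1$ I would set $f_{j}( x ) \DefAs \ln \big( \lVert \Cocyc( \Shift^{j-1} x ) \cdots \Cocyc( x ) \rVert \big)$, with $\lVert \cdot \rVert$ the operator norm. Submultiplicativity of the operator norm, applied to the splitting
\[ \Cocyc( \Shift^{m+n-1} x ) \cdots \Cocyc( x ) = \big( \Cocyc( \Shift^{m+n-1} x ) \cdots \Cocyc( \Shift^{n} x ) \big) \cdot \big( \Cocyc( \Shift^{n-1} x ) \cdots \Cocyc( x ) \big) \, , \]
gives, after taking logarithms, the subadditivity relation $f_{m+n}( x ) \leq f_{m}( \Shift^{n} x ) + f_{n}( x )$ for all $m, n \geq 1$. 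This is exactly the structural input that Kingman's theorem requires, once the relevant integrability is in place.

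Next I would verify the two integrability conditions. Iterating subadditivity yields $f_{n}( x ) \leq \sum_{i=0}^{n-1} f_{1}( \Shift^{i} x )$, and since $f_{1} = \ln( \lVert \Cocyc \rVert ) \in L^{1}( \ErgodMeas )$ by hypothesis and $\ErgodMeas$ is $\Shift$-invariant, the positive parts $f_{n}^{+}$ lie in $L^{1}( \ErgodMeas )$. For a lower bound I would use that $\lVert M \rVert \cdot \lVert M^{-1} \rVert \geq \lVert M M^{-1} \rVert = 1$ for every $M \in \GL$, so $\ln( \lVert M \rVert ) \geq - \ln( \lVert M^{-1} \rVert )$; applying this to $M = \Cocyc( \Shift^{n-1} x ) \cdots \Cocyc( x )$ and expanding via submultiplicativity of $\lVert M^{-1} \rVert$ gives $f_{n}( x ) \geq - \sum_{i=0}^{n-1} \ln( \lVert \Cocyc( \Shift^{i} x )^{-1} \rVert )$, hence
\[ \frac{1}{n} \int_{X} f_{n} \, d\ErgodMeas \geq - \int_{X} \ln( \lVert \Cocyc^{-1} \rVert ) \, d\ErgodMeas > - \infty \, . \]
Thus $\inf_{n} \tfrac{1}{n} \int f_{n} \, d\ErgodMeas$ is a finite real number.

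With this in hand, Kingman's subadditive ergodic theorem applies to $( f_{n} )$: there is a $\Shift$-invariant function in $L^{1}( \ErgodMeas )$ to which $\tfrac{1}{n} f_{n}$ converges both $\ErgodMeas$-almost everywhere and in $L^{1}( \ErgodMeas )$, with integral equal to $\lim_{n} \tfrac{1}{n} \int f_{n} \, d\ErgodMeas = \inf_{n} \tfrac{1}{n} \int f_{n} \, d\ErgodMeas$. Ergodicity of $( X, \ErgodMeas, \Shift )$ then forces this invariant limit to be $\ErgodMeas$-almost surely the constant $\Lyapu{ \Cocyc } \DefAs \inf_{n} \tfrac{1}{n} \int f_{n} \, d\ErgodMeas$, which is the asserted convergence. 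Since the analytic heart — existence of the subadditive limit — is outsourced to Kingman's theorem, the only genuinely delicate bookkeeping point will be the lower bound above: it is the sole place where the hypothesis $\ln( \lVert \Cocyc^{-1} \rVert ) \in L^{1}( \ErgodMeas )$ enters, and without it $\Lyapu{ \Cocyc }$ could a priori equal $- \infty$ and the $L^{1}$-convergence could fail. In the later application to locally constant $\SL$-cocycles over a compact subshift both hypotheses are automatic, since such a cocycle takes only finitely many values by Remark~\ref{rem:LocConstFinVal}, so $\ln( \lVert \Cocyc \rVert )$ and $\ln( \lVert \Cocyc^{-1} \rVert )$ are bounded.
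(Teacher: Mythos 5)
Your argument is correct, but note that the paper does not prove Theorem~\ref{thm:MET} at all: it is quoted as a classical result of Furstenberg and Kesten in the formulation of Furman, with the remark that it rests on Kingman's subadditive ergodic theorem. Your derivation is precisely that standard route — subadditivity of \( f_{j}( x ) = \ln( \lVert \Cocyc( \Shift^{j-1} x ) \cdots \Cocyc( x ) \rVert ) \) from submultiplicativity, Kingman for almost sure and \( L^{1} \) convergence, and ergodicity to make the invariant limit constant — and you correctly identify the one delicate point, namely that \( \ln( \lVert \Cocyc^{-1} \rVert ) \in L^{1}( \ErgodMeas ) \) is what keeps \( \Lyapu{\Cocyc} \) finite and secures the \( L^{1} \)-convergence, a hypothesis that is indeed automatic for the locally constant \( \SL \)-cocycles used later in the paper.
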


If \( \Cocyc \colon \Subshift \to \SL \) is locally constant, then it is also measurable and takes only finitely many values. Thus \( \ln ( \lVert \Cocyc( \InfWord ) \rVert ) \) and  \( \ln( \lVert \Cocyc( \InfWord )^{-1} \rVert ) \) are bounded and so the previous theorem applies. Note that, even for a uniquely ergodic and minimal system, the limit holds only almost surely and need not exist in every point. Counterexamples can for instance be found in \cite[Proposition~6.4]{Herm_DiffeoPosEntro} or \cite[Theorem~2.2]{Walt_UniErgRandProd}. However, we are especially interested in cases where the limit even holds uniformly.

\begin{defi}[{\cite[Section~4]{Furman_MultiErgodThm}}]
A map \( \Cocyc \colon \Subshift \to \SL \) is called \emph{uniform}\index{uniform!map} if the limit \( \lim_{j \to \infty} \frac{1}{j} \ln( \lVert \Cocyc(j , \InfWord) \rVert ) \) exists for all \( \InfWord \in \Subshift \) and the convergence is uniform in \( \InfWord \).
\end{defi}

\begin{defi}
A cocycle \( \Cocyc \colon \ZZ \times \Subshift \to \SL \) is called \emph{uniform}\index{uniform!cocycle} if the underlying map \( \Cocyc \colon \Subshift \to \SL \) is uniform.
\end{defi}

\begin{rem}
\label{rem:ThmByWeiss}
For minimal systems, the existence of the limit \( \lim_{j \to \infty} \frac{1}{j} \ln( \lVert \Cocyc(j , \InfWord) \rVert ) \) for all \( \InfWord \in \Subshift \) already implies uniformity of \( \Cocyc \colon \Subshift \to \SL \). To see this, we use that \( \ln( \lVert \Cocyc( j , \cdot ) \rVert ) \) as a function from \( \Subshift \) to \( \RR \) satisfies \( \ln( \lVert \Cocyc( k+j , \InfWord ) \rVert ) \leq \ln( \lVert \Cocyc( k , \InfWord ) \rVert ) + \ln( \lVert \Cocyc( j , \Shift^{k} \InfWord ) \rVert ) \) for all \( j , k \in \NN \) and all \( \InfWord \in \Subshift \). The claim then follows from a result by Benjamin Weiss, which was published in \cite[Theorem~1.2]{GLNS_LeadingSeq_Arxiv}. However, in the following we will not make use of this fact. Instead we prove a uniform lower bound for \( \frac{1}{j} \ln( \lVert \Cocyc( j , \InfWord ) \rVert ) \) and use a result by Lenz to show uniformity (see Proposition~\ref{prop:LowBdUnif}).
\end{rem}

For Schrödinger operators it was shown in \cite[Corollary~2.1]{Lenz_SingSpec1dQC} that uniformity of the transfer matrix implies Cantor spectrum. By a result of Beckus and Pogorzelski, this holds for Jacobi operators as well:

\begin{prop}[{\cite[Theorem~3]{BeckPogo_SpectrJacobi}}]
\label{prop:UnifCantorSpec}
Let \( \Subshift \) be a minimal, uniquely ergodic and aperiodic subshift. Consider the continuous maps \( \NDig \colon \Subshift \to \RR \setminus \Set{0} \) and \( \Dig \colon \Subshift \to \RR \) which take finitely many values, and the corresponding family of Jacobi operators \( (\Jac_{\InfWord})_{\InfWord \in \Subshift} \). Suppose that \( ( \NDig , \Dig ) \) is aperiodic and that the transfer matrix \( \TrMat \) is uniform for every \( E \in \RR \). Then the spectrum \( \sigma( \Jac_{\InfWord} ) \) is a Cantor set of Lebesgue measure zero.
\end{prop}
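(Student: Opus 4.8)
By Proposition~\ref{prop:SpecIndepOmega} the spectrum $\sigma(\Jac_{\InfWord}) = \Spec$ does not depend on $\InfWord$, and as the spectrum of a bounded self-adjoint operator it is a non-empty compact subset of $\RR$. Since a non-empty compact subset of $\RR$ is a Cantor set exactly when it is perfect and has empty interior, the plan is to establish the two facts: (i) $\Spec$ has Lebesgue measure zero (hence empty interior, hence is totally disconnected); (ii) $\Spec$ has no isolated points (hence, being closed, is perfect).

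For (i) I would first pass from the transfer matrix cocycle $\TrMat$ to the modified one $\TrMod$ of Example~\ref{exmpl:TrMod}. A telescoping computation shows
\[ \TrMod( j , \InfWord ) = D( \Shift^{j+1} \InfWord ) \, \TrMat( j , \InfWord ) \, D( \Shift \InfWord )^{-1} \qquad \text{with} \quad D( \InfWord ) = \mathrm{diag}\big( 1 , \NDig( \InfWord ) \big) \, , \]
and since $\NDig$ is locally constant and takes only finitely many non-zero values, the matrices $D(\Shift^{k}\InfWord)^{\pm 1}$ are uniformly bounded; hence $\lVert \TrMod( j , \InfWord ) \rVert$ and $\lVert \TrMat( j , \InfWord ) \rVert$ are comparable uniformly in $j$ and $\InfWord$. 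In particular $\TrMod$ is uniform for every $E \in \RR$, with $\Lyapu{\TrMod}(E) = \Lyapu{\TrMat}(E)$. Next I would invoke the cocycle description of the spectrum in the Jacobi setting — the analogue through $\TrMod$ of the formula of Lenz (\cite[Theorem~3]{Lenz_SingSpec1dQC}) quoted in the introduction, extended to Jacobi operators — which reads
\[ \Spec = \Set{ E \in \RR : \Lyapu{\TrMod}(E) = 0 } \DisjCup \Set{ E \in \RR : \TrMod \text{ is not uniform at } E } \, ; \]
uniformity of $\TrMod$ at every $E$ collapses this to $\Spec = \Set{ E \in \RR : \Lyapu{\TrMod}(E) = 0 }$. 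Finally Kotani theory enters as a black box: by Theorem~\ref{thm:KotaniNoAC} the aperiodicity of $( \NDig , \Dig )$ gives $\Spec_{\AC} = \emptyset$, and since $\Spec_{\AC}$ is the essential closure of $\Set{ E : \Lyapu{\TrMod}(E) = 0 }$ (the Ishii/Pastur/Kotani identity recalled after Theorem~\ref{thm:KotaniNoAC}, in its Jacobi version \cite[Theorem~5.17]{Teschl_JO}), an empty essential closure forces that set, and therefore $\Spec$, to have Lebesgue measure zero.

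For (ii) I would quote the general ergodic fact that the common spectrum of an ergodic family of Jacobi operators over a minimal, uniquely ergodic (here even aperiodic) base contains no isolated points; for Schrödinger operators this is \cite[Theorem~4]{Pastur_LyapuAndSc}, and the Jacobi version is obtained in the same way (an isolated point of $\Spec$ would be an eigenvalue of $\Jac_{\InfWord}$ for almost every $\InfWord$ with constant finite multiplicity, which is incompatible with minimality and aperiodicity). Thus $\Spec$ is closed, non-empty and has no isolated points, so it is perfect; together with the empty interior from (i) it is compact, perfect and totally disconnected, i.e.\ a Cantor set, and it has Lebesgue measure zero.

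The hard part is the cocycle/spectrum dictionary used in step (i): one direction says that at energies where $\TrMod$ is uniform with $\Lyapu{\TrMod}(E) > 0$ the cocycle enjoys a uniform exponential dichotomy, which puts $E$ into the resolvent set (this uses that a determinant-one two-by-two cocycle over a minimal base cannot grow uniformly subexponentially once its Lyapunov exponent is positive); the other says $\Lyapu{\TrMod}(E) = 0$ forces $E \in \Spec$, via subordinacy theory adapted to two-sided Jacobi operators. Pinning this down requires that $\TrMod$ genuinely governs the solutions of $\Jac_{\InfWord} \varphi = E \varphi$ — the content of the displayed identity in Example~\ref{exmpl:TrMod} — and a careful passage from the half-line to $\ZZ$, exactly as in the proof sketch of Theorem~\ref{thm:KotaniNoAC}. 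Everything else is either quoted from the earlier sections or an elementary point-set argument.
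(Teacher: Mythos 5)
The thesis does not prove this proposition at all: it is quoted verbatim as Theorem~3 of Beckus and Pogorzelski, so the only thing to compare your argument with is the strategy the thesis sketches (Lenz's description of the spectrum, Kotani theory, absence of isolated points), which is also the route of the cited paper. Your outline follows exactly that route, and your conjugation identity is correct: with \( D(\InfWord) = \mathrm{diag}(1,\NDig(\InfWord)) \) one has \( \TrMod(\InfWord) = D(\Shift^{2}\InfWord)\,\TrMat(\InfWord)\,D(\Shift\InfWord)^{-1} \), the product telescopes to the formula you state, and since \( \NDig \) is locally constant with finitely many non-zero values the norms of \( \TrMat(j,\InfWord) \) and \( \TrMod(j,\InfWord) \) are uniformly comparable; this is precisely the content of Proposition~\ref{prop:UnifTrMatTrMod}. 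The Kotani step (empty essential closure of the zero-Lyapunov set forces Lebesgue measure zero) and the point-set bookkeeping (measure zero gives empty interior, hence total disconnectedness inside \( \RR \); absence of isolated points gives perfectness) are also fine.

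Be clear, however, that the decisive ingredient you invoke --- the identity \( \Spec = \Set{ E : \Lyapu{\TrMod}(E) = 0 } \DisjCup \Set{ E : \TrMod \text{ not uniform at } E } \) for Jacobi operators --- cannot be treated as available ``by analogy with Lenz'': extending that formula from the Schr\"odinger to the Jacobi setting is essentially the substance of the theorem you are proving (it is what Beckus and Pogorzelski actually establish), so as written your argument is a correct reduction to that dictionary rather than a self-contained proof. You flag this yourself in the final paragraph, and your sketch of the two directions has the right shape, though the implication ``\( \Lyapu{\TrMod}(E)=0 \Rightarrow E \in \Spec \)'' is more cleanly obtained from uniform hyperbolicity/Johnson-type arguments or exponential decay of the Green function off the spectrum than from subordinacy theory. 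One small inaccuracy: in the no-isolated-points step, what drives Pastur's argument is shift-invariance and ergodicity of the measure (the trace of the eigenprojection attached to an isolated point would have to be simultaneously finite and infinite), not minimality or aperiodicity; aperiodicity is needed only in the Kotani step that yields measure zero.
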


A major tool to establish uniformity of the transfer matrix is the Boshernitzan condition (see Section~\ref{sec:BoshCond} for the definition). The following proposition is obtained in \cite{BeckPogo_SpectrJacobi}, using a result from \cite{DamLenz_Boshern}:

\begin{prop}[{\cite[Corollary~4]{BeckPogo_SpectrJacobi}}]
\label{prop:BoshCantor}
Let \( \Subshift\) be a minimal, aperiodic subshift such that the Boshernitzan condition holds. Consider the family of corresponding Jacobi operators \( \Set{ \Jac_{\InfWord} }_{\InfWord \in \Subshift \,} \), where the continuous maps \( \NDig \) and \( \Dig \) take only finitely many values and \( ( \NDig , \Dig ) \) is aperiodic. Then the transfer matrix map \( \TrMat \colon \Subshift \to \GL \) is uniform for every \( E \in \RR \). In particular, the spectrum \( \sigma( \Jac_{\InfWord} ) \) is a Cantor set of Lebesgue measure zero.
\end{prop}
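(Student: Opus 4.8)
I would treat the two assertions of the proposition separately, since the second (``in particular\dots'') is a formal consequence of the first together with Proposition~\ref{prop:UnifCantorSpec}. Indeed, by a theorem of Boshernitzan (\cite[Theorem~1.5]{Boshernitzan_UniErgodic}) a minimal subshift satisfying~(\ref{eqn:BoshCond}) is automatically uniquely ergodic, so once uniformity of \( \TrMat\colon\Subshift\to\GL \) is established for every \( E\in\RR \), Proposition~\ref{prop:UnifCantorSpec} applies verbatim and yields the Cantor spectrum of Lebesgue measure zero. The plan is therefore to devote all the work to proving uniformity of the transfer matrix cocycle, with \( E\in\RR \) fixed throughout.

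First I would pass from \( \TrMat \) to the modified transfer matrix map \( \TrMod\colon\Subshift\to\SL \) from Example~\ref{exmpl:TrMod}: since \( \NDig \) is locally constant and bounded away from \( 0 \) and \( \infty \), the cocycles \( \TrMat \) and \( \TrMod \) differ by conjugation with the bounded, locally constant diagonal cocycle \( \mathrm{diag}\bigl(1,\NDig(\Shift\,\cdot)\bigr) \), so \( \frac1j\log\lVert\TrMat(j,\InfWord)\rVert \) and \( \frac1j\log\lVert\TrMod(j,\InfWord)\rVert \) have the same asymptotics, uniformly in \( \InfWord \); it thus suffices to show that the \( \SL \)-valued cocycle \( \TrMod \) is uniform. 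I would set \( F_j(\InfWord)\DefAs\log\lVert\TrMod(j,\InfWord)\rVert \). Because \( \TrMod \) is locally constant, each \( F_j \) is a locally constant function on \( \Subshift \), and submultiplicativity of the operator norm gives the subadditivity \( F_{j+k}(\InfWord)\le F_k(\InfWord)+F_j(\Shift^{k}\InfWord) \) for all \( j,k\in\NN \) and all \( \InfWord\in\Subshift \); moreover, since \( \TrMod(j,\InfWord)\in\SL \) we have \( \log\lVert\TrMod(j,\InfWord)^{-1}\rVert=F_j(\InfWord) \), which will supply the matching lower bounds. By Theorem~\ref{thm:MET}, \( \frac1j F_j\to\Lyapu{\TrMod} \) almost surely and in \( L^1(\ErgodMeas) \), and the goal is to upgrade this to convergence uniform in \( \InfWord \).

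The heart of the argument is a uniform subadditive ergodic theorem valid under the Boshernitzan condition, which I would import from \cite{DamLenz_Boshern}. Its mechanism is combinatorial: condition~(\ref{eqn:BoshCond}) furnishes a sequence \( \ell_r\to\infty \) with \( \ell_r\cdot\MaxCylin(\ell_r)\ge c>0 \), and since \( \ErgodMeas(\Cylin{u}{1}) \) is the uniform frequency of \( u \) in every element of \( \Subshift \), this says that at the scales \( \ell_r \) every word of length \( \ell_r \) occurs with frequency at least \( c/\ell_r \), uniformly over all positions and over all \( \InfWord\in\Subshift \). Consequently any sufficiently long factor of any \( \InfWord \) can be cut into blocks of length \( \ell_r \) whose empirical distribution over the boundedly many words of length \( \ell_r \) approximates the ergodic distribution with an error controlled uniformly in \( \InfWord \) and vanishing as \( r\to\infty \); a Rauzy-graph / unique-decomposition bookkeeping handles the at most two boundary blocks. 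Feeding this decomposition into the subadditivity of \( (F_j) \) yields the uniform upper bound \( \limsup_j\sup_{\InfWord}\frac1j F_j(\InfWord)\le\Lyapu{\TrMod} \), and applying the same decomposition to the inverse cocycle (using \( F_j(\InfWord)=\log\lVert\TrMod(j,\InfWord)^{-1}\rVert \)) gives the matching lower bound \( \liminf_j\inf_{\InfWord}\frac1j F_j(\InfWord)\ge\Lyapu{\TrMod} \); together these are exactly uniformity of \( \TrMod \), hence of \( \TrMat \). The \textbf{main obstacle} is precisely this uniform subadditive ergodic theorem: extracting, from the single scalar inequality \( \limsup_{L} L\cdot\MaxCylin(L)>0 \), the quantitative and position-uniform control on block statistics needed to run the decomposition for \emph{all} \( \InfWord \) simultaneously rather than merely \( \ErgodMeas \)-almost surely.

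Finally, with uniformity of \( \TrMat \) in hand for every \( E \), I would conclude via Proposition~\ref{prop:UnifCantorSpec}: aperiodicity of \( (\NDig,\Dig) \) together with Kotani theory (Theorem~\ref{thm:KotaniNoAC} and the description of \( \Spec \) through the Lyapunov exponent and the non-uniformity locus) forces \( \Spec \) to have Lebesgue measure zero, while the general Pastur-type argument excludes isolated points; a closed subset of \( \RR \) of Lebesgue measure zero without isolated points is a Cantor set.
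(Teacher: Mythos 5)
You should first note what you are being compared against: the paper does not prove this proposition at all — it is quoted verbatim from \cite[Corollary~4]{BeckPogo_SpectrJacobi}, which in turn rests on \cite[Theorem~1]{DamLenz_Boshern} — so the relevant comparison is with the cited literature. Your top-level architecture agrees with it: pass from \( \TrMat \) to the \( \SL \)-valued modified cocycle \( \TrMod \) (your conjugation identity is exactly the content of Proposition~\ref{prop:UnifTrMatTrMod}, i.e.\ Lemma~5 of \cite{BeckPogo_SpectrJacobi}), obtain uniformity for every \( E \) from the Boshernitzan condition via the Damanik/Lenz theorem, and conclude Cantor spectrum of measure zero with Proposition~\ref{prop:UnifCantorSpec}. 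If you import the Damanik/Lenz theorem as a black box — which is all the paper itself does — your proof is complete, up to a citation slip: \cite[Theorem~1.5]{Boshernitzan_UniErgodic} is the complexity criterion \( \limsup_{L} \Comp(L)/L < 3 \), not the statement that (\ref{eqn:BoshCond}) implies unique ergodicity (that is a different result of Boshernitzan, and in this thesis (\ref{eqn:BoshCond}) is in any case formulated with respect to the unique ergodic measure).

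The genuine gap is in your sketch of the mechanism of that imported theorem, specifically the lower bound. ``Applying the same decomposition to the inverse cocycle'' gives nothing new: for \( \SL \)-matrices \( \lVert M^{-1} \rVert = \lVert M \rVert \) (compare Proposition~\ref{prop:CocycInvCut}), so your \( F_{j} \) computed for the inverse is the same function; and more fundamentally, subadditive block decompositions can only bound \( \ln \lVert \TrMod( j , \InfWord ) \rVert \) from \emph{above} — norms of matrix products can collapse (a product of large-norm \( \SL \)-matrices can have norm one), so \( j \mapsto \ln \lVert \TrMod( j , \InfWord ) \rVert \) is subadditive but not superadditive, and on general minimal, uniquely ergodic systems the uniform lower bound genuinely fails (this is exactly the non-uniformity phenomenon cited after Theorem~\ref{thm:MET}). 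The uniform upper bound needs only unique ergodicity; the entire role of the Boshernitzan condition in Damanik/Lenz is the lower bound, which is obtained along the lines of Lenz's criterion (Proposition~\ref{prop:LowBdUnif}): when \( \Lyapu{\TrMod}(E) > 0 \) one must produce \( \delta > 0 \) and \( m \) with \( \frac{1}{j} \ln \lVert \TrMod( j , \InfWord ) \rVert \geq \delta \) for all \( \InfWord \) and all \( j \geq m \), and this requires an argument of a different nature — roughly, that failure of such a bound produces long words along which the norm stays small, while (\ref{eqn:BoshCond}) forces these words to occur with definite frequency in every element, contradicting positivity of the Lyapunov exponent. As written, your ``matching lower bound'' step would fail; either cite \cite[Theorem~1]{DamLenz_Boshern} (or \cite[Corollary~4]{BeckPogo_SpectrJacobi} directly, as the paper does) without attempting to reprove it, or supply that argument.
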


By combining this result with Proposition~\ref{prop:BoshCondA2}, Proposition~\ref{prop:BoshCond}, Corollary~\ref{cor:BoshCondForA3} and Remark~\ref{rem:LocConstFinVal} we obtain:

\begin{cor}
\label{cor:BoshSTCantor}
Let \( \Subshift \) be a simple Toeplitz subshift. If there exists a sequence \( (k_{r}) \) of natural numbers with \( \lim_{r \to \infty} k_{r} = \infty \) such that \( \prod_{ j = k_{r}+1 }^{\AllL( k_{r}-1 )-1} n_{j} \) is bounded, then every aperiodic Jacobi operator on the subshift has Cantor spectrum of Lebesgue measure zero. In particular this is the case if either \( \Card{\AlphabEv} = 2 \) holds or \( \Card{\AlphabEv} = 3 \) and \( \liminf_{i \to \infty} n_{\AllLInc{i}+1} < \infty \) hold.
\end{cor}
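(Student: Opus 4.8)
The plan is to deduce the statement from the characterisation of the Boshernitzan condition in Section~\ref{sec:BoshCond} together with Proposition~\ref{prop:BoshCantor}; the corollary is essentially a matter of assembling results that are already available.

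First I would observe that the hypothesis on the sequence $(k_{r})$ is, by Proposition~\ref{prop:BoshCond}, exactly equivalent to the Boshernitzan condition~(\ref{eqn:BoshCond}) for $\Subshift$. If $\Card{\AlphabEv} = 1$, then $\Subshift$ is periodic by Proposition~\ref{prop:STAperiod}, the product condition holds trivially (every relevant product is empty), and there is no aperiodic pair $(\NDig,\Dig)$ on a finite orbit, so the assertion is vacuous; hence I may assume $\Card{\AlphabEv} \geq 2$, in which case $\Subshift$ is aperiodic. By Corollary~\ref{cor:STStrictErgod} it is moreover minimal (and uniquely ergodic).

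Next I would invoke Proposition~\ref{prop:BoshCantor}. Fix an aperiodic Jacobi operator $\Jac_{\InfWord}$ on $\Subshift$; by the standing conventions of this chapter this means that $\NDig$ and $\Dig$ are locally constant and that $(\NDig,\Dig)$ is aperiodic. By Remark~\ref{rem:LocConstFinVal} both functions take only finitely many values. Thus all hypotheses of Proposition~\ref{prop:BoshCantor} are met: $\Subshift$ is minimal and aperiodic and satisfies~(\ref{eqn:BoshCond}), and $(\NDig,\Dig)$ is aperiodic with finite range. The proposition then yields that the transfer matrix map $\TrMat \colon \Subshift \to \GL$ is uniform for every $E \in \RR$ and that $\sigma(\Jac_{\InfWord})$ is a Cantor set of Lebesgue measure zero, which proves the first part.

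For the ``in particular'' clause I would treat the two cases separately. If $\Card{\AlphabEv} = 2$, then by Example~\ref{exmpl:AllLAlphEv2} one has $\AllL(k) = k+2$ for all sufficiently large $k$, so $\AllL(k-1) - 1 = k$ and $\prod_{j=k+1}^{\AllL(k-1)-1} n_{j}$ is an empty product for all large $k$; hence the hypothesis of the corollary holds (alternatively, Proposition~\ref{prop:BoshCondA2} gives~(\ref{eqn:BoshCond}) directly). If $\Card{\AlphabEv} = 3$ and $\liminf_{i\to\infty} n_{\AllLInc{i}+1} < \infty$, then by Corollary~\ref{cor:BoshCondForA3} the subshift satisfies~(\ref{eqn:BoshCond}), which by Propositions~\ref{prop:BoshCond} and~\ref{prop:BdKBdAllLInc} is again the hypothesis of the corollary; so the first part applies in both cases. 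I do not anticipate any genuine obstacle here: the only thing requiring care is the bookkeeping, namely keeping straight which results require aperiodicity of $\Subshift$ versus aperiodicity of the pair $(\NDig,\Dig)$, and passing correctly between the product condition, the auxiliary sequence $(\AllLInc{i})$, and~(\ref{eqn:BoshCond}). All the substantive work lies in the quoted results.
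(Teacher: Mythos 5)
Your proposal is correct and follows essentially the same route as the paper, which derives the corollary precisely by combining Proposition~\ref{prop:BoshCond} (and Proposition~\ref{prop:BoshCondA2}, Corollary~\ref{cor:BoshCondForA3}) with Proposition~\ref{prop:BoshCantor} and Remark~\ref{rem:LocConstFinVal}. Your extra remarks — disposing of the $\Card{\AlphabEv}=1$ case as vacuous and citing Corollary~\ref{cor:STStrictErgod} for minimality — are sound bookkeeping that the paper leaves implicit.
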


Consequently, there are some simple Toeplitz subshifts on which Jacobi operators exhibit Cantor spectrum, for instance the period doubling subshift (see Example~\ref{exmpl:PDExBosh}). However, there are also simple Toeplitz subshifts for which the Boshernitzan condition does not hold, see Example~\ref{exmpl:BoshExBoshCond}. Nevertheless it could be proved for arbitrary simple Toeplitz subshifts in \cite[Theorem~1.1]{LiuQu_Simple} that the transfer matrices of Schrödinger operators are uniform for all \( E \in \RR \). In Section~\ref{sec:UnifCoycLCond} we generalise this result. We prove that every locally constant cocycle over a so-called \LCond{}-subshift is uniform. In particular this yields uniformity of \( \TrMod \), but to deduce Cantor spectrum we need uniformity of the unmodified transfer matrix \( \TrMat \). To prove this we use the following relation between \( \TrMat \) and \( \TrMod \) that was established in \cite{BeckPogo_SpectrJacobi}: 

\begin{prop}[{\cite[Lemma~5]{BeckPogo_SpectrJacobi}}]
\label{prop:UnifTrMatTrMod}
Let \( \Subshift \) be a minimal and uniquely ergodic subshift and let \( \InfWord \in \Subshift \). Then the limit \( \lim_{j \to \infty} \frac{1}{j} \ln( \lVert \TrMat( j, \InfWord ) \rVert ) \) exists if and only if the limit \( \lim_{j \to \infty} \frac{1}{j} \ln( \lVert \TrMod( j, \InfWord ) \rVert ) \) exists and in this case, they are equal. Moreover, \( \TrMat \) is uniform if and only if \( \TrMod \) is uniform.
\end{prop}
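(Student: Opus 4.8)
The plan is to exhibit $\TrMod$ as a conjugate of $\TrMat$ by a position-dependent diagonal matrix that is bounded and boundedly invertible \emph{uniformly} in the position and in $\InfWord$. Once this is done, the norms $\lVert \TrMod( j , \InfWord ) \rVert$ and $\lVert \TrMat( j , \InfWord ) \rVert$ differ only by a multiplicative constant, and both assertions follow by taking logarithms.

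First I would record the conjugation identity explicitly. For $\InfWord \in \Subshift$ and $j \in \NN_{0}$ put $D_{j}( \InfWord ) \DefAs \bigl(\begin{smallmatrix} 1 & 0 \\ 0 & \NDig( \Shift^{j+1} \InfWord ) \end{smallmatrix}\bigr)$. Tracking a solution $\varphi$ of $\Jac_{\InfWord} \varphi = E \varphi$ through the vectors $\bigl(\begin{smallmatrix} \varphi(j+1) \\ \varphi(j) \end{smallmatrix}\bigr)$ (as in Section~\ref{sec:JOTransMat}) and through $\bigl(\begin{smallmatrix} \varphi(j+1) \\ \NDig( \Shift^{j+1} \InfWord )\,\varphi(j) \end{smallmatrix}\bigr)$ (as in Example~\ref{exmpl:TrMod}), and using that an arbitrary vector in $\RR^{2}$ occurs as $\bigl(\begin{smallmatrix}\varphi(1)\\\varphi(0)\end{smallmatrix}\bigr)$ for a suitable solution, one reads off $D_{j}( \InfWord )\, \TrMat( j , \InfWord ) = \TrMod( j , \InfWord )\, D_{0}( \InfWord )$, that is,
\[ \TrMod( j , \InfWord ) \, = \, D_{j}( \InfWord )\, \TrMat( j , \InfWord )\, D_{0}( \InfWord )^{-1} \qquad \text{for all } j \in \NN_{0} \, , \]
with the analogous identity for $j < 0$.

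Second I would use compactness: since $\NDig$ is continuous on the compact space $\Subshift$ and never vanishes, there are constants $0 < c \leq C$ with $c \leq \lvert \NDig( \varrho ) \rvert \leq C$ for all $\varrho \in \Subshift$. Hence $\lVert D_{j}( \InfWord ) \rVert$, $\lVert D_{j}( \InfWord )^{-1} \rVert$ and $\lVert D_{0}( \InfWord )^{-1} \rVert$ are all bounded by a constant $K_{0} \geq 1$ depending only on $c$ and $C$, independently of $j$ and $\InfWord$. Plugging this into the identity above, and into the identity obtained by solving it for $\TrMat$, submultiplicativity of the operator norm yields
\[ K^{-1} \cdot \lVert \TrMat( j , \InfWord ) \rVert \;\leq\; \lVert \TrMod( j , \InfWord ) \rVert \;\leq\; K \cdot \lVert \TrMat( j , \InfWord ) \rVert \]
for all $j$ and all $\InfWord$, with $K \DefAs K_{0}^{2}$. (For negative $j$ one may either run the same diagonal-conjugation argument verbatim, or observe that $\det \TrMat( j , \InfWord ) = \NDig( \Shift \InfWord )/\NDig( \Shift^{j+1} \InfWord )$ telescopes and is therefore bounded above and below, so that a transfer matrix and its inverse have comparable norms uniformly.)

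Finally I take logarithms: the displayed inequalities give $\bigl\lvert \ln \lVert \TrMod( j , \InfWord ) \rVert - \ln \lVert \TrMat( j , \InfWord ) \rVert \bigr\rvert \leq \ln K$ for all $j \in \NN$ and all $\InfWord \in \Subshift$. Dividing by $j$ and letting $j \to \infty$, the two families $\tfrac{1}{j} \ln \lVert \TrMod( j , \cdot ) \rVert$ and $\tfrac{1}{j} \ln \lVert \TrMat( j , \cdot ) \rVert$ have identical asymptotics at every point and identical uniform-convergence behaviour on $\Subshift$; this gives both the pointwise equality of limits (when they exist) and the equivalence of uniformity of $\TrMod$ with uniformity of $\TrMat$. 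There is no genuine obstacle here: the only thing requiring attention is the bookkeeping of shift indices in the conjugation identity and the treatment of negative $j$, and both are controlled entirely by the uniform two-sided bound on $\lvert \NDig \rvert$, which is the whole content of the statement. (Note that minimality and unique ergodicity of $\Subshift$ are not actually needed for this argument; they are retained only for consistency with the surrounding results.)
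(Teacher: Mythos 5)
Your proof is correct. Note that the paper contains no proof of this proposition to compare against: it is imported verbatim as Lemma~5 of Beckus/Pogorzelski, so the statement is used as a black box in the text. Your argument is the natural one behind that cited lemma: the conjugation identity \( \TrMod( j , \InfWord ) = D_{j}( \InfWord )\, \TrMat( j , \InfWord )\, D_{0}( \InfWord )^{-1} \) is indeed valid (it can be checked for \( j = 1 \) by a direct matrix computation and then telescoped, since \( D_{1}( \Shift^{j} \InfWord ) = D_{j+1}( \InfWord ) \) and \( D_{0}( \Shift^{j} \InfWord ) = D_{j}( \InfWord ) \)), and since \( \NDig \) is nonvanishing and locally constant (or just continuous on the compact \( \Subshift \)) it is bounded away from \( 0 \) and \( \infty \), giving a constant \( K \) independent of \( j \) and \( \InfWord \) with \( K^{-1} \lVert \TrMat( j , \InfWord ) \rVert \leq \lVert \TrMod( j , \InfWord ) \rVert \leq K \lVert \TrMat( j , \InfWord ) \rVert \); dividing logarithms by \( j \) then yields both the pointwise equivalence and equality of the limits and the equivalence of uniformity. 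Your closing remarks are also accurate: the treatment of negative \( j \) is not even needed for the statement as formulated (uniformity only involves \( j \to \infty \)), and minimality and unique ergodicity play no role in the argument.
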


The following two observations (see \cite[Proposition~1.1]{GLNS_LeadingSeq_Arxiv}) will be helpful in our study of the asymptotic behaviour of cocycles. They demonstrate that taking inverses does not change the norm and that omitting a finite piece at the beginning does not change the asymptotic behaviour.
 
\begin{prop}
\label{prop:CocycInvCut}
Let \( \Subshift \) be a subshift and let \( \Cocyc \colon \Subshift \to \SL \) be a locally constant map. Then the following holds:
\begin{tightenumerate}
\item{For every \( j\in \NN_{0} \) and every \( \InfWord \in \Subshift \) we have \( \lVert \Cocyc( -j , \InfWord ) \rVert = \lVert \Cocyc( j , \Shift^{-j} \InfWord ) \rVert \).}
\item{For every \( J \in \ZZ \) there exists a constant \( c(J) > 0 \) with
\[ \ln( \lVert \Cocyc( j-J , \Shift^{J} \InfWord ) \rVert ) - c(J) \leq \ln( \lVert \Cocyc( j , \InfWord ) \rVert ) \leq \ln( \lVert \Cocyc( j-J , \Shift^{J} \InfWord ) \rVert ) + c(J) \]
for all \( \InfWord \in \Subshift \) and all \( j \in \ZZ \).}
\end{tightenumerate}
\end{prop}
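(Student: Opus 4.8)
Looking at this, I need to prove Proposition~\ref{prop:CocycInvCut} with its two parts about locally constant cocycles over a subshift.

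\textbf{Plan for part (a).} The statement $\lVert \Cocyc( -j , \InfWord ) \rVert = \lVert \Cocyc( j , \Shift^{-j} \InfWord ) \rVert$ should follow almost directly from the definition of the cocycle (Definition~\ref{defi:Cocyc}) together with the special structure of $\SL$-matrices. First I would unpack both sides: by definition, for $j \in \NN$,
\[ \Cocyc( -j , \InfWord ) = \Cocyc( \Shift^{-j} \InfWord )^{-1} \cdot \ldots \cdot \Cocyc( \Shift^{-1} \InfWord )^{-1} \qquad \text{and} \qquad \Cocyc( j , \Shift^{-j} \InfWord ) = \Cocyc( \Shift^{-1} \InfWord ) \cdot \ldots \cdot \Cocyc( \Shift^{-j} \InfWord ) \, . \]
Hence $\Cocyc( -j , \InfWord ) = \big( \Cocyc( j , \Shift^{-j} \InfWord ) \big)^{-1}$. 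So part (a) reduces to the elementary fact that for any $M \in \SL$ one has $\lVert M^{-1} \rVert = \lVert M \rVert$ (in the operator/spectral norm): indeed for a $2\times 2$ matrix of determinant one, $M^{-1} = \big(\begin{smallmatrix} m_{22} & -m_{12} \\ -m_{21} & m_{11} \end{smallmatrix}\big)$, which is obtained from $M$ by a permutation of entries and sign changes, i.e.\ by conjugating with an orthogonal matrix and transposing; since the spectral norm is invariant under transposition and under multiplication by orthogonal matrices, $\lVert M^{-1} \rVert = \lVert M \rVert$. For $j = 0$ the statement is trivial since both sides are $\lVert \Id \rVert = 1$. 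I should state explicitly which matrix norm is used and that it is submultiplicative, since part (b) will need that.

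\textbf{Plan for part (b).} The idea is that shifting by a fixed amount $J$ only changes the cocycle by a bounded factor on the left and on the right. Fix $J \in \ZZ$ and $j \in \ZZ$. Using the cocycle identity $\Cocyc( m + n , \InfWord ) = \Cocyc( m , \Shift^{n} \InfWord ) \Cocyc( n , \InfWord )$ (which follows directly from the definition, taking a bit of care with the cases where the indices have different signs — the $\SL$ structure makes all the telescoping products match up), I can write
\[ \Cocyc( j , \InfWord ) = \Cocyc( -J , \Shift^{j} \InfWord ) \cdot \Cocyc( j - J , \Shift^{J} \InfWord ) \cdot \Cocyc( J , \InfWord ) \, . \]
Since $\Cocyc$ is locally constant, by Remark~\ref{rem:LocConstFinVal} it takes only finitely many values; hence both $\lVert \Cocyc( J , \varrho ) \rVert$ and $\lVert \Cocyc( -J , \varrho ) \rVert = \lVert \Cocyc( J , \Shift^{-J} \varrho ) \rVert$ (by part (a), or directly) are bounded, uniformly in $\varrho \in \Subshift$, by some constant $e^{c(J)/2}$ with $c(J) > 0$ depending only on $J$ (and not on $j$ or $\InfWord$). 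Submultiplicativity of the norm then gives $\lVert \Cocyc( j , \InfWord ) \rVert \leq e^{c(J)} \lVert \Cocyc( j - J , \Shift^{J} \InfWord ) \rVert$, which is the right-hand inequality after taking logarithms. For the left-hand inequality I apply the same argument in the other direction, writing $\Cocyc( j - J , \Shift^{J} \InfWord ) = \Cocyc( J , \Shift^{j} \InfWord )^{-1} \cdot \Cocyc( j , \InfWord ) \cdot \Cocyc( J , \InfWord )^{-1}$ and bounding the two outer factors uniformly; equivalently, this is just the right-hand inequality with the roles of $(j,\InfWord)$ and $(j - J, \Shift^{J}\InfWord)$ swapped and $J$ replaced by $-J$, so one can recycle the bound after enlarging $c(J)$ to also dominate $c(-J)$.

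\textbf{Main obstacle.} The only mildly fiddly point is verifying the cocycle composition identity $\Cocyc(m+n,\InfWord) = \Cocyc(m, \Shift^n \InfWord)\Cocyc(n,\InfWord)$ across all sign combinations of $m$, $n$, $m+n$, since Definition~\ref{defi:Cocyc} is piecewise and inverses appear in the negative branch. This is routine telescoping — the products of elementary factors $\Cocyc(\Shift^i \varrho)^{\pm 1}$ cancel appropriately — but I would want to either check the handful of cases or phrase the whole argument so that it only ever uses the identity in the form $\Cocyc(j,\InfWord) = \Cocyc(j - J, \Shift^J \InfWord)\,\Cocyc(J, \InfWord)$ together with its analogue obtained by left-multiplication, both of which unwind immediately from the definition without any genuinely new case analysis. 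Everything else is bookkeeping with the submultiplicative norm and the finiteness of the range of a locally constant map.
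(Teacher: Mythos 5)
Part (a) of your proposal is correct and is essentially the paper's argument: the reversed product of inverses shows \( \Cocyc( -j , \InfWord ) = \Cocyc( j , \Shift^{-j} \InfWord )^{-1} \), and \( \lVert M \rVert = \lVert M^{-1} \rVert \) for \( M \in \SL \) finishes it. In part (b) your overall strategy — split off the contribution of \( J \) shifts, bound it uniformly using that a locally constant map takes finitely many values, and apply submultiplicativity in both directions — is also the paper's. However, the two identities you actually display are false. Your three-factor decomposition
\[ \Cocyc( j , \InfWord ) = \Cocyc( -J , \Shift^{j} \InfWord ) \cdot \Cocyc( j - J , \Shift^{J} \InfWord ) \cdot \Cocyc( J , \InfWord ) \]
cannot hold: by the cocycle identity the last two factors already multiply to \( \Cocyc( j , \InfWord ) \), so the right-hand side equals \( \Cocyc( -J , \Shift^{j} \InfWord ) \cdot \Cocyc( j , \InfWord ) = \Cocyc( j-J , \InfWord ) \), which differs from \( \Cocyc( j , \InfWord ) \) in general (test \( J=1 \), \( j=2 \): the right-hand side collapses to \( \Cocyc( \InfWord ) \), the left-hand side is \( \Cocyc( \Shift \InfWord ) \Cocyc( \InfWord ) \)). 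The same defect occurs in your sandwich identity \( \Cocyc( j-J , \Shift^{J} \InfWord ) = \Cocyc( J , \Shift^{j} \InfWord )^{-1} \Cocyc( j , \InfWord ) \Cocyc( J , \InfWord )^{-1} \), whose right-hand side is actually \( \Cocyc( J , \Shift^{j} \InfWord )^{-1} \Cocyc( j-J , \Shift^{J} \InfWord ) \); the spurious outer factor evaluated at \( \Shift^{j} \InfWord \) does not belong there.

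The repair is exactly what you yourself suggest in your last paragraph, and it is what the paper does: use only the two-factor identity \( \Cocyc( j , \InfWord ) = \Cocyc( j-J , \Shift^{J} \InfWord ) \cdot \Cocyc( J , \InfWord ) \) (checked by an enumeration of sign cases), set \( C \DefAs \Cocyc( j-J , \Shift^{J} \InfWord ) \), \( D \DefAs \Cocyc( J , \InfWord ) \), and invoke
\[ \lVert C \rVert \, \frac{1}{\lVert D \rVert} = \lVert C \rVert \, \frac{1}{\lVert D^{-1} \rVert} \leq \lVert C D \rVert \leq \lVert C \rVert \, \lVert D \rVert \quad \text{for } C, D \in \SL \, , \]
with \( c(J) \DefAs \sup_{\varrho \in \Subshift} \ln( \lVert \Cocyc( J , \varrho ) \rVert ) \), finite by local constancy. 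This gives both inequalities at once, so the symmetric "swap \( (j,\InfWord) \) and \( (j-J,\Shift^{J}\InfWord) \)" step you propose for the lower bound is not needed (though it would also work once the upper bound is correctly established). As written, though, the displayed decompositions are wrong and the proof does not go through without this correction.
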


\begin{proof}
For every \( B\in \SL \) we have \( \lVert B \rVert = \lVert B^{-1} \rVert \). Part (a) then follows from a short computation:
\[ \lVert \Cocyc( -j , \InfWord ) \rVert = \lVert \Cocyc( \Shift^{-j} \InfWord )^{-1} \cdot \ldots \cdot \Cocyc( \Shift^{-1} \InfWord )^{-1} \rVert = \lVert \Cocyc( \Shift^{-1} \InfWord ) \cdot \ldots \cdot \Cocyc( \Shift^{-j} \InfWord ) \rVert = \lVert \Cocyc( j , \Shift^{-j} \InfWord ) \rVert \, . \]
For part (b) we define \( c(J) \DefAs \sup \Set{ \ln( \lVert \Cocyc( J , \InfWord ) \rVert ) : \InfWord \in \Subshift } \), which is finite since \( \Cocyc \) is locally constant. Moreover, the definition of cocycles yields \( \Cocyc( j , \InfWord ) = \Cocyc( j-J , \Shift^{J} \InfWord ) \cdot \Cocyc( J , \InfWord ) \) for all \( j , J \in \ZZ \), which can be checked by an enumeration of cases. Conceptually this means that the matrix product associated to \( j \) shifts of \( \InfWord \) is split into the product associated to \( J \) shifts of \( \InfWord \) and the product associated to \( j-J \) shifts of \( \Shift^{J} \InfWord \). With \( C \DefAs \Cocyc( j-J , \Shift^{J} \InfWord ) \) and \( D \DefAs \Cocyc( J , \InfWord ) \), the claim follows from the inequality
\[\lVert C \rVert \frac{ 1 }{ \lVert D \rVert } = \lVert C \rVert \frac{ 1 }{ \lVert D^{-1} \rVert } \leq \lVert C D \rVert \leq \lVert C \rVert \lVert D \rVert \, , \]
which holds for all \( C,D \in \SL \). 
\end{proof}

\section{(LSC)-subshifts}
\label{sec:LSCSubsh}

In the following we introduce \LCond{}-subshifts. Their defining properties are chosen in such a way that cocycles on these subshifts are particularly well-behaved. Since the definition is hard to check directly, we give sufficient combinatorial conditions for \LCond{}. In Section~\ref{sec:LCondExmpl} we will prove that both simple Toeplitz subshifts and Sturmian subshifts satisfy these sufficient conditions. The study of \LCond{}-subshifts stems from a collaboration with Rostislav Grigorchuk, Daniel Lenz and Tatiana Nagnibeda, see \cite{GLNS_LeadingSeq_Arxiv}.

\subsection{Definition and basic properties} 
\label{subsec:DefLCond}

We now introduce the leading sequence condition for subshifts. Essentially it says that there are finitely many elements which contain all \( u \in \Langu{ \Subshift } \) around their origins. In addition cocycles are required to have the same asymptotic exponential behaviour along all these elements. The third condition provides additional control over this behaviour.

\begin{defi}
\label{defi:LCond}
A subshift \( \Subshift \) satisfies the \emph{leading sequence condition}~\LCond{}\index{LSC@(LSC)}\index{leading sequence condition} if there exists a number \( r \in \NN \) and elements \( \LWord{1} , \ldots , \LWord{\LNr} \in \Subshift \) with the following properties:
\begin{tightdescription}
\item[\hypertarget{LSCa}{\LCond{\(\boldsymbol{\ \alpha}\)}}]{There exist integers \( L_{0} , I_{0} \in \NN_{0} \) such that
\[ \Langu{\Subshift}_{L} = \bigcup_{i=1}^{\LNr} \bigcup_{j= -I_{0}-L}^{I_{0}-1} \Restr{ \LWord{i} }{ j+1 }{ j+L } \]
holds for all  \( L \in \NN \) with \( L \geq L_{0} \).}
\item[\hypertarget{LSCb}{\LCond{\(\boldsymbol{\ \beta}\)}}]{For every locally constant map \( A \colon \Subshift \to \SL \), there exists a number \( c \in \RR \) such that for every \( i \in \Set{ 1 , \ldots , \LNr } \) the limits
\[ \lim_{j \to \pm \infty} \frac{ 1 }{ \lvert j \rvert } \ln( \lVert \Cocyc( j , \InfWord^{(i)}) \rVert ) \]
exist and all limits have the value \( c \).}
\item[\hypertarget{LSCc}{\LCond{\(\boldsymbol{\ \gamma}\)}}]{For every locally constant map \( \Cocyc \colon \Subshift \to \SL \), every \( i \in \Set{ 1, \ldots , \LNr } \) and every \( \Phi \in \RR^{2} \setminus \Set{ 0 } \),
at most one of the limits \( \lim_{j \to \pm \infty} \frac{1}{ \lvert j \rvert } \ln( \lVert \Cocyc( j , \LWord{i} ) \Phi \rVert ) \) is negative.}
\end{tightdescription}
Sometimes we call \LConda{} the \emph{combinatorial leading sequence condition}, while \LCondb{} and \LCondc{} are referred to as \emph{cocycle leading sequence condition}. The words \( \LWord{1}, \ldots, \LWord{\LNr} \) are called the \emph{leading sequences}\index{leading sequence} or \emph{leading words}\index{leading word}\index{word!leading} of the subshift.
\end{defi}

\begin{rem}
\label{rem:LNr-I0}
In \LConda{} we require that every \( u \in \Langu{ \Subshift } \) intersects \( \Restr{ \LWord{i} }{ -I_{0} }{ I_{0} } \) for some \( i \in \Set{ 1 , \ldots , \LNr } \). Without loss of generality we may assume \( I_{0} = 0 \):
if \( \LWord{1} , \ldots , \LWord{\LNr} \) are leading sequences with \( I_{0} > 0 \), then it follows easily from Proposition~\ref{prop:CocycInvCut} that \( \Shift^{ -I_{0} } \LWord{1} , \ldots , \Shift^{ I_{0} } \LWord{1} , \ldots , \Shift^{-I_{0}} \LWord{\LNr} , \ldots , \Shift^{I_{0}} \LWord{ \LNr } \) are leading sequences with \( I_{0} = 0 \). However, our general philosophy is that leading sequences are few elements which describe the behaviour of all elements. Thus we want to avoid leading sequences that are ``essentially the same'' and ``contain no new information'' (in the sense that they belong to the same orbit). We will therefore allow positive \( I_{0} \) in specific examples, while we will occasionally set \( I_{0} = 0 \) in proofs.
\end{rem}

It turns out that the seemingly arbitrary leading sequence conditions have rather strong combinatorial implications. Roughly speaking, the frequency of a word can be expressed as the norm of a cocycle, and by \LCondb{} it is therefore the same in all leading sequences. By combining \LConda{} and \LCondc{} we find that every \( u \in \Langu{ \Subshift } \) occurs in some \( \LWord{i} \) infinitely often. The application of \LConda{} to the gaps between two consecutive occurrences of \( u \) shows that \( u \) even has positive frequency. Since the frequencies in all leading sequences agree, every word has positive frequency in every leading sequence, which implies unique ergodicity and minimality. Below we discuss these arguments in detail. We begin with a result on the complexity.

\begin{prop}
\label{prop:LCondCompl}
Let \( \Subshift \) be an \LCond{}-subshift with \( \LNr \) leading sequences. Then its complexity is bounded by \( \Comp( L ) \leq \LNr L + 2 r I_{0} \) for all sufficiently large \( L \in \NN \).
\end{prop}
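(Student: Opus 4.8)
The plan is to bound $\Comp(L) = \Card{\Langu{\Subshift}_L}$ directly from the combinatorial leading sequence condition \LConda{}. That condition states precisely that, for $L \geq L_0$, every word of length $L$ in the subshift appears as a factor $\Restr{\LWord{i}}{j+1}{j+L}$ of one of the $\LNr$ leading sequences, where the starting position $j$ ranges over the finite set $\Set{ -I_0 - L , \ldots , I_0 - 1 }$. So the plan is simply to count: the number of pairs $(i,j)$ with $i \in \Set{1,\ldots,\LNr}$ and $j \in \Set{-I_0 - L, \ldots, I_0 - 1}$ is an upper bound for the size of $\Langu{\Subshift}_L$, since the union in \LConda{} runs over exactly these pairs.

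First I would fix $L \geq L_0$ and observe that the inner index set $\Set{ -I_0 - L , \ldots , I_0 - 1 }$ has cardinality $(I_0 - 1) - (-I_0 - L) + 1 = L + 2 I_0$. Summing over the $\LNr$ leading sequences gives at most $\LNr (L + 2 I_0) = \LNr L + 2 \LNr I_0$ factors in total, hence
\[
\Comp( L ) = \Card{ \Langu{\Subshift}_L } \leq \LNr L + 2 \LNr I_0 \, .
\]
Here I am using the notation $\LNr = r$ for the number of leading sequences, consistent with Definition~\ref{defi:LCond}. This matches the claimed bound $\LNr L + 2 r I_0$ (the two instances of $r$ in the statement being the same quantity). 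The "for all sufficiently large $L$" in the statement corresponds to the requirement $L \geq L_0$ coming from \LConda{}.

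There is essentially no obstacle here: the statement is an immediate consequence of \LConda{} together with the fact that a set covered by $N$ sets, each containing at most one element of a given type per index, has at most $N$ times the number of indices many elements. The only point worth a sentence of care is that the factors $\Restr{\LWord{i}}{j+1}{j+L}$ for different pairs $(i,j)$ need not be distinct, so the count is genuinely an upper bound and not an equality — which is exactly what the proposition asserts. I would also note in passing (or not, depending on how much detail is wanted) that if one normalises via Remark~\ref{rem:LNr-I0} to take $I_0 = 0$ one gets the cleaner bound $\Comp(L) \leq \LNr L$; the stated form simply keeps $I_0$ general so that the bound applies verbatim to the leading sequences as chosen in concrete examples.
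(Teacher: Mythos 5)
Your proposal is correct and is essentially the paper's own argument: the paper likewise counts, for each of the \( \LNr \) leading sequences, the at most \( L + 2I_{0} \) words of length \( L \) intersecting \( \Restr{ \LWord{i} }{ -I_{0} }{ I_{0} } \) and then invokes \LConda{}. Your extra remarks (non-distinctness of factors, the \( I_{0}=0 \) normalisation) are fine but not needed.
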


\begin{proof}
For each of the \( \LNr \) leading sequences there are at most (\( 2 I_{0} + L \))-many words of length \( L \) that intersect \( \Restr{ \LWord{i} }{ -I_{0} }{ I_{0} } \). Now the claim follows immediately from \LConda{}.
\end{proof}

Now we consider recurrence, unique ergodicity and minimality of \LCond{}-subshifts. By \LConda{}, every \( u \in \Langu{ \Subshift } \) occurs in some \( \LWord{i} \). It turns out that \LCondc{} implies that \( u \) has to occur infinitely often in this \( \LWord{i} \). This is known as recurrence.

\begin{defi}
A two-sided infinite word \( \InfWord \in \Alphab^{\ZZ} \) is called \emph{recurrent}\index{recurrent}\index{word!recurrent} if every finite subword of \( \InfWord \) occurs infinitely often in \( \InfWord \).
\end{defi}

\begin{prop}
\label{prop:LCondRecur}
Let \( \Subshift \) be an \LCond{}-subshift. Then every leading word \( \LWord{i} \), \( i \in \Set{ 1 ,\ldots , \LNr } \), is recurrent.
\end{prop}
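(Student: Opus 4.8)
The plan is to argue by contradiction, using only \LCondc{} together with a short combinatorial reduction; neither \LConda{} nor \LCondb{} is needed here. Suppose $\LWord{i}$ is not recurrent, so some non\nobreakdash-empty finite factor $v$ of $\LWord{i}$ occurs only finitely often in $\LWord{i}$, say exactly at the positions $p_1 < \dots < p_m$ with $m \geq 1$. The first step is to replace $v$ by a factor occurring \emph{exactly once}: set $u \DefAs \Restr{\LWord{i}}{p_1}{p_m + \Length{v} - 1}$, the factor of $\LWord{i}$ running from the start of the first occurrence of $v$ to the end of the last one. Then $v$ is both a prefix and a suffix of $u$; if $u$ occurred in $\LWord{i}$ at a position $q$, then $v$ would occur at $q$ and at $q + (p_m - p_1)$, forcing $q \geq p_1$ and $q + (p_m - p_1) \leq p_m$, hence $q = p_1$. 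So $u$ occurs in $\LWord{i}$ exactly once, namely at $p_1$, while $\Length{u} \geq \Length{v} \geq 1$.

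Next I would introduce a locally constant cocycle that ``sees'' this unique occurrence. Fix $\lambda > 1$ and let $\Cocyc \colon \Subshift \to \SL$ be given by $\Cocyc(\varrho) = P \DefAs \big(\begin{smallmatrix} 0 & -1 \\ 1 & 0 \end{smallmatrix}\big)$ if $\Restr{\varrho}{-\Length{u}+1}{0} = u$, and $\Cocyc(\varrho) = Q \DefAs \big(\begin{smallmatrix} \lambda & 0 \\ 0 & \lambda^{-1} \end{smallmatrix}\big)$ otherwise; both matrices lie in $\SL$, and $\Cocyc$ depends on a bounded window, so it is locally constant. Along $\LWord{i}$ the value $P$ is attained exactly once, at the shift $\Shift^{k_0}\LWord{i}$ with $k_0 \DefAs p_1 + \Length{u} - 1$, and $Q$ is attained at every other shift. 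Hence, for $\lvert j \rvert$ large, $\Cocyc(j, \LWord{i})$ is a power of $Q$ whose exponent tends to $+\infty$ as $j \to +\infty$ and to $-\infty$ as $j \to -\infty$, multiplied by a fixed matrix which is either $\Id$ or built from the single factor $P$ and a bounded power of $Q$, the precise shape depending only on the sign of $k_0$.

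The decisive point is that $P$ interchanges the two coordinate axes, which are exactly the expanding and contracting eigendirections of $Q$. Therefore one of the standard basis vectors --- $\Phi = \big(\begin{smallmatrix} 1 \\ 0 \end{smallmatrix}\big)$ when $k_0 \geq 0$ and $\Phi = \big(\begin{smallmatrix} 0 \\ 1 \end{smallmatrix}\big)$ when $k_0 < 0$ --- has the property that $\Cocyc(j, \LWord{i})\Phi$ ends up along the contracting direction of $Q$ as $j \to +\infty$ and along the contracting direction of $Q^{-1}$ as $j \to -\infty$. A direct computation with these diagonal and anti-diagonal $2 \times 2$ matrices then yields $\lim_{j \to +\infty} \frac{1}{j}\ln(\lVert \Cocyc(j, \LWord{i})\Phi \rVert) = -\ln\lambda = \lim_{j \to -\infty} \frac{1}{\lvert j \rvert}\ln(\lVert \Cocyc(j, \LWord{i})\Phi \rVert)$. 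Both limits are negative, contradicting \LCondc{}; hence $\LWord{i}$ is recurrent.

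The main obstacle is precisely the choice of cocycle, or rather the demand that a \emph{single} vector $\Phi$ contract at \emph{both} ends of $\LWord{i}$: if one ran the argument directly with $v$, which may occur several times, the several copies of $P$ would compose back to a power of $Q$ and cancel the axis swap, so no such $\Phi$ would exist --- which is exactly why the reduction to a factor occurring exactly once is indispensable. The rest --- verifying the two limits, split into the cases $k_0 \geq 0$ and $k_0 < 0$ --- is a routine matrix computation.
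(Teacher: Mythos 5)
Your proof is correct and is essentially the paper's argument: both construct a locally constant \( \SL \)-valued map that applies the rotation \( \big(\begin{smallmatrix} 0 & -1 \\ 1 & 0 \end{smallmatrix}\big) \) exactly once along \( \LWord{i} \) and a hyperbolic diagonal matrix at every other shift, and then exhibit a vector whose image contracts exponentially in both directions, contradicting \LCondc{}. The only difference is organisational: the paper takes as its marker the centred window \( \Restr{ \LWord{i} }{ -J }{ J } \) containing all occurrences of the non-recurrent word, which can match only at shift \( 0 \), so it needs neither your reduction to a factor occurring exactly once nor the case distinction on \( k_{0} \) and the sign-dependent choice of \( \Phi \).
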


\begin{proof}
Assume that there exist \( u \in \Langu{ \Subshift } \) and \( i \in \Set{ 1 , \ldots , \LNr } \) such that \( \LWord{i} \) contains \( u \), but only finitely many times. Let \( J \in \NN \) be such that \( \Restr{ \LWord{i} }{ -J }{ J } \) contains all occurrences of \( u \) in \( \LWord{i} \). Clearly this implies that \( \Restr{ \Shift^{j} \LWord{i} }{ -J }{ J } = \Restr{ \LWord{i} }{ -J }{ J } \) holds only for \( j = 0 \). Using this, we show that there is a locally constant cocycle \( \Cocyc \) such that the norm of \( \Cocyc( j , \LWord{i} ) ( \begin{smallmatrix} 1 \\ 0 \end{smallmatrix} ) \) decays exponentially in both directions, which contradicts \LCondc{}. Let \( \Cocyc \) be defined by
\[ \Cocyc \colon \Subshift \to \SL \quad , \;\; \InfWord \mapsto \begin{cases}
\big( \begin{smallmatrix} 0 & -1 \\ 1 & 0 \end{smallmatrix} \big) & \text{if } \Restr{ \InfWord }{ -J }{ J } = \Restr{ \LWord{i} }{ -J }{ J }  \\
\big( \begin{smallmatrix} 2 & 0 \\ 0 & \frac{1}{2} \end{smallmatrix} \big) & \text{if } \Restr{ \InfWord }{ -J }{ J } \neq \Restr{ \LWord{i} }{ -J }{ J } \\
\end{cases} \, .\]
For \( j > 0 \) this yields
\[  \Cocyc( j , \LWord{i} ) \begin{pmatrix} 1 \\ 0 \end{pmatrix}  = \begin{pmatrix} 2 & 0 \\ 0 & \frac{1}{2} \end{pmatrix}^{j-1} \cdot \begin{pmatrix} 0 & -1 \\ 1 & 0 \end{pmatrix} \cdot \begin{pmatrix} 1 \\ 0 \end{pmatrix} =  \begin{pmatrix} 0 \\ \frac{1}{2^{j-1}} \end{pmatrix} \, . \]
Similarly we obtain for \( j < 0 \):
\[  \Cocyc( j , \LWord{i} ) \begin{pmatrix} 1 \\ 0 \end{pmatrix} = \Bigg( \begin{pmatrix} 2 & 0 \\ 0 & \frac{1}{2} \end{pmatrix}^{-1} \Bigg)^{j} \begin{pmatrix} 1 \\ 0 \end{pmatrix} = \begin{pmatrix} \frac{1}{2^{j}} & 0 \\ 0 & 2^{j} \end{pmatrix} \cdot \begin{pmatrix} 1 \\ 0 \end{pmatrix} = \begin{pmatrix} \frac{1}{2^{j}} \\ 0 \end{pmatrix} \, . \qedhere \]
\end{proof}

Now we apply \LConda{} to the gaps between two occurrences of \( u \) in \( \LWord{i} \). This allows us to strengthen the previous result considerably:

\begin{prop}
\label{prop:LCondBdGap}
Let \( \Subshift \) be an \LCond{}-subshift. If \( i \in \Set{ 1 , \ldots , \LNr } \) is such that \( u \in \Langu{ \Subshift } \) occurs in \( \LWord{i} \), then \( u \) occurs with bounded gaps in \( \LWord{i}|_{-\NN_{0}} \) or in \( \LWord{i}|_{\NN} \).
\end{prop}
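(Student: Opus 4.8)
The plan is to argue by contradiction, combining recurrence of the leading sequences (Proposition~\ref{prop:LCondRecur}) with the combinatorial condition \LConda{}, applied — as the preceding remark indicates — to the gaps between consecutive occurrences of $u$. By Remark~\ref{rem:LNr-I0} I may assume $I_{0}=0$, so \LConda{} says precisely that every sufficiently long word of $\Langu{\Subshift}$ occurs in some leading sequence $\LWord{m}$ in a window containing position $0$.

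First I would set up the contradiction. Suppose $u$ occurs in $\LWord{i}$ but with unbounded gaps in both $\LWord{i}|_{-\NN_{0}}$ and $\LWord{i}|_{\NN}$. By Proposition~\ref{prop:LCondRecur} the word $\LWord{i}$ is recurrent, so $u$ occurs infinitely often; hence its occurrence positions are unbounded above or below, say above (the other case is symmetric), with consecutive occurrences at positions $p_{1}<p_{2}<\cdots\to+\infty$. If the gaps $p_{n+1}-p_{n}$ were bounded, $u$ would occur with bounded gaps in $\LWord{i}|_{\NN}$, contrary to assumption; so $\sup_{n}(p_{n+1}-p_{n})=\infty$, and along a subsequence $g_{k}\DefAs p_{n_{k}+1}-p_{n_{k}}\to\infty$. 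Put $w_{k}\DefAs\Restr{\LWord{i}}{p_{n_{k}}}{p_{n_{k}+1}+\Length{u}-1}\in\Langu{\Subshift}$. Each $w_{k}$ begins and ends with $u$, has length $g_{k}+\Length{u}\to\infty$, and — since $p_{n_{k}}$ and $p_{n_{k}+1}$ are consecutive occurrences — contains $u$ only at its two extremal positions (relative positions $0$ and $g_{k}$).

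The decisive step is to feed the $w_{k}$ into \LConda{}. For $k$ large, $w_{k}$ occurs in some leading sequence $\LWord{m_{k}}$ in a window containing position $0$; since there are only $\LNr$ leading sequences, I pass to a subsequence along which $m_{k}\equiv m$ is constant. Writing the occurrence as $\Restr{\LWord{m}}{c_{k}}{c_{k}+g_{k}+\Length{u}-1}$ with $c_{k}\leq 0$ and $c_{k}+g_{k}+\Length{u}-1\geq 0$, we find that in $\LWord{m}$ the block $u$ occurs at positions $c_{k}$ and $c_{k}+g_{k}$ and at no position strictly between them. Now run a short case analysis on $(c_{k})$ and $(c_{k}+g_{k})$, both of which are pinned on one side ($c_{k}\leq 0$, and $c_{k}+g_{k}\geq 1-\Length{u}$): if $(c_{k})$ is bounded it takes some value $c$ infinitely often, and for two such indices with $g_{k}<g_{k'}$ the position $c+g_{k}$ is an occurrence of $u$ in $\LWord{m}$ coming from $w_{k}$ yet lies in the $u$-free stretch $(c,\,c+g_{k'})$ coming from $w_{k'}$; if $(c_{k})\to-\infty$ while $(c_{k}+g_{k})$ stays bounded, a symmetric comparison of two of the left endpoints $c_{k'}$ is contradictory; and if both $(c_{k})\to-\infty$ and $(c_{k}+g_{k})\to+\infty$, then the $u$-free stretches $(c_{k},\,c_{k}+g_{k})$ eventually swallow the fixed occurrence position $c_{1}$, again a contradiction. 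Any of these contradictions finishes the proof.

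I expect the only real care to be needed in the last paragraph: using the window around $0$ correctly so the relevant positions remain pinned on one side, and verifying in each of the three cases that two long gap-words genuinely disagree about an occurrence of $u$. This "two arbitrarily long $u$-gap-words cannot both straddle the origin of the same leading sequence" phenomenon is the heart of the argument, and it is exactly where the finiteness of the set of leading sequences (via pigeonhole) is used.
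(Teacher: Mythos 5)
Your proof is correct and rests on the same mechanism as the paper's: you apply \LConda{} to the gap words \( \Word{u}{v}{u} \) between consecutive occurrences of \( u \) and exploit that every sufficiently long such word must straddle the origin window of one of the finitely many leading sequences, where occurrences of \( u \) are pinned down. The paper packages this directly, extracting an explicit uniform bound \( \widetilde{L}_{u} \) from the finitely many occurrences near the origins (so that no long gap word exists in \( \Langu{\Subshift} \) at all), whereas you argue by contradiction via pigeonhole on the leading sequence and on the endpoint positions; this is only a difference in bookkeeping, not in the underlying idea.
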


\begin{proof}
Fix \( u \in \Langu{ \Subshift } \) and let \( i \in \Set{ 1 , \ldots , \LNr } \) be arbitrary. Let \( j^{(i)}_{1} < \ldots < j^{(i)}_{N_{i}} \) denote the first positions of all (possibly overlapping) occurrences of \( u \) that either intersect \( \Restr{ \LWord{i} }{ -I_{0} }{ I_{0} } \) or are the first occurrence of \( u \) to the left or to the right of \( [-I_{0} , \, I_{0} ] \), see Figure~\ref{fig:DefLTilde}. Clearly the set \( \Set{  j^{(i)}_{1} , \ldots , j^{(i)}_{N_{i}} } \) is finite for every \(  i \in \Set{ 1 , \ldots , \LNr } \), but some of these sets might be empty (we will see in Proposition~\ref{prop:LSPosFreq} that this is not the case). However, at least one of the sets contains at least two elements, since \( u \) intersects at least one \( \Restr{ \LWord{i} }{ -I_{0} }{ I_{0} } \) and occurs infinitely often in this \( \LWord{i} \) (Proposition~\ref{prop:LCondRecur}). Therefore
\[ \widetilde{L}_{u} \DefAs \max \big\{ \; j^{(i)}_{ l+1 } -  j^{(i)}_{ l } : i \in \Set{ 1 , \ldots , \LNr } \text{ with } N_{i} \geq 2 \, , \; l \in \Set{ 1 , \ldots , N_{i}-1 } \; \big\} \]
is well-defined. By definition, \( \widetilde{L}_{u} - \Length{u} \) is then the largest possible gap between two occurrences of \( u \), see again Figure~\ref{fig:DefLTilde}. Consequently, no word of the form \( \Word{ u }{ v }{ u } \) with \( \Length{v} > \widetilde{L}_{u} - \Length{u} \) can intersect any \( \Restr{ \LWord{i} }{ -I_{0} }{ I_{0} } \). Since all finite words intersect some \( \Restr{ \LWord{i} }{ -I_{0} }{ I_{0} } \), no word \( \Word{ u }{ v }{ u } \) with \( \Length{v} > \widetilde{L}_{u} - \Length{u} \) exists in \( \Langu{ \Subshift } \). Combined with Proposition~\ref{prop:LCondRecur} this yields the claim.
\end{proof}

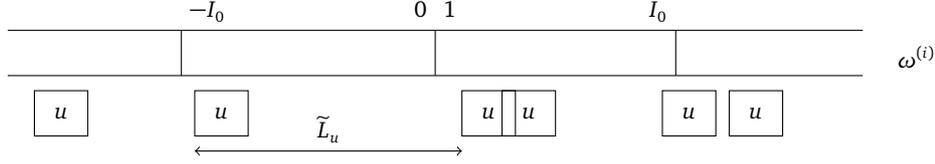
\begin{figure}
\centering
\footnotesize
\pgfmathsetmacro{\TikzLLength}{5} 
\pgfmathsetmacro{\TikzULength}{4*\TikzLLength} 
\pgfmathsetmacro{\TikzILength}{18*\TikzLLength} 
\begin{tikzpicture}
[uWord/.style={draw, rectangle, outer sep=0pt, inner sep=0pt, minimum height=0.6cm, minimum width=\TikzULength pt}]
\draw ( -160 pt, 0 ) --( 160 pt, 0 );
\draw ( -160 pt, -0.6 ) --( 160 pt, -0.6 );
\node at (180 pt, -0.35){\( \LWord{i}\)};
\draw ( 0 , -0.6 ) -- ( 0 , 0 ) node[at end, above left]{0\vphantom{\( 1 I_{0} \)}} node[at end, above right]{1\vphantom{\( 0 I_{0} \)}};
\draw ( -\TikzILength-\TikzLLength pt , -0.6 ) -- ( -\TikzILength-\TikzLLength pt , 0 ) node[at end, above right]{\( -I_{0} \)\vphantom{\( 0 1 \)}};
\draw ( \TikzILength pt , -0.6 ) -- ( \TikzILength pt , 0 ) node[at end, above left]{\( I_{0} \)\vphantom{\( 0 1 \)}};
\node [right] at (-30*\TikzLLength pt , -1.1) [uWord] {\( u \)};
\node [right] at (-18*\TikzLLength pt , -1.1) [uWord] (A) {\( u \)};
\node [right] at (   2*\TikzLLength pt , -1.1) [uWord] (B) {\( u \)};
\node [right] at (  5*\TikzLLength pt , -1.1) [uWord] {\( u \)};
\node [right] at ( 17*\TikzLLength pt , -1.1) [uWord] {\( u \)};
\node [right] at ( 22*\TikzLLength pt , -1.1) [uWord] {\( u \)};
\draw[<->] ( -18*\TikzLLength pt, -1.6  ) -- ( 2*\TikzLLength pt, -1.6  ) node[midway, above]{\( \widetilde{L}_{u} \)};
\end{tikzpicture}
\normalsize
\caption{Definition of \( \widetilde{L}_{u} \) as the maximal distance between two occurrences of \( u \) that intersect \( \Restr{ \LWord{i} }{ -I_{0} }{ I_{0} } \) or lie next to it.\label{fig:DefLTilde}}
\end{figure}

Now recall from Definition~\ref{defi:FreqAndCo} that \( u \in \Langu{ \Subshift } \) is said to have frequency \( c_{u} \) in \( \RWord \in \Alphab^{\NN} \) if, for every \( j \in \NN \), we have \( c_{u} = \lim_{L \to \infty} \frac{1}{L} \Copies{u}{ \Restr{ \RWord }{ j }{ j+L-1 } } \) and the convergence is uniform in \( j \) (and similar for \( \RWord \in \Alphab^{- \NN_{0}} \) ). The previous result, combined with \LCondb{}, implies that every \( u \in \Langu{ \Subshift } \) has positive frequency in each half of every leading sequence:

\begin{prop}
\label{prop:LSPosFreq}
Let \( \Subshift \) be an \LCond{}-subshift with \( \LNr \) leading sequences. Then for every \( u \in \Langu{ \Subshift } \) there exists a constant \( c_{u} > 0 \) such that for all \( \RWord \in \Set{ \LWord{ 1 }|_{-\NN_{0}} , \ldots , \LWord{ \LNr }|_{-\NN_{0}} , \LWord{ 1 }|_{\NN} , \ldots \), \( \LWord{ \LNr }|_{\NN} } \) the frequency of \( u \) in \( \RWord \) exists and is equal to \( c_{u} \).
\end{prop}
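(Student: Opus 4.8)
The plan is to combine the combinatorial control over gaps from Proposition~\ref{prop:LCondBdGap} with the cocycle condition \LCondb{} in order to first establish that frequencies exist along the relevant one-sided infinite words, and then to transport the value of the frequency from one half-word to all others by expressing frequencies in terms of a suitable locally constant cocycle. The core idea for the transport step is the classical observation that counting occurrences of a fixed word $u$ of length $\ell$ can be encoded by a cocycle: for each $j$, whether $u$ begins at position $j$ depends only on $\Restr{\InfWord}{j}{j+\ell-1}$, hence on a bounded window around $\Shift^{j}\InfWord$, so there is a locally constant map recording this information; iterating it turns the \emph{count} $\Copies{u}{\Restr{\RWord}{1}{L}}$ into (the logarithm of) a cocycle norm, up to harmless bounded factors.

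First I would fix $u \in \Langu{\Subshift}$ of length $\ell = \Length{u}$ and construct an explicit locally constant map $\Cocyc_{u}\colon \Subshift \to \SL$ whose cocycle $\Cocyc_{u}(j,\InfWord)$ grows (multiplicatively) like the number of occurrences of $u$. Concretely one can let $\Cocyc_{u}(\InfWord)$ be a fixed matrix of norm $>1$, say $\operatorname{diag}(2,\tfrac12)$, when $u$ starts at the origin of $\InfWord$ (i.e.\ $\Restr{\InfWord}{1}{\ell}=u$), and a rotation (norm $1$) otherwise; this is locally constant with window size roughly $\ell$. Then $\lVert \Cocyc_{u}(j,\InfWord)\rVert$ is comparable to $2^{\Copies{u}{\Restr{\InfWord}{1}{j}}}$ up to a constant depending only on $\ell$, so that $\frac1j \ln\lVert\Cocyc_{u}(j,\InfWord)\rVert$ and $\frac{\ln 2}{j}\Copies{u}{\Restr{\InfWord}{1}{j}}$ differ by $o(1)$. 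Applying \LCondb{} to $\Cocyc_{u}$, the limits $\lim_{j\to\pm\infty}\frac1{\lvert j\rvert}\ln\lVert\Cocyc_{u}(j,\LWord{i})\rVert$ all exist and share a common value, which therefore forces $\lim_{L\to\infty}\frac1L\Copies{u}{\Restr{\RWord}{1}{L}}$ to exist and to be the \emph{same} constant $c_{u}\geq 0$ for every $\RWord$ in the list $\LWord{1}|_{-\NN_{0}},\dots,\LWord{\LNr}|_{-\NN_{0}},\LWord{1}|_{\NN},\dots,\LWord{\LNr}|_{\NN}$ (using Proposition~\ref{prop:CocycInvCut}(a) to handle the $j\to-\infty$ side, which turns the backward cocycle into a forward one over $\Shift^{-j}$). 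Uniformity of the convergence in the shift parameter $j$ — as required by Definition~\ref{defi:FreqAndCo} — follows from the same cocycle estimate together with Proposition~\ref{prop:CocycInvCut}(b), which shows that replacing $\RWord$ by a shift of it changes $\ln\lVert\Cocyc_{u}(j,\cdot)\rVert$ only by a bounded additive constant, hence does not affect the limit.

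It then remains to check $c_{u} > 0$. This is where Proposition~\ref{prop:LCondBdGap} and \LConda{} enter: by \LConda{}, $u$ occurs in some leading sequence $\LWord{i}$, and by Proposition~\ref{prop:LCondBdGap} it occurs there with bounded gaps — say with gaps at most $G$ — along $\LWord{i}|_{-\NN_{0}}$ or along $\LWord{i}|_{\NN}$. On that half-word one gets $\Copies{u}{\Restr{\RWord}{1}{L}} \geq L/(G+\ell) - 1$, hence $\lim_{L\to\infty}\frac1L\Copies{u}{\Restr{\RWord}{1}{L}} \geq \frac1{G+\ell} > 0$. Since we have already shown all the limits (over all the one-sided pieces of all leading sequences) coincide, this single positive lower bound propagates to all of them, giving $c_{u} > 0$ uniformly. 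I would also record here, as the referenced forward use in Proposition~\ref{prop:LCondBdGap} anticipates, that in particular every $j^{(i)}$-set is nonempty, i.e.\ $u$ actually occurs in every leading sequence.

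The main obstacle I anticipate is the bookkeeping in the cocycle–count comparison: one must be careful that $\Copies{u}{\cdot}$ counts \emph{overlapping} copies (as in Definition~\ref{defi:FreqAndCo}) and that the window of $\Cocyc_{u}$ is wide enough to detect each start position of $u$ unambiguously, so that the additive discrepancy between $\Copies{u}{\Restr{\RWord}{1}{j}}\ln 2$ and $\ln\lVert\Cocyc_{u}(j,\RWord)\rVert$ is genuinely $O(1)$ and not growing with $j$. Equivalently, one can avoid the ambiguity altogether by using the non-overlapping count $\DisCopies{u}{\cdot}$ inside the cocycle and then comparing $\Copies{u}{\cdot}$ and $\DisCopies{u}{\cdot}$ (they differ by at most a multiplicative constant $\ell$), but in either case the estimate must be uniform in the shift parameter to yield the \emph{uniform} frequency demanded by the statement; this uniformity is exactly what Proposition~\ref{prop:CocycInvCut}(b) is designed to supply, so the argument should go through cleanly.
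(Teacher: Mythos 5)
Your strategy coincides with the paper's: encode the occurrence count of \( u \) as a locally constant \( \SL \)-cocycle, let \LCondb{} (plus Proposition~\ref{prop:CocycInvCut}(a) for the backward direction) force all the limits anchored at the origin of the \( \LWord{i} \) to exist and coincide, and get positivity of the common value from the bounded gaps supplied by Proposition~\ref{prop:LCondBdGap}. One slip in the construction: taking a rotation \( R \) as the value of \( \Cocyc_{u} \) at non-occurrences destroys the claimed comparability between \( \ln\lVert \Cocyc_{u}(j,\InfWord)\rVert \) and \( \Copies{u}{\Restr{\InfWord}{1}{j}}\,\ln 2 \). For \( D=\big(\begin{smallmatrix}2&0\\0&1/2\end{smallmatrix}\big) \) and \( R \) the rotation by \( \pi/2 \) one has \( RDR^{-1}=D^{-1} \), hence \( (DR)^{2}=R^{2} \), so along an alternating occurrence pattern the cocycle norm stays bounded while the count grows linearly; the error is not \( O(1) \) in \( \ell \). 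The paper avoids this by using \( \Id \) off occurrences, so the product is exactly \( \operatorname{diag}(2^{n},2^{-n}) \) with \( n \) the number of (overlapping) start positions, and the identity \( \tfrac{1}{\ln 2}\ln\lVert\Cocyc_{u}(L,\cdot)\rVert=\Copies{u}{\cdot} \) is exact — your worry about overlapping versus non-overlapping copies then evaporates, since the cocycle literally counts start positions.

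The substantive gap is the uniformity in the starting position \( j \), which Definition~\ref{defi:FreqAndCo} requires and which is where the paper spends most of its proof. Proposition~\ref{prop:CocycInvCut}(b) gives, for each \emph{fixed} shift \( J \), an additive error \( c(J) \), but \( c(J)=\sup_{\InfWord}\ln\lVert\Cocyc(J,\InfWord)\rVert \) grows (in general linearly) with \( J \); dividing by \( L \) therefore only yields, for every fixed \( j \), that \( \lim_{L\to\infty}\frac1L\Copies{u}{\Restr{\RWord}{j}{j+L-1}}=c_{u} \) — pointwise in \( j \), not uniformly (take \( j \) of size comparable to \( L \) and the error \( c(j)/L \) is not small). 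More conceptually, convergence of the prefix densities \( \frac1N\Copies{u}{\Restr{\RWord}{1}{N}} \) does not control local densities in far-out windows, which is exactly what uniform frequency asserts. The paper closes this by invoking \LConda{} again: every window \( \Restr{\LWord{1}}{j}{j+L-1} \) equals some \( \Restr{\LWord{i}}{k}{k+L-1} \) with \( k\in\Set{-I_{0}-L+1,\ldots,I_{0}} \), i.e.\ anchored so as to meet the fixed interval \( [-I_{0},I_{0}] \); one then splits the window at the origin into a suffix of \( \LWord{i}|_{-\NN_{0}} \) and a prefix of \( \LWord{i}|_{\NN} \) (shifted by at most \( I_{0} \)), applies the already established anchored limits for \( L\to\pm\infty \) to whichever pieces are long, and absorbs a possibly short piece and the \( O(\Length{u}) \) boundary terms into \( \varepsilon \)'s chosen in advance of \( j \). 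Some such reduction of arbitrary \( j \) to a bounded set of anchor positions is indispensable; as written, your appeal to Proposition~\ref{prop:CocycInvCut}(b) does not deliver the uniformity, so this step of your proposal needs to be replaced by an argument of the above type.
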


\begin{proof}
Let \( u \in \Langu{ \Subshift } \) be fixed. By Proposition~\ref{prop:LCondBdGap} we assume without loss of generality that \( u \) occurs with bounded gaps in some \( \LWord{m}|_{\NN} \), with \( m \in \Set{ 1 , \ldots , \LNr } \) (the case of the negative half-line is similar). Let \( L_{u} \) be such that for every \( j > 0 \), the word \( u \) occurs at least once in \( \Restr{ \LWord{m} }{ j }{ j +L_{u} - 1} \). This yields
\[ \liminf_{L \to \infty} \frac{1}{L} \cdot \Copies{u}{ \Restr{ \LWord{m} }{ 1 }{ L } } \geq \liminf_{L \to \infty} \frac{1}{L} \cdot \Bigg\lfloor \frac{ L }{ L_{u} } \Bigg\rfloor = \frac{1}{ L_{u} } > 0 \, .\]
Now we consider the locally constant map \( \Cocyc_{u} \colon \Subshift \to \SL \) that is defined by \( \Cocyc_{u}( \InfWord ) \DefAs \left(\begin{smallmatrix} 2  & 0 \\ 0 & \frac{ 1 }{ 2 } \end{smallmatrix} \right) \) if \( \Restr{ \InfWord }{ 1 }{ \Length{u} } = u \) holds, and \( \Cocyc_{u}( \InfWord ) \DefAs \Id \) otherwise. For \( L \geq 1 \) we obtain
\[ \frac{ 1 }{ \ln(2) } \cdot \frac{ \ln ( \lVert \Cocyc_{u}( L , \LWord{m} ) \rVert ) }{ L } = \frac{ \Copies{ u }{ \Restr{ \LWord{m} }{ 1 } {\Length{u}+L-1 } } }{ L } \, .\] 
Let \( c_{u} \in \RR \) denote the limit of this term for \( L \to \infty \), which exists by \LCondb{}. By the estimate above we obtain \( c_{u} \geq \frac{ 1 }{ L_{u} } > 0 \). Again by \LCondb{}, all other limits for \( i \in \Set{ 1 , \ldots , r } \) and \( L \to \pm \infty \) exist as well and are all equal to \( c_{u} \). Note in particular that for \( L \leq -1 \) the term
\[ \frac{ 1 }{ \ln(2) } \cdot \frac{ \ln ( \lVert \Cocyc_{u}( L , \LWord{i} ) \rVert ) }{ -L } = \frac{ \Copies{ u }{ \Restr{ \LWord{i} }{ L+1 }{ \Length{u}-1 } } }{ -L } \]
tends to \( c_{u} \) as \( L \) tends to minus infinity. Next we prove the uniform convergence of \( \frac{1}{L} \Copies{u}{ \Restr{ \LWord{1} }{ j }{ j+L-1 } } \) with respect to the initial position \( j \in \NN \) (all other limits are similar). The idea is that every sufficiently long word \( \Restr{ \LWord{1} }{ j }{ j+L-1 } \) also occurs close to the origin. Either the part left of the origin, right of the origin, or both are long enough that we can apply the considerations from above. If one part is short, then dividing by \( L \) will let this term tend to zero. Here are the details:

Let \( \varepsilon > 0 \). Let \( L_{0} \) denote the minimum length that is required in \LConda{}. Let \( L_{1} \) be such that for all \( L \geq L_{1} \), all \( i \in \Set{ 1 , \ldots , \LNr } \) and all \( k \in \Set{ -I_{0} -\Length{u}+1 , \ldots , I_{0} } \) we have
\[ \Big\lvert \frac{ \Copies{ u }{ \Restr{ ( \Shift^{k} \LWord{i} ) }{ -L+1 }{ \Length{u}-1 } } }{ L } - c_{u} \Big\rvert < \varepsilon \qquad \text{and} \qquad \Big\lvert \frac{ \Copies{ u }{ \Restr{ ( \Shift^{k} \LWord{i} ) }{ 1 } {\Length{u}+L-1 } } }{ L } - c_{u} \Big\rvert < \varepsilon \, .\]
Let \( L_{2}, L_{3} \in \NN \) be large enough that
\[ \max \Big( \bigcup_{i=1}^{\LNr} \Big\{ \; \frac{ \Copies{ u }{ \Restr{ \LWord{i} }{ 2-L_{1} }{ \Length{u}-1 } } }{ L_{2} } \, , \, \frac{ \Copies{ u }{ \Restr{ \LWord{i} }{ 1 }{ L_{1}+\Length{u}-2 } } }{ L_{2} }  \; \Big\} \Big) < \varepsilon \]
and \( \frac{ \Length{u} + L_{1} -1 }{ L_{3} } < \varepsilon \) hold. Now consider \( L \geq \max \Set{ L_{0} , 2 L_{1}+\Length{u}-1, L_{2} , L_{3} } \).

Let \( j \in \NN \) be arbitrary. By \LConda{} there exist \( i \in \Set{ 1 , \ldots , \LNr } \) and \( k \in \Set{ -I_{0} -L+1 , \ldots , I_{0} } \) such that \( \Restr{ \LWord{1} }{ j }{ j+L-1 } = \Restr{ \LWord{i} }{ k }{ k+L-1 } \) holds. For \( k \geq 1 \) we have
\[ \frac{ \Copies{ u }{ \Restr{ \LWord{i} }{ k }{ k+L-1 } } }{ L } = \frac{ L - \Length{u} + 1 }{ L } \cdot \frac{ \Copies{ u }{ \Restr{ ( \Shift^{k-1} \LWord{i} ) }{ 1 }{ \Length{u} + ( L - \Length{u} + 1) - 1 } } }{ L - \Length{u} + 1 } \, . \]
By assumption \( I_{0}-1 \geq k-1 \geq 0 \) and \( L-\Length{u}+1 \geq L_{1} \) hold. Consequently, the second factor is \( \varepsilon \)-close to \( c_{u} \). Moreover, the first factor is \( \varepsilon \)-close to \( 1 \) because of \( L \geq L_{3} \). Similarly we get for \( k \leq \Length{u} - L \):
\[ \frac{ \Copies{ u }{ \Restr{ \LWord{i} }{ k }{ k+L-1 } } }{ L } = \frac{ L-\Length{u}+1 }{ L } \cdot \frac{ \Copies{ u }{ \Restr{ ( \Shift^{ k+L-\Length{u} } \LWord{i} ) }{ - ( L - \Length{u} + 1 )+1 }{ \Length{u}-1 } } }{ L-\Length{u}+1 }\, , \]
where the second factor is again \( \varepsilon \)-close to \( c_{u} \) and the first factor is \( \varepsilon \)-close to \( 1 \). Now only \( \Length{u}-L+1 \leq k \leq 0 \) remains. We split the word in a part left and a part right of the origin and reduce our considerations to the two previous cases:
\[ \frac{ \Copies{ u }{ \Restr{ \LWord{i} }{ k }{ k+L-1 } } }{ L } = \frac{ -k+1 }{ L } \cdot \frac{ \Copies{ u }{ \Restr{ \LWord{i} }{ k }{ \Length{u}-1 } } }{ -k+1 } + \frac{ k+L-\Length{u} }{ L } \cdot \frac{ \Copies{ u }{ \Restr{ \LWord{i} }{ 1 }{ k+L-1 } } }{ k+L-\Length{u} } \, . \]
If either \(  -k+1 \) or \( k+L-\Length{u} \) is less than \( L_{1} \), then the respective summand is less than \( \varepsilon \) because of \( L \geq L_{2} \). Since \( L \geq 2 L_{1}+\Length{u}-1 \) implies that the other term has to be at least \( L_{1} \), the respective frequency is \( \varepsilon \)-close to \( c_{u} \), while its coefficient is  \( \varepsilon \)-close to \( 1 \). If \(  -k+1 \) and \( k+L-\Length{u} \) are both greater or equal to \( L_{1} \), then
\[ \frac{ -k+1 }{ L } \cdot \frac{ \Copies{ u }{ \Restr{ \LWord{i} }{ k }{ \Length{u}-1 } } }{ -k+1 } + \frac{ k+L-\Length{u} }{ L } \cdot \frac{ \Copies{ u }{ \Restr{ \LWord{i} }{ 1 }{ k+L-1 } } }{ k+L-\Length{u} } + \frac{ \Length{u}-1 }{ L } \cdot c_{u} \]
is a weighted average over three terms that are \( \varepsilon \)-close to \( c_{u} \). Since \( L \geq L_{3} \) implies that the third summand is less than \( \varepsilon \), the quotient \( \frac{ \Copies{ u }{ \Restr{ \LWord{i} }{ k }{ k+L-1 } } }{ L } \) differs at most by \( 2 \varepsilon \) from \( c_{u} \).
\end{proof}

\begin{cor}
\label{cor:LSCLangu}
The language of an \LCond{}-subshift \( \Subshift \) is given by
\[ \Langu{ \Subshift } = \Langu{ \LWord{ 1 }|_{-\NN_{0}} } = \ldots = \Langu{ \LWord{ \LNr }|_{-\NN_{0}} } = \Langu{ \LWord{ 1 }|_{\NN} } = \ldots = \Langu{ \LWord{ \LNr }|_{\NN} } \, .\]
\end{cor}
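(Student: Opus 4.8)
The plan is to deduce Corollary~\ref{cor:LSCLangu} directly from Proposition~\ref{prop:LSPosFreq} together with the combinatorial condition~\LConda{}. The statement asserts that the language of the subshift coincides with the language of each of the $2\LNr$ one-sided halves of the leading sequences, so it suffices to prove two inclusions for each such half-line word $\RWord \in \Set{ \LWord{1}|_{-\NN_{0}}, \ldots, \LWord{\LNr}|_{-\NN_{0}}, \LWord{1}|_{\NN}, \ldots, \LWord{\LNr}|_{\NN} }$, namely $\Langu{\RWord} \subseteq \Langu{\Subshift}$ and $\Langu{\Subshift} \subseteq \Langu{\RWord}$.

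The first inclusion is the easy one: since each leading sequence $\LWord{i}$ lies in $\Subshift$ by the very definition of \LCond{} (Definition~\ref{defi:LCond}), every finite subword of $\LWord{i}$ — in particular every subword of either of its half-line restrictions — belongs to $\Langu{\LWord{i}} \subseteq \Langu{\Subshift}$. So I would dispose of $\Langu{\RWord} \subseteq \Langu{\Subshift}$ in one line. For the reverse inclusion, fix an arbitrary $u \in \Langu{\Subshift}$. By Proposition~\ref{prop:LSPosFreq} there is a constant $c_u > 0$ such that $u$ has frequency exactly $c_u$ in $\RWord$ for \emph{every} half-line $\RWord$ in the list above. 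In particular $c_u > 0$ forces $\Copies{u}{\Restr{\RWord}{j}{j+L-1}} \to \infty$ as $L \to \infty$, so $u$ occurs in $\RWord$ at least once; hence $u \in \Langu{\RWord}$. Carrying this out for all $u \in \Langu{\Subshift}$ gives $\Langu{\Subshift} \subseteq \Langu{\RWord}$, and combining the two inclusions over all the half-lines yields the chain of equalities in the statement.

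There is essentially no hard part here: the corollary is a bookkeeping consequence of the positive-frequency statement already proved. The only point that needs a word of care is the logical dependence — Proposition~\ref{prop:LSPosFreq} is what does the real work, and its proof in turn invoked \LConda{}, \LCondb{} and (via Proposition~\ref{prop:LCondBdGap} and Proposition~\ref{prop:LCondRecur}) \LCondc{}. So I would simply cite Proposition~\ref{prop:LSPosFreq} and extract the conclusion that positivity of the frequency implies occurrence. If one wanted to avoid even mentioning frequencies, an alternative route is: \LConda{} gives that every $u \in \Langu{\Subshift}$ intersects some $\Restr{\LWord{i}}{-I_0}{I_0}$, hence occurs in $\LWord{i}$; Proposition~\ref{prop:LCondBdGap} then says $u$ occurs with bounded gaps in $\LWord{i}|_{-\NN_0}$ or $\LWord{i}|_{\NN}$, so it occurs in that half; and finally the equality of all frequencies (or, more elementarily, a symmetry argument using \LConda{} applied to long enough words) propagates occurrence of $u$ to all $2\LNr$ half-lines. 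Either way the argument is short, and I would present the frequency-based version since Proposition~\ref{prop:LSPosFreq} is stated with exactly the needed uniformity over all half-lines.
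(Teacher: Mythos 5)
Your argument is correct and is exactly the paper's: the paper proves this corollary in one line, citing Proposition~\ref{prop:LSPosFreq} (positive frequency in every half-line forces occurrence, giving $\Langu{\Subshift}\subseteq\Langu{\RWord}$) together with the definition of $\Langu{\Subshift}$ as the union of the languages of its elements (giving the trivial reverse inclusion since each $\LWord{i}\in\Subshift$). Your write-up just spells out these two inclusions explicitly.
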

 
\begin{proof}
This follows immediately from Proposition~\ref{prop:LSPosFreq} and the definition of \( \Langu{ \Subshift } \).
\end{proof}

As mentioned after Definition~\ref{defi:FreqAndCo}, existence of frequencies is closely related to unique ergodicity. In addition, positivity of frequencies is related to minimality: a subshift \( \Subshift( \RWord ) \) defined by the language of \( \RWord \in \Alphab^{\NN}\) is minimal and uniquely ergodic if and only if, for every \( u \in \Langu{ \RWord } \), the frequency of \( u \) in \( \RWord \) exists and is positive (\cite[Example~1.4]{Dam_GordonInBook}, but see also \cite[Corollary~IV.14.]{Queff_SubstDynSys} and \cite[Proposition~4.4]{BaakeGrimm_Aperio}). By Corollary~\ref{cor:LSCLangu} and Proposition~\ref{prop:LSPosFreq} this applies in particular to \LCond{}-subshifts:

\begin{cor}
\label{cor:LSCMinUniErgo}
Every \LCond{}-subshift is minimal and uniquely ergodic.
\end{cor}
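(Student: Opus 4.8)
The plan is to derive the statement from Proposition~\ref{prop:LSPosFreq}, Corollary~\ref{cor:LSCLangu}, and the characterisation of minimality and unique ergodicity via uniform, strictly positive letter frequencies that was recalled after Definition~\ref{defi:FreqAndCo}. All of the analytic content is already contained in Proposition~\ref{prop:LSPosFreq} (which itself rests on \LCondb{}), so the proof should be short; the only point requiring a little care is the passage from ``frequencies in the halves of the leading sequences'' to a statement about the whole subshift $\Subshift$.

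First I would fix one leading sequence, say $\LWord{1}$, and pass to the one-sided word $\RWord \DefAs \LWord{1}|_{\NN} \in \Alphab^{\NN}$. By Corollary~\ref{cor:LSCLangu} we have $\Langu{\RWord} = \Langu{\Subshift}$, and by Proposition~\ref{prop:LSPosFreq} every $u \in \Langu{\RWord}$ has a frequency $c_{u}$ in $\RWord$ which exists (as a uniform limit in the starting position) and satisfies $c_{u} > 0$. The quoted criterion then applies to $\RWord$ and yields that the subshift $\Subshift(\RWord)$ associated to $\RWord$ via Definition~\ref{defi:SubshiftLangu} is minimal and uniquely ergodic.

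It remains to identify $\Subshift$ with $\Subshift(\RWord)$. The inclusion $\Subshift \subseteq \Subshift(\RWord)$ is immediate: for $\InfWord \in \Subshift$ we have $\Langu{\InfWord} \subseteq \Langu{\Subshift} = \Langu{\RWord}$, hence $\InfWord \in \Subshift(\RWord)$ by definition. Moreover $\Subshift$ is a nonempty subshift, since $\LWord{1} \in \Subshift$. As $\Subshift(\RWord)$ is minimal, its only nonempty closed shift-invariant subset is $\Subshift(\RWord)$ itself; therefore $\Subshift = \Subshift(\RWord)$, and so $\Subshift$ is minimal and uniquely ergodic, as claimed.

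I do not expect a genuine obstacle here. The only subtlety is that Proposition~\ref{prop:LSPosFreq} concerns the one-sided restrictions $\LWord{i}|_{\pm\NN_{0}}$ rather than arbitrary elements of $\Subshift$, which is precisely why one argues via $\RWord = \LWord{1}|_{\NN}$ and then uses minimality of $\Subshift(\RWord)$ to pull the conclusion back to $\Subshift$. One could instead give a direct argument, reducing unique ergodicity to the uniform convergence of the word-counting averages $\frac{1}{L}\,\Copies{u}{\Restr{\InfWord}{j+1}{j+L}}$ over all $\InfWord \in \Subshift$ and $j \in \ZZ$, and then combining the uniform frequencies of Proposition~\ref{prop:LSPosFreq} with \LConda{} to control those averages; but routing through $\Subshift(\RWord)$ and the cited criterion is cleaner and shorter.
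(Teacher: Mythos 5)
Your proposal is correct and follows essentially the same route as the paper: the corollary is obtained by applying the cited criterion (minimality and unique ergodicity of \( \Subshift( \RWord ) \) from existence and positivity of frequencies in \( \RWord \)) to a one-sided half of a leading sequence, together with Corollary~\ref{cor:LSCLangu} and Proposition~\ref{prop:LSPosFreq}. The only difference is that you explicitly carry out the identification \( \Subshift = \Subshift( \RWord ) \) via minimality of \( \Subshift( \RWord ) \), a step the paper leaves implicit, and your handling of it is correct.
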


\subsection{A sufficient combinatorial condition for (LSC)}
\label{subsec:LCondComb}

In this subsection we relate cocycles to combinatorial properties of words. For this we use subadditive functions. Therefore we first recall this notion and discuss two examples. They will later play a role in our proofs, see for example in Proposition~\ref{prop:LCondSuffi}.

\begin{defi}
A function \( F \colon \Langu{ \Subshift } \to \RR \) is called a \emph{subadditive}\index{subadditive function} if \( F( \Word{ u }{ v } ) \leq F( u ) + F( v ) \) holds for all \( u, v \in \Langu{ \Subshift } \) with \( \Word{ u }{ v } \in \Langu{ \Subshift} \).
\end{defi}

\begin{rem}
For the limit of a subadditive function \( F \colon \Langu{ \Subshift } \to \RR \) we write \( \AsAv{F} \DefAs \lim_{L \to \infty} \max \Set{ \frac{ F(u) }{ L } : u \in \Langu{ \Subshift }_{L} } \). It exists and is equal to \( \inf_{L \in \NN} \max \Set{ \frac{ F(u) }{ L } : u \in \Langu{ \Subshift }_{L} } \) by Fekete's Lemma, since \( \widetilde{F} \colon \NN \to \RR \, , \; L \mapsto \max \Set{ F(u) : u \in \Langu{ \Subshift }_{L} } \) is subadditive as well.
\end{rem}

\begin{exmpl}
\label{exmpl:CopiesSubAdd}
Fix a word \( u \in \Langu{ \Subshift } \) and recall that \( \DisCopies{u}{v} \) denotes the maximal number of non-overlapping copies of \( u \) in \( v \) (Definition~\ref{defi:FreqAndCo}). The function
\[ F \colon \Langu{ \Subshift } \to \RR \; , \quad v \mapsto - \DisCopies{u}{v} \]
is subadditive: for \( v_{1}, v_{2} \in \Langu{ \Subshift } \) with \( \Word{ v_{1} }{ v_{2} } \in \Langu{ \Subshift } \), the non-overlapping copies of \( u \) in \( \Word{ v_{1} }{ v_{2} } \) are at least all the copies of \( u \) in \( v_{1} \), plus all the copies of \( u \) in \( v_{2} \).
\end{exmpl}

\begin{exmpl}
\label{exmpl:MaxCocycSubadd}
Let \( \Cocyc \) be a locally constant cocycle. The function
\[ F \colon \Langu{ \Subshift } \to \RR \; , \quad v \mapsto \max \Set{ \ln( \lVert \Cocyc( \Length{v} , \InfWord ) \rVert ) : \InfWord \in \Subshift \text{ with } \Restr{ \InfWord }{ 1 }{ \Length{v} } = v } \]
is subadditive: for \( v_{1}, v_{2} \in \Langu{ \Subshift } \) with \( \Word{ v_{1} }{ v_{2} } \in \Langu{ \Subshift } \), we have
\begin{align*}
F( v_{1} v_{2} ) &= \max_{\Restr{ \InfWord }{ 1 }{ \Length{v_{1}v_{2} } } = v_{1} v_{2} } \hspace{-1.4em} \Set{ \ln( \lVert \Cocyc( \Length{\Word{ v_{1} }{ v_{2} }} , \InfWord ) \rVert ) } \\
&\leq \max_{\Restr{ \InfWord }{ 1 }{ \Length{v_{1}v_{2} } } = v_{1} v_{2} } \hspace{-1.2em} \Set{ \ln( \lVert \Cocyc( \Shift^{\Length{v_{1}} + \Length{v_{2}} -  1} \InfWord ) \cdot \ldots \cdot \Cocyc( \Shift^{\Length{v_{1}}} \InfWord ) \rVert \! \cdot\! \lVert \Cocyc( \Shift^{\Length{v_{1}} -  1} \InfWord ) \cdot \ldots \cdot \Cocyc( \Shift^{0} \InfWord ) \rVert ) } \\
&\leq \max_{\Restr{ \InfWord }{ \Length{v_{1}}+1 }{ \Length{v_{1}v_{2} } } = v_{2} } \hspace{-1.6em} \Set{ \ln( \lVert \Cocyc( \Shift^{\Length{v_{1}} + \Length{v_{2}} -  1} \InfWord ) \cdot \ldots \cdot \Cocyc( \Shift^{\Length{v_{1}}} \InfWord ) \rVert ) } \\
& \quad \, + \max_{\Restr{ \InfWord }{ 1 }{ \Length{v_{1} } } = v_{1} } \hspace{-0.5em} \Set{ \ln( \lVert \Cocyc( \Shift^{\Length{v_{1}} -  1} \InfWord ) \cdot \ldots \cdot \Cocyc( \Shift^{0} \InfWord ) \rVert ) } \\
&= F( v_{2} ) + F( v_{1} ) \, . \hspace{27em} \blacklozenge
\end{align*}%
\renewcommand{\qedsymbol}{}
\end{exmpl}

\vspace{-2\baselineskip}

In the following we prove sufficient combinatorial conditions for \LCondb{} and \LCondc{}. We make the simplifying assumption that there exists a one-sided infinite word \( \RWord \in \Alphab^{\NN} \) and finite words \( \LOrigin{1} , \ldots , \LOrigin{\LNr} \) such that the leading sequences are given by \( \LWord{i} = \Word{ \Rev{ \RWord } }{ \LOrigin{i} }{ \RWord } \). Here \( \Rev{ \RWord } \in \Alphab^{-\NN} \) denotes the \emph{reflection of \( \RWord \)}\index{reflection!of an infinite word}\index{word!reflection of an infinite} and is defined as \( \Rev{ \RWord }( -j ) = \RWord( j ) \). Accordingly, we now focus on limits of one-sided infinite words. First we recall a condition from Lenz, known as \emph{uniform positivity of quasiweights (PQ)}\index{PQ@(PQ)}\index{uniform positivity of quasiweights}:

\begin{defi}[{\cite[Section~1]{Lenz_UETfinAlphab}}]
\label{def:PQ} 
We say that \( \RWord \in \Alphab^{\NN} \) satisfies (PQ) if there exists a constant \( c > 0 \) such that the inequality 
\begin{equation}
\label{eqn:DefiPQ}
\tag{PQ}
\liminf_{L \to \infty} \; \frac{ j }{ L } \cdot \DisCopies{ \Restr{ \RWord }{ 1 }{ j } }{ \Restr{ \RWord }{ 1 }{ L } } \geq c
\end{equation}
holds for every prefix \( \Restr{ \RWord }{ 1 }{ j } \) of \( \RWord \). 
\end{defi}

The following two propositions show that (\ref{eqn:DefiPQ}) is closely related to limits of arbitrary subadditive functions. Both statements are variations of results in \cite[Section~3]{Lenz_UETfinAlphab} and the proofs are inspired by methods from \cite{Lenz_UETfinAlphab} as well.

\begin{prop}
\label{prop:PQPrefixLimit}
Let \( \Subshift \) be a subshift, \( \InfWord \in \Subshift \) an element and \( \RWord \DefAs \Restr{ \InfWord }{ 1 }{ \infty } \). Let \( F \colon \Langu{ \Subshift } \to \RR \) be a subadditive function and assume that \( \RWord \) satisfies (\ref{eqn:DefiPQ}). If there exists a sequence \( ( \PBlock{k} )_{k} \) of prefixes of \( \RWord \) with \( \lim_{ k \to \infty } \Length{ \PBlock{ k } } = \infty \) and 
\[ \lim_{k \to \infty} \frac{ F( \PBlock{k} ) }{ \Length{ \PBlock{k} } }  = \lim_{L \to \infty} \max \Big\{ \; \frac{ F(u) }{ L } : u \in \Langu{ \Subshift }_{L} \; \Big\} \AsDef \AsAv{F} \, , \]
then the limit \( \lim_{L \to \infty} \frac{ F( \Restr{ \RWord }{ 1 }{ L } ) }{ L } \) exists (and is obviously equal to \( \AsAv{F} \)).
\end{prop}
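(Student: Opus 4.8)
The plan is to establish the two one-sided bounds $\limsup_{L\to\infty}\frac{F(\Restr{\RWord}{1}{L})}{L}\le\AsAv{F}$ and $\liminf_{L\to\infty}\frac{F(\Restr{\RWord}{1}{L})}{L}\ge\AsAv{F}$ separately; together these give $\lim_{L\to\infty}\frac{F(\Restr{\RWord}{1}{L})}{L}=\AsAv{F}$. The upper bound is immediate: since $\Restr{\RWord}{1}{L}=\Restr{\InfWord}{1}{L}\in\Langu{\Subshift}_{L}$ we have $F(\Restr{\RWord}{1}{L})\le\widetilde{F}(L)\DefAs\max\Set{F(u):u\in\Langu{\Subshift}_{L}}$, and $\widetilde{F}(L)/L\to\AsAv{F}$ by Fekete's Lemma (see the remark preceding Example~\ref{exmpl:CopiesSubAdd}). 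In particular, if $\AsAv{F}=-\infty$ there is nothing further to prove, so from now on we may assume $\AsAv{F}$ to be finite.

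For the lower bound I would fix $\varepsilon>0$. Since $\widetilde{F}(\ell)/\ell$ decreases to $\AsAv{F}$, there is an $\ell_{0}\in\NN$ with $\widetilde{F}(\ell)\le(\AsAv{F}+\varepsilon)\ell$ for all $\ell\ge\ell_{0}$; put $D\DefAs\max_{0\le\ell<\ell_{0}}\widetilde{F}(\ell)$, a constant depending only on $F$ and $\varepsilon$. Given a large $L$, abbreviate $w\DefAs\Restr{\RWord}{1}{L}$ and choose an index $k$ with $N\DefAs\Length{\PBlock{k}}$ so large that $N\ge\frac{2L}{c}$, that $F(\PBlock{k})\ge(\AsAv{F}-\varepsilon)N$ (possible since $F(\PBlock{k})/\Length{\PBlock{k}}\to\AsAv{F}$), and — applying (\ref{eqn:DefiPQ}) to the prefix $w$ of $\RWord$ — that $m\DefAs\DisCopies{w}{\PBlock{k}}\ge\frac{c}{2}\cdot\frac{N}{L}\ge1$; note also $mL\le N$. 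Cutting $\PBlock{k}$ along a maximal family of non-overlapping occurrences of $w$ produces a decomposition $\PBlock{k}=b_{0}\,w\,b_{1}\,w\cdots w\,b_{m}$ with $\sum_{l=0}^{m}\Length{b_{l}}=N-mL$, and subadditivity of $F$ gives $(\AsAv{F}-\varepsilon)N\le F(\PBlock{k})\le mF(w)+\sum_{l=0}^{m}F(b_{l})$.

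The heart of the matter is to bound $\sum_{l=0}^{m}F(b_{l})$ from above by $(\AsAv{F}+\varepsilon)(N-mL)$ plus an error of order $m$ only (equivalently, of order $N/L$), so that after division by $mL\ge\frac{c}{2}N$ it becomes negligible. To this end I would split the gap words $b_{l}$ into the long ones, $\Length{b_{l}}\ge\ell_{0}$, and the short ones. For a long gap, $F(b_{l})\le\widetilde{F}(\Length{b_{l}})\le(\AsAv{F}+\varepsilon)\Length{b_{l}}$; since there are at most $m+1$ gaps altogether, the short gaps together contribute at most $(m+1)D$; and, after a routine case distinction on the sign of $\AsAv{F}+\varepsilon$ (using $\sum_{l}\Length{b_{l}}=N-mL$ when it is non-negative, and the crude bound $\sum_{\text{short}}\Length{b_{l}}\le\ell_{0}(m+1)$ when it is negative), the long gaps contribute at most $(\AsAv{F}+\varepsilon)(N-mL)+C_{0}m$, where $C_{0}\DefAs2\ell_{0}\lvert\AsAv{F}+\varepsilon\rvert$. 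Inserting these estimates, solving for $F(w)$, and using $mL\ge\frac{c}{2}N$ together with $\frac{m+1}{m}\le2$, one arrives at
\[
\frac{F(w)}{L}\ \ge\ \AsAv{F}+\varepsilon-\frac{4\varepsilon}{c}-\frac{C_{0}+2\lvert D\rvert}{L}\,.
\]
Since $C_{0}$ and $D$ depend on $\varepsilon$ but not on $L$, letting $L\to\infty$ (choosing the index $k$ anew for each $L$) yields $\liminf_{L\to\infty}\frac{F(\Restr{\RWord}{1}{L})}{L}\ge\AsAv{F}-\frac{4\varepsilon}{c}$, and then $\varepsilon\to0$ finishes the argument.

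The step I expect to be the real obstacle is precisely this bookkeeping over the gap words. A naive estimate such as $F(b_{l})\le\widetilde{F}(1)\Length{b_{l}}$ only yields an error proportional to $N-mL$, which need not be small relative to $N$: condition~(\ref{eqn:DefiPQ}) guarantees merely a positive density $c$ of non-overlapping copies of $w$ in $\PBlock{k}$, not a density close to $1$, so the gaps can occupy a fixed positive fraction of $\PBlock{k}$. Separating long from short gaps is what turns this into a lower-order error: long gaps are controlled by the nearly optimal per-letter value $\AsAv{F}+\varepsilon$, while the short gaps — although each is controlled only crudely — are few enough, at most $m+1$ of them, that their total contribution disappears after dividing by $mL$.
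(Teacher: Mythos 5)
Your proof is correct, and it organises the argument differently from the paper, although both rest on the same two ingredients: condition~(\ref{eqn:DefiPQ}) and subadditivity applied to a decomposition into disjoint copies of a word plus gap words. The paper argues by contradiction: assuming prefixes \( u_{k} \) with \( F( u_{k} ) / \Length{ u_{k} } \leq \AsAv{F} - \delta \), it applies (\ref{eqn:DefiPQ}) to these \emph{bad} prefixes, tiles an arbitrary long prefix \( \Restr{ \RWord }{ 1 }{ L } \) by copies of \( u_{k} \), and merges every other copy into the neighbouring gaps so that every gap word has length at least \( \Length{ u_{k} } \); a single per-letter estimate \( F( y_{i} ) \leq ( \AsAv{F} + \varepsilon ) \Length{ y_{i} } \) then forces \( F( \Restr{ \RWord }{ 1 }{ L } ) / L < \AsAv{F} - \tfrac{c}{8} \delta \) for \emph{all} large \( L \), contradicting the behaviour along \( ( \PBlock{k} ) \). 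You argue directly and with the roles swapped: (\ref{eqn:DefiPQ}) is applied to the arbitrary prefix \( w = \Restr{ \RWord }{ 1 }{ L } \), the \emph{good} prefix \( \PBlock{k} \) is tiled by copies of \( w \), and the subadditivity inequality is solved backwards to give a lower bound on \( F( w ) \), with the gaps controlled by the long/short dichotomy and the constant \( D \). What your route buys is a direct, quantitative statement — \( F( w ) / L \geq \AsAv{F} - \tfrac{4 \varepsilon}{c} - O_{\varepsilon}( 1/L ) \) for every \( L \) — at the price of the sign case-distinction and the \( m+1 \) bookkeeping, which the paper's merging trick avoids (paying only a constant factor in the density). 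Two small remarks: \( \widetilde{F}( \ell ) / \ell \) need not be monotone (Fekete gives convergence to the infimum, not monotonicity), but all you actually use is that it is eventually \( \leq \AsAv{F} + \varepsilon \), which is fine; and \( D \geq F( \epsilon ) \geq 0 \) by subadditivity, so \( \lvert D \rvert = D \) and possible empty gap words cause no trouble.
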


\begin{proof}
Since \( \frac{ F( \Restr{ \RWord }{ 1 }{ L } ) }{ L } \leq \max \Set{ \frac{ F(u) }{ L } : u \in \Langu{ \Subshift }_{L} } \) holds for every \( L \in \NN \), we obtain
\[ \limsup_{ L \to \infty } \frac{ F( \Restr{ \RWord }{ 1 }{ L } ) }{ L } \leq \limsup_{L \to \infty} \max_{ u \in \Langu{ \Subshift }_{L} } \frac{ F(u) }{ L } = \AsAv{F} \, . \]
To prove \( \liminf_{ L \to \infty } \frac{ F( \Restr{ \RWord }{ 1 }{ L } ) }{ L } \geq \AsAv{F} \), we assume the contrary, that is, we assume that there exist \( \delta > 0 \) and a sequence \( ( u_{k} ) \) of prefixes of \( \RWord \) with \( \Length{ u_{k} } \to \infty \), such that \( \frac{ F( u_{k} ) }{ \Length{ u_{k} } } \leq \AsAv{F} - \delta \) holds for all \( k \in \NN \). Since \( \RWord \) satisfies (\ref{eqn:DefiPQ}), there is a constant \( c > 0 \) such that
\[ \liminf_{L \to \infty} \; \frac{ \DisCopies{ u_{k} }{ \Restr{ \RWord }{ 1 }{ L } } }{ L } \geq c \frac{ 1 }{ \Length{u_{k}} } \]
holds for every \( k \in \NN \). In particular, for every \( k \) there is a constant \( L_{k} \) such that for every \( L \geq L_{k} \), the word \( \Restr{ \RWord }{ 1 }{ L } \) contains at least \( \frac{ 3c }{ 4 } \frac{ L }{ \Length{ u_{k} } } \geq 6 \) disjoint copies of \( u_{k} \). Therefore we can write \( \Restr{ \RWord }{ 1 }{ L } = \Word{ x_{1} }{ u_{k} }{ x_{2} }{ u_{k} }{ \ldots}{ u_{k} }{ x_{s+1} } \)
with (possibly empty) finite words \( x_{1} , \ldots , x_{s+1} \) and \( s \geq \frac{ 3c }{ 4 } \frac{ L }{ \Length{ u_{k} } } \). By combining every other copy of \( u_{k} \) with its neighbouring \( x_{i} \)'s, we obtain \(\Restr{ \RWord }{ 1 }{ L } = \Word{ y_{1} }{ u_{k} }{ y_{2} }{ u_{k} }{ \ldots}{ u_{k} }{ y_{t+1} } \) with \( t \geq \frac{ c }{ 4 } \frac{ L }{ \Length{ u_{k} } } \) and \( \Length{y_{i}} \geq \Length{u_{k}} \) for all \( i \in \Set{ 1 , \ldots , t+1 } \). Now we choose \( \varepsilon \) with \( 0 < \varepsilon < \frac{ c }{ 8 } \delta \). Moreover, we fix a sufficiently large \( k \) such that we have
\[ \frac{ F( u ) }{ L_{0} } \leq \lim_{L \to \infty} \max_{  u \in \Langu{ \Subshift }_{L} } \frac{ F(u) }{ L } + \varepsilon = \AsAv{F} + \varepsilon \]
for all \( L_{0} \geq \Length{ u_{k} } \) and all \( u \in \Langu{ \Subshift }_{L_{0}} \). For all \( L \geq L_{k} \), subadditivity of \( F \) now yields
\begin{align*}
\frac{ F( \Restr{ \RWord }{ 1 }{ L } ) }{ L } & \leq \frac{ t \cdot \Length{u_{k}} }{ L } \frac{ F( u_{k} ) }{ \Length{u_{k}} } + \sum_{i=1}^{t+1} \frac{ \Length{y_{i}} }{ L } \frac{ F( y_{i} )}{ \Length{y_{i}} } \\
& \leq \frac{ t \cdot \Length{u_{k}} }{ L } ( \AsAv{F} - \delta ) + \sum_{i=1}^{t+1} \frac{ \Length{y_{i}} }{ L } ( \AsAv{F} + \varepsilon ) \\
& = \AsAv{F} - t \cdot \frac{ \Length{u_{k}} }{ L } \cdot \delta + \big( \sum_{i=1}^{t+1} \frac{ \Length{y_{i}} }{ L } \big) \cdot \varepsilon \; , \;\; \text{since } t \cdot \Length{u_{k}} + \sum_{i=1}^{t+1} \Length{y_{i}} = L \\
& < \AsAv{F} - \frac{ c }{ 8 } \delta \; , \;\; \text{since } t \geq \frac{ c }{ 4 } \frac{ L }{ \Length{ u_{k} } } \text{ and } \varepsilon < \frac{ c }{ 8 } \delta \, .
\end{align*}
Clearly this contradicts \( \lim_{k \to \infty} \frac{ F( \PBlock{k} ) }{ \Length{ \PBlock{k} } } = \AsAv{F} \).
\end{proof}

For our study of \LCond{}-subshifts the proposition above suffices. Since the converse statement highlights the close connection between (\ref{eqn:DefiPQ}) and limits of subadditive functions, we include it here without proof. A proof can be found in \cite[Lemma~4.1~(b)]{GLNS_LeadingSeq_Arxiv}. 

\begin{prop}
\label{prop:PrefixLimitPQ}
Let \( \Subshift \) be a subshift, \( \InfWord \in \Subshift \) an element and \( \RWord \DefAs \Restr{ \InfWord }{ 1 }{ \infty } \). If the limit \( \lim_{L \to \infty} \frac{ F( \Restr{ \RWord }{ 1 }{ L } ) }{ L } \) exists for every subadditive function \( F \colon \Langu{ \Subshift } \to \RR \), then \( \RWord \) satisfies (\ref{eqn:DefiPQ}).
\end{prop}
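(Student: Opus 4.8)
The plan is to argue by contraposition: assuming that $\RWord$ does \emph{not} satisfy \eqref{eqn:DefiPQ}, I would construct a subadditive function $F\colon \Langu{\Subshift}\to\RR$ for which $\tfrac1L F(\Restr{\RWord}{1}{L})$ fails to converge as $L\to\infty$. For a prefix $q=\Restr{\RWord}{1}{t}$ of $\RWord$ write $\alpha(q)\DefAs\liminf_{L\to\infty}\tfrac{t}{L}\DisCopies{q}{\Restr{\RWord}{1}{L}}$ and $\beta(q)\DefAs\limsup_{L\to\infty}\tfrac{t}{L}\DisCopies{q}{\Restr{\RWord}{1}{L}}$. Since at most $\lfloor L/t\rfloor$ disjoint copies of $q$ fit into a word of length $L$, both lie in $[0,1]$, and failure of \eqref{eqn:DefiPQ} says precisely that $\inf_q\alpha(q)=0$, the infimum over all nonempty prefixes $q$ of $\RWord$. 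I would also record the monotonicity $\DisCopies{q'}{v}\le\DisCopies{q}{v}$ whenever $q$ is a prefix of $q'$ (each disjoint occurrence of $q'$ contains one of $q$), which lets one transport smallness of the scaled density between prefixes of different length.

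The first, easy, case is when some prefix $p$ of length $t$ satisfies $\alpha(p)<\beta(p)$. Here I would take $F\DefAs H_p$, where $H_p\colon\Langu{\Subshift}\to\RR$, $v\mapsto \Length{v}-t\cdot\DisCopies{p}{v}$; this is subadditive, being the sum of the additive map $v\mapsto\Length{v}$ and $t$ times the subadditive map of Example~\ref{exmpl:CopiesSubAdd}. Along prefixes of $\RWord$ one has $\tfrac1L H_p(\Restr{\RWord}{1}{L})=1-\tfrac{t}{L}\DisCopies{p}{\Restr{\RWord}{1}{L}}$, with $\limsup$ equal to $1-\alpha(p)$ and $\liminf$ equal to $1-\beta(p)$; these differ by assumption, so the prefix averages of $F$ do not converge.

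It then remains to treat the case where $\alpha(q)=\beta(q)$ for every prefix $q$, i.e.\ every scaled density $\tfrac{t}{L}\DisCopies{\Restr{\RWord}{1}{t}}{\Restr{\RWord}{1}{L}}$ converges, to a limit $\delta(\Restr{\RWord}{1}{t})$. Failure of \eqref{eqn:DefiPQ} together with the monotonicity above forces $\delta$ to tend to zero along a nested family of prefixes $\PBlock{1},\PBlock{2},\dots$ of $\RWord$ whose lengths $t_1<t_2<\dots$ can be arranged to grow arbitrarily fast, in particular with $t_{i+1}/t_i\to\infty$. (If some single prefix already has $\delta=0$, monotonicity produces such a family of every sufficiently large length; otherwise one extracts it from a sequence of prefixes with $\delta\to0$.) From this data one must then assemble a \emph{single} subadditive $F$ whose normalised prefix values oscillate — roughly, $\tfrac1L F(\Restr{\RWord}{1}{L})$ close to $1$ for $L$ just below $t_{i+1}$ (where only ``short'' marked prefixes are visible and $\delta$ is tiny) yet bounded away from $1$ for $L$ near $t_i$. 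The model is $\RWord=a\,b^\infty$, where $F(b^n)\DefAs 0$ and $F(ab^n)\DefAs-\max\{t_i:t_i\le n+1\}$ is subadditive and oscillates.

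The main obstacle is exactly this last construction, and above all the verification of $F(uv)\le F(u)+F(v)$. The obvious candidates fail: a pointwise-convergent weighted sum $\sum_i c_i H_{\PBlock{i}}$ is subadditive but its prefix averages \emph{do} converge (dominated, termwise convergent), while a pointwise infimum $\inf_i H_{\PBlock{i}}$ is not subadditive in general, even though it happens to be in the model case. A correct $F$ must make ``which scale is active'' depend on the length of its argument, and then subadditivity has to be checked by exploiting the combinatorial rigidity of how a long prefix of $\RWord$ can embed into an arbitrary factor. This is the delicate point, carried out in \cite[Lemma~4.1~(b)]{GLNS_LeadingSeq_Arxiv}, and I would follow that argument; note that the resulting picture dovetails with Proposition~\ref{prop:PQPrefixLimit}, which supplies the converse implication once, for a given subadditive $F$, the maximising prefixes $(\PBlock{k})$ there can be taken among the prefixes of $\RWord$.
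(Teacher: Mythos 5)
There is no in-paper proof to compare your attempt against: the paper deliberately states Proposition~\ref{prop:PrefixLimitPQ} \emph{without} proof and points to \cite[Lemma~4.1~(b)]{GLNS_LeadingSeq_Arxiv}, since only the direction of Proposition~\ref{prop:PQPrefixLimit} is used later. So the question is whether your proposal stands on its own, and it does not. Your easy case is fine: for a prefix \( p = \Restr{\RWord}{1}{t} \) the function \( H_{p}(v) = \Length{v} - t \cdot \DisCopies{p}{v} \) is subadditive (additive part plus \( t \) times the map of Example~\ref{exmpl:CopiesSubAdd}), and along prefixes of \( \RWord \) its normalised values have \( \limsup = 1 - \alpha(p) \) and \( \liminf = 1 - \beta(p) \), so if some prefix has \( \alpha(p) < \beta(p) \) the hypothesis is already violated. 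The monotonicity \( \DisCopies{q'}{v} \le \DisCopies{q}{v} \) for \( q \) a prefix of \( q' \) and the extraction of a sequence of prefixes with densities tending to zero and rapidly growing lengths are also essentially correct observations.

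The genuine gap is the remaining case, which is the entire substance of the proposition: all scaled prefix densities converge, yet their infimum over prefixes is zero, and one must produce a \emph{single} subadditive \( F \colon \Langu{\Subshift} \to \RR \) whose normalised values along \( \Restr{\RWord}{1}{L} \) fail to converge. At this point you only formulate what such an \( F \) should do, correctly note that the obvious candidates (convergent weighted sums \( \sum_i c_i H_{\PBlock{i}} \), or a pointwise infimum of the \( H_{\PBlock{i}} \)) do not work because the first converges along prefixes and the second is not subadditive, give a toy model on \( a\,b^{\infty} \), and then defer the actual construction and the verification of \( F(uv) \le F(u) + F(v) \) to the very reference the paper cites. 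Deferring the crux to \cite[Lemma~4.1~(b)]{GLNS_LeadingSeq_Arxiv} is legitimate as a citation, but it means your text is a plan plus an acknowledged missing step, not a proof: without exhibiting a length-sensitive subadditive \( F \) (or some alternative argument) and checking its subadditivity against arbitrary factorisations \( uv \in \Langu{\Subshift} \), the implication from ``all prefix limits exist'' to (\ref{eqn:DefiPQ}) remains unestablished.
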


Finally we also prove a sufficient combinatorial condition for \LCondc{}. Similar to what we have seen in Proposition~\ref{prop:ThreeGordon}, we use a three-block Gordon argument to show that repetitions prevent the norm from decaying in both directions.

\begin{prop}
\label{prop:GordonCocyc}
Let \( \Subshift \) be a subshift and \( \InfWord \in \Subshift \). 
If there exists a sequence \( ( l_{i} ) \) of natural numbers with \( \lim_{i \to \infty} l_{i} = \infty \) such that for all \( i \in \NN \) we have
\[ \Restr{\InfWord}{-2l_{i}+1}{-l_{i}} = \Restr{\InfWord}{-l_{i}+1}{0} = \Restr{\InfWord}{1}{l_{i}} \qquad \text{or} \qquad \Restr{\InfWord}{-l_{i}+1}{0} = \Restr{\InfWord}{1}{l_{i}} = \Restr{\InfWord}{l_{i}+1}{2 l_{i}} \; , \]
then for every locally constant \( \Cocyc \colon \Subshift \to \SL \) and every \( \Phi \in \RR^{2} \setminus \Set{ 0 } \), the norm \( \lVert \Cocyc( j , \InfWord) \Phi \rVert \) tends to zero for at most one of the limits \( j \to \pm \infty \). In particular, there cannot be exponential decay in both directions, that is, \LCondc{} holds for \( \InfWord \).
\end{prop}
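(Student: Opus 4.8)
The plan is to run a three-block Gordon argument at the level of cocycles, exactly parallel to the sketch of proof of Proposition~\ref{prop:ThreeGordon} but now for an arbitrary locally constant $\Cocyc \colon \Subshift \to \SL$ instead of the transfer matrix map, and for a general vector $\Phi \in \RR^{2} \setminus \Set{0}$ instead of a solution vector of the eigenvalue equation. First I would fix such a $\Cocyc$ and such a $\Phi$, and assume for contradiction that both limits $\lim_{j \to +\infty} \lVert \Cocyc(j, \InfWord) \Phi \rVert$ and $\lim_{j \to -\infty} \lVert \Cocyc(j, \InfWord) \Phi \rVert$ are zero. I would treat the first of the two repetition patterns, $\Restr{\InfWord}{-2l_{i}+1}{-l_{i}} = \Restr{\InfWord}{-l_{i}+1}{0} = \Restr{\InfWord}{1}{l_{i}}$; the second pattern is handled by the obvious reflection/translation of the argument (the repetitions are merely aligned differently relative to the origin). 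To make $\Cocyc(l_i, \cdot)$ genuinely see only the word $\Restr{\InfWord}{-l_i+1}{0}$ and not a few neighbouring letters, I would first absorb the locality constant $J$ from Definition~\ref{defi:LocConst}: by Proposition~\ref{prop:CocycInvCut}(b) one may replace $\InfWord$ by a fixed shift of it and enlarge the blocks, so it is harmless to assume $J = 0$, i.e.\ $\Cocyc(\InfWord)$ depends only on $\InfWord(0)$. Then the three equalities of finite words give $\Cocyc(l_i, \Shift^{-2l_i}\InfWord) = \Cocyc(l_i, \Shift^{-l_i}\InfWord) = \Cocyc(l_i, \InfWord) \AsDef B_i \in \SL$.

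Next I would apply the Cayley--Hamilton theorem in $\SL$: $B_i^{2} = \Tr(B_i)\,B_i - \Id$. Writing $\Phi(0) \DefAs \Phi$, $\Phi(j) \DefAs \Cocyc(j,\InfWord)\Phi$, the cocycle identity $\Cocyc(j,\InfWord) = \Cocyc(j-k,\Shift^k\InfWord)\Cocyc(k,\InfWord)$ (from the proof of Proposition~\ref{prop:CocycInvCut}) yields, just as in Equation~(\ref{eqn:ThreeGordon}),
\begin{align*}
\Phi(0) &= \Cocyc(2l_i, \Shift^{-2l_i}\InfWord)\,\Phi(-2l_i) = \Tr(B_i)\,\Phi(-l_i) - \Phi(-2l_i)\,, \\
\Phi(l_i) &= \Cocyc(2l_i, \Shift^{-l_i}\InfWord)\,\Phi(-l_i) = \Tr(B_i)\,\Phi(0) - \Phi(-l_i)\,.
\end{align*}
From the first identity, $\lVert \Phi(0) \rVert \le \lvert \Tr(B_i) \rvert \cdot \lVert \Phi(-l_i) \rVert + \lVert \Phi(-2l_i) \rVert$; from the second, $\lVert \Phi(-l_i) \rVert \le \lvert \Tr(B_i) \rvert \cdot \lVert \Phi(0) \rVert + \lVert \Phi(l_i) \rVert$. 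If all three of $\lVert \Phi(-2l_i) \rVert$, $\lVert \Phi(-l_i) \rVert$, $\lVert \Phi(l_i) \rVert$ tended to $0$ as $i \to \infty$, then I need to rule out that $\lvert \Tr(B_i) \rvert$ blows up fast enough to rescue the first inequality: feed the bound on $\lVert \Phi(-l_i) \rVert$ from the second inequality into the first to get $\lVert \Phi(0) \rVert \le \lvert \Tr(B_i) \rvert \big( \lvert \Tr(B_i) \rvert \lVert \Phi(0) \rVert + \lVert \Phi(l_i) \rVert \big) + \lVert \Phi(-2l_i) \rVert$, which is not yet a contradiction by itself. The clean way around this is the standard observation that a $2{\times}2$ matrix $B$ and its square cannot both shrink a fixed nonzero vector: one has the elementary inequality $\max\{ \lVert B\Phi \rVert, \lVert B^2\Phi \rVert \} \ge c(B)\lVert\Phi\rVert$ where $c(B)$ can be taken bounded below by an absolute constant when $B \in \SL$, because $B^2 + \Id = \Tr(B) B$ forces $\lVert \Phi \rVert = \lVert B^2 \Phi + \Phi \rVert \le |\Tr(B)|\,\lVert B\Phi\rVert$ while also $\lVert B^2\Phi\rVert \ge \lVert\Phi\rVert - |\Tr(B)|\,\lVert B\Phi\rVert$, so if $\lVert B\Phi\rVert$ is tiny then $\lVert B^2\Phi\rVert \ge \tfrac12\lVert\Phi\rVert$. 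Applying this with $B = B_i$ and $\Phi = \Phi(-l_i)$: since $\lVert B_i \Phi(-l_i)\rVert = \lVert \Phi(0)\rVert$ is a fixed nonzero number, either $\lVert \Phi(0) \rVert$ is not small (already a contradiction if we normalise) or $\lVert B_i^2 \Phi(-l_i)\rVert = \lVert \Phi(l_i)\rVert \ge \tfrac12 \lVert \Phi(-l_i)\rVert$; but from the second displayed identity $\lVert\Phi(-l_i)\rVert \ge |\Tr(B_i)|\lVert\Phi(0)\rVert - \lVert\Phi(l_i)\rVert$, and combining with $\lVert\Phi(0)\rVert = \lVert B_i\Phi(-l_i)\rVert$ one concludes $\lVert\Phi(-l_i)\rVert$, $\lVert\Phi(l_i)\rVert$, $\lVert\Phi(-2l_i)\rVert$ cannot all converge to $0$.

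I expect the only genuinely delicate point to be the bookkeeping around the locality constant and the alignment of the repetitions: making sure that replacing $\InfWord$ by a suitable shift (allowed by Proposition~\ref{prop:CocycInvCut}) preserves the hypothesis that there are arbitrarily long triple blocks at a fixed position, and that both repetition patterns in the statement reduce to the same algebraic situation after translating so that one of the three blocks starts at position $1$. Everything else is the elementary $\SL$ linear algebra above. Once the three norms $\lVert\Phi(-2l_i)\rVert,\lVert\Phi(-l_i)\rVert,\lVert\Phi(l_i)\rVert$ are shown not to all tend to $0$, it follows that $\lVert\Cocyc(j,\InfWord)\Phi\rVert$ does not tend to $0$ for at least one of $j \to +\infty$, $j \to -\infty$; hence at most one of the two limits $\lim_{j\to\pm\infty}\tfrac{1}{|j|}\ln(\lVert\Cocyc(j,\InfWord)\Phi\rVert)$ can be negative, which is precisely the conclusion \LCondc{} for $\InfWord$.
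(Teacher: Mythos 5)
Your overall skeleton (Cayley--Hamilton in \( \SL \), the two identities mirroring Equation~(\ref{eqn:ThreeGordon}), a dichotomy on the trace) is the same as the paper's, but there is a genuine gap at the step you dismiss as bookkeeping: the claim that one may ``assume \( J=0 \)'' via Proposition~\ref{prop:CocycInvCut}(b). That proposition only compares \( \ln(\lVert \Cocyc(j,\InfWord)\rVert) \) with \( \ln(\lVert \Cocyc(j-J,\Shift^{J}\InfWord)\rVert) \) up to an additive constant; shifting \( \InfWord \) does not shrink the window of \( \Cocyc \), and for \( J>0 \) the exact equalities \( \Cocyc(l_i,\Shift^{-2l_i}\InfWord)=\Cocyc(l_i,\Shift^{-l_i}\InfWord)=\Cocyc(l_i,\InfWord) \) are simply false in general, because the factors within distance \( J \) of a block boundary see letters outside the repeated blocks. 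Even the natural repair --- recoding to the \( (2J+1) \)-block presentation so that \( \Cocyc \) depends on a single letter --- does not restore the hypothesis: the threefold repetition of \( \InfWord \) becomes only a repetition shortened by \( J \) at each boundary in the recoded word. Handling this mismatch is exactly the technical core of the paper's proof: one passes to \( \varrho = \Shift^{J}\InfWord \), where one gets a ``three-minus-epsilon'' repetition, writes \( \Cocyc(l_i,\Shift^{-l_i+1}\varrho) = B\cdot\Cocyc(l_i,\Shift^{1}\varrho) \) with \( B \) a product of at most \( 4J \) elementary factors satisfying \( 1\leq\lVert B\rVert\leq S^{4J} \), and then the second Cayley--Hamilton identity acquires a \( B \): \( 0 = B\,\Phi_{l_i} - \Tr(\Cocyc(l_i,\Shift^{-l_i+1}\varrho))\,\Phi_{0} + \Phi_{-l_i} \), leading to the bound \( \max\{\lVert\Phi_{-2l_i}\rVert,\lVert\Phi_{-l_i}\rVert,\lVert\Phi_{l_i}\rVert\}\geq \lVert\Phi_{0}\rVert/(2\lVert B\rVert) \), which is still enough because \( \lVert B\rVert \) is bounded independently of \( i \). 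Your displayed identities, as written, hold only for a cocycle depending on \( \InfWord(0) \), so the case the proposition is actually about (general locally constant \( \Cocyc \)) is not covered.

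A secondary problem is the auxiliary ``elementary inequality'' \( \max\{\lVert B\Phi\rVert,\lVert B^{2}\Phi\rVert\}\geq c\lVert\Phi\rVert \) with an absolute constant for \( B\in\SL \): it is false (take \( B=\big(\begin{smallmatrix} t & 0 \\ 0 & t^{-1}\end{smallmatrix}\big) \) with large \( t \) and \( \Phi=\big(\begin{smallmatrix}0\\1\end{smallmatrix}\big) \)), and the intermediate equality \( \lVert\Phi\rVert=\lVert B^{2}\Phi+\Phi\rVert \) is also not correct. Fortunately this detour is unnecessary: your two inequalities already close the argument, either via the dichotomy \( \lvert\Tr\rvert\leq 1 \) versus \( \lvert\Tr\rvert\geq 1 \) (which is how the paper concludes), or by observing that the first identity forces \( \lvert\Tr(B_i)\rvert\,\lVert\Phi(-l_i)\rVert\geq\tfrac12\lVert\Phi(0)\rVert \) eventually, whence \( \lvert\Tr(B_i)\rvert\to\infty \) if \( \lVert\Phi(-l_i)\rVert\to 0 \), and then the second identity makes \( \lVert\Phi(l_i)\rVert \) blow up. So the linear algebra is repairable; the missing ingredient you must supply is the treatment of the boundary factors for \( J>0 \) described above.
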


\begin{proof}
We only treat the case \( \Restr{\InfWord}{-2l_{i}+1}{-l_{i}} = \Restr{\InfWord}{-l_{i}+1}{0} = \Restr{\InfWord}{1}{l_{i}} \) since the second case is similar. In its simplest form we have already seen the Gordon argument in Proposition~\ref{prop:ThreeGordon}, and the reader is encouraged to reread our earlier sketch since it highlights the main ideas without going into technical details. Here we prove the Gordon argument in full generality for locally constant cocycles: for the underlying map \( \Cocyc \) there exists a number \( J \) such that \( \Cocyc( \InfWord ) \) only depends on \( \Restr{\InfWord}{-J}{J} \). Without loss of generality we assume that all \( l_{i} \) are larger than \( 2 J \). Since \( \Cocyc \) depends on \( \Restr{\InfWord}{-J}{J} \), the values of \( \Cocyc( l_{i} , \Shift^{-2l_{i}+1} \InfWord) \),  \( \Cocyc( l_{i} , \Shift^{-l_{i}+1} \InfWord) \) and \( \Cocyc( l_{i} , \Shift^{1} \InfWord) \) are in general not equal. However, we have
\[ \Cocyc( l_{i}-J , \Shift^{-2l_{i}+J+1} \InfWord ) = \Cocyc( l_{i}-J , \Shift^{-l_{i}+J+1} \InfWord ) \quad \text{and} \quad \Cocyc( l_{i}-J , \Shift^{-l_{i}+1} \InfWord ) = \Cocyc( l_{i}-J , \Shift^{1} \InfWord ) \, ,\]
see Figure~\ref{fig:3EpsGord}. With \( \varrho \DefAs \Shift^{J} \InfWord \) we obtain
\[ \Cocyc( l_{i} , \Shift^{-2l_{i}+1} \varrho) = \Cocyc( l_{i} , \Shift^{-l_{i}+1} \varrho) \qquad \text{and} \qquad \Cocyc( l_{i}-2J , \Shift^{-l_{i}+1} \varrho) = \Cocyc( l_{i}-2J , \Shift^{1} \varrho) \, .\]
For \( l_{i} \to \infty \) this can be seen as a \emph{three-minus-epsilon repetition}\index{three@(three minus epsilon)-block Gordon argument}\index{Gordon argument!three minus@(three minus epsilon)-block} of \( \varrho \). In the following we show by a Gordon-type argument that this prevents decay for \( \varrho \), which then implies the claim for \( \InfWord \).

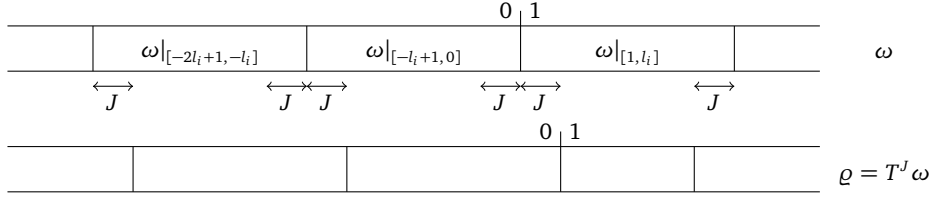
\begin{figure}
\centering
\footnotesize
\pgfmathsetmacro{\TikzWLength}{80} 
\pgfmathsetmacro{\TikzRLength}{0.4*\TikzWLength} 
\pgfmathsetmacro{\TikzJLength}{15} 
\begin{tikzpicture}
\draw ( - \TikzWLength - \TikzRLength pt, 0 ) --( 2*\TikzWLength + \TikzRLength pt, 0 );
\draw ( - \TikzWLength - \TikzRLength pt, -0.6 ) --( 2*\TikzWLength + \TikzRLength pt, -0.6 );
\draw ( \TikzWLength pt , -0.6 ) -- ( \TikzWLength pt , 0.2 ) node[at end, left]{0\vphantom{1}} node[at end, right]{1\vphantom{0}};
\foreach \x in {-1,0,2}{
	\draw ( \x*\TikzWLength pt, -0.6 ) -- ( \x*\TikzWLength pt, 0 );}
\foreach \x in {0,1,2}{
	\draw[<->] ( -\TikzWLength + \x*\TikzWLength pt, -0.8  ) -- ( -\TikzWLength +\TikzJLength + \x*\TikzWLength pt, -0.8  ) node[midway, below]{\( J \)};
	\draw[<->] ( \x*\TikzWLength pt, -0.8  ) -- ( -\TikzJLength + \x*\TikzWLength pt, -0.8  ) node[midway, below]{\( J \)};	
	}
\node at ( -0.5*\TikzWLength pt, -0.35) {\( \Restr{\InfWord}{-2l_{i}+1}{-l_{i}} \)};	
\node at ( 0.5*\TikzWLength pt, -0.35) {\( \Restr{\InfWord}{-l_{i}+1}{0} \)};
\node at ( 1.5*\TikzWLength pt, -0.35) {\( \Restr{\InfWord}{1}{l_{i}} \)};
\node at (2.7*\TikzWLength pt, -0.35){\( \InfWord \)};
\draw ( - \TikzWLength - \TikzRLength pt, -1.6 ) --( 2*\TikzWLength + \TikzRLength pt, -1.6 );
\draw ( - \TikzWLength - \TikzRLength pt, -2.2 ) --( 2*\TikzWLength + \TikzRLength pt, -2.2 );
\draw ( \TikzJLength pt, -2.2 ) -- ( \TikzJLength pt, -1.6 );
\draw ( \TikzJLength + \TikzWLength pt, -2.2 ) -- ( \TikzJLength + \TikzWLength pt, -1.4 ) node[at end, left]{0\vphantom{1}} node[at end, right]{1\vphantom{0}};
\draw ( -\TikzWLength + \TikzJLength pt, -2.2 ) -- ( -\TikzWLength +\TikzJLength pt, -1.6 );
\draw ( 2*\TikzWLength - \TikzJLength pt, -2.2 ) -- ( 2*\TikzWLength - \TikzJLength pt, -1.6 );
\node at (2.7*\TikzWLength pt, -1.95){\( \varrho = \Shift^{J} \InfWord \)};
\end{tikzpicture}
\normalsize
\caption{Threefold repetition in \( \InfWord \) and three-minus-epsilon repetition in \( \varrho \).\label{fig:3EpsGord}}
\end{figure}

It is easy to see that \( \Cocyc( l_{i} , \Shift^{-l_{i}+1} \varrho ) \) and \( \Cocyc( l_{i} , \Shift^{1} \varrho ) \) differ by \( 4J \) factors:
\begin{align*}
\Cocyc( l_{i} , \Shift^{-l_{i}+1} \varrho ) &= \Cocyc( \Shift^{0} \varrho ) \cdot \ldots \cdot \Cocyc( \Shift^{-2J+1} \varrho ) \cdot \Cocyc( l_{i}-2J, \Shift^{-l_{i}+1} \varrho ) \\
&= \Cocyc( \Shift^{0} \varrho ) \cdot \ldots \cdot \Cocyc( \Shift^{-2J+1} \varrho ) \cdot \Cocyc( l_{i}-2J, \Shift^{1} \varrho ) \\
&= \underbrace{\Cocyc( \Shift^{0} \varrho ) \cdot \ldots \cdot \Cocyc( \Shift^{-2J+1} \varrho )\cdot \Cocyc( \Shift^{l_{i}-2J+1} \varrho )^{-1} \cdot \ldots \cdot \Cocyc( \Shift^{l_{i}} \varrho )^{-1} }_{ \AsDef \, B} \\*
&\quad \cdot \Cocyc( \Shift^{l_{i}} \varrho ) \cdot \ldots \cdot \Cocyc( \Shift^{l_{i}-2J+1} \varrho ) \cdot \Cocyc( l_{i}-2J, \Shift^{1} \varrho ) \\
&= B \cdot \Cocyc( l_{i} , \Shift^{1} \varrho ) \, .
\end{align*}
Since \( \Cocyc \colon \Subshift \to \SL \) is locally constant, we have \( 1 \leq \sup_{ \InfWord \in \Subshift } \lVert \Cocyc( \InfWord ) \rVert \AsDef S < \infty \). Now \( \lVert A( \cdot ) \rVert = \lVert A( \cdot )^{-1} \rVert \) implies \( 1 \leq \lVert B \rVert \leq S^{4J} \). The Cayley/Hamilton theorem yields
\begin{align*}
0 &= \Cocyc( l_{i} , \Shift^{-l_{i}+1} \varrho )^{2} - \Tr( \Cocyc( l_{i} , \Shift^{-l_{i}+1} \varrho ) ) \, \Cocyc( l_{i} , \Shift^{-l_{i}+1} \varrho ) + \Id \\
&= \Cocyc( 2 l_{i} , \Shift^{-2l_{i}+1} \varrho) - \Tr( \Cocyc( l_{i} , \Shift^{-l_{i}+1} \varrho ) ) \, \Cocyc( l_{i} , \Shift^{-l_{i}+1} \varrho ) + \Id \, ,
\end{align*}
as well as
\begin{align*}
0 &= \Cocyc( l_{i} , \Shift^{-l_{i}+1} \varrho )^{2} - \Tr( \Cocyc( l_{i} , \Shift^{-l_{i}+1} \varrho ) ) \, \Cocyc( l_{i} , \Shift^{-l_{i}+1} \varrho ) + \Id \\
&= B \cdot \Cocyc( 2 l_{i} , \Shift^{-l_{i}+1} \varrho) - \Tr( \Cocyc( l_{i} , \Shift^{-l_{i}+1} \varrho ) ) \, \Cocyc( l_{i} , \Shift^{-l_{i}+1} \varrho ) + \Id \, .
\end{align*}
For \( \Phi_{0} \in \RR^{2} \setminus \Set{ 0 } \) and \( j \in \ZZ \) we now define \( \Phi_{j} \DefAs \Cocyc( j , \Shift^{1} \varrho) \Phi_{0} \). After multiplying the equalities above by \( \Phi_{-2l_{i}} \) and \( \Phi_{-l_{i}} \) respectively, we obtain
\begin{align}
0 &= \Phi_{0} - \Tr( \Cocyc( l_{i} , \Shift^{-l_{i}+1} \varrho ) ) \, \Phi_{-l_{i}} + \Phi_{-2l_{i}} \label{eqn:3EpsGord1} \\
\text{and} \qquad 0 &= B \, \Phi_{l_{i}} - \Tr( \Cocyc( l_{i} , \Shift^{-l_{i}+1} \varrho ) ) \, \Phi_{0}  + \Phi_{-l_{i}} \, . \label{eqn:3EpsGord2}
\end{align}
Now there are two cases: on the one hand, Equation~(\ref{eqn:3EpsGord1}) yields for \( \lvert \Tr( \Cocyc( l_{i} , \Shift^{-l_{i}+1} \varrho ) ) \rvert \leq 1 \) the relation
\[ \lVert \Phi_{-2l_{i}} \rVert + \lVert \Phi_{-l_{i}} \rVert \geq \lVert \Phi_{-2l_{i}} \rVert + \lVert \Tr( \Cocyc( l_{i} , \Shift^{-l_{i}+1} \varrho ) ) \Phi_{-l_{i}} \rVert \geq \lVert \Phi_{0} \rVert \geq \frac{\lVert \Phi_{0} \rVert}{ \lVert B \rVert } \, .\]
On the other hand, Equation~(\ref{eqn:3EpsGord2}) yields for \( \lvert \Tr( \Cocyc( l_{i} , \Shift^{1} \varrho ) ) \rvert \geq 1 \) the relation
\[ \lVert \Phi_{l_{i}} \rVert + \lVert \Phi_{-l_{i}} \rVert \geq \frac{ \lVert B \rVert \cdot \lVert \Phi_{l_{i}} \rVert + \lVert \Phi_{-l_{i}} \rVert }{ \lVert B \rVert } \geq \frac{ \lVert \Tr( \Cocyc( l_{i} , \Shift^{1} \varrho ) ) \Phi_{0} \rVert }{ \lVert B \rVert }  \geq \frac{ \lVert \Phi_{0} \rVert }{ \lVert B \rVert } \, . \]
By combining both cases, we obtain the inequality
\[ \max \Set{ \lVert \Phi_{-2l_{i}} \rVert , \lVert \Phi_{-l_{i}} \rVert , \lVert \Phi_{l_{i}} \rVert } \geq \frac{\lVert \Phi_{0} \rVert}{ 2 \cdot \lVert B \rVert } \, .\]
Since \( \lVert B \rVert \) is bounded, \( \lVert \Phi_{j} \rVert \) tends to zero for at most one of the limits \( j \to \pm \infty \).

Now it only remains to deduce the corresponding statement for  \( \lVert \Cocyc( j , \InfWord ) \Phi \rVert \): for an arbitrary \( \Phi \in \RR^{2} \setminus \Set{ 0 } \), define \( \Phi_{0} \DefAs \Cocyc( J+1 , \InfWord ) \Phi \), where \( J \) is such that \( \Cocyc( \InfWord ) \) only depends on \( \Restr{ \InfWord }{ -J }{ J } \). By the preceding considerations, \( \lVert \Phi_{j} \rVert \) cannot tend to zero in both directions. Now the claim follows from the definitions of \( \Phi_{j} \) and \( \varrho \), which yield
\[ \Phi_{j} = \Cocyc( j , \Shift^{ J+1 } \InfWord ) \Phi_{0} = \Cocyc( j , \Shift^{J+1} \InfWord ) \Cocyc( J+1 , \InfWord ) \Phi =\Cocyc( j+J+1 , \InfWord ) \Phi \, . \qedhere \]
\end{proof}

Combined the Propositions~\ref{prop:PQPrefixLimit} and~\ref{prop:GordonCocyc} yield combinatorial conditions that imply \LCond{}. In contrast to the conditions in Definition~\ref{defi:LCond}, they are relatively easy to check for specific examples. We will use them in Section~\ref{sec:LCondExmpl} to prove that simple Toeplitz subshifts and Sturmian subshifts satisfy \LCond{}. Note that the conditions are sufficient, but not necessary: every periodic subshift satisfies for example \LCond{} (see Subsection~\ref{subsec:LCondPeriod}), but their leading sequences will in general not have the palindromic form \( \LWord{i} = \Word{ \Rev{\RWord} }{\Origin{ \LOrigin{i} }}{ \RWord } \) that is required in the sufficient conditions below.

\begin{prop}
\label{prop:LCondSuffi}
A subshift \( \Subshift \) satisfies \LCond{} if there exists a one-sided infinite word \( \RWord \in \Alphab^{\NN} \) and finite words \( \LOrigin{1} , \ldots , \LOrigin{\LNr}\) such that the following conditions hold:
\begin{tightdescription}
\tightlist
\item[\hypertarget{LSCaa}{\LCond{\(\boldsymbol{\ \alpha'}\)}}]{For all \( i \in \Set{ 1 , \ldots , \LNr } \) we have \( \LWord{i} \DefAs \Word{ \Rev{\RWord} }{\Origin{ \LOrigin{i} }}{ \RWord } \in \Subshift \). Moreover, there exist \( L_{0}, I_{0} \in \NN_{0} \) such that
\[ \Langu{\Subshift}_{L} = \bigcup_{i=1}^{\LNr} \bigcup_{j= -I_{0}-L}^{I_{0}-1} \Restr{ \LWord{i} }{ j+1 }{ j+L } \]
holds for all  \( L \in \NN \) with \( L \geq L_{0} \).}
\item[\hypertarget{LSCbb}{\LCond{\(\boldsymbol{\ \beta'}\)}}]{The one-sided infinite word \( \RWord \) satisfies (\ref{eqn:DefiPQ}). Moreover, there is a sequence \( ( \Restr{\RWord}{1}{L_{k}} )_{k} \) of prefixes of \( \RWord \) with \( \lim_{k \to \infty} L_{k} = \infty \), such that for every subadditive function \( F \colon \Langu{ \Subshift } \to \RR \), the limit \( \lim_{k \to \infty} \frac{ F( \Restr{\RWord}{1}{L_{k}} ) }{ L_{k} } \) exists and is equal to \( \lim_{L \to \infty} \max \Set{ \frac{ F(u) }{ L } : u \in \Langu{ \Subshift }_{L} } \).}
\item[\hypertarget{LSCcc}{\LCond{\(\boldsymbol{\ \gamma'}\)}}]{For every \( i \in \Set{ 1 , \ldots , \LNr } \) there exists a sequence \( ( l_{k} ) \) of natural numbers with \( \lim_{k \to \infty} l_{k} = \infty \) such that for all \( k \in \NN \) we have either
\begin{alignat*}{5}
& \Restr{\LWord{i}}{-2 l_{k}+1}{-l_{k}} &&= \Restr{\LWord{i}}{-l_{k}+1}{0} &&= \Restr{\LWord{i}}{1}{l_{k}} \, , \\
\text{or} \qquad & \Restr{\LWord{i}}{-l_{k}+1}{0} &&= \Restr{\LWord{i}}{1}{l_{k}} &&= \Restr{\LWord{i}}{l_{k}+1}{2 l_{k}} \; .
\end{alignat*}}
\end{tightdescription}
\end{prop}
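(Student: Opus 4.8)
The plan is to verify that conditions \LCondaa{}, \LCondbb{} and \LCondcc{} are precisely strong enough to supply the three clauses of Definition~\ref{defi:LCond}. Since \LCondaa{} literally coincides with \LConda{} (both assert the same decomposition of \( \Langu{ \Subshift }_{L} \) for \( L \geq L_{0} \)), nothing has to be done for the combinatorial clause beyond noting that \( \LWord{i} = \Word{ \Rev{\RWord} }{ \Origin{ \LOrigin{i} } }{ \RWord } \in \Subshift \). The work therefore concentrates on deriving \LCondb{} from \LCondbb{} and \LCondc{} from \LCondcc{}.

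For \LCondc{}: given a locally constant \( \Cocyc \colon \Subshift \to \SL \), an index \( i \) and \( \Phi \in \RR^{2} \setminus \Set{0} \), I would take the sequence \( ( l_{k} ) \) supplied by \LCondcc{} and feed it directly into Proposition~\ref{prop:GordonCocyc}. That proposition says exactly that if \( \LWord{i} \) has, for each \( k \), a threefold repetition \( \Restr{\LWord{i}}{-2l_{k}+1}{-l_{k}} = \Restr{\LWord{i}}{-l_{k}+1}{0} = \Restr{\LWord{i}}{1}{l_{k}} \) or a shifted threefold repetition \( \Restr{\LWord{i}}{-l_{k}+1}{0} = \Restr{\LWord{i}}{1}{l_{k}} = \Restr{\LWord{i}}{l_{k}+1}{2l_{k}} \), then \( \lVert \Cocyc( j , \LWord{i} ) \Phi \rVert \) cannot decay to zero as \( j \to +\infty \) and as \( j \to -\infty \) simultaneously; hence at most one of the two limits \( \lim_{j \to \pm \infty} \frac{1}{ \lvert j \rvert } \ln( \lVert \Cocyc( j , \LWord{i} ) \Phi \rVert ) \) is negative, which is \LCondc{}. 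This step is essentially a citation.

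For \LCondb{}: fix a locally constant \( \Cocyc \colon \Subshift \to \SL \) and set \( F \colon \Langu{ \Subshift } \to \RR \), \( v \mapsto \max \Set{ \ln( \lVert \Cocyc( \Length{v} , \InfWord ) \rVert ) : \InfWord \in \Subshift \text{ with } \Restr{ \InfWord }{ 1 }{ \Length{v} } = v } \), which is subadditive by Example~\ref{exmpl:MaxCocycSubadd}. By \LCondbb{} the word \( \RWord \) satisfies (\ref{eqn:DefiPQ}) and there is a sequence of prefixes \( \Restr{\RWord}{1}{L_{k}} \) along which \( \frac{ F( \Restr{\RWord}{1}{L_{k}} ) }{ L_{k} } \to \AsAv{F} \). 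Proposition~\ref{prop:PQPrefixLimit} then yields that \( \lim_{L \to \infty} \frac{ F( \Restr{\RWord}{1}{L} ) }{ L } \) exists and equals \( \AsAv{F} \). The next point is to pass from this statement about the subadditive function \( F \) to the actual cocycle norm along \( \LWord{i} \): since \( F( \Restr{\LWord{i}}{1}{L} ) \) is by definition a maximum over all \( \InfWord \) with a given prefix, and \( \Restr{\LWord{i}}{1}{L} = \Restr{\RWord}{1}{L} \) for all \( i \), one has to compare \( \ln( \lVert \Cocyc( L , \LWord{i} ) \rVert ) \) with \( F( \Restr{\RWord}{1}{L} ) \). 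Here I would run a squeezing argument of the Lenz type: the lower bound \( \ln( \lVert \Cocyc( L , \LWord{i} ) \rVert ) \leq F( \Restr{\RWord}{1}{L} ) \) is immediate; for a matching lower bound one uses that (\ref{eqn:DefiPQ}) forces the prefix \( \Restr{\RWord}{1}{L} \) to reoccur with controlled frequency inside any long word achieving (nearly) the maximum, together with submultiplicativity \( \lVert CD \rVert \geq \lVert C \rVert / \lVert D^{-1} \rVert = \lVert C \rVert / \lVert D \rVert \) from Proposition~\ref{prop:CocycInvCut}, to show \( \frac{1}{L}\ln( \lVert \Cocyc( L , \LWord{i} ) \rVert ) \to \AsAv{F} \) as \( L \to +\infty \). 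For the limit \( L \to -\infty \) I would use the reflection structure \( \LWord{i} = \Word{ \Rev{\RWord} }{ \Origin{ \LOrigin{i} } }{ \RWord } \) together with Proposition~\ref{prop:CocycInvCut}(a), \( \lVert \Cocyc( -j , \InfWord ) \rVert = \lVert \Cocyc( j , \Shift^{-j} \InfWord ) \rVert \), and Proposition~\ref{prop:CocycInvCut}(b), which allows me to replace \( \Shift^{-j} \LWord{i} \) by a shift that aligns with a prefix of \( \RWord \) up to a bounded error \( c(J) \) that disappears after dividing by \( \lvert j \rvert \); so the negative-direction limit exists and has the same value \( \AsAv{F} \). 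Finally, since the common value \( c \DefAs \AsAv{F} \) does not depend on \( i \), all \( 2\LNr \) limits in \LCondb{} agree, which is exactly what is required.

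The main obstacle is this passage from the subadditive-function limit to the genuine cocycle-norm limit along the individual leading sequences — i.e. showing that \( \frac{1}{|j|}\ln(\lVert \Cocyc(j,\LWord{i})\rVert) \) and the maximising quantity \( \AsAv{F} \) have the same asymptotics, in both directions, uniformly over \( i \). This requires the Lenz-style reoccurrence argument exploiting (\ref{eqn:DefiPQ}) rather than just Fekete's lemma, plus careful bookkeeping with the locally-constant horizon \( J \) and the reflection at the origin; everything else (the combinatorial clause, and the Gordon-argument clause via Proposition~\ref{prop:GordonCocyc}) is essentially a direct invocation of earlier results.
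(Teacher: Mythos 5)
Your treatment of \LConda{} and \LCondc{} matches the paper: \LCondaa{} gives \LConda{} verbatim, and \LCondcc{} gives \LCondc{} by citing Proposition~\ref{prop:GordonCocyc}; the forward half of \LCondb{} also starts as in the paper (the subadditive function \( F_{\Cocyc} \) of Example~\ref{exmpl:MaxCocycSubadd}, then \LCondbb{} and Proposition~\ref{prop:PQPrefixLimit}). The genuine gap is in your handling of the limits \( j \to -\infty \). Proposition~\ref{prop:CocycInvCut}(a) turns \( \lVert \Cocyc( -L , \LWord{i} ) \rVert \) into \( \lVert \Cocyc( L , \Shift^{-L} \LWord{i} ) \rVert \), but the word that \( \Shift^{-L} \LWord{i} \) carries at positions \( 1 , \ldots , L \) is \( \Restr{ \LWord{i} }{ -L+1 }{ 0 } \), i.e.\ the \emph{reversed} prefix \( \Rev{ \Restr{ \RWord }{ 1 }{ L - \Length{ \LOrigin{i} } } } \) followed by \( \LOrigin{i} \); Proposition~\ref{prop:CocycInvCut}(b) only discards a bounded number of boundary factors and cannot ``align'' this with a prefix of \( \RWord \), since reversed prefixes are genuinely different words unless they happen to be palindromes. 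So neither the existence of the backward limit nor its equality with \( \AsAv{ F_{\Cocyc} } \) follows from your shift argument. The paper closes exactly this point by (i) showing that \( \Langu{ \Subshift } \) is reflection-closed --- which itself uses \LCondaa{} together with the repetitions of \LCondcc{} to push an occurrence of a given word entirely into \( \RWord \) or \( \Rev{ \RWord } \) --- so that \( \Rev{ F_{\Cocyc} }( u ) \DefAs F_{\Cocyc}( \Rev{ u } ) \) is a well-defined subadditive function; (ii) applying \LCondbb{} and Proposition~\ref{prop:PQPrefixLimit} to \( \Rev{ F_{\Cocyc} } \) to obtain existence of \( \lim_{L} \frac{1}{L} F_{\Cocyc}( \Rev{ \Restr{ \RWord }{ 1 }{ L } } ) \), hence of the backward cocycle limit; and (iii) using the palindromic prefixes forced by \LCondcc{}, namely \( \Rev{ \Restr{ \RWord }{ 1 }{ l_{k} - \Length{ \LOrigin{i} } } } = \Restr{ \RWord }{ 1 }{ l_{k} - \Length{ \LOrigin{i} } } \), to identify the backward value with the forward one. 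In your proposal \LCondcc{} is never used inside the proof of \LCondb{}, whereas it is indispensable there.

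A secondary remark: the step you single out as the main obstacle --- passing from \( \lim_{L} \frac{1}{L} F_{\Cocyc}( \Restr{ \RWord }{ 1 }{ L } ) \) to \( \lim_{L} \frac{1}{L} \ln( \lVert \Cocyc( L , \LWord{i} ) \rVert ) \) --- needs no reoccurrence argument, and the mechanism you describe is not what (\ref{eqn:DefiPQ}) provides: (\ref{eqn:DefiPQ}) controls occurrences of prefixes of \( \RWord \) inside \( \RWord \) itself, not inside arbitrary maximising words, and that Lenz-style argument is in any case already spent inside Proposition~\ref{prop:PQPrefixLimit}. Since \( F_{\Cocyc}( \Restr{ \RWord }{ 1 }{ L } ) \) is a maximum over elements that agree with \( \LWord{i} \) on positions \( 1 , \ldots , L \) and \( \Cocyc \) is locally constant with some horizon \( J \), the two matrix products share all but on the order of \( J \) boundary factors, so \( \ln( \lVert \Cocyc( L , \LWord{i} ) \rVert ) \) and \( F_{\Cocyc}( \Restr{ \RWord }{ 1 }{ L } ) \) differ by a constant independent of \( L \) (this is how the paper argues, via Proposition~\ref{prop:CocycInvCut}), and dividing by \( L \) kills the difference. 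Your sketch of this step would not, as written, produce the required matching bound, though the bound itself is easy; the part of the proposal that genuinely fails is the backward direction described above.
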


\begin{proof}
Clearly \LCondaa{} implies \LConda{}. Moreover, \LCondcc{} implies \LCondc{} by Proposition~\ref{prop:GordonCocyc}. Thus, only the validity of \LCondb{} remains to show. For this, let \( \Cocyc \colon \Subshift \to \SL \) be a locally constant map and consider the function
\[ F_{\Cocyc}( u ) \DefAs \max \Set{ \ln( \lVert \Cocyc( \Length{u} , \InfWord ) \rVert ) : \InfWord \in \Subshift \text{ with } \Restr{ \InfWord }{ 1 }{ \Length{u} } = u } \, . \]
Since it is subadditive (see Example~\ref{exmpl:MaxCocycSubadd}), the limit \( \lim_{L \to \infty} \frac{ F_{\Cocyc}( \Restr{\RWord}{1}{L} ) }{ L } \) exists by \LCondbb{} and Proposition~\ref{prop:PQPrefixLimit}. For every \( L \in \NN \), let \( \InfWord_{L} \in \Subshift\) be such that \( \Restr{ \InfWord_{L} }{ 1 }{ L } = \Restr{\RWord}{1}{L} \) and \( F_{\Cocyc}( \Restr{\RWord}{1}{L} ) = \ln( \lVert \Cocyc( L , \InfWord_{L} ) \rVert ) \) hold. Since \( \Cocyc \) depends on a finite patch around the origin, the first few and last few matrices in \( \Cocyc( L , \LWord{i} ) \) and \( \Cocyc( L , \InfWord_{L} ) \) may differ. However, according to Proposition~\ref{prop:CocycInvCut} we can ignore these finite pieces in limit considerations. Therefore we obtain for every \( i \in \Set{ 1 , \ldots , \LNr } \):
\[ \lim_{L \to \infty} \frac{ \ln( \lVert \Cocyc( L , \LWord{i} ) \rVert ) }{ L } = \lim_{L \to \infty} \frac{ \ln( \lVert \Cocyc( L , \InfWord_{L} ) \rVert ) }{ L }  = \lim_{L \to \infty} \frac{ F_{\Cocyc}( \Restr{\RWord}{1}{L} ) }{ L } \, , \]
and in particular the limit on the left hand side exists.

Now note that for every \( u \in \Langu{ \Subshift } \) we have \( \Rev{u} \in \Langu{ \Subshift } \) as well: since both, \( \RWord \) and \( \Rev{ \RWord } \) occur in the subshift, the reflections of all their subwords are in \( \Langu{ \Subshift } \). By \LCondaa{}, every word \( u \) occurs around the origin of some \( \LWord{i} \), and by the repetitions in \LCondcc{}, there is an occurrence of \( u \) that lies completely in \( \RWord \) or \( \Rev{ \RWord } \). We can therefore define \( \Rev{F_{\Cocyc}} \colon \Langu{ \Subshift } \to \RR \), \( u \mapsto F_{\Cocyc}( \Rev{u} ) \), which is a subadditive function as well. Hence the limit \( \lim_{L \to\infty} \frac{ F_{\Cocyc}( \Rev{ \Restr{\RWord}{1}{L} } ) }{ L } \) exists. Using \( \lVert  \Cocyc( -L , \InfWord ) \rVert = \lVert \Cocyc( \Shift^{-1} \InfWord ) \cdot \ldots \cdot \Cocyc( \Shift^{-L} \InfWord ) \rVert \), the same argument as above shows the relation
\[ \lim_{L \to \infty} \frac{ \ln( \lVert \Cocyc( -L , \LWord{i} ) \rVert ) }{ L } = \lim_{L \to \infty} \frac{ \Rev{F_{\Cocyc}}( \Restr{ \RWord }{1}{L} ) }{ L } = \lim_{L \to \infty} \frac{ F_{\Cocyc}( \Rev{ \Restr{\RWord}{1}{L} } ) }{ L } \]
and in particular the existence of the limit on the left. The repetitions in \LCondcc{} yield \( \Rev{ \Restr{ \RWord }{ 1 }{ l_{k} - \Length{\LOrigin{i}} } } = \Restr{ \RWord }{ 1 }{ l_{k} - \Length{\LOrigin{i}} } \). Since \( l_{k} \) gets arbitrarily large, the limits for \( \Cocyc( -L , \LWord{i} ) \) and \( \Cocyc( L , \LWord{i} ) \) all agree, which gives \LCondb{}.
\end{proof}

\section{Examples of (LSC)-subshifts}
\label{sec:LCondExmpl}

In this section we show that simple Toeplitz subshifts and Sturmian subshifts satisfy the leading sequence condition. Consequently, the results that we will prove in Section~\ref{sec:UnifCoycLCond} for \LCond{}-subshifts apply to them. Most importantly, it will follow that aperiodic Jacobi operators associated to a simple Toeplitz subshift always have Cantor spectrum (Theorem~\ref{thm:LCondCantorSpec}). For Sturmian subshifts this is well-known, but we discuss them nevertheless to show that \LCond{}-subshifts cover several interesting classes. As an additional example we also prove the leading sequence conditions for periodic subshifts.

\subsection{Periodic subshifts}
\label{subsec:LCondPeriod}

Our focus in this thesis is on aperiodic subshifts, since the spectral properties of Jacobi operators are rather well-understood in the periodic case (see Remark~\ref{rem:PerioAndRand}). In particular, the main application of the leading sequence property deals with aperiodic subshifts (Theorem~\ref{thm:LCondCantorSpec}). Nevertheless we briefly discuss periodic subshifts since they are the simplest examples of \LCond{}-subshifts. Below we give a direct proof of the \LCond{}-property. However, note that it can also be concluded from the corresponding result for simple Toeplitz subshifts in Theorem~\ref{thm:SimpToepLCond}, since the aperiodicity of simple Toeplitz subshifts is not used in Subsection~\ref{subsec:LCondST}.

Let \( \widetilde{ \InfWord } \in \Alphab^{\ZZ} \) be periodic with period \( P \in \NN \), that is, for all \( j , l \in \ZZ \) we have \( \widetilde{ \InfWord }( j ) = \widetilde{ \InfWord }( j + lP ) \). The associated subshift \( \Subshift \) is given by
\[ \Subshift \DefAs \Subshift( \widetilde{ \InfWord } ) = \Close{ \Set{ \Shift^{k} \widetilde{ \InfWord } : k \in \ZZ } } = \big\{ \; \widetilde{ \InfWord } , \Shift^{1} \widetilde{ \InfWord } , \ldots , \Shift^{P-1} \widetilde{ \InfWord } \; \big\}  \, . \]  
The subshift has a single leading sequence, which we choose as \( \LWord{1} \DefAs \widetilde{ \InfWord } \) (but we could have chosen any other \( \InfWord \in \Subshift \) as well). We set \( I_{0} \DefAs \big\lceil \frac{ P }{ 2 } \big\rceil \). First we show that every finite word intersects \( \Restr{ \widetilde{ \InfWord } }{ -I_{0} }{ I_{0} } \): every \( u \in \Langu{ \Subshift } \) occurs in \( \widetilde{ \InfWord } \). Since \( \widetilde{ \InfWord } \) is \( P \)-periodic and \( \Length{ \Restr{ \widetilde{ \InfWord } }{ -I_{0} }{ I_{0} } } > P \) holds, a shifted occurrence of \( u \) has its first letter in \( \Restr{ \widetilde{ \InfWord } }{ -I_{0} }{ I_{0} } \). Secondly, by Proposition~\ref{prop:GordonCocyc} the condition \LCondc{} follows immediately from periodicity due to the repetitions around the origin. Now let \( \Cocyc \) be a locally constant cocycle. It only remains to show that the limits \( \lim_{j \to \pm \infty} \frac{ 1 }{ \lvert j \rvert } \ln( \lVert \Cocyc( j , \widetilde{ \InfWord } ) \rVert ) \) exist and are equal. 

For \( j \to \infty \) we first consider the subsequence given by \( j_{l} \DefAs 2^{l} P \). Recall that \( \Cocyc( s , \InfWord ) = \Cocyc( s-t , \Shift^{t} \InfWord ) \cdot \Cocyc( t , \InfWord ) \) holds for all \( s , t \in \ZZ \) and all \( \InfWord \in \Subshift \), see the proof of Proposition~\ref{prop:CocycInvCut}. Since \( \widetilde{ \InfWord } \) is \( P \)-periodic, we obtain
\[ \Cocyc( 2^{l+1} P , \widetilde{ \InfWord } ) = \Cocyc( 2^{l} P , \Shift^{ 2^{l} P } \widetilde{ \InfWord } ) \cdot \Cocyc( 2^{l}P , \InfWord ) = \Cocyc( 2^{l} P , \widetilde{ \InfWord } ) \cdot \Cocyc( 2^{l}P , \InfWord ) \, .\]
Thus subadditivity yields
\[ \frac{ \ln( \lVert \Cocyc( j_{l+1} , \widetilde{ \InfWord } ) \rVert ) }{ j_{l+1} } \leq \frac{ 2 \cdot \ln( \lVert \Cocyc( 2^{l} P , \widetilde{ \InfWord } ) \rVert ) }{ 2^{l+1} P } = \frac{ \ln( \lVert \Cocyc( j_{l} , \widetilde{ \InfWord } ) \rVert ) }{ j_{l} } \, , \]
that is, the sequence \( ( \frac{ \ln( \lVert \Cocyc( j_{l} , \widetilde{ \InfWord } ) \rVert ) }{ j_{l} } )_{l} \) is monotonically decreasing. Since it is also bounded from below by zero, it converges to some value \( \Lyapu{ \Cocyc } \in [0 , \infty ) \). To deal with arbitrary \( j \in \NN \), we express them as weighted average of \( j_{l} \)'s and use the following relation:

\emph{Let \( ( x_{l} )_{l} \) be a convergent sequence of real numbers with limit \( x \). Let \( \lambda_{ l , k} \) be non-negative numbers with \( \sum_{l=1}^{k} \lambda_{ l , k } = 1 \) for every \( k \in\NN \) and \( \lim_{k \to \infty} \lambda_{ l , k } = 0 \) for every \( l \in \NN \). Then the sequence \( ( \sum_{l=1}^{k} \lambda_{ l , k } x_{l}  )_{k} \) of weighted averages of \( x_{l} \)'s converges to \( x \) as well.} 

This can easily be seen by splitting the sum: there exists \( L \in \NN \) such that \( \sum_{l=L+1}^{k} \lambda_{ l , k } \lvert x_{l} - x \rvert \) is close to zero for all sufficiently large \( k \) due to \( x_{l} \to x\) and \( \sum_{l=1}^{L} \lambda_{ l , k } \lvert x_{l} - x \rvert \) is close to zero due to \( \lambda_{ l , k } \to 0 \). Now we proceed with the main proof. We define \( D \DefAs \max \Set{ \ln( \lVert \Cocyc( \InfWord ) \rVert ) : \InfWord \in \Subshift } \) and write \( j \in \NN \) in the form \( j = \sum_{ l = 0 }^{  L-1 } m_{l} 2^{l} P + r \) with \( m_{l} \in \Set{ 0 ,1 } \), minimal \( L \in \NN \) and \( r \in \Set{ 0 , \ldots , P-1 } \). As before, \( P \)-periodicity and subadditivity yield an upper bound:
\begin{align*}
\frac{ \ln( \lVert \Cocyc( j , \widetilde{ \InfWord } ) \rVert ) }{ j }
&\leq \frac{ \ln( \lVert \Cocyc( r , \widetilde{ \InfWord } ) \rVert ) + \sum_{ l=0 }^{L-1} \ln( \lVert \Cocyc( m_{l}2^{l} P , \widetilde{ \InfWord } ) \rVert )  }{ j } \\
&\leq \frac{ r }{ j } \cdot D + \sum_{ l=0 }^{L-1} \frac{ m_{l} 2^{l} P }{ j } \cdot \frac{ \ln( \lVert \Cocyc( 2^{l} P , \widetilde{ \InfWord } ) \rVert ) }{ 2^{l} P } \, .
\end{align*}
Since the right hand side is a weighted average of \( ( \frac{ \ln( \lVert \Cocyc( j_{l} , \widetilde{ \InfWord } ) \rVert ) }{ j_{l} } )_{l} \), it converges to \( \Lyapu{ \Cocyc } \). To deduce a lower bound, we note that \( 2^{L} P = P - r + j + \sum_{ l=0 }^{ L-1 } ( 1 - m_{l} ) 2^{l} P  \) holds. Thus \( P \)-periodicity and subadditivity yield
\[ \ln( \lVert \Cocyc( 2^{L} P , \widetilde{ \InfWord } ) \rVert ) \leq (P-r) D + \ln( \lVert \Cocyc( j , \widetilde{ \InfWord } ) \rVert ) + \sum_{ l=0 }^{ L-1 } ( 1-m_{l} ) \ln( \lVert \Cocyc( 2^{l} P , \widetilde{ \InfWord } ) \rVert ) \, .\]
Rearranging the terms we obtain
\begin{align*}
\frac{ \ln( \lVert \Cocyc( j , \widetilde{ \InfWord } ) \rVert ) }{ j }
&\geq \frac{ \ln( \lVert \Cocyc( 2^{L} P , \widetilde{ \InfWord } ) \rVert ) - \sum_{l=0}^{L-1} ( 1-m_{l} ) \ln( \lVert \Cocyc( 2^{l} P , \widetilde{ \InfWord } ) \rVert ) - ( P-r ) D}{ j } \\
&= \frac{ \ln( \lVert \Cocyc( 2^{L} P , \widetilde{ \InfWord } ) \rVert ) }{ 2^{L} P } + \frac{ 2^{L} P - j }{ j } \Bigg[ \frac{ \ln( \lVert \Cocyc( 2^{L} P , \widetilde{ \InfWord } ) \rVert ) }{ 2^{L} P}\\
& \qquad - \Bigg( \sum_{ l=0 }^{ L-1 } \frac{ ( 1-m_{l} ) 2^{l} P }{ 2^{L} P - j } \cdot \frac{ \ln( \lVert \Cocyc( 2^{l} P , \widetilde{ \InfWord } ) \rVert ) }{ 2^{l} P } + \frac{ P-r }{ 2^{L} P - j } \cdot D \Bigg) \Bigg] \, .
\end{align*}
Since the term in parentheses is a weighted average, it converges to \( \Lyapu{ \Cocyc } \). Moreover \( \frac{ 2^{L} P - j }{ j } \) is bounded, so the second summand converges to zero. Hence upper and lower bound agree, so the limit \( \lim_{ j \to \infty } \frac{ \ln( \lVert \Cocyc( j , \widetilde{ \InfWord } ) \rVert ) }{ j } \) exists and is equal to \( \Lyapu{ \Cocyc } \).

Finally we show that the limit for \( j \to - \infty \) is equal to \( \Lyapu{ \Cocyc } \) as well. For \( j = l P + r \) with \( l \in \NN_{0} \) and \( 0 \leq r < P \), the \( P \)-periodicity of \( \widetilde{ \InfWord } \) yields
\[ \lVert \Cocyc( -j , \widetilde{ \InfWord } ) \rVert \!=\! \lVert \Cocyc( \Shift^{-1} \widetilde{ \InfWord } ) \cdot \ldots \cdot \Cocyc( \Shift^{-j} \widetilde{ \InfWord } ) \rVert \!=\! \lVert \Cocyc( \Shift^{(l+1)P - 1} \widetilde{ \InfWord } ) \cdot \ldots \cdot \Cocyc( \Shift^{P-r} \widetilde{ \InfWord } ) \rVert \!=\! \lVert \Cocyc( j , \Shift^{P-r} \widetilde{ \InfWord } ) \rVert \, . \]
Now the claim follows from Proposition~\ref{prop:CocycInvCut}, as a finite shift does not change the limit.

\subsection{Simple Toeplitz subshifts}
\label{subsec:LCondST}

We check that the conditions from Proposition~\ref{prop:LCondSuffi}, which are sufficient for \LCond{}, apply to simple Toeplitz subshifts. First we discuss the words \( \LWord{i} \) and their combinatorial properties: recall from Subsection~\ref{subsec:STDefiPBlock} that a simple Toeplitz subshift can be defined from palindromic blocks \( \PBlock{k} \). As before, we let \( \AlphabEv \) denote the set of letters that occur infinitely often in the coding sequence \( ( a_{k} ) \). We set \( \LNr \DefAs \Card{ \AlphabEv } \) and \( \AlphabEv \DefAs \Set{ a^{(1)}, \hdots , a^{(\LNr)} } \). Now we define \( \LNr \) words of length one by \( \LOrigin{i} \DefAs a^{(i)} \). As discussed in Example~\ref{exmpl:SpWBlocks}, the so-called ``leading words''
\[ \LWord{i} \DefAs \LWord{a^{(i)}} = \lim_{k \to \infty} \Word{ \PBlock{k} }{ \Origin{ a^{(i)} } }{ \PBlock{k} } =\Word{ \Rev{ \PBlock{\infty} } }{ \Origin{\LOrigin{i}} }{ \PBlock{\infty} }  \]  
belong to \( \Subshift \). Below we will see that they are indeed leading words in the sense of \LCond{}-subshifts. This is also the underlying reason that their combinatorial structure plays such an important role for the properties of the subshift (see for instance Example~\ref{exmpl:AbsenSpWords}). First we show that all sufficiently long finite words occur in some \( \LWord{i} \) around the origin.

\begin{prop}
Let \( \EventNr \) be such that \( a_{k} \in \AlphabEv \) holds for all \( k \geq \EventNr \). Then for every \( L \geq \Length{\PBlock{ \EventNr }} \) and every \( u \in \Langu{ \Subshift }_{L} \), there are numbers \( i \in \Set{ 1 , \ldots , \LNr } \) and \( j \in \Set{ 1, \ldots, L } \) with \( u = \Restr{ \LWord{i} }{ -j+1 }{ -j+L } \).
\end{prop}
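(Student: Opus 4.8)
The plan is to show that any sufficiently long word of the subshift, when we look at where it first occurs in a generic element, must straddle one of the single-letter "origins" $a^{(i)} \in \AlphabEv$ between two copies of a large palindromic block $\PBlock{k}$. First I would recall the decomposition from Equation~(\ref{eqn:DecompPBlock}): every $\InfWord \in \Subshift$ can be written as $\Word{ \ldots }{ \PBlock{k} }{ \star }{ \PBlock{k} }{ \star }{ \PBlock{k} }{ \ldots }$ with each $\star \in \Alphab_{k+1}$. Given $u \in \Langu{ \Subshift }_L$ with $L \geq \Length{\PBlock{\EventNr}}$, pick $k \geq \EventNr$ so that $\Length{\PBlock{k}} < L \leq \Length{\PBlock{k+1}}$ (or more simply, the unique $k$ with $\Length{\PBlock{k-1}}+1 \leq L \leq \Length{\PBlock{k}}+1$ — I would fix the exact bookkeeping while writing). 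By Proposition~\ref{prop:WordsContained}, $u$ is a subword of $\Word{ \PBlock{k} }{ a }{ \PBlock{k} }$ for some $a \in \Alphab_{k+1}$. Since $L > \Length{\PBlock{k}}$ wait — I need $L$ large enough that $u$ cannot fit inside a single $\PBlock{k}$; the correct choice is to take $k$ with $\Length{\PBlock{k-1}} < L$, so that any occurrence of $u$ in $\Word{ \PBlock{k-1} }{ a }{ \PBlock{k-1} }$ necessarily contains the middle letter $a$. Because $k \geq \EventNr$ forces $a = a_k \in \AlphabEv$, this middle letter is some $a^{(i)}$.

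Next I would transfer this occurrence into the leading word $\LWord{i} = \Word{ \Rev{\PBlock{\infty}} }{ \Origin{a^{(i)}} }{ \PBlock{\infty} }$. The key structural fact (from Subsection~\ref{subsec:STDefiPBlock} and Example~\ref{exmpl:SpWBlocks}) is that $\PBlock{k-1}$ is both a prefix and a suffix of $\PBlock{\infty}$, hence $\Rev{\PBlock{k-1}} = \PBlock{k-1}$ (Example~\ref{exmpl:PBlockPalindr}) is a "prefix" of $\Rev{\PBlock{\infty}}$ read toward $-\infty$. Therefore $\Word{ \PBlock{k-1} }{ \Origin{a^{(i)}} }{ \PBlock{k-1} }$ appears in $\LWord{i}$ centred exactly at the origin: positions $-\Length{\PBlock{k-1}}, \ldots, 0$ carry $\PBlock{k-1} \, a^{(i)}$ wait — I need to be careful with the index convention where the bar sits between positions $0$ and $1$. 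With the paper's convention $\LWord{i}(1) \ldots \LWord{i}(\Length{\PBlock{k-1}}) = \PBlock{k-1}$ and $\LWord{i}(0) = a^{(i)}$ and $\LWord{i}(-\Length{\PBlock{k-1}}) \ldots \LWord{i}(-1) = \Rev{\PBlock{k-1}} = \PBlock{k-1}$. So the word $\Word{ \PBlock{k-1} }{ a^{(i)} }{ \PBlock{k-1} }$, of length $2\Length{\PBlock{k-1}}+1 \geq L$, occurs as $\Restr{\LWord{i}}{-\Length{\PBlock{k-1}}}{\Length{\PBlock{k-1}}}$. Since $u$ occurs inside $\Word{ \PBlock{k-1} }{ a^{(i)} }{ \PBlock{k-1} }$ at some offset, it occurs inside $\LWord{i}$ as $\Restr{\LWord{i}}{-j+1}{-j+L}$ for a suitable $j$; because the occurrence of $u$ straddles the central letter $a^{(i)}$, the starting position $-j+1$ satisfies $-\Length{\PBlock{k-1}} \leq -j+1 \leq 0$, i.e.\ $1 \leq j \leq \Length{\PBlock{k-1}}+1 \leq L$, giving the claimed range $j \in \Set{1, \ldots, L}$.

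I expect the main obstacle to be nailing down the precise choice of $k$ relative to $L$ together with the index/offset bookkeeping so that the two constraints are met simultaneously: $u$ must be too long to fit in a single $\PBlock{\cdot}$-block (forcing it through a recurrent letter $a_k$ with $k \geq \EventNr$), yet short enough that $\Word{ \PBlock{k-1} }{ a^{(i)} }{ \PBlock{k-1} }$ has length at least $L$ so the occurrence embeds in $\LWord{i}$ centred at the origin. The hypothesis $L \geq \Length{\PBlock{\EventNr}}$ is exactly what is needed: choosing $k \geq \EventNr+1$ minimal with $\Length{\PBlock{k-1}} \geq L$ — wait, one wants $\Length{\PBlock{k-1}} < L \leq \Length{\PBlock{k}}$, and then $\Length{\PBlock{k}} \geq L$ with $k-1 \geq \EventNr$ wait this needs $L > \Length{\PBlock{\EventNr}}$; the boundary case $L = \Length{\PBlock{\EventNr}}$ should be handled by noting $u$ still cannot avoid $a_{\EventNr} \in \AlphabEv$ or by taking $k = \EventNr + 1$ directly. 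I would verify these inequalities carefully but they are all of the elementary bookkeeping type, using only $\Length{\PBlock{k}} + 1 = n_k(\Length{\PBlock{k-1}}+1)$ and $n_k \geq 2$. The conceptual content — that recurrent letters generate the whole language near the origins of the leading words — is already fully contained in Proposition~\ref{prop:WordsContained} and the palindromic prefix/suffix property of the $\PBlock{k}$.
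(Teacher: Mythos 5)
There is a genuine gap, and it sits exactly where you defer to ``bookkeeping''. Your one-step argument needs two things at the same level \( m \): that \( u \) occurs in a word of the form \( \Word{ \PBlock{m} }{ a }{ \PBlock{m} } \) (Proposition~\ref{prop:WordsContained} gives this only for \( L \leq \Length{ \PBlock{m} } + 1 \)), and that \( u \) is too long to fit inside a single \( \PBlock{m} \)-block, so that it is forced through the middle letter (this needs \( L > \Length{ \PBlock{m} } \)). These are compatible only when \( L = \Length{ \PBlock{m} } + 1 \). For all other lengths no choice of \( m \) works: if \( n_{m+1} \geq 3 \) there are admissible lengths with \( 2 \Length{ \PBlock{m} } + 1 < L \leq \Length{ \PBlock{m+1} } \), where \( u \) cannot even fit into \( \Word{ \PBlock{m} }{ a }{ \PBlock{m} } \), and already for \( \Length{ \PBlock{m} } + 1 < L \leq 2 \Length{ \PBlock{m} } + 1 \) Proposition~\ref{prop:WordsContained} does not apply at level \( m \), and containment in a single double-block word can fail (a word of the subshift of such length may contain two distinct connecting letters, e.g.\ an internal \( a_{m+1} \) and a junction letter from \( \Alphab_{m+2} \)). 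So the case in which the occurrence of \( u \) misses the centre of the double-block word is never handled, and the proof does not go through for general \( L \).

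The paper closes this by working at the level where \( u \) actually fits and making a case distinction. Choose \( k \) with \( \Length{ \PBlock{k} } < L \leq \Length{ \PBlock{k+1} } \) and decompose an element containing \( u \) into \( \PBlock{k+1} \)-blocks and single letters from \( \Alphab_{k+2} \). Either \( u \) is not contained in a single \( \PBlock{k+1} \)-block; then it lies in \( \Word{ \PBlock{k+1} }{ a }{ \PBlock{k+1} } \) and contains the connecting letter \( a \in \Alphab_{k+2} = \AlphabEv \). Or \( u \) lies inside \( \PBlock{k+1} = \Word{ \PBlock{k} }{ a_{k+1} }{ \ldots }{ \PBlock{k} } \); then \( L > \Length{ \PBlock{k} } \) forces \( u \) to contain one of the internal letters \( a_{k+1} \), and \( L \geq \Length{ \PBlock{ \EventNr } } \) guarantees \( k+1 \geq \EventNr \), so \( a_{k+1} \in \AlphabEv \). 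In both cases one aligns that letter with position \( 0 \) of the leading word \( \LWord{i} \) whose \( \LOrigin{i} \) equals it; the piece of \( u \) to the left of the letter is a suffix of a block-aligned prefix of \( \PBlock{k+1} \), hence (by the palindromic structure you invoke) a suffix of \( \PBlock{k+1} \), so it matches the positions left of the origin in \( \LWord{i} \), while the piece to the right is a prefix of \( \PBlock{k+1} \). Your second paragraph — the prefix/suffix embedding of \( \Word{ \PBlock{\cdot} }{ a^{(i)} }{ \PBlock{\cdot} } \) around the origin and the computation showing \( j \in \Set{ 1 , \ldots , L } \) — is correct and reusable, but without the two-case analysis at level \( k+1 \), in particular without treating occurrences of \( u \) centred at an internal \( a_{k+1} \) rather than at the centre of a double block, the statement is only proved for the special lengths \( L = \Length{ \PBlock{m} } + 1 \).
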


\begin{proof}
Let \( k \) be such that \( \Length{\PBlock{ k }} < L \leq \Length{\PBlock{ k+1 }} \) holds. We choose an element \( \InfWord \in \Subshift \) which contains \( u \) and decompose it as \( \InfWord = \Word{ \ldots }{ \PBlock{k+1} }{ \star }{ \PBlock{k+1} }{ \ldots } \) with single letters \( \star \in \Set{ a_{j} : j \geq k+2 } \), see Equation~(\ref{eqn:DecompPBlock}). Now we distinguish two cases: firstly, if \( u \) is contained in \( \PBlock{k+1} = \Word{ \PBlock{k} }{ a_{k+1} }{ \PBlock{k} }{ \ldots }{ \PBlock{k} } \), then \( u \) contains at least once the single letter \( a_{k+1} \). We choose \( i \) such that \( \LOrigin{i} = a_{k+1} \) holds. Then \( u \) can be found in \( \LWord{i} = \Word{ \ldots }{ \PBlock{k+1} }{ \Origin{\LOrigin{i}} }{  \PBlock{k+1} }{ \ldots } \) such that \( \LWord{i}(0) \) coincides with the position of \( a_{k+1} \) in \( u \). Secondly, if \( u \) is not contained in a single \( \PBlock{k+1} \)-block, then there is a letter \( a \in \Set{ a_{l} : l \geq k+2 } \subseteq \AlphabEv \) such that \( u \) is contained in \( \Word{ \PBlock{k+1} }{ a }{ \PBlock{k+1} } \) and \( a \) is contained in \( u \). We choose \( i \) such that \( \LOrigin{i} = a \) holds. Around the origin, \( \LWord{i} \) has the form \( \Word{ \ldots }{ \PBlock{k+1} }{ \Origin{\LOrigin{i}} }{  \PBlock{k+1} }{ \ldots } \) and therefore contains \( u \) as claimed.
\end{proof}

Now we check that the leading words contain the necessary repetitions for \LCondcc{}:

\begin{prop}
For every \( i \in \Set{ 1 , \ldots , \LNr } \) there exists a sequence \( ( l_{k} ) \) of natural numbers with \( \lim_{k \to \infty} l_{k} = \infty \) such that for all \( k \in \NN \) we have
\[\Restr{\LWord{i}}{-2 l_{k}+1}{-l_{k}} = \Restr{\LWord{i}}{-l_{k}+1}{0} = \Restr{\LWord{i}}{1}{l_{k}} \, . \]
\end{prop}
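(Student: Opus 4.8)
The plan is to exhibit, for each leading word $\LWord{i} = \Word{ \Rev{ \PBlock{\infty} } }{ \Origin{ \LOrigin{i} } }{ \PBlock{\infty} }$ with $\LOrigin{i} = a^{(i)} \in \AlphabEv$, a sequence of ``radii'' $l_k \to \infty$ along which $\LWord{i}$ has a threefold repetition centred at the origin. Since $a^{(i)}$ occurs infinitely often in the coding sequence $(a_k)$, I would pick indices $k_1 < k_2 < \ldots$ with $a_{k_m} = a^{(i)}$ for all $m$, and set $l_m \DefAs \Length{ \PBlock{k_m - 1} } + 1$. Clearly $l_m \to \infty$ because the block lengths $\Length{\PBlock{k}}$ are strictly increasing.

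The key computation is to read off what $\LWord{i}$ looks like around the origin at scale $l_m$. By construction every $\PBlock{k}$ is both a prefix and a suffix of $\PBlock{k+1}$, hence $\PBlock{k_m}$ is a prefix and a suffix of $\PBlock{\infty}$ (and, reflected, of $\Rev{\PBlock{\infty}}$). Therefore around the origin $\LWord{i}$ agrees with $\Word{ \PBlock{k_m} }{ \Origin{ a^{(i)} } }{ \PBlock{k_m} }$. Now expand each $\PBlock{k_m}$ via the recursion $\PBlock{k_m} = \Word{ \PBlock{k_m - 1} }{ a_{k_m} }{ \PBlock{k_m - 1} }{ a_{k_m} }{ \ldots }{ \PBlock{k_m - 1} }$ (at least for the first copy of $\PBlock{k_m - 1}$ together with the following single letter). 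Using $a_{k_m} = a^{(i)} = \LOrigin{i}$, the string in $\LWord{i}$ occupying positions $-2l_m + 1, \ldots, 2l_m$ has the shape
\[ \Word{ \PBlock{k_m - 1} }{ a^{(i)} }{ \PBlock{k_m - 1} }{ \Origin{ a^{(i)} } }{ \PBlock{k_m - 1} }{ a^{(i)} }{ \PBlock{k_m - 1} } \, , \]
where the bar marks the origin. Reading off blocks of length $l_m = \Length{ \PBlock{k_m - 1} } + 1$, the word $\Restr{ \LWord{i} }{ -2l_m + 1 }{ -l_m }$ equals $\Word{ \PBlock{k_m - 1} }{ a^{(i)} }$ (positions $-2l_m+1$ through $-l_m$, so $\PBlock{k_m-1}$ followed by the single letter at position $-l_m$), and likewise $\Restr{ \LWord{i} }{ -l_m + 1 }{ 0 } = \Word{ \PBlock{k_m - 1} }{ a^{(i)} }$ and $\Restr{ \LWord{i} }{ 1 }{ l_m } = \Word{ \PBlock{k_m - 1} }{ a^{(i)} }$. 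Hence all three length-$l_m$ windows coincide, which is exactly the asserted equality; this reproduces the argument already sketched in Example~\ref{exmpl:AbsenSpWords}.

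The only point requiring a little care — and the main (mild) obstacle — is the bookkeeping of positions relative to the origin: one must check that the single letter $a^{(i)}$ sitting at the origin, the two copies of $a_{k_m} = a^{(i)}$ flanking it inside the expansions of the left and right $\PBlock{k_m}$, and the $\PBlock{k_m-1}$-blocks all line up so that each of the three consecutive windows of length $l_m$ ending at positions $-l_m$, $0$, and $l_m$ begins with a full $\PBlock{k_m-1}$ and ends with one copy of $a^{(i)}$. This follows directly from the recursion formula for $\PBlock{k}$ together with $\Length{\PBlock{k_m}} + 1 = n_{k_m}(\Length{\PBlock{k_m-1}}+1)$ and $n_{k_m} \geq 2$, which guarantees that $\PBlock{k_m}$ does contain at least the initial segment $\Word{ \PBlock{k_m-1} }{ a_{k_m} }{ \PBlock{k_m-1} }$; no further estimates are needed. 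This establishes \LCondcc{} for simple Toeplitz subshifts, completing the verification of the hypotheses of Proposition~\ref{prop:LCondSuffi}.
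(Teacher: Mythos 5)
Your proposal is correct and takes essentially the same approach as the paper's proof: pick indices $k$ with $a_{k} = a^{(i)}$, set $l = \Length{\PBlock{k-1}}+1$, and read off from the expansion $\PBlock{k} = \Word{ \PBlock{k-1} }{ a_{k} }{ \PBlock{k-1} }{ \ldots }$ around the origin of $\LWord{i}$ that all three length-$l$ windows equal $\Word{ \PBlock{k-1} }{ a^{(i)} }$. The only (harmless) slip is that your displayed word actually occupies positions $-2l_m+1$ through $2l_m-1$ rather than $2l_m$ (at position $2l_m$ the letter need not be $a^{(i)}$, e.g.\ when $n_{k_m}=2$), but this does not affect the three windows you compare.
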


\begin{proof}
For every \( \LOrigin{i} = a^{(i)} \in \AlphabEv \), there exists a sequence \( ( k_{j}) \) with \( \lim_{j \to \infty} k_{j} = \infty \) and \( a_{k_{j}} = a^{(i)} \). Now \( l_{k} \DefAs \Length{ \PBlock{k_{j}-1} } + 1 \) has the claimed property since \( \LWord{i} \) looks around the origin like
\[ \Word{ \ldots }{ \PBlock{k_{j}} }{ \Origin{a^{(i)}} }{ \PBlock{k_{j}} }{ \ldots } \, = \, \Word{ \ldots }{ \PBlock{k_{j}-1} }{ a_{k_{j}} }{ \PBlock{k_{j}-1} }{ \Origin{ a_{k_{j}} } }{ \PBlock{k_{j}-1} }{ a_{k_{j}} }{ \ldots } \; . \qedhere \]
\end{proof}

It only remains to check \LCondbb{}. First we show that along the prefixes \( \PBlock{k} \) of \( \PBlock{\infty} \), the asymptotic average of every subadditive function exists. Afterwards we prove that \( \PBlock{ \infty } \) satisfies (\ref{eqn:DefiPQ}).

\begin{prop}
Let \( F \colon \Langu{ \Subshift } \to \RR \) be a subadditive function. Then \( \lim_{k \to \infty} \frac{ F( \PBlock{k} ) }{ \Length{ \PBlock{k} } } = \lim_{L \to \infty} \max \Set{ \frac{ F( u ) }{ L } : u \in \Langu{ \Subshift }_{L} } \AsDef \AsAv{F} \) holds.
\end{prop}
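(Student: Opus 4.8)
The plan is to use Fekete's lemma via the structure of the palindromic blocks. First I would note that, by definition, $\AsAv{F} = \lim_{L \to \infty} \max\{ F(u)/L : u \in \Langu{\Subshift}_L\}$ exists and equals $\inf_{L}$ of the same quantity, since $\widetilde{F}(L) := \max\{ F(u) : u \in \Langu{\Subshift}_L\}$ is subadditive (this is exactly the remark after the definition of subadditive function). Since $\PBlock{k} \in \Langu{\Subshift}$ with $\Length{\PBlock{k}} \to \infty$, we immediately get the easy inequality
\[
\limsup_{k \to \infty} \frac{F(\PBlock{k})}{\Length{\PBlock{k}}} \leq \limsup_{k \to \infty} \max_{u \in \Langu{\Subshift}_{\Length{\PBlock{k}}}} \frac{F(u)}{\Length{\PBlock{k}}} = \AsAv{F}.
\]
So the whole content is the reverse inequality $\liminf_{k \to \infty} F(\PBlock{k})/\Length{\PBlock{k}} \geq \AsAv{F}$.

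For the reverse inequality I would exploit the recursion $\PBlock{k+1} = \Word{\PBlock{k}}{a_{k+1}}{\PBlock{k}}{\ldots}{\PBlock{k}}$ with $n_{k+1}$ copies of $\PBlock{k}$. Applying subadditivity repeatedly,
\[
F(\PBlock{k+1}) \leq n_{k+1} F(\PBlock{k}) + (n_{k+1}-1) F(a_{k+1}) \leq n_{k+1}\bigl( F(\PBlock{k}) + C \bigr),
\]
where $C := \max\{ F(a) : a \in \Alphab\}$ is a finite constant. Dividing by $\Length{\PBlock{k+1}} + 1 = n_{k+1}(\Length{\PBlock{k}}+1)$ gives
\[
\frac{F(\PBlock{k+1})}{\Length{\PBlock{k+1}}+1} \leq \frac{F(\PBlock{k}) + C}{\Length{\PBlock{k}}+1},
\]
so the sequence $\bigl( \frac{F(\PBlock{k}) + C'}{\Length{\PBlock{k}}+1} \bigr)_k$ is monotonically decreasing for a suitable constant; hence $\lim_{k \to \infty} \frac{F(\PBlock{k})}{\Length{\PBlock{k}}+1}$ exists (it could a priori be $-\infty$, but it is bounded below since $F$ is, or more simply since $F(\PBlock{k})/\Length{\PBlock{k}} \geq \inf_L \widetilde{F}(L)/L$ is not quite it — I would instead just note the limit exists in $[-\infty,\infty)$ and the $\limsup$ bound already forces it to be finite). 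Call this limit $\ell$; clearly $\lim_k F(\PBlock{k})/\Length{\PBlock{k}} = \ell$ too since $\Length{\PBlock{k}} \to \infty$.

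It remains to show $\ell \geq \AsAv{F}$, i.e. $\ell = \AsAv{F}$. Fix an arbitrary word $v \in \Langu{\Subshift}_L$. By Proposition~\ref{prop:WordsContained}, for $k$ large enough that $L \leq \Length{\PBlock{k}}+1$, there is a letter $a \in \Alphab_{k+1}$ such that $v$ is a subword of $\Word{\PBlock{k}}{a}{\PBlock{k}}$; and for any $m$, the block $\PBlock{k+m}$ (or some nearby block, once $a$ reappears in the coding sequence, which it does since $\Alphab_{k+1}$ consists only of letters appearing at position $k+1$ or later) contains many disjoint translates of $\Word{\PBlock{k}}{a}{\PBlock{k}}$ aligned with the $\PBlock{k}$-block decomposition. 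Concretely, writing $\PBlock{N}$ as a concatenation of $\PBlock{k}$-blocks and single letters, I would decompose $\PBlock{N}$ into consecutive chunks each of length $L$ that are all equal to $v$, interspersed with ``leftover'' chunks, and apply subadditivity: $F(\PBlock{N}) \geq \bigl(\text{number of }v\text{-chunks}\bigr) F(v) - (\text{error from leftovers})$, wait — subadditivity gives upper bounds on $F$ of a concatenation, not lower bounds, so this direction needs care. The correct route: subadditivity of $F$ gives $F(\PBlock{N}) \leq \sum F(\text{pieces})$, which is the wrong inequality for a lower bound on $\ell$. So instead I would argue directly at the level of $\AsAv{F}$: since $\AsAv{F} = \inf_L \widetilde{F}(L)/L$, pick $L$ with $\widetilde{F}(L)/L$ close to $\AsAv{F}$ and a word $v \in \Langu{\Subshift}_L$ attaining $\widetilde{F}(L) = F(v)$; then for large $N$ write $\PBlock{N}$ — minus short boundary pieces — as a concatenation $w_1 w_2 \cdots w_s$ where the $w_i$ all lie in $\Langu{\Subshift}_L$ (just cut $\PBlock{N}$ into blocks of length $L$), giving $F(\PBlock{N}) \leq \sum_i F(w_i) \leq s\,\widetilde{F}(L)$, hence $F(\PBlock{N})/\Length{\PBlock{N}} \lesssim \widetilde{F}(L)/L$, which recovers only $\ell \leq \AsAv{F}$ again.

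The genuine lower bound $\ell \geq \AsAv{F}$ therefore cannot come from subadditivity alone and must use that $\PBlock{N}$ is, up to bounded-density corrections, built out of repetitions of one fixed word. The main obstacle — and the step I would spend the most effort on — is this: one needs a \emph{superadditivity-type} estimate along the blocks, which follows because for any fixed $v$ the block $\PBlock{N}$ contains, for $N$ large, at least $c_N \cdot \Length{\PBlock{N}}/L$ disjoint copies of $v$ with $c_N \to 1$, arranged so that the complementary gaps also have controlled total length; then since $F(\PBlock{N})$ can be bounded \emph{below} using that $F$ restricted to any single word $w \in \Langu{\Subshift}_M$ satisfies $F(w) \geq \widetilde{F}(M) - (\text{correction})$ only when... — no. The clean and correct statement is: the paper presumably reduces to Proposition~\ref{prop:PQPrefixLimit}. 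Indeed, once one knows $\PBlock{\infty}$ satisfies \eqref{eqn:DefiPQ} (the next proposition in the paper) and that $\lim_k F(\PBlock{k})/\Length{\PBlock{k}} = \AsAv{F}$... but that is circular. So I would instead prove $\ell \geq \AsAv{F}$ as follows: given $\varepsilon > 0$, choose $L$ and $v \in \Langu{\Subshift}_L$ with $F(v)/L > \AsAv{F} - \varepsilon$; for large $N$, the $\PBlock{k_0}$-block structure of $\PBlock{N}$ (where $\Length{\PBlock{k_0}}+1 \geq L$ and $v$ sits inside $\Word{\PBlock{k_0}}{a}{\PBlock{k_0}}$) shows $\PBlock{N}$ is a concatenation of at most $O(\Length{\PBlock{N}}/\Length{\PBlock{k_0}})$ words each of which is either $v$ (occupying a $(1-\delta)$-fraction of the length, with $\delta \to 0$ as $k_0$ grows relative to $L$... this requires $v$ to tile the block, which it does not in general) — and here is precisely where I expect the real argument to differ from my sketch, so I will flag it as the crux and defer to the subadditive-function machinery of Lenz already imported in the paper (Proposition~\ref{prop:PQPrefixLimit} is the relevant tool once applied in the right order), presenting the monotonicity argument above as the core new ingredient and citing Proposition~\ref{prop:WordsContained} for the combinatorial covering that makes the limit along $(\PBlock{k})$ agree with $\AsAv{F}$.
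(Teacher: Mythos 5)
Your $\limsup$ direction is fine, and your diagnosis that subadditivity only bounds concatenations from above is also accurate, but the proposal never proves the essential inequality $\liminf_{k \to \infty} F(\PBlock{k})/\Length{\PBlock{k}} \geq \AsAv{F}$: you explicitly flag it as the crux and ``defer to the subadditive-function machinery'', and the one concrete tool you point at, Proposition~\ref{prop:PQPrefixLimit}, cannot be used here because --- as you yourself observe --- its hypothesis is exactly the conclusion you are trying to establish, so that deferral is circular. As written this is an admitted gap, not a proof.

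The idea you are missing is to turn the decomposition around: do not try to pack copies of a near-maximizing word $v$ into $\PBlock{N}$ (that would indeed require a superadditivity you do not have), but instead decompose the arbitrary word $u$ over which the maximum defining $\AsAv{F}$ is taken into $\PBlock{k}$-blocks. Fix $k$, let $L \geq \Length{\PBlock{k}}$ and $u \in \Langu{\Subshift}_{L}$. By Equation~(\ref{eqn:DecompPBlock}) every element of $\Subshift$ splits into $\PBlock{k}$-blocks separated by single letters, so $u = v \, \star \, \PBlock{k} \, \star \, \cdots \, \star \, \PBlock{k} \, \star \, w$ with $v$ a suffix and $w$ a prefix of $\PBlock{k}$, at most $L/\Length{\PBlock{k}}$ blocks and fewer than $2L/\Length{\PBlock{k}}$ single letters. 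With $D \DefAs \max \Set{ \lvert F(a) \rvert : a \in \Alphab }$, subadditivity applied to this decomposition gives, uniformly in $u$,
\[ \frac{F(u)}{L} \leq \frac{F(\PBlock{k})}{\Length{\PBlock{k}}} + \frac{2D}{\Length{\PBlock{k}}} + \frac{2\Length{\PBlock{k}}\,D}{L} \, . \]
Taking the maximum over $u \in \Langu{\Subshift}_{L}$ and letting $L \to \infty$ with $k$ fixed yields $\AsAv{F} \leq F(\PBlock{k})/\Length{\PBlock{k}} + 2D/\Length{\PBlock{k}}$ for every $k$, and taking $\liminf_{k \to \infty}$ finishes the proof. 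Note that this is still pure subadditivity: the point is that $\PBlock{k}$ occurs with density close to one inside every sufficiently long word, so the maximizing words are controlled from above in terms of $F(\PBlock{k})$, not the other way round. Your monotonicity argument along the recursion $\PBlock{k+1} = \Word{ \PBlock{k} }{ a_{k+1} }{ \ldots }{ \PBlock{k} }$ is essentially correct but only gives existence of the limit, which becomes superfluous once the two-sided comparison with $\AsAv{F}$ is in place.
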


\begin{proof}
Since \( \limsup_{k \to \infty} \frac{ F( \PBlock{k} ) }{ \Length{ \PBlock{k} } } \leq \AsAv{F} \) holds by definition, only \( \liminf_{k \to \infty} \frac{ F( \PBlock{k} ) }{ \Length{ \PBlock{k} } } \geq \AsAv{F} \) remains to show: define \( D \DefAs \max \Set{ \lvert F( a ) \rvert : a \in \Alphab } \), fix an arbitrary \( k \in \NN \), let \( L \geq \Length{ \PBlock{k} } \) and let \( u \in \Langu{ \Subshift }_{L}\). Since every \( \InfWord \in \Subshift \) can be decomposed into \( \PBlock{k} \)-blocks and single letters, we obtain \( u = \Word{ v }{ \star }{ \PBlock{k} }{ \star }{ \PBlock{k} }{ \ldots }{ \PBlock{k} }{ \star }{ w } \), where \( v \) is a suffix and \( w \) is a prefix of \( \PBlock{k} \). In this decomposition of \( u \), there are at most \( \frac{ L }{ \Length{\PBlock{ k }} } \)-many \( \PBlock{k} \)-blocks and \( \frac{ L }{ \Length{\PBlock{ k }} } + 1 < \frac{ 2L }{ \Length{\PBlock{ k }} } \) single letters. This yields
\[ \frac{ F( u ) }{ L } \leq \frac{ \Length{v} \cdot D }{ L } + \frac{ \frac{ 2L }{ \Length{\PBlock{ k }} } \cdot D }{ L } + \frac{ \frac{ L }{ \Length{\PBlock{ k }} } \cdot F( \PBlock{k} )}{ L } + \frac{ \Length{w} \cdot D }{ L } \, . \]
Since \( u \) was arbitrary, we obtain for every \( k \) and every \( L \geq \Length{\PBlock{ k }} \):
\[ \max_{u \in \Langu{ \Subshift }_{L}} \frac{ F( u  )}{ L } \leq \frac{ \Length{\PBlock{ k }} \cdot D }{ L } + \frac{ 2 D }{ \Length{\PBlock{ k }} } + \frac{ F( \PBlock{k} ) }{ \Length{\PBlock{ k }} } + \frac{ \Length{\PBlock{ k }} \cdot D }{ L } \, . \]
Taking the limit \( L \to \infty \) yields \( \AsAv{F} \leq \frac{ 2 D }{ \Length{\PBlock{ k }} } + \frac{ F( \PBlock{k} ) }{ \Length{\PBlock{ k }} } \) for every fixed \( k \). The claim now follows by taking \( \liminf_{k \to \infty} \) on both sides.
\end{proof}

\begin{prop}
For every \( j \in \NN \) the inequality
\[ \liminf_{L \to \infty} \, \DisCopies{ \Restr{ \PBlock{\infty} }{ 1 }{ j } }{ \Restr{ \PBlock{\infty} }{ 1 }{ L } } \cdot  \frac{ j }{ L } \geq \frac{1}{8} \]
holds, that is, \( \PBlock{\infty} \in \Alphab^{\NN} \) satisfies (\ref{eqn:DefiPQ}).
\end{prop}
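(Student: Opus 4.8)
The plan is to produce, for each fixed $j \in \NN$, enough pairwise non-overlapping copies of the prefix $w \DefAs \Restr{ \PBlock{\infty} }{ 1 }{ j }$ inside every long prefix $\Restr{ \PBlock{\infty} }{ 1 }{ L }$ of $\PBlock{\infty}$; the asymptotic number of these copies, divided by $L$ and multiplied by $j$, will be bounded below by $\frac18$. The engine throughout is the block decomposition of $\PBlock{\infty}$ already used in the proof of Theorem~\ref{thm:AsNoPP}: for every $k$, the word $\PBlock{\infty}$ splits into $\PBlock{k}$-blocks separated by single letters, so it has ``period'' $\Length{ \PBlock{k} } + 1$, with full $\PBlock{k}$-blocks at positions $1 , \Length{ \PBlock{k} } + 2 , 2 ( \Length{ \PBlock{k} } + 1 ) + 1 , \ldots$; in particular $\Restr{ \PBlock{\infty} }{ 1 }{ L }$ contains at least $\lfloor L / ( \Length{ \PBlock{k} } + 1 ) \rfloor$ pairwise disjoint full $\PBlock{k}$-blocks.

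First I would treat $j = 1$ separately. Here $w = \PBlock{\infty}( 1 ) = a_{0}$, and the $\PBlock{0}$-block decomposition ($\PBlock{0} = a_{0}^{ n_{0} - 1 }$, period $n_{0}$) shows that $\Restr{ \PBlock{\infty} }{ 1 }{ L }$ contains at least $( n_{0} - 1 ) \lfloor L / n_{0} \rfloor$ pairwise disjoint single-letter occurrences of $a_{0}$, so $\DisCopies{ w }{ \Restr{ \PBlock{\infty} }{ 1 }{ L } } \cdot \frac{ j }{ L } \geq ( n_{0} - 1 ) \lfloor L / n_{0} \rfloor / L \to \frac{ n_{0} - 1 }{ n_{0} } \geq \frac{ 1 }{ 2 }$. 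For $j \geq 2$, let $k$ be the unique index with $\Length{ \PBlock{k-1} } + 1 \leq j \leq \Length{ \PBlock{k} }$, which exists since the intervals $[ \Length{ \PBlock{k-1} } + 1 , \Length{ \PBlock{k} } ]$, $k \geq 0$, partition $\NN$ (because $\Length{ \PBlock{k} } + 1 = n_{k} ( \Length{ \PBlock{k-1} } + 1 ) \geq 2 ( \Length{ \PBlock{k-1} } + 1 )$). Put $P \DefAs \Length{ \PBlock{k} } + 1$ and $p \DefAs \Length{ \PBlock{k-1} } + 1$, so $p \leq j \leq P - 1$ and $P = n_{k} p$. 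If $j \geq P / 2$, then $w$ is a prefix of $\PBlock{k}$ (as $j \leq \Length{ \PBlock{k} }$), each of the $\lfloor L / P \rfloor$ disjoint full $\PBlock{k}$-blocks begins with a copy of $w$, and these copies are disjoint, so
\[ \liminf_{ L \to \infty } \DisCopies{ w }{ \Restr{ \PBlock{\infty} }{ 1 }{ L } } \cdot \frac{ j }{ L } \; \geq \; \liminf_{ L \to \infty } \lfloor L / P \rfloor \cdot \frac{ j }{ L } \; = \; \frac{ j }{ P } \; \geq \; \frac{ 1 }{ 2 } \, . \]

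If instead $j < P / 2$, then $P \geq 2 j + 1 \geq 5$, and I would exploit the internal periodicity of $\PBlock{k}$. From $\PBlock{k} a_{k} = ( \PBlock{k-1} a_{k} )^{ n_{k} }$ it follows that $\PBlock{k}$, hence $w$, is a prefix of the $p$-periodic one-sided word $( \PBlock{k-1} a_{k} )^{\infty}$. Consequently $w$ occurs inside any full $\PBlock{k}$-block at every offset $i p$ for which $i p + j \leq \Length{ \PBlock{k} }$, i.e.\ for $i = 0 , 1 , \ldots , \lfloor ( \Length{ \PBlock{k} } - j ) / p \rfloor$; keeping only every $q$-th of these, with $q \DefAs \lceil j / p \rceil$ (so $q p \leq 2 j$, using $j \geq p$), yields at least $\frac{ \Length{ \PBlock{k} } - j }{ q p } \geq \frac{ P - 1 - j }{ 2 j } \geq \frac{ P - 2 }{ 4 j }$ pairwise disjoint copies of $w$ in each $\PBlock{k}$-block (consecutive retained copies are at distance $q p \geq j$), hence at least $\lfloor L / P \rfloor \cdot \frac{ P - 2 }{ 4 j }$ in $\Restr{ \PBlock{\infty} }{ 1 }{ L }$. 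Therefore
\[ \liminf_{ L \to \infty } \DisCopies{ w }{ \Restr{ \PBlock{\infty} }{ 1 }{ L } } \cdot \frac{ j }{ L } \; \geq \; \liminf_{ L \to \infty } \lfloor L / P \rfloor \cdot \frac{ P - 2 }{ 4 L } \; = \; \frac{ P - 2 }{ 4 P } \; \geq \; \frac{ 1 }{ 8 } \, , \]
the last step using $P \geq 4$. Since one of the two cases always applies, this would finish the proof of (\ref{eqn:DefiPQ}).

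The part I expect to require the most care is the case $j < P / 2$. There the obvious count — one copy of $w$ per $\PBlock{k}$-block — gives only the useless density $j / P \geq 1 / n_{k}$, so one genuinely has to use that a $\PBlock{k}$-block is a power of $\PBlock{k-1} a_{k}$ up to its last letter in order to pack roughly $n_{k}$ shifted copies of $w$ into it. Beyond that, the work is routine bookkeeping: checking that the offsets $i p$ with $i p + j \leq \Length{ \PBlock{k} }$ carry copies of $w$ (immediate from the $p$-periodicity of $( \PBlock{k-1} a_{k} )^{\infty}$), that $q p \leq 2 j$ and $P - 1 - j \geq ( P - 2 ) / 2$, and that $( P - 2 ) / ( 4 P ) \geq \frac18$ precisely for $P \geq 4$; the only genuinely small values of $P$ occur when $k = 0$ with $n_{0} \in \{ 2 , 3 \}$, and these are absorbed by the $j = 1$ argument and by the case $j \geq P / 2$.
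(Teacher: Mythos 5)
Your proof is correct and follows essentially the same route as the paper's: both fix the scale \( k \) with \( \Length{\PBlock{k-1}} < j \leq \Length{\PBlock{k}} \), pack disjoint shifted copies of \( \Restr{\PBlock{\infty}}{1}{j} \) into each \( \PBlock{k} \)-block using the period-\( (\Length{\PBlock{k-1}}+1) \) structure \( \Word{\PBlock{k}}{a_{k}} = (\Word{\PBlock{k-1}}{a_{k}})^{n_{k}} \), and then multiply by the density \( \approx 1/(\Length{\PBlock{k}}+1) \) of disjoint \( \PBlock{k} \)-blocks in \( \Restr{\PBlock{\infty}}{1}{L} \), arriving at \( \tfrac{1}{8} \) as roughly \( \tfrac14 \cdot \tfrac12 \). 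The only differences are in the bookkeeping of constants (your split into \( j=1 \), \( j \geq P/2 \), \( j < P/2 \) versus the paper's two cases for \( \lfloor n_{K+1}/(m+1)\rfloor \cdot m/n_{K+1} \)), which is immaterial.
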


\begin{proof}
Fix \( j \in \NN \), let \( K \) be such that \( \Length{\PBlock{ K }} < j \leq \Length{\PBlock{ K+1 }} \) holds and let \( m \) be such that \( m \cdot ( \Length{ \PBlock{K} } + 1 ) \leq j < (m+1) \cdot ( \Length{ \PBlock{K} } + 1 ) \) holds. In particular this implies \( 1 \leq m < n_{K+1} \). It is easy to see that
\[ \DisCopies{ \Restr{ \PBlock{\infty} }{ 1 }{ j } }{ \PBlock{K+1} } \cdot \frac{ j }{ \Length{\PBlock{ K+1 }} + 1 } \geq \Bigg\lfloor \frac{ n_{K+1} }{ m+1 } \Bigg\rfloor \cdot \frac{ m( \Length{\PBlock{ K }} +1 ) }{ n_{K+1}( \Length{ \PBlock{K} } +1 ) } = \Bigg\lfloor \frac{ n_{K+1} }{ m+1 } \Bigg\rfloor \cdot \frac{ m }{ n_{K+1} } \]
holds. We distinguish two cases: on the one hand, if \( \frac{ n_{K+1} }{ m+1 } < 2 \) holds, then we obtain
\[ \Bigg\lfloor \frac{ n_{K+1} }{ m+1 } \Bigg\rfloor \cdot \frac{ m }{ n_{K+1} } = 1 \cdot \frac{ m }{ m+1 } \frac{ m+1 }{ n_{K+1} } > \frac{ 1 }{ 4 } \, . \]
On the other hand, if \( \frac{ n_{K+1} }{ m+1 } \geq 2 \) holds, then we obtain
\[ \Bigg\lfloor \frac{ n_{K+1} }{ m+1 } \Bigg\rfloor \cdot \frac{ m }{ n_{K+1} } > \Bigg( \frac{ n_{K+1} }{ m+1 } - 1 \Bigg) \frac{ m }{ n_{K+1} } = \Bigg( 1 - \frac{ m+1 }{ n_{K+1} } \Bigg) \frac{ m }{ m+1 } \geq \frac{ 1 }{ 4 } \, . \]
Now a short computation yields the desired lower bound:
\begin{align*}
\DisCopies{ \Restr{ \PBlock{\infty} }{1}{j} }{\Restr{ \PBlock{\infty} }{ 1 }{ L} } \! \cdot \! \frac{ j }{L} &\geq \DisCopies{ \Restr{ \PBlock{\infty} }{1}{j} }{ \PBlock{K+1} } \! \cdot \! \frac{ j }{ \Length{\PBlock{ K+1 }} + 1 } \cdot \DisCopies{ \PBlock{K+1} }{ \Restr{ \PBlock{\infty} }{1}{L} } \! \cdot \! \frac{ \Length{\PBlock{ K+1 }} + 1 }{ L } \\
& > \frac{1}{4} \cdot \Big( \frac{ L }{ \Length{\PBlock{ K+1 }} + 1 } - 1 \Big) \cdot \frac{ \Length{\PBlock{ K+1}} +1 }{ L } \\
& = \frac{1}{4} \cdot \Big( 1 - \frac{ \Length{\PBlock{ K+1 }} + 1 }{ L } \Big) \\
& \geq \frac{1}{4} \cdot \Big( 1 - \frac{1}{2} \Big) \quad \text{for all sufficiently large } L \, . \hspace{8.1em} \qed
\end{align*}
\renewcommand{\qedsymbol}{}
\end{proof}

\vspace{-2\baselineskip}

\begin{thm}
\label{thm:SimpToepLCond}
Every simple Toeplitz subshift satisfies \LCond{}.
\end{thm}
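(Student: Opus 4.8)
The plan is to verify the three sufficient combinatorial conditions \LCondaa{}, \LCondbb{}, \LCondcc{} from Proposition~\ref{prop:LCondSuffi}, since each of them has already been established in the preceding propositions of this subsection. Concretely, I would take $\RWord \DefAs \PBlock{\infty} \in \Alphab^{\NN}$, set $\LNr \DefAs \Card{\AlphabEv}$, write $\AlphabEv = \Set{ a^{(1)} , \ldots , a^{(\LNr)} }$, and define the single-letter words $\LOrigin{i} \DefAs a^{(i)}$, so that the candidate leading sequences are $\LWord{i} = \Word{ \Rev{ \PBlock{\infty} } }{ \Origin{ \LOrigin{i} } }{ \PBlock{\infty} }$. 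The first task is to observe that these are genuinely elements of $\Subshift$, which is exactly the content of Example~\ref{exmpl:SpWBlocks} (the leading words $\SpW{a^{(i)}}$ lie in $\Subshift( \PBlock{\infty} )$).

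Next I would assemble the three conditions. For \LCondaa{}: the fact that $\LWord{i} \in \Subshift$ has just been noted, and the combinatorial exhaustiveness statement $\Langu{\Subshift}_{L} = \bigcup_{i} \bigcup_{j} \Restr{ \LWord{i} }{ j+1 }{ j+L }$ for all $L \geq \Length{\PBlock{\EventNr}}$ (with $I_{0}$ of order $\Length{\PBlock{\EventNr}}$) follows directly from the first proposition of this subsection, which shows every $u \in \Langu{\Subshift}_{L}$ with $L \geq \Length{\PBlock{\EventNr}}$ occurs around the origin of some $\LWord{i}$. For \LCondcc{}: the second proposition of the subsection produced, for each $i$, a sequence $l_{k} \DefAs \Length{\PBlock{k_{j}-1}} + 1$ (along a subsequence $k_{j}$ with $a_{k_{j}} = a^{(i)}$) with $\Restr{\LWord{i}}{-2l_{k}+1}{-l_{k}} = \Restr{\LWord{i}}{-l_{k}+1}{0} = \Restr{\LWord{i}}{1}{l_{k}}$, which is the threefold-repetition alternative in \LCondcc{}. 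For \LCondbb{}: the last proposition of the subsection shows $\PBlock{\infty}$ satisfies (\ref{eqn:DefiPQ}) with constant $\tfrac{1}{8}$, and the proposition before it shows that along the prefixes $\PBlock{k}$ of $\PBlock{\infty}$ the asymptotic average $\lim_{k} \frac{ F(\PBlock{k}) }{ \Length{\PBlock{k}} }$ exists and equals $\AsAv{F}$ for every subadditive $F \colon \Langu{\Subshift} \to \RR$; taking the subsequence $L_{k} \DefAs \Length{\PBlock{k}}$ supplies exactly the sequence of prefixes required in \LCondbb{}.

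With all three sufficient conditions in hand, Proposition~\ref{prop:LCondSuffi} immediately yields that $\Subshift$ satisfies \LCond{}, with the $\LWord{i}$ as leading sequences. I would also remark in passing that aperiodicity of the subshift is nowhere used in this argument, which is why the same reasoning covers periodic simple Toeplitz subshifts (as noted in Subsection~\ref{subsec:LCondPeriod}).

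**Expected main obstacle.** There is essentially no obstacle left at this stage: the theorem is a bookkeeping assembly of four already-proved propositions plus Example~\ref{exmpl:SpWBlocks} and Proposition~\ref{prop:LCondSuffi}. The only point requiring a little care is making the indexing match Proposition~\ref{prop:LCondSuffi} — in particular choosing $L_{0}$ and $I_{0}$ consistently (one may take $L_{0} = I_{0} = \Length{\PBlock{\EventNr}}$, noting that the proposition counting occurrences produces shifts $j \in \Set{1,\ldots,L}$ which fit inside the window $[-I_{0}-L, I_{0}-1]$), and making sure that each $\LWord{i}$ really does have the palindromic form $\Word{ \Rev{\RWord} }{ \Origin{\LOrigin{i}} }{ \RWord }$ with the common one-sided word $\RWord = \PBlock{\infty}$, which it does because $\PBlock{k}$ is both a prefix and a suffix of $\PBlock{k+1}$ and every $\PBlock{k}$ is a palindrome (Example~\ref{exmpl:PBlockPalindr}).
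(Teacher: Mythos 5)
Your proposal is correct and follows essentially the same route as the paper: the paper's proof of Theorem~\ref{thm:SimpToepLCond} is precisely the observation that the four propositions of Subsection~\ref{subsec:LCondST} (membership and combinatorial exhaustiveness of the leading words \( \LWord{i} = \Word{ \Rev{\PBlock{\infty}} }{ \Origin{\LOrigin{i}} }{ \PBlock{\infty} } \), the threefold repetitions, the existence of \( \lim_{k} F(\PBlock{k})/\Length{\PBlock{k}} = \AsAv{F} \), and the (\ref{eqn:DefiPQ}) bound for \( \PBlock{\infty} \)) verify the sufficient conditions \LCondaa{}, \LCondbb{}, \LCondcc{} of Proposition~\ref{prop:LCondSuffi}. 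Your bookkeeping remarks on \( L_{0} \), \( I_{0} \) and the palindromic form are consistent with the paper (indeed one may even take \( I_{0} = 0 \) there), so nothing is missing.
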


\begin{proof}
By the preceding propositions, every simple Toeplitz subshift satisfies the conditions of Proposition \ref{prop:LCondSuffi}.
\end{proof}

\subsection{Sturmian subshifts}
\label{subsec:LCondSturm}

We check that the conditions from Proposition~\ref{prop:LCondSuffi}, which are sufficient for \LCond{}, apply to Sturmian subshifts. Background information on Sturmian subshifts can be found in Appendix~\ref{app:Sturm}. In particular, we discuss in Proposition~\ref{prop:SturmDefBlocks} how a sequence \( ( n_{k} )_{k \in \NN} \) and the recurrence relation
\[ s_{0} \DefAs b \;\; , \quad s_{1} \DefAs \Word{ b^{n_{1}-1} }{ a } \;\; , \quad s_{k+1} \DefAs \Word{ s_{k}^{n_{k+1}} }{ s_{k-1} } \quad (k\geq 1) \]
define a Sturmian subshift: by \( \PBlock{k} \) we denote the word \( s_{k} \) without its last two letters. We write \( \lim_{k \to \infty} \PBlock{k} \AsDef \PBlock{\infty} \in \Set{ a , b }^{\NN} \) and let \( \Subshift = \Subshift( \PBlock{\infty} ) \DefAs \Set{ \InfWord \in \Alphab^{\ZZ} : \Langu{ \InfWord } \subseteq \Langu{ \PBlock{\infty} } } \) denote the associated Sturmian subshift. We define \( \LOrigin{1} \DefAs \Word{ a }{ b } \) and \( \LOrigin{2} \DefAs \Word{ b }{ a } \). In the following we show that \( \Subshift \) satisfies \LCond{} with leading sequences \( \LWord{i} \DefAs \Word{ \Rev{ \PBlock{\infty} } }{ \Origin{\LOrigin{i}} }{ \PBlock{\infty} } \) for \( i \in \Set{ 1, 2 } \). 

\begin{prop}
The infinite words \( \LWord{1} , \LWord{2} \) are elements of \( \Subshift \). For every sufficiently long \( u \in \Langu{ \Subshift } \) there are numbers \( i \in \Set{ 1 , 2 } \) and \( k \in \Set{ 1, \ldots, \Length{u}+1 } \) with \( u = \Restr{ \LWord{i} }{ -k+1 }{ -k+\Length{u} } \).
\end{prop}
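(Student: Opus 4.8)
The plan is a direct combinatorial analysis of standard Sturmian words, reusing the structural facts collected in Appendix~\ref{app:Sturm}; it is the Sturmian counterpart of the argument given for simple Toeplitz subshifts in Subsection~\ref{subsec:LCondST}. Throughout I write $w_m \in \Set{ab, ba}$ for the two-letter suffix of $s_m$; classically $w_m$ alternates with $m$, so $\Set{w_m, \Rev{w_m}} = \Set{\LOrigin{1}, \LOrigin{2}}$, and each central word $\PBlock{m}$ (that is, $s_m$ with those two letters removed) is a palindrome and is a prefix — hence also a suffix — of every $\PBlock{l}$ with $l \geq m$. The key combinatorial lemma I would establish is: for every $m$, both $\PBlock{m}\,\LOrigin{1}\,\PBlock{m}$ and $\PBlock{m}\,\LOrigin{2}\,\PBlock{m}$ are factors of $\PBlock{m+1}$. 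The proof is a short computation: since $n_{m+1} \geq 2$, the word $s_{m+1} = s_m^{n_{m+1}} s_{m-1}$ begins with $s_m s_m = \PBlock{m}\,w_m\,\PBlock{m}\,w_m$, so $\PBlock{m}\,w_m\,\PBlock{m}$ is a prefix of $\PBlock{m+1}$; applying the palindrome involution to $\PBlock{m+1}$ shows that its reflection $\Rev{\PBlock{m}\,w_m\,\PBlock{m}} = \PBlock{m}\,\Rev{w_m}\,\PBlock{m}$ is a suffix of $\PBlock{m+1}$. Because $\Set{w_m, \Rev{w_m}} = \Set{\LOrigin{1}, \LOrigin{2}}$, the lemma follows, and it carries over to $\PBlock{\infty}$ since $\PBlock{m+1}$ is a prefix of $\PBlock{\infty}$.

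\textbf{Membership.} As $\Subshift = \Subshift(\PBlock{\infty})$ is defined through the language of $\PBlock{\infty}$ (Definition~\ref{defi:SubshiftLangu}), it suffices to prove $\Langu{\LWord{i}} \subseteq \Langu{\PBlock{\infty}}$ for $i \in \Set{1,2}$. I would use that an appropriate window of $\LWord{i}$ centred at the origin equals $\PBlock{m}\,\LOrigin{i}\,\PBlock{m}$ for every $m$ (using that $\PBlock{m}$ is the prefix of $\PBlock{\infty}$ of length $\Length{\PBlock{m}}$ and is a palindrome); hence every finite factor of $\LWord{i}$ occurs inside $\PBlock{m}\,\LOrigin{i}\,\PBlock{m}$ for all sufficiently large $m$, which by the key lemma lies in $\Langu{\PBlock{\infty}}$. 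Therefore $\Langu{\LWord{i}} \subseteq \Langu{\PBlock{\infty}} = \Langu{\Subshift}$, so $\LWord{1}, \LWord{2} \in \Subshift$.

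\textbf{Every long word around the origin.} Given $u \in \Langu{\Subshift}$ with $L \DefAs \Length{u}$ large, I would choose $m$ minimal with $L \leq \Length{\PBlock{m}}$, so that $\Length{\PBlock{m-1}} < L \leq \Length{\PBlock{m}} < \Length{s_m}$ and (for large $L$) $m$ is large. Using the $S$-adic decomposition of $\PBlock{\infty}$ into copies of $s_m$ and $s_{m-1}$ — in which an $s_{m-1}$-block is always immediately followed by an $s_m$-block, since $n_m \geq 2$ — any factor of length $L < \Length{s_m}$ is a factor of $s_m s_m$, of $s_m s_{m-1} s_m$, or of $s_{m-1} s_m s_m$. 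Each of these three words is a factor of $\PBlock{M}$ for $M$ large enough (e.g.\ $M = m+1$ or $m+2$), positioned so as to overlap one of the two central letters $w_m$ in $\PBlock{m}\,w_m\,\PBlock{m}$ or $\Rev{w_m}$ in $\PBlock{m}\,\Rev{w_m}\,\PBlock{m}$. Combining this with the fact that an appropriate window of $\LWord{i}$ centred at the origin equals $\PBlock{M}\,\LOrigin{i}\,\PBlock{M}$ and with $\Set{w_m, \Rev{w_m}} = \Set{\LOrigin{1}, \LOrigin{2}}$, one obtains an occurrence of $u$ inside a window of $\LWord{1}$ or $\LWord{2}$ that meets the origin; re-indexing the left endpoint of that window as $-k+1$ yields exactly $u = \Restr{\LWord{i}}{-k+1}{-k+L}$ with $k \in \Set{1, \ldots, L+1}$, as claimed. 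This mirrors the corresponding argument in the simple Toeplitz case (Subsection~\ref{subsec:LCondST}), with the junctions between Sturmian blocks playing the role of the single separating letters $a_{k+1}$ there.

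\textbf{Main obstacle.} The delicate step is the second one: it is not enough that $u$ occurs somewhere in $\Subshift$, it must occur \emph{straddling} (or abutting) the two central letters of a leading word. This needs the continued-fraction/$S$-adic structure of Sturmian words in an essential way — specifically, that a factor whose length lies in $(\Length{\PBlock{m-1}}, \Length{\PBlock{m}}]$ cannot avoid an $s_{m-1}$/$s_m$ block junction (or a junction of two $s_m$-blocks), and that these junctions are exactly the configurations realised around the origins of $\LWord{1}$ and $\LWord{2}$. The palindromicity of the $\PBlock{m}$ and the identity $\Set{w_m, \Rev{w_m}} = \Set{\LOrigin{1}, \LOrigin{2}}$ are precisely what make two leading sequences sufficient to exhaust $\Langu{\Subshift}$.
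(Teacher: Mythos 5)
Your overall architecture (get \( \Word{\PBlock{m}}{\LOrigin{i}}{\PBlock{m}} \) into the language, deduce membership of the \( \LWord{i} \), then locate long words at a block junction and align that junction with the origin) is the same as the paper's, but two steps are genuinely broken. First, your key lemma is false as stated, and its proof rests on the assumption \( n_{m+1} \geq 2 \), which is not available for Sturmian subshifts: the continued fraction coefficients only satisfy \( n_k \geq 1 \), and for the Fibonacci subshift all \( n_k = 1 \) (you may have imported the simple Toeplitz convention \( n_k \geq 2 \)). When \( n_{m+1} = 1 \), the word \( s_{m+1} = \Word{s_m}{s_{m-1}} \) does not begin with \( \Word{s_m}{s_m} \), and in fact \( \Length{ \Word{\PBlock{m}}{\LOrigin{i}}{\PBlock{m}} } = 2\Length{s_m} - 2 > \Length{s_m} + \Length{s_{m-1}} - 2 = \Length{\PBlock{m+1}} \) for \( m \geq 2 \), so \( \Word{\PBlock{m}}{\LOrigin{i}}{\PBlock{m}} \) cannot be a factor of \( \PBlock{m+1} \) at all. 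The correct statement, which is what the paper uses (Corollary~\ref{cor:LWordInSturm}), comes from the junction \( \Word{s_{m+1}}{s_m} \) inside \( s_{m+2} = \Word{s_{m+1}^{n_{m+2}}}{s_m} \): writing \( s_m = \Word{\PBlock{m}}{w_m} \), this junction reads \( \Word{\PBlock{m+1}}{w_{m+1}}{\PBlock{m}}{w_m} \), and since \( \PBlock{m+1} \) ends with \( \PBlock{m} \) (palindromic prefix, hence suffix) and \( w_{m+1} = \Rev{w_m} \), one obtains \( \Word{\PBlock{m}}{\Rev{w_m}}{\PBlock{m}} \) in \( s_{m+2} \); the word with the other separator is then found one level higher. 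With that repair, your membership argument goes through.

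Second, and more seriously, the heart of the second claim — that \( u \) occurs in some \( \LWord{i} \) meeting positions \( -1, 0 \) — is not actually proved. Choosing \( m \) minimal with \( \Length{u} \leq \Length{\PBlock{m}} \) does not force \( u \) to straddle a block junction: \( u \) can lie entirely inside a single \( s_m \)-block (its length is only bounded below by \( \Length{\PBlock{m-1}} \), while the block has length \( \Length{\PBlock{m}}+2 \)), and knowing that \( u \) is a factor of \( \Word{s_m}{s_m} \) or \( \Word{s_m}{s_{m-1}}{s_m} \) says nothing about where inside those words it sits, which is exactly what is needed; you concede this in your "main obstacle" paragraph but do not close it. The paper closes it by choosing the level differently: take \( K \) minimal such that \( u \) is a factor of \( \PBlock{K} \) (not: such that \( \Length{u} \leq \Length{\PBlock{K}} \)) and use the decomposition \( \PBlock{K} = \Word{\PBlock{K-1}}{\LOrigin{i}}{\ldots}{\PBlock{K-1}}{\LOrigin{i}}{\PBlock{K-2}} \). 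Minimality of \( K \) forces every occurrence of \( u \) in \( \PBlock{K} \) to intersect one of the separators \( \LOrigin{i} \); the part of \( u \) to the left of the first such separator is a suffix of \( \PBlock{K-1} \), hence of \( \PBlock{K} \) (palindromicity again), and the part to its right is a prefix of \( \PBlock{K} \), so aligning this separator with the central \( \LOrigin{i} \) of \( \LWord{i} = \Word{\ldots}{\PBlock{K}}{\Origin{\LOrigin{i}}}{\PBlock{K}}{\ldots} \) gives exactly \( u = \Restr{\LWord{i}}{-k+1}{-k+\Length{u}} \) with \( k \in \Set{1, \ldots, \Length{u}+1} \). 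Without a minimality argument of this kind (or an equivalent localisation), your sketch does not yield the statement.
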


\begin{proof}
In Corollary~\ref{cor:LWordInSturm} in the appendix we show \( \LWord{1} , \LWord{2} \in \Subshift \). The second part of the claim can be concluded from \cite[Lemma~3.2]{DamLenz_UniSpectProp2}. For completeness, we give an explicit proof as well, which is based on Proposition~\ref{prop:SturmPali}: let \( K \in \NN \) be minimal such that \( u \) is contained in \( \PBlock{K} \). Assume that \( K \geq 6 \) holds. In the decomposition \( \PBlock{K} = \Word{ \PBlock{K-1} }{ \LOrigin{i} }{ \ldots }{ \PBlock{K-1} }{ \LOrigin{i} }{ \PBlock{K-2} } \), the word \( u \) intersects \( \LOrigin{i} \) at least once. To the left of the first such intersection, \( u \) is equal to a suffix of \( \PBlock{K-1} \), which clearly is a suffix of \( \PBlock{ K } \) as well. To the right of this intersection, \( u \) is equal to a prefix of \( \PBlock{K} \). Aligning the intersection with the occurrence of \( \LOrigin{i} \) at the origin of \( \LWord{i} = \Word{ \ldots }{ \PBlock{K} }{ \Origin{ \LOrigin{i} } }{ \PBlock{K} }{ \ldots } \) proves the claim.
\end{proof}

\begin{prop}
The word \( \PBlock{\infty} \in \Alphab^{\NN} \) satisfies (\ref{eqn:DefiPQ}). 
\end{prop}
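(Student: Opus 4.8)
The plan is to mimic the argument used for simple Toeplitz subshifts in Subsection~\ref{subsec:LCondST} and to exploit the two--scale self--similar structure of \( \PBlock{\infty} \). Write \( f_{k} \DefAs \Length{ s_{k} } \), so that \( f_{0} = 1 \), \( f_{1} = n_{1} \) and \( f_{k+1} = n_{k+1} f_{k} + f_{k-1} \); in particular \( f_{k-1} < f_{k} \) for all \( k \geq 1 \), and \( f_{k+1} < ( n_{k+1} + 1 ) f_{k} \leq 2 n_{k+1} f_{k} \). First I would record two elementary structural facts. \emph{(i)} The word \( s_{k-1} \) is a prefix of \( s_{k} \) for every \( k \geq 1 \) (immediate induction from \( s_{k+1} = \Word{ s_{k}^{n_{k+1}} }{ s_{k-1} } \) together with the base cases), hence \( \PBlock{k-1} \), being a prefix of \( s_{k-1} \), is also a prefix of \( s_{k} \), and moreover \( \PBlock{K+1} = \Word{ s_{K}^{n_{K+1}} }{ \PBlock{K-1} } \). \emph{(ii)} For every \( K \) and every \( m \geq K+1 \), the word \( s_{m} \) is a concatenation of blocks each equal to \( s_{K+1} \) or to \( s_{K} \), in which the first block is an \( s_{K+1} \)--block and the starting positions of two consecutive \( s_{K+1} \)--blocks differ by \( f_{K+1} \) or by \( f_{K+1} + f_{K} \leq 2 f_{K+1} \); this follows by induction on \( m \) from the recurrence. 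Since \( \PBlock{\infty} = \lim_{m} \PBlock{m} \) is an increasing limit of prefixes of the \( s_{m} \), the same description applies to \( \PBlock{\infty} \).

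Fix a long prefix \( p \DefAs \Restr{ \PBlock{\infty} }{ 1 }{ j } \) and choose \( K \) with \( f_{K} \leq j < f_{K+1} \) and \( m \DefAs \lfloor j / f_{K} \rfloor \), so that \( 1 \leq m \leq n_{K+1} \) (the last inequality because \( j < f_{K+1} < ( n_{K+1} + 1 ) f_{K} \)). Since \( p \) is a prefix of \( s_{K+1} = \Word{ s_{K}^{n_{K+1}} }{ s_{K-1} } \) of length \( j < ( m+1 ) f_{K} \), and \( s_{K-1} \) is a prefix of \( s_{K} \) by \emph{(i)}, the word \( p \) is in fact a prefix of \( s_{K}^{m+1} \). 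I would then count disjoint occurrences of \( p \) inside a single \( s_{K+1} \)--block of \( \PBlock{\infty} \): placing copies of \( p \) at the positions \( 1 + \ell ( m+1 ) f_{K} \), \( \ell = 0, 1, \ldots, \lfloor n_{K+1} / ( m+1 ) \rfloor - 1 \), yields pairwise disjoint occurrences, since consecutive ones are \( ( m+1 ) f_{K} > j \) apart and each lies inside the initial run \( s_{K}^{n_{K+1}} \) of the block. Hence every \( s_{K+1} \)--block of \( \PBlock{\infty} \) contains at least \( \max \Set{ \lfloor n_{K+1} / ( m+1 ) \rfloor , \, 1 } \geq \tfrac{ n_{K+1} }{ 2 ( m+1 ) } \) disjoint copies of \( p \) (when the floor is \( 0 \) there is still room for a single copy, as \( j < f_{K+1} \), and then \( 1 \geq \tfrac{ n_{K+1} }{ 2 ( m+1 ) } \)). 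By \emph{(ii)}, the prefix \( \Restr{ \PBlock{\infty} }{ 1 }{ L } \) contains at least \( \tfrac{ L }{ 2 f_{K+1} } - 1 \) pairwise disjoint \( s_{K+1} \)--blocks, and occurrences of \( p \) coming from distinct blocks are disjoint; therefore
\[ \DisCopies{ p }{ \Restr{ \PBlock{\infty} }{ 1 }{ L } } \;\geq\; \Big( \tfrac{ L }{ 2 f_{K+1} } - 1 \Big) \cdot \tfrac{ n_{K+1} }{ 2 ( m+1 ) } \, . \]
Taking \( \liminf_{ L \to \infty } \) and using \( j \geq m f_{K} \) and \( f_{K+1} < 2 n_{K+1} f_{K} \), this gives
\[ \liminf_{ L \to \infty } \; \DisCopies{ p }{ \Restr{ \PBlock{\infty} }{ 1 }{ L } } \cdot \frac{ j }{ L } \;\geq\; \frac{ j \, n_{K+1} }{ 4 f_{K+1} ( m+1 ) } \;\geq\; \frac{ m f_{K} \, n_{K+1} }{ 4 \cdot 2 n_{K+1} f_{K} \, ( m+1 ) } \;=\; \frac{ m }{ 8 ( m+1 ) } \;\geq\; \frac{ 1 }{ 16 } \, . \]
For the finitely many short prefixes not covered by this (those for which \( K \) would be too small for the recurrence to apply), one uses that \( \PBlock{\infty} \) is uniformly recurrent — a classical property of Sturmian words, which also follows from minimality of the Sturmian subshift — so each such prefix occurs in \( \Restr{ \PBlock{\infty} }{ 1 }{ L } \) with bounded gaps and hence has strictly positive liminf; taking the minimum of \( \tfrac{1}{16} \) and these finitely many positive numbers yields a uniform constant \( c > 0 \), which is exactly (\ref{eqn:DefiPQ}).

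The step I expect to require the most care is the transition from the block decomposition of the finite words \( s_{m} \) to a decomposition of the infinite word \( \PBlock{\infty} \), and, relatedly, making the lower bound \( \tfrac{ n_{K+1} }{ 2 ( m+1 ) } \) on the number of copies of \( p \) per \( s_{K+1} \)--block uniformly valid: the ``boundary'' value \( m = n_{K+1} \), where the naive count \( \lfloor n_{K+1} / ( m+1 ) \rfloor \) degenerates to \( 0 \), must be absorbed by the single--copy fallback, and one has to keep track of which \( s_{K+1} \)--blocks lie entirely inside \( \Restr{ \PBlock{\infty} }{ 1 }{ L } \). The remaining estimates are the routine length bookkeeping \( f_{k-1} < f_{k} \) and \( f_{K+1} < 2 n_{K+1} f_{K} \), so no genuine difficulty is expected beyond the combinatorial bookkeeping just described.
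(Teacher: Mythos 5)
Your proposal is correct and takes essentially the same route as the paper's proof: both factor \( \DisCopies{ \Restr{ \PBlock{\infty} }{ 1 }{ j } }{ \Restr{ \PBlock{\infty} }{ 1 }{ L } } \) as (number of disjoint \( s_{K+1} \)-blocks in the length-\( L \) prefix, obtained from the decomposition with isolated lower-order blocks as in Proposition~\ref{prop:SturmPali}~(e)) times (number of disjoint copies of the prefix packed at spacing \( (m+1)\Length{s_{K}} \) inside one such block), giving a uniform constant (\( \tfrac{1}{16} \) versus the paper's \( \tfrac{1}{12} \)). The only blemish is your claim (i), which fails at \( k=1 \) when \( n_{1}=1 \) (then \( s_{0}=b \) is not a prefix of \( s_{1}=a \)); this only touches the small-\( K \) boundary cases that your uniform-recurrence fallback for the finitely many short prefixes already absorbs, so the argument stands.
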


\begin{proof}
Fix \( j \in \NN \), let \( k \) be the unique index given by \( \Length{s_{k-1}} \leq j < \Length{s_{k}} \) and note that \( \DisCopies{ \Restr{ \PBlock{\infty} }{ 1 }{ j } }{ \Restr{ \PBlock{\infty} }{ 1 }{ L } } \geq \DisCopies{ s_{k} }{ \Restr{ \PBlock{\infty} }{ 1 }{ L } } \cdot \DisCopies{ \Restr{ \PBlock{\infty} }{ 1 }{ j } }{ s_{k} } \) holds. We study the factors separately: in the decomposition of \( \Restr{ \PBlock{\infty} }{ 1 }{ L } \) into \( s_{k} \)-blocks and \( s_{k-1} \)-blocks, all  \( s_{k-1} \)-blocks are isolated, see Proposition~\ref{prop:SturmPali}~(e). Using \( \Length{ s_{k} } > \Length{ s_{k-1} } \), we obtain \( \DisCopies{ s_{k} }{ \Restr{ \PBlock{\infty} }{ 1 }{ L } } \geq \frac{ L }{ 2 \Length{s_{k}} } - 1 \geq \frac{ L }{ 3 \Length{s_{k}} } \) for all sufficiently large \( L \). Now we consider \( \Restr{ \PBlock{\infty} }{ 1 }{ j } \). Let \( t , r \in \NN \) be given by \( j = t \Length{s_{k-1}} + r \) and \( 0 \leq r < \Length{s_{k-1}} \). Note that we have \( 1 \leq t \leq n_{k} \). If \( 2t+2 \leq n_{k} \) holds, we use \( \DisCopies{ \Restr{ \PBlock{\infty} }{ 1 }{ j } }{ s_{k} } \geq \lfloor \frac{ n_{k} }{ t+1 } \rfloor \geq \frac{ n_{k} }{ t+1 } -1 \), which yields 
\[ \frac{ j }{ L } \DisCopies{ \Restr{ \PBlock{\infty} }{ 1 }{ j } }{ \Restr{ \PBlock{\infty} }{ 1 }{ L } } \geq \frac{ t \Length{s_{k-1}} }{ L } \cdot \Big( \frac{ n_{k} }{ t+1 } -1 \Big) \cdot \frac{ L }{ 3 \Length{s_{k}} } = \frac{ t }{ 3 ( t+1 ) } \Big( 1 - \frac{ t+1 }{ n_{k} } \Big) \geq \frac{ 1 }{ 12 } \, . \]
If \( 2t+1 \geq n_{k} \) holds, we use \( \DisCopies{ \Restr{ \PBlock{\infty} }{ 1 }{ j } }{ s_{k} } \geq 1 \) and obtain
\[ \frac{ j }{ L } \DisCopies{ \Restr{ \PBlock{\infty} }{ 1 }{ j } }{ \Restr{ \PBlock{\infty} }{ 1 }{ L } } \geq \frac{ t \Length{s_{k-1}} }{ L } \cdot 1 \cdot \frac{ L }{ 3 \Length{s_{k}} } \geq \frac{ t }{ 3 ( n_{k} + 1 ) } \geq \frac{1}{12} \, .\qedhere \]
\end{proof}

\begin{prop}
For every subadditive function \( F \colon \Langu{ \Subshift } \to \RR \), the limit \( \lim_{k \to \infty} \frac{ F( s_{k} ) }{ \Length{s_{k}} } \) exists (but might be equal to minus infinity).
\end{prop}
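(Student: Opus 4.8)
The plan is to exploit the Fibonacci-like recurrence $s_{k+1} = s_k^{n_{k+1}} s_{k-1}$ together with subadditivity of $F$, in close analogy with the corresponding argument for simple Toeplitz subshifts (where the recurrence was $\PBlock{k+1} = \Word{\PBlock{k}}{a_{k+1}}{\PBlock{k}}{\ldots}{\PBlock{k}}$). First I would set up notation: write $\ell_k \DefAs \Length{s_k}$ and $x_k \DefAs \frac{F(s_k)}{\ell_k}$. The goal is to show that $(x_k)$ converges in $[-\infty, \infty)$. Since $F$ takes values in $\RR$ and $\ell_k \to \infty$, the sequence $(x_k)$ is bounded above (by $\AsAv{F}$, using that $s_k \in \Langu{\Subshift}_{\ell_k}$ and the definition of $\AsAv{F}$ via Fekete's lemma), so it suffices to prove that $\liminf_{k\to\infty} x_k \geq \limsup_{k\to\infty} x_k$, i.e.\ that the sequence actually has a limit (possibly $-\infty$).

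The key step is the subadditivity estimate applied to the recurrence. From $s_{k+1} = \Word{s_k^{n_{k+1}}}{s_{k-1}}$ and subadditivity of $F$ we get
\[ F(s_{k+1}) \leq n_{k+1} F(s_k) + F(s_{k-1}) \, , \]
hence, dividing by $\ell_{k+1} = n_{k+1}\ell_k + \ell_{k-1}$,
\[ x_{k+1} \leq \frac{n_{k+1}\ell_k}{\ell_{k+1}} \, x_k + \frac{\ell_{k-1}}{\ell_{k+1}} \, x_{k-1} \, . \]
The two coefficients are non-negative and sum to $1$, so $x_{k+1}$ is a weighted average of $x_k$ and $x_{k-1}$ — but with an inequality, not an equality. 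I would use this to control $\limsup$: if $x_k$ were large along a subsequence while $x_{k-1}$ stayed moderate (or vice versa), the averaging inequality would force later terms down. Concretely, I expect to argue that $\limsup_k x_k$ cannot exceed $\liminf_k x_k$ by combining the above inequality with a telescoping/iteration argument; one clean way is to show directly, using the same ``weighted average of a convergent sequence'' lemma that appears in the periodic case (Subsection~\ref{subsec:LCondPeriod}), that $\frac{F(\Restr{\RWord}{1}{L})}{L}$ converges along all $L$, but it is more economical here to stay with the subsequence $(\ell_k)$ and only prove convergence of $(x_k)$, since that is all \LCondbb{} ultimately needs together with Proposition~\ref{prop:PQPrefixLimit}.

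A cleaner route, which I would prefer, is this: decompose an arbitrary long prefix of $\PBlock{\infty}$ using the Ostrowski-type (Zeckendorf-type) representation afforded by the recurrence, so that $\Restr{\PBlock{\infty}}{1}{L}$ is a concatenation of blocks $s_k$ with controlled multiplicities. Subadditivity then gives $F(\Restr{\PBlock{\infty}}{1}{L}) \leq \sum$ (multiplicity)$\cdot F(s_k) + (\text{error from boundary letters})$, and dividing by $L$ exhibits $\frac{F(\Restr{\PBlock{\infty}}{1}{L})}{L}$ as essentially a weighted average of the $x_k$'s plus a vanishing error; monotone control of $\ell_{k+1}/\ell_k$ and the structure of Sturmian blocks (Proposition~\ref{prop:SturmPali}) keep these weights under control, and one concludes $\limsup_L \frac{F(\Restr{\PBlock{\infty}}{1}{L})}{L} \leq \AsAv{F}$; the matching lower bound comes from choosing $L = \ell_k$ and using (\ref{eqn:DefiPQ}) — but this overshoots what the statement asks, so in practice I would only extract convergence of $(x_k)$ from it. The main obstacle is handling the case $\AsAv{F} = -\infty$ (or $x_k \to -\infty$): the averaging inequalities still make sense, but one must be careful that ``weighted average'' arguments and the splitting of sums remain valid when the quantities tend to $-\infty$; I would deal with this by first truncating, i.e.\ replacing $F$ by $\max(F, -M)$ for large $M$ (still subadditive up to an additive constant, or handled by a direct $\varepsilon$--$M$ argument), proving convergence there, and then letting $M \to \infty$ — or, most simply, by observing that if $\limsup_k x_k = -\infty$ there is nothing to prove, and otherwise $\limsup_k x_k \in \RR$ and the bounded-case argument applies verbatim to show $\liminf_k x_k = \limsup_k x_k$.
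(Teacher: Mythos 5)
You correctly identify the paper's starting point: subadditivity applied to the recursion \( s_{k+1} = \Word{ s_{k}^{n_{k+1}} }{ s_{k-1} } \) gives \( F(s_{k+1}) \leq n_{k+1}F(s_{k}) + F(s_{k-1}) \) and hence the weighted-average inequality for \( x_{k} \DefAs F(s_{k})/\Length{s_{k}} \), which is exactly Equation~(\ref{eqn:SubAdRecSturm}). But the actual content of the proposition --- deducing convergence of \( (x_{k}) \) from this \emph{one-sided} recursion --- is not supplied. Your first route stops at ``I expect to argue that \( \limsup \) cannot exceed \( \liminf \) by a telescoping/iteration argument'', and that is precisely where the work lies: subadditivity yields only upper bounds on \( F(s_{k}) \), so there is no lower bound on \( x_{k} \) and no control on \( \lvert x_{k} - x_{k-1} \rvert \); after an arbitrarily large drop the inequality still permits the next term to bounce back up by as much as half the drop, so neither monotonicity along a subsequence (as in the periodic case, where the two concatenated blocks are identical and one gets a genuinely decreasing subsequence) nor the ``weighted average of a convergent sequence'' lemma applies directly. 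The paper closes this gap quantitatively: from the same inequality, using that the weight on the older term, \( \Length{s_{k-2}}/\Length{s_{k}} \), is smaller than \( 1/2 \), it derives \( x_{k} - x_{k-1} \leq \lvert x_{1} - x_{0} \rvert \cdot 2^{-(k-1)} \), and then telescopes from indices \( l \) along which \( x_{l} \) approaches \( \liminf_{k} x_{k} \) to get \( x_{k} \leq x_{l} + \lvert x_{1} - x_{0} \rvert \cdot 2^{-(l-1)} \) for all \( k > l \), which forces \( \limsup \leq \liminf \) also when the liminf is \( -\infty \). Some bound of this kind on the positive part of the increments (or an equivalent device, e.g.\ observing that \( \max(x_{k}, x_{k-1}) \) is non-increasing and combining this with the fact that the coefficient of \( x_{k-1} \) is below \( 1/2 \)) is indispensable, and nothing in your sketch provides it.

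Your preferred ``cleaner route'' does not repair this, and is moreover circular in the paper's architecture: Proposition~\ref{prop:PQPrefixLimit} needs as a \emph{hypothesis} a sequence of prefixes whose ratios converge to \( \AsAv{F} \), and for Sturmian subshifts that hypothesis is exactly what the present proposition (together with the following one, which identifies the limit with \( \AsAv{F} \)) supplies; (\ref{eqn:DefiPQ}) alone does not give the lower bound \( \liminf_{L} F( \Restr{ \PBlock{\infty} }{1}{L} )/L \geq \AsAv{F} \), and ``choosing \( L = \Length{s_{k}} \)'' just returns you to the quantity \( x_{k} \) you are trying to control. Even granting two-sided convergence of the prefix ratios, extracting convergence of the subsequence \( (x_{k}) \) is again the missing oscillation argument. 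Two smaller inaccuracies: \( x_{k} \leq \AsAv{F} \) is not true in general (you only get \( x_{k} \leq \widetilde{F}(\Length{s_{k}})/\Length{s_{k}} \), which lies above \( \AsAv{F} \); boundedness above follows instead from the recursion or from \( F(u) \leq \Length{u} \max_{a} F(a) \)), and \( \max(F, -M) \) is subadditive only up to an additive constant of order \( M \), so the proposed truncation would need extra care. As it stands, the proposal reproduces the paper's opening inequality but not its proof.
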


\begin{proof}
Subadditivity of \( F \) combined with the decomposition \( s_{k} = \Word{ s_{k-1}^{n_{k}} }{ s_{k-2} } \) yields for all \( k \geq 2 \) the inequality
\begin{equation}
\label{eqn:SubAdRecSturm}
\frac{ F( s_{k} ) }{ \Length{s_{k}} } \leq \frac{ n_{k} \Length{s_{k-1}} }{  n_{k} \Length{s_{k-1}} + \Length{s_{k-2}} } \cdot \frac{ F( s_{k-1} ) }{ \Length{s_{k-1}} } + \frac{ \Length{s_{k-2}} }{  n_{k} \Length{s_{k-1}} + \Length{s_{k-2}} } \cdot \frac{ F( s_{k-2} ) }{ \Length{s_{k-2}} } \, .
\end{equation}
Since this is a weighted average of \( \frac{ F( s_{k-1} ) }{ \Length{s_{k-1}} } \) and \( \frac{ F( s_{k-2} ) }{ \Length{s_{k-2}} } \), the sequence \( \big( \frac{ F( s_{k} ) }{ \Length{s_{k}} } \big)_{k} \) is bounded from above by \( \max \big\{ \, \frac{ F( s_{0} ) }{ \Length{s_{0}} } , \frac{ F( s_{1} ) }{ \Length{s_{1}} } \, \big\} \). Moreover, Equation~(\ref{eqn:SubAdRecSturm}) implies
\[ \frac{ F( s_{k} ) }{ \Length{s_{k}} } - \frac{ F( s_{k-1} ) }{ \Length{s_{k-1}} } \leq - \Big( \frac{ F( s_{k-1} ) }{ \Length{s_{k-1}} } - \frac{ F( s_{k-2} ) }{ \Length{s_{k-2}} } \Big) \frac{ \Length{s_{k-2}} }{ \Length{s_{k}} } \leq (-1)^{k-1} \Big( \frac{ F( s_{1} ) }{ \Length{s_{1}} } - \frac{ F( s_{0} ) }{ \Length{s_{0}} } \Big) \prod_{i=2}^{k} \frac{ \Length{s_{i-2}} }{ \Length{s_{i}} } \, . \]
Now \( 0 < \frac{ \Length{s_{i-2}} }{ \Length{s_{i}} } = \frac{ \Length{s_{i-2}} }{ n_{i} \Length{s_{i-1}} + \Length{ s_{i-2} } } < \frac{1}{2} \) yields for all \( k\geq 2 \) the inequality
\[ \frac{ F( s_{k} ) }{ \Length{s_{k}} } - \frac{ F( s_{k-1} ) }{ \Length{s_{k-1}} } \leq \Big\lvert \frac{ F( s_{1} ) }{ \Length{s_{1}} } - \frac{ F( s_{0} ) }{ \Length{s_{0}} } \Big\rvert \cdot \frac{1 }{ 2^{k-1} } \, . \]
Finally, we use a telescoping sum and obtain for \( k > l \geq 1 \):
\[ \frac{ F( s_{k} ) }{ \Length{ s_{k} } } = \frac{ F( s_{l} ) }{ \Length{ s_{l} } } + \sum_{i=l+1}^{k} \Big( \frac{ F( s_{i} ) }{ \Length{ s_{i} } } - \frac{ F( s_{i-1} ) }{ \Length{ s_{i-1} } } \Big) \leq \frac{ F( s_{l} ) }{ \Length{ s_{l} } } + \Big\lvert \frac{ F( s_{1} ) }{ \Length{s_{1}} } - \frac{ F( s_{0} ) }{ \Length{s_{0}} } \Big\rvert \cdot \frac{1 }{ 2^{l-1} } \, .\]
Now consider a subsequence \( ( s_{l} )\) such that \( \big( \frac{ F( s_{l} ) }{ \Length{ s_{l} } } \big)_{l} \) converges to \( F_{\infty} \DefAs \liminf_{k \to \infty} \frac{ F( s_{k} ) }{ \Length{ s_{k} } } \). For sufficiently large \( l \), the second summand gets arbitrarily close to zero, while the first summand gets arbitrarily close to \( F_{\infty} \) (if \( F_{\infty} \) is finite) or becomes smaller than every constant (if \( F_{\infty} \) is equal to minus infinity). Consequently, also \( \frac{ F( s_{k} ) }{ \Length{ s_{k} } } \) gets arbitrarily close to \( F_{\infty} \) or becomes smaller than every constant for all sufficiently large \( k \).
\end{proof}

\begin{prop}
For every subadditive function \( F \colon \Langu{ \Subshift } \to \RR \) we have \( \lim_{k \to \infty} \frac{ F( s_{k} ) }{ \Length{s_{k}} } = \lim_{L \to \infty} \max \big\{ \, \frac{ F(u) }{ L } : u \in \Langu{ \Subshift }_{L} \, \big\} \AsDef \AsAv{F} \).
\end{prop}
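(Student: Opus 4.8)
The plan is to mirror the corresponding statement for simple Toeplitz subshifts proved above, but with the $\PBlock{k}$-block decomposition used there replaced by the decomposition of elements of $\Subshift$ into $s_k$-blocks and $s_{k-1}$-blocks supplied by Proposition~\ref{prop:SturmPali}. Write $L_F \DefAs \lim_{k \to \infty} \frac{ F( s_{k} ) }{ \Length{ s_{k} } }$, which exists (possibly equal to $-\infty$) by the previous proposition. One inequality is immediate: since $s_k \in \Langu{ \Subshift }_{ \Length{ s_{k} } }$, we have $\frac{ F( s_{k} ) }{ \Length{ s_{k} } } \leq \max \big\{ \, \frac{ F(u) }{ \Length{ s_{k} } } : u \in \Langu{ \Subshift }_{ \Length{ s_{k} } } \, \big\}$ for every $k$, and as $\Length{ s_{k} } \to \infty$ the right-hand side tends to $\AsAv{F}$; hence $L_F = \limsup_{k \to \infty} \frac{ F( s_{k} ) }{ \Length{ s_{k} } } \leq \AsAv{F}$. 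The substance of the proof lies in the reverse inequality $\AsAv{F} \leq L_F$.

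For that I would fix $\varepsilon > 0$ and, using $\frac{ F( s_{j} ) }{ \Length{ s_{j} } } \to L_F$, choose $k$ so large that $M_k \DefAs \max \big\{ \, \frac{ F( s_{k} ) }{ \Length{ s_{k} } } , \frac{ F( s_{k-1} ) }{ \Length{ s_{k-1} } } \, \big\} \leq L_F + \varepsilon$ (when $L_F = -\infty$, make $M_k$ smaller than an arbitrary prescribed constant instead). Set $D \DefAs \max \Set{ \lvert F(a) \rvert : a \in \Alphab }$. Given $u \in \Langu{ \Subshift }_{L}$ with $L$ large, pick $\InfWord \in \Subshift$ containing $u$; since the $s_k$/$s_{k-1}$-block decomposition of $\InfWord$ tiles $\ZZ$, the window occupied by $u$ produces a factorisation $u = \Word{ v }{ B_1 }{ \ldots }{ B_m }{ w }$, where $v$ is a proper suffix of one block and $w$ a proper prefix of one block — so $\Length{ v } , \Length{ w } < \Length{ s_{k} }$ — and each $B_i \in \Set{ s_k , s_{k-1} }$. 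Splitting $v$ and $w$ into single letters and applying subadditivity repeatedly gives $F( u ) \leq \Length{ v } D + \sum_{i=1}^{m} F( B_i ) + \Length{ w } D$, and since $F( B_i ) \leq M_k \Length{ B_i }$ and $\sum_i \Length{ B_i } = L - \Length{ v } - \Length{ w }$, this yields the uniform bound $\frac{ F( u ) }{ L } \leq \frac{ 2 \Length{ s_{k} } D }{ L } + M_k + \frac{ 2 \Length{ s_{k} } \lvert M_k \rvert }{ L }$.

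Taking the supremum over $u \in \Langu{ \Subshift }_{L}$ and then letting $L \to \infty$ with $k$ (hence $\Length{ s_{k} }$ and $M_k$) fixed kills the first and third terms and yields $\AsAv{F} \leq M_k \leq L_F + \varepsilon$; letting $\varepsilon \to 0$ (or the auxiliary constant tend to $-\infty$) gives $\AsAv{F} \leq L_F$. Together with the first inequality this is the assertion $\lim_{k \to \infty} \frac{ F( s_{k} ) }{ \Length{ s_{k} } } = \AsAv{F}$.

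The main obstacle I expect is the sign bookkeeping: $F$ may take large negative values, and indeed $\AsAv{F}$ can equal $-\infty$, so the passage from $\sum_i F( B_i ) \leq M_k \sum_i \Length{ B_i }$ to a bound on $\frac{ F( u ) }{ L }$ must carry along the correction term $\frac{ 2 \Length{ s_{k} } \lvert M_k \rvert }{ L }$, and one has to check that it still vanishes as $L \to \infty$ for fixed $k$. A secondary point is justifying the factorisation $u = \Word{ v }{ B_1 }{ \ldots }{ B_m }{ w }$, which only uses that the $s_k$/$s_{k-1}$-block decomposition of $\InfWord$ tiles $\ZZ$ (and that $L$ is eventually larger than a single block); in particular, unlike in some other arguments of this subsection, the isolatedness of the $s_{k-1}$-blocks is not needed here.
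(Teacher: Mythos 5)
Your proof is correct and follows essentially the same route as the paper's: decompose \( u \) into \( s_{k} \)- and \( s_{k-1} \)-blocks plus a boundary suffix/prefix, bound the block contributions via the already established limit of \( F( s_{j} ) / \Length{ s_{j} } \), absorb the boundary into an error of order \( \Length{ s_{k} } / L \) that vanishes as \( L \to \infty \) for fixed \( k \), and treat the cases \( F_{\infty} \) finite and \( F_{\infty} = -\infty \) separately. The only cosmetic difference is that you derive the factorisation of \( u \) from a block decomposition of a two-sided infinite \( \InfWord \) containing \( u \), whereas the paper obtains it from Proposition~\ref{prop:SturmPali}\,(e) applied to some \( s_{K} \) in which \( u \) occurs — an equivalent and equally valid source.
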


\begin{proof}
We give a proof by direct calculation, but a similar result can also be found in \cite[Theorem~11]{Lenz_ErgodTheo1dimSO}. We write \( F_{\infty} \DefAs \lim_{k \to \infty} \frac{ F( s_{k} ) }{ \Length{ s_{k} } } \), which exists by the previous proposition. Since \( F_{\infty} \leq \AsAv{F} \) is clear, we only have to show \( F_{\infty} \geq \AsAv{F} \): let \( D \DefAs \max \Set{   F( a ) , F( b ) } \), \( k \in \NN \), \( L \geq \Length{ s_{k} } \) and \( u \in \Langu{ \Subshift }_{L} \). Then \( u \) can be decomposed into \( s_{k} \)-blocks, \( s_{k-1} \)-blocks, a suffix \( v \) of \( s_{k} \) or \( s_{k-1} \) and a prefix \( w \) of \( s_{k} \), see Proposition~\ref{prop:SturmPali}. Let \( r(u) \) and \( t(u) \) denote the number of \( s_{k} \)-blocks and \( s_{k-1} \)-blocks respectively. Subadditivity yields
\begin{align*}
\frac{ F( u ) }{ L } &\leq \frac{ r(u) \cdot \Length{ s_{k} } }{ L } \cdot \frac{F( s_{k} )}{ \Length{s_{k}} } + \frac{ t(u) \cdot \Length{ s_{k-1} } }{ L } \cdot \frac{F( s_{k-1} )}{ \Length{s_{k-1}} } + \frac{ \Length{v} \cdot D }{ L } + \frac{ \Length{w} \cdot D }{ L } \\
&\leq \frac{ r(u) \cdot \Length{ s_{k} } + \Length{v} + \Length{w} }{ L } \cdot \frac{F( s_{k} )}{ \Length{s_{k}} } + \frac{ t(u) \cdot \Length{ s_{k-1} } }{ L } \cdot \frac{F( s_{k-1} )}{ \Length{s_{k-1}} } + \frac{ 2 \Length{s_{k}} }{ L } \cdot \Big\lvert D - \frac{F( s_{k} )}{ \Length{s_{k}} } \Big\rvert \, .
\end{align*}
Now we distinguish two cases: first assume that \( F_{\infty} \) is finite. Let \( \varepsilon > 0 \) and choose \( k \) such that \( \big\lvert \frac{ F( s_{l} ) }{ \Length{s_{l}} } - F_{\infty} \big\rvert < \varepsilon \) holds for all \( l \geq k-1 \). For all \( L \geq \Length{s_{k}} \) and all \( u \in \Langu{ \Subshift }_{L} \) we obtain
\[ \frac{ F( u ) }{ L } \leq \underbrace{ \frac{ r(u) \cdot \Length{ s_{k} } + \Length{v} + \Length{w} + t(u) \cdot \Length{ s_{k-1} } }{ L } }_{= 1} ( F_{\infty} + \varepsilon ) + \frac{ 2 \Length{s_{k}} }{ L } \cdot \Big\lvert D - \frac{F( s_{k} )}{ \Length{s_{k}} } \Big\rvert \, . \]
Taking the maximum over all \( u \in \Langu{ \Subshift }_{L} \) and then the limit \( L \to \infty \) yields \( \AsAv{F} \leq F_{\infty} + \varepsilon \) for all \( \varepsilon > 0 \) and thus \( \AsAv{F} \leq F_{\infty} \).

For the second case, assume that \( F_{\infty} = - \infty \) holds, let \( C \in \RR \) be arbitrary and choose \( k \) such that \( \frac{ F( s_{l} ) }{ \Length{s_{l}} } < C \) holds for all \( l \geq k-1 \). For all \( L \geq \Length{s_{k}} \) and all \( u \in \Langu{ \Subshift }_{L} \) we obtain
\[ \frac{ F( u ) }{ L } \leq \frac{ r(u) \cdot \Length{ s_{k} } + \Length{v} + \Length{w} + t(u) \cdot \Length{ s_{k-1} } }{ L } C + \frac{ 2 \Length{s_{k}} }{ L } \cdot \Big\lvert D - \frac{F( s_{k} )}{ \Length{s_{k}} } \Big\rvert \, . \]
Again, taking the maximum over all \( u \in \Langu{ \Subshift }_{L} \) and then the limit \( L \to \infty \) yields \( \AsAv{F} \leq C \) for all \( C \in \RR \) and thus \( \AsAv{F} = - \infty = F_{\infty} \).
\end{proof}

\begin{prop}
For every \( k \geq 2 \) we have
\begin{alignat*}{5}
\Restr{ \LWord{1} }{ -\Length{ s_{2k} }+1 }{ 0 } &= \Restr{ \LWord{1} }{1 }{ \Length{s_{2k}} } &&= \Restr{ \LWord{1} }{\Length{s_{2k}}+1 }{ 2\Length{s_{2k}} } &&= s_{2k}\\
\text{and} \quad \Restr{ \LWord{2} }{ -\Length{ s_{2k+1} }+1 }{ 0 } &= \Restr{ \LWord{2} }{1 }{ \Length{s_{2k+1}} } &&= \Restr{ \LWord{2} }{\Length{s_{2k+1}}+1 }{ 2\Length{s_{2k+1}} } &&= s_{2k+1} \, .
\end{alignat*}
\end{prop}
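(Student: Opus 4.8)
The plan is to unwind the definition \( \LWord{i} = \Word{ \Rev{ \PBlock{\infty} } }{ \Origin{\LOrigin{i}} }{ \PBlock{\infty} } \) and reduce the three equalities to a handful of combinatorial facts about the Sturmian blocks, all of which are recorded (or can be read off) from Proposition~\ref{prop:SturmPali}: namely that every \( \PBlock{k} \) is a palindrome; that the last two letters of \( s_{k} \) depend only on the parity of \( k \), so that \( s_{2k} = \Word{ \PBlock{2k} }{ \LOrigin{1} } \) and \( s_{2k+1} = \Word{ \PBlock{2k+1} }{ \LOrigin{2} } \) with \( \LOrigin{1} = \Word{ a }{ b } \), \( \LOrigin{2} = \Word{ b }{ a } \); and the classical overlap property that \( \Word{ s_{k-1} }{ s_{k} } \) and \( \Word{ s_{k} }{ s_{k-1} } \) agree on their first \( \Length{ s_{k} } + \Length{ s_{k-1} } - 2 \) letters. (All three follow from the recurrence \( s_{k+1} = \Word{ s_{k}^{n_{k+1}} }{ s_{k-1} } \) together with a direct inspection of small indices; for the present statement only \( k \geq 2 \), hence block indices \( \geq 4 \), occur, so that \( s_{m} \) ends in \( s_{m-2} \) and \( \Length{ s_{m-1} } \geq 2 \) throughout.) Fix \( i \in \Set{ 1 , 2 } \) and put \( m \DefAs 2k \) if \( i = 1 \) and \( m \DefAs 2k+1 \) if \( i = 2 \), so that \( s_{m} = \Word{ \PBlock{m} }{ \LOrigin{i} } \).

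First I would treat the block to the left of the origin. By definition of \( \LWord{i} \), the finite word \( \Restr{ \LWord{i} }{ -\Length{ s_{m} }+1 }{ 0 } \) is the reversal of \( \Restr{ \PBlock{\infty} }{ 1 }{ \Length{ \PBlock{m} } } = \PBlock{m} \), followed by \( \LOrigin{i} \). Since \( \PBlock{m} \) is a palindrome, this equals \( \Word{ \PBlock{m} }{ \LOrigin{i} } = s_{m} \), which is the first of the three claimed equalities.

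Next I would treat the two blocks to the right of the origin. Here \( \Restr{ \LWord{i} }{ 1 }{ \Length{ s_{m} } } = \Restr{ \PBlock{\infty} }{ 1 }{ \Length{ s_{m} } } \) and \( \Restr{ \LWord{i} }{ \Length{ s_{m} }+1 }{ 2\Length{ s_{m} } } = \Restr{ \PBlock{\infty} }{ \Length{ s_{m} }+1 }{ 2\Length{ s_{m} } } \), so it suffices to show that \( \Word{ s_{m} }{ s_{m} } \) is a prefix of \( \PBlock{\infty} \). As \( \PBlock{\infty} \) has \( \PBlock{N} \) as a prefix for every \( N \), and a one-line estimate from the recurrence gives \( \Length{ \PBlock{m+2} } > 2 \Length{ s_{m} } \), it is enough to see that \( s_{m+2} = \Word{ s_{m+1}^{n_{m+2}} }{ s_{m} } \) begins with \( \Word{ s_{m} }{ s_{m} } \). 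Using \( s_{m+1} = \Word{ s_{m}^{n_{m+1}} }{ s_{m-1} } \), this is immediate when \( n_{m+1} \geq 2 \). The case \( n_{m+1} = 1 \) is the only substantive point: then \( s_{m+1} = \Word{ s_{m} }{ s_{m-1} } \), so \( s_{m+2} \) begins with \( \Word{ s_{m} }{ s_{m-1} }{ s_{m} }{ \ldots } \); now the overlap property, applied to \( \Word{ s_{m-1} }{ s_{m} } \) and \( \Word{ s_{m} }{ s_{m-1} } \) together with the fact that the latter trivially begins with \( s_{m} \) and \( \Length{ s_{m-1} } \geq 2 \), shows that \( \Word{ s_{m-1} }{ s_{m} } \) also begins with \( s_{m} \), whence \( \Word{ s_{m} }{ s_{m} } \) is a prefix of \( s_{m+2} \); if moreover \( n_{m+2} = 1 \) one reads the same conclusion directly off \( s_{m+2} = \Word{ s_{m} }{ s_{m-1} }{ s_{m} } \).

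Putting the three computations together yields \( \Restr{ \LWord{1} }{ -\Length{ s_{2k} }+1 }{ 0 } = \Restr{ \LWord{1} }{ 1 }{ \Length{ s_{2k} } } = \Restr{ \LWord{1} }{ \Length{ s_{2k} }+1 }{ 2\Length{ s_{2k} } } = s_{2k} \) for all \( k \geq 2 \), and likewise with \( \LWord{2} \) and \( s_{2k+1} \). I expect the main obstacle to be the \( n_{m+1} = 1 \) case, i.e. the fact that \( s_{m}^{2} \) need not appear directly as a prefix of \( s_{m+1} \); this is precisely where genuine Sturmian combinatorics (the overlap property from Proposition~\ref{prop:SturmPali}) is needed, whereas the parity bookkeeping of the last two letters and the length estimates are routine. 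Note that this establishes the second of the two alternatives in \LCondcc{} with \( l_{k} \DefAs \Length{ s_{2k} } \) (resp. \( \Length{ s_{2k+1} } \)), a sequence tending to infinity, which is all that is required for the application of Proposition~\ref{prop:LCondSuffi}.
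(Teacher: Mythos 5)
Your proposal is correct and follows essentially the same route as the paper: the left block comes from the palindromicity of \( \PBlock{m} \) together with \( s_{m} = \Word{ \PBlock{m} }{ \LOrigin{i} } \), and the right blocks come from showing \( \Word{ s_{m} }{ s_{m} } \) is a prefix of \( \PBlock{\infty} \) via the decomposition of \( s_{m+2} \) and the near-commutation identity \( \Word{ s_{m-1} }{ \PBlock{m} } = \Word{ s_{m} }{ \PBlock{m-1} } \) from Proposition~\ref{prop:SturmPali}. The only cosmetic difference is that you split into the cases \( n_{m+1} \geq 2 \) and \( n_{m+1} = 1 \), whereas the paper applies that identity inside \( s_{m+2} = \Word{ s_{m}^{n_{m+1}} }{ s_{m-1} }{ \PBlock{m} }{ \ldots } \) and so handles both cases uniformly.
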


\begin{proof}
By Proposition~\ref{prop:SturmPali} we have for all \( k \geq 4 \) the decomposition
\[ s_{k+2} = \Word{ s_{k+1} }{ s_{k+1}^{n_{k+2}-1} }{ s_{k} } = \Word{ s_{k}^{ n_{k+1} } }{ s_{k-1} }{ s_{k+1}^{n_{k+2}-1} }{ s_{k} } = \Word{ s_{k}^{ n_{k+1} } }{ s_{k-1} }{ \PBlock{k} }{ \ldots } = \Word{ s_{k} }{ s_{k} }{ \PBlock{k-1} }{ \ldots } \, , \]
which implies \( \Restr{ \LWord{i} }{ 1 }{ 2 \Length{s_{k} } } = \Word{ s_{k} }{ s_{k} } \) for \( i \in \Set{ 1 , 2 } \) and \( k \geq 4 \). In addition, Proposition~\ref{prop:SturmPali} yields for all \( k \geq 2 \) the equalities
\[ \Restr{ \LWord{1} }{ -\Length{ s_{2k} }+1 }{ 0 } = \Word{ \PBlock{2k} }{ a }{ b } =  s_{2k} \quad \text{and} \quad \Restr{ \LWord{2} }{ -\Length{ s_{2k+1} }+1 }{ 0 } = \Word{ \PBlock{2k+1} }{ b }{ a } =  s_{2k+1} \, .  \qedhere\]
\end{proof}

\begin{thm}
\label{thm:SturmLCond}
Every Sturmian subshift satisfies \LCond{}.
\end{thm}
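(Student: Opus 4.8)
The plan is to verify that Sturmian subshifts fall under the umbrella of Proposition~\ref{prop:LCondSuffi}, exactly as was done for simple Toeplitz subshifts in Theorem~\ref{thm:SimpToepLCond}. That is, I would identify the one-sided infinite word and the finite origin words, check conditions \LCondaa{}, \LCondbb{} and \LCondcc{}, and then invoke Proposition~\ref{prop:LCondSuffi}. The ingredients are precisely the sequence of propositions that precede the statement: the leading sequences \( \LWord{1} = \Word{ \Rev{\PBlock{\infty}} }{ \Origin{\LOrigin{1}} }{ \PBlock{\infty} } \) and \( \LWord{2} = \Word{ \Rev{\PBlock{\infty}} }{ \Origin{\LOrigin{2}} }{ \PBlock{\infty} } \) with \( \LOrigin{1} = \Word{a}{b} \), \( \LOrigin{2} = \Word{b}{a} \), and \( \RWord = \PBlock{\infty} \).

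Concretely, the proof would run as follows. First, \LCondaa{}: the first proposition above shows \( \LWord{1}, \LWord{2} \in \Subshift \) (via Corollary~\ref{cor:LWordInSturm}) and that every sufficiently long \( u \in \Langu{\Subshift} \) occurs in some \( \LWord{i} \) in the required windowed form, which is exactly the combinatorial exhaustiveness demanded by \LConda{} (with suitable \( L_0 \) and \( I_0 \)). Second, \LCondbb{}: the proposition on (\ref{eqn:DefiPQ}) shows \( \PBlock{\infty} \) satisfies the uniform positivity of quasiweights; and the chain of three propositions on subadditive functions (the recurrence \( s_k = \Word{s_{k-1}^{n_k}}{s_{k-2}} \), convergence of \( \frac{F(s_k)}{\Length{s_k}} \), and the identification of this limit with \( \AsAv{F} \)) provides the required sequence of prefixes — note that \( \PBlock{k} \) is \( s_k \) minus two letters, so \( \lim_k \frac{F(\PBlock{k})}{\Length{\PBlock{k}}} = \lim_k \frac{F(s_k)}{\Length{s_k}} = \AsAv{F} \), the finite discrepancy of two letters being negligible in the limit by subadditivity. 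Third, \LCondcc{}: the last proposition before the theorem exhibits, for each \( i \in \Set{1,2} \), a sequence \( l_k \to \infty \) (namely \( l_k = \Length{s_{2k}} \) for \( \LWord{1} \) and \( l_k = \Length{s_{2k+1}} \) for \( \LWord{2} \)) with a threefold repetition, of the form \( \Restr{\LWord{i}}{-l_k+1}{0} = \Restr{\LWord{i}}{1}{l_k} = \Restr{\LWord{i}}{l_k+1}{2l_k} \), which is precisely the second alternative in \LCondcc{}.

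Having assembled these three items, the proof concludes in one line: by Proposition~\ref{prop:LCondSuffi}, \( \Subshift \) satisfies \LCond{}. In LaTeX:
\begin{proof}
By the preceding propositions, every Sturmian subshift satisfies the conditions of Proposition~\ref{prop:LCondSuffi}, with leading sequences \( \LWord{i} = \Word{ \Rev{ \PBlock{\infty} } }{ \Origin{\LOrigin{i}} }{ \PBlock{\infty} } \), \( i \in \Set{1,2} \), where \( \LOrigin{1} = \Word{a}{b} \) and \( \LOrigin{2} = \Word{b}{a} \). Hence it satisfies \LCond{}.
\end{proof}

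The routine obstacle, if any, is bookkeeping rather than mathematics: making sure the two-letter gap between \( \PBlock{k} \) and \( s_k \) does not spoil the identification of limits in \LCondbb{}, and that the window indices in \LCondaa{} are chosen consistently with \( \LOrigin{i} \) having length two (so the occurrence of \( u \) overlapping \( \LOrigin{i} \) at the origin translates correctly into a shift of \( \LWord{i} \)). Both of these are already handled in the propositions above, so no genuinely new argument is needed; the main content of the theorem has in effect been distributed across the preceding lemmas, and the theorem itself is their immediate corollary.
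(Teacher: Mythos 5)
Your proposal is correct and follows exactly the paper's proof: the theorem is obtained by checking \LCondaa{}, \LCondbb{} and \LCondcc{} via the preceding propositions and then invoking Proposition~\ref{prop:LCondSuffi}. Your only extra worry, the two-letter gap between \( \PBlock{k} \) and \( s_{k} \), is moot, since each \( s_{k} \) is itself a prefix of \( \PBlock{\infty} \) (as \( s_{k} \) is a prefix of \( s_{k+2} \)), so the sequence of prefixes in \LCondbb{} can be taken to be the \( s_{k} \) directly, which is what the paper's propositions address.
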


\begin{proof}
By the preceding propositions, every Sturmian subshift satisfies the conditions of Proposition \ref{prop:LCondSuffi}.
\end{proof}

\section{Uniformity of cocycles for (LSC)-subshifts and Cantor spectrum}
\label{sec:UnifCoycLCond}

As we have seen, \LCond{}-subshifts provide a framework that contains several interesting classes of subshifts. In this section we show that all locally constant cocycles on \LCond{}-subshifts are uniform. This proves in a unified way that Jacobi operators have Cantor spectrum, both on Sturmian subshifts (where it is well-known, see \cite[Theorem~4]{Lenz_ErgodTheo1dimSO} combined with \cite[Theorem~3]{BeckPogo_SpectrJacobi}) and on simple Toeplitz subshifts (where it was only known for Schrödinger operators, see \cite[Theorem~1.2]{LiuQu_Simple}). This is the main result of this chapter and it relies on two statements. They are due to Ruelle (or, in a probabilistic version, to Oseledec \cite{Oseledec_MET}) and to Lenz:

\begin{prop}[\cite{Ruel_DiffDynSys}, see {\cite[Theorem~9.12]{CFKS_SOApplicQC}} for this version]
\label{prop:Ruelle}
Let \( (A_{j}) \) be a sequence of matrices in \( \SL \) with \( \lim_{j \to \infty} \frac{ \ln( \lVert A_{j} \rVert ) }{ j } = 0 \) and \( \lim_{j \to \infty} \frac{ \ln ( \lVert \Cocyc_{j} \ldots \Cocyc_{1} \rVert ) }{ j  } \AsDef c > 0 \). Then there exists a unique one-dimensional subspace \( V \subset \RR^{2} \) with
\[ \lim_{ j \to \infty} \frac{ \ln( \lVert \Cocyc_{j} \ldots \Cocyc_{1} \Phi \rVert )}{ j } = - c \qquad \text{and} \qquad \lim_{ j \to \infty } \frac{ \ln ( \lVert \Cocyc_{j} \ldots \Cocyc_{1} \Psi \rVert ) }{ j } = + c\]
for all \( \Phi \in V \setminus \Set{0} \) and all \( \Psi \notin V \).
\end{prop}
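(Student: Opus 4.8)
The plan is to realise $V$ as the limit of the sequence of \emph{most contracted directions} of the partial products $M_n \DefAs A_n \cdots A_1$, exploiting the rigidity of $\SL$: for every $M \in \SL$ one has $\lVert M \rVert \geq 1$, $\lVert M^{-1} \rVert = \lVert M \rVert$, and the two singular values of $M$ are $\lVert M \rVert$ and $\lVert M \rVert^{-1}$. First I would fix, via the singular value decomposition of $M_n$, a unit vector $\hat v_n$ (unique up to sign) with $\lVert M_n \hat v_n \rVert = \lVert M_n \rVert^{-1}$, together with $\hat v_n^{\perp} \perp \hat v_n$, for which $\lVert M_n \hat v_n^{\perp} \rVert = \lVert M_n \rVert$ and $M_n \hat v_n \perp M_n \hat v_n^{\perp}$. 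Then for any unit $w = \cos\alpha\,\hat v_n + \sin\alpha\,\hat v_n^{\perp}$ one has the identity $\lVert M_n w \rVert^{2} = \cos^{2}\alpha\,\lVert M_n \rVert^{-2} + \sin^{2}\alpha\,\lVert M_n \rVert^{2}$, hence $|\sin\alpha|\,\lVert M_n \rVert \leq \lVert M_n w \rVert$ and $\lVert M_n \rVert^{-1} \leq \lVert M_n w \rVert \leq \lVert M_n \rVert$. In this notation the hypotheses read $\lVert M_n \rVert = e^{cn + o(n)}$ and $\lVert A_n \rVert = e^{o(n)}$.

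The main step is to show that the directions $[\hat v_n]$ converge in the projective line $\mathbb{P}^{1}$. Since $M_{n+1} = A_{n+1} M_n$ and $\lVert A_{n+1}^{-1} \rVert = \lVert A_{n+1} \rVert$, the relation $\lVert M_{n+1} \hat v_{n+1} \rVert = \lVert M_{n+1} \rVert^{-1}$ forces $\lVert M_n \hat v_{n+1} \rVert \leq \lVert A_{n+1} \rVert\,\lVert M_{n+1} \rVert^{-1}$. Writing $\hat v_{n+1}$ in the orthonormal frame $\{\hat v_n, \hat v_n^{\perp}\}$ and applying the identity above with $\alpha_n \DefAs \angle([\hat v_n],[\hat v_{n+1}])$, one gets $|\sin\alpha_n|\,\lVert M_n \rVert \leq \lVert M_n \hat v_{n+1} \rVert$, and therefore $|\sin\alpha_n| \leq \lVert A_{n+1} \rVert\,\lVert M_{n+1} \rVert^{-1}\,\lVert M_n \rVert^{-1} = e^{-2cn + o(n)}$. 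Because $c > 0$ this (equivalently $\alpha_n$ itself) is summable, so $[\hat v_n]$ is Cauchy in $\mathbb{P}^{1}$ and converges to a line $V \subset \RR^{2}$; the remaining tails give $\angle([\hat v_n], V) \leq e^{-2cn + o(n)}$.

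It then remains to read off the exponents. For a unit vector $\Phi \in V$, writing $\Phi = \cos\beta_n\,\hat v_n + \sin\beta_n\,\hat v_n^{\perp}$ with $|\sin\beta_n| = \angle(V,[\hat v_n]) \leq e^{-2cn + o(n)}$, the identity gives $\lVert M_n \Phi \rVert^{2} = \cos^{2}\beta_n\,\lVert M_n \rVert^{-2} + \sin^{2}\beta_n\,\lVert M_n \rVert^{2} \leq e^{-2cn + o(n)}$, while $\lVert M_n \Phi \rVert \geq \lVert M_n \rVert^{-1} = e^{-cn + o(n)}$; hence $\tfrac{1}{n} \ln \lVert M_n \Phi \rVert \to -c$, and by rescaling this holds for all $\Phi \in V \setminus \Set{0}$. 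For $\Psi \notin V$ the angle $\gamma \DefAs \angle(\Psi, V)$ is positive, so $\angle(\Psi, [\hat v_n]) \to \gamma$ and for large $n$ the $\hat v_n^{\perp}$-coefficient of a unit vector parallel to $\Psi$ is at least $\sin(\gamma/2)$; then $\lVert M_n \Psi \rVert \geq \sin(\gamma/2)\,\lVert M_n \rVert = e^{cn + o(n)}$ and trivially $\lVert M_n \Psi \rVert \leq \lVert M_n \rVert = e^{cn + o(n)}$, so $\tfrac{1}{n} \ln \lVert M_n \Psi \rVert \to +c$. Uniqueness of $V$ is immediate: two distinct directions contracted at rate $-c$ would span $\RR^{2}$, forcing every vector, hence $\lVert M_n \rVert$, to decay like $e^{-cn + o(n)}$, contradicting $c > 0$.

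The hard part will be the Cauchy estimate in the second paragraph: one has to notice that comparing consecutive indices $n$ and $n+1$ — rather than far-apart ones — already yields an exponentially small, hence summable, angle bound. This rests on combining the one-step inequality $\lVert M_n \hat v_{n+1} \rVert \leq \lVert A_{n+1} \rVert\,\lVert M_{n+1} \rVert^{-1}$ with the lower bound $\lVert M_n w \rVert \geq |\sin\angle(w, \hat v_n)|\,\lVert M_n \rVert$, and then being careful enough with the $o(n)$ error terms to extract summability from the single assumption $c > 0$.
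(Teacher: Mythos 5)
Your proof is correct. Note that the paper itself does not prove Proposition~\ref{prop:Ruelle} — it is imported from Ruelle, in the form given in \cite[Theorem~9.12]{CFKS_SOApplicQC} — so there is no internal proof to compare against; your argument (most contracted directions $\hat v_{n}$ from the singular value decomposition, the one-step bound $\lvert \sin \alpha_{n} \rvert \leq \lVert A_{n+1} \rVert \, \lVert M_{n+1} \rVert^{-1} \lVert M_{n} \rVert^{-1}$, summability from $c>0$, and then reading off the exponents on and off the limit line) is essentially the standard proof found in that reference. The only cosmetic caveat is that $\hat v_{n}$ is well defined (up to sign) only once $\lVert M_{n} \rVert > 1$, which holds for all large $n$ since $\lVert M_{n} \rVert = e^{cn+o(n)}$ with $c>0$, and that is all your argument uses.
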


\begin{prop}[{\cite[Theorem~3]{Lenz_NonUnifCocy}}]
\label{prop:LowBdUnif}
Let \( \Subshift \) be minimal and uniquely ergodic. Then a continuous map \( \Cocyc \colon \Subshift \to \SL \) is uniform with limit value \( \Lyapu{ \Cocyc } > 0 \), if and only if there exist \( m \in \NN \) and \( \delta > 0 \) such that \( \frac{1}{j} \ln( \lVert \Cocyc( j , \InfWord ) \rVert ) \geq \delta \) holds for all \( \InfWord \in \Subshift \) and all \( j \geq m \).
\end{prop}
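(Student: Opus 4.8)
The plan is to prove the two directions of the stated equivalence separately; the forward implication is essentially immediate, and the substance lies entirely in the converse. Throughout, write $F_j(\InfWord) \DefAs \ln \lVert \Cocyc(j,\InfWord) \rVert$. From $\Cocyc(j+k,\InfWord) = \Cocyc(j,\Shift^k\InfWord)\,\Cocyc(k,\InfWord)$ one gets $F_{j+k}(\InfWord) \le F_j(\Shift^k\InfWord) + F_k(\InfWord)$, so $(F_j)$ is a continuous subadditive cocycle over $(\Subshift,\Shift)$; since $\det \Cocyc(\InfWord) = 1$ forces $F_1 \ge 0$, and $\Subshift$ is compact, the sequence $j \mapsto \sup_\InfWord F_j(\InfWord)$ is nonnegative, bounded by $j\lVert F_1\rVert_\infty$, and subadditive, so $\Lyapu{\Cocyc} \DefAs \lim_j \frac1j \sup_\InfWord F_j(\InfWord)$ exists by Fekete's lemma. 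For the direction ``uniform with $\Lyapu{\Cocyc}>0$ $\Rightarrow$ lower bound'': uniform convergence of $\frac1j F_j(\InfWord)$ to $\Lyapu{\Cocyc}$ yields an $m$ with $\frac1j F_j(\InfWord) > \frac12 \Lyapu{\Cocyc}$ for all $j\ge m$ and all $\InfWord$, so one takes $\delta \DefAs \frac12 \Lyapu{\Cocyc}$.

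For the converse, assume there are $m,\delta$ with $\frac1j F_j(\InfWord) \ge \delta$ for all $j\ge m$ and all $\InfWord \in \Subshift$. First I would upgrade this to a genuinely uniform exponential lower bound valid for all $j\ge 0$: since $\lVert \Cocyc(j,\InfWord) \rVert \ge 1$, we automatically have $\lVert \Cocyc(j,\InfWord) \rVert \ge e^{-\delta m} e^{\delta j}$ for every $j \ge 0$ and every $\InfWord$ (and, taking the supremum and letting $j\to\infty$, also $\Lyapu{\Cocyc} \ge \delta > 0$). The decisive point is then that uniform exponential growth of the operator norm of an $\SL$-cocycle over a compact base forces a \emph{continuous}, $\Shift$-invariant hyperbolic splitting $\RR^2 = E^s(\InfWord) \oplus E^u(\InfWord)$ with uniform rates: an exponential lower bound on $\lVert \Cocyc(j,\InfWord) \rVert$ is the same as a uniform exponential gap between the two singular values, which is exactly the criterion of the Bochi--Gourmelon theorem for the existence of a dominated splitting, and for $\SL$ a dominated splitting is a uniformly hyperbolic one. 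I would emphasise that Proposition~\ref{prop:Ruelle} already produces such a decomposition \emph{along each individual orbit}, but it gives no control of the dependence on $\InfWord$; supplying this continuity is precisely what the uniform lower bound buys, and it is the only non-elementary input.

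Granting the continuous hyperbolic splitting, the rest is routine bookkeeping. On the line bundle $E^u$ the cocycle acts as a multiplicative $\RR$-cocycle with continuous generator $\phi(\InfWord) \DefAs \ln\bigl( \lVert \Cocyc(\InfWord) v(\InfWord) \rVert / \lVert v(\InfWord) \rVert \bigr)$, where $v(\InfWord)$ spans $E^u(\InfWord)$; this is well defined because $E^u$ is $\Cocyc$-invariant, and continuous because $E^u$ and $\Cocyc$ are, so $\ln \lVert \Cocyc(j,\InfWord)|_{E^u} \rVert = \sum_{k=0}^{j-1} \phi(\Shift^k\InfWord)$. Comparing norms, $\lVert \Cocyc(j,\InfWord) \rVert \ge \lVert \Cocyc(j,\InfWord)|_{E^u} \rVert$ trivially, while decomposing a unit vector along $E^s(\InfWord)\oplus E^u(\InfWord)$ — whose opening angle is bounded away from zero by continuity and compactness — together with the uniform exponential smallness of $\Cocyc(j,\InfWord)|_{E^s}$ gives $\lVert \Cocyc(j,\InfWord) \rVert \le C\,\lVert \Cocyc(j,\InfWord)|_{E^u} \rVert$ for all large $j$, with $C$ independent of $\InfWord$. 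Hence $F_j(\InfWord) = \sum_{k=0}^{j-1} \phi(\Shift^k\InfWord) + O(1)$ uniformly in $\InfWord$. Finally, unique ergodicity of $(\Subshift,\Shift)$ gives the classical uniform Birkhoff theorem for the continuous observable $\phi$, namely $\frac1j \sum_{k=0}^{j-1} \phi(\Shift^k\InfWord) \to \int \phi \, d\ErgodMeas$ uniformly in $\InfWord$. Therefore $\frac1j F_j(\InfWord)$ converges uniformly to $\int \phi \, d\ErgodMeas = \Lyapu{\Cocyc} \ge \delta > 0$, which is the assertion.

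The main obstacle is the opening step of the converse: extracting a continuous hyperbolic splitting from a merely uniform exponential lower bound on the norm. This is where genuine dynamical-systems input (Bochi--Gourmelon domination theory, or the older exponential-dichotomy results of Johnson, or a self-contained cone-field argument that would amount to reproving the $\SL$ case of these) is unavoidable; everything downstream of it — the elementary $\SL$ norm estimates, the reduction to a Birkhoff average, and the uniform ergodic theorem for continuous functions over a uniquely ergodic system — is standard. (In the thesis this step is simply absorbed by citing Lenz's result directly.)
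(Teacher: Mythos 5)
You should note first that the thesis contains no proof of this proposition at all: it is imported verbatim as \cite[Theorem~3]{Lenz_NonUnifCocy}, so there is no in-paper argument to compare yours against, and the only question is whether your argument stands on its own. In outline it does. The forward direction is as immediate as you say; for the converse, the upgrade to $\lVert \Cocyc(j,\InfWord)\rVert \ge e^{-\delta m}e^{\delta j}$ for all $j\ge 0$ is correct (every $\SL$-matrix has norm at least one), and since the singular values of an $\SL$-matrix are $\lVert\cdot\rVert^{\pm1}$, this uniform norm growth is exactly the uniform singular-value gap in the Bochi--Gourmelon characterisation of dominated splittings (equivalently, Yoccoz's criterion for uniform hyperbolicity of $\SL$-cocycles over a compact base); domination together with determinant one and the uniformly positive angle between the two continuous line bundles does give uniform expansion on $E^{u}$ and contraction on $E^{s}$, your comparison $\ln\lVert\Cocyc(j,\InfWord)\rVert=\sum_{k=0}^{j-1}\phi(\Shift^{k}\InfWord)+O(1)$ uniformly in $\InfWord$ is then right, and the uniform Birkhoff theorem for the continuous observable $\phi$ over the uniquely ergodic base finishes the argument with limit $\int\phi\,d\ErgodMeas\ge\delta>0$ (minimality is indeed never needed).

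The caveat is one of weight rather than correctness: the entire content of Lenz's theorem is concentrated in the step ``uniform exponential norm growth forces a \emph{continuous} invariant hyperbolic splitting'', and you discharge it by citation. The cited results do apply, so this is not a gap in the logical sense; but their $\SL$ case is essentially equivalent to the hard direction being proved, whereas the known proofs (Lenz's in particular) obtain the required invariant direction along orbits and its continuity directly from the uniform lower bound and compactness, in the spirit of Proposition~\ref{prop:Ruelle} as used again in the thesis's proof of Theorem~\ref{thm:LCondUniform}, before invoking the same uniform ergodic theorem. So your route buys brevity and a clean conceptual framing via domination theory, at the price of importing machinery at least as deep as the statement itself; as a self-contained replacement for the citation to \cite{Lenz_NonUnifCocy} it would still owe a proof of that one step, which you candidly acknowledge.
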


We can now prove the main result of this chapter.

\begin{thm}
\label{thm:LCondUniform}
If \( \Subshift \) satisfies \LCond{}, then every locally constant map \( \Cocyc \colon \Subshift \to \SL \) is uniform.
\end{thm}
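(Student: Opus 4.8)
The plan is to verify the criterion of Lenz, Proposition~\ref{prop:LowBdUnif}: since an \LCond{}-subshift is minimal and uniquely ergodic (Corollary~\ref{cor:LSCMinUniErgo}), it suffices to establish a \emph{uniform} exponential lower bound for $\lVert\Cocyc(j,\InfWord)\rVert$, plus separately the degenerate case of exponent $0$. Fix a locally constant $\Cocyc\colon\Subshift\to\SL$, let $J$ be a locality radius for $\Cocyc$, set $S\DefAs\max_{\InfWord\in\Subshift}\lVert\Cocyc(\InfWord)\rVert<\infty$ (finite by Remark~\ref{rem:LocConstFinVal}), and let $c$ be the common value of the limits in \LCondb{}. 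The basic tool is a \emph{middle-block comparison}: if $\Restr{\InfWord}{1}{L}$ and $\Restr{\Shift^{k}\LWord{i}}{1}{L}$ coincide, then all but at most $2J$ of the elementary factors of $\Cocyc(L,\InfWord)$ and $\Cocyc(L,\Shift^{k}\LWord{i})$ agree, and since each omitted factor and its inverse has norm $\le S^{J}$ this gives $\lvert\ln\lVert\Cocyc(L,\InfWord)\rVert-\ln\lVert\Cocyc(L,\Shift^{k}\LWord{i})\rVert\rvert\le 8J\ln S$. By \LConda{} every sufficiently long word equals $\Restr{\LWord{i}}{k+1}{k+L}$ for some $i$ and some $k$ with $-I_{0}-L\le k\le I_{0}-1$, so the asymptotics of $\Cocyc$ on all of $\Subshift$ are governed by blocks of the finitely many leading words.

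First I would combine the middle-block comparison with \LCondb{} (splitting a block at the origin and using $\lVert\Cocyc(-m,\LWord{i})\rVert=\lVert\Cocyc(m,\Shift^{-m}\LWord{i})\rVert$ from Proposition~\ref{prop:CocycInvCut}) to show $\overline{F_{\Cocyc}}=c$ for the subadditive function $F_{\Cocyc}$ of Example~\ref{exmpl:MaxCocycSubadd}; this yields $\frac1L\ln\lVert\Cocyc(L,\InfWord)\rVert\le c+\varepsilon$ uniformly in $\InfWord$ for $L$ large. If $c=0$, then since $\lVert B\rVert\ge 1$ for $B\in\SL$ also $\frac1L\ln\lVert\Cocyc(L,\InfWord)\rVert\ge 0$, so the limit exists and equals $0$ uniformly, finishing this case. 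Assume now $c>0$. I would argue by contradiction: if no uniform lower bound exists, there are $\InfWord_{n}\in\Subshift$ and $j_{n}\to\infty$ with $\frac1{j_{n}}\ln\lVert\Cocyc(j_{n},\InfWord_{n})\rVert<c/4$. By the middle-block comparison and \LConda{}, after passing to a subsequence one may write $\Restr{\InfWord_{n}}{1}{j_{n}}=\Restr{\Shift^{k_{n}}\LWord{i}}{1}{j_{n}}$ for a \emph{fixed} leading word $\LWord{i}$, so $\frac1{j_{n}}\ln\lVert\Cocyc(j_{n},\Shift^{k_{n}}\LWord{i})\rVert<c/4+o(1)$. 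Writing $\Cocyc(j_{n},\Shift^{k_{n}}\LWord{i})=\Cocyc(k_{n}+j_{n},\LWord{i})\,\Cocyc(k_{n},\LWord{i})^{-1}$, I would distinguish whether this block lies in $\LWord{i}|_{\NN}$, in $\LWord{i}|_{-\NN_{0}}$, or straddles the origin, and in each case derive the opposite estimate $\ln\lVert\Cocyc(j_{n},\Shift^{k_{n}}\LWord{i})\rVert\ge(c/2)j_{n}-O(1)$. In the two non-straddling cases the ``short'' side of the block has length at most $I_{0}$, so $\Cocyc$ over it is bounded and the estimate is immediate from \LCondb{}.

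The straddling case is the crux. I would apply Proposition~\ref{prop:Ruelle} to the forward cocycle $j\mapsto\Cocyc(j,\LWord{i})$ ($j>0$): since $\frac1j\ln\lVert\Cocyc(\Shift^{j-1}\LWord{i})\rVert\to 0$ and $\frac1j\ln\lVert\Cocyc(j,\LWord{i})\rVert\to c>0$ by \LCondb{}, there is a line $V_{i}^{+}\subset\RR^{2}$ with exponent $-c$ on $V_{i}^{+}$ and $+c$ off it; applying Ruelle to the backward cocycle gives likewise a line $V_{i}^{-}$. Then \LCondc{}, applied to a nonzero $v$ spanning $V_{i}^{-}$ — whose $j\to-\infty$ exponent is $-c<0$ — forces its $j\to+\infty$ exponent to be nonnegative, hence $+c$, i.e.\ $v\notin V_{i}^{+}$, so $V_{i}^{+}\cap V_{i}^{-}=\Set{0}$. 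With $m_{n}\DefAs -k_{n}$ and $n_{n}\DefAs k_{n}+j_{n}$ (both tending to $\infty$, the bounded-side subcase being handled as above), I would take the test vector $\xi_{n}\DefAs\Cocyc(-m_{n},\LWord{i})\,v$: since $v\in V_{i}^{-}$, Ruelle gives $\lVert\xi_{n}\rVert\le e^{(-c+\varepsilon)m_{n}}$, while $\Cocyc(j_{n},\Shift^{k_{n}}\LWord{i})\,\xi_{n}=\Cocyc(n_{n},\LWord{i})\,v$ has norm $\ge e^{(c-\varepsilon)n_{n}}$ because $v\notin V_{i}^{+}$. Hence $\lVert\Cocyc(j_{n},\Shift^{k_{n}}\LWord{i})\rVert\ge e^{(c-\varepsilon)n_{n}}/e^{(-c+\varepsilon)m_{n}}=e^{(c-\varepsilon)(m_{n}+n_{n})}=e^{(c-\varepsilon)j_{n}}$, contradicting the bound above for $\varepsilon=c/4$ and $n$ large; Proposition~\ref{prop:LowBdUnif} then yields uniformity of $\Cocyc$. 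The main obstacle I anticipate is organisational rather than conceptual: routing \emph{all} elements of $\Subshift$ through the finitely many leading words via the middle-block comparison and \LConda{}, and, in the straddling case, making the Ruelle estimates genuinely uniform by applying them to the single fixed vector $v$ (rather than ``for each $\Phi$ separately'') while correctly separating the subcases where $m_{n}$ or $n_{n}$ stays bounded.
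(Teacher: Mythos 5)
Your proposal is correct and follows essentially the same route as the paper's proof: reduction to Lenz's criterion (Proposition~\ref{prop:LowBdUnif}), a direct uniform upper bound via \LConda{} and \LCondb{} settling the case \( c=0 \), and, for \( c>0 \), Ruelle's theorem applied forward and backward along each leading word combined with \LCondc{} to get transversal directions \( V^{(i)}_{\pm} \), then a test vector in \( V^{(i)}_{-} \) to turn the splitting of each block at the origin of \( \LWord{i} \) into a uniform lower bound, with boundary factors controlled as in Proposition~\ref{prop:CocycInvCut}. The only differences are cosmetic (your contradiction/subsequence framing and the detour through the subadditive function \( F_{\Cocyc} \) versus the paper's direct estimate with explicit thresholds, and keeping \( I_{0}>0 \) rather than normalising it to zero).
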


\begin{proof}
Without loss of generality we assume \( I_{0} = 0 \), see Remark~\ref{rem:LNr-I0}. Moreover, let \( L \in \NN \) be such that \( \Cocyc( \InfWord ) \) only depends on \( \Restr{ \InfWord }{ -L }{ L } \). Define \( C \DefAs \max \Set{ \ln( \lVert \Cocyc( \InfWord ) \rVert ) : \InfWord \in \Subshift } \), which is finite since \( \Cocyc \) takes only finitely many values. We use that every word occurs around the origin of some \( \LWord{i} \) by \LConda{} and that we have some control over the \( \LWord{i} \) by \LCondb{} and \LCondc{}. More precisely, let \( j_{0} \in \NN \) be such that for every \( \InfWord \) and every \( j \geq j_{0} \) there exist \( i \in \Set{ 1 , \ldots , \LNr }  \) and \( j_{-} \geq 1 , \, j_{+} \geq 0 \) with \( \Restr{ \InfWord }{ 0 }{ j-1 } = \Restr{ \LWord{i} }{ -j_{-}+1 }{ j_{+} } \). Obviously we have
\[ \Cocyc( j_{-}+j_{+} , \Shift^{-j_{-}} \LWord{i} ) = \Cocyc( j_{+} , \LWord{i} ) \cdot \Cocyc( -j_{-} , \LWord{i} )^{-1} \, . \]
By \LCondb{} there exist a constant \( c \geq 0 \) with \( c = \lim_{ j \to \pm \infty} \frac{ \ln( \lVert \Cocyc( j , \LWord{i} ) \rVert ) }{ \lvert j \rvert } \) for every \( i \in \Set{ 1, \ldots,  \LNr } \). First we consider the case \( c = 0 \):

Let \( \varepsilon > 0 \) and let \( j_{1} \in \NN \) be such that \( \frac{ 1 }{ \lvert j \rvert } \ln( \lVert \Cocyc( j , \LWord{i} ) \rVert ) < \frac{ \varepsilon }{3} \) holds for all \( i \in \Set{ 1 , \ldots , \LNr } \) and all \( j \in \ZZ \) with \( \lvert j \rvert \geq j_{1} \). Let \( j_{2} \in \NN \) be such that
\[ \frac{ 1 }{ j_{2} } \cdot \max \Set{ \ln( \lVert \Cocyc( l , \LWord{i} ) \rVert ) : l \in \ZZ \text{ with } \lvert l \rvert < j_{1} \, , \; i \in \Set{ 1 , \ldots , r } } \leq \frac{ \varepsilon }{ 3 } \]
holds. Let \( j_{3} \in \NN \) be such that \( \frac{ 4 C L }{ j_{3} } \leq \frac{ \varepsilon }{ 3 } \) holds. For arbitrary \( J \geq \max \Set{ j_{0} , j_{2} , j_{3} } \) and arbitrary \( \InfWord \in \Subshift \), let \( i \in \Set{ 1 , \ldots , \LNr } \) be such that \( \Restr{ \InfWord }{ 0 }{ J-1 } = \Restr{ \LWord{i} }{ -j_{-}+1 }{ j_{+} } \) holds for suitable \( j_{-} \geq 1 , \, j_{+} \geq 0 \). Note that we have
\[ \frac{ \ln( \lVert \Cocyc( j_{-} + j_{+} , \Shift^{ -j_{-} } \LWord{i} ) \rVert ) }{ J } \leq \frac{ \ln( \lVert \Cocyc( j_{+} , \LWord{i} ) \rVert ) }{ J }+\frac{  \ln( \lVert A( -j_{-} , \LWord{i} ) \rVert ) }{ J } \, , \]
where both summands are less or equal \(  \frac{ \varepsilon }{ 3 } \): if \( j_{\pm} \) is greater or equal \( j_{1} \), then this follows from the definition of \( j_{1} \), and if \( j_{\pm} \) is smaller than \( j_{1} \), then this follows from \( J \geq j_{2} \). Since \( \Cocyc( J , \InfWord ) \) and \( \Cocyc( j_{-} + j_{+} , \Shift^{ -j_{-} } \LWord{i} ) \) differ at most in the first \(  L-1 \) matrices and in the last \( L+1 \) matrices, we obtain
\[ \frac{ \ln( \lVert \Cocyc( J , \InfWord) \rVert ) }{ J } \leq  \frac{ \ln( \lVert \Cocyc( j_{-} + j_{+} , \Shift^{ -j_{-} } \LWord{i} ) \rVert ) }{ J } + \frac{ 4 C L }{J} \leq \varepsilon \, . \]

Now we treat the case \( c > 0 \): by Proposition~\ref{prop:LowBdUnif} it suffices to show that there exists a number \( \delta > 0 \) with \( \frac{ \ln( \lVert \Cocyc( j , \InfWord ) \rVert ) }{ j } \geq \delta \) for all \( \InfWord \in \Subshift \) and all sufficiently large \( j \). First we note that the assumptions of Proposition~\ref{prop:Ruelle} are satisfied because of \LCondb{}. This implies for every \( i \in \Set{ 1 , \ldots, \LNr } \) the existence of a one-dimensional subspace \( V^{(i)}_{+} \subset \RR^{2} \) with
\[ \lim_{j \to \infty} \frac{ \ln( \lVert \Cocyc( j , \LWord{i} ) v \rVert ) }{ j } = - c \quad \text{for all } v \in V^{(i)}_{+} \setminus \Set{ 0 } \, , \]
and another one-dimensional subspace \( V^{(i)}_{-} \subset \RR^{2} \) with
\[ \lim_{j \to \infty} \frac{ \ln ( \lVert \Cocyc( -j , \LWord{i} ) v \rVert ) }{ j } = - c \quad \text{for all } v \in V^{(i)}_{-} \setminus \Set{ 0 } \, . \]
Now fix \( i \in \Set{ 1 , \ldots , \LNr } \). By \LCondc{} we obtain \( V^{(i)}_{+} \neq V^{(i)}_{-} \) and in particular, Proposition~\ref{prop:Ruelle} yields the equality
\[ \lim_{j \to \infty} \frac{ \ln( \lVert \Cocyc( j , \LWord{i} ) v \rVert ) }{ j } = c \quad \text{for all } v \in V^{(i)}_{-} \setminus \Set{ 0 } \, . \]
To prove uniformity, we fix some \( \hat{v} \in  V^{(i)}_{-} \setminus \Set{ 0 } \) and proceed similar to the case \( c=0 \), but with \( \lVert \Cocyc \hat{v}\rVert \) instead of \( \lVert \Cocyc \rVert \). More precisely, let \( j_{1} \in \NN \) be such that we have
\[ \frac{ \ln( \lVert \Cocyc( -j , \LWord{i} ) \hat{v} \rVert ) }{ j } \leq - \frac{ c }{ 2 } \qquad \text{and} \qquad \frac{ \ln( \lVert \Cocyc( j , \LWord{i} ) \hat{v} \rVert ) }{ j } \geq \frac{ c }{ 2 } \]
for all \( j \geq j_{1} \). Let \( j_{2} \in \NN \) be such that
\[ \frac{ 1 }{ j_{2} } \cdot \Big\lvert \; \max \Set{ \ln( \lVert \Cocyc( l , \LWord{i} ) \hat{v} \rVert ) : l \in \ZZ \text{ with } \lvert l \rvert < j_{1} } \; \Big\rvert \leq \frac{ c }{ 8 } \]
holds. Let \( j_{-} \geq 1 , \, j_{+} \geq 0 \) be such that \( J \DefAs j_{-} + j_{+} \geq \max \Set{ 2 j_{1} , j_{2} } \) holds. We write \( \hat{u} \DefAs \Cocyc( -j_{-} , \LWord{i} ) \hat{v} \) and obtain
\begin{align*}
\ln ( \lVert \Cocyc( J , \Shift^{-j_{-}} \LWord{i} ) \rVert ) & \geq \ln \left( \frac{ \lVert \Cocyc( J , \Shift^{-j_{-}} \LWord{i} ) \hat{u} \rVert }{ \lVert \hat{u} \rVert } \right) \\
&= \ln( \lVert \Cocyc( j_{+} , \LWord{i} ) \hat{v} \rVert ) - \ln( \lVert \Cocyc( -j_{-} , \LWord{i} ) \hat{v} \rVert ) \, .
\end{align*}
Because of \( j_{-} + j_{+} = J \geq 2 j_{1} \), either \( j_{-} \) or \( j_{+} \) or both are greater or equal \( j_{1} \). Accordingly, we distinguish three cases: for \( j_{-} , j_{+} \geq j_{1} \), the definitions of \( j_{1} \) and \( \hat{v} \) imply
\[ \frac{ \ln( \lVert \Cocyc( J , \Shift^{-j_{-}} \LWord{i} ) \rVert ) }{ J } \geq \frac{ \ln( \lVert \Cocyc( j_{+} , \LWord{i} ) \hat{v} \rVert ) }{ j_{+} } \frac{ j_{+} }{ J } - \frac{ \ln( \lVert \Cocyc( -j_{-} , \LWord{i} ) \hat{v} \rVert ) }{ j_{-} } \frac{ j_{-} }{ J } \geq \frac{ c }{ 2 } \, .\]
For \( j_{-} < j_{1} \) and \( j_{+} \geq j_{1} \), we have \( \frac{ j_{+} }{ J } \geq \frac{ 1 }{ 2 } \) and \( j_{-} < j_{1} \). Because of \( J \geq j_{2} \) this implies
\[ \frac{ \ln( \lVert \Cocyc( J , \Shift^{-j_{-}} \LWord{i} ) \rVert ) }{ J } \geq \frac{ \ln( \lVert \Cocyc( j_{+} , \LWord{i} ) \hat{v} \rVert ) }{ j_{+} } \frac{ j_{+} }{ J } - \frac{ \ln( \lVert \Cocyc( -j_{-} , \LWord{i} ) \hat{v} \rVert ) }{ J } > \frac{ c }{ 8 } \, . \]
For \( j_{-} \geq j_{1} \) and \( j_{+} < j_{1} \), similar considerations yield
\[ \frac{ \ln( \lVert \Cocyc( J , \Shift^{-j_{-}} \LWord{i} ) \rVert ) }{ J } \geq \frac{ \ln( \lVert \Cocyc( j_{+} , \LWord{i} ) \hat{v} \rVert ) }{ J } - \frac{ \ln( \lVert \Cocyc( -j_{-} , \LWord{i} ) \hat{v} \rVert ) }{ j_{-} } \frac{ j_{-} }{ J } > \frac{ c }{ 8 } \, . \]
The above bounds hold for all \( i \in \Set{ 1 , \ldots , \LNr } \). Hence we deduce from Proposition~\ref{prop:CocycInvCut} and \( \Restr{ \InfWord }{ 0 }{ J-1 } = \Restr{ \LWord{i} }{ -j_{-}+1 }{ j_{+} } \) that there exists a constant \( C \) with
\[ \frac{ \ln( \lVert \Cocyc( J , \InfWord ) \rVert ) }{ J } \geq \frac{ \ln( \lVert \Cocyc( J , \Shift^{-j_{-}} \LWord{i} ) \rVert ) }{ J } - \frac{ C }{ J } \geq \frac{ c }{ 8 } - \frac{ C }{ J } \, . \]
Since \( C \) and \( c \) are independent of \( \InfWord \), we obtain \( \frac{ \ln( \lVert \Cocyc( J , \InfWord ) \rVert ) }{ J } \geq \delta > 0 \) for all \( \InfWord \in \Subshift \) and all sufficiently large \( J \), which proves the claim.
\end{proof}

The main application of the previous theorem is to transfer matrix cocycles, see Example~\ref{exmpl:TrMod}. Their uniformity on \LCond{}-subshifts has consequences for the spectrum of Jacobi operators:

\begin{thm}
\label{thm:LCondCantorSpec}
Let \( \Subshift \) be an \LCond{}-subshift. Let \( f \colon \Subshift \to \RR \setminus \Set{ 0 } \) and \( g \colon \Subshift \to \RR \) be locally constant functions such that \( ( \NDig , \Dig ) \) is aperiodic. Let \( (\Jac)_{\InfWord \in \Subshift} \) be the family of Jacobi operators that is defined by \( \NDig \) and \( \Dig \). Then the spectrum \( \Spec = \sigma( \Jac_{\InfWord} ) \) is a Cantor set of Lebesgue measure zero.
\end{thm}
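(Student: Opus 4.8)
The plan is to deduce the statement from the cocycle machinery of this chapter together with the results of Beckus and Pogorzelski quoted above, so the proof is essentially an assembly of already established facts. First I would record a small observation: the hypothesis that $(\NDig,\Dig)$ is aperiodic forces the underlying subshift $\Subshift$ to be aperiodic as well. Indeed, if $\Subshift$ were periodic, say $\Shift^{P}\InfWord=\InfWord$ for all $\InfWord\in\Subshift$, then $\widetilde{\Shift}^{P}$ would fix every element of $\widetilde{\Subshift}$ (see Equation~(\ref{eqn:TildsShiftFG})), contradicting the aperiodicity of $(\NDig,\Dig)$. Since $\Subshift$ satisfies \LCond{}, Corollary~\ref{cor:LSCMinUniErgo} tells us moreover that $\Subshift$ is minimal and uniquely ergodic. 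Thus $\Subshift$ already meets all the structural assumptions of Proposition~\ref{prop:UnifCantorSpec}, and it only remains to verify that the transfer matrix cocycle $\TrMat$ is uniform for every $E\in\RR$.

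To obtain this uniformity I would pass through the modified transfer matrix. Fix $E\in\RR$. Because $\NDig$ and $\Dig$ are locally constant, they take only finitely many values (Remark~\ref{rem:LocConstFinVal}), and since $\TrMod(\InfWord)$ depends on $\InfWord$ only through $\NDig(\Shift^{2}\InfWord)$ and $\Dig(\Shift\InfWord)$, the map $\TrMod\colon\Subshift\to\SL$ of Example~\ref{exmpl:TrMod} is again locally constant. Now Theorem~\ref{thm:LCondUniform} applies directly: every locally constant map $\Subshift\to\SL$ over an \LCond{}-subshift is uniform, hence $\TrMod$ is uniform. By Proposition~\ref{prop:UnifTrMatTrMod}, uniformity of $\TrMod$ is equivalent to uniformity of the unmodified transfer matrix $\TrMat$, so $\TrMat$ is uniform; and since $E$ was arbitrary, $\TrMat$ is uniform for every $E\in\RR$.

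With all the hypotheses of Proposition~\ref{prop:UnifCantorSpec} now in place --- $\Subshift$ minimal, uniquely ergodic and aperiodic, $\NDig$ and $\Dig$ taking finitely many values, $(\NDig,\Dig)$ aperiodic, and $\TrMat$ uniform for every $E$ --- I would simply invoke that proposition to conclude that $\Spec=\sigma(\Jac_{\InfWord})$ is a Cantor set of Lebesgue measure zero. Combined with Theorem~\ref{thm:SimpToepLCond} and Theorem~\ref{thm:SturmLCond}, this yields the advertised consequences for simple Toeplitz subshifts and for Sturmian subshifts.

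The genuine content of the argument is, of course, the uniformity statement Theorem~\ref{thm:LCondUniform}, whose proof is the hard part and rests on the three leading sequence conditions together with the Ruelle/Oseledec theorem (Proposition~\ref{prop:Ruelle}) and Lenz's lower-bound criterion (Proposition~\ref{prop:LowBdUnif}). Granting that input, the present theorem is bookkeeping; the only two points that require a moment's care are the passage from $\SL$-valued to $\GL$-valued cocycles via the modified transfer matrix and the observation that aperiodicity of the pair $(\NDig,\Dig)$ upgrades to aperiodicity of the subshift, both of which are handled above.
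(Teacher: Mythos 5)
Your proposal is correct and follows essentially the same route as the paper's own proof: locally constant \( \NDig, \Dig \) make the modified transfer matrix cocycle \( \TrMod \) locally constant, Theorem~\ref{thm:LCondUniform} gives its uniformity, Proposition~\ref{prop:UnifTrMatTrMod} transfers this to \( \TrMat \), and Proposition~\ref{prop:UnifCantorSpec} yields Cantor spectrum of measure zero. The only difference is that you spell out explicitly what the paper leaves implicit, namely that minimality and unique ergodicity come from Corollary~\ref{cor:LSCMinUniErgo} and that aperiodicity of \( (\NDig,\Dig) \) forces aperiodicity of \( \Subshift \); both observations are correct.
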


\begin{proof}
Since \( \NDig \) and \( \Dig \) are locally constant, the cocycle associated to the modified transfer matrix map
\[ \TrMod \colon \Subshift \to \SL \, ,\; \InfWord \mapsto \begin{pmatrix} \frac{ E - \Dig( \Shift \InfWord ) }{ \NDig( \Shift^{2} \InfWord ) } & - \frac{ 1 }{ \NDig( \Shift^{2} \InfWord ) } \\ \NDig( \Shift^{2} \InfWord ) & 0 \end{pmatrix} \]
is locally constant as well. By Theorem~\ref{thm:LCondUniform} it is therefore uniform. This implies uniformity of the (unmodified) transfer matrices \( \TrMat \) (\cite{BeckPogo_SpectrJacobi}, see Proposition~\ref{prop:UnifTrMatTrMod} above). As \( \NDig \) and \( \Dig \) take only finitely many values (see Remark~\ref{rem:LocConstFinVal}), we can apply \cite[Theorem~3]{BeckPogo_SpectrJacobi} (see Proposition~\ref{prop:UnifCantorSpec}), which yields the desired result.
\end{proof}

\begin{exmpl}
\label{exmpl:SturmUnif}
Sturmian subshifts (see Appendix~\ref{app:Sturm} for a definition and basic properties) satisfy \LCond{} by Theorem~\ref{thm:SturmLCond}. Thus every locally constant cocycle is uniform, which is a well-known fact (see \cite[Theorem~4]{Lenz_ErgodTheo1dimSO}). In particular, aperiodic Jacobi operators over Sturmian subshifts have Cantor spectrum of Lebesgue measure zero. The corresponding result for Schrödinger operators was already shown in \cite[corollary of the main result]{BellIochScopTest_SpecProp}.
\end{exmpl}

\begin{exmpl}
Simple Toeplitz subshifts satisfy \LCond{} by Theorem~\ref{thm:SimpToepLCond}. Thus every aperiodic Jacobi operator over a simple Toeplitz subshift has Cantor spectrum of Lebesgue measure zero by Theorem~\ref{thm:LCondCantorSpec}. This strengthens the result from Corollary~\ref{cor:BoshSTCantor}, which only applied to those simple Toeplitz subshifts that satisfy the Boshernitzan condition.
\end{exmpl}

\begin{rem}
For Schrödinger operators (that is, \( \NDig \equiv 1 \)) with a function \( \Dig \) that only depends on \( \InfWord(0) \), uniformity of the associated cocycles over simple Toeplitz subshifts was studied by Liu and Qu in \cite{LiuQu_Simple}. In their analysis as well as in ours, subadditivity is used to establish the existence of certain limits. The result in \cite{LiuQu_Simple} then follows from a careful investigation of the trace map and the set of non-escaping energies. Here we are able to cover the more general class of locally constant Jacobi operators by proving uniformity from the characterisation by Lenz (see Proposition~\ref{prop:LowBdUnif}), a Gordon-type argument (Proposition~\ref{prop:GordonCocyc}) and the result of Ruelle (Proposition~\ref{prop:Ruelle}). 
\end{rem}

\appendix 

\chapter{Sturmian subshifts}
\label{app:Sturm}

\pagestyle{NoSecruled} 

In this appendix we review the notion of Sturmian subshifts. Similar to simple Toeplitz subshifts, they receive much interest since their elements are very structured. We recall several equivalent definitions for Sturmian subshifts and mention results on the spectrum of Schrödinger and Jacobi operators. Afterwards we discuss combinatorial properties, which we use in Subsection~\ref{subsec:LCondSturm} to show that Sturmian subshifts satisfy the leading sequence condition. While this appendix focuses on properties that are related to the topic of this thesis, much more is known about Sturmian subshifts. Additional information can for instance be found in \cite[Chapter~2]{Loth_AlgCombWords} and \cite[Chapter~6]{Fogg_Substitutions}.

Sturmian sequences were introduced in \cite[Section~2]{MorseHedl_Sturmian} via a balance property. The modern definition (see \cite{CovHed_MinBlockGr}, \cite{Cov_MinBlockGr2}) via the complexity function is slightly more restrictive since it excludes eventually periodic words:

\begin{defi}
A one-sided infinite word \( \RWord \in \Alphab^{\NN} \) is called \emph{Sturmian}\index{Sturmian!word}\index{word!Sturmian} if its complexity is given by \( \Comp( L ) = L+1 \) for all \( L \in \NN_{0} \).
\end{defi}

For a Sturmian word \( \RWord \in \Alphab^{\NN} \), we define the associated \emph{Sturmian subshift}\index{subshift!Sturmian}\index{Sturmian!subshift} by \( \Subshift( \RWord ) \DefAs \Set{ \InfWord \in \Alphab^{\ZZ} : \Langu{ \InfWord } \subseteq \Langu{ \RWord } } \). It is well-known that Sturmian subshifts are minimal (this follows from the minimality of irrational rotations, which are discussed below) and uniquely ergodic (this follows for example by \cite[Theorem~1.5]{Boshernitzan_UniErgodic} from \( \limsup_{ L \to \infty } \frac{ \Comp( L ) }{ L } = 1 < 3 \), see Remark~\ref{rem:STCompUniErg}). Note that minimality implies that every \( \InfWord \in \Subshift( \RWord ) \) has complexity \( \Comp( L ) = L+1 \).

\begin{rem}
Sturmian subshifts are always defined over an alphabet \( \Alphab = \Set{ a, b } \) of two letters, since \( \Card{ \Alphab } = \Comp( 1 ) = 2 \) holds by definition.
\end{rem}

\begin{rem}
The class of Sturmian subshifts is disjoint from the class of simple Toeplitz subshifts: for periodic simple Toeplitz subshifts, this is immediately clear. For aperiodic simple Toeplitz subshifts, we have \( \Card{ \AlphabEv } \geq 2 \) (Proposition~\ref{prop:STAperiod}) and
\[ \Comp( \Length{ \PBlock{k-1} } + 1 ) = ( \Card{ \AlphabEv } - 1) \cdot ( \Length{ \PBlock{k-1} } + 1 ) + \Length{ \PBlock{k-2} } + 1 > \Length{ \PBlock{k-1} } + 2 \]
for all sufficiently large \( k \) (Corollary~\ref{cor:CompLargeL}). This contradicts \( \Comp( L ) = L+1 \).
\end{rem}

If \( \InfWord \in \Alphab^{\ZZ} \) is periodic with period \( P \in \NN \), then it has complexity \( \Comp( L ) \leq P \) for all \( L \geq P \). It is therefore not a Sturmian word and also not in a Sturmian subshift. On the other hand, the famous Morse/Hedlund theorem states that non-periodic words cannot have lower complexity than Sturmian words:

\begin{prop}[{\cite[Theorem~7.4]{MorseHedl_SymbDyn}}]
\label{prop:MHThm}
A non-periodic \( \InfWord \in \Alphab^{\ZZ} \) has complexity \( \Comp( L ) \geq L - 1 + \Card{\Alphab} \).
\end{prop}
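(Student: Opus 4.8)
The plan is to prove the Morse/Hedlund theorem in its classical combinatorial form: a two-sided infinite word over an alphabet $\Alphab$ whose complexity satisfies $\Comp(L) \leq L - 2 + \Card{\Alphab}$ for at least one $L$ must be periodic. The contrapositive is exactly the stated bound. I would work with the \emph{growth function} $\Growth(L) = \Comp(L+1) - \Comp(L)$, already introduced in the excerpt, and the notion of a \emph{right special word}: a word $u \in \Langu{\InfWord}_L$ that has at least two distinct one-letter extensions to the right inside $\Langu{\InfWord}$. The key observation is that $\Growth(L)$ counts the ``extra'' extensions coming from right special words: if $u$ has $e(u)$ distinct right extensions, then $\Growth(L) = \sum_{u \in \Langu{\InfWord}_L} (e(u) - 1)$, so $\Growth(L) \geq 0$ always, and $\Growth(L) \geq 1$ precisely when $\Langu{\InfWord}_L$ contains a right special word.

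First I would establish the easy direction of the dichotomy: if $\Growth(L_0) = 0$ for some $L_0$, then there are no right special words of length $L_0$, hence every word of length $L_0$ extends uniquely to the right, hence (by induction) $\Comp(L) = \Comp(L_0)$ for all $L \geq L_0$. A bounded complexity function on a two-sided infinite word forces periodicity: since $\Comp$ is bounded, only finitely many distinct words occur, and the unique-right-extension property means the bi-infinite word is determined by a finite window and must eventually cycle; a standard pigeonhole argument on the finitely many length-$L_0$ factors appearing in $\InfWord$ then yields a period $P \leq \Comp(L_0)$. The slightly delicate point here is handling the two-sided (rather than one-sided) case — one also needs unique \emph{left} extensions, which follows symmetrically from $\Growth(L_0)=0$ once one notes that $\Comp$ being eventually constant forces the left-extension graph to be a union of cycles as well.

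Next, for the main bound: suppose $\InfWord$ is non-periodic. Then by the previous paragraph $\Growth(L) \geq 1$ for \emph{every} $L \geq 0$, i.e.\ there is a right special word of every length. Also $\Comp(1) = \Card{\Alphab}$ (every letter occurs, since a letter not occurring could simply be deleted from the alphabet — or one argues $\Comp(1) \geq 2$ and absorbs the difference). Writing $\Comp(L) = \Comp(1) + \sum_{j=1}^{L-1} \Growth(j) \geq \Card{\Alphab} + (L-1)$, I would get $\Comp(L) \geq L - 1 + \Card{\Alphab}$, which is exactly the claim. The telescoping sum is routine; the only substantive input is that non-periodicity propagates to $\Growth(L) \geq 1$ for all $L$, which is the contrapositive of the dichotomy step.

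\textbf{The main obstacle} will be the periodicity argument in the first step — specifically, deducing genuine (two-sided) periodicity of $\InfWord$ from the fact that $\Comp$ stabilises. One must be careful that ``finitely many factors, each with a unique right successor'' really does force a global period and not merely eventual periodicity on a half-line; the clean way is to observe that if $u$ is a length-$L_0$ factor, it recurs in $\InfWord$ (two-sided words over a minimal-type situation, or directly: among $\InfWord(-N..{-N+L_0-1})$ for varying $N$ only finitely many values occur, so some factor repeats at positions $i < i'$), and unique right extension forces the word between two consecutive occurrences to be a fixed block, giving period $i' - i$. I would present this carefully as the heart of the proof, with the telescoping complexity estimate as a short corollary.
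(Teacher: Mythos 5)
The paper does not prove this statement at all: it is quoted directly from Morse and Hedlund (Theorem~7.4 of their 1938 paper), so there is no in-paper argument to compare against. Your proposal is the standard proof of that classical theorem and it is essentially correct: the identity \( \Growth(L) = \sum_{u} (e(u)-1) \) is valid because every factor of a two-sided infinite word has at least one right extension, the pigeonhole-plus-unique-extension step gives a period \( p \le \Comp(L_0) \) on a right half-line, and the symmetric counting argument (the same sum \( \Comp(L_0+1) \) also counts left extensions) yields unique left extensions, which lets you propagate the relation \( \InfWord(j) = \InfWord(j+p) \) to all \( j \in \ZZ \) and obtain genuine two-sided periodicity --- you correctly identify this as the delicate point and your sketch of how to handle it is sound. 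Two small caveats: the bound with \( \Card{\Alphab} \) requires that every letter of \( \Alphab \) actually occurs in \( \InfWord \) (equivalently \( \Comp(1) = \Card{\Alphab} \)); your fallback remark that one could instead use \( \Comp(1) \ge 2 \) would only give \( \Comp(L) \ge L+1 \), not the stated bound, so the ``delete unused letters'' convention is the one to keep. Also, in your final write-up the left-propagation step should be carried out explicitly rather than only invoked for the cycle structure, since the pigeonhole occurrence of the repeated factor a priori only controls the word to its right.
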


Since the complexity is a way to measure the order in a word, Sturmian words can be considered as the most ordered among the aperiodic words. While the above definition highlights the simplicity of Sturmian words, it does not provide a way to generate them. However, there are many other, equivalent ways to describe Sturmian words and subshifts. To mention just a few, Sturmian subshifts can be characterised by
\begin{tightitemize}
\item{mechanical words and cutting sequences (\cite[Subsection~2.1.2]{Loth_AlgCombWords}),}
\item{balanced words (\cite[Theorem~6.1.8]{Fogg_Substitutions}, \cite[Theorem~2.1.13]{Loth_AlgCombWords}),}
\item{return words (\cite[main theorem]{Vuill_RetSturm}).}
\end{tightitemize}
While all these notions are beyond the scope of this appendix, we focus now on yet another characterisation, namely by rotation maps. For \( \alpha \in [0 , 1) \), the \emph{rotation by \( \alpha \)}\index{rotation on the unit circle} on the unit circle is defined by
\[ \Rot{\alpha} \colon [0 , 1) \to [0 , 1) \, , \; x \mapsto ( x + \alpha ) \bmod 1 \, . \]
The rotation and an initial value \( x_{0} \in [0 , 1 ) \) generate an infinite word \( \RWord_{\alpha ,\, x_{0}} \in \Alphab^{\NN} \) by
\begin{equation}
\label{eqn:RotSeqSturm}
\RWord_{\alpha ,\, x_{0}}( j ) \DefAs \begin{cases}
b & \text{if } \Rot{\alpha}^{j}( x_{0} ) \in [0, 1-\alpha ) \\
a & \text{if } \Rot{\alpha}^{j}( x_{0} ) \in [1-\alpha , 1)
\end{cases} \, .
\end{equation}
For irrational \( \alpha \), the language of \( \RWord_{\alpha ,\, x_{0}} \) is independent of \( x_{0} \) (\cite[Proposition~2.1.18]{Loth_AlgCombWords}). Thus every \( \alpha \in [ 0 , 1 ) \setminus \QQ \) has an associated subshift \( \Subshift_{\alpha} \DefAs \Set{ \InfWord \in \Alphab^{\ZZ} : \Langu{ \InfWord } \subseteq \Langu{ \RWord_{\alpha ,\, x_{0}} } } \).

\begin{prop}[{\cite[Theorem~2.1.13]{Loth_AlgCombWords}}]
A one-sided infinite word \( \RWord \in \Alphab^{\NN} \) is Sturmian if and only if \( \RWord = \RWord_{\alpha ,\, x_{0}} \) holds for some irrational \( \alpha \in [0 , 1 ) \) and some \( x_{0} \in [0 , 1 ) \).
\end{prop}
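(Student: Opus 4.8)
The plan is to prove the two implications separately, the forward direction (rotation sequences are Sturmian) being routine and the converse the substantial part. \emph{Rotation sequences are Sturmian.} Fix an irrational $\alpha$ and $x_{0}$. For $L \in \NN$, the finite word $\Restr{ \RWord_{\alpha ,\, x_{0}} }{ j+1 }{ j+L }$ is determined precisely by which atom of the refined partition $\mathcal{P}_{L} \DefAs \bigvee_{i=0}^{L-1} \Rot{\alpha}^{-i}\{ [0,1-\alpha),\, [1-\alpha,1) \}$ contains $\Rot{\alpha}^{j}(x_{0})$. The endpoints of $\mathcal{P}_{L}$ are the points $-i\alpha \bmod 1$ for $i \in \{ 0,1,\ldots,L \}$, and by irrationality of $\alpha$ these are $L+1$ distinct points, so $\mathcal{P}_{L}$ has exactly $L+1$ atoms; hence $\Comp( L ) \leq L+1$. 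Conversely the orbit $( \Rot{\alpha}^{j}(x_{0}) )_{j}$ is dense, so every atom is visited and each produces a distinct length-$L$ factor, giving $\Comp( L ) = L+1$. In particular $\Comp$ is unbounded, so $\RWord_{\alpha ,\, x_{0}}$ is not eventually periodic, consistently with the definition.

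\emph{Sturmian words come from irrational rotations.} Let $\RWord$ be Sturmian and $\Subshift \DefAs \Subshift( \RWord )$; it is minimal and uniquely ergodic (as recalled above), with unique invariant measure $\ErgodMeas$. Put $\alpha \DefAs \ErgodMeas( \{ \InfWord \in \Subshift : \InfWord(1) = a \} )$, the frequency of the letter $a$. First I would show $\alpha \notin \QQ$: since $\Growth( L ) = \Comp( L+1 ) - \Comp( L ) = 1$, a short counting argument on factors shows that $\RWord$ is balanced (any two factors of equal length differ by at most one in their number of $a$'s); if $\alpha$ were rational, balancedness together with the exact frequency would force $\RWord$ to be eventually periodic, contradicting $\Comp( L ) = L+1$.

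Next I would construct a topological semiconjugacy $\pi \colon \Subshift \to \RR/\ZZ$ with $\pi \circ \Shift = \Rot{\alpha} \circ \pi$. For each $L$, a length-$L$ factor $u$ determines the half-open interval $J_{u} \subseteq \RR/\ZZ$ that is the corresponding atom of the partition $\mathcal{P}_{L}$ for angle $\alpha$ (meaningful now that $\alpha$ is irrational); because $\RWord$ has exactly $L+1$ factors of length $L$ and $\mathcal{P}_{L}$ has exactly $L+1$ atoms, the assignment $u \mapsto J_{u}$ is a bijection, compatible with extension of words. Hence for $\InfWord \in \Subshift$ the closed intervals $\overline{ J_{ \Restr{\InfWord}{1}{L} } }$ are nested with lengths tending to zero (the maximal atom length of $\mathcal{P}_{L}$ goes to zero by minimality of $\Rot{\alpha}$), and I would define $\pi( \InfWord )$ as the unique point of their intersection. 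One then checks that $\pi$ is continuous, surjective, intertwines $\Shift$ with $\Rot{\alpha}$, and is injective off the countable backward orbit of $\{ 0 , 1-\alpha \}$. Reading off itineraries gives $\InfWord( j ) = \RWord_{ \alpha ,\, \pi(\InfWord) }( j )$ for all but the possible boundary orbit points, and choosing for $x_{0}$ the intersection point built from the \emph{same-sided} (one-sided-limit) versions of the intervals $J_{ \Restr{\RWord}{1}{L} }$ removes that ambiguity and yields $\RWord = \RWord_{ \alpha ,\, x_{0} }$.

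\emph{Main obstacle.} The delicate part is the converse, and within it the construction of $\pi$: one must simultaneously show that the nested intervals shrink to a single well-defined point (which rests on the exact $(L+1)$-to-$(L+1)$ match of factors with atoms) and reconcile the half-open convention in~(\ref{eqn:RotSeqSturm}) with the two-sided limits of $\RWord$ at orbit points of $0$ and $1-\alpha$, so that $\RWord = \RWord_{ \alpha ,\, x_{0} }$ holds as sequences rather than only up to language. An alternative route for the converse is to first extract, via a Rauzy-graph / special-factor analysis (exactly one right-special and one left-special factor of each length, forming increasing chains), the continued-fraction type decomposition $s_{k+1} = \Word{ s_{k}^{n_{k+1}} }{ s_{k-1} }$ of Proposition~\ref{prop:SturmDefBlocks}, and then identify the resulting subshift with the rotation of angle $\alpha = [0; n_{1}, n_{2}, \ldots]$; this replaces the interval bookkeeping with an inductive bookkeeping on the $n_{k}$.
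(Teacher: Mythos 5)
There is no argument in the paper to compare against here: the proposition is quoted directly from Lothaire (\cite[Theorem~2.1.13]{Loth_AlgCombWords}) and the text gives no proof, so your proposal has to stand on its own — and in its central step it does not. In the converse direction you define, for each factor \( u \) of \( \RWord \), the interval \( J_{u} \) as ``the corresponding atom'' of the partition \( \mathcal{P}_{L} \) for the angle \( \alpha \). This presupposes that every length-\( L \) factor of \( \RWord \) actually occurs as a length-\( L \) itinerary of \( \Rot{\alpha} \) with respect to the partition in Equation~(\ref{eqn:RotSeqSturm}), i.e.\ that \( \Langu{ \RWord }_{L} \) coincides with the set of coding words of the rotation — but this equality of languages is essentially the statement being proven. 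The fact that both sets have cardinality \( L+1 \) gives no canonical, well-defined assignment \( u \mapsto J_{u} \), and certainly not the asserted ``compatibility with extension of words'', which is exactly where the content of the theorem sits; everything after that point (nested intervals, the semiconjugacy \( \pi \), reading off \( x_{0} \)) rests on this unfounded identification. Two smaller issues: in the forward direction, \( L+1 \) distinct cut points only give \emph{at most} \( L+1 \) atoms (an atom could a priori be a union of several arcs), so the lower bound \( \Comp(L) \geq L+1 \) needs either the standard arc-equals-atom argument or aperiodicity plus Morse/Hedlund (Proposition~\ref{prop:MHThm}); and the step ``complexity \( L+1 \) implies balanced'' is a genuine lemma (the engine of Lothaire's proof), not a short counting remark, while minimality and unique ergodicity of \( \Subshift( \RWord ) \) must be obtained without the rotation picture (the paper's own aside justifies minimality \emph{via} irrational rotations, which would be circular in this proof).

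To repair the converse you must earn the language identification rather than infer it from cardinalities. One route is Lothaire's: show complexity \( L+1 \) implies balanced and aperiodic, then show a balanced aperiodic word is mechanical by taking \( \alpha \DefAs \lim_{n} S_{n}/n \) (with \( S_{n} \) the number of \( a \)'s in the prefix of length \( n \)) and extracting the intercept \( x_{0} \) from the extremal deviations of \( S_{n} - n\alpha \); this produces \( \RWord = \RWord_{\alpha ,\, x_{0}} \) as sequences and settles the half-open boundary convention in the same stroke. The other route is the one you mention only in passing: use the unique right-special factor of each length to derive the standard-word decomposition \( s_{k+1} = \Word{ s_{k}^{n_{k+1}} }{ s_{k-1} } \) and then identify the resulting language with that of \( \Rot{\alpha} \) for \( \alpha = [0; n_{1}, n_{2}, \ldots ] \), as in Proposition~\ref{prop:SturmDefBlocks}. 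Either way the work your bijection tries to shortcut is precisely the substance of the theorem.
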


\begin{exmpl}[Fibonacci subshift]
Let \( \alpha \DefAs \frac{ \sqrt{5} - 1 }{ 2 } \) denote the inverse of the golden ratio. The associated Sturmian subshift \( \Subshift_{\alpha} \) can also be generated from the substitution \( \sigma_{\text{Fib}} \colon a \mapsto \Word{ a }{ b } \, , \; b \mapsto a \) (see for example \cite[Proposition~5.4.9]{Fogg_Substitutions}), which is closely related to the Fibonacci sequence. Therefore, \( \Subshift_{\alpha} \) is known as the \emph{Fibonacci subshift}\index{Fibonacci subshift}\index{subshift!Fibonacci}. It is probably the most-studied subshift and there exists a large amount of literature both on its combinatorial properties (for example \cite{MigPir_RepeFib}, \cite{Drou_PaliFib}, \cite{HuaWen_RetWordsFib}) and on the spectrum of Schrödinger operators on it (see the remark below).
\end{exmpl}

\begin{rem}
\label{rem:SturmSpecSO}
As discussed in Chapter~\ref{chap:JOSimpToep}, the spectral properties of Schrödinger operators are closely related to the combinatorics of the underlying potential. It is therefore not surprising that Schrödinger operators on Sturmian subshifts (and, as a prototype, on the Fibonacci subshift) are particularly well-studied, see \cite{DGY_FibAlles} and the references therein. For the Fibonacci subshift, it was proved by Sütő (\cite{Suto_QuasiPerSO}, \cite{Suto_SingContSpect}) that the spectrum of Schrödinger operators is purely singular continuous and a Cantor set of Lebesgue measure zero. The Hausdorff dimension of the spectrum has been determined as well (\cite{DEGT_HDimFib}). Cantor spectrum (\cite{BellIochScopTest_SpecProp}) which is purely singular continuous (\cite{DamKillipLenz_UniSpectProp3}) could also be shown for all Sturmian subshifts. While the aforementioned results rely on analysing the trace map, Cantor spectrum can also be concluded from the Boshernitzan condition. The condition's validity for all Sturmian subshifts was proved in \cite[Theorem~4]{DamLenz_BoshLowCompl}. This implies uniformity of the transfer matrix cocycle, see Proposition~\ref{prop:BoshCantor}. More generally, it was shown in \cite[Theorem~4]{Lenz_ErgodTheo1dimSO} that every locally constant cocycle on a Sturmian subshift is uniform. It is this result that we reproduce in Example~\ref{exmpl:SturmUnif} by a different method, namely by our theory of \LCond{}-subshifts.
\end{rem}

To prove the leading sequence condition for Sturmian subshifts (Subsection~\ref{subsec:LCondSturm}), we use that they exhibit a block structure similar to simple Toeplitz subshifts (see Equation~(\ref{eqn:DecompPBlock})). It emerges from rotation maps: for every \( \alpha \in [  0 , 1 ) \setminus \QQ \), there exists a unique, non-terminating continued fraction expansion
\[ \alpha = \frac{ 1 }{ n_{1} + \frac{ 1 }{ n_{2} + \frac{ 1 }{ n_{3} + \ldots } } } \, , \]
which defines a sequence \( ( n_{k})_{k \in \NN} \) of natural numbers. Recursively we now define a sequence of words by:
\begin{equation}
\label{eqn:StrumRecur}
s_{0} \DefAs b \;\; , \quad s_{1} \DefAs \Word{ b^{n_{1}-1} }{ a } \;\; , \quad s_{k+1} \DefAs \Word{ s_{k}^{n_{k+1}} }{ s_{k-1} } \, .
\end{equation}
By \( \PBlock{k} \DefAs \Restr{ s_{k} }{ 1 }{ \Length{ s_{k} }-2 } \), with \( k \geq 2 \), we denote the word \( s_{k} \) with its last two letters omitted. Since \( \PBlock{ k } \) is a prefix of  \( \PBlock{k+1} \), the one-sided limit \( \PBlock{\infty} \DefAs \lim_{k \to \infty} \PBlock{k} \in \Alphab^{\NN} \) exists. In fact it is precisely the word that is generated by the rotation \( \Rot{\alpha} \):

\begin{prop}[{\cite[Proposition~1]{BellIochScopTest_SpecProp}}, {\cite[Proposition~2.2.24]{Loth_AlgCombWords}}]
\label{prop:SturmDefBlocks}
For every \( \alpha \in [  0 , 1 ) \setminus \QQ \), we have \( \RWord_{\alpha ,\, 0} = \PBlock{\infty} \). In particular this implies \( \Subshift( \PBlock{\infty} ) = \Subshift_{\alpha} \).
\end{prop}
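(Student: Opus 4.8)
The plan is to identify \( \PBlock{\infty} \) with the rotation sequence \( \RWord_{\alpha ,\, 0} \) by showing that both are the characteristic Sturmian word of slope \( \alpha \). First I would fix notation for the arithmetic data: for the non-terminating continued fraction expansion of the irrational \( \alpha \in (0,1) \) with partial quotients \( (n_{k})_{k \geq 1} \) as in Equation~(\ref{eqn:StrumRecur}), put \( q_{-1} \DefAs 0 \), \( q_{0} \DefAs 1 \), \( q_{k+1} \DefAs n_{k+1} q_{k} + q_{k-1} \) for the denominators of the convergents, and let \( \widetilde{p}_{k} \) be the numerators, defined by the same recursion with \( \widetilde{p}_{-1} \DefAs 1 \), \( \widetilde{p}_{0} \DefAs 0 \). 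A one-line induction from \( s_{0} = b \), \( s_{1} = b^{n_{1}-1} a \), \( s_{k+1} = s_{k}^{n_{k+1}} s_{k-1} \) gives \( \Length{ s_{k} } = q_{k} \) and \( \Length{s_{k}}_{a} = \widetilde{p}_{k} \) for all \( k \geq 0 \). Since \( n_{k+1} \geq 1 \), the word \( s_{k} \) is a prefix of \( s_{k+1} \); consequently \( \PBlock{k} = \Restr{ s_{k} }{ 1 }{ \Length{s_{k}} - 2 } \) is a prefix of \( \PBlock{k+1} \) as soon as \( k \) is large enough that \( \Length{ s_{k+1} } - 2 \geq \Length{ s_{k} } \), so \( \PBlock{\infty} \) is well-defined and in fact \( \PBlock{\infty} = \lim_{k \to \infty} s_{k} \in \Alphab^{\NN} \).

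The substantive step is to show that \( s_{k} \) is the prefix of \( \RWord_{\alpha ,\, 0} \) of length \( q_{k} \), for all \( k \geq 1 \) (the small cases are immediate, the only subtlety being that for \( n_{1} = 1 \) the identity holds only from \( k = 1 \) on). Rewriting the definition~(\ref{eqn:RotSeqSturm}) of \( \RWord_{\alpha ,\, x_{0}} \) with \( x_{0} = 0 \): since \( \Rot{\alpha}^{j}( 0 ) = \{ j\alpha \} \), one has \( \RWord_{\alpha ,\, 0}( j ) = a \) exactly when \( \{ j\alpha \} + \alpha \geq 1 \), that is, when \( \lfloor (j+1)\alpha \rfloor = \lfloor j\alpha \rfloor + 1 \), so \( \RWord_{\alpha ,\, 0} \) is the characteristic word of slope \( \alpha \). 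The claimed prefix identity is then precisely the classical statement that the characteristic word is the limit of the standard sequence attached to the continued fraction of its slope, which I would quote from \cite[Proposition~2.2.24]{Loth_AlgCombWords} (see also \cite[Proposition~1]{BellIochScopTest_SpecProp}). For a self-contained argument one proves the identity by induction on \( k \) using the three-distance theorem: the points \( \Rot{\alpha}^{0}( 0 ) , \ldots , \Rot{\alpha}^{ q_{k}-1 }( 0 ) \) cut the circle into arcs of only two distinct lengths, and the recursion \( q_{k+1} = n_{k+1} q_{k} + q_{k-1} \) translates, via a careful accounting of which of the two half-open intervals \( [ 0 , 1-\alpha ) \), \( [ 1-\alpha , 1 ) \) the successive orbit points lie in, exactly into the word recursion \( s_{k+1} = s_{k}^{ n_{k+1} } s_{k-1} \).

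Granting the prefix identity, passing to the limit finishes the first claim: each \( s_{k} \), hence each \( \PBlock{k} \), is a prefix of \( \RWord_{\alpha ,\, 0} \), and \( \Length{ s_{k} } - 2 \to \infty \), so \( \PBlock{\infty} = \lim_{k} \PBlock{k} = \RWord_{\alpha ,\, 0} \). For the ``in particular'': the language \( \Langu{ \RWord_{\alpha ,\, x_{0}} } \) does not depend on \( x_{0} \) for irrational \( \alpha \) (\cite[Proposition~2.1.18]{Loth_AlgCombWords}, recalled in this appendix), so by definition \( \Subshift_{\alpha} = \Set{ \InfWord \in \Alphab^{\ZZ} : \Langu{ \InfWord } \subseteq \Langu{ \RWord_{\alpha ,\, 0} } } \); substituting \( \RWord_{\alpha ,\, 0} = \PBlock{\infty} \) and comparing with Definition~\ref{defi:SubshiftLangu} yields \( \Subshift( \PBlock{\infty} ) = \Subshift_{\alpha} \). (Alternatively, the subshift equality alone can be obtained from the slope: the frequency of \( a \) in \( \PBlock{\infty} \) equals \( \lim_{k} \widetilde{p}_{k} / q_{k} = \alpha \), and two Sturmian subshifts with the same slope coincide.)

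The main obstacle is the substantive step. If one does not wish to invoke the classical characteristic-word theorem, then proving \( s_{k} = \Restr{ \RWord_{\alpha ,\, 0} }{ 1 }{ q_{k} } \) requires the delicate combinatorics of rotation orbits — the three-distance theorem together with an honest accounting of the half-open interval endpoints and of the special role of \( n_{1} \) in \( s_{1} = b^{n_{1}-1} a \) — and one must match the indexing conventions so that the continued-fraction recursion for \( (q_{k}) \) lines up with the word recursion for \( (s_{k}) \). The remaining steps are routine bookkeeping.
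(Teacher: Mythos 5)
The paper offers no proof of its own for this statement — it is quoted directly from \cite[Proposition~1]{BellIochScopTest_SpecProp} and \cite[Proposition~2.2.24]{Loth_AlgCombWords} — and your argument follows essentially the same route: the bookkeeping (\( \Length{ s_{k} } = q_{k} \), the prefix property giving \( \PBlock{\infty} = \lim_{k} s_{k} \), the \( n_{1} = 1 \) subtlety at \( k = 0 \), and the deduction of \( \Subshift( \PBlock{\infty} ) = \Subshift_{\alpha} \) from the \( x_{0} \)-independence of the language) is correct, and the substantive prefix identity \( s_{k} = \Restr{ \RWord_{\alpha ,\, 0} }{ 1 }{ q_{k} } \) is exactly the classical characteristic-word theorem that you, like the paper, cite from those same references. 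Your three-distance-theorem alternative is only a sketch rather than a worked-out proof, but since the paper itself treats the proposition as an imported classical fact, reducing it to that citation is precisely what is expected.
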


 In the next proposition we collect some well-known combinatorial properties of the words \( s_{k} \) and \( \PBlock{k} \).

\begin{prop}
\label{prop:SturmPali}
For every \( k \geq 2 \) the following properties hold:
\begin{tightenumerate}
\item{The word \( s_{2k} \) ends with \( \Word{ a }{ b } \), that is, we have \( s_{2k} = \Word{ \PBlock{2k} }{ a }{ b } \).}
\item{The word \( s_{2k+1} \) ends with \( \Word{ b }{ a } \), that is, we have \( s_{2k+1} = \Word{ \PBlock{2k+1} }{ b }{ a } \).}
\item{We have \( \Word{ s_{k} }{ \PBlock{k+1} } = \Word{ s_{k+1} }{ \PBlock{k} } \).}
\item{The word \( \PBlock{k} \) is a palindrome.}
\item{For every \( j \) with \( 1 \leq j < k \), the word \( s_{k} \) can be decomposed into occurrences of \( s_{j} \) and  \( s_{j-1} \), such that the occurrences of \( s_{j-1} \) are isolated.}
\end{tightenumerate}
\end{prop}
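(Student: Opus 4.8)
The plan is to establish parts (a)--(e) by induction on $k$, treating (a) and (b) together first, then (c), then (d), and finally (e) by a separate argument. Throughout I would write $s_m = \PBlock{m}\, x_m$, where $x_m$ is the two-letter suffix removed to form $\PBlock{m}$; parts (a) and (b) say exactly that $x_{2k} = \Word{ a }{ b }$ and $x_{2k+1} = \Word{ b }{ a }$, so in particular $x_m \neq x_{m-1}$ and hence $x_{m-1} = \Rev{x_m}$. The inductive engine for (a) and (b) is that, since $s_{k+1} = \Word{ s_k^{n_{k+1}} }{ s_{k-1} }$, the word $s_{k+1}$ ends with the last two letters of $s_{k-1}$ whenever $\Length{ s_{k-1} } \geq 2$, i.e.\ for $k \geq 3$; as $k+1$ and $k-1$ have the same parity this propagates the claim, and it only remains to check $s_2$ and $s_3$ directly from \eqref{eqn:StrumRecur}: $s_2 = \Word{ (b^{n_1-1}a)^{n_2} }{ b }$ ends in $\Word{ a }{ b }$, while $s_3 = \Word{ s_2^{n_3} }{ s_1 }$ ends in $\Word{ b }{ a }$, where one distinguishes $n_1 \geq 2$ (so $\Length{s_1} \geq 2$ and $s_1$ itself ends in $\Word{ b }{ a }$) from $n_1 = 1$ (so $s_1 = a$ and the penultimate letter of $s_3$ is the final letter $b$ of $s_2$).

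For (c) I would prove $\Word{ s_k }{ \PBlock{k+1} } = \Word{ s_{k+1} }{ \PBlock{k} }$ by reducing it to the same identity at index $k-1$. For $k \geq 3$ one has $\PBlock{k+1} = \Word{ s_k^{n_{k+1}} }{ \PBlock{k-1} }$ (deleting the last two letters of $s_{k+1} = \Word{ s_k^{n_{k+1}} }{ s_{k-1} }$ just deletes them from $s_{k-1}$, using $\Length{s_{k-1}} \geq 2$); substituting this together with $s_{k+1} = \Word{ s_k^{n_{k+1}} }{ s_{k-1} }$, both sides acquire the common prefix $s_k^{n_{k+1}}$, and cancelling it turns the claim into $\Word{ s_k }{ \PBlock{k-1} } = \Word{ s_{k-1} }{ \PBlock{k} }$, which is (c) at index $k-1$. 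The base case $k = 2$, i.e.\ $\Word{ s_2 }{ \PBlock{3} } = \Word{ s_3 }{ \PBlock{2} }$, is a short direct computation with the same $n_1 = 1$ versus $n_1 \geq 2$ split. Equivalently, (c) is the classical statement that $\Word{ s_k }{ s_{k+1} }$ and $\Word{ s_{k+1} }{ s_k }$ agree on their first $\Length{s_k} + \Length{s_{k+1}} - 2$ letters.

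Part (d) is the crux. Rewriting the index-$(k-1)$ instance of (c), $\Word{ s_{k-1} }{ \PBlock{k} } = \Word{ s_k }{ \PBlock{k-1} }$, via $s_m = \PBlock{m}x_m$ and $x_{k-1} = \Rev{x_k}$ from (a)/(b), yields the key identity
\[ \Word{ \PBlock{k} }{ x_k }{ \PBlock{k-1} } = \Word{ \PBlock{k-1} }{ \Rev{x_k} }{ \PBlock{k} } \, . \]
A short sub-induction on the exponent then upgrades this to $\Word{ (\PBlock{k}x_k)^{n} }{ \PBlock{k-1} } = \Word{ \PBlock{k-1} }{ (\Rev{x_k}\PBlock{k})^{n} }$ for all $n$; taking $n = n_{k+1}$ and recalling $\PBlock{k+1} = \Word{ s_k^{n_{k+1}} }{ \PBlock{k-1} } = \Word{ (\PBlock{k}x_k)^{n_{k+1}} }{ \PBlock{k-1} }$ gives $\PBlock{k+1} = \Word{ \PBlock{k-1} }{ (\Rev{x_k}\PBlock{k})^{n_{k+1}} }$. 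Reversing this word and invoking the inductive hypothesis that $\PBlock{k-1}$ and $\PBlock{k}$ are palindromes (so $\Rev{\Rev{x_k}\PBlock{k}} = \PBlock{k}x_k$ and $\Rev{\PBlock{k-1}} = \PBlock{k-1}$) returns $\Word{ (\PBlock{k}x_k)^{n_{k+1}} }{ \PBlock{k-1} } = \PBlock{k+1}$, so $\PBlock{k+1}$ is a palindrome. The base cases $\PBlock{2}$ (empty or very short) and $\PBlock{3}$ must be checked by hand, again with an $n_1$ case split; threading the key identity, the exponent sub-induction, and the reversal step together, alongside the small-index conventions for $\PBlock{m}$ and the short words $s_0,s_1$, is where essentially all of the care in the argument goes.

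Finally, (e) I would prove separately, by induction on $k \geq j$ (for fixed $j$) on the conjunction of two assertions: when $s_k$ is written, via \eqref{eqn:StrumRecur}, as a concatenation of copies of $s_j$ and $s_{j-1}$, this concatenation begins with an $s_j$-block and contains no two consecutive $s_{j-1}$-blocks. The cases $k = j$ (the single block $s_j$) and $k = j+1$ (the block word $s_j^{n_{j+1}}s_{j-1}$, whose lone $s_{j-1}$ is preceded by an $s_j$) are immediate, and in the step $s_{k+1} = \Word{ s_k^{n_{k+1}} }{ s_{k-1} }$ (with $k \geq j+1$, hence $k-1 \geq j$) every block boundary has the shape $(\,\cdot\,)\,|\,s_j$, since each copy of $s_k$ and the trailing $s_{k-1}$ begins with an $s_j$-block by the inductive hypothesis; thus no $\Word{ s_{j-1} }{ s_{j-1} }$ is created and $s_{k+1}$ still begins with $s_j$. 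The main obstacle, as indicated, is part (d).
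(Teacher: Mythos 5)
Your proposal is correct and follows essentially the same route as the paper: (a)/(b) by propagating the two-letter suffix through the recursion, (c) by the same prefix-cancellation/power-commutation identity (you recurse downward where the paper recurses upward), (d) via the two decompositions of \( \PBlock{k+1} \) obtained from (c) together with reversal and the inductive palindromicity of \( \PBlock{k-1} \) and \( \PBlock{k} \), and (e) by the same block-decomposition induction. The only differences are bookkeeping ones — you spell out the base cases and the exponent sub-induction that the paper leaves as "easy to check", and you run the induction in (e) over \( k \) for fixed \( j \) rather than over \( j \) for fixed \( k \) — so no substantive gap.
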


\begin{proof}
We proceed by induction. Using Equation~(\ref{eqn:StrumRecur}) it is easy to check the claims for small \( k \in \NN \). Moreover \( s_{k} \) is a suffix of \( s_{k+2} \), which immediately implies (a) and (b). Now assume that (c) holds for some \( k \). Again by Equation~(\ref{eqn:StrumRecur}) we have \( \PBlock{ k+2 } = \Word{ s_{k+1}^{n_{k+2}} }{ \PBlock{k} } \), which yields
\[\Word{ s_{k+1} }{ \PBlock{k+2} } =\Word{  s_{k+1} }{ s_{k+1}^{n_{k+2}} }{ \PBlock{k} } = \Word{ s_{k+1}^{n_{k+2}} }{  s_{k+1} }{ \PBlock{k} } = \Word{ s_{k+1}^{n_{k+2}} }{  s_{k} }{ \PBlock{k+1} } = \Word{  s_{k+2} }{ \PBlock{k+1} } \, . \]
To prove (d), assume that \( \PBlock{k} \) and \( \PBlock{k-1} \) are palindromes. Let \( v_{k} \) denote the last two letters of \( s_{k} \). Because of (c) we can decompose \( \PBlock{ k+1 } = \Word{ s_{k}^{n_{k+1}} }{ \PBlock{k-1} } \) in two different ways:
\[
( \Word{ \PBlock{k} }{ v_{k} } )^{n_{k+1}} \, \PBlock{k-1} = \Word{ s_{k}^{n_{k+1}} }{ \PBlock{k-1} } = \PBlock{k-1} \, ( \Word{ v_{k-1} }{ \PBlock{k} } )^{n_{k+1}} \, .\]
Since \( \PBlock{k} \) and \( \PBlock{k-1} \) are palindromes and \( \Rev{ v_{k-1} } = v_{k} \) holds, the claim follows. Finally note that for every fixed \( k \), property (e) holds for \( j=k-1 \) by Equation~(\ref{eqn:StrumRecur}). Inductively, (e) can be proved for all \( j \) by decomposing \( s_{j} \) into \( s_{j-1} \) and \( s_{j-2} \), which finishes the proof.
\end{proof}

\begin{cor}
\label{cor:LWordInSturm}
Let \( \Rev{\PBlock{\infty}} \in \Alphab^{ -\NN } \) be defined by \( \Rev{\PBlock{\infty}}( -j ) = \PBlock{\infty}( j ) \). Then the two-sided infinite words \( \LWord{1} \DefAs \Word{ \Rev{\PBlock{\infty}} }{ a }{\Origin{ b }}{ \PBlock{\infty} } \) and \( \LWord{2} \DefAs \Word{ \Rev{\PBlock{\infty}} }{ b }{\Origin{ a }}{ \PBlock{\infty} } \) are elements of the Sturmian subshift \( \Subshift( \PBlock{\infty} ) \).
\end{cor}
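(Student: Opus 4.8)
The plan is to verify directly that $\Langu{\LWord{i}} \subseteq \Langu{\PBlock{\infty}}$ for $i \in \Set{1,2}$; by Definition~\ref{defi:SubshiftLangu} this is exactly the assertion $\LWord{i} \in \Subshift(\PBlock{\infty})$. Since every finite subword of $\LWord{i}$ is contained in some central block $\Restr{\LWord{i}}{-J}{J}$ with $J \in \NN$, it suffices to show that each such central block lies in $\Langu{\PBlock{\infty}}$.

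First I would identify the central blocks of $\LWord{1}$ and $\LWord{2}$ explicitly. Fix $k \geq 2$. Since $\PBlock{k}$ is a prefix of $\PBlock{\infty}$, it is the length-$\Length{\PBlock{k}}$ prefix of $\PBlock{\infty}$, and by palindromicity (Proposition~\ref{prop:SturmPali}~(d)) one has $\Rev{\PBlock{k}} = \PBlock{k}$; reading off the definition of $\Rev{\PBlock{\infty}}$ and of $\LWord{i}$ position by position around the origin then gives
\[ \Restr{ \LWord{1} }{ -\Length{\PBlock{k}}-1 }{ \Length{\PBlock{k}} } = \Word{ \PBlock{k} }{ a }{ b }{ \PBlock{k} } \qquad \text{and} \qquad \Restr{ \LWord{2} }{ -\Length{\PBlock{k}}-1 }{ \Length{\PBlock{k}} } = \Word{ \PBlock{k} }{ b }{ a }{ \PBlock{k} } \, . \]
Now I would choose the parity of $k$ to match $i$. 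For even $k$, Proposition~\ref{prop:SturmPali}~(a) gives $s_{k} = \Word{\PBlock{k}}{a}{b}$, so the displayed central block of $\LWord{1}$ equals $\Word{s_{k}}{\PBlock{k}}$; for odd $k$, Proposition~\ref{prop:SturmPali}~(b) gives $s_{k} = \Word{\PBlock{k}}{b}{a}$, so the displayed central block of $\LWord{2}$ equals $\Word{s_{k}}{\PBlock{k}}$. Hence in both cases it remains to show that $\Word{s_{k}}{\PBlock{k}} \in \Langu{\PBlock{\infty}}$ for arbitrarily large $k$ of the required parity.

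For this I would build a chain of prefix relations from the recursion $s_{k+1} = \Word{s_k^{n_{k+1}}}{s_{k-1}}$ and Proposition~\ref{prop:SturmPali}~(c). Since $\PBlock{k}$ is a prefix of $\PBlock{k+1}$, the word $\Word{s_k}{\PBlock{k}}$ is a prefix of $\Word{s_k}{\PBlock{k+1}} = \Word{s_{k+1}}{\PBlock{k}}$; since $\PBlock{k}$ is a prefix of $s_k$, this is a prefix of $\Word{s_{k+1}}{s_k}$; and since $s_k$ is a prefix of $s_{k+1}$, the word $\Word{s_{k+1}}{s_k}$ is a prefix of $s_{k+1}^{n_{k+2}}\,s_k = s_{k+2}$. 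Finally, for all sufficiently large $k$ one has $\Length{s_{k+1}} \geq 3$, so deleting the last two letters in the definition $\PBlock{k+3} = \Restr{s_{k+3}}{1}{\Length{s_{k+3}}-2}$, where $s_{k+3} = \Word{s_{k+2}^{n_{k+3}}}{s_{k+1}}$, does not reach the leading block $s_{k+2}$; hence $s_{k+2}$ is a prefix of $\PBlock{k+3}$, which is a prefix of $\PBlock{\infty}$. Chaining these inclusions, $\Word{s_k}{\PBlock{k}}$ is a prefix of $\PBlock{\infty}$, in particular an element of $\Langu{\PBlock{\infty}}$, which finishes the argument.

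The only mildly delicate point is the bookkeeping in this prefix chain — in particular ensuring that removing two letters in the definition $\PBlock{m} = \Restr{s_m}{1}{\Length{s_m}-2}$ never disturbs the prefix one needs, which is why the argument is run for large $k$ only. The position-by-position identification of the central blocks of $\LWord{1}$ and $\LWord{2}$ is purely mechanical once one unwinds the definition of $\Rev{\PBlock{\infty}}$ together with $\Rev{\PBlock{k}} = \PBlock{k}$.
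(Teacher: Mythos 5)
Your proposal is correct, and it shares the paper's overall outline --- identify the central blocks of \( \LWord{1} \) and \( \LWord{2} \) as \( \Word{ \PBlock{k} }{ a }{ b }{ \PBlock{k} } \) and \( \Word{ \PBlock{k} }{ b }{ a }{ \PBlock{k} } \) via palindromicity of \( \PBlock{k} \), then show these words lie in \( \Langu{ \PBlock{\infty} } \) --- but the embedding step is carried out differently. The paper exhibits both central words, for all \( k \) at once, at the junctions in the decompositions \( s_{2k+1} = \Word{ \ldots }{ s_{2k} }{ s_{2k-1} } \) and \( s_{2k+2} = \Word{ \ldots }{ s_{2k+1} }{ s_{2k} } \), using that \( \PBlock{m-1} \) is a suffix of \( \PBlock{m} \) (prefix plus palindrome); this takes two lines. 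You instead observe that, when the parity of \( k \) matches \( i \), the central block equals \( \Word{ s_{k} }{ \PBlock{k} } \), and you prove the sharper statement that \( \Word{ s_{k} }{ \PBlock{k} } \) is a \emph{prefix} of \( \PBlock{\infty} \) by chaining \( \Word{ s_{k} }{ \PBlock{k} } \preceq \Word{ s_{k} }{ \PBlock{k+1} } = \Word{ s_{k+1} }{ \PBlock{k} } \preceq \Word{ s_{k+1} }{ s_{k} } \preceq s_{k+2} \preceq \PBlock{k+3} \preceq \PBlock{\infty} \), with Proposition~\ref{prop:SturmPali}~(c) supplying the middle identity. The parity restriction is harmless, since for each radius \( J \) you may take \( k \) of the matching parity with \( \Length{ \PBlock{k} } \geq J \), and the small case checks go through: for \( n_{k+2} = 1 \) one has \( \Word{ s_{k+1} }{ s_{k} } = s_{k+2} \) exactly, and \( \Length{ s_{k+1} } \geq 2 \) already guarantees that deleting the last two letters of \( s_{k+3} = \Word{ s_{k+2}^{n_{k+3}} }{ s_{k+1} } \) does not reach the leading \( s_{k+2} \). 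Your route costs a bit more bookkeeping than the paper's junction argument, but it buys the slightly stronger conclusion that the central blocks occur as prefixes of \( \PBlock{\infty} \), not merely as subwords.
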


\begin{proof}
Since all \( \PBlock{k} \) are palindromes and \( \PBlock{k-1} \) is a prefix of \( \PBlock{k} \), it is a suffix of \( \PBlock{k} \) as well. Combining Equation~(\ref{eqn:StrumRecur}) with Proposition~\ref{prop:SturmPali} we obtain the following decompositions:
\begin{alignat*}{5}
s_{2k+1} &= \Word{ \ldots }{ s_{2k} }{ s_{2k-1} } &&= \Word{ \ldots }{ \PBlock{2k} }{ a }{ b }{ \PBlock{2k-1} }{ b }{ a }&& = \Word{ \ldots }{ \PBlock{2k-1} }{ a }{ b }{ \PBlock{2k-1} }{ \ldots}\; , \\
s_{2k+2} &= \Word{ \ldots }{ s_{2k+1} }{ s_{2k} } &&= \Word{ \ldots }{ \PBlock{2k+1} }{ b }{ a }{ \PBlock{2k} }{ a }{ b }&& = \Word{ \ldots }{ \PBlock{2k} }{ b }{ a }{ \PBlock{2k} }{ \ldots} \; .
\end{alignat*}
Thus \( \Word{ \PBlock{k} }{ a }{ b }{ \PBlock{k} } \) and \( \Word{ \PBlock{k} }{ b }{ a }{ \PBlock{k} } \) are in the language of the subshift for all \( k \). In addition the palindromicity of \( \PBlock{k} \) implies that the ``left limit'' \( \lim_{k \to \infty} \PBlock{k} \in \Alphab^{-\NN} \) is equal to \( \Rev{\PBlock{\infty}} \).
\end{proof}

\chapter[The family of Grigorchuk's groups and their Schreier graphs][Grigorchuk's groups and Schreier graphs]{The family of Grigorchuk's groups and their Schreier graphs}
\label{app:SelfSimGr}

\pagestyle{ruled} 

This appendix gives a brief overview over certain self-similar groups and their connection to simple Toeplitz subshifts. It is by no means an exhaustive treatment of the numerous topics that are related to self-similar groups. More information and additional details can for example be found in \cite{Mur_IntroSelfSimGr}, \cite{GNS_GrSpecSets} or \cite{BGS_BranchGr}. Here we merely give the basic definitions and highlight some results that concern the relation to simple Toeplitz subshifts. Essentially the connection between these two topics is caused by similarities between the subshifts on the one hand and certain graphs that are associated to self-similar groups on the other hand, the so-called Schreier graphs. Because of these similarities, the Jacobi operators on simple Toeplitz subshifts are unitarily equivalent to Laplacians on the Schreier graphs. Thus our result on Cantor spectrum, see Theorem~\ref{thm:LCondUniform}, yields new insight into self-similar groups as well. The aim of this appendix is to sketch the background for this application of our spectral result. It also motivates our definition of the (generalised) Grigorchuk subshift. Nevertheless the content of this appendix is entirely supplementary and not necessary for the understanding of any of the other chapters.

\section{The family of Grigorchuk's groups}

Self-similar groups are groups which act on a regular tree in a self-similar way. More precisely, let \( X \) be a finite set of cardinality \( r \geq 2 \). The \emph{\(r\)-regular rooted tree}\index{tree} is an undirected graph whose vertex set is \( \cup_{k = 0}^{\infty} X^{k} \). The root is the empty word \( \epsilon \). Between two vertices there is an edge if and only if they are of the form \( u \) and \( \Word{ u }{ a } \) with \( u \in \cup_{k = 0}^{\infty} X^{k} \) and \( a \in X \). As an example Figure~\ref{fig:GrigGrActionA} shows the binary tree (that is, \( r=2 \)) with \( X = \Set{ 0 , 1 } \). The background shading and the arrows can be ignored for now. A bijective map from the vertex set of a regular rooted tree to itself is called an \emph{automorphism of the tree} if neighbouring vertices are mapped to neighbouring vertices. In particular every automorphism preserves the root.

\begin{defi}
A group \( G \) of automorphisms on a regular rooted tree is called a \emph{self-similar group}\index{self-similar group}\index{group!self-similar} if, for every \( g \in G \) and every \( a \in X \), there exist elements \( h \in G \) and \( b \in X \) such that \( g( a u ) = b h( u ) \) holds for all \( u \in \cup_{k = 0}^{\infty} X^{k} \).
\end{defi}

In other words, for every group element \( g \) and every \( a \in X \), there is a group element \( h \) such that \( g \) acts on the subtree below \( a \) in the same way as \( h \) acts on the whole tree. Since self-similar groups often have interesting properties such as intermediate growth or non-elementary amenability, they have been heavily studied in the last decades, see for instance \cite{BGN_FractalGrSets}, \cite{BGS_BranchGr}, \cite{Nekrash_SelfSimGr} and the references therein. An important example is \emph{Grigorchuk's group}\index{Grigorchuk!group}\index{group!Grigorchuk's}, a self-similar group on the binary tree that was introduced in \cite{Grig_BurnsideRuss}. We recall its definition in the example below. More information can for instance be found in \cite{Grig_SolvUnsolv}.

\begin{exmpl}[Grigorchuk's group]
Consider the binary tree with \(  X = \Set{ 0, 1 } \). Grigorchuk's group is generated by four elements \( a \), \( b \), \( c \) and \( d \) which satisfy
\begin{align*}
a( \Word{ 0 }{ u } ) &= \Word{ 1 }{ u } \, , & b( \Word{ 0 }{ u } ) &= \Word{ 0 }{ a( u ) } \, , & c( \Word{ 0 }{ u } ) &= \Word{ 0 }{ a( u ) } \, , & d( \Word{ 0 }{ u } ) &= \Word{ 0 }{ u } \, , \\
a( \Word{ 1 }{ u } ) &= \Word{ 0 }{ u } \, , & b( \Word{ 1 }{ u } ) &= \Word{ 1 }{ c( u ) } \, , & c( \Word{ 1 }{ u } ) &= \Word{ 1 }{ d( u ) } \, , & d( \Word{ 1 }{ u } ) &= \Word{ 1 }{ b( u ) } \, .
\end{align*}
We see that the generator \( a \) maps each vertex to the vertex where the first letter is flipped, that is, it interchanges the subtree below \( 0 \) with the subtree below \( 1 \) (see Figure~\ref{fig:GrigGrActionA}). Clearly the action of \( a \) on \( \cup_{k = 0}^{\infty} X^{k} \) is an involution.

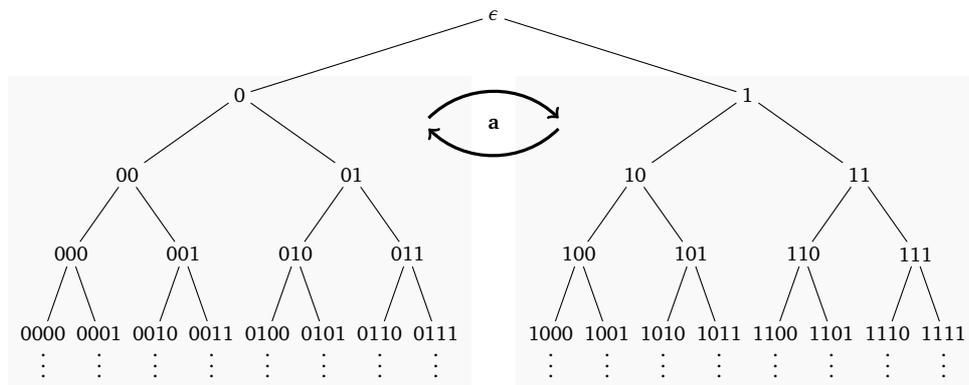
\begin{figure}
\centering
\scriptsize
\begin{tikzpicture}
[level distance=30, level 1/.style={sibling distance=190}, level 2/.style={sibling distance=84}, level 3/.style={sibling distance=42}, level 4/.style={sibling distance=21},
every node/.style ={outer sep=0pt, inner sep=2pt, rectangle},
every on background layer/.style={every node/.style ={fill=gray!5}}]
\node{\( \epsilon \)}
child foreach \a in {0,1}{
	node(\a){\a}
	child foreach \b in {0,1}{
		node {\a \b }
		child foreach \c in {0,1}{
			node {\a \b \c}
			child foreach \d in {0,1}{
				node(\a \b \c \d){\a \b \c \d}
				node (\a \b \c \d b) [below=-5pt of {\a \b \c \d}] {\( \vdots \)}
				}}}};
\begin{scope}[on background layer]
	\node(background0) [fit=(0) (0000) (0111) (0000b) (0111b)] {};
	\node(background1) [fit=(1) (1000) (1111) (1000b) (1111b)] {};
\end{scope}
\node (arrow0) [below left = 0.8 of background0.north east] {};
\node (arrow1) [below right = 0.8 of background1.north west] {};
\draw [very thick, ->] (arrow0) to [out=45,in=135] (arrow1);
\draw [very thick, ->] (arrow1) to [out=225,in=315] (arrow0);
\path (arrow0) -- (arrow1) node[midway]{\( \mathbf{a} \)};
\end{tikzpicture}
\normalsize
\caption{The  generator \( a \) of Grigorchuk's group, acting on the binary tree.\label{fig:GrigGrActionA}}
\end{figure}

\begin{figure}
\centering
\footnotesize
%
\begin{minipage}[b]{0.6\textwidth}
\centering
\begin{tikzpicture}
[level distance=30, level/.style={sibling distance=120/(2^((#1)-1))}, 
every node/.style ={outer sep=0pt, inner sep=2pt},
vertex/.style ={circle, inner sep=1pt, fill},
every on background layer/.style={every node/.style ={rectangle, fill=gray!5}}]
\node[vertex]{}
	child foreach \a in {0,1}{
		node[vertex](\a){}
		child foreach \b in {0,1}{
			node[vertex](\a \b){}
			child foreach \c in {0,1}{
				node[vertex](\a \b \c){}
				child foreach \d in {0,1}{
					node[vertex](\a \b \c \d){}
					node (\a \b \c \d b) [below=-3pt of {\a \b \c \d}] {\( \vdots \)}
					}}}};
\begin{scope}[on background layer]
	\node(background00) [fit=(00) (0000b) (0011b)] {};
	\node(background01) [fit=(01) (0100b) (0111b)] {};
	\node(background100) [fit=(100) (1000b) (1001b)] {};
	\node(background101) [fit=(101) (1010b) (1011b)] {};
	\node(background110) [fit=(110) (1100b) (1101b), label=] {\\[4 ex] \(\mathbf{id}\)};
	\node(background111) [fit=(111) (1110b) (1111b)] {\\[4 ex] \(\mathbf{b}\)};
\end{scope}
\node (arrow00) [below left = 0.4 of background00.north east] {};
\node (arrow01) [below right = 0.4 of background01.north west] {};
\node (arrow100) [below left = 0.2 of background100.north east] {};
\node (arrow101) [below right = 0.2 of background101.north west] {};
\draw [thick, ->] (arrow00) to [out=45,in=135] (arrow01);
\draw [thick, ->] (arrow01) to [out=225,in=315] (arrow00);
\path (arrow00) -- (arrow01) node[midway]{\( \mathbf{a} \)};
\draw [thick, ->] (arrow100) to [out=45,in=135] (arrow101);
\draw [thick, ->] (arrow101) to [out=225,in=315] (arrow100);
\path (arrow100) -- (arrow101) node[midway]{\( \mathbf{a} \)};
\end{tikzpicture}
\subcaption{The action of \( b \) on the binary tree.\label{subfig:GrigGrActionB}}
\end{minipage}
\hfill
%
\begin{minipage}[b]{0.3\textwidth}
\centering
\begin{tikzpicture}
[level distance = 15, sibling distance=20, 
every node/.style ={circle, outer sep=0pt, inner sep=1pt, fill, minimum height=0pt,},
every label/.style ={label distance=0.5ex, draw=none, fill=none, rectangle}]
\node{}
child{
	node[label=below:\( a \)]{} }
child{ node{}
	child{ node[label=below:\( a \)]{} }
	child{ node{}
		child{ node[label=below:\( id \)]{} }
		child{ node{}
			child{ node[label=below:\( a \)]{} }
			child{ node{}
				child{ node[label=below:\( a \)]{} }
				child{ node{}
					child{ node[label=below:\( id \)]{} }
					child{ node[label=below:\( \ddots \)]{} }}}}}};
\end{tikzpicture}
\subcaption{Portrait of \( b \).\label{subfig:GrigGrPortraitB}}
\end{minipage}
\normalsize
\caption{The generator \( b \) of Grigorchuk's group.\label{fig:GrigGrB}}
\end{figure}

Now we discuss the action of \( b \), see Figure~\ref{subfig:GrigGrActionB}: the action of \( b \) on the subtree below \( 0 \) is the same as the action of \( a \) on the whole tree, that is, \( b \) interchanges the two branches. The action of \( b \) below \( 1 \) is the same as the action of \( c \) on the whole tree, that is, \( b \) acts below \( \Word{ 1 }{ 0 } \) like \( a \) and below \( \Word{ 1 }{ 1 } \) like \( d \). The definition of \( d \) implies that \( b \) acts below \( \Word{ 1 }{ 1 }{ 0 } \) like the identity. Moreover, \( b \) acts below \( \Word{ 1 }{ 1 }{ 1 } \) as it acts on the whole tree, that is, our the considerations have to be repeated for the vertices below \( \Word{ 1 }{ 1 }{ 1 } \). In Figure~\ref{subfig:GrigGrPortraitB} this action is depicted in the so-called \emph{portrait}\index{portrait of a generator} of \( b \). Since the action of \( a \) is an involution and since \( b \) acts on each subtree either like \( a \) or like the identity, the action of \( b \) is an involution as well. Similarly the actions of \( c \) and \( d \) are involutions, too.
\end{exmpl}

In \cite{Grig_DegrOfGrowthRuss}, the definition above was generalised to a whole family \( (G_{\GenSeq}) \) of groups. We review their definition in the next example.

\begin{defi}[\emph{family of Grigorchuk's groups}\index{Grigorchuk!family of groups}\index{group!family of Grigorchuk's groups}]
\label{defi:DefFamGrigGr}
Let \( a \),  \( b \),  \( c \) and \( d \) denote the generators of Grigorchuk's group (see previous example). Let \( \GenSeq = (\GenSeq_{k}) \in \Set{ \pi_{b}, \pi_{c}, \pi_{d} }^{\NN} \) denote a sequence whose values are the maps
\[ \pi_{b} \colon b \mapsto id ,\, c \mapsto a , \, d \mapsto a \;\; , \;\;\; \pi_{c} \colon b \mapsto a ,\, c \mapsto id , \, d \mapsto a \;\; , \;\;\; \pi_{d} \colon b \mapsto a ,\, c \mapsto a , \, d \mapsto id \, . \]
Later we will see that \( (\GenSeq_{k}) \) is related to the coding sequence \( (a_{k} ) \) of a simple Toeplitz subshift.

\begin{figure}
\centering
\footnotesize
%
\begin{minipage}[b]{0.3\textwidth}
\centering
\begin{tikzpicture}
[every node/.style ={inner sep=1pt, minimum height=0pt, fill, circle}, every label/.style ={label distance=0.5ex, draw=none, fill=none, rectangle}, level distance = 20, sibling distance=35]
\node{}
	child{ node[label=below:\( \GenSeq_{1}( b ) \)]{}
	}
	child{ node{}
		child{ node[label=below:\( \GenSeq_{2}( b ) \)]{}
		}
		child{ node{}
			child{ node[label=below:\( \GenSeq_{3}( b ) \)]{} }
			child{ node[label=below:\( \ddots \)]{} }
		}
	};
\end{tikzpicture}
\subcaption{The generator \( \hat{b} \).}
\end{minipage}
\hfill
%
\begin{minipage}[b]{0.3\textwidth}
\centering
\begin{tikzpicture}
[every node/.style ={inner sep=1pt, minimum height=0pt, fill, circle}, every label/.style ={label distance=0.5ex, draw=none, fill=none, rectangle}, level distance = 20, sibling distance=35]
\node{}
	child{ node[label=below:\( \GenSeq_{1}( c ) \)]{}
	}
	child{ node{}
		child{ node[label=below:\( \GenSeq_{2}( c ) \)]{}
		}
		child{ node{}
			child{ node[label=below:\( \GenSeq_{3}( c ) \)]{} }
			child{ node[label=below:\( \ddots \)]{} }
		}
	};
\end{tikzpicture}
\subcaption{The generator \( \hat{c} \).}
\end{minipage}
\hfill
%
\begin{minipage}[b]{0.3\textwidth}
\centering
\begin{tikzpicture}
[every node/.style ={inner sep=1pt, minimum height=0pt, fill, circle}, every label/.style ={label distance=0.5ex, draw=none, fill=none, rectangle}, level distance = 20, sibling distance=35]
\node{}
	child{ node[label=below:\( \GenSeq_{1}( d ) \)]{}
	}
	child{ node{}
		child{ node[label=below:\( \GenSeq_{2}( d ) \)]{}
		}
		child{ node{}
			child{ node[label=below:\( \GenSeq_{3}( d ) \)]{} }
			child{ node[label=below:\( \ddots \)]{} }
		}
	};
\end{tikzpicture}
\subcaption{The generator \( \hat{d} \).}
\end{minipage}
\normalsize
\caption{The generators \( \hat{b} \), \( \hat{c} \) and \( \hat{d} \) of the group \( G_{\InfWord} \).\label{fig:FamGrigGenerat}}
\end{figure}

For every sequence \( (\GenSeq_{k}) \) we now define a group \( G_{\GenSeq} \) of automorphisms of the binary tree. Each group is given by four generators \( a \), \( \hat{b} \), \( \hat{c} \) and \( \hat{d} \), where \( a \) is as above and the latter three are defined as follows: the action of \( \hat{b} \) below \( 0 \) is given by \( \GenSeq_{1}(b) \), below \( \Word{ 1 }{ 0 } \) by \( \GenSeq_{2}(b) \),  below \( \Word{ 1 }{ 1 }{ 0 } \) by \( \GenSeq_{3}(b) \) and so on. Similarly, the action of \( \hat{c} \) is defined by \( ( \GenSeq_{1}(c), \GenSeq_{2}(c), \GenSeq_{3}(c), \ldots ) \) and the action of \( \hat{d} \) by \( ( \GenSeq_{1}(d), \GenSeq_{2}(d), \GenSeq_{3}(d), \ldots ) \), see Figure~\ref{fig:FamGrigGenerat}. Note that the action of every generator is an involution. Note also that the family of groups contains in particular Grigorchuk's group, which corresponds to the periodic sequence \( \GenSeq = (\pi_{d}, \pi_{c}, \pi_{b} , \ldots ) \).
\end{defi}

\section{Schreier graphs and Laplacians}
\label{sec:AppSchrGraphLapl}

There is a connection between Toeplitz subshifts and self-similar groups, which has lately received increased attention, for instance in \cite{MBon_TopoFullGr} and \cite{GLN_SpectraSchreierAndSO}. The connection is based on the groups' Schreier graphs. In the following we recall their definition for the family of Grigorchuk's groups. We also mention a Boshernitzan condition for group actions, which is analogous to our characterisation of the Boshernitzan condition in Corollary~\ref{cor:BoshCondGenGrig}.

\begin{defi}
Let \( G_{\GenSeq} \) be a member of the family of Grigorchuk's groups. The group action on the binary tree \( \cup_{k = 0}^{\infty} \Set{ 0 ,1 }^{k} \) induces an action on the boundary \( \Set{ 0 , 1 }^{\NN} \) as well. We can therefore define the following directed, labelled graph with vertices given by \( ( \cup_{k=0}^{\infty} \Set{ 0 , 1 }^{k} ) \cup \Set{ 0 , 1 }^{\NN} \): there is an edge labelled by \( s \in \Set{ a , \hat{b} , \hat{c} , \hat{d} } \) from a vertex \( u \) to a vertex \( v \) if and only if \( s \) acts by \( s( u ) = v \). The connected component of a vertex \( u \in \Set{ 0 , 1 }^{k} \) is the \( k \)-th level of the tree and is called the \emph{level-\( k \) Schreier graph}\index{Schreier graph!level-\( k \)}\index{graph!Schreier graph!level-\( k \)} of \( G_{\GenSeq} \). The connected component of a vertex \( u \in \Set{ 0 , 1 }^{\NN} \) is its orbit under the group action on \( \Set{ 0 , 1 }^{\NN} \) and is called an \emph{orbital Schreier graph}\index{Schreier graph!orbital}\index{graph!Schreier graph!orbital} of \( G_{\GenSeq} \).
\end{defi}

\begin{rem}
Since \( a \), \( \hat{b} \), \( \hat{c} \) and \( \hat{d} \) act by involutions, we actually consider the edges as undirected. Moreover, for every Schreier graph we choose one of its vertices as the root. Hence we consider Schreier graphs as rooted, undirected, labelled graphs. The group \( G_{\GenSeq} \) acts on the set of these graphs by shifting the root according to the edge labels.
\end{rem}

Recall from Example~\ref{exmpl:defGenGrigSubsh} that the generalised Grigorchuk subshift is described by the limit of the words \( \PBlock{k+1} \DefAs \Word{ \PBlock{k} }{ a_{k+1} }{ \PBlock{k} } \). Note how the word ``at level \( k+1 \)'' is given by two copies of word ``at level \( k \)'', which are connected by \( a_{k+1} \). A similar structure can be found in the Schreier graphs that are associated to the family of Grigorchuk's groups:

\begin{exmpl}[family of Grigorchuk's groups]
Without proofs, we quickly review the shape of the Schreier graphs associated to \( G_{\GenSeq} \). More details can for example be found in \cite{Voro_SchrGrGrigGr} or in \cite[Subsection~1.2]{GLN_SpectraSchreierAndSO}. The level-\( (k+1) \) Schreier graph consists of two copies of the level-\( k \) Schreier graph, connected in a way that is specified by \( \GenSeq_{k} \). Consequently, all level-\(k\) Schreier graphs of \( G_{\GenSeq} \) are ``essentially linear'' and the connection between every second pair of neighbours corresponds to the action of the generator \( a \). Of the remaining connections, every second one corresponds to the action of \( \GenSeq_{1} \). Of those connections still remaining after that, every second one corresponds to the action of \( \GenSeq_{2} \), and so on.

To discuss orbital Schreier graphs, we write \( \overline{0} \DefAs \Word{ 0 }{ 0 }{ 0 }{ \ldots }\) respectively \( \overline{1} \DefAs \Word{ 1 }{ 1 }{ 1 }{ \ldots } \) for a constant one-sided infinite word. The orbital Schreier graph that corresponds to the rightmost ray \( \overline{1} \) in the binary tree is one-sided infinite. It is shown in Figure~\ref{subfig:OneSideSchreier} for the special case of Grigorchuk's group. While all other graphs in the orbit of \( \overline{1} \) under the group action are one-sided as well, all graphs in different orbits are two-sided infinite. They differ in their labels, but as unlabelled graphs they are all isomorphic and look like the graph sketched in Figure~\ref{subfig:TwoSideSchreier}.
\end{exmpl}

\begin{figure}
\centering
\footnotesize
%
\begin{minipage}[b]{0.9\textwidth}
\centering
\pgfmathsetmacro{\TikzNodeDist}{1} 
\pgfmathsetmacro{\TikzNumNodes}{11} 
\pgfmathsetmacro{\TikzLoopH}{0.8} 
\begin{tikzpicture}
[Vertex/.style ={inner sep=1pt, minimum height=0pt, fill, circle},
Label/.style ={inner sep=2pt,  draw=none, fill=none, rectangle}, 
Loop/.style={to path={ .. controls +(0.4, \TikzLoopH) and +(-0.4, \TikzLoopH) ..node[midway, above, Label]{ \(#1\) } (\tikztotarget)}},
CurveUp/.style={to path={ .. controls +(0.4, 0.15) and +(-0.4, 0.15) .. (\tikztotarget) node[midway, above, Label]{ \(#1\) } }},
CurveDown/.style={to path={ .. controls +(0.4, -0.15) and +(-0.4, -0.15) .. (\tikztotarget) node[midway, below, Label]{ \(#1\) } }}]
\def\VertexName{{"1111","0111","0011","1011","1001","0001","0101","1101","1100","0100","0000","1000","1010","0010","0110","1110"}}
\node [Vertex, label={[yshift=-2ex, xshift=-2ex, font=\scriptsize]below left:\(1111 \overline{1}\)}](1) at (\TikzNodeDist , 0){};
\path[draw](1)
	.. controls +(-0.4 , \TikzLoopH) and +(0.4 , \TikzLoopH) .. node[midway, above, Label]{\( b \)}(1)
	.. controls +(-\TikzLoopH , 0.4) and +(-\TikzLoopH, -0.4) .. node[midway, left, Label]{\( c \)} (1)
	.. controls +(-0.4 , -\TikzLoopH) and +(0.4 , -\TikzLoopH) .. node[midway, below, Label]{\( d \)}(1);
\foreach \x in {2,...,\TikzNumNodes}{
	\pgfmathparse{\VertexName[int(\x-1)]} 
	\pgfmathsetmacro{\NameVar}{\pgfmathresult} 
	\node [Vertex, label={[yshift=-2ex, font=\scriptsize]below:\NameVar \(\overline{1}\)}](\x) at (\x*\TikzNodeDist , 0){};}
\foreach \end [evaluate=\end as \start using \end-1] in {2,4,...,\TikzNumNodes}{
	\path[draw] (\start) edge node[Label, above] {\( a \)} (\end);}
\foreach \end [evaluate=\end as \start using int(\end-1)] in {3,7,...,\TikzNumNodes}{
	\path[draw]	(\end)	edge[Loop=d] (\end)
				(\start)	edge[Loop=d] (\start)
						edge[CurveUp=b] (\end)
						edge[CurveDown=c] (\end);}
\path[draw] (5) edge[Loop=c] (5)
	(4) 	edge[Loop=c] (4)
		edge[CurveUp=b] (5)
		edge[CurveDown=d] (5);
\path[draw] (9) edge[Loop=b] (9)
	(8) 	edge[Loop=b] (8)
		edge[CurveUp=c] (9)
		edge[CurveDown=d] (9);
\node (End) [draw=none, fill=none] at (\TikzNumNodes*\TikzNodeDist + \TikzNodeDist/2 , 0){};
\draw[dotted, thick] (\TikzNumNodes)--(End);
\end{tikzpicture}
\vspace{-1ex}
\subcaption{The one-sided infinite graph associated to \( \overline{1} \).\label{subfig:OneSideSchreier}}
\vspace{2ex}
\end{minipage}
%
\begin{minipage}[b]{0.9\textwidth}
\centering
\pgfmathsetmacro{\TikzNodeDist}{1} 
\pgfmathsetmacro{\TikzNumNodes}{11} 
\pgfmathsetmacro{\TikzLoopH}{0.8} 
\begin{tikzpicture}
[Vertex/.style ={inner sep=1pt, minimum height=0pt, fill, circle},
Label/.style ={inner sep=2pt,  draw=none, fill=none, rectangle}, 
Loop/.style={to path={ .. controls +(0.4, \TikzLoopH) and +(-0.4, \TikzLoopH) ..node[midway, above, Label]{ \(#1\) } (\tikztotarget)}},
CurveUp/.style={to path={ .. controls +(0.4, 0.15) and +(-0.4, 0.15) .. (\tikztotarget) node[midway, above, Label]{ \(#1\) } }},
CurveDown/.style={to path={ .. controls +(0.4, -0.15) and +(-0.4, -0.15) .. (\tikztotarget) node[midway, below, Label]{ \(#1\) } }}]
\def\VertexName{{"1001","0001","0101","1101","1100","0100","0000","1000","1010","0010","0110"}}
\foreach \x in {1,...,\TikzNumNodes}{
	\pgfmathparse{\VertexName[int(\x-1)]} 
	\pgfmathsetmacro{\NameVar}{\pgfmathresult}
	\node [Vertex, label={[yshift=-2ex, font=\scriptsize]below:\NameVar \(\overline{0}\)}](\x) at (\x*\TikzNodeDist , 0){};}
\node (Start) [draw=none, fill=none] at (\TikzNodeDist/2 , 0){};
\draw[dotted, thick] (Start)--(1);
\foreach \end [evaluate=\end as \start using \end-1] in {2,4,...,\TikzNumNodes}{
	\path[draw] (\start) edge node[Label, above] {\( a \)} (\end);}
\foreach \end [evaluate=\end as \start using int(\end-1)] in {3,7,...,\TikzNumNodes}{
	\path[draw]	(\end)	edge[Loop=d] (\end)
				(\start)	edge[Loop=d] (\start)
						edge[CurveUp=b] (\end)
						edge[CurveDown=c] (\end);}
\path[draw] (9) edge[Loop=c] (9)
	(8) 	edge[Loop=c] (8)
		edge[CurveUp=b] (9)
		edge[CurveDown=d] (9);						
\path[draw] (5) edge[Loop=b] (5)
	(4) 	edge[Loop=b] (4)
		edge[CurveUp=c] (5)
		edge[CurveDown=d] (5);
\node (End) [draw=none, fill=none] at (\TikzNumNodes*\TikzNodeDist + \TikzNodeDist/2 , 0){};
\draw[dotted, thick] (\TikzNumNodes)--(End);
\end{tikzpicture}
\subcaption{A two-sided infinite Schreier graph in the orbit of \( \overline{0} \).\label{subfig:TwoSideSchreier}}
\end{minipage}
\normalsize
\caption{An example of a one-sided infinite Schreier graph and of a two-sided infinite Schreier graph, see also \cite[Figure~2 and~3]{GLN_SpectraSchreierAndSO} and \cite[Figure~2]{Voro_SchrGrGrigGr}.}
\end{figure}

\begin{rem}
By Definition~\ref{defi:SToepHoleFill} we distinguish normal and extended Toeplitz words. The latter have a remaining undetermined position after the hole-filling process and correspond to the leading words in a simple Toeplitz subshift (Example~\ref{exmpl:SpWBlocks}). Analogous notions exist for the Schreier graphs of \( G_{\GenSeq} \) as well. To see this, we shift our focus towards isomorphism classes of graphs.

Two rooted, directed graphs with labelled edges are called isomorphic if there exists a bijection that preserves the graph structure, the labels and maps the root to the root. Moreover, a graph is called locally finite if only finitely many edges end at each vertex. Following \cite{Voro_SchrGrGrigGr} we denote by \( \mathcal{MG}_{0} \) the set of isomorphism classes of rooted, directed, connected, locally finite, labelled graphs. By \cite[Lemma~2.1]{Voro_SchrGrGrigGr}, a topology on \( \mathcal{MG}_{0} \) is given by the base \( \mathscr{U}_{ \Gamma , V } \), where \( \Gamma \) runs over all finite graphs in \( \mathcal{MG}_{0} \), \( V \) runs over all subsets of the vertices of \( \Gamma \) and \( \mathscr{U}_{ \Gamma , V } \) denotes the set of those isomorphism classes in \( \mathcal{MG}_{0} \) whose graphs have \( \Gamma \) as a subgraph and contain no edges that connect a vertex in \( V \) to a vertex outside of \( \Gamma \). Now we consider the set of isomorphism classes of the orbital Schreier graphs associated to \( G_{\GenSeq} \). When we take the set's closure in \( \mathcal{MG}_{0} \), it turns out that the isolated points in the closure are precisely the isomorphism classes of the one-sided infinite graphs, see \cite[Theorem~1.1~(iii)]{Voro_SchrGrGrigGr}. The closure without these isolated points is in the following denoted by \( \SchreiSp{\GenSeq} \). Clearly \( \SchreiSp{\GenSeq} \) contains all isomorphism classes of two-sided infinite Schreier graphs. They correspond to regular Toeplitz words. But since we took the closure, \( \SchreiSp{\GenSeq} \) contains also the isomorphism class of graphs that consist of two copies of the one-sided infinite graph, connected in a way that is specified by either \( \pi_{b} \), \( \pi_{c} \) or \( \pi_{d} \). They correspond to extended Toeplitz words (which similarly consist of two copies of \( \PBlock{\infty} \), connected by \( b \), \( c \) or \( d \)).
\end{rem}

In Subsection~\ref{subsec:AlphaRepe} we discuss the notion of linear repetitivity, which roughly speaking means that the gap between two occurrences of a word grows at most linearly with the word length. In the same chapter we also encounter the Boshernitzan condition (Section~\ref{sec:BoshCond}), a weaker analogue of linear repetitivity. Both notion exist not only for subshifts, but also for actions of \( G_{\GenSeq} \) on metric spaces. To make this precise, we denote by \( \lvert g \rvert \) the minimal number of generators that is necessary to obtain an element \( g \in G_{\GenSeq} \).

\begin{defi}[\cite{NagnPerez_SchreierGr}]
\label{defi:LRuBGrAct}
Let \( ( Y, d ) \) be a metric space and let \( G_{\GenSeq} \) be a member of the family of Grigorchuk's groups, equipped with an action on \( ( Y , d ) \). We say that this action
\begin{tightenumerate}
\item{\emph{is linearly repetitive}\index{linearly repetitive!for a group action}\index{group!linear repetitive action}, if there exists a constant \( C \geq 0 \) such that for all \( r > 0 \) and all \( y_{1} ,  y_{2} \in Y \) there exists an element \( g \in G_{\GenSeq} \) with \( \lvert g \rvert \leq C \cdot r \) such that \( d( y_{1} , g( y_{2} ) ) < \frac{1}{r} \) holds.}
\item{\emph{satisfies the Boshernitzan condition}\index{Boshernitzan condition!for a group action}\index{group!Boshernitzan condition for action}, if there exists a constant \( C \geq 0 \) and an increasing real-valued sequence \( (r_{k}) \) with \( \lim_{k \to \infty} r_{k} = \infty \) such that for all \( y_{1} ,  y_{2} \in Y \) there exists an element \( g \in G_{\GenSeq} \) with \( \lvert g \rvert \leq C \cdot r_{k} \) such that \( d( y_{1} , g( y_{2} ) ) < \frac{1}{r_{k}} \) holds.}
\end{tightenumerate}
\end{defi}

As in the previous remark, let \( \SchreiSp{\GenSeq} \) denote the closure of the set of isomorphism classes of orbital Schreier graphs of \( G_{\GenSeq} \) without its isolated points. For the action of \( G_{\GenSeq} \) on \( \SchreiSp{\GenSeq} \) by a shift of the root, Nagnibeda and Perez characterised the above notions. Note the strong resemblance of their result for the action of groups in Grigorchuk's family, and our result for the word combinatorics of generalised Grigorchuk subshifts in Corollary~\ref{cor:BoshCondGenGrig}. 

\begin{prop}[\cite{NagnPerez_SchreierGr}]
\label{prop:BoshCondGrAction}
The action of \( G_{\GenSeq} \) on \( \SchreiSp{\GenSeq} \)
\begin{tightenumerate}
\item{is linearly repetitive if and only if there exists a constant \( M \in \NN \) such that, for all \( t \in \NN_{0}  \), we have \( \Set{ \GenSeq_{t} , \ldots , \GenSeq_{t+M-1} } = \Set{ \GenSeq_{i} : i \geq t} \).}
\item{satisfies the Boshernitzan condition if and only if there exists a constant \( M \in \NN \) such that, for all \( r \in \NN_{0} \), there exists some \( t \geq r \) such that \( \Set{ \GenSeq_{t} , \ldots , \GenSeq_{t+M-1} } = \Set{ \GenSeq_{i} : i \geq t } \) holds.}
\end{tightenumerate}
\end{prop}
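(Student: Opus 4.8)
The plan is to transfer the statement into the language of the generalised Grigorchuk subshift, where the needed combinatorial criteria have essentially been established in Chapter~\ref{chap:Combinat}. First I would make precise the correspondence between the system $(\SchreiSp{\GenSeq}, G_{\GenSeq})$ and the subshift $(\Subshift, \Shift)$ attached to the coding sequence $(b_{k})$ with $b_{0} \DefAs a$ and $b_{k} \DefAs \GenSeq_{k}$ for $k \geq 1$ (reading $\pi_{b}, \pi_{c}, \pi_{d}$ as $b, c, d$) and all $n_{k} = 2$ in the sense of Example~\ref{exmpl:defGenGrigSubsh}. Each marked orbital Schreier graph in $\SchreiSp{\GenSeq}$ is, up to isomorphism, an essentially linear graph whose consecutive spine-vertices are joined by edges whose ``scale'' is dictated precisely by a simple Toeplitz word; the map sending such a graph to the two-sided word recording these scales should be a homeomorphism intertwining the root-shift action of $G_{\GenSeq}$ (moving the root one spine-step) with $\Shift$. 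The passage to the closure and the removal of the isolated points is mirrored on the subshift side by the fact that $\Subshift$ is an orbit closure containing both normal Toeplitz words and extended (``leading'') words; one must check the map stays a homeomorphism on these boundary elements, using the structural facts from \cite{Voro_SchrGrGrigGr} and \cite{GLN_SpectraSchreierAndSO}.

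Next I would translate the two metric ingredients of Definition~\ref{defi:LRuBGrAct}. On the graph side $d(y_{1}, g(y_{2})) < \tfrac{1}{r}$ means that the balls of radius $\asymp r$ about the roots of $y_{1}$ and $g(y_{2})$ are isomorphic, which corresponds to $\Restr{\InfWord_{1}}{-r'}{r'} = \Restr{\Shift^{k}\InfWord_{2}}{-r'}{r'}$ for a suitable $r' \asymp r$. Since every generator moves the root along the spine by at most one edge, and all non-spine edges of these Schreier graphs are loops or join consecutive spine-vertices, the graph displacement between a vertex and its image under $g$ equals the minimal number of generators needed to realise that move on the orbit; hence $\Length{g} \leq Cr$ (after passing to this effective length) is equivalent to $g(y_{2}) \leftrightarrow \Shift^{k}\InfWord_{2}$ with $\lvert k \rvert \leq Cr$. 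Under this dictionary, linear repetitivity of the action becomes ``every word of length $\asymp r$ occurs in every window of length $\asymp Cr$'', i.e.\ $\Repe(L) \leq C''L$ for all $L$, and the Boshernitzan condition of the action becomes $\Repe(L_{k}) \leq C'' L_{k}$ along some sequence $L_{k} \to \infty$.

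With this in hand, part~(a) follows from Corollary~\ref{cor:LinRepe} together with the translation of the products $\prod n_{j}$ into the $(b_{k})$-picture carried out in the proof of Corollary~\ref{cor:BoshCondGenGrig}: for $n_{k} = 2^{j_{k}}$ the relevant product is bounded exactly when the corresponding sum of the $j_{i}$ is bounded, which is precisely the statement that every tail $\Set{ \GenSeq_{s} : s \geq t }$ is realised within a fixed number $M$ of positions starting at $t$, uniformly in $t$. For part~(b) I would combine Theorem~\ref{thm:Repe} (which shows $\Repe(L)/L$ is comparable to $\prod_{j = \AllLInc{i}+1}^{\AllL(\AllLInc{i})-1} n_{j}$ at the lengths $L \asymp \Length{\PBlock{\AllLInc{i}}}$) with Proposition~\ref{prop:BoshCond} and Proposition~\ref{prop:BdKBdAllLInc} to see that ``$\Repe(L_{k}) \leq C'' L_{k}$ along a subsequence'' is equivalent to the Boshernitzan condition of $\Subshift$, hence — again via the $(b_{k})$-translation — to the existence of $M$ with $\Set{ \GenSeq_{t}, \ldots, \GenSeq_{t+M-1} } = \Set{ \GenSeq_{i} : i \geq t }$ for arbitrarily large $t$.

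The main obstacle is the first step: setting up the homeomorphism $\SchreiSp{\GenSeq} \cong \Subshift$ with full rigour, in particular verifying that it is well defined and continuous on the elements obtained in the closure (the graphs glued from two one-sided pieces, matching the extended Toeplitz words) and that the root-shift action is genuinely conjugated to $\Shift$ there. A secondary difficulty is the Boshernitzan part of~(b): Definition~\ref{defi:LRuBGrAct}\,(b) is a priori a ``repetitivity along a subsequence of scales'' condition rather than the measure-theoretic condition~(\ref{eqn:BoshCond}), so one genuinely needs the explicit repetitivity formula of Theorem~\ref{thm:Repe} (equivalently Proposition~\ref{prop:AlphRepe} with $\alpha = 1$) to know that, for these particular systems, the two notions reduce to the same combinatorial condition on $(\GenSeq_{k})$. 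Alternatively, one could bypass the subshift dictionary and prove both parts directly by analysing how radius-$r$ balls appear in the self-similar build-up of the level-$k$ Schreier graphs, which is the graph-theoretic mirror of the $\Word{\PBlock{k}}{a}{\PBlock{k}}$ decomposition used throughout Chapter~\ref{chap:Combinat}.
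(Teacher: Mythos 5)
First, a point of comparison: the thesis itself contains no proof of this proposition. It is quoted from \cite{NagnPerez_SchreierGr} (Nagnibeda/Perez, in preparation); the thesis only proves the subshift-side analogue, Corollary~\ref{cor:BoshCondGenGrig}, and points out the formal parallel. So your proposal is not an alternative to an internal argument, it is an attempt to supply a proof that the paper deliberately outsources, by pulling the group statement back to the subshift. The difficulty is that the pull-back is exactly the non-trivial content, and you leave it as an acknowledged obstacle. Two places where it cannot be completed from the material in the thesis: (i) Definition~\ref{defi:LRuBGrAct} is a statement about a metric space \( (Y,d) \), but the thesis never specifies a metric on \( \SchreiSp{\GenSeq} \) — only the topology via the base \( \mathscr{U}_{\Gamma, V} \). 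Your step ``\( d( y_{1}, g( y_{2} ) ) < \tfrac{1}{r} \) means the balls of radius \( \asymp r \) about the roots agree'' is precisely what makes the group conditions correspond to \( \Repe( L ) \leq C L \) for all \( L \), respectively along a subsequence; with a metric whose decay is exponential in the radius of agreement, \( d < \tfrac{1}{r} \) would only force agreement on balls of radius \( \asymp \log r \) and the quantitative translation would be different. This input has to come from \cite{NagnPerez_SchreierGr} or \cite{Voro_SchrGrGrigGr}, not from the thesis. (ii) The equivariant identification of \( \SchreiSp{\GenSeq} \) (including the non-isolated limit points glued from two one-sided graphs, which should match the extended Toeplitz words) with the subshift, together with the comparability of \( \lvert g \rvert \) and the root displacement, is asserted rather than proven; the thesis only records the unitary equivalence of Laplacians and Jacobi operators (\cite[Proposition~4.1]{GLN_SpectraSchreierAndSO}), which does not by itself give the coarse conjugacy of the two actions that your translation needs.

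Granting that dictionary, the combinatorial half of your argument is essentially right, with two small repairs. For part~(a) you need a ``for all \( k \)'' analogue of Proposition~\ref{prop:BdKBdAllLInc} (the product \( \prod_{j=k+1}^{\AllL(k-1)-1} n_{j} \) is minimised over each block at \( k = \AllLInc{i} \)) in order to pass from Corollary~\ref{cor:LinRepe}, which is indexed by the \( \AllLInc{i} \), to the ``for all \( t \)'' form of the letter-block condition in the statement; this is easy but not written anywhere in the thesis. For part~(b), the parenthetical appeal to Proposition~\ref{prop:AlphRepe} with \( \alpha = 1 \) is off target: that proposition controls \( \limsup_{L} \Repe(L)/L \), i.e.\ genuine linear repetitivity, whereas Definition~\ref{defi:LRuBGrAct}~(b) translates into a subsequence (\( \liminf \)-type) bound on \( \Repe(L)/L \); what you actually need is Theorem~\ref{thm:Repe}, to see that \( \Repe(L)/L \) at the scales \( L \approx \Length{\PBlock{\AllLInc{i}}} \) is comparable to \( \prod_{j=\AllLInc{i}+1}^{\AllL(\AllLInc{i}-1)-1} n_{j} \), combined with Propositions~\ref{prop:BoshCond} and~\ref{prop:BdKBdAllLInc} — the route you also name. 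With these caveats the subshift part goes through; the substantive gap remains the graph-to-word reduction, which is exactly what the cited work of Nagnibeda and Perez provides.
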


\begin{rem}
The Boshernitzan condition for group actions is actually not restricted to the family of Grigorchuk's groups. It likewise applies to the more general case of \emph{spinal groups}\index{spinal group}\index{group!spinal}, which were introduced in \cite{BarthSun_SpinalGr}. Their precise definition is beyond the scope of this appendix, but roughly speaking, a spinal group is generated by the following automorphisms of a \( p \)-regular tree (where \( p \) is a prime number): the generator \( a \) permutes the vertices directly below the root. In addition, there is a set \( B \) of generators. The action of a generator \( b \in B \) below vertices of the form \( \Word{ 1^{k} }{ 0 } \) is given by a sequence \( ( \GenSeq_{k} ) \) of maps. Below all other vertices, \( b \) acts as the identity, see Figure~\ref{fig:ActSpinGr}. For more details, the interested reader is referred to \cite{BarthSun_SpinalGr}, \cite[Section~2]{BGS_BranchGr} or \cite{NagnPerez_SchreierGr}. Some examples of spinal groups are the Fabrykowski/Gupta group (\cite{FabrGupta_FabGupGr}, \cite{FabrGupta_FabGupGr2}) and the members of the family of Grigorchuk's groups.

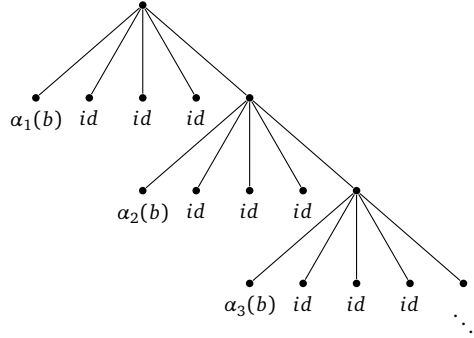
\begin{figure}
\centering
\scriptsize
\begin{tikzpicture}
[every node/.style ={inner sep=1pt, minimum height=0pt, fill, circle}, every label/.style ={label distance=0.5ex, draw=none, fill=none, rectangle}, level distance = 35, sibling distance=20]
\node{}
	child{ node[label=below:\( \GenSeq_{1}( b ) \)]{} }
	child{ node[label=below:\( id \)]{} }
	child{ node[label=below:\( id \)]{} }
	child{ node[label=below:\( id \)]{} }
	child{ node{}
		child{ node[label=below:\( \GenSeq_{2}( b ) \)]{} }
		child{ node[label=below:\( id \)]{} }
		child{ node[label=below:\( id \)]{} }
		child{ node[label=below:\( id \)]{} }
		child{ node{}
			child{ node[label=below:\( \GenSeq_{3}( b ) \)]{} }
			child{ node[label=below:\( id \)]{} }
			child{ node[label=below:\( id \)]{} }
			child{ node[label=below:\( id \)]{} }
			child{ node[label=below:\( \ddots \)]{} } } };
\end{tikzpicture}
\normalsize
\caption{Action of a generator \( b\in B \) of a spinal group that acts on the five-regular tree.\label{fig:ActSpinGr}}
\end{figure}

As for the family of Grigorchuk's groups, Schreier graphs also can be defined for spinal groups in general. As before, the level-(\( k+1 \)) Schreier graph consists of several copies of the level-\( k \) Schreier graph, connected in a way that is specified by \( \GenSeq_{k} \). But since we might have more than two copies, the resulting graph is not ``essentially linear''. It is therefore not clear how to associate a subshift to the graph.
\end{rem}

For the family of Grigorchuk's groups, the similarity between generalised Grigorchuk subshifts and Schreier graphs yields a relation between operators on these objects as well, see \cite{GLN_SpectraSchreierAndSO}, \cite{GLNS_LeadingSeq_Arxiv}. On the subshifts, these operators are precisely the Jacobi operators that are discussed in Chapter~\ref{chap:JOSimpToep}. On the Schreier graphs, so-called Laplacians can be defined: let \( G_{\GenSeq} \) be a member of the family of Grigorchuk's groups, let \( S \in \SchreiSp{\GenSeq} \) be an isomorphism class and let \( s \) be a Schreier graph in this class. Then every edge of \( s \) is labelled by one of the generators \( a \), \( \hat{b} \), \( \hat{c} \) or \( \hat{d} \). For the set of vertices of \( s \) and the set of edges of \( s \) we write \( \Vertices{ s } \) and \( \Edges{ s } \), respectively. The Laplacian on \( s \) is defined as follows:

\begin{defi}
Let \( \NDig \colon \Edges{ s } \to \RR \) be a function that only depends on the label of an edge. The Laplacian \( \Lap{s}{\NDig} \colon \ell^{2}( \Vertices{ s } ) \to \ell^{2}( \Vertices{ s } ) \) on the graph \( s \) is defined by
\[ ( \Lap{s}{\NDig} \varphi )( u ) \DefAs \sum_{e \, : \, u \stackrel{e}{\sim} v} \NDig( e ) \varphi( v ) \, , \]
where the sum is taken over all edges \( e \) such that \( u \) is one endpoint of \( e \). The other endpoint is denoted by \( v \) (and may be equal to \( u \)). 
\end{defi}

Since isomorphisms of labelled graphs preserve the labels, all representatives of an isomorphism class define the same Laplacian. Therefore we write \( \Lap{S}{\NDig} \) instead of \( \Lap{s}{\NDig} \). Now recall that \( \SchreiSp{(\pi_{d}, \pi_{c}, \pi_{b} , \ldots )} \) denotes the closure of the set of isomorphism classes of orbital Schreier graphs of Grigorchuk's group \( G_{\GenSeq} = G_{(\pi_{d}, \pi_{c}, \pi_{b} , \ldots )} \), without isolated points. For Laplacians on these classes the following relation was established in \cite{GLN_SpectraSchreierAndSO}:

\begin{prop}[{\cite[Proposition~4.1]{GLN_SpectraSchreierAndSO}}]
For every \( S \in  \SchreiSp{(\pi_{d}, \pi_{c}, \pi_{b} , \ldots )} \) and every real-valued function \( \NDig \) that is defined on the set of edge labels, there exists an element \( \InfWord \in \Subshift \) in the Grigorchuk subshift such that the Laplacian \( \Lap{S}{\NDig} \) is unitarily equivalent to a locally constant Jacobi operator \( \Jac_{\InfWord} \) on \( \ell^{2}( \ZZ ) \).
\end{prop}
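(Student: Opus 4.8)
The plan is to exploit the fact that the orbital Schreier graphs of the Grigorchuk family are ``line-like'': a two-sided infinite path decorated with loops and, occasionally, parallel edges. Identifying such a graph with \( \ZZ \) turns its Laplacian into a tridiagonal operator whose entries depend only locally on a bi-infinite word from the Grigorchuk subshift, i.e.\ into a locally constant Jacobi operator.

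First I would recall the structure theory of these graphs (\cite{Voro_SchrGrGrigGr}, \cite{GLN_SpectraSchreierAndSO}): for \( S \in \SchreiSp{(\pi_{d},\pi_{c},\pi_{b},\ldots)} \) a representative graph \( s \) has a vertex set carrying a natural total order that is order-isomorphic to \( \ZZ \) (every element of \( \SchreiSp{} \) is two-sided infinite, the isolated one-sided rays having been removed), and in which every edge joins two consecutive vertices --- possibly in parallel --- or is a loop. Enumerating \( \Vertices{s} = \Set{ v_{j} : j \in \ZZ } \) compatibly with the order, the gaps \( \Set{v_{j},v_{j+1}} \) alternate between ``\( a \)-gaps'', each carrying a single edge labelled \( a \), and ``branch gaps'', each carrying a pair of edges labelled by two of \( \Set{ \hat b,\hat c,\hat d } \); the remaining generator fixes both endpoints of the branch gap and so contributes a loop at each of \( v_{j} \) and \( v_{j+1} \), whence every vertex carries exactly one loop. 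The self-similar recursion ``the level-\( (k+1) \) graph is two copies of the level-\( k \) graph glued according to \( \GenSeq_{k} \)'' mirrors the block recursion \( \PBlock{k+1} = \Word{ \PBlock{k} }{ a_{k+1} }{ \PBlock{k} } \) of the Grigorchuk word; consequently the pattern of branch-gap labels along \( s \) is obtained by reading off a two-sided infinite word \( \InfWord \) in the Grigorchuk subshift \( \Subshift \), the genuine orbital Schreier graphs corresponding to normal Toeplitz words and the two graphs built from two rays glued by \( \pi_{b},\pi_{c},\pi_{d} \) corresponding to the leading (extended Toeplitz) words, cf.\ the remark on \( \SchreiSp{\GenSeq} \). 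The base point of \( s \) is normalised to sit at \( v_{0} \).

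Next I would introduce the unitary \( U \colon \ell^{2}(\Vertices{s}) \to \ell^{2}(\ZZ) \), \( U\delta_{v_{j}} \DefAs \delta_{j} \), and compute \( U \Lap{S}{\NDig} U^{-1} \). As every non-loop edge joins consecutive vertices, this conjugate is tridiagonal: its entry between sites \( j \) and \( j+1 \) is the sum of \( \NDig \) over the edges in the gap \( \Set{v_{j},v_{j+1}} \) --- namely \( \NDig(a) \) on an \( a \)-gap and \( \NDig(x)+\NDig(y) \) on a branch gap with active pair \( \Set{x,y} \) --- while its diagonal entry at site \( j \) is \( \NDig \) of the generator fixing \( v_{j} \). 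By the dictionary of the previous paragraph, both quantities are, for each \( j \), determined by finitely many letters of \( \InfWord \) around position \( j \), i.e.\ they are block codes evaluated at \( \Shift^{j}\InfWord \). Hence there are locally constant off-diagonal and diagonal weight functions on \( \Subshift \) for which \( U \Lap{S}{\NDig} U^{-1} = \Jac_{\InfWord} \), which is the claim --- under the implicit assumption that the off-diagonal weight is nowhere zero (if one insists on this, one restricts to functions \( \NDig \) with \( \NDig(a)\neq 0 \) and \( \NDig(x)+\NDig(y)\neq 0 \) for the occurring pairs).

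The step I expect to be the main obstacle is the first: proving carefully that the labelled-graph isomorphism class \( S \) is faithfully encoded, with a uniformly bounded window, by a single bi-infinite word, and that this word can be taken in \( \Subshift \). This amounts to running the self-similar recursion of the Schreier graphs in lock-step with \( \PBlock{k+1} = \Word{ \PBlock{k} }{ a_{k+1} }{ \PBlock{k} } \), tracking how the cyclic action \( \pi_{b}\to\pi_{c}\to\pi_{d} \) of the generators along the ``spine'' of the graph translates into the coding sequence \( (a_{k}) \), and identifying the three limit classes built from pairs of rays with the three leading words. Once this combinatorial dictionary is established, the operator identity of the second step is a routine finite computation.
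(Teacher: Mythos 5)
The thesis itself contains no proof of this statement---it is imported verbatim from \cite{GLN_SpectraSchreierAndSO}---so your proposal can only be measured against the argument of that source, and it reconstructs it in essentially the standard way: identify the vertex set of a representative graph with \( \ZZ \) by the obvious unitary, observe that the decorations (single \( a \)-edges alternating with doubled edges from \( \Set{ \hat b , \hat c , \hat d } \), plus exactly one loop per vertex) are read off from a two-sided infinite word, and check that the diagonal and off-diagonal entries of \( \Lap{S}{\NDig} \) are block codes of that word; this computation is indeed routine once the dictionary is in place. The substantive content is exactly the step you flag as the main obstacle: running the self-similar recursion of the level-\( k \) Schreier graphs (as described in \cite{Voro_SchrGrGrigGr} and reviewed in the appendix) in lock-step with \( \PBlock{k+1} = \Word{ \PBlock{k} }{ a_{k+1} }{ \PBlock{k} } \), so that every class in \( \SchreiSp{(\pi_{d}, \pi_{c}, \pi_{b}, \ldots)} \) is encoded, with bounded window, by an element of \( \Subshift \) --- the two-sided orbital graphs by normal Toeplitz words and the glued double-ray limit graphs by the leading words; note there are three of the latter, one for each letter of \( \AlphabEv = \Set{ b , c , d } \), not two. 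Your closing caveat is also apt: with the thesis's definition of a Jacobi operator the off-diagonal function must be nonvanishing, so for an arbitrary real-valued \( \NDig \) one must either assume \( \NDig( a ) \neq 0 \) and \( \NDig( x ) + \NDig( y ) \neq 0 \) for the occurring pairs, or read ``Jacobi operator'' in the slightly wider sense of the cited paper.
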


In fact, this result extends to the whole family of Grigorchuk's groups and even to spinal groups on the binary tree (\cite[Theorem~7.1]{GLNS_LeadingSeq_Arxiv}). This is one of our main motivations for studying Jacobi operators on simple Toeplitz subshifts: in Chapter~\ref{chap:UnifCocycCantor} we establish Cantor spectrum of Lebesgue measure zero for aperiodic Jacobi operators (Theorem~\ref{thm:LCondCantorSpec}). We can apply this result to Laplacians on self-similar groups (see also \cite[Corollary~7.2]{GLNS_LeadingSeq_Arxiv}): if a spinal group on the binary tree satisfies a certain aperiodicity condition (namely that the edge weights associated to the eventual alphabet are not all equal), then the spectrum of the Laplacian on the associated orbital Schreier graphs is a Cantor set of Lebesgue measure zero.

\backmatter 

\newcommand{\etalchar}[1]{$^{#1}$}

\cleardoublepage

\printindex

\end{document}